\pgfplotsset{compat=1.6}
\theoremstyle{plain}
\newtheorem{theorem}{Theorem}[section]
\newtheorem{lemma}[theorem]{Lemma}
\newtheorem{proposition}[theorem]{Proposition}
\newtheorem*{conjecture*}{Conjecture} 
\newtheorem{sublemma}[theorem]{\normalfont Claim}
 \numberwithin{equation}{section}
\theoremstyle{definition}
\newtheorem{definition}[theorem]{Definition}
\theoremstyle{remark}
\newtheorem{remark}[theorem]{Remark}
\newenvironment{subproof}[1][\proofname]{%
  \proof[Proof of claim]%
  \renewcommand\qedsymbol{\Large$\square$}%
}{\endproof}
 \let \leq \leqslant
 \let \geq \geqslant
\DeclareMathOperator{\sgn}{sgn}
\DeclareMathOperator{\unif}{unif}
\DeclareMathOperator{\support}{supp}
\DeclareMathOperator{\dist}{dist}
\definecolor{detailcolor00}{rgb}{0.4405, 0.204, 0.343}
\definecolor{detailcolor01}{rgb}{0.546, 0.215, 0.352}
\definecolor{detailcolor02}{rgb}{0.675, 0.247, 0.387} 
\definecolor{detailcolor03}{rgb}{0.775, 0.317, 0.455}
\definecolor{detailcolor04}{rgb}{0.830, 0.421, 0.553} 
\definecolor{detailcolor05}{rgb}{0.831, 0.533, 0.663}
\definecolor{detailcolor06}{rgb}{0.779, 0.619, 0.775}
\definecolor{detailcolor07}{rgb}{0.724, 0.694, 0.827}
\definecolor{detailcolor08}{rgb}{0.687, 0.770, 0.880}
\definecolor{detailcolor09}{rgb}{0.671, 0.839, 0.904}
\definecolor{detailcolor10}{rgb}{0.659, 0.872, 0.882}
\title{Wilson loops in the abelian lattice Higgs model}
\author{Malin P. Forsstr\"om}
\address[Malin P. Forsstr\"om]{Department of Mathematics, KTH Royal Institute of Technology, 100 44 Stockholm, Sweden.}
\email{malinpf@kth.se}
\author{Jonatan Lenells}
\address[Jonatan Lenells]{Department of Mathematics, KTH Royal Institute of Technology, 100 44 Stockholm, Sweden.}
\email{jlenells@math.kth.se}
\author{Fredrik Viklund}
\address[Fredrik Viklund]{Department of Mathematics, KTH Royal Institute of Technology, 100 44 Stockholm, Sweden.}
\email{fredrik.viklund@math.kth.se}
\begin{document}

\begin{abstract}
    We consider the lattice Higgs model on \(\mathbb{Z}^4\) in the fixed length limit, with structure group given by \( \mathbb{Z}_n \) for \( n \geq 2 \). We compute the expected value of the Wilson loop observable to leading order when the inverse temperature and hopping parameter are both sufficiently large compared to the length of the loop. The leading order term is expressed in terms of a quantity arising from the related but much simpler \( \mathbb{Z}_n \) model, which reduces to the usual Ising model when \(n=2\). As part of the proof, we construct a coupling between the lattice Higgs model and the \( \mathbb{Z}_n \) model. 
\end{abstract}

\maketitle

\section{Introduction}

\subsection{Background}
 
Lattice gauge theories were introduced by Wilson~\cite{w1974} (see also Wegner~\cite{w1971}) as lattice approximations of quantum field theories known as (Yang--Mills) gauge theories. Gauge theories appear for instance in the Standard Model, where they describe fundamental particle interactions. One advantage of the lattice models is that they are immediately mathematically well-defined, whereas it remains a substantial problem to give rigorous constructions of relevant continuum theories~\cite{jw}. In fact, one possible approach to the latter problem is to try passing to a scaling limit with a lattice theory. 

A discrete gauge field configuration assigns to each lattice edge an element of a given group $G$, known as the structure (or gauge-) group, and can be thought of as a discrete approximation of a connection one-form on a principal $G$-bundle on an underlying smooth manifold. 
The simplest lattice gauge theory is a pure gauge theory, realized as a probability measure on gauge field configurations. The choice of structure group is part of the model and is fixed at the outset. The action that Wilson introduced in~\cite{w1974} (and which determines the probability measure) involves a discrete analog of curvature obtained by considering microscopic holonomies around lattice plaquettes. The definition is set up in such a way that the resulting discrete action is exactly invariant with respect to gauge transformations of the discrete configuration space, while still approximating the continuum Yang--Mills action in an appropriate lattice size scaling limit. 

In the physics community, lattice gauge theories have been studied as statistical mechanics models in their own right, and have also been used as a basis for numerical simulations of the corresponding continuum models. Recently there has been a renewed interest in the rigorous analysis of four-dimensional gauge theories in the mathematical community using probabilistic techniques (see, e.g.,~\cite{c2019, sc2019, flv2020,c2021,f2021, gs2021}). For example, in the papers~\cite{c2019, sc2019, flv2020}, the leading order terms for the expectation of Wilson loop observables (see below) were computed for pure gauge theories with Wilson action and finite structure groups in the limit when the size of the loop and inverse temperature tend to infinity simultaneously in such a way that the Wilson loop expectation is non-trivial. (Structure groups that are of primary relevance to unresolved problems in physics, such as \(SU(N)\), appear to be out of reach at the moment.) This type of result was first obtained by Chatterjee in~\cite{c2019} and is different from the more classical results on area versus perimeter law (see, e.g., \cite{mms1979,fs1979,bf,w1974,os1978, fs1982}), which concern the decay rate of the Wilson loop expectation as the size of the loop tends to infinity while the inverse temperature is kept \emph{fixed}. Part of the motivation for the results in this paper instead stems from the question of how to renormalize in order to be able to take a scaling limit. We will comment further on this below.

Pure gauge theories model only the gauge field itself, and in order to advance towards physically relevant theories, it is necessary to consider models that include matter fields interacting with the gauge field, see, e.g.,~\cite{fs1979, os1978}. 
In this paper, we consider lattice gauge theories with Wilson's action for the gauge field, coupled to a scalar Bosonic field with a quartic Higgs potential. The resulting theory is called the \emph{lattice Higgs model}. We analyze this model in a particular limiting ``fixed length'' regime where the relative weight of the potential is infinite, forcing the Higgs field to have unit length. We only consider finite abelian structure groups. 

We will discuss the relation to previous work in more detail below, but let us now mention that the models we consider here have received significant attention in the physics literature: see for instance the works~\cite{b1974,bdi1975ii,bdi1975iii}, where calculations to obtain critical parameters of these models were first performed, and~\cite{jsj1980, ks1984}, in which phase diagrams were sketched. For further background, as well as more references, we refer the reader to~\cite{fs1979} and~\cite{s1988}.

Just as in \cite{c2019,flv2020,sc2019} our main goal is to analyze the asymptotic behavior of Wilson loop expectations, that is, expected values of ``holonomy variables'', computed along large loops when the inverse temperature is also large compared to the perimeter of the loop. Wilson introduced such observables as a means to detect whether quark confinement occurs, see~\cite{w1974}, and they have since been a primary focus in the analysis of lattice gauge theories. Here we compute the leading order term of the Wilson loop expectation (in the sense of \cite{c2019,flv2020,sc2019}) in the lattice Higgs model in the fixed length regime. We show that the leading order term can be expressed in terms of the expectation of a certain observable in a simpler \( \mathbb{Z}_n \) model of interacting spins on vertices. (In the case $n=2$ this is the Ising model.) Our theorems are of a similar type as the main results of~\cite{c2019,flv2020,sc2019}, but require substantial additional work due to the presence of an interacting field. 
Along the way, we develop tools that we believe may be useful to analyze other gauge theories coupled with matter fields, as well as other observables in the lattice Higgs model. In particular, we construct a coupling, in the sense of probability measures, of the lattice Higgs model and the \( \mathbb{Z}_n \) model, thereby making precise certain ideas appearing in the physics literature, see, e.g.,~\cite[Section~C]{fs1979}. We note however that we do not consider the question of lattice fields acquiring ``mass'' due to the presence of a Higgs field and the breaking of an exact gauge symmetry (see Section~\ref{sec:unitary_gauge}), cf., e.g., Section 4 of \cite{os1978}.

\subsection{Preliminary notation}
For \( m \geq 2 \), the graph $\mathbb{Z}^m$ has a vertex at each point \( x \in \mathbb{Z}^m \) with integer coordinates and a non-oriented edge between nearest neighbors. We will work with oriented edges throughout this paper, and for this reason we associate to each non-oriented edge \( \bar e \) in \( \mathbb{Z}^m \) two oriented edges \( e_1 \) and \( e_2 = -e_1 \) with the same endpoints as \( \bar e \) and opposite orientations. 

Let \( d\mathbf{e}_1 \coloneqq (1,0,0,\ldots,0)\), \( d\mathbf{e}_2 \coloneqq (0,1,0,\ldots, 0) \), \ldots, \( d\mathbf{e}_m \coloneqq (0,\ldots,0,1) \) be oriented edges corresponding to the unit vectors in \( \mathbb{Z}^m \). We say that an oriented edge \( e \) is \emph{positively oriented} if it is equal to a translation of one of these unit vectors, i.e.,\ if there is a \( v \in \mathbb{Z}^m \) and a \( j \in \{ 1,2, \ldots, m\} \) such that \( e = v + d{\mathbf{e}}_j \). 
When two vertices \( x \) and \( y \) in \( \mathbb{Z}^m \) are adjacent, we sometimes write \( (x,y) \) to denote the oriented edge with endpoints at \( x \) and \( y \) which is oriented from \( x \) to \( y \).
If \( v \in \mathbb{Z}^m \) and \( j_1 <   j_2 \), then \( p = (v +  d\mathbf{e}_{j_1}) \land  (v+ d\mathbf{e}_{j_2}) \) is a positively oriented 2-cell, also known as a  \emph{positively oriented plaquette}. 
We let \( B_N \) denote the set \(   [-N,N]^m \) of \( \mathbb{Z}^m \), and we let \( V_N \), \( E_N \), and \( P_N \) denote the sets of oriented vertices, edges, and plaquettes, respectively, whose end-points are all in \( B_N \).

Whenever we talk about lattice gauge theory we do so with respect to some (abelian) group \( (G,+)  \), referred to as the \emph{structure group}. We also fix a unitary and faithful representation \( \rho \) of \( (G,+) \). In this paper, we will always assume that \( G = \mathbb{Z}_n \) for some \( n \geq 2 \) with the group operation \( + \) given by standard addition modulo \( n \). Also, we will assume that \( \rho \) is a one-dimensional representation of \( G \). We note that a natural such representation is \( j\mapsto e^{j \cdot 2 \pi i/n} \).

Now assume that a structure group \( (G,+) \), a one-dimensional unitary representation \( \rho \) of \( (G,+) \), and an integer \( N\geq 1 \) are given.
We let \( \Sigma_{E_N} \) denote the set of all  \( G \)-valued  1-forms \( \sigma \) on \( E_N \), i.e., the set of all \( G \)-valued functions \(\sigma \colon  e \mapsto \sigma_e \) on \( E_N \) such that \( \sigma_e =  -\sigma_{-e} \) for all \( e \in E_N \).
We let \( \hat{\Phi}_{V_N} \) denote the set of all \( \mathbb{C} \)-valued functions \( \phi \colon x \mapsto \phi_x\) on \( V_N \) which  satisfy \( \phi_x/|\phi_x| \in \rho(G) \) for \( x \in V_N \) and \( \phi_x = 0 \) for \( x \notin V_N \). Similarly, we let $\Phi_{V_N}$ denote the set of all \( \phi \in \hat \Phi_{V_N} \) such that \( \phi_x  \in \rho(G) \) for each \( x \in V_N \).
When \( \sigma \in \Sigma_{E_N } \) and \( p \in P_N \), we let \( \partial p \) denote the four edges in the oriented boundary of \( p \) (see Section~\ref{sec: plaquettes}), and define
\begin{equation*}
    (d\sigma)_p \coloneqq \sum_{e \in \partial p} \sigma_e.
\end{equation*} 

Elements \( \sigma \in \Sigma_{E_N} \) will be referred to as \emph{gauge field configurations}, and elements \( \phi \in \hat{\Phi}_{V_N} \) will be referred to as \emph{Higgs field configurations}.

\subsection{Lattice Higgs model and Wilson loops}

The \emph{Wilson action functional} for pure gauge theory is defined by (see, e.g.,~\cite{w1974})
\begin{equation*}
    S^W(\sigma) \coloneqq - \sum_{p \in P_N}  \rho( (d\sigma)_p), \quad \sigma \in \Sigma_{E_N}.
\end{equation*}

For  \( \sigma \in \Sigma_{E_N}\) and \( e=(x,y) \in E_N\), we define a \emph{discrete covariant derivative} of \( \phi\in \hat \Phi_{V_N} \) along \( e \) by
\begin{equation*}
D_{e}\phi \coloneqq \rho(\sigma_{-e})\phi_y  - \phi_x.
\end{equation*} 
Then, for \( \sigma \in \Sigma_{E_N} \) and \( \phi \in \hat{\Phi}_{V_N} \), we define the corresponding kinetic action by the Dirichlet energy
\begin{align*}
    &S^D(\sigma, \phi) \coloneqq \frac{1}{2} \sum_{e \in E_N} |D_e \phi|^2   = \frac{1}{2}\sum_{e=(x,y)\in E_N}  
    \overline{(\rho(\sigma_{-e})\phi_y - \phi_x)} (\rho(\sigma_{-e})\phi_y - \phi_x) 
    \\ &\qquad = \frac{1}{2}\sum_{e=(x,y)\in E_N} \Bigl( |\phi_x|^2 + |\phi_y|^2 -  \overline{\phi_y}\overline{\rho( \sigma_{-e})} \phi_x-   \overline{\phi_x}\rho( \sigma_{-e}) \phi_y  \Bigr)\\
    & \qquad= \sum_{x \in V_N} \bigl|\{e \in E_N \colon x \in \partial e\} \bigr| \, |\phi_x|^2 + S^I(\sigma, \phi),
\end{align*}
where the interaction term $S^I$ is given by
\begin{equation*}
    S^I(\sigma, \phi) \coloneqq 
    -\sum_{e=(x,y)\in E_N}  \overline{\phi_y} \rho(\sigma_{e}) \phi_x.
\end{equation*} 
We finally consider a quartic ``sombrero'' potential
\begin{equation*}
    V(\phi) \coloneqq \sum_{x \in V_N}\bigl(|\phi_x|^2-1 \bigr)^2, \quad \phi \in \hat \Phi_{V_N}.
\end{equation*}
Given \( \beta, \kappa ,\zeta \geq 0 \), the action \( S_{\beta,\kappa, \zeta} \) for lattice gauge theory with Wilson action coupled to a Higgs field on \( B_N \) is, for \( \sigma \in \Sigma_{E_N} \) and \( \phi \in \hat{\Phi}_{V_N} \),  defined by
\begin{equation}\label{eq: general action}
    \begin{split}
        &S_{\beta,\kappa,\zeta}(\sigma, \phi)  \coloneqq  \beta S^W(\sigma) + S^D(\sigma, \phi) + (\kappa-1)S^I(\sigma, \phi) + \zeta V(\phi)  
        \\&\qquad= -\beta \sum_{p \in P_N}  \rho( (d\sigma)_p) - \kappa\sum_{e=(x,y) \in E_N}  \overline{\phi_y}    \rho(\sigma_{e})  {\phi_x} 
         \\&\qquad\qquad
         + \zeta \sum_{x \in V_N}  \bigl(|\phi_x|^2-1 \bigr)^2 + \sum_{x \in V_N} \bigl|\{e \in E_N \colon x \in \partial e\} \bigr|\,  |\phi_x|^2. 
    \end{split}
\end{equation}
The corresponding Gibbs measure can be written
\[
d\mu_{N,\beta, \kappa, \zeta}(\sigma, \phi) = Z^{-1}_{N,\beta,\kappa, \zeta} e^{-S_{\beta,\kappa,\zeta}(\sigma, \phi)}
\prod_{e \in E_N^+} d\mu_G(\sigma_e) 
\prod_{x \in V_N} d\mu_{\rho(G)}(\phi_x/|\phi_x|) d\mu_{\mathbb{R}_+}(|\phi_x|),
\]
where \( E_N^+ \) denotes the set of positively oriented edges in \( E_N \), \( d\mu_G \) is the uniform measure on \( G \), \( d\mu_{\rho(G)} \) is the uniform measure on \( \rho(G) \), and \( \mu_{\mathbb{R}_+} \) is the Lebesgue measure on \( \mathbb{R}_+ \).
We refer to this lattice gauge theory as the \emph{lattice Higgs model}.
Since \( \sigma \in \Sigma_{E_N} \), we have \( \rho(\sigma_{-e}) = \overline{\rho(\sigma_{e})} \), and hence \( S_{\beta,\kappa,\zeta}(\sigma, \phi) \) is real for all \( \sigma \in \Sigma_{E_N} \) and \( \phi \in \hat \Phi_{V_N} \).
The quantity \( \beta \) is known as the \emph{gauge coupling constant} (or inverse temperature),  \( \kappa \) is known as the \emph{hopping parameter}, and \( \zeta \) is known as the \emph{quartic Higgs self coupling}. 
We will work with the model obtained from this action in the \emph{fixed length limit} \( \zeta \to \infty\), in which the Higgs field concentrates on the unit circle, i.e., \(|\phi_x| = 1\) for all \( x \in V_N \). (This is sometimes called the \emph{London limit} in the physics literature.) We do not discuss the limit of the Gibbs measure corresponding to $S_{\beta, \kappa, \zeta}$ as $\zeta \to \infty$, but simply from the outset adopt the action resulting from only considering \( \phi \in \Phi_{V_N} \), so that \( |\phi_x| = 1 \) for all \( x \in V_N \):
\begin{equation}\label{eq: general fixed length action}
    \begin{split}
        &S_{\beta,\kappa, \infty}(\sigma, \phi) \coloneqq  \beta S^W(\sigma) + \kappa S^I(\sigma, \phi)
        \\&\qquad = -\beta \sum_{p \in P_N}  \rho( (d\sigma)_p) - \kappa\sum_{e=(x,y) \in E_N}  \overline{\phi_y}  \rho(\sigma_e){\phi_x}, \quad \sigma \in \Sigma_{E_N},\, \phi \in \Phi_{V_N}.
    \end{split}
\end{equation} 
We then consider a corresponding probability measure \(\mu_{N,\beta, \kappa, \infty}\) on \(\Sigma_{E_N} \times \Phi_{V_N}\) given by
\begin{equation}\label{eq: general fixed length measure}
    \mu_{N,\beta, \kappa, \infty}(\sigma, \phi)  \coloneqq
    Z_{N,\beta,\kappa, \infty}^{-1} e^{-S_{\beta,\kappa, \infty}(\sigma, \phi)} , \quad \sigma \in \Sigma_{E_N},\, \phi \in \Phi_{V_N},
\end{equation}
where \( Z_{N,\beta,\kappa, \infty}\) is a normalizing constant. This is the \emph{fixed length lattice Higgs model}. We let \( \mathbb{E}_{N,\beta,\kappa,\infty} \) denote the corresponding expectation. In the physics literature, this model has been studied in, e.g.,~\cite{ejjsln1987,jsj1980,fmf1986,cis2002} (see also the review~\cite{fs1979}). In~\cite{c1980,ks1984}, phase diagrams for the theory described by~\eqref{eq: general action} are sketched. 
These diagrams suggest (see also, e.g.,~\cite{s1988}) that for large \( \zeta \), the model described by~\eqref{eq: general action} behaves very similarly to the fixed length model considered here. 

Observables attached to simple oriented loops in the underlying lattice are among the main objects of interest in lattice gauge theories. 
If  \( \gamma \subseteq E_N \) is such that
\begin{enumerate}[label=(\arabic*)]
    \item if \( e \in \gamma \) then \( -e \notin \gamma \), and
    \item the edges in \( \gamma \) can be ordered so that they form an oriented loop,
\end{enumerate}
then we say that \( \gamma \) is a \emph{simple loop} in \( E_N \). If \( \gamma \) is a simple loop in \( E_N \) for some \( N \geq 1 \), then we say that \( \gamma \) is a simple loop in \( \mathbb{Z}^m \). 
Given a gauge field configuration \( \sigma \in \Sigma_{E_N} \), the  \emph{Wilson loop observable} \( W_\gamma \) is defined by considering the discrete holonomy along \(\gamma\):
\begin{equation}\label{Wilsonloopdef}
    W_\gamma \coloneqq  \rho \Bigl( \sum_{e \in \gamma} \sigma_e \Bigr).
\end{equation}
Wilson argued that the decay rate of the expected value of \(W_\gamma\) carries information about whether \emph{quark confinement} occurs in the model, see~\cite{w1974}.
 
Figure~\ref{fig: simulation} shows the results of two simulations of the fixed length lattice Higgs model on \( \mathbb{Z}^2. \)
\begin{figure}[hbt]
    \centering
    \begin{subfigure}[t]{0.32\textwidth}\centering
        \centering
        \includegraphics[width=\textwidth]{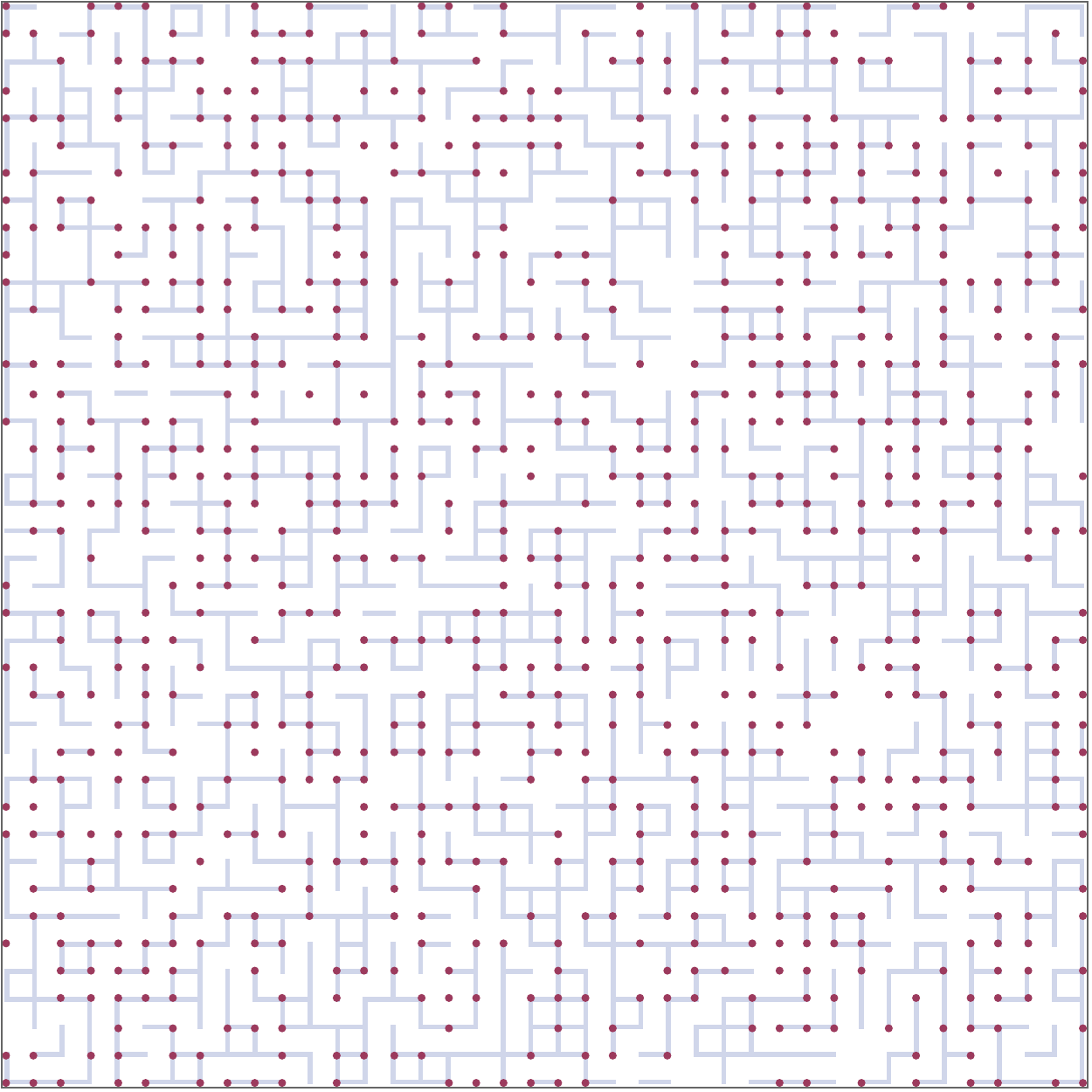}
        \caption{\(\kappa=0\)}
        \label{subfig: config1}
    \end{subfigure}
    \hfil
    \begin{subfigure}[t]{0.32\textwidth}\centering
        \centering
        \includegraphics[width=\textwidth]{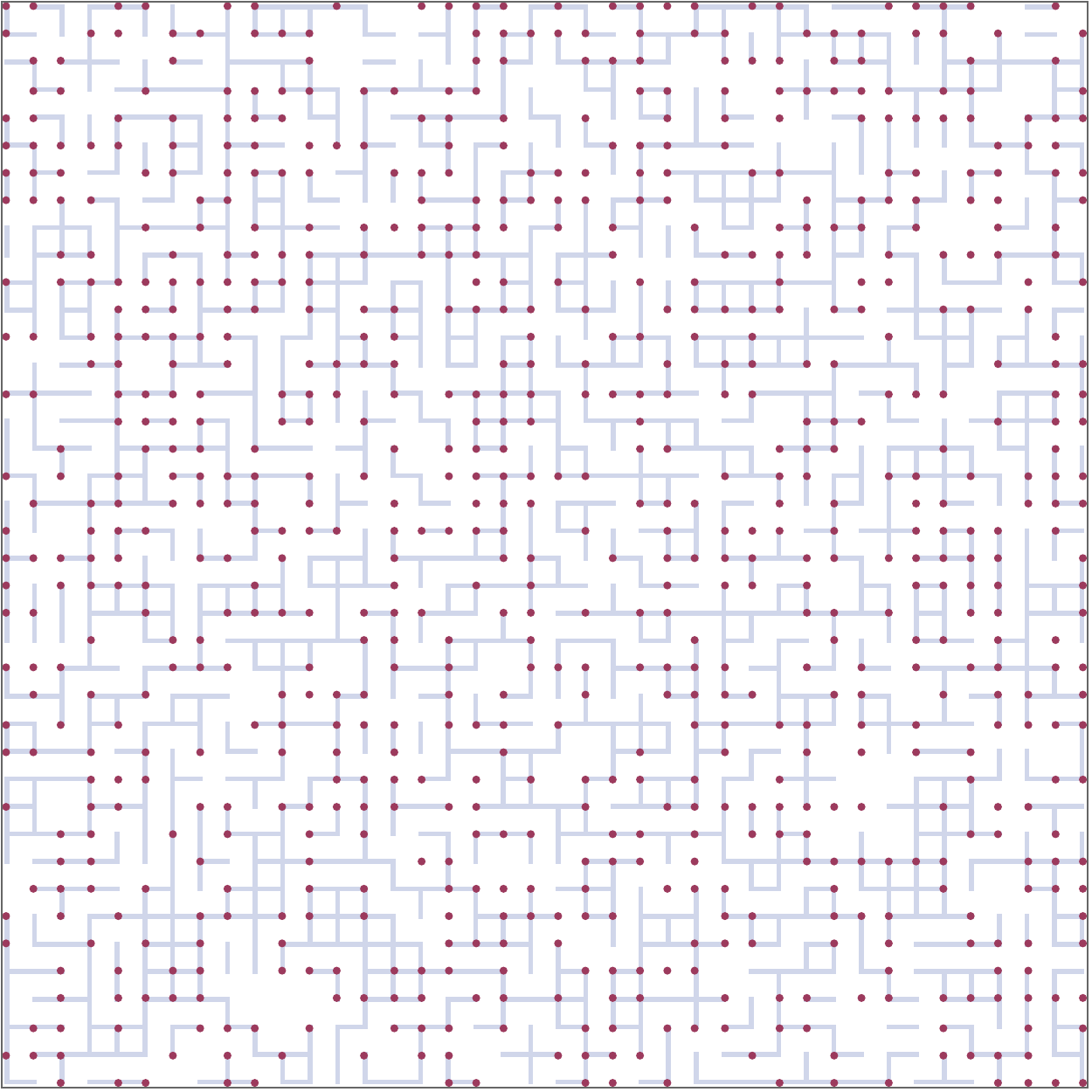}
        \caption{\(\kappa=0.3\)}
        \label{subfig: config2}
    \end{subfigure}
    \hfil
    \begin{subfigure}[t]{0.32\textwidth}\centering
        \centering
        \includegraphics[width=\textwidth]{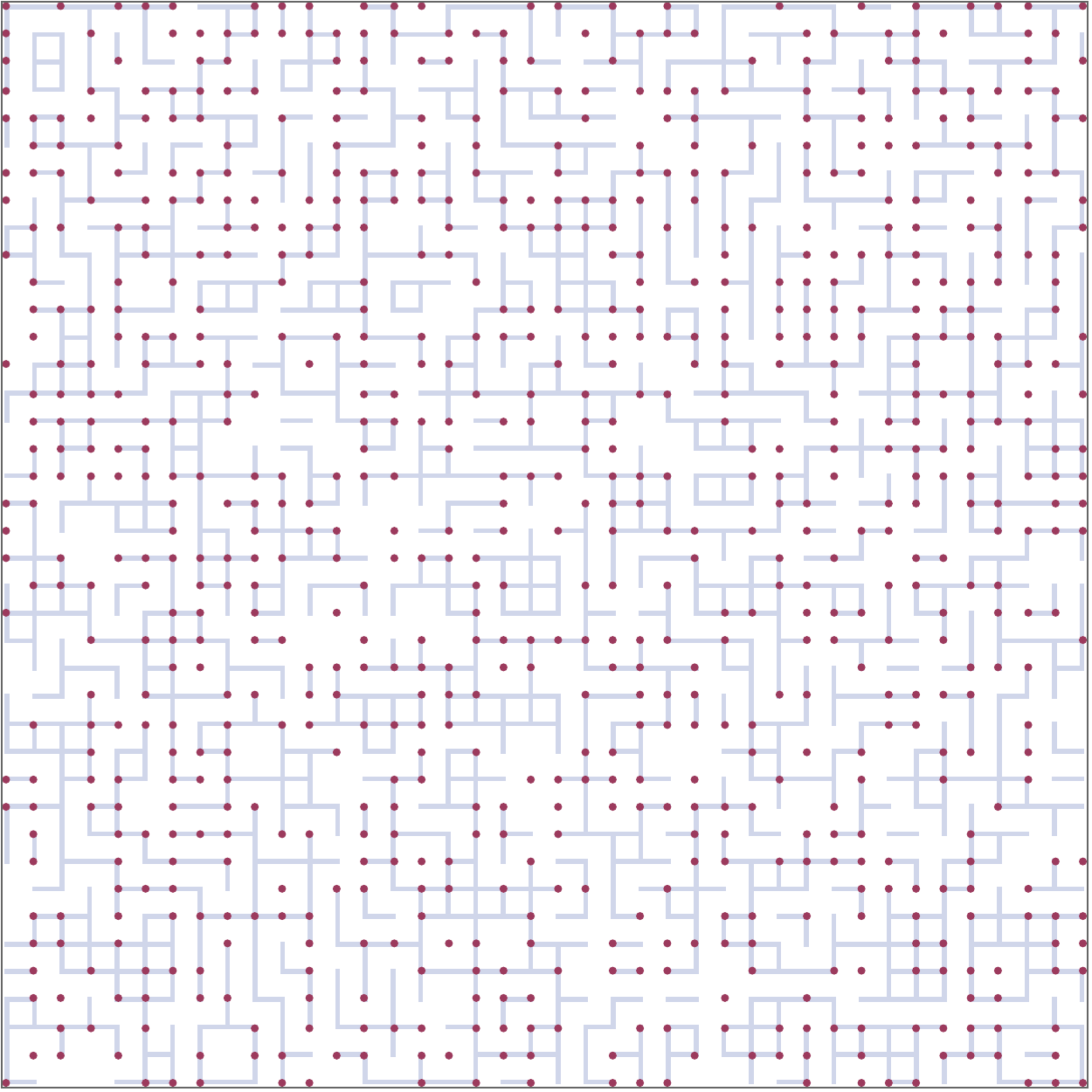}
        \caption{\(\kappa=0.5\)}
        \label{subfig: config3}
    \end{subfigure}
    
    \begin{subfigure}[t]{0.32\textwidth}\centering
        \centering
        \includegraphics[width=\textwidth]{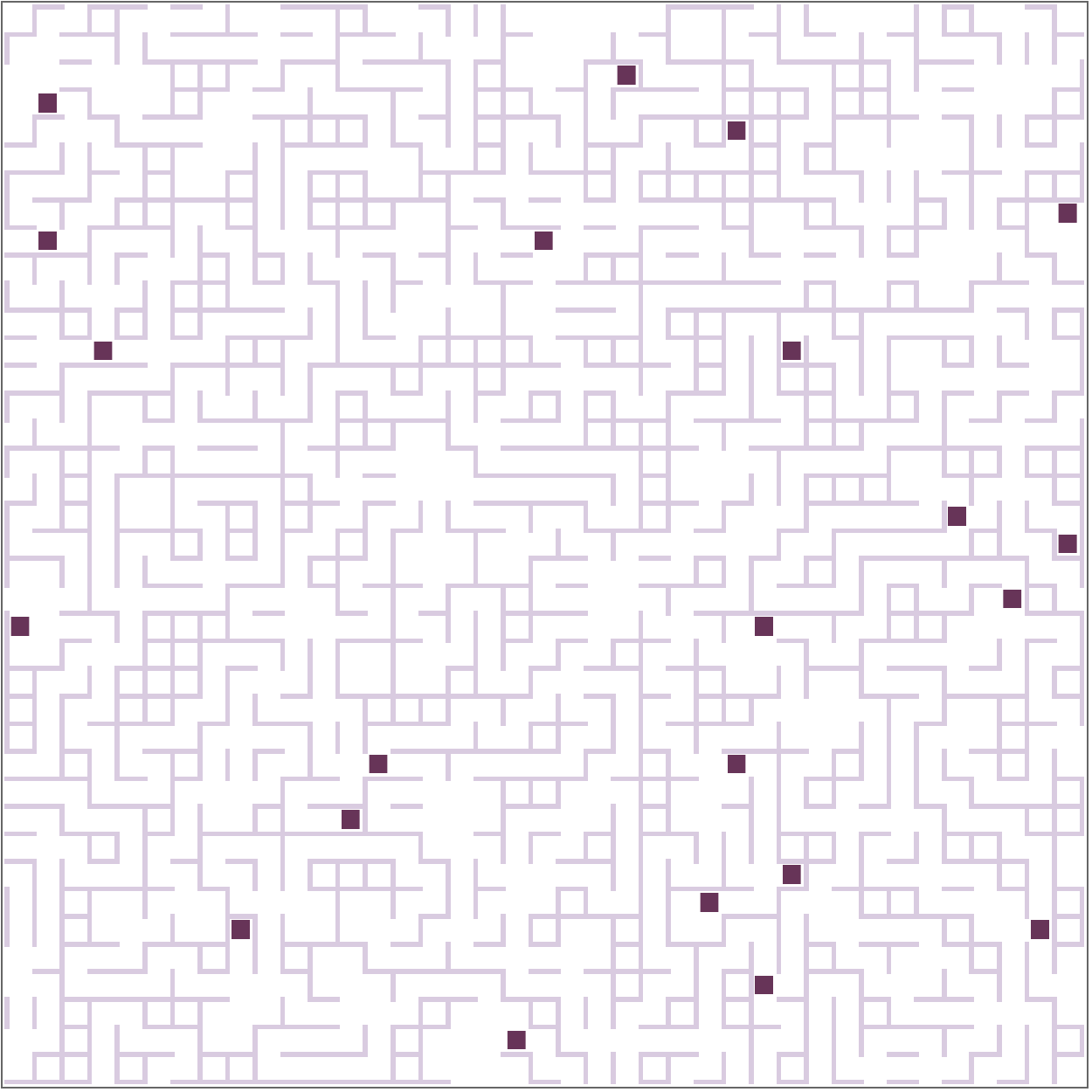}
        \caption{\(\kappa=0\)}
        \label{subfig: config4}
    \end{subfigure}
    \hfil
    \begin{subfigure}[t]{0.32\textwidth}\centering
        \centering
        \includegraphics[width=\textwidth]{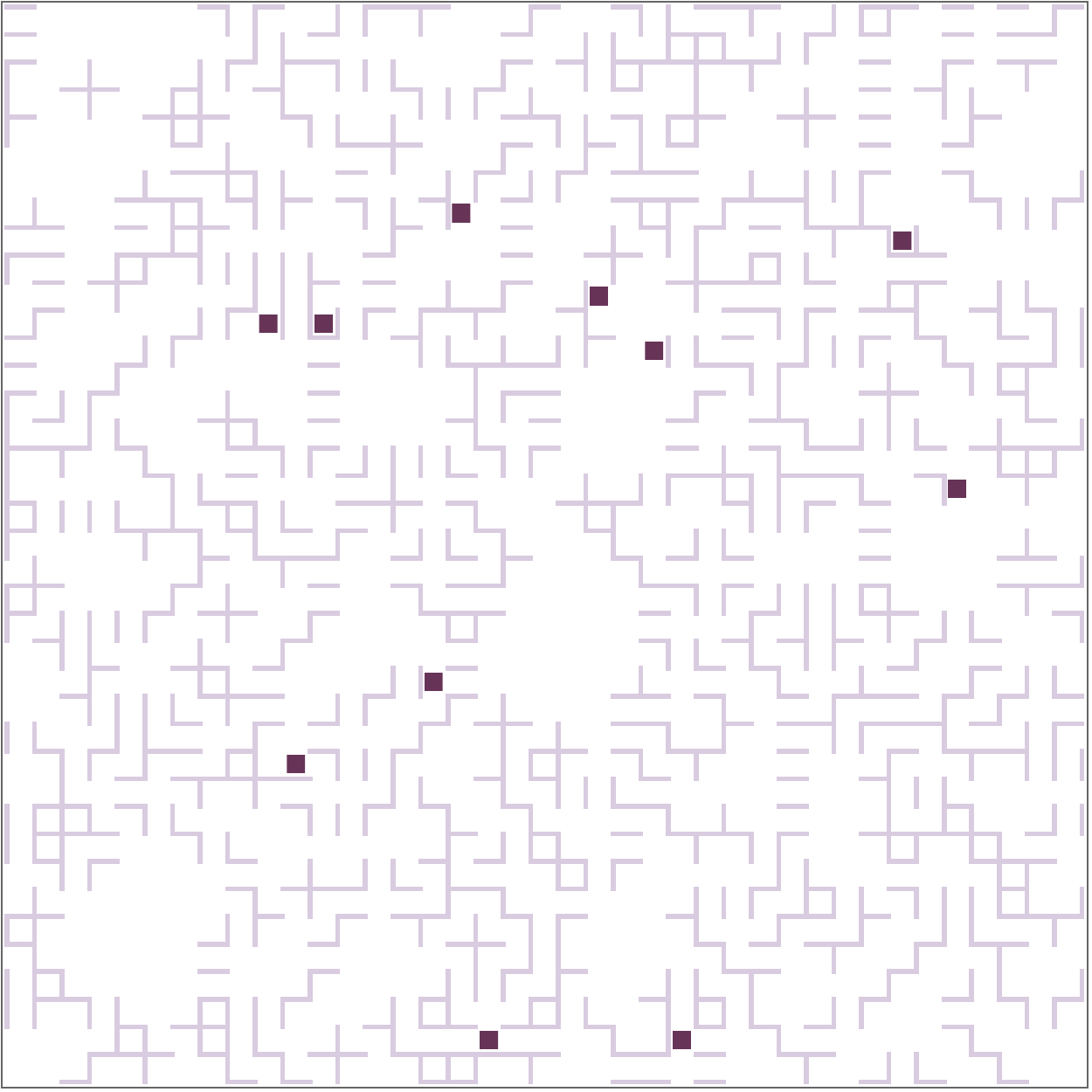}
        \caption{\(\kappa=0.3\)}
        \label{subfig: config5}
    \end{subfigure}
    \hfil
    \begin{subfigure}[t]{0.32\textwidth}\centering
        \centering
        \includegraphics[width=\textwidth]{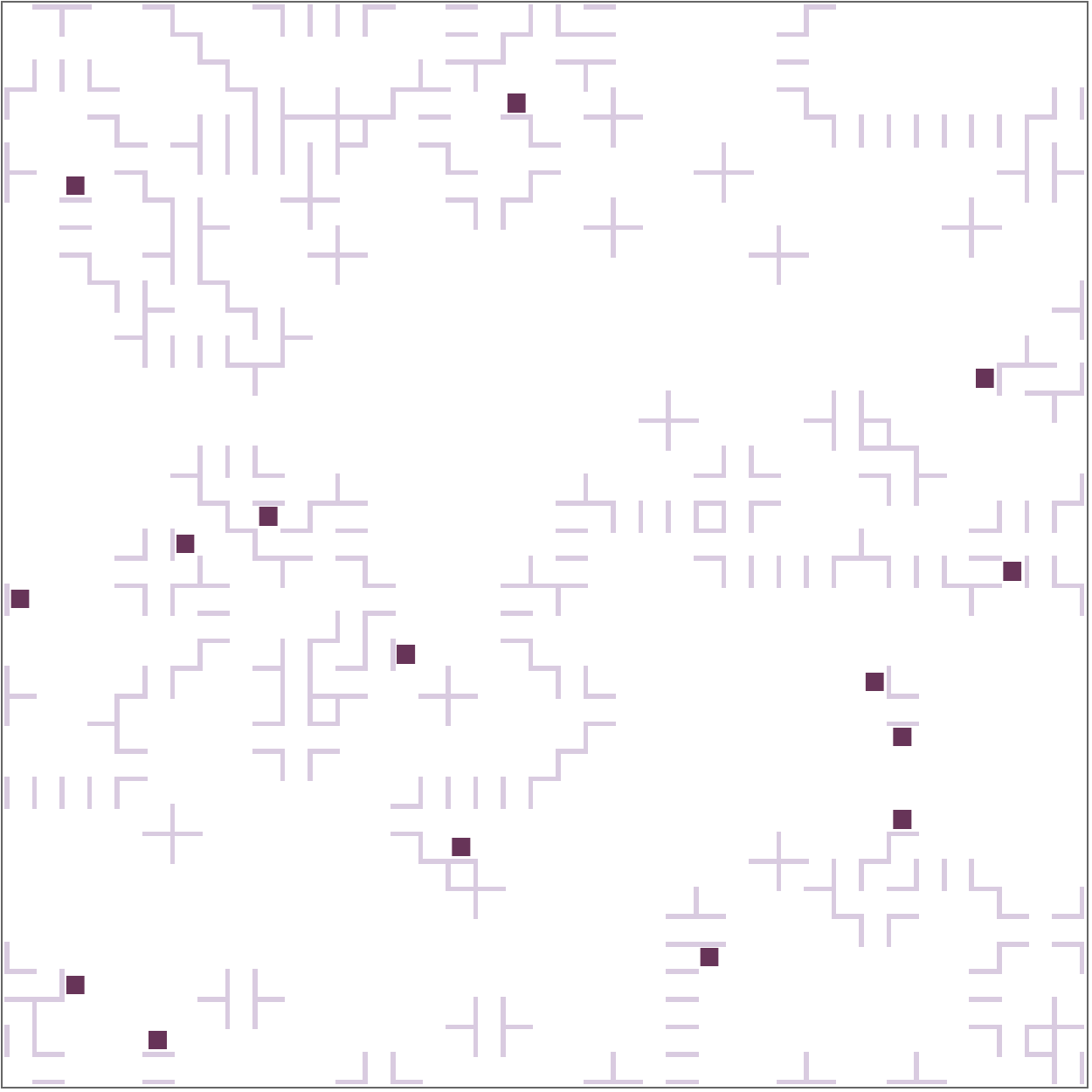}
        \caption{\(\kappa=0.5\)}
        \label{subfig: config6}
    \end{subfigure}
    \caption{In~\subref{subfig: config1},~\subref{subfig: config2},~and~\subref{subfig: config3}, we draw configurations \( (\sigma,\phi) \) obtained by simulation of the fixed length lattice Higgs model on a subset of \( \mathbb{Z}^2\), using \( G = \mathbb{Z}_2 \), \( \beta = 2.2 \), and  free boundary conditions. In~\subref{subfig: config1},~\subref{subfig: config2},~and~\subref{subfig: config3}, the colored edges correspond to edges \( e \) where the gauge field \( \sigma_e \neq 0 \), and the red dots to vertices \( x \) where the Higgs field \( \phi_x \neq 1 \).
    In~\subref{subfig: config4},~\subref{subfig: config5},~and~\subref{subfig: config6}, we draw the same configurations as in~\subref{subfig: config1},~\subref{subfig: config2},~and~\subref{subfig: config3}, but here the colored edges instead correspond to edges \( e =(x,y)\) with
    \( \overline{\phi_y}\rho(\sigma_e) \phi_x\neq 1 \), and the colored squares to plaquettes \( p \) with \( (d\sigma)_p \neq 0 \).}
    \label{fig: simulation}
\end{figure}

\subsection{Main results}

We define the measure \(\mu_{N,\infty, \kappa}\) on \(\Sigma_{E_N}^0 \coloneqq \{ \sigma \in \Sigma_{E_N} \colon d\sigma = 0 \}  \) by
\begin{equation}\label{eq: Zn model def}
    \mu_{N,\infty, \kappa} (\sigma) \coloneqq Z_{N,\infty,\kappa}^{-1} e^{\kappa \sum_{e \in E_N} \rho( \sigma_e)} ,\quad \sigma \in \Sigma_{E_N}^0,
\end{equation}
where \( Z_{N,\infty,\kappa} \) is a normalizing constant, and let  \(\mathbb{E}_{N,\infty, \kappa} \) denote the corresponding expectation. We note that if \( G = \mathbb{Z}_2 \), then \( \mu_{N,\infty,\kappa} \) represents the gradient field of the standard Ising model in the following sense. If \( \eta \in \mathbb{Z}_2^{V_N}\) is a spin configuration on \( V_N \), chosen according to the Ising model with coupling parameter \( \kappa \), and we for \( e = (x,y) \in E_N \) let \(  \sigma_e \coloneqq \eta_y-\eta_x \), then \( (\sigma_e)_{e \in E_N} \sim \mu_{N,\infty, \kappa} \).
In the same way, if \( G = \mathbb{Z}_n \) for some \( n \geq 3 \), then \( \mu_{N,\infty,\kappa} \) describes the distribution of the gradient field of the \( \mathbb{Z}_n \) model (also known as the clock model).

For \( \beta ,  \kappa \geq 0 \), we let \( \langle W_\gamma \rangle_{\beta,\kappa} \) and \( \langle W_\gamma \rangle_{\infty,\kappa} \) denote the infinite volume limits
\begin{equation}\label{infinitevolumelimits}
    \langle W_\gamma \rangle_{\beta,\kappa} \coloneqq \lim_{N \to \infty} \mathbb{E}_{N,\beta,\kappa,\infty}[W_\gamma] \quad \text{and} \quad \langle W_\gamma \rangle_{\infty,\kappa} \coloneqq \lim_{N \to \infty} \mathbb{E}_{N,\infty,\kappa}[W_\gamma],
\end{equation}
which are guaranteed to exist by the Ginibre inequalities (see Section~\ref{sect:ginibre}).

Finally, given a simple loop \( \gamma \), we let \( \gamma_c \) denote the set of \emph{corner edges} in \( \gamma \), that is, the set of edges   \( e \in \gamma \) which share a plaquette with at least one other edge in \( \pm \gamma \).

We are now ready to state our first main result.
\begin{theorem}\label{theorem: main result Z2}
    Consider the fixed length lattice Higgs model on \( \mathbb{Z}^4 \), with structure group \(G = \mathbb{Z}_2\), and representation \( \rho\colon G \to \mathbb{C} \) given by \( \rho(0) = 1 \) and \( \rho(1)=-1 \). 
    Suppose that \( \beta_0,\kappa_0 \geq 0\) satisfy
    \begin{enumerate}[label=\textnormal{[a\arabic*]}]
        \item \( e^{-4\beta_0}  + 4e^{-4\kappa_0} < 1\),
        \item \( 2^{1/3} 5  e^{-4\beta_0}  e^{-4\kappa_0/6} <1\), and
        \item \( 18^2 e^{-4\kappa_0}(2 + e^{-4\kappa_0})<1\).
    \end{enumerate}
    Then there is a constant \(C = C(\beta_0,\kappa_0)\), such that for any \( \beta \geq \beta_0 \) and \( \kappa \geq \kappa_0 \) with \( 3 \beta \geq 2\kappa \), and any  simple oriented loop \( \gamma \) in \( \mathbb{Z}^4 \), we have 
    \begin{equation}\label{eq: main result Z2}
      \begin{split}
        \Bigl|\langle W_\gamma\rangle_{\beta,\kappa}
        -
       \Upsilon_{\gamma,\beta,\kappa} \Bigr|
        \leq
        C \Bigl( e^{-4(\beta+\kappa/6)} + \sqrt{|\gamma_c|/|\gamma|}\Bigr)^{2/7},
    \end{split}
        \end{equation}
    where 
    \begin{equation}\label{Upsilondef}  \Upsilon_{\gamma, \beta,\kappa} \coloneqq  \bigl\langle e^{-2|\gamma|e^{-24\beta-4\kappa}  (1 + \frac{|\{ e \in \gamma \colon \sigma_e= 1\}|}{|\gamma|}(e^{ 8\kappa}-1))}\bigr\rangle_{\infty,\kappa}.
    \end{equation}
\end{theorem}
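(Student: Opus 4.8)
The plan is to begin by integrating out the Higgs field, which is exactly the coupling to the $\mathbb{Z}_2$ model referred to in the introduction. Writing $\phi_x = \rho(\psi_x)$ with $\psi_x \in \mathbb{Z}_2$ and, for each fixed $\psi$, substituting $\sigma_e \mapsto \sigma_e + (d\psi)_e$, the interaction term collapses to $\kappa\sum_{e}\rho(\sigma_e)$, while $S^W$ is preserved because $d(d\psi)=0$ and $W_\gamma$ is preserved because $\gamma$ is a cycle and hence $\sum_{e\in\gamma}(d\psi)_e = 0$. Summing over $\psi$ then cancels between numerator and denominator, giving
\[
  \mathbb{E}_{N,\beta,\kappa,\infty}[W_\gamma]
  = \frac{\sum_{\sigma\in\Sigma_{E_N}} W_\gamma(\sigma)\, e^{\beta\sum_{p\in P_N}\rho((d\sigma)_p) + \kappa\sum_{e\in E_N}\rho(\sigma_e)}}{\sum_{\sigma\in\Sigma_{E_N}} e^{\beta\sum_{p\in P_N}\rho((d\sigma)_p) + \kappa\sum_{e\in E_N}\rho(\sigma_e)}}.
\]
This reduced measure is a pure $\mathbb{Z}_2$ gauge theory perturbed by a gauge-symmetry-breaking single-edge term of strength $\kappa$; as $\beta\to\infty$ it concentrates on $\{d\sigma=0\}$ and becomes precisely $\mu_{N,\infty,\kappa}$, so the latter is the correct reference measure and the whole problem becomes a comparison of the finite-$\beta$ reduced model with its $\beta=\infty$ projection.

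Next I would decompose each $\sigma$ as a closed background $\sigma^0$ (with $d\sigma^0=0$, distributed to leading order according to $\langle\,\cdot\,\rangle_{\infty,\kappa}$) together with localized excitations, and condition on $\sigma^0$. Since $W_\gamma(\sigma^0)=1$ on every closed configuration, the entire deviation of $\langle W_\gamma\rangle_{\beta,\kappa}$ from $1$ comes from excitations. The cheapest excitation that changes $W_\gamma$ is the flip of a single edge $e\in\gamma$: it multiplies $W_\gamma$ by $-1$, flips the $12$ oriented plaquettes of $P_N$ on $e$ (the six geometric plaquettes, each counted with both orientations) at total cost $e^{-24\beta}$, and, summing the edge term over the two orientations of $e$ in $E_N$, costs $e^{-4\kappa}$ when $\sigma^0_e=0$ and $e^{+4\kappa}$ when $\sigma^0_e=1$. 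Thus, conditionally on $\sigma^0$, the flip probability of $e$ is $p_e\approx e^{-24\beta-4\kappa}$ or $e^{-24\beta+4\kappa}$ according to $\sigma^0_e\in\{0,1\}$. Flips of edges sharing no plaquette are independent to leading order, so $\langle W_\gamma\mid\sigma^0\rangle\approx\prod_{e\in\gamma}(1-2p_e)\approx\exp(-2\sum_{e\in\gamma}p_e)$, and since $\sum_{e\in\gamma}p_e = e^{-24\beta-4\kappa}|\{e\in\gamma:\sigma^0_e=0\}| + e^{-24\beta+4\kappa}|\{e\in\gamma:\sigma^0_e=1\}|$, averaging over $\sigma^0$ reproduces exactly the exponent defining $\Upsilon_{\gamma,\beta,\kappa}$ in \eqref{Upsilondef}.

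To make this rigorous I would set up a convergent cluster (polymer) expansion for the reduced measure that resolves the $\beta$-plaquette defects and the $\kappa$-edge weights simultaneously; conditions \textnormal{[a1]}--\textnormal{[a3]} are of the form needed for geometric convergence of such an expansion, treating $e^{-4\beta}$ and $e^{-4\kappa}$ as the respective activities, while $3\beta\geq 2\kappa$ ensures that the dangerous factor $e^{+4\kappa}$ carried by domain-wall edges remains dominated by $e^{-24\beta}$. The expansion would justify: (i) that conditioning on the closed background is legitimate and that its law is $\langle\,\cdot\,\rangle_{\infty,\kappa}$ up to controlled corrections; (ii) that the conditional law factorizes over edges up to the multiplicative error recorded by $e^{-4(\beta+\kappa/6)}$; and (iii) that the failure of independence is confined to the corner edges $\gamma_c$, where two flipped edges share a plaquette, producing the $\sqrt{|\gamma_c|/|\gamma|}$ contribution after a second-moment estimate. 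The infinite-volume limits are supplied by the Ginibre inequalities of Section~\ref{sect:ginibre}.

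I expect the main obstacle to be the uniform, quantitative control of this coupled expansion: one must bound, with explicit constants and uniformly in both $\gamma$ and $N$, the combined effect of the correlations induced by shared plaquettes near corners and of the dependence of each $p_e$ on the random background $\sigma^0$, and then exponentiate the per-edge estimate without losing these dependencies. Balancing the truncation level of the expansion against the total activity of the neglected clusters is what forces the fractional exponent $2/7$ appearing in \eqref{eq: main result Z2}.
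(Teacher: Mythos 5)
Your opening reduction and your heuristic are both sound and match the paper: the substitution $\sigma_e \mapsto \sigma_e + (d\psi)_e$ is exactly the unitary-gauge step (Proposition~\ref{proposition: unitary gauge one dim}), and your single-edge-flip computation (cost $e^{-24\beta}$ from the twelve oriented plaquettes, times $e^{-4\kappa}$ or $e^{+4\kappa}$ according to the background spin) is precisely what produces $\theta_{\beta,\kappa}(0)$ and $\theta_{\beta,\kappa}(1)$ in \eqref{eq: theta for Z2} and hence the exponent in \eqref{Upsilondef}. Your reading of the role of [a1]--[a3] (convergence of geometric series over excitation sizes, cf.\ Propositions~\ref{proposition: first step} and~\ref{proposition: the vortex flipping lemma}) is also correct. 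However, the proposal defers all of the rigor to an unspecified ``coupled cluster expansion,'' and the two steps that are genuinely hard are exactly the ones left as black boxes. First, the decomposition $\sigma = \sigma^0 + \text{excitations}$ with $d\sigma^0 = 0$ is not canonical, and it is simply false that the law of any such background under $\mu_{N,\beta,\kappa}$ \emph{is} $\mu_{N,\infty,\kappa}$; the conditional weight of an excitation depends on the background edges it sits on (your own $p_e$ depends on $\sigma^0_e$), so the background and the defects do not decouple. Making your step (i) precise is the central contribution of the paper: an explicit coupling $\mu_{N,(\beta,\kappa),(\infty,\kappa)}$ (Definition~\ref{def: the coupling}, Proposition~\ref{proposition: coupling is a coupling}) together with quantitative bounds on the discrepancy set $E_{\sigma,\sigma'}$ (Propositions~\ref{proposition: coupling and conditions 2},~\ref{proposition: coupling and conditions 1},~\ref{proposition: minimal Ising}). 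Without this construction, or a genuinely worked-out polymer representation in which the activities carry the background dependence, the conditioning step is an assertion, not an argument. (By contrast, the independence of flips along the loop has a clean rigorous form that needs no expansion at all: condition on all spins off the non-corner loop edges, Lemma~\ref{lemma: independent spins} and Proposition~\ref{prop: resampling in main proof}.)

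Second, and more decisively, your error control is per-edge and therefore scales like $|\gamma| \cdot (\text{small})$. Such a bound is vacuous precisely in the regime of interest, namely $|\gamma| e^{-24\beta-4\kappa} \asymp 1$, where $\Upsilon_{\gamma,\beta,\kappa}$ is bounded away from $0$ and $1$, and it diverges when $|\gamma| e^{-24\beta-4\kappa}$ is large. The paper closes this gap by proving a complementary perimeter-law decay for \emph{both} quantities --- $|\mathbb{E}_{N,\beta,\kappa}[W_\gamma]| \le e^{-|\gamma\smallsetminus\gamma_c|\,\alpha_5(\beta,\kappa)}$ (Lemma~\ref{lemma: upper bound}) and the analogous bound for the $\theta$-product (Lemma~\ref{lemma: theta inequalities Dplusii}) --- so that when $|\gamma|\alpha_5$ is large the difference in \eqref{eq: main result Z2} is exponentially small for trivial reasons, and then interpolating between this decay and the additive error bound of Lemma~\ref{lemma: C}, which carries a factor $e^{2|\gamma|\alpha_5(\beta,\kappa)}$. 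The exponent $2/7$ is an artifact of exactly this interpolation: one raises the difference to the power $1 + 2|\gamma|/(|\gamma|-|\gamma_c|)$ and uses the threshold $|\gamma_c|/|\gamma| \le 1/5$, giving $(1-a)/(3-a) = 2/7$ at $a = 1/5$. Your attribution of the exponent to ``balancing the truncation level of the expansion against the total activity of the neglected clusters'' is not the operative mechanism, and nothing in your outline produces a bound that is uniform in $|\gamma|$; as written, the proposed proof would fail for long loops.
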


Notice that $\ell/\ell_c$ and $\beta$ must both tend to infinity simultaneously for the theorem statement to be non-trivial. In particular, Theorem~\ref{theorem: main result Z2} gives no information about a confinement/deconfinement phase transition nor about related quantitative information such as the exact perimeter law decay rate at fixed but sufficiently large $\beta, \kappa$. Conversely, Theorem~\ref{theorem: main result Z2} is not implied by any known such result. 

Roughly speaking, the conclusion of Theorem~\ref{theorem: main result Z2} may be interpreted as follows. We consider several cases.
\begin{enumerate}
    \item If \( |\gamma| e^{-24\beta-4\kappa}  \) is very large, then \(\Upsilon_{\gamma,\beta, \kappa} \approx 0 \).
    \item If \( |\gamma| e^{-24\beta-4\kappa} \) is very small, then \(  \Upsilon_{\gamma,\beta, \kappa} \approx 1 \). (This is immediate if \( \kappa \) is bounded from above, and follows in general because \( \mu_{N, \infty, \kappa}( \{ \sigma \in \Sigma_{E_N}^0 \colon \sigma_e = 1 \})\lesssim e^{-32\kappa} \) for any fixed edge \( e  \), see Proposition~\ref{proposition: minimal Ising}. 
    \item If \( |\gamma|\asymp e^{24\beta+4\kappa} \), then \(  \Upsilon_{\gamma,\beta, \kappa}  \) is bounded away from both $0$ and $1$. Hence the theorem identifies the correct relation between the size of the loop and $\beta, \kappa$ needed to obtain a non-trivial quantity asymptotically. 
\end{enumerate} 

\begin{remark}
    Part of the motivation for considering the type limit as in Theorem~\ref{theorem: main result Z2} is the following. In the statement, the lattice size is fixed but the size of the loop grows. Essentially equivalently, one may fix a loop and then let the lattice size tend to zero. In that setting, there is some belief in the physics community (see, e.g., Section 6 of \cite{os1978} and Chapter 2 of \cite{s}) that the type of limit we consider here (i.e., the inverse temperature tends to infinity at an appropriate rate with the inverse mesh-size) is what should be taken in order to define a non-trivial scaling limit of the model based on this particular class of observables, see also the discussion in \cite{c2018}. 
\end{remark}

\begin{remark}\label{remark: remark below}
    The assumptions of Theorem~\ref{theorem: main result Z2} are all satisfied if, e.g.,
    \begin{equation*}
        \kappa_0 > -\tfrac{1}{4}\log(\tfrac{5 \sqrt{13}}{18} - 1 ) \approx 1.61867 \quad \text{and} \quad
        \beta_0   > \tfrac{1}{4}\log(2^{1/3} 5) + \tfrac{1}{24}\log \bigl(\tfrac{5 \sqrt{13}}{18} - 1) \approx 0.190344. 
    \end{equation*}
\end{remark}

\begin{remark}
    In a forthcoming paper~\cite{f2021b}, a version of Theorem~\ref{theorem: main result Z2} that is valid also for so-called \emph{Wilson lines} is obtained, that is, for observables of the form 
    \begin{equation*}
        L_\gamma(\sigma,\phi)
        \coloneqq  \rho \bigl( \sigma(\gamma) - \phi(\partial \gamma) \bigr),\qquad \sigma\in \Sigma_{E_\infty},\, \phi\in \Phi_{V_N},
    \end{equation*}
    where \( \gamma \subseteq E_N \) is a simple oriented path along the boundary of some rectangle with side lengths \( \ell_1,\ell_2 \).
\end{remark}

\begin{remark}
    The constant \( C \) in Theorem~\ref{theorem: main result Z2} is given explicitly by
    \begin{equation*}
        C \coloneqq 2^{5/7}
            \cdot 
            \bigl\{ 2C_1C_0^{(7)} + C_3  + \sqrt{ C_{c,1} }  + \sqrt{ C_{c,2}  } +4\sqrt{3C_0^{(6)}}+2\sqrt{C_I}+2C_0^{(25)}+2
            \bigr\}^{1/3},
    \end{equation*}
    where \( C_0^{(6)} \), \( C_0^{(7)} \), and \( C_0^{(25)} \) are defined in~\eqref{eq: C0M}, \( C_3 \) is defined in~\eqref{eq: C3 def}, \( C_I \) is defined in~\eqref{eq: minimal Ising}, \( C_{c,1} \) is defined in~\eqref{eq: Cc1}, \( C_{c,2} \) is defined in~\eqref{eq: Cc2}, and \( C_1 \) is defined in~\eqref{eq: C1}. 
    These constants are all uniformly bounded from above for all \( \beta \geq \beta_0 \) and \( \kappa \geq \kappa_0 \), for any \( \beta_0 \) and \( \kappa_0 \) which satisfy the assumptions on \( \beta_0 \) and \( \kappa_0 \) in Theorem~\ref{theorem: main result Z2}.
\end{remark}

\begin{remark}
    When \( \kappa =0 \) we have \( \Upsilon_{\gamma,\beta,\kappa} = e^{-2|\gamma|e^{-24\beta}}  \). Consequently, in this case the conclusion of Theorem~\ref{theorem: main result Z2} (i.e.,~\eqref{eq: main result Z2}) agrees with the main result of~\cite{c2019} (in~\cite{c2019}, the constant \( \beta \) is the same as \( 2\beta \) in our paper). However, we stress that the case \( \kappa = 0 \) is not covered by Theorem~\ref{theorem: main result Z2}.
\end{remark}

\begin{remark}
    It would be interesting to find a more explicit formula for the Ising gradient field expression \(\Upsilon_{\gamma,\beta, \kappa}\). 
    It would also be interesting to establish the leading order behavior of \( \langle W_\gamma \rangle_{\beta,\kappa} \) as $\beta \to \infty$ when \( \kappa \) is small. However, our current argument uses a coupling between the abelian Higgs model and the \( \mathbb{Z}_n \) model which becomes less good when \( \kappa \to 0, \) and hence cannot be used directly in this regime.
\end{remark}

Our second theorem, which is a more general version of Theorem~\ref{theorem: main result Z2}, is valid for all finite cyclic groups \( \mathbb{Z}_n \) and without the assumption that \( 3\beta \geq 2\kappa \). In order to state this theorem, we need to introduce some additional notation.

For \( r \geq 0\) and \( g \in G \), we define
\begin{equation}\label{eq: varphi}
    \varphi_r(g) \coloneqq e^{r \Re (\rho(g)-\rho(0))}.
\end{equation}
We extend this notation to \( r = \infty \) by letting \( \varphi_\infty(g) \coloneqq \mathbb{1}_{g=0} \), i.e.,
\begin{equation*}
    \varphi_\infty(g) 
    \coloneqq 
    \begin{cases} 
        1 &\text{if } g = 0 , \\
        0 & \text{if }g \in G \smallsetminus \{0\}.
    \end{cases}
\end{equation*}
Next, for \( \hat g \in G \) and \( \beta,\kappa \geq 0 \), we define 
\begin{equation}\label{eq: def of thetag}
    \theta_{\beta,\kappa}(\hat g) \coloneqq \frac{\sum_{g \in G} \rho(g)  \varphi_\beta (g)^{12} \varphi_\kappa(g+\hat g)^2}{\sum_{g \in G} \varphi_\beta(g)^{12} \varphi_\kappa (g+\hat g)^2}.
\end{equation} 
For \( r \geq 0 \), let
\begin{equation}\label{eq: alpha01def}
    \alpha_0(r) \coloneqq \sum_{g \in G\smallsetminus \{ 0 \}} \varphi_r(g)^2 \quad \text{and} \quad  \alpha_1(r) \coloneqq \max_{g \in G\smallsetminus \{ 0 \}} \varphi_r(g)^2.
\end{equation} 
Next, for \( \beta,\kappa \geq 0 \), define
\begin{equation}\label{eq: alpha234def}
    \alpha_2(\beta,\kappa) \coloneqq \alpha_0(\beta) \alpha_0(\kappa)^{1/6},
    \quad
    \alpha_3(\beta,\kappa) \coloneqq  \bigl| 1-  \theta_{\beta,\kappa}(0)\bigr|,
    \quad
    \alpha_4(\beta,\kappa) \coloneqq  \max_{g \in G} \bigl| \theta_{\beta,\kappa}(g)-  \theta_{\beta,\kappa}(0)\bigr|,
\end{equation} 
and
\begin{equation}\label{eq: alpha5}
    \alpha_5(\beta,\kappa) \coloneqq  \min_{g_1,g_2, \ldots, g_6 \in G}\bigg( 1 -   \biggl| \frac{\sum_{g \in G} \rho(g) \bigl(\prod_{k =1}^6 \varphi_\beta(g+g_k)^2\bigr) \varphi_\kappa(g)^2}{\sum_{g \in G} \bigl(\prod_{k =1}^6\varphi_\beta(g+g_k)^2\bigr) \varphi_\kappa(g)^2} \biggr|\bigg).
\end{equation}  
Finally, we let 
\begin{align*}
    &\varepsilon_1(\beta,\kappa) \coloneqq \frac{\alpha_2(\beta,\kappa)^6}{\alpha_5(\beta,\kappa)}   ,\quad 
    \varepsilon_2(\beta,\kappa) \coloneqq \sqrt{\frac{\alpha_0(\kappa)^8 \alpha_4(\beta,\kappa)}{ \alpha_5(\beta,\kappa)}}
        +
        \sqrt{\frac{\alpha_3(\beta,\kappa) }{\alpha_5(\beta,\kappa)}},
        \\
  &   \varepsilon_3(\beta,\kappa) \coloneqq
    \sqrt{\frac{\alpha_4(\beta,\kappa) \alpha_0(\kappa)^2}{\alpha_2(\beta,\kappa)^6}}.
\end{align*}

In the general version of our main theorem below, we will work under the following three assumptions:
\begin{enumerate}[label=\textnormal{[A\arabic*]}]
        \item \( \alpha_1(\beta) + 4\alpha_0(\kappa) < 1\), \label{assumption: 1}
        \item \( 2^{1/3} 5  \alpha_0(\beta)  \alpha_0(\kappa)^{ 1/6} <1\), and \label{assumption: 2}
        \item \( 18^2 \alpha_0(\kappa)(2 + \alpha_0(\kappa))<1\). \label{assumption: 3}
\end{enumerate}
 
\begin{theorem}\label{theorem: the main result}\label{theorem: main result} 
    Let \(n \geq 2\) be an integer. Consider the lattice Higgs model on \( \mathbb{Z}^4 \) in the fixed-length limit, with structure group \(G = \mathbb{Z}_n\) and a faithful one-dimensional representation \( \rho \) of \( G \). 
    Suppose that \( \beta_0,\kappa_0 \geq 0 \) satisfy~\ref{assumption: 1},~\ref{assumption: 2}, and~\ref{assumption: 3} with \( \beta = \beta_0 \) and \( \kappa=\kappa_0 \). Then, for any \( \beta \geq \beta_0 \), any \( \kappa \geq \kappa_0 \), and any simple oriented loop \( \gamma \) in \( \mathbb{Z}^4 \), we have 
    \begin{equation}\small\label{eq: main theorem}
        \begin{split}
            &\Bigl|\langle W_\gamma\rangle_{\beta,\kappa} -  \Upsilon_{\gamma,\beta,\kappa} \Bigr|
            \leq 
            K \, \Bigl( \alpha_2(\beta,\kappa) + \sqrt{|\gamma_c|/|\gamma|}\Bigr)^{\mathrlap{1 - 2|\gamma| /(3|\gamma|-|\gamma_c| )}}
            ,
        \end{split}
    \end{equation} 
    where
    \begin{equation*}
        \Upsilon_{\gamma,\beta,\kappa} \coloneqq  \Bigl\langle \, \prod_{e \in \gamma } \theta_{\beta,\kappa}(\sigma_e ) \Bigr\rangle_{\infty,\kappa}
    \end{equation*}
    and
    \begin{align*}
        &K \coloneqq 2 \biggl( \bigl( 2C_1C_0^{(7)} +C_3 \bigr) \varepsilon_1(\beta,\kappa) 
        +
       \Bigl( \sqrt{C_{c,1}}+\sqrt{C_{c,2}} \max\bigl( 1,\alpha_2(\beta,\kappa)^6\bigr) \Bigr)\, \varepsilon_1(\beta,\kappa) \varepsilon_3(\beta,\kappa)   
        \\&\qquad\qquad+ 
        \sqrt{12C_0^{(6)}} \sqrt{\varepsilon_1(\beta,\kappa)\varepsilon_2(\beta,\kappa)^2}
        +
        \sqrt{C_I}\varepsilon_2(\beta,\kappa)  
        +
        2C_0^{(25)}   \varepsilon_1(\beta,\kappa)^4   \biggr)^{\mathrlap{1 - 2|\gamma| /(3|\gamma|-|\gamma_c|)}}.
    \end{align*}
\end{theorem}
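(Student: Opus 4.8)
The plan is to remove the Higgs field entirely, reducing the fixed-length Higgs model to a pure \(\mathbb{Z}_n\) gauge theory carrying an extra edge term, and then to analyze the Wilson loop of that reduced model by a vortex expansion coupled to the \(\mathbb{Z}_n\) model \(\mu_{N,\infty,\kappa}\).

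\emph{Reducing the model.} Since \(\phi_x\in\rho(G)\) and \(\rho\) is faithful, write \(\phi_x=\rho(\psi_x)\) with \(\psi\in G^{V_N}\). For \(e=(x,y)\) one has \(\overline{\phi_y}\rho(\sigma_e)\phi_x=\rho(\sigma_e-(d\psi)_e)\), so the gauge change \(\sigma\mapsto\sigma+d\psi\) leaves the Wilson term invariant (as \(d\,d\psi=0\)) and sends the interaction term to \(\rho(\sigma_e)\). Because \(W_\gamma\) is gauge invariant and \(\gamma\) is a closed loop, the sum over \(\psi\) factors out, and hence
\[
\langle W_\gamma\rangle_{\beta,\kappa}=\lim_{N\to\infty}\frac{\sum_{\sigma\in\Sigma_{E_N}}\rho\bigl(\sum_{e\in\gamma}\sigma_e\bigr)\exp\bigl(\beta\sum_{p\in P_N}\rho((d\sigma)_p)+\kappa\sum_{e\in E_N}\rho(\sigma_e)\bigr)}{\sum_{\sigma\in\Sigma_{E_N}}\exp\bigl(\beta\sum_{p\in P_N}\rho((d\sigma)_p)+\kappa\sum_{e\in E_N}\rho(\sigma_e)\bigr)}.
\]
This identity is the precise form of the coupling announced in the abstract: conditioned on the event \(\{d\sigma=0\}\) this gauge measure is exactly \(\mu_{N,\infty,\kappa}\) of~\eqref{eq: Zn model def}, while the full measure allows sparse curvature defects weighted by \(\beta\). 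The infinite-volume limit itself is guaranteed by the Ginibre inequalities of Section~\ref{sect:ginibre}.

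\emph{Identifying the leading term.} I would decompose each configuration as \(\sigma=\sigma^0+(\text{local deviations})\), where the closed ground part \(\sigma^0\) is distributed, to leading order, as \(\mu_{N,\infty,\kappa}\), and the deviations are the vortices created by the curvature term. Fixing \(\sigma^0\) and summing over the single-edge deviation \(g\) at an edge \(e\in\gamma\) — which excites the six plaquettes containing \(e\) at total cost \(\varphi_\beta(g)^{12}\) and changes the edge weight to \(\varphi_\kappa(\sigma^0_e+g)^2\) — produces the conditional factor \(\mathbb{E}[\rho(\sigma_e)\mid\sigma^0]\approx\rho(\sigma^0_e)\,\theta_{\beta,\kappa}(\sigma^0_e)\) with \(\theta\) exactly as in~\eqref{eq: def of thetag}. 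Since \(\gamma\) is a loop and \(\sigma^0\) is a gradient, \(\prod_{e\in\gamma}\rho(\sigma^0_e)=\rho(\sum_{e\in\gamma}\sigma^0_e)=1\), so the product of local factors collapses to \(\bigl\langle\prod_{e\in\gamma}\theta_{\beta,\kappa}(\sigma_e)\bigr\rangle_{\infty,\kappa}=\Upsilon_{\gamma,\beta,\kappa}\). (For \(n=2\) one checks directly that \(\theta_{\beta,\kappa}(0)\approx e^{-2e^{-24\beta-4\kappa}}\) and \(\theta_{\beta,\kappa}(1)\approx e^{-2e^{-24\beta+4\kappa}}\), recovering the explicit \(\Upsilon\) of Theorem~\ref{theorem: main result Z2}.)

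\emph{Error control and the main obstacle.} Making the previous step rigorous is the heart of the argument. I would organize the deviations into polymers (connected vortices together with their edge defects) and run a Kotecky--Preiss-type expansion whose convergence is exactly what assumptions~\ref{assumption: 1}--\ref{assumption: 3} guarantee, with \(\alpha_0(\beta)\) and \(\alpha_0(\kappa)\) the single-plaquette and single-edge activities and \(\alpha_2(\beta,\kappa)=\alpha_0(\beta)\alpha_0(\kappa)^{1/6}\) the governing small parameter. Three sources of error must then be matched against the \(\varepsilon_i\) and the constants \(C_\bullet\) appearing in \(K\): (i) vortices touching two or more loop edges at once — unavoidable precisely at the corner edges \(\gamma_c\), giving the \(\sqrt{|\gamma_c|/|\gamma|}\) contribution; (ii) replacing the true marginal of \(\sigma^0\) by \(\mu_{N,\infty,\kappa}\) and the exact conditional factor by \(\theta\), controlled through \(\alpha_3,\alpha_4\) (hence \(\varepsilon_2,\varepsilon_3\)) together with the lower bound \(\alpha_5\) preventing degeneracy of the denominators in~\eqref{eq: def of thetag}; and (iii) higher-order multi-deviation terms, controlled by powers of \(\varepsilon_1\). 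I expect the principal difficulty to be (i) combined with the fact that, unlike the pure gauge case, the background field \(\sigma^0\) is a genuinely interacting (\(\mathbb{Z}_n\)/clock) field, so the product over loop edges cannot be factorized and must be handled inside \(\langle\,\cdot\,\rangle_{\infty,\kappa}\) while tracking the correlations induced by shared plaquettes. Finally, balancing the independent-edge estimate against the probability of atypical vortex configurations via a Hölder interpolation with a free exponent, and optimizing that exponent, yields the stated power \(1-2|\gamma|/(3|\gamma|-|\gamma_c|)\) and the constant \(K\); Theorem~\ref{theorem: main result Z2} then follows as the case \(n=2\) after simplifying the \(\alpha\)- and \(\varepsilon\)-quantities using \(3\beta\ge2\kappa\).
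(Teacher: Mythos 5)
Your outline reproduces the paper's skeleton --- unitary gauge to eliminate \( \phi \), the emergence of \( \theta_{\beta,\kappa} \) from resumming a single-edge excitation over a closed background (your computation \( \mathbb{E}[\rho(\sigma_e)\mid\sigma^0]=\rho(\sigma^0_e)\theta_{\beta,\kappa}(\sigma^0_e) \) and the cancellation \( \rho(\sum_{e\in\gamma}\sigma^0_e)=1 \) by Stokes' theorem are exactly right) --- but the central step is asserted rather than proved, and your proposed substitute for it would not go through as stated. You declare that ``the closed ground part \( \sigma^0 \) is distributed, to leading order, as \( \mu_{N,\infty,\kappa} \)'' and identify ``the coupling'' with the observation that the gauge measure conditioned on \( \{d\sigma=0\} \) equals \( \mu_{N,\infty,\kappa} \). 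That identity is far too weak: the event \( \{d\sigma=0\} \) has exponentially small probability, and nothing in your reduction produces, for a typical \( \sigma\sim\mu_{N,\beta,\kappa} \), a canonical closed \( \sigma^0 \) together with quantitative control of where \( \sigma \) and \( \sigma^0 \) disagree and of the conditional law of the deviation given the background. Constructing exactly this object is the paper's main technical contribution (Section~\ref{sec: coupling}): a measure on pairs \( (\sigma,\sigma') \), built from an independent pair by swapping configurations outside an exploration-defined defect set \( E_{\sigma,\sigma'} \), proved to be a genuine coupling of \( \mu_{N,\beta,\kappa} \) and \( \mu_{N,\infty,\kappa} \) in Proposition~\ref{proposition: coupling is a coupling}, with disagreement probabilities bounded in Propositions~\ref{proposition: coupling and conditions 2}--\ref{proposition: minimal Ising}. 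Your alternative --- a Kotecky--Preiss expansion whose polymers are deviations from \( \sigma^0 \) --- runs into the obstruction you yourself flag but do not resolve: the reference measure \( \mu_{N,\infty,\kappa} \) is an interacting field rather than a product measure, so polymer activities depend on the background and standard convergence criteria do not apply. Assumptions~\ref{assumption: 1}--\ref{assumption: 3} are calibrated in the paper to union-bound vortex counting (Propositions~\ref{proposition: edgecluster flipping ii},~\ref{proposition: first step}, and~\ref{proposition: the vortex flipping lemma}) combined with the coupling and a resampling/conditional-independence argument (Proposition~\ref{prop: resampling in main proof}); there is no reason they should serve as a Kotecky--Preiss condition without a new argument.

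Second, the exponent \( 1-2|\gamma|/(3|\gamma|-|\gamma_c|) \) does not come from ``optimizing a free H\"older exponent.'' In the paper it is forced by an exact cancellation: the additive error bound grows like \( (A+A'+B)\,e^{2|\gamma|\alpha_5(\beta,\kappa)} \), while the triangle inequality gives \( |\langle W_\gamma\rangle_{\beta,\kappa}-\Upsilon_{\gamma,\beta,\kappa}|\le 2e^{-(|\gamma|-|\gamma_c|)\alpha_5(\beta,\kappa)} \), and this second bound requires perimeter-law decay of \emph{both} terms --- Lemma~\ref{lemma: upper bound} for \( \langle W_\gamma\rangle_{\beta,\kappa} \) and Lemma~\ref{lemma: theta inequalities Dplusii} for \( \Upsilon_{\gamma,\beta,\kappa} \). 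Raising the difference to the power \( 1+2|\gamma|/(|\gamma|-|\gamma_c|) \) makes \( e^{2|\gamma|\alpha_5} \) cancel against \( e^{-2|\gamma|\alpha_5} \), and the reciprocal of that power is precisely the stated exponent. Your sketch invokes only an ``independent-edge estimate'' (presumably the analogue of Lemma~\ref{lemma: upper bound}); without the matching \( \alpha_5 \)-decay of \( \Upsilon_{\gamma,\beta,\kappa} \) itself, which you never mention, the interpolation does not close and no choice of exponent rescues it.
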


\begin{remark}
    The constants in Theorem~\ref{theorem: the main result} are all defined later in the paper. In detail, \( C_0^{(6)} \), \( C_0^{(7)} \), and \( C_0^{(25)} \) are defined in~\eqref{eq: C0M}, \( C_3 \) is defined in~\eqref{eq: C3 def}, \( C_I \) is defined in~\eqref{eq: minimal Ising}, \( C_{c,1} \) is defined in~\eqref{eq: Cc1}, \( C_{c,2} \) is defined in~\eqref{eq: Cc1}, and \( C_1 \) is defined in~\eqref{eq: C1}. 
    These constants are all uniformly bounded from above for all \( \beta \geq \beta_0 \) and \( \kappa \geq \kappa_0 \) for any \( \beta_0 \) and \( \kappa_0 \) which satisfy Assumptions~\ref{assumption: 1}--\ref{assumption: 3}.
\end{remark}

\begin{remark}
    When \( G = \mathbb{Z_2} \), we have 
    \( \varepsilon_1(\beta,\kappa) \leq 1 \), \( \varepsilon_2(\beta,\kappa) \leq \sqrt{2} \), and \( \varepsilon_3(\beta,\kappa) \leq \sqrt{2} \) (see the proof of Theorem~\ref{theorem: main result Z2}).
    Heuristic arguments suggest that similar inequalities are valid also for \( n \geq 3 \). In particular, this would imply that \( K \) on the right-hand side of~\eqref{eq: main theorem} can be uniformly bounded from above for all \( \beta \geq \beta_0 \) and \( \kappa \geq \kappa_0 \), but we have not shown this.
\end{remark}

\begin{remark} 
   We  assume throughout the paper that the underlying lattice is \( \mathbb{Z}^4 \). However,  most of our results can be extended to other dimensions by simply changing the constants and the assumptions on \( \beta \) and \( \kappa \). Whenever we draw figures or simulations, we will for simplicity do this for \( \mathbb{Z}^2 \). One should be aware that although this gives a general idea of how the model behaves, there are qualitative differences between \( \mathbb{Z}^2 \) and \( \mathbb{Z}^d \) for \( d \geq 3 \). 
\end{remark}

\subsection{Relation to other work}

    In this section we summarize known results about pure lattice gauge theory and the lattice Higgs model on \( \mathbb{Z}^4 \), we mainly concentrate on the case when the structure group is finite. We then compare these works with the questions considered in this paper.

    Pure lattice gauge theory with structure group \( \mathbb{Z}_2 \) (also known as Ising gauge theory) was first introduced by Wegner in~\cite{w1971} as an example of a model exhibiting a phase transition without a local order parameter. It was argued that a phase transition could be detected by the decay rate of the Wilson loop expectation as the size of the loop tends to infinity while the parameters are kept fixed. 
    The same model, but in a more general setting, was then independently introduced by Wilson in~\cite{w1974} as a discretization of Yang-Mills theories and the decay rate of Wilson loop expectations was proposed as a condition for the occurrence of quark confinement.
    
    In terms of the Wilson loop expectation, the belief is that there are in general (at least) two phases:  a confining phase, with  ``area law'' decay, in which the Wilson loop expectation decays like \( e^{-c\, \mathrm{area}(\gamma)}, \) where \( \mathrm{area}(\gamma) \) is the surface area of the minimal surface that has \( \gamma \) as its boundary, as well as a non-confining phase, with ``perimeter law'' decay, in which the expectation decays like \( e^{-c'|\gamma|}. \) Here \( c,c' > 0 \) are constants which may depend on the parameters of the considered model. Significant effort has been put into making this precise and rigorously understanding the phase diagrams of the various lattice gauge theories.

    In~\cite{bdi1975ii}, it was shown that in pure lattice gauge theory, if \( \beta>0 \) is sufficiently small then the Wilson loop observable follows an area law (see also~\cite{os1978}). Also in~\cite{bdi1975ii}, the lattice Higgs model~\eqref{eq: general fixed length measure} studied in this paper was first considered, with the goal of understanding the phase structure of the model.
    Extending this line of work, in~\cite{fs1982} (see also~\cite{g1980,g1980b}) it was shown that pure lattice gauge theory with Villain action and structure group \( G = \mathbb{Z}_n \) 
    has at least two phases when \( n \) is sufficiently large, corresponding to the Wilson loop observable having area and perimeter law respectively. 
    In contrast, in~\cite{ejkfk1987b} it was shown that the Wilson loop expectation follows a perimeter law whenever \( \kappa > 0 \) for any finite group.
    We remark that it is also known~\cite{sj1982} that perimeter law is the slowest possible decay for Wilson loop expectations in a pure lattice gauge theory.

    The type of results obtained in this paper (see also~\cite{c2019,flv2020,sc2019,f2021}) are different from the more classical results on area versus perimeter law, which concern the decay rate of the Wilson loop expectation as the size of the loop tends to infinity while the inverse temperature is kept \emph{fixed}. 
    Instead, the main goal of these papers is to compute the leading order term for the expectation of Wilson loop observables in the limit when the size of the loop and inverse temperature tend to infinity simultaneously in such a way that the Wilson loop expectation is non-trivial. The main motivation for considering this limit is that it is believed that to obtain an interesting scaling limit, the parameter \( \beta \) would have to tend to infinity with the perimeter of the loop \( \gamma\), see, e.g.,~\cite{w1974, os1978}.
    This was first done in~\cite{c2019} for Wilson loop observables in a pure lattice gauge theory with Wilson action and structure group \( \mathbb{Z}_2.\)
    The results of~\cite{c2019} were extended to general finite groups in~\cite{flv2020,sc2019}. In this paper we prove an analogous result in the presence of an external field. 
    A central idea in all these papers is the decomposition of spin configurations into vortices, an idea sometimes referred to as polymer (or defect) expansion. This idea was used already in~\cite{mms1979} and only works when the structure group is discrete. One of the contributions of the current paper is to extend this idea to the case \( \kappa > \kappa_c. \) In particular, we construct and use a coupling between lattice gauge theory and its \( \beta \to \infty \) limit, which for \( G = \mathbb{Z}_2 \) is given by the discrete differential of the Ising model. By using this coupling, we can handle also the case when \( \kappa \) does not grow with the perimeter of the loop.
    
    When \( \kappa > 0, \) it is known from~\cite{ejkfk1987b} that \( \langle W_\gamma \rangle_{\beta,\kappa,\infty} \) follows a perimeter law. In other words, it is known that for any fixed \( \beta,\kappa > 0, \) one has 
    \begin{equation}\label{eq: log limits}
        0< \liminf_{|\gamma| \to \infty} -|\gamma|^{-1}\log \langle W_\gamma \rangle_{\beta,\kappa,\infty}  \leq  \limsup_{|\gamma| \to \infty} -|\gamma|^{-1}\log \langle W_\gamma \rangle_{\beta,\kappa,\infty}  < \infty.
    \end{equation}
    In~\cite{fs1979}, using a first cumulant approximation/dilute gas expansion, the leading behavior of the limits in~\eqref{eq: log limits} was predicted to be given by \( e^{-2|\gamma|e^{-24\beta-4\kappa}}. \)
    In the upcoming paper~\cite{f2021}, we show that the expression for \( \Upsilon_{\gamma,\beta,\kappa} \) in Theorem~\ref{theorem: main result Z2} can be replaced by \begin{equation*}
        e^{-2| \gamma|e^{-24\beta-4\kappa}\bigl(1+\frac{1}{2}(e^{8\kappa}-1) (1-\langle \rho(\sigma_e-(d\phi)_e) \rangle_{\infty,\kappa,\infty})\bigr)} 
    \end{equation*}
    for an arbitrary edge \( e \in E_N\),  without increasing the size of the error term. Thus, when \( \kappa \) is sufficiently large, the leading order term of our main theorem matches the predictions from~\cite{fs1979}.
    However, we remark that since our main theorem requires \( \beta \) and \( |\gamma| \) to tend to infinity simultaneously, one cannot deduce anything about the limit of \( |\gamma|^{-1}\log \langle W_\gamma \rangle_{\beta,\kappa,\infty} \) as \( |\gamma| \to \infty \) for fixed $\beta$ and $\kappa$ from this result.

\subsection{Structure of the paper}

The rest of this paper will be structured as follows.
In Section~\ref{sec: preliminaries}, we give a brief background on the language of, and tools from, discrete exterior calculus, which will be used throughout the rest of this paper. Much of this is standard material, but in  Section~\ref{sec: the partial ordering}, we introduce a partial ordering of differential forms, and in Section~\ref{sec: irreducibility}, we use this definition to define a concept of irreducibility for spin configurations. We also describe the unitary gauge, and use this to give an alternative expression for the probability measure $\mu_{N,\beta, \kappa, \infty}$.  In Section~\ref{sec: vortices}, we use the definition of irreducibility to define vortices, and establish some of their properties.
In Section~\ref{sec: activity}, we define the activity of a gauge field configuration. This notation is then used in Section~\ref{sec: distribution of configurations} and Section~\ref{section: distributions of frustrated plaquettes}  to give upper bounds on the probability that a gauge field configuration locally has certain properties. 
In Section~\ref{sec: coupling}, we present a coupling between the measures \( \mu_{N,\beta,\kappa} \) and \( \mu_{N,\beta,\infty} \).
In Section~\ref{section: technical inequalities}, we derive a few useful properties of the function \( \theta_{\beta,\kappa} \).
In Section~\ref{sec:WtoWprime}, we use ideas from~\cite{c2019} in order to replace the expected value of a Wilson loop observable with a simpler expectation.
In Section~\ref{sec: resampling}, we use a resampling trick to rewrite the expectation obtained in Section~\ref{sec:WtoWprime}.
In Section~\ref{sec:applyingcoupling}, we then apply the coupling introduced in Section~\ref{sec: coupling} to this expectation. 
Finally, in Section~\ref{sec: main result}, we use the results from the previous sections to prove Theorem~\ref{theorem: main result Z2} and Theorem~\ref{theorem: main result}.

\section{Preliminaries}\label{sec: preliminaries}
In this section, we introduce and discuss needed auxiliary results and  notation. Many statements given are certainly well-known, but in order to have the paper self-contained, and since clean references do not always seem available, we choose to include them here. Sections~\ref{sec: the partial ordering},~\ref{sec: irreducibility}, ~\ref{sec: minimal configurations}, and ~\ref{sec:unitary_gauge} however contain new results. 
Throughout this section, we assume that \( N \geq 1 \) is given.

\subsection{Discrete exterior calculus}
In what follows, we give a brief overview of discrete exterior calculus on the cell complex of \(  [-N,N]^m \cap  \mathbb{Z}^m \), for \( m \geq 1 \).
For a more thorough background on discrete exterior calculus, we refer the reader to~\cite{c2019}. 

All of the results in this subsection are obtained under the assumption that an abelian group \( G \), which is not necessarily finite, has been given. In particular, they all hold for both \( G=\mathbb{Z}_n \) and \( G=\mathbb{Z} \).

\subsubsection{Boxes}

Any set of the form \( \bigl( [a_1,b_1] \times \cdots \times [a_m,b_m] \bigr) \cap \mathbb{Z}^m \) where, for each \( j \in \{ 1,2, \ldots, m \} \), \( a_j, b_j  \in \mathbb{Z} \) satisfy $a_j < b_j$, will be referred to as a  \emph{box}. If all the intervals $[a_j,b_j]$, $1 \leq j \leq m$, have the same length, then the set \( \bigl( [a_1,b_1] \times \cdots \times [a_m,b_m] \bigr) \cap \mathbb{Z}^m \) will be referred to as a {\it cube}.

Recall that \( B_N \) is the cube \( [-N,N]^m  \cap \mathbb{Z}^m \).

\subsubsection{Oriented edges (1-cells)}

The graph \(\mathbb{Z}^m\) has a vertex at each point \( x \in \mathbb{Z}^m \) with integer coordinates and an (undirected) edge between each pair of nearest neighbors. 
We associate to each undirected edge \( \bar e \) in \( \mathbb{Z}^m \) two directed or \emph{oriented} edges \( e \) and \( -e \) with the same endpoints as \( \bar e \); oriented in opposite directions.

Let \( \mathbf{e}_1 \coloneqq (1,0,0,\ldots,0)\), \( \mathbf{e}_2 \coloneqq (0,1,0,\ldots, 0) \), \ldots, \( \mathbf{e}_m \coloneqq (0,\ldots,0,1) \) and let \( d{\mathbf{e}}_1 \), \ldots, \( d{\mathbf{e}}_m \)  denote the \( m \) oriented edges with one endpoint at the origin which naturally correspond to these unit vectors. We say that an oriented edge \( e \) is \emph{positively oriented} if it is equal to a translation of one of these unit vectors, i.e., if there exists a point \( x \in \mathbb{Z}^m \) and an index \( j \in \{ 1,2, \ldots, m\} \) such that \( e = x + d{\mathbf{e}}_j \). If \( x \in \mathbb{Z}^m \) and  \( j \in \{ 1,2, \ldots, m\} \), then we write \( dx_j \coloneqq x + d{\mathbf{e}}_j  \).

We let \( E_N \) denote the set of oriented edges whose endpoints are both in the box \( B_N \).

\subsubsection{Oriented \( k \)-cells}
For any two oriented edges \( e_1  \) and \( e_2\), we consider the wedge product \( e_1 \wedge e_2  \) satisfying
\begin{equation}\label{eq: reversing wedge order}
     e_1 \wedge e_2 = -(e_2 \wedge e_1) = (-e_2) \wedge e_1 = e_2 \wedge (-e_1),
\end{equation}
and \( e_1 \wedge e_1 = 0 \). If \( e_1 \), \( e_2 \), \ldots, \( e_k \) are oriented edges which  do not share a common endpoint, we  set \( e_1 \wedge e_2  \wedge \cdots \wedge e_k = 0 \).
If \( e_1 \), \( e_2 \), \ldots, \( e_k \) are oriented edges and \( e_1 \wedge \cdots \wedge e_k \neq 0 \), we say that \( e_1 \wedge \cdots  \wedge e_k \) is an oriented \emph{\( k \)-cell}. If there exists an \( x \in \mathbb{Z}^m \) and \( j_1 <j_2 <\cdots< j_k \) such that \( e_i = d{x}_{j_i} \), then we say that \( e_1 \wedge \cdots \wedge e_k \) is \emph{positively oriented} and that \( -( e_1 \wedge \cdots \wedge e_k) \) is \emph{negatively oriented}. Using~\eqref{eq: reversing wedge order}, this defines an orientation for all \( k \)-cells. 

Whenever \( S \) is a set of \( k \)-cells, we let \( S^+ \) denote the set of positively oriented \( k \)-cells in \( S \). If a set \( S \) of \( k \)-cells satisfies \( S = -S \coloneqq \{ c \colon -c \in S \} \), then we say that \( S \) is \emph{symmetric}.

\subsubsection{Oriented plaquettes} \label{sec: plaquettes}
We will usually say \emph{oriented plaquette} instead of oriented \(2\)-cell.
If \( x \in \mathbb{Z}^m \) and \( j_1 <   j_2 \), then  \( p \coloneqq dx_{j_1} \wedge d{{x}}_{j_2} \) is a positively oriented plaquette, and we define 
\begin{equation*}
    \partial p \coloneqq \{ 
    dx_{j_1},  
    (d(x +  \mathbf{e}_{j_1}))_{j_2}, 
    -(d(x + \mathbf{e}_{j_2}))_{j_1}, 
    - dx_{j_2}
    \}.
\end{equation*} 
If \( e \) is an oriented edge, we let \( \hat \partial e \) denote the set of oriented plaquettes \( p \) such that \( e \in \partial p \).
We let \( P_N \) denote the set of oriented plaquettes whose edges are all contained in \( E_N \).

\subsubsection{Discrete differential forms} 

A \( G \)-valued function \( f \) defined on the set of \( k \)-cells in \( V_N \) with the property that \( f(c) = -f(-c) \) is called a \emph{\( k \)-form}. 
If \( f \) is a \( k \)-form in \( \Sigma_k \) which takes the value \( f_{j_1,\ldots, j_k}(x) \) on \( dx_{j_1} \wedge \cdots \wedge dx_{j_k} \), it is useful to represent its values on the $k$-cells at $x \in V_N$ by the formal expression
\begin{equation*}
    f(x) = \sum_{1 \leq j_1 < \cdots < j_k\leq m} f_{ j_1,\ldots, j_k}(x) \,  dx_{j_1} \wedge \cdots \wedge dx_{j_k}.
\end{equation*}
To simplify notation, if \( c \coloneqq  dx_{j_1} \wedge \cdots \wedge dx_{j_k} \) is a \( k \)-cell and \( f \) is a \( k \)-form we often write \( f_c \) instead of \( f(c) = f_{j_1,\ldots, j_k}(x) \). We say that a \( k \)-form \( f \) is \emph{non-trivial} if there is at least one \( k \)-cell \( c \) such that \( f_c \neq 0 \).

Given a \( k \)-form \( f \), we let \( \support f \) denote the support of \( f \), i.e., the set of all oriented \( k \)-cells \( c \) such that \( f(c) \neq 0 \).
For a symmetric set \( E \subseteq E_N \), we let \( \Sigma_E \) denote the set of all \( G \)-valued 1-forms with support contained in \( E \). 
More generally, For \( k = 0,1,\dots, m \), we let \( \Sigma_k \) denote the set of \( k \)-forms with support contained in \( B_N \). We note that with this notation, \( \Sigma_1 = \Sigma_{E_N} \).

\subsubsection{The exterior derivative}
Given \( h \colon \mathbb{Z}^m \to G \), \( x \in \mathbb{Z}^m \), and \( i \in \{1,2, \ldots, m \} \), we let 
\begin{equation*}
    \partial_i h(x) \coloneqq h(x+\mathbf{e}_i) - h(x) .
\end{equation*}
If \( k \in \{ 0,1,2, \ldots, m-1 \} \) and \( f \) is a \( G \)-valued \( k \)-form in \( \Sigma_k \), we define the \( (k+1) \)-form \( df \) in \( \Sigma_{k+1}\) via the formal expression
\begin{equation*}
    df(x) = \sum_{1 \leq j_1 < \cdots < j_k\leq m} \sum_{i=1}^m \partial_i f_{j_1,\ldots, j_k} (x) \, dx_i \wedge (dx_{j_1} \wedge \cdots \wedge dx_{j_k}). 
\end{equation*}
The operator \( d \) is called the \emph{exterior derivative.}

\subsubsection{Boundary operators}\label{sec: boundary}
If \( \hat x  \in V_N \) and \( j \in \{ 1,2, \ldots, m \} \), then for any \( x \in V_N\), we have
\begin{equation*}\label{equation: simplest differential form}
    \begin{split}
        &d(\mathbb{1}_{x = \hat x} \, dx_j) 
        = \sum_{i=1}^m (\partial_i \mathbb{1}_{x = \hat x} )\,  dx_i \wedge dx_j
        = \sum_{i=1}^m (\mathbb{1}_{x + \mathbf{e}_i= \hat x} -  \mathbb{1}_{x = \hat x} )\,  dx_i \wedge dx_j
        \\&\qquad = \sum_{i=1}^{j-1} (\mathbb{1}_{x + \mathbf{e}_i = \hat x} -  \mathbb{1}_{x = \hat{x}} )\,  dx_i \wedge dx_j
        - \sum_{i=j+1}^{m} (\mathbb{1}_{x + \mathbf{e}_i = \hat x} -  \mathbb{1}_{x = \hat x} )\,    dx_j \wedge dx_i.
    \end{split}
\end{equation*}
Here we are writing \(\mathbb{1}_{x = \hat x}\) for the Dirac delta function of \(x\) with mass at \(\hat x\).
From this equation, it follows that whenever \(  e = \hat{x} + d\mathbf{e}_j = d\hat x_j \) is an oriented edge and \( p \) is an oriented plaquette, we have
\begin{equation}\label{d1dxjp}
     \bigl(d(\mathbb{1}_{x = \hat x} \, dx_j )\bigr)_p =
     \begin{cases}1 &\text{if }  e \in \partial p, \cr 
     -1 &\text{if } - e \in \partial p,
     \cr 0 &\text{else.} \end{cases}
\end{equation}
This implies in particular that if \( 1 \leq j_1 < j_2 \leq m \), \( p =   dx_{j_1} \wedge dx_{j_2}  \) is a plaquette, and \( f \) is a \( 1 \)-form, then
\begin{equation*}
    (df)_p = (df)_{j_1,j_2}(x)
    = \bigg(d\bigg(\sum_{j=1}^m f_j(x) dx_j\bigg)\bigg)_p
=  \sum_{\hat{x} \in \mathbb{Z}^m} \sum_{j=1}^m f_j(\hat{x}) \big(d\big( \mathbb{1}_{x = \hat{x}} dx_j\big)\big)_p
    = \sum_{e \in \partial p}  f_e.
\end{equation*}
Analogously, if \( k \in \{ 1,2, \ldots, m \} \) and \(  c  \) is a \( k \)-cell, we define \( \partial  c \) as the set of all \( (k-1) \)-cells \( \hat c =   d\hat x_{j_1} \wedge \cdots \wedge d\hat x_{j_{k-1}}\) such that
\begin{equation}\label{eq: gen boundary def}
    \bigl(d(\mathbb{1}_{x = \hat x} \, d\hat x_{j_1} \wedge \dots \wedge d\hat x_{j_{k-1}}) \bigr)_{ c} = 1.
\end{equation}
Using this notation, one can show that if \( f \) is a \( k \)-form and \( c_0 \) is a \( (k+1)\)-cell, then
\begin{equation}\label{dfc0sumdfc}
    (d f)_{c_0} = \sum_{c \in  \partial c_0} f_c.
\end{equation}

If \( k \in \{ 1,2, \ldots, m \} \) and \( \hat c \) is a \( k \)-cell, the set \( \partial \hat c \) will be referred to as the \emph{boundary} of \( \hat c \). When \( k \in \{ 0,1,2, \ldots, m-1 \} \) and \( c \) is a \( k \)-cell, we also define the \emph{co-boundary} \( \hat \partial c \) of \( c \) as the set of all \( (k+1) \)-cells \( \hat c \) such that \( c \in \partial \hat c \). 
If \( B \) is a set of \( k \)-cells, then we write \( \partial B = \bigcup_{c \in B} \partial c \), and \( \hat \partial B = \bigcup_{c \in B} \hat \partial c  \).
 
Using~\eqref{dfc0sumdfc}, we immediately obtain the following lemma which is a discrete version of the Bianchi identity.  
\begin{lemma}\label{lemma: Bianchi}
    If \( \sigma \in \Sigma_{E_N}\) and \( m \geq 3 \), then for any oriented  3-cell \( c \) in \( B_N \) we have  \begin{equation}\label{eq: Bianchi}
        \sum_{p \in \partial c} (d\sigma)_p = 0.
    \end{equation}
\end{lemma}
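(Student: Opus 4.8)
The plan is to recognize equation~\eqref{eq: Bianchi} as nothing more than the statement that the discrete exterior derivative squares to zero, applied to the $1$-form $\sigma$. Concretely, since $d\sigma \in \Sigma_2$ is a $2$-form and $c$ is an oriented $3$-cell, I would apply~\eqref{dfc0sumdfc} with $f = d\sigma$ and $c_0 = c$ to get immediately
\[
(d(d\sigma))_c = \sum_{p \in \partial c} (d\sigma)_p .
\]
Thus the left-hand side of~\eqref{eq: Bianchi} is exactly $(d(d\sigma))_c$, and it suffices to show that $d(d\sigma)$ vanishes as a $3$-form, i.e., that $d^2 = 0$.

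To establish $d^2\sigma = 0$ I would argue from the defining formula for the exterior derivative. Writing $\sigma = \sum_j \sigma_j(x)\,dx_j$ and applying $d$ twice, every term of $d(d\sigma)$ has the shape $\partial_i \partial_{i'}\sigma_j(x)\, dx_i \wedge dx_{i'}\wedge dx_j$. Two observations finish the argument. First, the discrete partial differences commute, since
\[
\partial_i\partial_{i'}h(x) = h(x+\mathbf e_i + \mathbf e_{i'}) - h(x+\mathbf e_i) - h(x+\mathbf e_{i'}) + h(x)
\]
is symmetric under $i \leftrightarrow i'$. Second, the wedge factor is antisymmetric, $dx_i \wedge dx_{i'} = -\,dx_{i'}\wedge dx_i$, with $dx_i\wedge dx_i = 0$. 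Pairing the summand indexed by $(i,i')$ with the one indexed by $(i',i)$ then produces an exact cancellation, so every coefficient of $d(d\sigma)$ is zero.

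A more hands-on alternative, which may be more transparent in this combinatorial setting, is to verify the cancellation directly on the cube. An oriented $3$-cell is bounded by six plaquettes, and expanding $\sum_{p\in\partial c}(d\sigma)_p = \sum_{p\in\partial c}\sum_{e\in\partial p}\sigma_e$ via the description of $\partial p$ in Section~\ref{sec: plaquettes}, each of the twelve edges of the cube lies in exactly two of these six faces. The only point requiring care — and the one genuine obstacle, such as it is — is the sign bookkeeping: one must confirm from the explicit formula for $\partial p$ that the two faces sharing a common edge induce \emph{opposite} orientations on it, so that their contributions are $\sigma_e$ and $\sigma_{-e} = -\sigma_e$ and cancel. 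Summing over all edges then yields zero. This is precisely the discrete incarnation of $\partial\partial = 0$, and once the orientations are checked the identity follows with no further computation.
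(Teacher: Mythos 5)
Your proposal is correct and follows essentially the same route as the paper: the paper derives the lemma directly from~\eqref{dfc0sumdfc} applied to $f = d\sigma$, so that the left-hand side of~\eqref{eq: Bianchi} becomes $(d(d\sigma))_c$, which vanishes because $d^2 = 0$. Your explicit verification that $d^2 = 0$ (commuting discrete differences against antisymmetric wedges) merely fills in a standard fact the paper leaves implicit, and your alternative edge-cancellation argument on the cube is the same identity unwound by hand.
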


\subsubsection{Boundary cells}

Recall that an edge is said to be in a box \( B \) if both its endpoint are in \( B \). More generally, for \( k \in \{ 1,2,\ldots, m \} \), a \( k \)-cell \( c \) is said to be in \( B \) if all its corners are in \( B \).

An edge \( e \in E_N \)   is said to be a \emph{boundary edge} of \( B \) if there exists a plaquette \( p \in \hat \partial e \) which is not in \( B \).
Analogously, a plaquette \( p \in P_N \) is said to be a \emph{boundary plaquette} of \( B \) if there is a 3-cell  \( c \in \hat   \partial p\) which is not in \( B \).
More generally, for \( k \in \{ 0,1, \ldots, m-1 \} \),  a \( k \)-cell \( c \) in \( B \) is said to be a \emph{boundary cell} of \( B \), or equivalently to be in \emph{the boundary} of \( B \), if there is a \( (k+1) \)-cell \( \hat c \in \hat \partial c \) which is not in \( B \).

\subsubsection{The Poincar\'e lemma}
For \( k \in \{ 1,\ldots, m \} \), 
we say that a \( k \)-form \( f \) is \emph{closed} if \( df = 0 \).

\begin{lemma}[The Poincar\'e lemma, Lemma 2.2 in~\cite{c2019}]\label{lemma: poincare}
  Let \( k \in \{ 0,1, \ldots, m-1\} \) and let \( B \) be a box in \( \mathbb{Z}^m \). Then the exterior derivative \( d \) is a surjective map from the set of \(G\)-valued \( k \)-forms with support contained in \(B\) onto the set of G-valued closed \((k+1)\)-forms with support contained in \(B\). Moreover, if \(G\) is finite and \( j \) is the number of closed \(G\)-valued \( k \)-forms with support contained in \(B\), then this map is a \(j\)-to-\(1\) correspondence. Lastly, if \( k \in \{ 0,1,2, \ldots, m-2 \} \) and \(f\) is a closed \( (k+1) \)-form that vanishes on the boundary of \(B\), then there is a \(k\)-form \( h\) that also vanishes on the boundary of \(B\) and satisfies \(dh = f\).
\end{lemma}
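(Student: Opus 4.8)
The plan is to construct an explicit discrete homotopy (integration) operator by induction on the dimension $m$, writing the box as a product $B = B' \times I$ with $B' \subseteq \mathbb{Z}^{m-1}$ a box and $I = [a_m,b_m] \cap \mathbb{Z}$, and integrating out the last coordinate. Given a closed $(k+1)$-form $f$ supported in $B$, I would first split it as $f = \beta + dx_m \wedge \alpha$, where $\beta$ and $\alpha$ involve only $dx_1, \ldots, dx_{m-1}$; here $\alpha$ is a $k$-form and $\beta$ a $(k+1)$-form, both viewed as families indexed by the $x_m$-coordinate. Writing $d'$ for the exterior derivative in the first $m-1$ directions and $\partial_m$ for the discrete difference in direction $m$, a direct computation shows that $df = 0$ is equivalent to the two relations $d'\beta = 0$ and $\partial_m \beta = d'\alpha$. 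I would then look for a primitive of the form $h = \gamma_1 + \eta$, where $\gamma_1(x',x_m) \coloneqq \sum_{t=a_m}^{x_m-1}\alpha(x',t)$ is the discrete antiderivative in direction $m$, and $\eta = \eta(x')$ is supplied by the inductive hypothesis applied to the bottom-slice form $\beta(\cdot,a_m)$, which is $d'$-closed. Telescoping together with $\partial_m\beta = d'\alpha$ gives $d'\gamma_1 = \beta - \beta(\cdot,a_m)$ and $\partial_m\gamma_1 = \alpha$, so that $dh = f$; since the sums range only over $t \in I$ and $\eta$ is obtained on $B'$, the support of $h$ stays in $B$. This establishes surjectivity, the base case $m=1$ being the elementary statement that every function on an interval has a discrete antiderivative.

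The $j$-to-$1$ count is then immediate. The kernel of $d$, acting on the finite (since $G$ is finite) set of $k$-forms supported in $B$, is precisely the set of closed such forms, which has $j$ elements by definition. Two $k$-forms supported in $B$ have the same image under $d$ if and only if their difference is closed and supported in $B$, so the fibre over any form in the image is a coset of this kernel and hence has exactly $j$ elements; combined with surjectivity this yields the $j$-to-$1$ correspondence.

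For the last assertion I would rerun the same induction while tracking the boundary-vanishing, and this is where the hypothesis $k \le m-2$ is essential, since it guarantees enough room for the relevant coboundary cells to exist. The key observation is that if $f$ vanishes on the boundary of $B$, then its components along every face of $B$ are forced to vanish: a $(k+1)$-cell lying in such a face has a $(k+2)$-cell in its coboundary that reaches outside $B$, and this $(k+2)$-cell exists precisely because $k+1 \le m-1$. In particular $\beta(\cdot,a_m) = 0$, so no $\eta$ is needed, and the side-boundary components of $\alpha$ vanish, which forces $\gamma_1$ to vanish on every side-boundary and bottom-cap $k$-cell. The only surviving term is the top-cap value $\psi \coloneqq \gamma_1|_{x_m = b_m}$; telescoping together with $\beta(\cdot,b_m) = 0$ shows that $\psi$ is a $d'$-closed $k$-form on $B'$ vanishing on $\partial B'$, to which the boundary-vanishing statement in dimension $m-1$ applies, producing a $(k-1)$-form $\xi$ on $B'$ with $d'\xi = \psi$ and $\xi$ vanishing on $\partial B'$ (the case $k=0$ being degenerate, since then $\psi$ is a closed function vanishing on $\partial B'$, hence $\psi = 0$).

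The main obstacle is precisely this top-cap correction: one must modify $h$ by subtracting a \emph{closed} $k$-form on $B$ that agrees with $\psi$ on the top face yet itself vanishes on the rest of $\partial B$, all while remaining $\mathbb{Z}_n$-valued, so that no fractional interpolation in the $x_m$-direction is permitted. I would build this correction as $d$ of an auxiliary $(k-1)$-form extending $\xi$ into the $x_m$-direction, and then verify, by careful bookkeeping of exactly which coboundary cells leave $B$, that the corrected primitive vanishes on the entire boundary. Managing these boundary conditions consistently through the induction, rather than the homotopy construction itself, is the delicate part of the argument.
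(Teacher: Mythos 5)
The paper gives no proof of this lemma; it is quoted directly from Lemma~2.2 of~\cite{c2019}, whose proof is the standard induction on the dimension $m$ via discrete integration in the last coordinate. Your proposal follows essentially that same route — the splitting $f = \beta + dx_m \wedge \alpha$, the telescoped primitive $\gamma_1$, the kernel-coset argument for the $j$-to-$1$ count, and the top-cap correction given by applying $d$ to an extension of $\xi$ into the $x_m$-direction (which is exactly where the hypothesis $k \leq m-2$ is consumed, through the inductive call in dimension $m-1$ producing $\xi$) — and the outline is correct, provided closedness is understood, as intended in~\cite{c2019} and in the paper's application with $B = B_N$, to be tested only on the $(k+2)$-cells lying in $B$.
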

 
The set of closed \( G \)-valued 1-forms \( \sigma \in \Sigma_{E_N} = \Sigma_1 \) will be denoted by \( \Sigma_{E_N}^0 \), and the set of closed \( G \)-valued 2-forms \( \omega \in \Sigma_2 \)  will be denoted by \( \Sigma_{P_N} \). By Lemma~\ref{lemma: poincare}, \( \omega \in \Sigma_{P_N} \) if and only if \( \omega = d\sigma \) for some \( \sigma \in \Sigma_{E_N} \). The 2-forms in \( \Sigma_{P_N} \) will be referred to as \emph{plaquette configurations}.

\subsubsection{The co-derivative}\label{sec: coderivative}

Given \( h \colon \mathbb{Z}^m \to G \), \( x \in \mathbb{Z}^m \), and \( i \in \{ 1,2, \ldots, m \} \), we let 
\begin{equation*}
    \bar \partial_i h(x) \coloneqq h(x) - h(x-\mathbf{e}_i).
\end{equation*}
When \( k \in \{ 1,2, \ldots, m \} \) and \( f \) is a \( G \)-valued \( k \)-form in \( \Sigma_k \), we define the \( (k-1) \)-form \( \delta f \) in \( \Sigma_{k-1}\) by
\begin{equation*}
    \delta f(x) \coloneqq \sum_{1 \leq j_1 < \cdots < j_k \leq m} \sum_{i = 1}^k   (-1)^{i} \, \bar \partial_{j_i} f_{j_1,\ldots, j_k}(x) \, dx_{j_1} \wedge \cdots \wedge dx_{j_{i-1}}\wedge dx_{j_{i+1}} \wedge \cdots \wedge  dx_{j_{k}}.
\end{equation*}  
The operator \( \delta \) is called the \emph{co-derivative}. 
Note that if \( 1 \leq i_1 < i_2 \leq m \), \(\hat x  \in \mathbb{Z}^m\), \( p_0 = d\hat x_{i_1} \wedge d\hat x_{i_2} \in P_N \), and the \( 2 \)-form \( f \) is defined by
\begin{equation*}
    f(x) \coloneqq \mathbb{1}_{x = \hat x} \, d x_{i_1}\wedge d x_{i_2},\quad x\in V_N
\end{equation*}  
then for any \( x\in V_N \) and \( j,j_1,j_2 \in \{ 1,2, \ldots, m \} \) with \(j_1 < j_2\), we have
\begin{equation*}
    \begin{split} 
        &\bar \partial_{j} f_{j_1,j_2}(x) 
        = \bar \partial_j \mathbb{1}_{x = \hat x,\, j_1 = i_1, j_2 = i_2}
        = \mathbb{1}_{x = \hat x,\, j_1 = i_1, j_2 = i_2}
        - \mathbb{1}_{x = \hat x-\mathbf{e}_j,\, j_1 = i_1, j_2 = i_2} ,
    \end{split}
\end{equation*}
and hence
\begin{align*}
    &\delta f(x) 
    = 
    \sum_{1 \leq j_1 < j_2 \leq m} \big[(-1)^1 \bar{\partial}_{j_1}f_{j_1, j_2}(x) dx_{j_2} + (-1)^2 \bar{\partial}_{j_2} f_{j_1, j_2}(x) dx_{j_1}\big]  
     \\&\qquad
   = 
    -  \bar \partial_{i_1} \mathbb{1}_{x=\hat x}  dx_{i_2} 
    + 
    \bar \partial_{i_2} \mathbb{1}_{x=\hat x} dx_{i_1}   
    = 
    - \mathbb{1}_{x = \hat x } dx_{i_2}
    +\mathbb{1}_{x-\mathbf{e}_{i_1} = \hat x}\, dx_{i_2}
    + \mathbb{1}_{x = \hat x}\, dx_{i_1}
    - \mathbb{1}_{x-\mathbf{e}_{i_2} = \hat x}\, dx_{i_1}.
\end{align*}
In other words, for any \( e \in E_N \) we have
\begin{equation*}
    (\delta f)_e = 
    \begin{cases}
        1 &\text{if } e \in \partial p_0, \cr 
        -1 &\text{if } -e \in \partial p_0, \cr 
        0 &\text{else.}
    \end{cases}
\end{equation*}
This implies that if \( 1 \leq j \leq m \), \( e = dx_{j} \) is an edge, and \( f \) is any \( 2 \)-form, then
\begin{equation*}
    (\delta f)_e = (\delta f)_{j}(x) 
    = \sum_{\hat{x} \in \mathbb{Z}^m} \sum_{1 \leq i_1 < i_2 \leq m} f_{i_1, i_2}(\hat{x}) \bigl(\delta(\mathbb{1}_{x = \hat{x}} dx_{i_1} \wedge dx_{i_2})\bigr)_e
    = \sum_{p \in \hat{\partial} e}  f_p.
\end{equation*} 
More generally, one can show that whenever \( f \) is a \( k \)-form and \( c_0 \) is a \( (k-1)\)-cell, then
\begin{equation}\label{eq: deltafc0}
    (\delta f)_{c_0} = \sum_{c \in \hat \partial c_0} f_c.
\end{equation}

\begin{lemma}[The Poincar\'e lemma for the co-derivative, Lemma 2.7 in~\cite{c2019}]\label{lemma: lemma 2.7}
    Let \( k \in \{ 1,2, \ldots, m-1 \} \). Let \( f \) be a \( G \)-valued \(k\)-form which satisfies \( \delta f  = 0 \). Then there is a \( (k+1) \)-form \( h \) such that \( f = \delta h \). Moreover, if \( f \) is equal to zero outside a box \( B \), then there is a choice of \( h \) that is equal to zero outside \( B \).
\end{lemma}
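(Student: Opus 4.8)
The plan is to deduce the statement from the ordinary Poincar\'e lemma (Lemma~\ref{lemma: poincare}) by Hodge duality, exploiting the fact that the co-derivative \( \delta \) is, up to sign and passage to the dual cell complex, the exterior derivative \( d \). Concretely, I would first introduce a discrete Hodge star \( \star \): to each positively oriented \( k \)-cell \( c = dx_{j_1}\wedge\cdots\wedge dx_{j_k} \) I associate the complementary \( (m-k) \)-cell \( \star c \), spanned at a suitable base point of the dual lattice by the remaining directions \( \{1,\dots,m\}\smallsetminus\{j_1,\dots,j_k\} \), with orientation fixed so that \( c\wedge \star c \) is positively oriented; I extend \( \star \) by \( \star(-c)=-\star c \) and to forms by \( (\star f)_{\star c} := \pm f_c \).

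The key step is the commutation relation
\[
    \star(\delta f) = \pm\, d(\star f), \qquad \text{equivalently} \qquad \delta = \pm\, \star^{-1} d\, \star .
\]
This is essentially a restatement of the two combinatorial identities already recorded in the excerpt: formula~\eqref{dfc0sumdfc} expresses \( (df)_{c_0}=\sum_{c\in\partial c_0}f_c \) through the boundary operator, while~\eqref{eq: deltafc0} expresses \( (\delta f)_{c_0}=\sum_{c\in\hat\partial c_0}f_c \) through the co-boundary operator, and under \( \star \) the co-boundary \( \hat\partial c_0 \) of a cell is carried exactly to the boundary \( \partial(\star c_0) \) of its dual (a dimension count confirms this is consistent). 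I would pin down the signs by comparing the defining formulas for \( d \) and \( \delta \), using that \( \bar\partial_i \) is the backward difference adjoint to the forward difference \( \partial_i \).

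Granting this relation, the argument is short. If \( \delta f = 0 \), then \( d(\star f)=0 \), so \( \star f \) is a closed \( (m-k) \)-form; since \( k\le m-1 \) we have \( m-k\ge 1 \), so Lemma~\ref{lemma: poincare} yields an \( (m-k-1) \)-form \( g \) with \( dg=\star f \). Setting \( h:=\pm\,\star^{-1}g \), a \( (k+1) \)-form, I then compute
\[
    \delta h = \pm\,\star^{-1} d(\star h) = \pm\,\star^{-1} d g = \pm\,\star^{-1}(\star f) = \pm f,
\]
and the conventions can be fixed so that the outcome is exactly \( f \). For the support claim I would note that \( \star \) and \( \star^{-1} \) are local and carry forms vanishing outside a box \( B \) to forms vanishing outside the dual box; since \( m-k-1\in\{0,\dots,m-2\} \) (which holds because \( k\ge 1 \)), the last assertion of Lemma~\ref{lemma: poincare} produces a primitive \( g \) again vanishing outside \( B \), and transporting back by \( \star^{-1} \) gives \( h \) vanishing outside \( B \).

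The main obstacle is bookkeeping rather than conceptual: pinning down the orientation and sign conventions in the definition of \( \star \) so that \( \star\delta = \pm d\star \) holds with a single, consistent sign across all degrees, and reconciling the ``vanishes outside \( B \)'' formulation with the ``vanishes on the boundary of \( B \)'' hypothesis of Lemma~\ref{lemma: poincare}. This last point needs care because the dual cells are naturally indexed on a shifted lattice, so one must check that \( B \) and its dual box have compatible boundaries in order for the support-preserving part of the Poincar\'e lemma to transfer cleanly.
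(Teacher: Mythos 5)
Your proposal is correct and is essentially the same argument as the proof this paper relies on (the lemma is quoted from~\cite{c2019}, whose proof is precisely this Hodge-duality reduction to the ordinary Poincar\'e lemma): the commutation relation \( \delta = \pm *d* \) that you plan to establish is exactly the paper's Lemma~\ref{lemma: lemma 2.3}, and the support/boundary bookkeeping you flag as the main obstacle is exactly what Lemma~\ref{lemma: lemma 2.4} settles, since it identifies cells outside \( B \) with dual cells that are outside \( *B \) or on its boundary, so that ``vanishing outside \( B \)'' transfers to the hypothesis of the last part of Lemma~\ref{lemma: poincare} and back.
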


\subsubsection{The Hodge dual}

The lattice \( \mathbb{Z}^m \) has a natural dual, called the \( \emph{dual lattice} \) and denoted by \( *\mathbb{Z}^m \). In this context, the lattice \( \mathbb{Z}^m \) is called the \emph{primal lattice.} The vertices of the dual lattice \( *\mathbb{Z}^m \) are placed at the centers of the \( m \)-cells of the primal lattice. More generally, for \( k \in \{ 0,1,\ldots, m \} \), there is a bijection between the set of \( k \)-cells of \( \mathbb{Z}^m \) and the set of \( (m-k) \)-cells of \(*\mathbb{Z}^m \) defined as follows. For each \( x \in \mathbb{Z}^m \), let \( y \coloneqq *(dx_1 \wedge \cdots \wedge dx_m) \in *\mathbb{Z}^m\) be the point at the center of the primal lattice \( m \)-cell \( dx_1 \wedge \cdots \wedge dx_m \).
Let \( dy_1 = y-d\mathbf{e}_1, \ldots, dy_m = y-d\mathbf{e}_m \) be the edges coming out of \( y \) in the \emph{negative} direction.
Next,  let \( k \in \{ 0,1, \ldots, m \} \) and assume that \( 1 \leq i_1 < \cdots < i_k \leq m \) are given. If \(x \in \mathbb{Z}^m\), then \( c = dx_{i_1} \wedge \cdots \wedge dx_{i_k} \) is a \( k \)-cell in \( \mathbb{Z}^m \).  Let \( {j_1}, \ldots, j_{m-k} \) be any enumeration of \( \{ 1,2, \ldots, m \} \smallsetminus \{ i_1, \ldots, i_k \} \), and let \(  \sgn (i_1,\ldots, i_k, j_{1}, \ldots, j_{m-k} )\) denote the sign of the permutation that maps \((1,2,\ldots, m)\) to \( (i_1,\ldots, i_k, j_{1}, \ldots, j_{m-k} ) \). Define 
\begin{equation*}
    *(dx_{i_1} \wedge \cdots \wedge dx_{i_k}) =  \sgn (i_1,\ldots, i_k, j_{1}, \ldots, j_{m-k} )\, dy_{j_1} \wedge \cdots \wedge dy_{j_{m-k}}
\end{equation*}
and, analogously, define
\begin{equation*}
    \begin{split}
         *&(dy_{j_1} \wedge \cdots \wedge dy_{j_{m-k}}) =
        \sgn (j_{1}, \ldots, j_{m-k},i_1,\ldots, i_k)\, dx_{i_1} \wedge \cdots \wedge dx_{i_k}
        \\&\qquad =
        (-1)^{k(m-k)}  \sgn (i_1,\ldots, i_k, j_{1}, \ldots, j_{m-k} )\, dx_{i_1} \wedge \cdots \wedge dx_{i_k}.
    \end{split}
\end{equation*}

If \( B \) is a box in \( \mathbb{Z}^m \), then
\begin{equation*}
    *B \coloneqq \bigl\{ y \in *\mathbb{Z}^m \colon \exists x \in B \text{ such that } y \text{ is a corner in } *x \bigr\}.
\end{equation*}
Note that with this definition, \( B \subsetneq *{*B} \).

Given a \( G \)-valued \( k \)-form \( f \) on \( \mathbb{Z}^m \), we define the \emph{Hodge dual} \( *f \) of \( f \)  as
\begin{equation*}
    *f(y) \coloneqq \sum_{1 \leq i_1 < \cdots < i_k \leq m} f_{i_1,\ldots, i_k}(x) \sgn (i_1,\ldots, i_k,j_{1}, \ldots, j_{m-k}) \, dy_{j_1} \wedge \cdots \wedge dy_{j_{m-k}},
\end{equation*}
where \( y = *(dx_1 \wedge \cdots \wedge dx_m)\), and in each term, the sequence \( j_1, \ldots, j_{m-k} \) depends on the sequence \( i_1,\ldots, i_k \).
One verifies that with these definitions, we have \( *f(*c) = f(c) \) and 
\begin{equation}\label{hodgehodge}
    *(*f) = (-1)^{k(m-k)}f.
\end{equation}
The exterior derivative on the dual cell lattice is defined by
\begin{equation*}
    df(y) \coloneqq \sum_{1 \leq j_1 < \cdots < j_k\leq m} \sum_{i=1}^m \bar \partial_i f_{j_1,\ldots, j_k} (y) \, dy_i \wedge (dy_{j_1} \wedge \cdots \wedge dy_{j_k}). 
\end{equation*}

The next two lemmas describe how the Hodge dual relates to previously given definitions.
 
\begin{lemma}[Lemma 2.3 in~\cite{c2019}]\label{lemma: lemma 2.3}
    For any \( G \)-valued \( k \)-form \( f \), 
    and any \( x \in V_N  \), 
    \begin{equation*}
        \delta f(x)= (-1)^{m(k+1)+1} *(d (*f(y))),
    \end{equation*}
    where \( y =*(dx_1 \wedge \cdots \wedge dx_n )\) is the center of the \( m \)-cell \( dx_1 \wedge \cdots \wedge dx_m \).
\end{lemma}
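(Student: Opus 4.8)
The plan is to verify the identity by a direct computation, reducing to basis forms via linearity. Since both \( \delta f \) and \( *(d(*f)) \) are \( G \)-linear in \( f \), it suffices to check the identity when \( f \) is a single monomial \( k \)-form \( f = \mathbb{1}_{\,\cdot\, = \hat x}\, dx_{i_1} \wedge \cdots \wedge dx_{i_k} \) with \( 1 \le i_1 < \cdots < i_k \le m \) fixed; write \( I = (i_1,\ldots,i_k) \) and let \( J = (j_1,\ldots,j_{m-k}) \) be the complementary indices in increasing order.

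First I would apply the definition of the Hodge dual to get \( *f(y) = \sgn(i_1,\ldots,i_k,j_1,\ldots,j_{m-k})\, \mathbb{1}_{\,\cdot\, = \hat x}\, dy_{j_1} \wedge \cdots \wedge dy_{j_{m-k}} \), an \( (m-k) \)-form on the dual lattice supported on the single dual cell indexed by \( J \). Next I would apply the dual exterior derivative \( d \), which is built from the backward differences \( \bar \partial_i \) (because the dual edges \( dy_i = y - d\mathbf{e}_i \) point in the negative coordinate direction). Only those terms \( dy_a \wedge dy_{j_1} \wedge \cdots \wedge dy_{j_{m-k}} \) with \( a \in I \) survive, since the wedge vanishes when \( a \in J \); for \( a = i_\ell \) one records the sign \( (-1)^{r_\ell} \) incurred by reordering \( dy_{i_\ell} \wedge dy_{j_1} \wedge \cdots \wedge dy_{j_{m-k}} \) into increasing order, where \( r_\ell = \#\{ j \in J : j < i_\ell \} \). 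Finally I would apply \( * \) once more and simplify the resulting double Hodge dual using \( *(*f) = (-1)^{k(m-k)}f \) from~\eqref{hodgehodge}, landing back on a primal \( (k-1) \)-form.

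The heart of the argument is then to match the result term by term against the defining formula for \( \delta f \), which contains exactly the summands \( (-1)^\ell\, \bar \partial_{i_\ell} f_{i_1,\ldots,i_k} \). Here the key point is that the dual \( d \) and the co-derivative \( \delta \) are both assembled from the \emph{same} backward differences \( \bar \partial_{i_\ell} \), so no spurious lattice shift appears and the two expressions are supported on the same cells; what remains is purely a sign comparison. I expect the main obstacle to be this sign bookkeeping: one must show that the product of the two permutation signs, the reordering sign \( (-1)^{r_\ell} \), and the factor \( (-1)^{k(m-k)} \) from~\eqref{hodgehodge} collapses — for every \( I \) and every \( \ell \) — to the single index-independent constant \( (-1)^{m(k+1)+1} \) times the \( (-1)^\ell \) present in \( \delta \). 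The combinatorial identity making this work is that deleting \( i_\ell \) from \( I \), inserting it back into \( J \), and recording the complementary permutation sign reproduces \( \sgn(i_1,\ldots,i_k,j_1,\ldots,j_{m-k}) \) up to a factor depending only on \( m \) and \( k \).

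As a conceptual check one can instead observe that \( (-1)^{m(k+1)+1}*d* \) is precisely the formal adjoint of the exterior derivative with respect to the inner product \( \langle f,g\rangle = \sum_c f_c g_c \) on \( k \)-forms, and that \( \delta \) is defined to be this same adjoint (summation by parts turns \( \partial_i \) into \( -\bar \partial_i \)); uniqueness of adjoints then forces the two operators to coincide away from the boundary of \( B_N \), which is exactly the content of the pointwise statement. This is the discrete incarnation of the classical formula \( \delta = (-1)^{m(k+1)+1}*d* \) for the codifferential on a \( k \)-form in \( m \) dimensions.
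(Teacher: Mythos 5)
The paper never proves this lemma: it is imported verbatim from Chatterjee~\cite{c2019}, so there is no in-paper argument to compare against, and your proposal has to stand on its own. It does. Reducing to monomial forms \( f = \mathbb{1}_{\,\cdot\, = \hat x}\, dx_{i_1}\wedge\cdots\wedge dx_{i_k} \) is legitimate — both sides are universal sign-combinations of the values of \( f \), so matching coefficients suffices even though \( G \) is only an abelian group — and your computation then rests on exactly the right sign identity, which indeed holds. Concretely, with \( I=(i_1,\ldots,i_k) \), \( J=(j_1,\ldots,j_{m-k}) \), \( J'_\ell \) the sorted union \( J\cup\{i_\ell\} \), and \( r_\ell = \#\{j\in J : j<i_\ell\} \): moving \( i_\ell \) from its slot in the list \( (J,I) \) to its sorted slot costs \( m-k+\ell-r_\ell-1 \) adjacent transpositions, so \( \sgn(J'_\ell, I\smallsetminus\{i_\ell\}) = (-1)^{m-k+\ell-r_\ell-1}\sgn(J,I) \); combining with \( \sgn(J,I)=(-1)^{k(m-k)}\sgn(I,J) \), the total prefactor of \( \bar \partial_{i_\ell} f_{I}(x)\, dx_{I\smallsetminus\{i_\ell\}} \) in \( *(d(*f)) \) is \( \sgn(I,J)(-1)^{r_\ell}\sgn(J'_\ell,I\smallsetminus\{i_\ell\}) = (-1)^{k(m-k)+m-k+\ell-1} \), and this equals \( (-1)^{m(k+1)+1}(-1)^\ell \) because the two exponents differ by \( k(k+1)+2 \), which is even. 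That matches the \( (-1)^\ell \) in the definition of \( \delta \), proving the lemma, and it confirms your claim that the \( r_\ell \)-dependence cancels and only an index-independent constant times \( (-1)^\ell \) remains. One bookkeeping correction: the factor \( (-1)^{k(m-k)} \) does not come from applying \eqref{hodgehodge} to a ``double Hodge dual'' — after \( d \) intervenes you are dualizing an \( (m-k+1) \)-form, whose sign is read off from the paper's defining formula for \( * \) on dual cells; the factor enters through the permutation identity \( \sgn(J,I)=(-1)^{k(m-k)}\sgn(I,J) \), which is the same arithmetic but not the same statement.

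Your ``conceptual check'' via adjointness does not survive scrutiny under this paper's definitions, so do not lean on it. For \( G=\mathbb{Z}_n \)-valued forms there is no inner product \( \sum_c f_c g_c \) (the values lie in an abelian group, not a ring), so one would first have to reduce to \( \mathbb{Z} \)-valued forms; more importantly, \( \delta \) is \emph{defined} here by the explicit backward-difference formula, not as an adjoint, so ``uniqueness of adjoints'' has no purchase until you separately prove that this formula is adjoint to \( d \) — a summation-by-parts computation of essentially the same length as the direct proof, with its own caveats about boundary cells of \( B_N \). Keep it as motivation; the direct computation is the proof.
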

 
The following lemma was proved in the special case when \(B\) is a cube in \cite[Lemma 2.4]{c2019}; the proof when \(B\) is a box is analogous.

\begin{lemma}\label{lemma: lemma 2.4}
    Let \( B \)  be any box in \( \mathbb{Z}^m \). Then a \( k \)-cell \(c\) is outside \(B\) if and only if \(*c\) is either outside \(*B\) or in the boundary of \(*B \). Moreover, if \( c\) is a \(k\)-cell outside \(B\) that contains a \( (k-1)\)-cell of \(B\), then \( *c \) belongs to the boundary of \(*B \).
\end{lemma}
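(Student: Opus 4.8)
The plan is to reduce the whole statement to explicit coordinate bookkeeping, exploiting that the Hodge dual and the notions ``in/outside/boundary'' are all insensitive to orientation, so the sign factors in the definition of \( * \) may be discarded throughout. Write \( B = \bigl([a_1,b_1]\times\cdots\times[a_m,b_m]\bigr)\cap\mathbb{Z}^m \) with \( a_j<b_j \). Up to orientation any \( k \)-cell is \( c = dx_{i_1}\wedge\cdots\wedge dx_{i_k} \) with base point \( x\in\mathbb{Z}^m \) and direction set \( I=\{i_1,\dots,i_k\} \); its corners are \( \{x+\sum_{j\in S}\mathbf e_j \colon S\subseteq I\} \). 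First I would record the characterization of ``\( c \) in \( B \)'': setting \( J \coloneqq \{1,\dots,m\}\smallsetminus I \), the cell lies in \( B \) iff \( a_j\le x_j\le b_j-1 \) for every \( j\in I \) and \( a_j\le x_j\le b_j \) for every \( j\in J \).

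Next I would describe the dual objects in the same coordinates. Since \( *\mathbb{Z}^m = \mathbb{Z}^m+(\tfrac12,\dots,\tfrac12) \) and the corners of \( *x \) are the points \( x\pm\tfrac12 \) in each coordinate, a short argument (using \( a_j<b_j \), so the coordinatewise choices are independent) shows that \( *B \) is itself a box in the dual lattice: it consists of the dual points whose \( j \)-th coordinate is a half-integer in \( [a_j-\tfrac12,\,b_j+\tfrac12] \). The cell \( *c \) is spanned in the complementary directions \( J \) and based at \( y = x+(\tfrac12,\dots,\tfrac12) \), with corner coordinates \( \{x_j+\tfrac12,\,x_j-\tfrac12\} \) for \( j\in J \) and \( x_j+\tfrac12 \) for \( j\in I \). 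Feeding this into the description of \( *B \) gives ``\( *c \) in \( *B \)'' iff \( a_j-1\le x_j\le b_j \) for \( j\in I \) and \( a_j\le x_j\le b_j \) for \( j\in J \); note the asymmetry, namely that relative to ``\( c \) in \( B \)'' the \( J \)-inequalities are identical while the \( I \)-directions gain one unit of slack on each side. The co-faces of \( *c \) are obtained by extending it in a further direction \( j_0\in I \), in the \( +\mathbf e_{j_0} \) or \( -\mathbf e_{j_0} \) sense, producing new corner coordinates \( x_{j_0}+\tfrac32 \) and \( x_{j_0}-\tfrac12 \) respectively; comparing with \( [a_{j_0}-\tfrac12,b_{j_0}+\tfrac12] \) shows the positive co-face lies in \( *B \) iff \( x_{j_0}\le b_{j_0}-1 \) and the negative one iff \( x_{j_0}\ge a_{j_0} \). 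Hence, given \( *c\in *B \), the cell \( *c \) is \emph{not} in \( \partial(*B) \) iff \( a_{j_0}\le x_{j_0}\le b_{j_0}-1 \) for every \( j_0\in I \). Intersecting with ``\( *c \) in \( *B \)'' recovers exactly the system characterizing ``\( c \) in \( B \)''; taking contrapositives yields the first (iff) assertion, the degenerate cases \( k=0 \) and \( k=m \) being checked directly.

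For the ``moreover'' clause I would argue as follows. By the iff just proved, \( c \) outside \( B \) already forces \( *c \) to be outside \( *B \) or in \( \partial(*B) \), so it suffices to show that the hypothesis ``\( c \) contains a \( (k-1) \)-cell of \( B \)'' implies \( *c\in *B \); together with \( c \) outside \( B \) this pins down the boundary case. A \( (k-1) \)-face of \( c \) is obtained by dropping one direction \( i_0\in I \), either at the bottom (base \( x \)) or at the top (base \( x+\mathbf e_{i_0} \)). Requiring that face to lie in \( B \) gives \( a_j\le x_j\le b_j \) for every \( j\in J \) and, for every \( j\in I \), the inequality \( a_j-1\le x_j\le b_j \): the dropped direction \( i_0 \) contributes \( a_{i_0}\le x_{i_0}\le b_{i_0} \) at the bottom and \( a_{i_0}-1\le x_{i_0}\le b_{i_0}-1 \) at the top, both of which imply the claimed bound. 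This is precisely ``\( *c\in *B \)'', completing the reduction.

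The hard part — indeed essentially the only place where care is needed — is keeping the two families of directions and the half-unit shift straight: the spanned directions \( I \) acquire slack under \( * \) while the complementary directions \( J \) do not, and it is exactly this slack that the boundary co-faces consume. Beyond that the argument is routine: the orientation signs in the Hodge dual are irrelevant and should be dropped at the outset, and one notes that \( *\mathbb{Z}^m \) is a translate of \( \mathbb{Z}^m \) so the box/boundary vocabulary transfers verbatim, which is why the cube proof of \cite{c2019} adapts with only cosmetic changes.
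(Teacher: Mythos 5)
Your proof is correct. Note that the paper itself gives no argument for this lemma: it simply cites the cube case (Lemma 2.4 of~\cite{c2019}) and remarks that the box case is analogous, and your coordinatewise verification is exactly the routine adaptation being alluded to — the characterizations ``\(c\subseteq B\) iff \(a_j\le x_j\le b_j-1\) for \(j\in I\), \(a_j\le x_j\le b_j\) for \(j\in J\)'', ``\(*c\subseteq *B\) iff the \(I\)-constraints relax to \(a_j-1\le x_j\le b_j\)'', and the co-face computation consuming precisely that slack are the same bookkeeping used there, with the only genuine observation being that orientation signs are irrelevant to the in/outside/boundary notions, which you state explicitly.
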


\subsubsection{Restrictions of forms}

If \( \sigma \in \Sigma_{E_N} \) and \( E \subseteq E_N \) is symmetric,  we define \( \sigma|_E \in \Sigma_{E_N} \) by, for \( e \in E_N \), letting
\begin{equation*}
    (\sigma|_E)_e \coloneqq \begin{cases}
    \sigma_e &\text{if }  e \in E, \cr 
    0 &\text{else.}
    \end{cases}
\end{equation*}
Similarly, if \( \omega \in \Sigma_2 \) and \( P \subseteq P_N \) is symmetric, we define \( \omega|_P \in \Sigma_2 \) by, for \( p \in P_N \), letting
\begin{equation*}
    (\omega|_P)_p \coloneqq \begin{cases}
    \omega_p &\text{if } p \in P, \cr 
    0 &\text{else.}
    \end{cases}
\end{equation*}

\subsubsection{A partial ordering of \texorpdfstring{\( k \)}{k}-forms}\label{sec: the partial ordering} 

We now introduce a partial ordering on \( k \)-forms which will be useful both when talking about gauge field configurations and when introducing vortices in later sections. 

\begin{definition}\label{def: partial order}
    When \( k \in \{ 0,1,\dots, m-1 \} \) and \( \omega,\omega' \in \Sigma_k \), we write \( \omega' \leq  \omega \) if
    \begin{enumerate}[label=(\roman*)]
        \item \( \omega' = \omega|_{\support \omega'} \), and
        \item \( d\omega' = (d\omega)|_{\support d\omega'} \).
    \end{enumerate}
    If \( \omega' \neq \omega \) and \( \omega' \leq \omega \), we write \( \omega'< \omega \).
\end{definition}

The following lemma collects some basic facts about the relation $\leq$ on $\Sigma_k$. In particular, it shows that $\leq$ is a partial order on $\Sigma_k$.
\begin{lemma}\label{lemma: the blue lemma}
    Let \( k \in \{ 0,1,\dots, m-1 \} \) and \(\omega, \omega', \omega'' \in \Sigma_k \). The relation $\leq$ on $\Sigma_k$ has the following properties.
    \begin{enumerate}[label=\textnormal{(\roman*)}]
        \item Reflexivity: \( \omega \leq \omega \). \label{property 1}
        \item Antisymmetry: If \(\omega' \leq \omega \) and \(\omega \leq \omega'\), then \( \omega = \omega'\).\label{property 2}
        \item Transitivity: If \( \omega'' \leq \omega' \) and \( \omega' \leq \omega \), then \( \omega'' \leq \omega \).\label{property 3}
        \item If \( \omega' \leq \omega \), then \( \omega-\omega' = \omega|_{E_N \smallsetminus (\support \omega')} \leq \omega \). \label{property 4}
        \item If \( \omega' \leq \omega \), then \( \support d\omega'\) and \( \support d(\omega-\omega') \) are disjoint. \label{property 5} 
    \end{enumerate}
\end{lemma}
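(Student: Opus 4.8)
The plan is to first reformulate the defining conditions in a pointwise way, which turns all five claims into elementary support-chasing. Condition (i) of \( \omega' \leq \omega \) says precisely that \( \omega'_c = \omega_c \) at every cell \( c \) with \( \omega'_c \neq 0 \), and condition (ii) says that \( (d\omega')_{c_0} = (d\omega)_{c_0} \) at every cell \( c_0 \) with \( (d\omega')_{c_0} \neq 0 \). In words: wherever \( \omega' \) is supported it agrees with \( \omega \), and wherever \( d\omega' \) is supported it agrees with \( d\omega \). With this reading, the only external input needed is the linearity of the exterior derivative \( d \).

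Reflexivity~\ref{property 1} is immediate, since \( \omega = \omega|_{\support \omega} \) and \( d\omega = (d\omega)|_{\support d\omega} \) hold by the definition of restriction. For antisymmetry~\ref{property 2}, I would suppose toward a contradiction that \( \omega_c \neq \omega'_c \) for some \( c \); then at least one of \( \omega_c, \omega'_c \) is nonzero. If \( \omega'_c \neq 0 \) then \( c \in \support \omega' \) and condition (i) of \( \omega' \leq \omega \) forces \( \omega'_c = \omega_c \); if instead \( \omega_c \neq 0 \) then condition (i) of \( \omega \leq \omega' \) forces \( \omega_c = \omega'_c \); either way we contradict \( \omega_c \neq \omega'_c \), so \( \omega = \omega' \) (this uses condition (i) only). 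Transitivity~\ref{property 3} applies the same mechanism twice: for \( c \in \support \omega'' \) we get \( \omega''_c = \omega'_c \) from \( \omega'' \leq \omega' \), which shows \( c \in \support \omega' \) and hence \( \omega'_c = \omega_c \) from \( \omega' \leq \omega \), giving \( \omega''_c = \omega_c \); the identical argument at the level of \( d \), replacing \( \support \omega'' \) by \( \support d\omega'' \) and invoking condition (ii), verifies the second defining property of \( \omega'' \leq \omega \).

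I would prove~\ref{property 5} next, directly from condition (ii) and linearity, since it is exactly what is needed for~\ref{property 4}. If \( c_0 \in \support d\omega' \), then \( \omega' \leq \omega \) gives \( (d\omega')_{c_0} = (d\omega)_{c_0} \), whence \( (d(\omega-\omega'))_{c_0} = (d\omega)_{c_0} - (d\omega')_{c_0} = 0 \); thus no cell lies in both \( \support d\omega' \) and \( \support d(\omega-\omega') \), which is the claimed disjointness. For~\ref{property 4}, I would first check the stated identity \( \omega - \omega' = \omega|_{E_N \smallsetminus \support \omega'} \): on \( \support \omega' \) condition (i) gives \( \omega'_c = \omega_c \), so \( (\omega-\omega')_c = 0 \), matching the restriction, while off \( \support \omega' \) we have \( \omega'_c = 0 \), so \( (\omega-\omega')_c = \omega_c \). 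To conclude \( \omega - \omega' \leq \omega \), the first defining property is clear because \( \support(\omega-\omega') \subseteq E_N \smallsetminus \support \omega' \), so on this support \( \omega'_c = 0 \) and \( (\omega-\omega')_c = \omega_c \); the second defining property is precisely~\ref{property 5}, since for \( c_0 \in \support d(\omega-\omega') \) the disjointness forces \( c_0 \notin \support d\omega' \), hence \( (d\omega')_{c_0} = 0 \) and \( (d(\omega-\omega'))_{c_0} = (d\omega)_{c_0} \).

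The whole argument is routine once the pointwise reformulation is in place. The only point requiring care is keeping the two layers—the form itself and its exterior derivative—separate and invoking the correct defining condition for each, together with the observation that~\ref{property 5} is exactly what closes the verification of~\ref{property 4}; accordingly I would order the write-up as \ref{property 1}, \ref{property 2}, \ref{property 3}, \ref{property 5}, \ref{property 4}.
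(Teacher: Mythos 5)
Your proof is correct. For properties (i)--(iii) it is essentially the paper's argument: the paper carries out the same support-chasing, only written in restriction notation (e.g.\ \( d\omega'' = ((d\omega)|_{\support d\omega'})|_{\support d\omega''} = (d\omega)|_{\support d\omega''} \)) rather than pointwise. The genuine divergence is in (iv) and (v), where you reverse the paper's logical dependency. The paper proves (iv) first, self-contained, via the identity
\begin{equation*}
    d(\omega|_{S \smallsetminus S'}) = d\omega - d\omega' = (d\omega)|_{(\support d\omega) \smallsetminus (\support d\omega')}
\end{equation*}
followed by taking supports of both sides; it then proves (v) by contradiction, and that contradiction argument actually relies on (iv), since it invokes \( \omega - \omega' \leq \omega \) to write \( (d(\omega-\omega'))_c = (d\omega)_c \) and arrive at \( 0 \neq (d\omega)_c = 2(d\omega)_c \). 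You instead prove (v) directly from condition (ii) of the definition and linearity of \( d \): on \( \support d\omega' \) one has \( d\omega' = d\omega \), so \( d(\omega - \omega') \) vanishes there, giving disjointness in two lines without any contradiction and without (iv); you then use precisely this disjointness to verify the second defining condition in (iv). Your route buys a shorter, contradiction-free proof of (v) with a cleaner dependency structure; the paper's route buys a proof of (iv) that stands on its own. Both are equally elementary and both are complete, so the difference is organizational rather than substantive.
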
 

\begin{proof}
    Properties~\ref{property 1}--\ref{property 3} follow easily from the definitions. For example, to show~\ref{property 3}, we note that if \( \omega'' \leq \omega'\) and \( \omega' \leq \omega\), then 
    \[ 
        d\omega'' = (d\omega')|_{\support d\omega''} = \big((d\omega)|_{\support d\omega'}\big)\big|_{\support d\omega''}
        = (d\omega)|_{\support d\omega''},
    \]
    where we have used that $\support d\omega'' \subseteq \support d\omega'$ in the last step. 

    To prove~\ref{property 4}, suppose that \( \omega' \leq \omega\), and let \( S \coloneqq \support \omega \) and \( S' \coloneqq \support\omega'\). Since \( \support (\omega|_{S \smallsetminus S'}) = S \smallsetminus S'\), we have \( \omega|_{S \smallsetminus S'} = \omega|_{\support (\omega|_{S \smallsetminus S'}) }.\)
    Moreover, since \( \omega = \omega|_{S \smallsetminus S'} + \omega',\)
    \begin{align}\label{dsigmaEEprime}
        d(\omega|_{S \smallsetminus S'}) = d\omega - d\omega' = d\omega - (d\omega)|_{\support d\omega'} = (d\omega)|_{(\support d\omega) \smallsetminus (\support d\omega')}.
    \end{align}
    Taking the support of both sides, we find that \(\support d(\omega|_{S \smallsetminus S'}) = (\support d\omega) \smallsetminus (\support d\omega')\), and hence (\ref{dsigmaEEprime}) implies that \( d(\omega|_{S \smallsetminus S'}) = (d\omega)|_{\support d(\omega|_{S \smallsetminus S'})}.\) This shows that \( \omega|_{E_N \smallsetminus S'} = \omega|_{S \smallsetminus S'} \leq \omega\) and completes the proof of~\ref{property 4}.
    
    To prove~\ref{property 5}, suppose that \( \omega' \leq \omega\) and that \( \omega'' = \omega - \omega' \). Let \( S' \coloneqq \support d\omega' \) and  \( S'' \coloneqq \support d\omega'' \).  
    Assume for contradiction that there exists \( c \in S' \cap S'' \). Then \( c \in \support d \omega' \), and hence \(  (d\omega')_c \neq 0 \). Since \( \omega' \leq  \omega \) and \( c \in S' \), we have \( (d\omega')_c = ((d \omega)|_{S'})_c = (d \omega)_c \) and hence \( (d \omega)_c \neq 0 \). Analogously, since \( \omega'' \leq  \omega \), we have \( (d\omega'')_c  = (d \omega)_c \). Combining these observations, it follows that 
    \begin{equation*}
        0 \neq (d \omega)_c = (d(\omega' + \omega''))_c = (d \omega' )_c + (d  \omega'')_c
        = (d \omega)_c  + (d \omega)_c 
        = 2 (d \omega)_c, 
    \end{equation*}
    which is a contradiction. This proves~\ref{property 5}.
\end{proof}

The next lemma guarantees the existence of minimal elements satisfying certain constraints.
\begin{lemma}\label{lemma: reduction V}
    Let \( k \in \{ 0,1,\dots, m-1 \}\), let \( \Omega \subseteq \Sigma_k \), and let \( \omega \in \Omega \). 
    Then there is \(  \omega' \leq \omega \) such that
    \begin{enumerate}[label=\textnormal{(\roman*)}]
        \item \(   \omega' \in \Omega \), and \label{item: new fix lemma i}
        \item there is no \( \omega'' <  \omega' \) such that \( \omega'' \in \Omega \).\label{item: new fix lemma ii}
    \end{enumerate}
\end{lemma}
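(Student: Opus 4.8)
The plan is to obtain $\omega'$ as a minimizer of the support size within the down-set of $\omega$ inside $\Omega$. Concretely, I would consider the collection
\[
    \mathcal{A} \coloneqq \{\tilde\omega \in \Omega \colon \tilde\omega \leq \omega\},
\]
which is non-empty because $\omega \leq \omega$ by the reflexivity property~\ref{property 1} of Lemma~\ref{lemma: the blue lemma} and $\omega \in \Omega$ by hypothesis.

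The key structural fact to exploit is that the relation $\leq$ refines support inclusion. Indeed, if $\eta \leq \zeta$, then clause~(i) of Definition~\ref{def: partial order} gives $\eta = \zeta|_{\support \eta}$, so $\eta$ vanishes off $\support\eta$ and agrees with $\zeta$ there; in particular $\support\eta \subseteq \support\zeta$. Moreover, if this inclusion were an equality, then $\eta = \zeta|_{\support\eta} = \zeta|_{\support\zeta} = \zeta$, so a strict relation $\eta < \zeta$ forces the strict inclusion $\support\eta \subsetneq \support\zeta$. Note that clause~(ii) of Definition~\ref{def: partial order}, concerning $d\eta$, plays no role in this observation.

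Since $N \geq 1$ is fixed and $B_N$ is a finite box, the support of any element of $\Sigma_k$ is a finite set of $k$-cells, so $|\support\tilde\omega|$ is a well-defined non-negative integer for each $\tilde\omega \in \mathcal{A}$. I would then choose $\omega' \in \mathcal{A}$ minimizing $|\support\tilde\omega|$ over $\tilde\omega \in \mathcal{A}$; such a minimizer exists because one is minimizing a non-negative integer over a non-empty set. By construction $\omega' \in \Omega$ and $\omega' \leq \omega$, which gives~\ref{item: new fix lemma i} together with the required relation $\omega' \leq \omega$. To verify minimality~\ref{item: new fix lemma ii}, suppose for contradiction that some $\omega'' < \omega'$ satisfies $\omega'' \in \Omega$. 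Applying the transitivity property~\ref{property 3} of Lemma~\ref{lemma: the blue lemma} to $\omega'' \leq \omega' \leq \omega$ yields $\omega'' \leq \omega$, so $\omega'' \in \mathcal{A}$; but the strict relation $\omega'' < \omega'$ gives $\support\omega'' \subsetneq \support\omega'$ by the observation above, whence $|\support\omega''| < |\support\omega'|$, contradicting the minimality of $\omega'$.

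I do not expect any genuine obstacle here: this is a finite-descent (well-ordering) argument, and the only point requiring care is confirming that the partial order is compatible with support size so that the latter strictly decreases along strict comparisons. That compatibility is exactly the content of the second paragraph and rests solely on clause~(i) of Definition~\ref{def: partial order}, together with the finiteness of $B_N$ which guarantees the descent terminates.
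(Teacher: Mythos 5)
Your proof is correct, but it takes a genuinely different route from the paper's. The paper argues by contradiction using finiteness of $\Omega$: since $\Sigma_k$ is finite (for finite $G$), $\Omega$ is finite, and the failure of the conclusion would produce an infinite strictly decreasing chain $\omega > \omega^{(1)} > \omega^{(2)} > \cdots$ in $\Omega$, whose terms are pairwise distinct by transitivity --- contradicting finiteness. You instead introduce a rank function: you observe that clause~(i) of Definition~\ref{def: partial order} alone forces $\support \eta \subseteq \support \zeta$ whenever $\eta \leq \zeta$, with strict inclusion under a strict relation, and then take $\omega'$ to be a minimizer of $|\support \tilde\omega|$ over the down-set $\{\tilde\omega \in \Omega \colon \tilde\omega \leq \omega\}$, which is non-empty by reflexivity; minimality~\ref{item: new fix lemma ii} then follows from transitivity plus the strict decrease of support size. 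What your approach buys: it never uses finiteness of $\Omega$ or of $\Sigma_k$, only finiteness of supports, so it works verbatim when the structure group is infinite (e.g.\ $G = \mathbb{Z}$, which the paper's Section~2 explicitly allows, even though the paper's own proof as written invokes finiteness of $\Sigma_k$ and hence implicitly of $G$); it is also slightly more constructive, exhibiting the minimal element as one of smallest support. What the paper's argument buys is brevity: given that $\Omega$ is finite, no auxiliary monotonicity fact about supports needs to be established.
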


\begin{proof}
    Since \( N \) is finite, the set \( \Sigma_k \) is finite, and hence \( \Omega \) is finite.
    Assume for contradiction that there is no \( \omega' \leq \omega \) such that~\ref{item: new fix lemma i} and~\ref{item: new fix lemma ii} both hold. Then there exists an infinite sequence \( \omega^{(1)},\omega^{(2)},\omega^{(3)},... \in \Omega \) such that \( \omega > \omega^{(1)} > \omega^{(2)} > \ldots \).
    Now let \( 1 \leq i < j \). By Lemma~\ref{lemma: the blue lemma}, we have \( \omega^{(j)} < \omega^{(i)} \), and hence \( \omega^{(j)} \neq \omega^{(i)} \). In particular, all the \( \omega^{(i)}\) must be distinct. Since this contradicts the assumption that \( \Omega \) is finite, the desired conclusion follows.
\end{proof}

\subsubsection{Irreducible configurations}\label{sec: irreducibility}

The partial ordering given in Definition~\ref{def: partial order} allows us to introduce a notion of irreducibility.

\begin{definition}\label{def: irreducible kform}
    When \( k \in \{ 0,1,\dots, m-1 \} \), a configuration \( \omega \in \Sigma_k \) is said to be \emph{irreducible} if there is no non-trivial configuration \( \omega' \in \Sigma_k \) such that \( \omega' < \omega \).
\end{definition}

Equivalently, \( \omega \in \Sigma_k \) is irreducible if there is no  non-empty set \( S \subsetneq \support \omega \) such that \( \support d(\omega|_S) \) and \( \support d (\omega|_{S^c}) \) are disjoint.
Note that if \( \omega \in \Sigma_k \)  satisfies \( d\omega = 0 \), then \( \omega \) is irreducible if and only if there is no non-empty set \( S \subsetneq \support \omega \) such that \(   d(\omega|_S) = d(\omega|_{S^c}) = 0\). 

The main reason the notion of irreducibility is important to us is the following lemma.

\begin{lemma}\label{lemma: lemma sum of irreducible configurations} 
    Let \( k \in \{ 0,1,\dots, m-1 \} \), and let $\omega\in \Sigma_k $ be non-trivial.
    Then there is an integer \( j \geq 1 \) and $k$-forms \( \omega^{(1)}, \ldots, \omega^{(j)} \in \Sigma_k \) such that
    \begin{enumerate}[label=\textnormal{(\roman*)}]
        \item\label{lemma210property1} for each \( i \in \{ 1,2, \ldots, j \} \), \( \omega^{(i)}  \) is non-trivial and irreducible,
        \item\label{lemma210property2} for each \( i \in \{ 1,2, \ldots, j \} \), \( \omega^{(i)} \leq \omega \),
        \item\label{lemma210property3} \( \omega^{(1)}, \dots, \omega^{(j)} \) have disjoint supports, and
        
        \item\label{lemma210property4} \( \omega = \omega^{(1)} + \dots + \omega^{(j)} \).

         
    \end{enumerate} 
\end{lemma}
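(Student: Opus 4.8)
The plan is to argue by induction on the cardinality of \( \support \omega \), which is a finite quantity since \( \Sigma_k \) is finite (as \( N \) is finite). At each stage I peel off a single non-trivial irreducible summand \( \omega^{(1)} \leq \omega \) and then recurse on the remainder \( \omega - \omega^{(1)} \), whose support will be strictly smaller. Assembling the summand produced at the top level with those produced by the inductive hypothesis will yield the full decomposition.

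For the peeling step I would apply Lemma~\ref{lemma: reduction V} to the set \( \Omega \coloneqq \{ \eta \in \Sigma_k \colon \eta \leq \omega \text{ and } \eta \text{ is non-trivial} \} \), which is non-empty because \( \omega \in \Omega \) by reflexivity (Lemma~\ref{lemma: the blue lemma}\ref{property 1}). This produces \( \omega^{(1)} \leq \omega \) with \( \omega^{(1)} \in \Omega \) and with no \( \eta < \omega^{(1)} \) lying in \( \Omega \). I then claim \( \omega^{(1)} \) is irreducible in the sense of Definition~\ref{def: irreducible kform}: if it were not, there would exist a non-trivial \( \eta < \omega^{(1)} \), and transitivity (Lemma~\ref{lemma: the blue lemma}\ref{property 3}) would give \( \eta \leq \omega \), so that \( \eta \in \Omega \), contradicting the minimality of \( \omega^{(1)} \). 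Thus \( \omega^{(1)} \) is a non-trivial irreducible form with \( \omega^{(1)} \leq \omega \). Next I would form \( \omega - \omega^{(1)} \); by Lemma~\ref{lemma: the blue lemma}\ref{property 4} we have \( \omega - \omega^{(1)} = \omega|_{E_N \smallsetminus (\support \omega^{(1)})} \leq \omega \), and its support equals \( (\support \omega) \smallsetminus (\support \omega^{(1)}) \). Since \( \omega^{(1)} = \omega|_{\support \omega^{(1)}} \) (the first defining condition of \( \leq \)) forces \( \support \omega^{(1)} \subseteq \support \omega \), and since \( \omega^{(1)} \) is non-trivial, this support is strictly smaller than \( \support \omega \), so the induction is well-founded.

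Finally I would close the induction. If \( \omega - \omega^{(1)} \) is trivial, we stop with \( j = 1 \). Otherwise the inductive hypothesis applied to \( \omega - \omega^{(1)} \) yields non-trivial irreducible forms \( \omega^{(2)}, \dots, \omega^{(j)} \), pairwise disjointly supported, each satisfying \( \omega^{(i)} \leq \omega - \omega^{(1)} \), and summing to \( \omega - \omega^{(1)} \). I then verify the four conclusions for the full list: non-triviality and irreducibility hold by construction; the sum telescopes to \( \omega^{(1)} + (\omega - \omega^{(1)}) = \omega \); each \( \omega^{(i)} \leq \omega \) follows for \( i \geq 2 \) from \( \omega^{(i)} \leq \omega - \omega^{(1)} \leq \omega \) via transitivity; and disjointness holds because \( \support \omega^{(2)}, \dots, \support \omega^{(j)} \subseteq (\support \omega) \smallsetminus (\support \omega^{(1)}) \), which is disjoint from \( \support \omega^{(1)} \). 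I expect the one genuinely delicate point — the main obstacle — to be the irreducibility claim for \( \omega^{(1)} \): one must check that minimality relative to the external constraint \( \eta \leq \omega \) upgrades to unconstrained minimality (no non-trivial \( \eta < \omega^{(1)} \) at all), and this is precisely where transitivity of \( \leq \) is indispensable, being the only place where the order structure, rather than mere finiteness, is used.
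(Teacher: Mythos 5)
Your proposal is correct and follows essentially the same route as the paper: both peel off a minimal non-trivial element of \( \{\eta \in \Sigma_k \colon 0 < \eta \leq \omega\} \) via Lemma~\ref{lemma: reduction V}, upgrade its minimality to irreducibility through transitivity, and then handle the remainder \( \omega - \omega^{(1)} \), which has strictly smaller support by Lemma~\ref{lemma: the blue lemma}\ref{property 4}. The only difference is presentational—you phrase the recursion as induction on \( |\support \omega| \) while the paper iterates the peeling step and argues termination—and the transitivity point you flag as delicate is indeed the step the paper uses (implicitly) as well.
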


\begin{proof}
    If \( \omega \) is irreducible, then the conclusion of the lemma trivially holds, and hence we can assume that this is not the case.  
    Since \( N \) is finite and \( \omega \in \Sigma_k \),
    the set \( \support \omega \) is finite, and hence the set \( \Omega \coloneqq \{\omega' \in \Sigma_k \colon 0 < \omega' \leq \omega\} \subseteq \Sigma_k \) must also be finite. Consequently, by Lemma~\ref{lemma: reduction V}, there is a non-trivial and irreducible configuration \(   \omega^{(1)} \in \Sigma_k \) such that \(  \omega^{(1)} \leq \omega \). By definition, we have
    \begin{equation*} 
        \omega = \omega^{(1)} + (\omega-\omega^{(1)}),
    \end{equation*}
    and since \(  \omega^{(1)} \leq \omega \), we have \( \support (\omega-\omega^{(1)})  \subsetneq \support \omega \). Moreover, since \( \omega^{(1)} \leq \omega \), the configurations \( \omega^{(1)} \) and \( \omega-\omega^{(1)} \)  have disjoint supports.
    
    Since $\omega-\omega^{(1)} \in \Sigma_k $ and \( |\support (\omega-\omega^{(1)})| < | \support \omega| < \infty \), either \( \omega-\omega^{(1)}  \) is irreducible, or we can repeat the above argument with \( \omega \)  replaced with \( \omega-\omega^{(1)} \) to find a non-trivial and irreducible  configuration \( \omega^{(2)} \leq \omega-\omega^{(1)} \). Then
    \begin{equation*} 
        \omega-\omega^{(1)} = \omega^{(2)} + (\omega-\omega^{(1)}-\omega^{(2)}).
    \end{equation*}
    and hence 
    \begin{equation*}
        \omega = \omega^{(1)} + \omega^{(2)} + (\omega-\omega^{(1)}-\omega^{(2)}).
    \end{equation*}
    We now make the following observations.
    \begin{enumerate}[label=(\arabic*)]
    
        \item Since \( \omega^{(2)} \leq \omega-\omega^{(1)} \) and \( \omega-\omega^{(1)} \leq \omega \), it follows from Lemma \ref{lemma: the blue lemma}\ref{property 3} that \( \omega^{(2)} \leq \omega \).
        
        \item Since \( \omega^{(2)} \leq \omega-\omega^{(1)} \), the configurations \( \omega^{(2)} \) and \(  \omega-\omega^{(1)} -\omega^{(2)}\) have disjoint supports and \( \support \omega^{(2)} \subsetneq \support (\omega-\omega^{(1)}) \). Since \( \omega^{(1)} \leq \omega \), the configurations \( \omega^{(1)} \) and \( \omega - \omega^{(1)} \) have disjoint supports, and hence \( \omega^{(1)} \) and \( \omega^{(2)} \) have disjoint supports. In addition, we have 
        \begin{equation*}
            \support (\omega - \omega^{(1)}-\omega^{(2)}) \subsetneq \support (\omega -\omega^{(1)}) \subsetneq \support \omega .
        \end{equation*}
    \end{enumerate} 
    
    Repeating the above argument, we obtain a sequence \( \omega^{(1)}, \omega^{(2)}, \omega^{(3)},\ldots \leq \omega \) of non-trivial and irreducible configurations in \( \Sigma_k \) with disjoint supports, which satisfies
    \begin{equation*}
        \support \omega \supsetneq \support (\omega-\omega^{(1)}) \supsetneq \support (\omega-\omega^{(1)}-\omega^{(2)}) \supsetneq \dots
    \end{equation*}
    Since \( \support \omega \) is a finite set, this process must eventually stop. Equivalently, there must exist some \( j \geq 1 \) such that \( \omega - \omega^{(1)} -\dots -\omega^{(j)} \) is trivial. For this \( j \), we have
    \begin{equation*}
        \omega = \omega^{(1)} + \dots+ \omega^{(j)} + (\omega - \omega^{(1)} -\dots -\omega^{(j)} ),
    \end{equation*}
    and hence the existence of $k$-forms satisfying  \ref{lemma210property1}--\ref{lemma210property4} follows. 
\end{proof}
 
\begin{remark}\label{remark: decomposition is not unique}
    The decomposition in Lemma~\ref{lemma: lemma sum of irreducible configurations} is not unique. To see this, consider first the \( \mathbb{Z}^2 \)-lattice. Fix some plaquette  \( p   \) and define a gauge field configuration \( \sigma \) by  \( e \mapsto \sigma_e \coloneqq \mathbb{1}(e \in   \partial p)-\mathbb{1}(-e \in   \partial p)   \). Then it is easy to see that there are three distinct ways to write \(  \sigma \) as a sum of irreducible gauge field configurations \( \sigma_1 \) and \( \sigma_2   \) with disjoint supports (see Figure~\ref{fig: non-unique decomposition example}). With slightly more work, an analogous argument gives a counter-example also for the \( \mathbb{Z}^4 \)-lattice. (Let \( c \) be a 4-cell and for \( e \in E_N \) define \( \sigma_e \coloneqq \mathbb{1}(e \in \partial\partial\partial c)-\mathbb{1}(-e \in \partial\partial\partial c) \).)
\end{remark}

\begin{figure}
    \centering 
    \pgfmathsetmacro{\bd}{0.2} 
    \begin{subfigure}[t]{0.22\textwidth}\centering
        \centering
        \begin{tikzpicture}[scale=0.6] 
    
            \draw[help lines] (3,1) grid (8,6);
         
            \draw[very thick, dotted, detailcolor00] (5,4) -- (6,4);
            \draw[very thick, detailcolor01] (6,4) -- (6,3);   
            \draw[very thick, dotted, detailcolor00] (6,3) -- (5,3);   
            \draw[very thick, detailcolor01] (5,3) -- (5,4);  
            
            \fill[fill=detailcolor09, fill opacity=0.6] (4+\bd,3+\bd) -- (4+\bd,4-\bd) -- (5-\bd,4-\bd) -- (5-\bd,3+\bd) -- (4+\bd,3+\bd); 
            \fill[fill=detailcolor09, fill opacity=0.6] (6+\bd,3+\bd) -- (6+\bd,4-\bd) -- (7-\bd,4-\bd) -- (7-\bd,3+\bd) -- (6+\bd,3+\bd);  
            \fill[fill=detailcolor09, fill opacity=0.6, fill opacity=0.5] (5+\bd,4+\bd) -- (5+\bd,5-\bd) -- (6-\bd,5-\bd) -- (6-\bd,4+\bd) -- (5+\bd,4+\bd); 
            \fill[fill=detailcolor09, fill opacity=0.6, fill opacity=0.5] (5+\bd,2+\bd) -- (5+\bd,3-\bd) -- (6-\bd,3-\bd) -- (6-\bd,2+\bd) -- (5+\bd,2+\bd); 
           
        \end{tikzpicture} 
        \caption{}
    \end{subfigure} 
    \begin{subfigure}[t]{0.22\textwidth}\centering
        \begin{tikzpicture}[scale=0.6] 
    
            \draw[help lines] (3,1) grid (8,6);
         
            \draw[very thick, dotted, detailcolor00] (5,4) -- (6,4);
            \draw[very thick, dotted, detailcolor00] (6,4) -- (6,3);   
            \draw[very thick, detailcolor01] (6,3) -- (5,3);   
            \draw[very thick, detailcolor01] (5,3) -- (5,4);   
          
            \fill[fill=detailcolor09, fill opacity=0.6] (4+\bd,3+\bd) -- (4+\bd,4-\bd) -- (5-\bd,4-\bd) -- (5-\bd,3+\bd) -- (4+\bd,3+\bd); 
            \fill[fill=detailcolor09, fill opacity=0.6] (5+\bd,2+\bd) -- (5+\bd,3-\bd) -- (6-\bd,3-\bd) -- (6-\bd,2+\bd) -- (5+\bd,2+\bd); 
            \fill[fill=detailcolor09, fill opacity=0.6] (5+\bd,4+\bd) -- (5+\bd,5-\bd) -- (6-\bd,5-\bd) -- (6-\bd,4+\bd) -- (5+\bd,4+\bd); 
            \fill[fill=detailcolor09, fill opacity=0.6] (6+\bd,3+\bd) -- (6+\bd,4-\bd) -- (7-\bd,4-\bd) -- (7-\bd,3+\bd) -- (6+\bd,3+\bd);c
        \end{tikzpicture} 
        \caption{}
    \end{subfigure}
    \begin{subfigure}[t]{0.22\textwidth}\centering
        \begin{tikzpicture}[scale=0.6] 
    
            \draw[help lines] (3,1) grid (8,6);
         
            \draw[very thick, detailcolor01] (5,4) -- (6,4);
            \draw[very thick, dotted, detailcolor00] (6,4) -- (6,3);   
            \draw[very thick, dotted, detailcolor00] (6,3) -- (5,3);   
            \draw[very thick, detailcolor01] (5,3) -- (5,4);   
          
            \fill[fill=detailcolor09, fill opacity=0.6] (4+\bd,3+\bd) -- (4+\bd,4-\bd) -- (5-\bd,4-\bd) -- (5-\bd,3+\bd) -- (4+\bd,3+\bd); 
            \fill[fill=detailcolor09, fill opacity=0.6] (5+\bd,4+\bd) -- (5+\bd,5-\bd) -- (6-\bd,5-\bd) -- (6-\bd,4+\bd) -- (5+\bd,4+\bd); 
            \fill[fill=detailcolor09, fill opacity=0.6] (5+\bd,2+\bd) -- (5+\bd,3-\bd) -- (6-\bd,3-\bd) -- (6-\bd,2+\bd) -- (5+\bd,2+\bd); 
            \fill[fill=detailcolor09, fill opacity=0.6] (6+\bd,3+\bd) -- (6+\bd,4-\bd) -- (7-\bd,4-\bd) -- (7-\bd,3+\bd) -- (6+\bd,3+\bd); 
        \end{tikzpicture} 
        \caption{}
    \end{subfigure}
    
    \caption{Given a plaquette \( p  \) in \( \mathbb{Z}^2 \), define a gauge field configuration \( \sigma  \) on \( \mathbb{Z}^2 \) by  \( e \mapsto \sigma_e \coloneqq \mathbb{1}(e \in   \partial p)-\mathbb{1}(-e \in   \partial p)   \). The three figures above represent three different ways to write
    \(  \sigma \) as a sum \( \sigma_1+\sigma_2\) of non-trivial and irreducible gauge field configurations \( \sigma_1 \) (with support at solid lines) and \( \sigma_2  \) (with support at dotted lines) with disjoint supports. The plaquettes at which \( d\sigma \) has support are drawn blue. In particular, the above figures show that the decompositions whose existence are guaranteed by Lemma~\ref{lemma: lemma sum of irreducible configurations}, are in general not unique. (See also Remark~\ref{remark: decomposition is not unique}.)}
    \label{fig: non-unique decomposition example}
\end{figure}
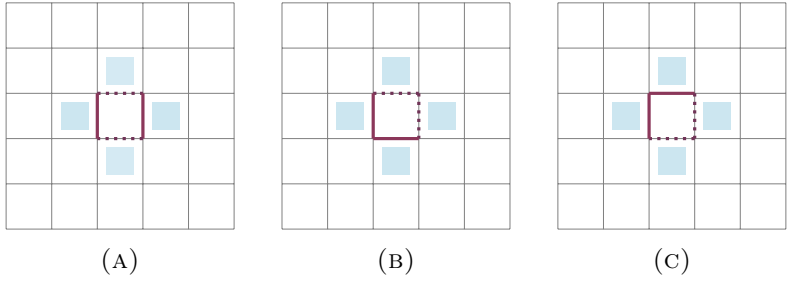

In the special case that \( k = 2 \) and \( \omega \in \Sigma_2 \) satisfies \( d\omega = 0 \), Lemma~\ref{lemma: lemma sum of irreducible configurations} gives the following lemma.
 
\begin{lemma}\label{lemma: lemma sum of irreducible vortices}
    Let $\omega\in \Sigma_{P_N} $ be non-trivial. Then there is a \( k \geq 1 \) and \( \omega_1, \ldots, \omega_k \in \Sigma_{P_N} \) such that
    \begin{enumerate}[label=\textnormal{(\roman*)}]
        \item for each \( j \in \{ 1,2, \ldots, k \} \), \( \omega_j  \) is non-trivial and irreducible,
        \item for each \( j \in \{ 1,2, \ldots, k \} \), \( \omega_j \leq \omega \),
        \item \( \omega_1 ,\dots, \omega_k \) have disjoint supports, and
        \item \( \omega = \omega_1 + \dots + \omega_k \).
    \end{enumerate} 
\end{lemma}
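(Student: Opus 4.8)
The plan is to recognize the statement as precisely the special case \( k = 2 \) of Lemma~\ref{lemma: lemma sum of irreducible configurations}, together with the observation that membership in the subset \( \Sigma_{P_N} \subseteq \Sigma_2 \) is preserved under the decomposition. First I would note that \( \Sigma_{P_N} \) is by definition the set of closed \(2\)-forms, so a non-trivial \( \omega \in \Sigma_{P_N} \) is in particular a non-trivial element of \( \Sigma_2 \). Applying Lemma~\ref{lemma: lemma sum of irreducible configurations} with \( k = 2 \) then produces an integer \( j \geq 1 \) and forms \( \omega^{(1)}, \dots, \omega^{(j)} \in \Sigma_2 \) that are non-trivial and irreducible, satisfy \( \omega^{(i)} \leq \omega \), have pairwise disjoint supports, and sum to \( \omega \). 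Relabelling \( k \coloneqq j \) and \( \omega_i \coloneqq \omega^{(i)} \), properties (i), (iii), and (iv) of the lemma to be proved hold immediately.

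The only point that genuinely requires verification is that each \( \omega_i \) actually lies in \( \Sigma_{P_N} \), i.e.\ that it is closed, rather than merely in \( \Sigma_2 \). This is where I would use that \( \omega \) itself is closed: since \( \omega \in \Sigma_{P_N} \) we have \( d\omega = 0 \), and since \( \omega_i \leq \omega \), clause (ii) of Definition~\ref{def: partial order} gives \( d\omega_i = (d\omega)|_{\support d\omega_i} = 0|_{\support d\omega_i} = 0 \). Hence \( \omega_i \) is closed and therefore \( \omega_i \in \Sigma_{P_N} \), as required. I would also remark that the notion of irreducibility transfers correctly: the forms \( \omega_i \) are irreducible as elements of \( \Sigma_2 \) in the sense of Definition~\ref{def: irreducible kform}, and since \( \Sigma_{P_N} \subseteq \Sigma_2 \), the absence of a non-trivial \( \omega' < \omega_i \) in the larger space \( \Sigma_2 \) a fortiori rules out any such \( \omega' \) inside \( \Sigma_{P_N} \); one may alternatively invoke the remark following Definition~\ref{def: irreducible kform}, which reformulates irreducibility for closed forms purely in terms of splittings of the support.

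There is no serious obstacle here, and I would frame the lemma honestly as an immediate corollary of the general decomposition result; the entire content beyond Lemma~\ref{lemma: lemma sum of irreducible configurations} is the one-line check that closedness of \( \omega \) passes to each summand via the partial order. The only place where a little care is warranted is in confirming that ``irreducible'' has the intended meaning in the plaquette-configuration setting, which is why I would explicitly record the compatibility of the two notions of irreducibility rather than leave it implicit.
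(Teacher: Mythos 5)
Your proposal is correct and follows essentially the same route as the paper: apply Lemma~\ref{lemma: lemma sum of irreducible configurations} with \( k=2 \), then observe that since \( d\omega = 0 \) and each \( \omega_i \leq \omega \), clause (ii) of Definition~\ref{def: partial order} forces \( d\omega_i = (d\omega)|_{\support d\omega_i} = 0 \), so each summand lies in \( \Sigma_{P_N} \). Your additional remark on the compatibility of the two notions of irreducibility is harmless but not needed, since any \( \omega' < \omega_i \) is automatically closed by the same observation, so the two notions coincide for closed forms.
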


\begin{proof}
    The desired conclusion will follow immediately from Lemma~\ref{lemma: lemma sum of irreducible configurations} if we can show that whenever \( \omega' \in \Sigma_2 \) is such that \( \omega' \leq \omega \), then \( d\omega' = 0 \), and hence \( \omega' \in \Sigma_{P_N} \). However, this is an immediate consequence of the definition of the relation \( \leq \).
\end{proof}

\subsubsection{Oriented surfaces}\label{sec: oriented surfaces}

In this section, we introduce oriented surfaces, and outline their connection to simple loops. 

\begin{definition}
    A \( \mathbb{Z} \)-valued \( 2 \)-form \( q \)  is said to be an \emph{oriented surface} if 
    \begin{equation*}
         \sum_{p \in \hat \partial e} q_p \in \{ -1,0,1 \}
    \end{equation*}
    for all \( e \in E_N \).
    If \( q \) is an oriented surface, then the \emph{boundary} of \( q \) is the set 
    \begin{equation*}
        B_q \coloneqq \biggl\{ e \in E_N \colon \sum_{p \in \hat \partial e} q_p = 1 \biggr\}.
    \end{equation*}
    An edge \( e \in E_N \) is an \emph{internal edge} of \( q \) if there is a \( p \in \support q \) such that \( e \in \partial p \), but neither \( e \) nor \( -e \) belongs to $B_q$.
\end{definition}

The following lemma is a discrete analogue of Stokes' theorem. If \( q \) is an oriented surface and \( p \in P_N \), we write $q^+_p \coloneqq \max(q_p,0)$.
If \( a \in \mathbb{Z} \) and \( g \in G \), we let \( ag \) denote the sum $g + \dots + g$ with $a$ terms.

\begin{lemma}[Lemma~5.2 in~\cite{flv2020}]\label{lemma: stokes}
    Let \( q \) be an oriented surface with boundary \( B_q \). For any \( \sigma \in \Sigma_{E_N} \), we have
    \begin{equation*}
        \sum_{p\in P_N  } q_p^+  (d\sigma)_p = \sum_{e \in B_q} \sigma_e.
    \end{equation*}
\end{lemma}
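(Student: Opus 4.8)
The plan is to expand the left-hand side using the definition of $d\sigma$, interchange the order of summation so that the sum is organized by edges rather than plaquettes, recognize the resulting edge weights in terms of the coderivative $\delta q$, and then read off the boundary $B_q$ from the oriented-surface condition $(\delta q)_e\in\{-1,0,1\}$.

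First I would use $(d\sigma)_p=\sum_{e\in\partial p}\sigma_e$ and interchange summation. Since $\{p\in P_N: e\in\partial p\}=\hat\partial e$ and $q$ is supported on $P_N$ (so plaquettes outside $P_N$ carry weight $0$, using also that $P_N$ is symmetric), this gives
\[
\sum_{p\in P_N}q_p^+(d\sigma)_p=\sum_{p\in P_N}q_p^+\sum_{e\in\partial p}\sigma_e=\sum_{e\in E_N}\sigma_e\,F(e),\qquad F(e):=\sum_{p\in\hat\partial e}q_p^+.
\]

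The heart of the argument is an algebraic identity relating $F$ to $\delta q$. From the definitions one has $\hat\partial(-e)=\{-p:p\in\hat\partial e\}$, and since $q_{-p}=-q_p$ we get $q_{-p}^+=q_p^-$, where $x^-:=\max(-x,0)$. Combined with the elementary identity $x^+-x^-=x$ and the coderivative formula $(\delta q)_e=\sum_{p\in\hat\partial e}q_p$, this yields
\[
F(e)-F(-e)=\sum_{p\in\hat\partial e}\bigl(q_p^+-q_p^-\bigr)=\sum_{p\in\hat\partial e}q_p=(\delta q)_e.
\]
Writing $S:=\sum_{e\in E_N}\sigma_e F(e)$ and reindexing by $e\mapsto-e$ (using $\sigma_{-e}=-\sigma_e$) gives $S=-\sum_{e\in E_N}\sigma_e F(-e)$; adding the two expressions for $S$ produces $2S=\sum_{e\in E_N}\sigma_e(\delta q)_e$. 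As $\sigma$ and $\delta q$ are both $1$-forms, the summand is invariant under $e\mapsto-e$, so $S=\sum_{e\in E_N^+}\sigma_e(\delta q)_e$.

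Finally I would invoke the hypothesis that $q$ is an oriented surface, so $(\delta q)_e\in\{-1,0,1\}$ for every $e$, with $(\delta q)_e=1$ exactly when $e\in B_q$ and hence $(\delta q)_e=-1$ exactly when $-e\in B_q$. Splitting the sum over $E_N^+$ into these three cases and using $\sigma_{-e}=-\sigma_e$ shows that each edge of $B_q$ contributes $\sigma_e$ exactly once and nothing else contributes, giving $S=\sum_{e\in B_q}\sigma_e$. The only genuinely delicate step is the identity $F(e)-F(-e)=(\delta q)_e$: one must correctly match the positive-part weights on the coboundary of $e$ against those on the coboundary of $-e$, which is precisely where $x^+-x^-=x$ converts positive parts back into the signed coderivative. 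Everything else is routine orientation bookkeeping.
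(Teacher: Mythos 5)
Your overall strategy is the natural one (and note that the present paper does not prove this lemma at all; it is quoted from~\cite{flv2020}, so there is no in-paper proof to compare against): expand \( (d\sigma)_p \), interchange the sums, and identify the resulting edge weights with \( \delta q \) via \( \hat\partial(-e)=-\hat\partial e \), \( q_{-p}^+=q_p^- \), and \( x^+-x^-=x \). The identity \( F(e)-F(-e)=(\delta q)_e \) and your final case analysis on \( (\delta q)_e\in\{-1,0,1\} \) (including the fact that \( B_q \) never contains both \( e \) and \( -e \)) are all correct.

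However, one step genuinely fails: passing from \( 2S=\sum_{e\in E_N}\sigma_e(\delta q)_e=2\sum_{e\in E_N^+}\sigma_e(\delta q)_e \) to \( S=\sum_{e\in E_N^+}\sigma_e(\delta q)_e \). All of these quantities are elements of \( G=\mathbb{Z}_n \) (the paper's convention is \( ag=g+\dots+g \) for \( a\in\mathbb{Z} \), \( g\in G \)), and multiplication by \( 2 \) is not injective on \( \mathbb{Z}_n \) when \( n \) is even. In particular for \( n=2 \) — the case of Theorem~\ref{theorem: main result Z2} — the relation \( 2S=2T \) reads \( 0=0 \) and carries no information, so as written your chain of equalities proves nothing there. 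The repair is immediate and avoids any division: instead of reindexing over all of \( E_N \) and adding the two expressions, split the single sum into the pairs \( \{e,-e\} \) with \( e\in E_N^+ \), giving
\[
S=\sum_{e\in E_N^+}\bigl(\sigma_e F(e)+\sigma_{-e}F(-e)\bigr)=\sum_{e\in E_N^+}\sigma_e\bigl(F(e)-F(-e)\bigr)=\sum_{e\in E_N^+}\sigma_e(\delta q)_e,
\]
after which your concluding case analysis yields \( \sum_{e\in B_q}\sigma_e \) exactly as you argued.
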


\begin{lemma}[Lemma~5.3 in~\cite{flv2020}]\label{lemma: oriented loops}
    Let \( \gamma \) be a simple loop in \( E_N   \). 
    Then there exists an oriented surface \( q \) 
    such that \( \gamma \) is the boundary of \( q \).  
\end{lemma}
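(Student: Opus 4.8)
The plan is to encode $\gamma$ as a $\mathbb{Z}$-valued $1$-form, observe that the loop condition forces this form to be co-closed, and then invert the co-derivative using the Poincar\'e lemma for $\delta$ (Lemma~\ref{lemma: lemma 2.7}) applied with $G = \mathbb{Z}$.

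First I would introduce the $1$-form $J_\gamma \in \Sigma_{E_N}$, with group $G = \mathbb{Z}$, defined by $(J_\gamma)_e \coloneqq \mathbb{1}(e \in \gamma) - \mathbb{1}(-e \in \gamma)$. By property~(1) in the definition of a simple loop, $\gamma$ and $-\gamma$ are disjoint, so $J_\gamma$ takes values in $\{-1,0,1\}$ and satisfies $(J_\gamma)_e = 1$ precisely when $e \in \gamma$. The goal is then reduced to producing a $\mathbb{Z}$-valued $2$-form $q$ with $\delta q = J_\gamma$. Indeed, recalling from Section~\ref{sec: coderivative} that $(\delta q)_e = \sum_{p \in \hat \partial e} q_p$, such a $q$ would automatically satisfy $\sum_{p \in \hat \partial e} q_p = (J_\gamma)_e \in \{-1,0,1\}$ for every edge $e$, making it an oriented surface, and its boundary would be $B_q = \{ e \colon (\delta q)_e = 1 \} = \{ e \colon (J_\gamma)_e = 1 \} = \gamma$, exactly as required.

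The key step is to verify that $J_\gamma$ is co-closed, i.e.\ $\delta J_\gamma = 0$. Using the explicit formula for $\delta$ on $1$-forms from Section~\ref{sec: coderivative}, one has $(\delta J_\gamma)_x = \sum_{j=1}^m \bigl( (J_\gamma)_j(x - \mathbf{e}_j) - (J_\gamma)_j(x) \bigr)$, where $(J_\gamma)_j(y)$ denotes the value of $J_\gamma$ on the positively oriented edge $dy_j$; this quantity measures the signed imbalance between the $\gamma$-edges entering and the $\gamma$-edges leaving $x$. Ordering the edges of $\gamma$ into an oriented closed path, each vertex $x$ visited by $\gamma$ has exactly one incoming and one outgoing $\gamma$-edge, and a short case analysis according to the orientations of these two edges shows that the incoming edge contributes $+1$ and the outgoing edge contributes $-1$, so they cancel and $(\delta J_\gamma)_x = 0$; at vertices not on $\gamma$ the sum is trivially zero. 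This is precisely the dual formulation of the elementary fact that an oriented loop is a cycle, $\sum_i (x_i - x_{i-1}) = 0$.

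Having established $\delta J_\gamma = 0$, and noting that $J_\gamma$ vanishes outside $B_N$ since $\gamma \subseteq E_N$, I would invoke Lemma~\ref{lemma: lemma 2.7} with $k = 1$ and $G = \mathbb{Z}$ to obtain a $\mathbb{Z}$-valued $2$-form $q$, vanishing outside $B_N$, with $\delta q = J_\gamma$. By the observations of the second paragraph, this $q$ is an oriented surface with $B_q = \gamma$, which completes the proof. I expect the main obstacle to be the careful bookkeeping in the key step: translating the purely combinatorial hypothesis that the edges of $\gamma$ form an oriented loop into the pointwise vanishing of $\delta J_\gamma$ requires unpacking the orientation conventions relating $(J_\gamma)_e$ to the direction in which $\gamma$ passes through each vertex. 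Everything after that is a direct application of the co-derivative Poincar\'e lemma together with the automatic range and boundary checks above.
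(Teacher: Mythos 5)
Your proposal is correct, and it is essentially the argument behind the cited result: the paper itself defers the proof to Lemma~5.3 of~\cite{flv2020}, whose proof likewise encodes \( \gamma \) as a \( \mathbb{Z} \)-valued co-closed \( 1 \)-form and inverts \( \delta \) via the co-derivative Poincar\'e lemma (Lemma~\ref{lemma: lemma 2.7}, valid for \( G = \mathbb{Z} \) since the discrete exterior calculus results hold for arbitrary abelian \( G \)). Your verification that \( \delta J_\gamma = 0 \) (one incoming and one outgoing edge at each visited vertex, contributing \( +1 \) and \( -1 \) respectively) and that \( \delta q = J_\gamma \) forces both the oriented-surface condition and \( B_q = \gamma \) is exactly the required bookkeeping.
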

 
If \( q \) is an oriented surface and \( \hat p \in \support q \), then \( \hat p \) is an \emph{internal plaquette} of \( q \) if 
\begin{equation*}
    \sum_{p \in \hat \partial e} q_p = 0 \quad \text{for every $e \in \partial \hat p$}.
\end{equation*}
Equivalently, a plaquette \( p \in \support q \) is an internal plaquette if   \( \pm \partial p \cap B_q = \emptyset\).

\subsubsection{Minimal configurations}\label{sec: minimal configurations}

In this section, we assume that \( m = 4 \). In other words, we assume that we are working on the \(\mathbb{Z}^4 \)-lattice.

The first lemma of this section gives a relationship between the size of the support of a plaquette configuration and the size of the support of the corresponding gauge field configuration.
\begin{lemma}\label{lemma: 6 plaquettes per edge}
    Let \( \sigma \in \Sigma_{E_N} \). Then
    \begin{equation*}
        |\support  \sigma| \geq |\support d \sigma|/6.
    \end{equation*}
\end{lemma}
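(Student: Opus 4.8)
The inequality is an incidence (double-counting) statement, so the plan is to count pairs $(e,p)$ with $e \in \support \sigma$, $p \in \support d\sigma$, and $e \in \partial p$, bounding the total from below using $\support d\sigma$ and from above using $\support \sigma$. The starting observation is that $d\sigma$ can only be nonzero on a plaquette that ``sees'' the support of $\sigma$: since $(d\sigma)_p = \sum_{e \in \partial p}\sigma_e$ by~\eqref{dfc0sumdfc}, if $p \in \support d\sigma$ then at least one of the four edges $e \in \partial p$ must satisfy $\sigma_e \neq 0$, i.e., $e \in \support \sigma$. Hence every plaquette in $\support d\sigma$ contributes at least one pair to the count, giving the lower bound
\begin{equation*}
    \bigl|\{(e,p) : e \in \support \sigma,\ p \in \support d\sigma,\ e \in \partial p\}\bigr| \geq |\support d\sigma|.
\end{equation*}

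For the upper bound, I would fix an edge $e \in \support \sigma$ and count the plaquettes $p$ with $e \in \partial p$; by definition this is exactly $\hat\partial e$, the co-boundary of $e$. The key combinatorial fact is that on $\mathbb{Z}^4$ each oriented edge lies in the boundary of exactly $6$ oriented plaquettes, i.e.\ $|\hat\partial e| = 6$. Indeed, for an edge in coordinate direction $j$, a plaquette containing it is spanned by $j$ and one of the $m-1 = 3$ remaining directions $i$, and for each such $i$ there are two plaquettes (on either side of $e$ in the $i$-direction); the requested orientation of $e$ selects a unique orientation of each of these $2(m-1)=6$ plaquettes. Since the pairs we are counting require additionally $p \in \support d\sigma \subseteq P_N$, restricting to this subset only decreases the count, so
\begin{equation*}
    \bigl|\{(e,p) : e \in \support \sigma,\ p \in \support d\sigma,\ e \in \partial p\}\bigr| \leq \sum_{e \in \support \sigma} |\hat\partial e| = 6\,|\support \sigma|.
\end{equation*}
Combining the two bounds yields $|\support d\sigma| \leq 6\,|\support \sigma|$, which is the claim.

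I do not expect any serious obstacle here; the only point requiring a little care is the orientation bookkeeping in the count $|\hat\partial e| = 6$, and the observation that $\support \sigma$ and $\support d\sigma$ are both symmetric (so the count is consistent whether one works with oriented cells or passes to non-oriented representatives). One should also confirm that plaquettes in $\partial p$ lying partly outside $B_N$ cause no issue, but this is automatic since both supports are by definition contained in $E_N$ and $P_N$ respectively, and the incidence relation $e \in \partial p$ is the same one appearing in~\eqref{dfc0sumdfc}.
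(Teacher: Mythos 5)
Your proof is correct and takes essentially the same route as the paper: the paper also observes that each $p \in \support d\sigma$ must contain an edge of $\support \sigma$ and that each edge lies in at most $6$ plaquettes of $P_N$, which is exactly your double count in compressed form. Your explicit handling of the incidence count and of edges near the boundary of $B_N$ (where the paper writes $|\hat\partial e \cap P_N| \leq 6$ rather than $=6$) is consistent with the paper's statement.
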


\begin{proof}
    Assume that \( p \in \support d \sigma \). Since \( (d \sigma)_p \neq 0 \), there must exist at least one edge \( e \in \partial p \subseteq E_N \) such that \(  \sigma_e \neq 0 \), and hence \( e \in \support  \sigma \). Since each edge \( e \in E_N \) satisfies \( \bigl| \hat \partial e \cap P_N \bigr| \leq 6 \), the desired conclusion follows.
\end{proof}

\begin{lemma}\label{lemma: minimal vortex I}
    Let \( \omega \in \Sigma_{P_N} \) be non-trivial, and assume that there is a plaquette \( p \in \support \omega \) such that \( \hat \partial \partial p \subseteq P_N \). Then
    \begin{enumerate}[label=\textnormal{(\roman*)}]
        \item \( | (\support \omega)^+ | \geq 6\), and \label{item: minimal vortex I i}
        \item if \(| (\support \omega)^+| = 6\), then there is an edge \( e_0 \in E_N \) such that \( \support \nu = \hat \partial e_0 \cup \hat \partial (-e_0)\).\label{item: minimal vortex}
    \end{enumerate}
\end{lemma}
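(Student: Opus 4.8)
The plan is to exploit the single structural consequence of closedness: since $\omega\in\Sigma_{P_N}$ satisfies $d\omega=0$, for every oriented $3$-cell $c$ we have $(d\omega)_c=\sum_{p\in\partial c}\omega_p=0$ by~\eqref{dfc0sumdfc} (this holds for all $c$, with the convention $\omega_p=0$ off $P_N$). In particular, no $3$-cell can have exactly one face at which $\omega$ is nonzero: its boundary $\partial c$ either meets $\support\omega$ in zero plaquettes or in at least two. I would fix coordinates so that $p_0=dx_1\wedge dx_2$ sits at the origin, and record the four distinct $3$-cells having $p_0$ as a face, namely the cells in directions $\{1,2,3\}$ and $\{1,2,4\}$ lying on either side of $p_0$; call them $A_\pm$ and $B_\pm$. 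Any two of these four cells share only the face $p_0$, so a non-$p_0$ face of one of them is never a face of another.

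For the lower bound~\ref{item: minimal vortex I i}, note first that because $\omega_{p_0}\neq 0$, the local fact applied to each of $A_\pm,B_\pm$ produces a second nonzero face in that cell. By the previous remark these four ``extra'' plaquettes are pairwise distinct and distinct from $p_0$, and they lie in $\support\omega\subseteq P_N$; this already gives $|(\support\omega)^+|\geq 5$. To reach $6$ one must propagate: picking one extra $q\in\support\omega\smallsetminus\{p_0\}$, the local fact applied to a $3$-cell incident to $q$ but not to $p_0$ forces a further nonzero plaquette. The role of the hypothesis $\hat\partial\partial p_0\subseteq P_N$ is precisely to guarantee that the flux cannot ``escape'' across the boundary of $B_N$ during this step, so that the forced plaquette genuinely lies in $P_N$ and is new. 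Carrying this out yields $|(\support\omega)^+|\geq 6$.

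For~\ref{item: minimal vortex}, I would run the same forcing under the standing assumption $|(\support\omega)^+|=6$, i.e.\ with a budget of exactly six plaquettes. The extra of each of $A_\pm,B_\pm$ is one of five candidate faces of the corresponding cell: the face parallel to $p_0$, or one of four faces each sharing a single edge of $p_0$. A parallel face shares no edge with $p_0$ and, by the propagation argument, triggers too many additional plaquettes to fit the budget; hence every extra must be an edge-sharing face. One then checks, again using the no-single-nonzero-face constraint together with the budget, that the four edge-sharing extras cannot be distributed among different edges of $p_0$ but must all contain one common edge $e_0\in\partial p_0$. Together with $p_0$ and the unique sixth plaquette (the remaining member of $\hat\partial e_0$), this identifies $\support\omega$ with the six plaquettes through $e_0$, and since $\support\omega$ is symmetric we conclude $\support\omega=\hat\partial e_0\cup\hat\partial(-e_0)$.

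I expect the main obstacle to be the finite case analysis in the equality case: ruling out parallel-face extras and showing the edge-sharing extras cannot split among several edges of $p_0$, while keeping track of orientations and signs in the constraints $\sum_{p\in\partial c}\omega_p=0$ and of exactly which neighbouring cells the hypothesis $\hat\partial\partial p_0\subseteq P_N$ places inside $P_N$. A cleaner route that I would also try is to prove the single statement that $|(\support\omega)^+|\leq 6$ implies $\support\omega=\hat\partial e_0\cup\hat\partial(-e_0)$ for some $e_0\in\partial p_0$, which yields~\ref{item: minimal vortex I i} and~\ref{item: minimal vortex} simultaneously and may streamline the bookkeeping.
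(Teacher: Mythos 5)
The paper itself gives no proof of this lemma; it refers the reader to Lemma~3.4.6 of \cite{sc2019}, so your proposal must stand on its own. Its skeleton is right: closedness means every $3$-cell of $B_N$ contains $0$ or at least $2$ support plaquettes among its faces, and since the four $3$-cells containing $p_0$ pairwise share only the face $p_0$, their ``extras'' are distinct, giving $|(\support\omega)^+|\geq 5$. But the two places where you defer the work are exactly where the proof lives, and the bridging claims you make there do not hold as stated.

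First, the propagation from $5$ to $6$. You assert that applying the local constraint to a $3$-cell incident to an extra $q$ but not to $p_0$ forces a plaquette that ``is new.'' This is false in general: the four extras can pair up to satisfy such second-generation constraints among themselves. Concretely, with $p_0 = dx_1\wedge dx_2$ at the origin, the extra of the $\{1,2,3\}$-cell at the origin can be $dx_1\wedge dx_3$ at the origin, and the extra of the $\{1,2,4\}$-cell at the origin can be $dx_1\wedge dx_4$ at the origin; the $\{1,3,4\}$-cell at the origin then contains both of these extras, and the zero-sum condition there is satisfiable with no sixth plaquette. To rule out a five-plaquette support one must instead select a cell such as the $\{1,2,3\}$-cell based at $-\mathbf{e}_2$ (which contains $dx_1\wedge dx_3$ at the origin but not $p_0$) and check, by exhausting the possible positions and types of the other four plaquettes, that none of them can serve as its second support face. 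That finite case analysis, over all placements of the extras (side faces versus parallel faces), \emph{is} the proof of part (i), and the same analysis with a budget of six plaquettes is the proof of part (ii); neither is carried out in your proposal.

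Second, you misattribute the role of the hypothesis $\hat\partial\partial p\subseteq P_N$. That forced plaquettes lie in $P_N$ is automatic, since $\support\omega\subseteq P_N$ by the definition of $\Sigma_{P_N}$; and newness is a purely combinatorial issue the hypothesis cannot settle. What genuinely needs the hypothesis is the right to invoke $d\omega=0$ at the cells you use: closedness of $\omega\in\Sigma_2$ constrains only $3$-cells lying inside $B_N$, which is exactly what fails in the boundary counterexamples of the remark following the lemma. The hypothesis guarantees this directly only for the four cells containing $p_0$, not for the second-generation cells your propagation needs. To cover those, one must additionally use that $B_N$ is a box, so that membership is checked coordinate-by-coordinate: the vertices of the plaquettes in $\hat\partial\partial p_0$ realize the coordinate ranges $\{-1,\dots,2\}$ in the two directions of $p_0$ and $\{-1,0,1\}$ in the two transverse directions, hence the whole block $[-1,2]^2\times[-1,1]^2$ in coordinates centered at $p_0$ lies in $B_N$; the case analysis must then be arranged to use only $3$-cells inside this block (this is possible, but requires choosing the cells with care). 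Without this observation, the constraint you want to propagate with is simply not available.
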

For a proof of Lemma~\ref{lemma: minimal vortex I}, see, e.g., Lemma~3.4.6~in~\cite{sc2019}.

\begin{remark}
    Without the assumption that \( \hat \partial \partial p \subseteq P_N \), the conclusion of Lemma~\ref{lemma: minimal vortex I} does not hold. To see this, let \( e \in E_N \) be in the boundary of \( B_N \), assume that \( \sigma \in \Sigma_{E_N} \) has support exactly at \( \pm e \). Then \( d\sigma \in \Sigma_{P_N} \), but we have \( |(\support d\sigma)^+| \in \{ 3,4,5\}, \) depending on the position of \( e \) at the boundary.
\end{remark}

\begin{lemma}\label{lemma: small 1forms}
    Let \( \sigma \in \Sigma_{E_N}^0 \) be non-trivial, and assume that there is an edge \( e\in \support \sigma \) such that \( \partial \hat \partial \partial \hat \partial e \subseteq E_N\). Then \(|(\support \sigma)^+ | \geq 8 \).
\end{lemma}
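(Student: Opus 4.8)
The plan is to use the fact that a closed $1$-form on $\mathbb{Z}^4$ is exact, and then to invoke an edge-isoperimetric count. First I would apply the Poincar\'e lemma (Lemma~\ref{lemma: poincare}) to write \( \sigma = dh \) for some \( G \)-valued \( 0 \)-form \( h \) on \( V_N \). Since \( \sigma \) is non-trivial, \( h \) is non-constant, and the hypothesis \( e \in \support\sigma \) says exactly that \( h \) takes different values at the two endpoints of \( e \). Under this identification a positively oriented edge lies in \( \support\sigma \) precisely when \( h \) jumps across it, so that \( (\support\sigma)^+ \) is the set of edges separating the level sets of \( h \); the quantity to be bounded is thus the edge boundary between these level sets.

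The target estimate is that this edge boundary contains at least \( 8 = 2\cdot 4 \) edges, where \( 8 \) is the degree of an interior vertex of \( \mathbb{Z}^4 \). This is an instance of the edge-isoperimetric inequality, sharp when a level set consists of a single vertex, whose coboundary is exactly its \( 8 \) incident edges. The role of the hypothesis \( \partial\hat\partial\partial\hat\partial e \subseteq E_N \) is to guarantee that all vertices, edges, and plaquettes entering the count lie deep enough inside \( B_N \) that the relevant vertices genuinely have full degree \( 8 \) and no boundary edges of \( B_N \) are inadvertently omitted; near \( \partial B_N \) a level set could have a smaller boundary, which is why such a room condition is needed.

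Concretely, I would first extract the bound \( 7 \) by a purely local argument. The six plaquettes in \( \hat\partial e \) all lie in \( P_N \) by the room hypothesis, and since \( d\sigma = 0 \) while \( \sigma_e \neq 0 \), each of them must contain a further edge of \( \support\sigma \); as these six plaquettes pairwise intersect only in \( \pm e \), the six extra edges are distinct, giving \( |(\support\sigma)^+| \geq 7 \). To upgrade \( 7 \) to \( 8 \), I would assume equality and derive a contradiction: equality forces each plaquette of \( \hat\partial e \) to contribute \emph{exactly} one further support edge and forbids any support edge outside these seven. A short analysis then shows that no contribution can be the edge parallel to \( e \) across its plaquette, since otherwise the adjacent second-layer plaquette --- present precisely because \( \partial\hat\partial\partial\hat\partial e \subseteq E_N \) --- would be forced to carry yet another support edge. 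Hence all six contributions are transversal edges at the two endpoints of \( e \); the perpendicular plaquettes then force the neighboring values of \( h \) about one endpoint to agree, so that this endpoint is surrounded on seven sides by jumps of \( h \). Its eighth incident edge, or else an edge one layer further out, is then also forced into \( \support\sigma \), contradicting the count of seven.

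The main obstacle is this final step, namely ruling out \( |(\support\sigma)^+| = 7 \); the rest is bookkeeping. I expect it to reduce to a finite and essentially symmetric case check, equivalent to the sharp statement that a non-empty finite level set of \( h \) has edge boundary at least \( 8 \) in \( \mathbb{Z}^4 \). The hypothesis on \( e \) is exactly what lets this local count be carried out with all participating cells inside \( E_N \), free of interference from the boundary of \( B_N \).
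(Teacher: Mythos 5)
Your first step coincides exactly with the paper's: closedness on the six plaquettes of \( \hat\partial e \), whose edge sets away from \( e \) are pairwise disjoint, yields six distinct further support edges and the bound \( 1+6=7 \). Where you genuinely diverge is in producing the eighth edge, and here the paper's route is both different and substantially lighter. Rather than assuming \( |(\support \sigma)^+|=7 \) and excluding it by a case analysis of a potential \( h \), the paper argues directly: fix any one of the six edges \( e' \) just found, choose a plaquette \( p'\in\hat\partial e' \) with \( (\partial p'\smallsetminus\{e'\})\cap(\partial\hat\partial e)=\emptyset \) (such a \( p' \) exists for every such \( e' \) in \( \mathbb{Z}^4 \)), and apply \( (d\sigma)_{p'}=0 \) once more: since \( \sigma_{e'}\neq 0 \), some \( e''\in\partial p'\smallsetminus\{e'\} \) has \( \sigma_{e''}\neq 0 \), and \( e''\in\partial\hat\partial\partial\hat\partial e\smallsetminus\partial\hat\partial e\subseteq E_N \) is automatically distinct from the seven edges already counted. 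No Poincar\'e lemma, no contradiction, no enumeration of configurations. Your route can be completed --- the case check you defer does close --- but it is real work and has a wrinkle worth flagging: contributions in two non-collinear transversal directions sitting on opposite endpoints of \( e \) are excluded by the plaquette those directions span, but for \emph{opposite} directions \( +\mathbf{e}_j \) and \( -\mathbf{e}_j \) no plaquette contains both corresponding edges, so that case must be settled by bringing in a third direction before you reach the configuration where one endpoint is surrounded by seven jumps and an edge one layer out gives the contradiction. Finally, your global isoperimetric framing (``a non-empty finite level set of \( h \) has edge boundary at least \( 8 \)'') cannot be invoked literally: the level set through an endpoint of \( e \) may extend all the way to \( \partial B_N \), where part of its edge boundary leaves \( E_N \) and stops counting toward \( \support\sigma \); the hypothesis \( \partial\hat\partial\partial\hat\partial e\subseteq E_N \) only protects a neighborhood of \( e \), so it is the local count there --- which is what both your case analysis and the paper's direct argument perform --- that carries the proof, not the global inequality.
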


\begin{proof}
    Let \( e\in \support \sigma \) be such that \( \partial \hat \partial \partial \hat \partial e \subseteq E_N\) (see Figure~\ref{fig: edge}). Since $\sigma$ is closed, \( (d\sigma)_p = 0 \) for each \( p \in \hat \partial e \). Hence, since \( \sigma_e \neq 0 \), for each \( p \in \hat \partial e \) there exists an edge \( e' \in \partial p \smallsetminus \{ e \} \) such that \( \sigma_{e'} \neq 0 \) (see Figure~\ref{fig: edge and selection}). Since  \( |\hat \partial e| = 6 \) (see Figure~\ref{fig: edge and adjacent plaquettes}), \( \partial \hat \partial e \subseteq E_N \), and for distinct \( p,p' \in \hat \partial e \) the sets  \( \partial p\smallsetminus \{ e \} \) and  \( \partial p'\smallsetminus \{ e \} \) are disjoint, it follows that
    \begin{equation*}
        \bigl|\bigl((\partial \hat \partial e)\smallsetminus \{ e \}\bigr) \cap \support \sigma \bigr| \geq 6.
    \end{equation*}
    
    Next, fix some \( e' \in (\partial \hat \partial e) \smallsetminus \{ e \} \) with \( \sigma_{e'} \neq 0 \). Then \( \hat \partial e' \) contains at least one plaquette \( p' \) with 
    \begin{equation*}
        \bigl( \partial p' \smallsetminus \{ e' \} \bigr) \cap \bigl( \partial \hat \partial e\bigr) = \emptyset.
    \end{equation*}
    Fix such a plaquette \( p' \) (see Figure~\ref{fig: edge and selection and plaquette}).
    Since \( d\sigma = 0 \), we must have \( (d\sigma)_{p'} = 0 \). Consequently, since \( e' \in \partial p' \) and \( \sigma_{e'} \neq 0 \), there must exist \( e'' \in \partial p' \smallsetminus \{ e' \} \) such that \( \sigma_{e''} \neq 0 \). Note that by the choice of \( p' \) we have \( e'' \in \partial \hat \partial \partial \hat \partial e \smallsetminus  \partial \hat \partial e \subseteq E_N\). Consequently, we must have
    \begin{equation*}
        \bigl| \bigl( (\partial \hat \partial \partial \hat \partial e ) \smallsetminus (\partial \hat \partial e )\bigr) \cap \support \sigma \bigr| \geq 1.
    \end{equation*}
    Combining the above observations, and recalling that by assumption, \( \partial \hat \partial \partial \hat \partial e \in E_N \), we obtain 
    \begin{equation*}
        |(\support \sigma)^+| \geq |\{ e \}| + \bigl|\bigl((\partial \hat \partial e)\smallsetminus \{ e \}\bigr) \cap \support \sigma\bigr| + \bigl| \bigl( (\partial \hat \partial \partial \hat \partial e ) \smallsetminus (\partial \hat \partial e )\bigr) \cap \support \sigma \bigr| 
        \geq 1+6+1 = 8,
    \end{equation*}
    which is the desired conclusion.
\end{proof}

\begin{figure}[ht]
     \centering
     \begin{subfigure}[t]{0.3\textwidth}
         \centering
         \begin{tikzpicture}
            \draw[white, line width=0.01mm] (0,0) -- (1,0) -- (1,1) -- (0,1) -- (0,0);
            \draw[white, line width=0.01mm] (0,0) -- (0,1) -- (-1,1) -- (-1,0) -- (0,0);
            \draw[white, line width=0.01mm] (0,0) -- (0.55,-0.3) -- (0.55,0.7) -- (0,1) -- (0,0);
            \draw[white, line width=0.01mm] (0,0) -- (0,1) -- (-0.55,1.3) -- (-0.55,0.3) -- (0,0);
            \draw[white, line width=0.01mm] (0,0) -- (0.7,0.25) -- (0.7,1.25) -- (0,1) -- (0,0);
            \draw[white, line width=0.01mm] (0,0) -- (0,1) -- (-0.7,0.75) -- (-0.7,-0.25) -- (0,0);
            
            \draw[thick, detailcolor03] (0,0) -- (0,1);
        \end{tikzpicture}
         \caption{An edge \( e \in \support \sigma\) (red).}
         \label{fig: edge}
     \end{subfigure}
     \hfill
     \begin{subfigure}[t]{0.3\textwidth}
         \centering
         \begin{tikzpicture}
            \filldraw[fill=detailcolor07, fill opacity=0.14, draw opacity=0.4, line width=0.01mm] (0,0) -- (1,0) -- (1,1) -- (0,1);
            \filldraw[fill=detailcolor07, fill opacity=0.14, draw opacity=0.4, line width=0.01mm] (0,1) -- (-1,1) -- (-1,0) -- (0,0);
            \filldraw[fill=detailcolor07, fill opacity=0.14, draw opacity=0.4, line width=0.01mm] (0,0) -- (0.55,-0.3) -- (0.55,0.7) -- (0,1);
            \filldraw[fill=detailcolor07, fill opacity=0.14, draw opacity=0.4, line width=0.01mm] (0,1) -- (-0.55,1.3) -- (-0.55,0.3) -- (0,0);
            \filldraw[fill=detailcolor07, fill opacity=0.14, draw opacity=0.4, line width=0.01mm] (0,0) -- (0.7,0.25) -- (0.7,1.25) -- (0,1);
            \filldraw[fill=detailcolor07, fill opacity=0.14, draw opacity=0.4, line width=0.01mm] (0,1) -- (-0.7,0.75) -- (-0.7,-0.25) -- (0,0);
            
            \draw[thick, detailcolor03] (0,0) -- (0,1);
        \end{tikzpicture}
         \caption{An edge \( e \in \support \sigma \) (red) and the plaquettes in the set \( \hat \partial e \) (purple).}
         \label{fig: edge and adjacent plaquettes}
     \end{subfigure}
     \hfill
     \begin{subfigure}[t]{0.3\textwidth}
         \centering
         \begin{tikzpicture}
            \filldraw[fill=detailcolor07, fill opacity=0.14, draw opacity=0.4, line width=0.01mm] (0,0) -- (1,0) -- (1,1) -- (0,1) -- (0,0);
            \filldraw[fill=detailcolor07, fill opacity=0.14, draw opacity=0.4, line width=0.01mm] (0,0) -- (0,1) -- (-1,1) -- (-1,0) -- (0,0);
            \filldraw[fill=detailcolor07, fill opacity=0.14, draw opacity=0.4, line width=0.01mm] (0,0) -- (0.55,-0.3) -- (0.55,0.7) -- (0,1) -- (0,0);
            \filldraw[fill=detailcolor07, fill opacity=0.14, draw opacity=0.4, line width=0.01mm] (0,0) -- (0,1) -- (-0.55,1.3) -- (-0.55,0.3) -- (0,0);
            \filldraw[fill=detailcolor07, fill opacity=0.14, draw opacity=0.4, line width=0.01mm] (0,0) -- (0.7,0.25) -- (0.7,1.25) -- (0,1) -- (0,0);
            \filldraw[fill=detailcolor07, fill opacity=0.14, draw opacity=0.4, line width=0.01mm] (0,0) -- (0,1) -- (-0.7,0.75) -- (-0.7,-0.25) -- (0,0);
            
            \draw[thick, detailcolor03] (0,0) -- (0,1);
            
            \draw[thick, Black] (0,1) -- (1,1);
            \draw[thick, Black] (0,1) -- (-1,1);
            \draw[thick, Black] (0,1) -- (0.55,0.7);
            \draw[thick, Black] (0,1) -- (-0.55,1.3);
            \draw[thick, Black] (0,1) -- (0.7,1.25);
            \draw[thick, Black] (0,1) -- (-0.7,0.75); 
            
        \end{tikzpicture}
         \caption{An edge \( e \in \support \sigma \) (red) and an edge \( e_p \in (\partial p \smallsetminus \{ e \}) \cap \support \sigma \) for each plaquette \( p \in \hat \partial e \) (black) (depending on \( \sigma \)).}
         \label{fig: edge and selection}
     \end{subfigure}
     
     \vspace{4ex}
     
     \begin{subfigure}[t]{0.3\textwidth}
         \centering
         \begin{tikzpicture}
         
            \filldraw[fill=detailcolor07, fill opacity=0.14, draw opacity=0.4, line width=0.01mm] (0,0) -- (1,0) -- (1,1) -- (0,1) -- (0,0);
            \filldraw[fill=detailcolor07, fill opacity=0.14, draw opacity=0.4, line width=0.01mm] (0,0) -- (0,1) -- (-1,1) -- (-1,0) -- (0,0);
            \filldraw[fill=detailcolor07, fill opacity=0.14, draw opacity=0.4, line width=0.01mm] (0,0) -- (0.55,-0.3) -- (0.55,0.7) -- (0,1) -- (0,0);
            \filldraw[fill=detailcolor07, fill opacity=0.14, draw opacity=0.4, line width=0.01mm] (0,0) -- (0,1) -- (-0.55,1.3) -- (-0.55,0.3) -- (0,0);
            \filldraw[fill=detailcolor07, fill opacity=0.14, draw opacity=0.4, line width=0.01mm] (0,0) -- (0.7,0.25) -- (0.7,1.25) -- (0,1) -- (0,0);
            \filldraw[fill=detailcolor07, fill opacity=0.14, draw opacity=0.4, line width=0.01mm] (0,0) -- (0,1) -- (-0.7,0.75) -- (-0.7,-0.25) -- (0,0);
            
            \draw[thick, detailcolor03] (0,0) -- (0,1); 
            
            \draw[thick, Black, opacity=0] (0,1) -- (0,2);
            
            \draw[thick, Black] (0,1) -- (1,1);
            \draw[ultra thick,dashed, Black] (0,1) -- (-1,1);
            \draw[thick, Black] (0,1) -- (0.55,0.7);
            \draw[thick, Black] (0,1) -- (-0.55,1.3);
            \draw[thick, Black] (0,1) -- (0.7,1.25);
            \draw[thick, Black] (0,1) -- (-0.7,0.75);  
            
        \end{tikzpicture}
         \caption{An edge \( e \in \support \sigma \) (red), a choice of one edge \( e_p \in (\partial p \smallsetminus \{ e \}) \cap \support \sigma \) for each plaquette \( p \in \hat \partial e \) (black), and an edge \( e' \in \cup_{p \in \hat \partial e} \{ e_p \}\) (dashed).}
         \label{fig: edge and selection and edge}
     \end{subfigure}
     \hfill
     \begin{subfigure}[t]{0.3\textwidth}
         \centering
         \begin{tikzpicture}
            \filldraw[fill=detailcolor07, fill opacity=0.14, draw opacity=0.4, line width=0.01mm] (0,0) -- (1,0) -- (1,1) -- (0,1) -- (0,0);
            \filldraw[fill=detailcolor07, fill opacity=0.14, draw opacity=0.4, line width=0.01mm] (0,0) -- (0,1) -- (-1,1) -- (-1,0) -- (0,0);
            \filldraw[fill=detailcolor07, fill opacity=0.14, draw opacity=0.4, line width=0.01mm] (0,0) -- (0.55,-0.3) -- (0.55,0.7) -- (0,1) -- (0,0);
            \filldraw[fill=detailcolor07, fill opacity=0.14, draw opacity=0.4, line width=0.01mm] (0,0) -- (0,1) -- (-0.55,1.3) -- (-0.55,0.3) -- (0,0);
            \filldraw[fill=detailcolor07, fill opacity=0.14, draw opacity=0.4, line width=0.01mm] (0,0) -- (0.7,0.25) -- (0.7,1.25) -- (0,1) -- (0,0);
            \filldraw[fill=detailcolor07, fill opacity=0.14, draw opacity=0.4, line width=0.01mm] (0,0) -- (0,1) -- (-0.7,0.75) -- (-0.7,-0.25) -- (0,0);
            
            \draw[thick, detailcolor03] (0,0) -- (0,1);
            
            \draw[thick, Black] (0,1) -- (1,1);
            \draw[ultra thick,dashed, Black] (0,1) -- (-1,1);
            \draw[thick, Black] (0,1) -- (0.55,0.7);
            \draw[thick, Black] (0,1) -- (-0.55,1.3);
            \draw[thick, Black] (0,1) -- (0.7,1.25);
            \draw[thick, Black] (0,1) -- (-0.7,0.75); 
            
            \filldraw[fill=detailcolor08, fill opacity=0.6, line width=0.01mm](0,1) -- (-1,1) -- (-1,2) -- (0,2) -- (0,1);
        \end{tikzpicture}
         \caption{An edge \( e \in \support \sigma \) (red), a choice of one edge \( e_p \in (\partial p \smallsetminus \{ e \}) \cap \support \sigma \) for each plaquette \( p \in \hat \partial e \) (black), and a plaquette \( p' \in \hat \partial e'  \) such that \( (\partial p' \smallsetminus \{ e' \}) \cap (\partial \hat \partial e) = \emptyset\) (blue).}
         \label{fig: edge and selection and plaquette}
     \end{subfigure}
     \hfill
     \begin{subfigure}[t]{0.3\textwidth}
         \centering
         \begin{tikzpicture}
            \filldraw[fill=detailcolor07, fill opacity=0.14, draw opacity=0.4, line width=0.01mm] (0,0) -- (1,0) -- (1,1) -- (0,1) -- (0,0);
            \filldraw[fill=detailcolor07, fill opacity=0.14, draw opacity=0.4, line width=0.01mm] (0,0) -- (0,1) -- (-1,1) -- (-1,0) -- (0,0);
            \filldraw[fill=detailcolor07, fill opacity=0.14, draw opacity=0.4, line width=0.01mm] (0,0) -- (0.55,-0.3) -- (0.55,0.7) -- (0,1) -- (0,0);
            \filldraw[fill=detailcolor07, fill opacity=0.14, draw opacity=0.4, line width=0.01mm] (0,0) -- (0,1) -- (-0.55,1.3) -- (-0.55,0.3) -- (0,0);
            \filldraw[fill=detailcolor07, fill opacity=0.14, draw opacity=0.4, line width=0.01mm] (0,0) -- (0.7,0.25) -- (0.7,1.25) -- (0,1) -- (0,0);
            \filldraw[fill=detailcolor07, fill opacity=0.14, draw opacity=0.4, line width=0.01mm] (0,0) -- (0,1) -- (-0.7,0.75) -- (-0.7,-0.25) -- (0,0);
            
            \draw[thick, detailcolor03] (0,0) -- (0,1);
            
            \draw[thick, Black] (0,1) -- (0,2);
            
            \draw[thick, Black] (0,1) -- (1,1);
            \draw[thick, Black] (0,1) -- (-1,1);
            \draw[thick, Black] (0,1) -- (0.55,0.7);
            \draw[thick, Black] (0,1) -- (-0.55,1.3);
            \draw[thick, Black] (0,1) -- (0.7,1.25);
            \draw[thick, Black] (0,1) -- (-0.7,0.75); 
        \end{tikzpicture}
         \caption{An edge \( e \) (red) and a set of edges (black) which correspond to a gauge field configuration \( \sigma \) with \( e \in \support \sigma \) and \( |(\support \sigma)^+| =8\).}
         \label{fig:five over x}
     \end{subfigure}
        \caption{The figures above illustrate the setting of Lemma~\ref{lemma: small 1forms}.}
        \label{fig: small 1forms}
\end{figure}
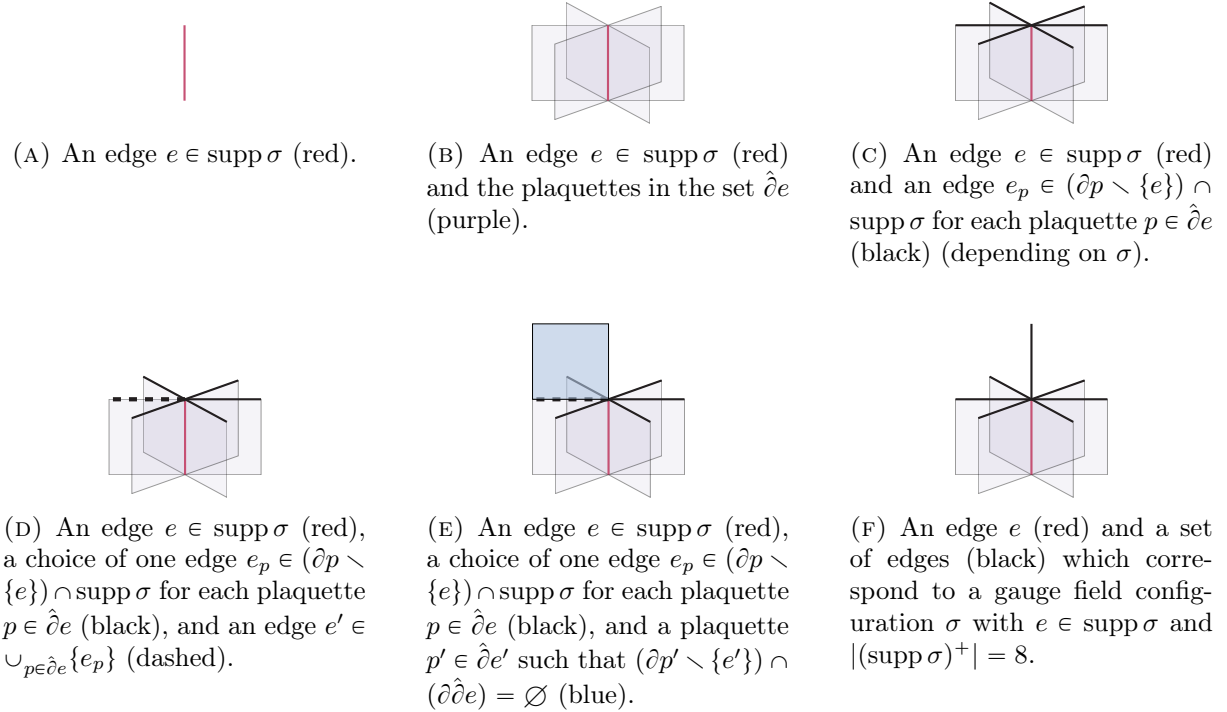

\begin{lemma}\label{lemma: other minimal configuration}
    Let \( g \in G\smallsetminus \{ 0 \}\), let \( e \in E_N \), and let \( \sigma \in \Sigma_{E_N} \) be such that \( (d\sigma)_p - \sigma_e = g \) for all \( p \in \hat \partial e \). Assume further that \( \partial \hat \partial \partial \hat \partial e \subseteq E_N\). Then, either
    \begin{enumerate}[label=\textnormal{(\roman*)}]
        \item \( \bigl|(\support \sigma)^+ \bigr| \geq 7\), or
        \item \( \bigl|(\support \sigma)^+\bigr| = 6\) and \( \bigl|(\support d\sigma)^+\bigr| \geq 12\).
    \end{enumerate}
\end{lemma}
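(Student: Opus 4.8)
The plan is to first extract six support edges directly from the hypothesis, then, in the single borderline case, to force the stronger conclusion by combining the closed-form lower bound of Lemma~\ref{lemma: small 1forms} with the minimal-vortex lower bound of Lemma~\ref{lemma: minimal vortex I}, applied to a one-edge modification of $\sigma$ at $e$.

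First I would rewrite the hypothesis. Since $e \in \partial p$ with coefficient $+1$ for every $p \in \hat\partial e$, we have $(d\sigma)_p = \sigma_e + \sum_{e' \in \partial p \smallsetminus \{e\}} \sigma_{e'}$, so the assumption $(d\sigma)_p - \sigma_e = g$ becomes
\[
\sum_{e' \in \partial p \smallsetminus \{e\}} \sigma_{e'} = g \neq 0, \qquad p \in \hat\partial e.
\]
Hence each of the six plaquettes $p \in \hat\partial e$ contains an edge $e' \in \partial p \smallsetminus \{e\}$ with $\sigma_{e'} \neq 0$. As in the proof of Lemma~\ref{lemma: small 1forms}, the sets $\partial p \smallsetminus \{e\}$, $p \in \hat\partial e$, are pairwise disjoint, so these six edges are distinct and none equals $\pm e$; thus $\bigl|\bigl((\support \sigma)\smallsetminus \{e\}\bigr)^+\bigr| \geq 6$. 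If $\sigma_e \neq 0$, or if strictly more than six edges occur, then $|(\support \sigma)^+| \geq 7$ and we land in case (i). The only remaining possibility is $\sigma_e = 0$ and $|(\support \sigma)^+| = 6$: then $\support \sigma$ is exactly one edge per plaquette, each carrying value $g$, and $(d\sigma)_p = \sigma_e + g = g \neq 0$ for $p \in \hat\partial e$, so $\hat\partial e \subseteq \support d\sigma$ and already $|(\support d\sigma)^+| \geq 6$.

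To upgrade this $6$ to the required $12$, I would introduce the $1$-form $\hat\sigma$ obtained from $\sigma$ by subtracting $g$ from its value on $e$, that is $\hat\sigma_e = -g$ and $\hat\sigma_{e'} = \sigma_{e'}$ for $e' \neq \pm e$. Subtracting $g$ on $e$ changes $(d\sigma)_p$ by exactly $-g = -(d\sigma)_p$ on the plaquettes of $\hat\partial e \cup \hat\partial(-e)$ and leaves every other plaquette unchanged, so $d\hat\sigma$ vanishes on $\hat\partial e \cup \hat\partial(-e)$ and agrees with $d\sigma$ elsewhere; consequently $\support d\hat\sigma = \support d\sigma \smallsetminus (\hat\partial e \cup \hat\partial(-e))$ and $|(\support d\hat\sigma)^+| = |(\support d\sigma)^+| - 6$. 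Moreover $\hat\sigma_e \neq 0$ and $|(\support \hat\sigma)^+| = 7$. Now I would dichotomize on whether $\hat\sigma$ is closed. If $d\hat\sigma = 0$, then $\hat\sigma \in \Sigma_{E_N}^0$ is non-trivial with $e \in \support \hat\sigma$, and since $\partial\hat\partial\partial\hat\partial e \subseteq E_N$ by assumption, Lemma~\ref{lemma: small 1forms} forces $|(\support \hat\sigma)^+| \geq 8$, contradicting $|(\support \hat\sigma)^+| = 7$. Hence $d\hat\sigma$ is a non-trivial closed $2$-form, and applying Lemma~\ref{lemma: minimal vortex I} to $\omega = d\hat\sigma$ gives $|(\support d\hat\sigma)^+| \geq 6$, whence $|(\support d\sigma)^+| = |(\support d\hat\sigma)^+| + 6 \geq 12$, which is case (ii).

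The step I expect to be the main obstacle is verifying the hypothesis of Lemma~\ref{lemma: minimal vortex I}, i.e.\ exhibiting a plaquette $p_0 \in \support d\hat\sigma$ with $\hat\partial\partial p_0 \subseteq P_N$. Since $\support \hat\sigma \subseteq \partial\hat\partial e$, one gets $\support d\hat\sigma \subseteq \hat\partial\partial\hat\partial e$, but a crude estimate places $\hat\partial\partial p_0$ one lattice layer outside the region controlled by $\partial\hat\partial\partial\hat\partial e \subseteq E_N$, so a careless choice of $p_0$ does not immediately satisfy the interiority requirement. The resolution I would pursue is to select $p_0$ among the plaquettes of $\support d\hat\sigma$ closest to $e$: because $\hat\sigma$ is supported within one lattice step of the endpoints of $e$, the form $d\hat\sigma$ must be non-zero on a plaquette whose corners lie within sup-distance one of $e$, and for such a $p_0$ the assumption $\partial\hat\partial\partial\hat\partial e \subseteq E_N$ does guarantee $\hat\partial\partial p_0 \subseteq P_N$. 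Making this ``a nearby plaquette in $\support d\hat\sigma$ always exists'' precise is the delicate point, and I would verify it by the same local closed-form reasoning used in Lemma~\ref{lemma: small 1forms}: if $d\hat\sigma$ vanished on all nearby plaquettes around a support edge, the closedness constraint would force more support edges than the at most seven available.
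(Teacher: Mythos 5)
Your first half --- extracting one support edge \( e_j \in \partial p_j \smallsetminus \{e\} \) from each of the six plaquettes \( p_j \in \hat\partial e \), using disjointness of the sets \( \partial p_j \smallsetminus \{e\} \), and reducing to the case \( \sigma_e = 0 \), \( |(\support \sigma)^+| = 6 \) --- is correct and coincides with the paper's first step. Your second half takes a genuinely different route (the paper instead counts directly, for each \( e_j \), at least two plaquettes of \( \hat\partial e_j \) meeting no other support edge), and the part of it that rules out \( d\hat\sigma = 0 \) via Lemma~\ref{lemma: small 1forms} is correct and elegant. However, the appeal to Lemma~\ref{lemma: minimal vortex I} has a genuine gap, exactly at the point you flagged, and your proposed repair is false. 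Take \( G = \mathbb{Z}_2 \), write \( x_1 = x_0 + \mathbf{v} \) for the endpoints of \( e \), and choose every \( e_j \) to be the rung of \( p_j \) at \( x_0 \); then \( \support \hat\sigma \) consists of \( \pm e \) together with the six edges at \( x_0 \) perpendicular to \( e \), all carrying the value \( 1 \). Each page and each plaquette spanned by two rungs contains exactly two support edges, so over \( \mathbb{Z}_2 \) the form \( d\hat\sigma \) vanishes there, and \( \support d\hat\sigma \) consists of precisely the six plaquettes spanned by a rung and the edge \( (x_0 - \mathbf{v}, x_0) \). These six plaquettes have all corners within sup-distance one of \( e \), yet each has a neighbouring plaquette, one further step below \( x_0 \), whose corners lie two steps below \( e \). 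Since every vertex met by \( \partial\hat\partial\partial\hat\partial e \) has \( \mathbf{v} \)-coordinate at least \( -1 \) relative to \( x_0 \), the hypothesis \( \partial\hat\partial\partial\hat\partial e \subseteq E_N \) is compatible with \( x_0 \) sitting at height \( -N+1 \); in that position none of the six plaquettes of \( \support d\hat\sigma \) satisfies \( \hat\partial\partial p_0 \subseteq P_N \), so Lemma~\ref{lemma: minimal vortex I} cannot be invoked at all, and your claim that corners within sup-distance one of \( e \) suffice for interiority fails as well. This does not contradict the lemma being proved (there \( |(\support d\hat\sigma)^+| = 6 \) and \( |(\support d\sigma)^+| = 12 \)); it is the derivation that breaks, by the same boundary phenomenon recorded in the remark following Lemma~\ref{lemma: minimal vortex I}.

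The structural reason the paper's count survives is worth internalizing: the plaquettes it needs to lie in \( P_N \) all belong to some \( \hat\partial e_j \), and \( \hat\partial e_j \subseteq P_N \) follows from \( \partial\hat\partial e_j \subseteq \partial\hat\partial\partial\hat\partial e \subseteq E_N \) --- a one-layer margin around \( e_j \), which the hypothesis provides. Your route needs a two-layer margin around a support plaquette of \( d\hat\sigma \) (interiority of all of its neighbours), which the hypothesis does not provide. To salvage your argument you would have to either strengthen the hypothesis of the lemma (e.g.\ assume \( \partial\hat\partial\partial\hat\partial\partial\hat\partial e \subseteq E_N \), which would then have to be propagated to every application of the lemma in the paper), or prove \( |(\support d\hat\sigma)^+| \geq 6 \) by a direct local count that tolerates the boundary --- at which point you are essentially rebuilding the paper's counting argument for \( \sigma \) itself.
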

 
\begin{proof}
    For each \( p \in \hat \partial e \), we have \( (d\sigma)_p - \sigma_e = g \neq 0 \) by assumption, and hence  \(| (\partial p\smallsetminus \{ e \}) \cap \support \sigma| \geq 1 \). Since \( |\hat \partial e|=6 \), \( \partial \hat \partial e \subseteq E_N\), and since for distinct \( p,p' \in \hat \partial e \), the sets \( \partial p\smallsetminus \{ e \} \) and \( \partial p'\smallsetminus \{ e \} \) are disjoint, it follows that \( |(\support \sigma)^+ | \geq  6 \). 
    
    Now assume that \( |(\support \sigma)^+ | = 6 \), and let \( \{ p_1,p_2,\dots,p_6 \} \coloneqq \hat \partial e \).
    For any \( j \in [6] \coloneqq \{1,2,3,4,5,6 \} \), we have \( (d\sigma)_{p_j}- \sigma_e = g \). 
    Since for any distinct \( j,j' = 1,2, \dots, 6 \), the sets \( \partial p_j \smallsetminus \{ e \} \) and \( \partial p_{j'} \smallsetminus \{ e \} \) are disjoint, and we have \( |\support \sigma| = 2 \cdot 6 \), it follows that there  exist \( e_1 \in \partial p_1 \smallsetminus \{ e \} \), \dots, \( e_6 \in \partial p_6\smallsetminus \{ e \} \), such that \( \support \sigma = \{ \pm e_1, \dots,  \pm e_6 \} \). 
    Now note that for any choice of \( e_j \in \partial p_j \smallsetminus \{ e \} \), \(j \in \{1,2, \dots, 6 \} \), we have \( |(\hat \partial e_j) \smallsetminus \bigcup_{k \in [6] \smallsetminus \{ j \}} \hat \partial e_k | \geq 2\) (see Figure~\ref{figure: more minimal configurations c}). 
    Also, we have \( \partial \hat \partial e_j \subseteq \partial \hat \partial \partial \hat \partial e \subseteq E_N,\) and hence \(  \hat \partial e_j \subseteq P_N.\)
    Consequently, since \( \support \sigma = \{ \pm e_1, \dots,  \pm e_6 \} \),
    \begin{equation*}
        \bigl|(\support d\sigma)^+ \bigr| \geq \sum_{j=1}^6 \biggl|(\hat \partial e_j) \, \setminus \bigcup_{k \in [6] \smallsetminus \{ j \}} \hat \partial e_k \biggr| \geq 12.
    \end{equation*}
    This concludes the proof.
\end{proof}

\begin{figure}[ht]
    \centering
    \begin{subfigure}[t]{0.3\textwidth}
         \centering
         \begin{tikzpicture}
            \filldraw[fill=detailcolor07, fill opacity=0.0, draw opacity=0.0, line width=0.01mm] (0,0) -- (1,0) -- (1,1) -- (0,1); 
            
            \filldraw[fill=detailcolor07, fill opacity=0.0, draw opacity=0.0, line width=0.01mm] (0,1) -- (-1,1) -- (-1,0) -- (0,0); 
            
            \filldraw[fill=detailcolor07, fill opacity=0.0, draw opacity=0.0, line width=0.01mm] (0,0) -- (0.55,-0.3) -- (0.55,0.7) -- (0,1); 
            
            \filldraw[fill=detailcolor07, fill opacity=0.0, draw opacity=0.0, line width=0.01mm] (0,1) -- (-0.55,1.3) -- (-0.55,0.3) -- (0,0);
            
            \filldraw[fill=detailcolor07, fill opacity=0.0, draw opacity=0.0, line width=0.01mm] (0,0) -- (0.7,0.25) -- (0.7,1.25) -- (0,1);
            
            \filldraw[fill=detailcolor07, fill opacity=0.0, draw opacity=0.0, line width=0.01mm] (0,1) -- (-0.7,0.75) -- (-0.7,-0.25) -- (0,0); 
            
            \draw[thick, detailcolor03] (0,0) -- (0,1); 
            \draw[draw opacity=0.0, line width=0.01mm] (0,1) -- (0,2); 
            
        \end{tikzpicture}
        \caption{The edge \( e \) (red).}
        \label{figure: more minimal configurations a}
        \end{subfigure}
        \hfil
        \begin{subfigure}[t]{0.3\textwidth}
         \centering
         \begin{tikzpicture}
            \filldraw[fill=detailcolor07, fill opacity=0.14, draw opacity=0.4, line width=0.01mm] (0,0) -- (1,0) -- (1,1) -- (0,1); 
            
            \filldraw[fill=detailcolor07, fill opacity=0.14, draw opacity=0.4, line width=0.01mm] (0,1) -- (-1,1) -- (-1,0) -- (0,0); 
            
            \filldraw[fill=detailcolor07, fill opacity=0.14, draw opacity=0.4, line width=0.01mm] (0,0) -- (0.55,-0.3) -- (0.55,0.7) -- (0,1); 
            
            \filldraw[fill=detailcolor07, fill opacity=0.14, draw opacity=0.4, line width=0.01mm] (0,1) -- (-0.55,1.3) -- (-0.55,0.3) -- (0,0);
            
            \filldraw[fill=detailcolor07, fill opacity=0.14, draw opacity=0.4, line width=0.01mm] (0,0) -- (0.7,0.25) -- (0.7,1.25) -- (0,1);
            
            \filldraw[fill=detailcolor07, fill opacity=0.14, draw opacity=0.4, line width=0.01mm] (0,1) -- (-0.7,0.75) -- (-0.7,-0.25) -- (0,0); 
             
            \draw[thick, detailcolor03] (0,0) -- (0,1); 
            \draw[draw opacity=0.0, line width=0.01mm] (0,1) -- (0,2); 
            
        \end{tikzpicture}
        \caption{The edge \( e \) (red), and the set \( \hat \partial e \) (purple).}
        \label{figure: more minimal configurations b}
        \end{subfigure}
        \hfil
        \begin{subfigure}[t]{0.3\textwidth}
         \centering
         \begin{tikzpicture}
            \filldraw[fill=detailcolor07, fill opacity=0.14, draw opacity=0.4, line width=0.01mm] (0,0) -- (1,0) -- (1,1) -- (0,1);
            \filldraw[fill=detailcolor07, fill opacity=0.14, draw opacity=0.4, line width=0.01mm] (0,1) -- (1,1) -- (1,2) -- (0,2);
            
            \filldraw[fill=detailcolor07, fill opacity=0.14, draw opacity=0.4, line width=0.01mm] (0,1) -- (-1,1) -- (-1,0) -- (0,0);
            \filldraw[fill=detailcolor07, fill opacity=0.14, draw opacity=0.4, line width=0.01mm] (0,2) -- (-1,2) -- (-1,1) -- (0,1); 
            
            \filldraw[fill=detailcolor07, fill opacity=0.14, draw opacity=0.4, line width=0.01mm] (0,0) -- (0.55,-0.3) -- (0.55,0.7) -- (0,1);
            \filldraw[fill=detailcolor07, fill opacity=0.14, draw opacity=0.4, line width=0.01mm] (0,1) -- (0.55,0.7) -- (0.55,1.7) -- (0,2); 
            
            \filldraw[fill=detailcolor07, fill opacity=0.14, draw opacity=0.4, line width=0.01mm] (0,1) -- (-0.55,1.3) -- (-0.55,0.3) -- (0,0);
            \filldraw[fill=detailcolor07, fill opacity=0.14, draw opacity=0.4, line width=0.01mm] (0,2) -- (-0.55,2.3) -- (-0.55,1.3) -- (0,1);
            \filldraw[fill=detailcolor07, fill opacity=0.14, draw opacity=0.4, line width=0.01mm] (0,0) -- (0.7,0.25) -- (0.7,1.25) -- (0,1);
            \filldraw[fill=detailcolor07, fill opacity=0.14, draw opacity=0.4, line width=0.01mm] (0,1) -- (0.7,1.25) -- (0.7,2.25) -- (0,2);
            
            \filldraw[fill=detailcolor07, fill opacity=0.14, draw opacity=0.4, line width=0.01mm] (0,1) -- (-0.7,0.75) -- (-0.7,-0.25) -- (0,0);
            \filldraw[fill=detailcolor07, fill opacity=0.14, draw opacity=0.4, line width=0.01mm] (0,2) -- (-0.7,1.75) -- (-0.7,0.75) -- (0,1);
            
            \draw[thick, detailcolor03] (0,0) -- (0,1);
            \draw[draw opacity=0.4, line width=0.01mm] (0,1) -- (0,2); 
            
            \draw[thick, Black] (0,1) -- (1,1);
            \draw[thick, Black] (0,1) -- (-1,1);
            \draw[thick, Black] (0,1) -- (0.55,0.7);
            \draw[thick, Black] (0,1) -- (-0.55,1.3);
            \draw[thick, Black] (0,1) -- (0.7,1.25);
            \draw[thick, Black] (0,1) -- (-0.7,0.75); 
        \end{tikzpicture}
        \caption{The edge \( e \) (red), the edges \( e_1,e_2, \dots, e_6 \) (black), and the set \( \bigcup_{j \in \{ 1,2,3,4,5,6\}} \bigl( (\hat \partial e_j) \, \setminus \bigcup_{k \in [6]\smallsetminus \{ j \}} \hat \partial e_k\bigr)\) (purple).}
        \label{figure: more minimal configurations c}
        \end{subfigure} 
        
        \caption{Given an edge \( e \) (see~\subref{figure: more minimal configurations a}) and a labelling \( \{ p_1,\dots, p_6\} \coloneqq \hat \partial e \) (see~\subref{figure: more minimal configurations b}),
        \subref{figure: more minimal configurations c} shows the set \( \bigcup_{j \in [6]} \bigl( (\hat \partial e_j) \, \setminus \bigcup_{k \in [6]\smallsetminus \{j \}} \hat \partial e_k \bigr)\) (purple) for one choice of \( e_1 \in \partial p_1 \smallsetminus \{ e \} \), \dots, \( e_6 \in \partial p_6 \smallsetminus \{ e \}\) (black). }
    \label{figure: more minimal configurations}
\end{figure}

\subsection{Unitary gauge}\label{sec:unitary_gauge}

The purpose of this section is to present Proposition~\ref{proposition: unitary gauge one dim}, which shows that when considering the expected value of a Wilson loop, by applying a gauge transformation, the action defined in~\eqref{eq: general fixed length action} can be replaced by an alternative simpler action which does not explicitly involve the Higgs field.

We now introduce some additional notation.
For \( \eta \in G^{V_N}\), consider the bijection \( \tau \coloneqq \tau_\eta \coloneqq \tau_\eta^{(1)} \times \tau_\eta^{(2)} \colon \Sigma_{E_N} \times \Phi_{V_N} \to  \Sigma_{E_N} \times \Phi_{V_N}\), defined by
\begin{equation}\label{eq: gauge transform}
    \begin{cases}
     \sigma_{e} \mapsto -\eta_x +\sigma_e + \eta_y, & e=(x,y)\in E_N, \cr
     \phi_x \mapsto  \rho(\eta_x)\phi_x, & x \in V_N.
    \end{cases}
\end{equation}
Any mapping \( \tau \) of this form is called a \emph{gauge transformation}, and functions \( f: \Sigma_{E_N} \times \Phi_{V_N}  \to \mathbb{C} \) which are invariant under such mappings in the sense that $f= f \circ \tau$ are said to be \emph{gauge invariant}. One easily verifies that the Wilson loop observable \( W_\gamma \) is gauge invariant.
Next, for \( \beta, \kappa \geq   0 \) and  \( \sigma \in E_N \), let
\begin{equation}\label{eq: fixed length action}
    S_{\beta,\kappa}(\sigma) 
    \coloneqq
    -\beta \sum_{p \in P_N}      \rho((d  \sigma)_p) - \kappa \sum_{e \in E_N}   \rho( \sigma_e).
\end{equation} 
Define
\begin{equation}\label{eq: fixed length unitary measure}
    \mu_{N,\beta,\kappa}(\sigma) 
    \coloneqq
    Z_{N,\beta,\kappa}^{-1} e^{-S_{\beta,\kappa}(\sigma)},
\end{equation} 
where \( Z_{N,\beta,\kappa} \) is a normalizing constant, 
and let \( \mathbb{E}_{N,\beta,\kappa} \) denote the corresponding expectation.

\begin{proposition}\label{proposition: unitary gauge one dim}
Let \( \beta,\kappa \geq 0 \), let \( \gamma \) be a simple loop in \( E_N \), and let \( f \colon \Sigma_{E_N} \times \Phi_{V_N} \to \mathbb{C}\) be gauge invariant.  Then 
\begin{equation*} 
    \mathbb{E}_{N,\beta,\kappa,\infty}[f(\sigma,\phi)] = 
    \mathbb{E}_{N,\beta,\kappa}[f(\sigma,1)], 
\end{equation*}
where \(\mathbb{E}_{N,\beta,\kappa,\infty}\) denotes expectation with respect to the measure defined in \eqref{eq: general fixed length measure}. In particular, 
\begin{equation*}
    \mathbb{E}_{N,\beta,\kappa,\infty}[W_\gamma] = 
    \mathbb{E}_{N,\beta,\kappa}[W_\gamma], 
\end{equation*}
where \(W_\gamma\) is the Wilson loop observable defined in \eqref{Wilsonloopdef}.
\end{proposition}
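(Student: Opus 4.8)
The plan is to exploit the gauge invariance of the action $S_{\beta,\kappa,\infty}$ in order to transform every Higgs configuration into the constant configuration $\phi\equiv 1$, at the cost of a translation of the gauge field that leaves the reference measure unchanged. Since $G=\mathbb{Z}_n$ is finite, $\Sigma_{E_N}$ and $\Phi_{V_N}$ are finite sets and all expectations are finite sums against $e^{-S_{\beta,\kappa,\infty}}$, so the argument reduces to a change of variables in a double sum.

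First I would verify that $S_{\beta,\kappa,\infty}\circ\tau_\eta = S_{\beta,\kappa,\infty}$ for every $\eta\in G^{V_N}$. For the Wilson term this is immediate: under $\tau_\eta$ the gauge field transforms as $\sigma_e \mapsto \sigma_e + (d\eta)_e$, so $(d\sigma)_p \mapsto (d\sigma)_p + (d\,d\eta)_p = (d\sigma)_p$ because $d\circ d = 0$. For the interaction term I would compute directly, using that $\rho$ is a unitary homomorphism (so $\overline{\rho(g)}=\rho(-g)$ and $\rho(g+h)=\rho(g)\rho(h)$): for $e=(x,y)$,
\begin{equation*}
\overline{\rho(\eta_y)\phi_y}\,\rho(\sigma_e + \eta_y - \eta_x)\,\rho(\eta_x)\phi_x = \overline{\phi_y}\,\rho(\sigma_e)\,\phi_x,
\end{equation*}
since the $\eta$-factors cancel in pairs. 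Hence each summand, and therefore the whole action, is invariant.

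Next, since $\rho$ is faithful and one-dimensional, $g\mapsto\rho(g)$ is a bijection from $G$ onto $\rho(G)$; thus for each $\phi\in\Phi_{V_N}$ there is a unique $\eta=\eta(\phi)\in G^{V_N}$ with $\rho(\eta_x)\phi_x=1$ for all $x$ (namely $\eta_x=-\rho^{-1}(\phi_x)$), so that $\tau_\eta^{(2)}(\phi)\equiv 1$. I would then write $\mathbb{E}_{N,\beta,\kappa,\infty}[f]$ as a double sum over $\sigma$ and $\phi$ and, for each fixed $\phi$, invoke the gauge invariance of both $f$ and $S_{\beta,\kappa,\infty}$ to replace $f(\sigma,\phi)e^{-S_{\beta,\kappa,\infty}(\sigma,\phi)}$ by $f(\tau_{\eta(\phi)}^{(1)}(\sigma),1)\,e^{-S_{\beta,\kappa}(\tau_{\eta(\phi)}^{(1)}(\sigma))}$, where I also use the identity $S_{\beta,\kappa,\infty}(\cdot,1)=S_{\beta,\kappa}$ coming straight from~\eqref{eq: general fixed length action} and~\eqref{eq: fixed length action}. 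The inner map $\sigma\mapsto\tau_{\eta(\phi)}^{(1)}(\sigma)=\sigma + d\eta(\phi)$ is a bijection of the finite set $\Sigma_{E_N}$, so after reindexing the inner sum becomes $\sum_{\tilde\sigma} f(\tilde\sigma,1)e^{-S_{\beta,\kappa}(\tilde\sigma)}$, which no longer depends on $\phi$.

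Summing over the $|\Phi_{V_N}|$ choices of $\phi$ then gives
\begin{equation*}
\mathbb{E}_{N,\beta,\kappa,\infty}[f(\sigma,\phi)] = \frac{|\Phi_{V_N}|\sum_{\sigma}f(\sigma,1)\,e^{-S_{\beta,\kappa}(\sigma)}}{Z_{N,\beta,\kappa,\infty}}.
\end{equation*}
Specializing to $f\equiv 1$ identifies the normalizing constants, $Z_{N,\beta,\kappa,\infty}=|\Phi_{V_N}|\,Z_{N,\beta,\kappa}$, and substituting this back yields exactly $\mathbb{E}_{N,\beta,\kappa}[f(\sigma,1)]$. The final claim about $W_\gamma$ follows since $W_\gamma$ is gauge invariant and in any case depends only on $\sigma$, so $W_\gamma(\sigma,1)=W_\gamma$. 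The only genuinely delicate point is the bookkeeping in the change of variables, namely ensuring that the $\phi$-dependent gauge transformation acts as a measure-preserving bijection on the $\sigma$-sum so that the result decouples from $\phi$; the algebraic gauge-invariance check is routine once one uses that $\rho$ is a unitary character.
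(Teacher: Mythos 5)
Your proposal is correct and follows essentially the same route as the paper: gauge invariance of $S_{\beta,\kappa,\infty}$, the choice of $\eta(\phi)$ with $\rho(\eta_x)\phi_x=1$ trivializing the Higgs field, the bijective reindexing of the $\sigma$-sum, and the cancellation of the $\phi$-sum against the normalization (the paper's Lemma~\ref{lemma: gauge transform}, Lemma~\ref{lemma: gauge}, and the concluding computation). The only cosmetic differences are that you justify invariance of the Wilson term via $\sigma\mapsto\sigma+d\eta$ and $d\circ d=0$ rather than the paper's direct cancellation over corners of $\partial p$, and that you identify $Z_{N,\beta,\kappa,\infty}=|\Phi_{V_N}|\,Z_{N,\beta,\kappa}$ explicitly where the paper cancels the sums in the ratio.
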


Proposition~\ref{proposition: unitary gauge one dim} is considered well-known in the physics literature.
Using this proposition, we will work with \( \sigma \sim \mu_{N,\beta, \kappa} \) rather than \( (\sigma,\phi) \sim \mu_{N,\beta,\kappa,\infty}\) throughout the rest of this paper.

\begin{lemma}\label{lemma: gauge transform}
    Let \( \beta,\kappa \geq 0 \), \( \sigma \in \Sigma_{E_N} \), \( \phi \in \Phi_{V_N} \), and \( \eta \in G^{V_N} \). Then 
    \begin{equation*}
    S_{\beta,\kappa,\infty}(\sigma,\phi ) =  S_{\beta,\kappa,\infty}(\tau_\eta(\sigma,\phi)).
\end{equation*}
In other words, the action functional for the fixed length lattice Higgs model is gauge invariant.
\end{lemma}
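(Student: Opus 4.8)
The plan is to write $(\sigma',\phi')\coloneqq \tau_\eta(\sigma,\phi)$ explicitly, so that $\sigma'_e = -\eta_x + \sigma_e + \eta_y$ for $e=(x,y)\in E_N$ and $\phi'_x = \rho(\eta_x)\phi_x$ for $x \in V_N$, and then to verify separately that each of the two summands $\beta S^W$ and $\kappa S^I$ appearing in \eqref{eq: general fixed length action} is left unchanged. Since $S_{\beta,\kappa,\infty} = \beta S^W + \kappa S^I$, invariance of each piece immediately yields the claim.

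First I would treat the Wilson term $S^W(\sigma') = -\sum_{p\in P_N}\rho((d\sigma')_p)$. The key observation is that the extra contribution to $\sigma'_e$ is exactly the exterior derivative of the $0$-form $\eta$, i.e.\ $\sigma'_e = \sigma_e + (d\eta)_e$ with $(d\eta)_e = \eta_y - \eta_x$ for $e=(x,y)$. Hence $d\sigma' = d\sigma + d(d\eta)$, and because $d\circ d = 0$ (which follows from \eqref{dfc0sumdfc} together with the cell structure, and is the same cancellation that underlies Lemma~\ref{lemma: Bianchi}), we get $(d\sigma')_p = (d\sigma)_p$ for every $p \in P_N$. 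Concretely, $\sum_{e\in\partial p}(d\eta)_e = 0$ since $\partial p$ is a closed loop of four edges and the $\eta$ values at its corners telescope. Therefore $\rho((d\sigma')_p)=\rho((d\sigma)_p)$ and $S^W(\sigma') = S^W(\sigma)$.

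Next I would treat the interaction term $S^I(\sigma',\phi') = -\sum_{e=(x,y)\in E_N}\overline{\phi'_y}\,\rho(\sigma'_e)\,\phi'_x$, working edge by edge. Substituting the definitions of $\sigma'$ and $\phi'$, the $e$-th summand is $\overline{\rho(\eta_y)\phi_y}\,\rho(-\eta_x+\sigma_e+\eta_y)\,\rho(\eta_x)\phi_x$. Here I would use that $\rho$ is a one-dimensional unitary representation, so it is a group homomorphism into the unit circle satisfying $\rho(a+b)=\rho(a)\rho(b)$ and $\overline{\rho(g)}=\rho(-g)=\rho(g)^{-1}$. Expanding, the factors $\overline{\rho(\eta_y)}\,\rho(\eta_y)$ and $\rho(-\eta_x)\,\rho(\eta_x)$ each collapse to $1$, leaving precisely $\overline{\phi_y}\,\rho(\sigma_e)\,\phi_x$. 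Summing over $e$ gives $S^I(\sigma',\phi')=S^I(\sigma,\phi)$.

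Neither step presents a genuine obstacle; the only thing to be careful about is the bookkeeping in the interaction term, namely tracking the complex conjugation and confirming that the $\eta$-dependent phases cancel using exactly that $\rho$ is both multiplicative and unitary — without one-dimensionality and faithfulness of $\rho$ this cancellation of phases around each edge would not be available. Combining the two displayed identities gives $S_{\beta,\kappa,\infty}(\tau_\eta(\sigma,\phi)) = \beta S^W(\sigma) + \kappa S^I(\sigma,\phi) = S_{\beta,\kappa,\infty}(\sigma,\phi)$, as desired.
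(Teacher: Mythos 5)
Your proof is correct and follows essentially the same route as the paper: the Wilson term is unchanged because the $\eta$-contributions telescope around each plaquette (your $d(d\eta)=0$ observation is exactly the paper's remark that each corner of $p$ is an endpoint of exactly two edges of $\partial p$), and the interaction term is unchanged by inserting the unit factors $\overline{\rho(\eta_y)}\rho(\eta_y)=1$ and $\overline{\rho(\eta_x)}\rho(\eta_x)=1$, precisely as in the paper. One minor correction: faithfulness of $\rho$ plays no role in this cancellation — unitarity, multiplicativity, and one-dimensionality suffice.
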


\begin{proof}
    For each \( p \in P_N \), since each corner in \( p \) is an endpoint of exactly two edges in \( \partial p \), we have
    \begin{equation*}
        (d\sigma)_p = \sum_{e \in \partial p} \sigma_e
        = \sum_{e = (x,y)\in \partial p} (-\eta_x+\sigma_e + \eta_y).
    \end{equation*}
    Moreover, for any edge \( e = (x,y) \in E_N \), we have
    \begin{equation*}
        \overline{\phi_y} \rho(\sigma_e) \phi_x = 
        \overline{ \phi_y} \overline{\rho(\eta_y)}\rho(\eta_y) \rho(\sigma_e)\overline{\rho(\eta_x)}\rho(\eta_x) \phi_x = 
        \overline{\rho(\eta_y) \phi_y}   \rho(-\eta_x + \sigma_e+\eta_y) \rho(\eta_x) \phi_x.
    \end{equation*}
    From this the desired conclusion immediately follows.
\end{proof}

The main tool in the proof of Proposition~\ref{proposition: unitary gauge one dim} is the following lemma.

\begin{lemma}\label{lemma: gauge}
    Let \( \beta,\kappa \geq 0 \), and assume that  \( f \colon \Sigma_{E_N} \times \Phi_{V_N} \to \mathbb{C} \) is a gauge invariant function. Then, for any \( \phi \in \Phi_{V_N} \), we have
    \begin{equation}\label{eq: gauge}
        \sum_{\sigma \in \Sigma_{E_N}}  f(\sigma,\phi) e^{-S_{\beta,\kappa, \infty}(\sigma, \phi)} 
        =
        \sum_{\sigma \in \Sigma_{E_N}}  f(\sigma,1  ) e^{-S_{\beta,\kappa}(\sigma)} .
    \end{equation}
\end{lemma}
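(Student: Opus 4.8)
The plan is to exploit the gauge invariance of both $f$ and the action $S_{\beta,\kappa,\infty}$ — the latter established in Lemma~\ref{lemma: gauge transform} — together with a change of variables induced by a carefully chosen gauge transformation. Fix $\phi \in \Phi_{V_N}$. Since $\phi \in \Phi_{V_N}$ means $\phi_x \in \rho(G)$ for every $x \in V_N$, and since $\rho$ is faithful and hence a bijection from $G$ onto $\rho(G)$, there is for each $x$ a unique $g_x \in G$ with $\rho(g_x) = \phi_x$. Setting $\eta_x \coloneqq -g_x$ defines an element $\eta \in G^{V_N}$, and the second component of the associated gauge transformation satisfies $\tau_\eta^{(2)}(\phi)_x = \rho(\eta_x)\phi_x = \rho(-g_x)\rho(g_x) = \rho(0) = 1$ for every $x$. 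Thus $\tau_\eta$ carries the Higgs field $\phi$ to the constant field $1$.

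Next I would rewrite the summand on the left-hand side of~\eqref{eq: gauge}. By gauge invariance of $f$ we have $f(\sigma, \phi) = f(\tau_\eta(\sigma, \phi)) = f(\tau_\eta^{(1)}(\sigma), \tau_\eta^{(2)}(\phi))$, and by Lemma~\ref{lemma: gauge transform} we have $S_{\beta,\kappa,\infty}(\sigma, \phi) = S_{\beta,\kappa,\infty}(\tau_\eta^{(1)}(\sigma), \tau_\eta^{(2)}(\phi))$. Substituting $\tau_\eta^{(2)}(\phi) = 1$, the summand becomes $f(\tau_\eta^{(1)}(\sigma), 1)\, e^{-S_{\beta,\kappa,\infty}(\tau_\eta^{(1)}(\sigma), 1)}$. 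Since $\tau_\eta^{(1)} \colon \Sigma_{E_N} \to \Sigma_{E_N}$ is a bijection for fixed $\eta$, with inverse $\tau_{-\eta}^{(1)}$, the change of variables $\sigma' = \tau_\eta^{(1)}(\sigma)$ leaves the total sum over $\Sigma_{E_N}$ unchanged, yielding $\sum_{\sigma' \in \Sigma_{E_N}} f(\sigma', 1)\, e^{-S_{\beta,\kappa,\infty}(\sigma', 1)}$.

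Finally I would verify that $S_{\beta,\kappa,\infty}(\sigma, 1) = S_{\beta,\kappa}(\sigma)$ for every $\sigma$, which is a direct computation: substituting $\phi \equiv 1$ into~\eqref{eq: general fixed length action} turns the interaction term $\overline{\phi_y}\rho(\sigma_e)\phi_x$ into $\rho(\sigma_e)$, so that $S_{\beta,\kappa,\infty}(\sigma,1) = -\beta\sum_{p \in P_N}\rho((d\sigma)_p) - \kappa\sum_{e \in E_N}\rho(\sigma_e)$, which is precisely the definition~\eqref{eq: fixed length action} of $S_{\beta,\kappa}$. This identifies the right-hand side of the claimed identity and completes the argument.

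I do not anticipate a genuine obstacle here; the only point demanding care is the construction of $\eta$, which relies essentially on working with $\phi \in \Phi_{V_N}$ — so that $\phi_x$ lies in $\rho(G)$ rather than merely having unit modulus — and on the faithfulness of $\rho$ to invert $\phi_x = \rho(g_x)$. One should also note that the change of variables is legitimate precisely because, with $\phi$ held fixed, $\tau_\eta^{(1)}$ acts as a bijection on $\Sigma_{E_N}$; the fact that the full gauge transformation also moves $\phi$ is harmless, since $f$ and the action depend on the pair $(\sigma,\phi)$ jointly and gauge-invariantly.
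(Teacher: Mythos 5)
Your proposal is correct and follows essentially the same route as the paper's own proof: choose $\eta \in G^{V_N}$ with $\rho(\eta_x)\phi_x = 1$, use gauge invariance of $f$ together with Lemma~\ref{lemma: gauge transform} to rewrite the summand, and conclude via the bijection $\tau_\eta^{(1)}$ on $\Sigma_{E_N}$. Your explicit justification of the existence of $\eta$ via faithfulness of $\rho$, and your explicit verification that $S_{\beta,\kappa,\infty}(\sigma,1) = S_{\beta,\kappa}(\sigma)$, are points the paper leaves implicit, so if anything your write-up is slightly more complete.
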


\begin{proof}
    Fix some \( \phi\in \Phi_{V_N} \).
    Let \( \eta \in G^{V_N} \) be such that  \( \rho(\eta_x) \phi_x = 1 \) for all \( x \in V_N.\) Then, by Lemma~\ref{lemma: gauge}, we have
    \begin{align*}
      &S_{\beta,\kappa, \infty}(\sigma, \phi) =
        S_{\beta,\kappa, \infty}\bigl(\tau_\eta(\sigma, \phi)\bigr)  ,\quad \sigma \in \Sigma_{E_N}.
    \end{align*}
    For our choice of \( \eta \), we have  \( \tau_\eta^{(2)}(\phi) = \rho(\eta_x) \phi_x = 1 \) for all \( x \in V_N \), and hence
    \begin{align*}
      &
        S_{\beta,\kappa, \infty}\bigl(\tau_\eta(\sigma, \phi)\bigr) = S_{\beta,\kappa}\bigl(\tau_\eta^{(1)}(\sigma)\bigr),\quad \sigma \in \Sigma_{E_N},
    \end{align*}
    and since \( \tau_\eta \) is a gauge transformation and \( f \) is gauge invariant, we also have
    \begin{equation*}
        f(\sigma,\phi) = f\bigl(\tau_\eta(\sigma,\phi)\bigr) =  
        f\bigl(\tau_\eta^{(1)}(\sigma),1 \bigr), \quad  \sigma \in \Sigma_{E_N}.
    \end{equation*}
    Combining the previous equations, it follows that for any \( \sigma \in \Sigma_{E_N} \), we have
    \begin{align*}
        &
        f(\sigma,\phi) e^{-S_{\beta,\kappa, \infty}(\sigma, \phi)} 
        =
        f(\tau_\eta^{(1)}(\sigma),1) e^{-S_{\beta,\kappa}(\tau_\eta^{(1)}(\sigma))} .
    \end{align*}
    Finally, since \( \tau_\eta^{(1)} \colon \Sigma_{E_N} \to \Sigma_{E_N}\) is a bijection, we have 
    \begin{align*}
        & 
        \sum_{\sigma \in \Sigma_{E_N}}   f\bigl(\tau_\eta^{(1)}(\sigma),1\bigr) e^{-S_{\beta,\kappa}(\tau_\eta^{(1)}(\sigma))}
        =
        \sum_{\sigma \in \Sigma_{E_N}}   f(\sigma,1) e^{-S_{\beta,\kappa}(\sigma)}.
    \end{align*}
    Combining the two previous equations, we obtain~\eqref{eq: gauge} as desired.
\end{proof}

\begin{proof}[Proof of Proposition~\ref{proposition: unitary gauge one dim}]
    Since the functions \( (\sigma,\phi) \mapsto f(\sigma,\phi) \) and  \( (\sigma,\phi) \mapsto 1 \) are both gauge invariant, it follows from Lemma~\ref{lemma: gauge} that
    \begin{align*}     
        &\mathbb{E}_{N,\beta,\kappa,\infty}[f(\sigma,\phi)] =
        \frac{
            \sum_{\phi \in \Phi_{V_N}} \sum_{\sigma \in \Sigma_{E_N}}
            f(\sigma,\phi)
            e^{-S_{\beta,\kappa,\infty}(\sigma,\phi)}
        }{
            \sum_{\phi \in \Phi_{V_N}} \sum_{\sigma \in \Sigma_{E_N}}
            e^{-S_{\beta,\kappa,\infty}(\sigma,\phi)}
        }
        = 
        \frac{
            \sum_{\phi \in \Phi_{V_N}} 
            \sum_{\sigma \in \Sigma_{E_N}}
            f(\sigma,1)
            e^{-S_{\beta,\kappa}(\sigma)}
        }{
            \sum_{\phi \in \Phi_{V_N}}\sum_{\sigma \in \Sigma_{E_N}} e^{-S_{\beta,\kappa}(\sigma)}
        }
        \\&\qquad = 
        \frac{
            \sum_{\sigma \in \Sigma_{E_N}} 
            f(\sigma,1)
            e^{-S_{\beta,\kappa}(\sigma)}
        }{
            \sum_{\sigma \in \Sigma_{E_N}}  e^{-S_{\beta,\kappa}(\sigma)}
        }
        =
        \mathbb{E}_{N,\beta,\kappa}[f(\sigma,1)],
    \end{align*}
    which is the desired conclusion. 
\end{proof}

The following result is not used in this paper, but since the proof is quite short given the work done in this section we choose to include it. 
\begin{proposition}\label{proposition: gauge relationships i}
    Let \( \beta,\kappa \geq 0 \). Then the following hold.
    \begin{enumerate}[label=\textnormal{(\roman*)}]
        \item If \( (\sigma,\phi) \sim \mu_{\beta,\kappa,\infty} \), then the marginal distribution of \( \phi \) is uniform on \( \rho(G)^{V_N} \). \label{item: gauge relationships i}
        
        \item If \( (\sigma,\phi) \sim \mu_{\beta,\kappa,\infty} \), and \( \eta_\phi \in G^{V_N} \) is such that \( \rho( \eta_\phi) = \phi \), then \( \tau_{-\eta_\phi}^{(1)} (\sigma)\sim \mu_{\beta,\kappa}.\)\label{item: gauge relationships ii}
        
        \item  If \( \sigma \sim \mu_{\beta,\kappa} \) and \( \eta \sim \unif G^{V_N}  \), then \( \tau_\eta(\sigma,1) \sim \mu_{\beta,\kappa,\infty}.\)\label{item: gauge relationships iii}
    \end{enumerate}
\end{proposition}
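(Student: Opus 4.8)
The plan is to exploit two structural facts already at hand: the gauge invariance of the fixed-length action (Lemma~\ref{lemma: gauge transform}), and the fact that the maps $\tau_\eta^{(1)}$ form a group action on $\Sigma_{E_N}$, i.e.\ $\tau_a^{(1)}\circ\tau_b^{(1)}=\tau_{a+b}^{(1)}$ with $\tau_0^{(1)}=\mathrm{id}$. At the outset I would record that $\tau_\eta^{(2)}(1)=\rho(\eta)$ (the configuration $x\mapsto\rho(\eta_x)$), that $\eta\mapsto\rho(\eta)$ is a bijection $G^{V_N}\to\Phi_{V_N}=\rho(G)^{V_N}$ since $\rho$ is faithful, and that $S_{\beta,\kappa}(\sigma)=S_{\beta,\kappa,\infty}(\sigma,1)$, which is immediate from comparing~\eqref{eq: general fixed length action} and~\eqref{eq: fixed length action}. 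With these in place all three parts are short rearrangements of finite sums.

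For~\ref{item: gauge relationships i} I would apply Lemma~\ref{lemma: gauge} with the gauge invariant function $f\equiv 1$. This gives $\sum_{\sigma}e^{-S_{\beta,\kappa,\infty}(\sigma,\phi)}=\sum_{\sigma}e^{-S_{\beta,\kappa}(\sigma)}=Z_{N,\beta,\kappa}$ for \emph{every} $\phi\in\Phi_{V_N}$. Since the $\phi$-marginal of $\mu_{\beta,\kappa,\infty}$ assigns to each $\phi$ a mass proportional to $\sum_{\sigma}e^{-S_{\beta,\kappa,\infty}(\sigma,\phi)}$, which is the same constant for all $\phi$, the marginal is uniform on $\Phi_{V_N}$. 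Summing this identity over the $|G|^{|V_N|}$ elements $\phi\in\Phi_{V_N}$ also yields the normalization $Z_{N,\beta,\kappa,\infty}=|G|^{|V_N|}\,Z_{N,\beta,\kappa}$, which I reuse below.

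The cleanest route to~\ref{item: gauge relationships iii} is a direct change of variables. For a test function $h$ I would write $\mathbb{E}\bigl[h(\tau_\eta(\sigma,1))\bigr]=|G|^{-|V_N|}Z_{N,\beta,\kappa}^{-1}\sum_{\eta}\sum_{\sigma}h(\tau_\eta(\sigma,1))\,e^{-S_{\beta,\kappa}(\sigma)}$, then replace $S_{\beta,\kappa}(\sigma)$ by $S_{\beta,\kappa,\infty}(\tau_\eta(\sigma,1))$ using $S_{\beta,\kappa}(\sigma)=S_{\beta,\kappa,\infty}(\sigma,1)$ together with Lemma~\ref{lemma: gauge transform}. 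The key step is that $(\eta,\sigma)\mapsto(\rho(\eta),\tau_\eta^{(1)}(\sigma))$ is a bijection from $G^{V_N}\times\Sigma_{E_N}$ onto $\Phi_{V_N}\times\Sigma_{E_N}$: the first coordinate is bijective by faithfulness of $\rho$, and for each fixed $\eta$ the second is bijective with inverse $\tau_{-\eta}^{(1)}$. Carrying out this substitution turns the double sum into $\sum_{\phi}\sum_{\sigma}h(\sigma,\phi)\,e^{-S_{\beta,\kappa,\infty}(\sigma,\phi)}$, and the prefactor $|G|^{-|V_N|}Z_{N,\beta,\kappa}^{-1}$ equals $Z_{N,\beta,\kappa,\infty}^{-1}$ by the normalization from~\ref{item: gauge relationships i}; this is precisely $\mathbb{E}_{N,\beta,\kappa,\infty}[h(\sigma,\phi)]$.

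Finally,~\ref{item: gauge relationships ii} is the pathwise inverse of~\ref{item: gauge relationships iii}. Realizing $(\sigma,\phi)\sim\mu_{\beta,\kappa,\infty}$ as $\tau_\eta(\sigma_0,1)$ with $\sigma_0\sim\mu_{\beta,\kappa}$ and $\eta\sim\unif\,G^{V_N}$ via~\ref{item: gauge relationships iii}, one has $\phi=\rho(\eta)$, so $\eta_\phi=\eta$ by faithfulness of $\rho$, whence $\tau_{-\eta_\phi}^{(1)}(\sigma)=\tau_{-\eta}^{(1)}(\tau_\eta^{(1)}(\sigma_0))=\sigma_0\sim\mu_{\beta,\kappa}$ by the group law. (Alternatively one argues directly that $f(\sigma,\phi):=g(\tau_{-\eta_\phi}^{(1)}(\sigma))$ is gauge invariant, using $\eta_{\tau_\zeta^{(2)}(\phi)}=\zeta+\eta_\phi$ and $\tau_{-(\zeta+\eta_\phi)}^{(1)}\circ\tau_\zeta^{(1)}=\tau_{-\eta_\phi}^{(1)}$, and then invokes Proposition~\ref{proposition: unitary gauge one dim} together with $f(\sigma,1)=g(\sigma)$.) The only genuinely delicate point throughout is the bookkeeping of how the gauge parameter $\eta_\phi$ transforms and the verification of the group-action identities for $\tau^{(1)}$; once these are pinned down, every remaining step is a routine manipulation of finite sums.
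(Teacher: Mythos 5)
Your proposal is correct. Parts \ref{item: gauge relationships i} and \ref{item: gauge relationships iii} follow essentially the paper's own route: \ref{item: gauge relationships i} is exactly Lemma~\ref{lemma: gauge} applied with \( f \equiv 1 \), and your change-of-variables argument for \ref{item: gauge relationships iii} --- built on the bijection \( (\eta,\sigma) \mapsto (\rho(\eta), \tau_\eta^{(1)}(\sigma)) \) together with gauge invariance of the action --- is just the ``summed'' form of the paper's pointwise computation, which fixes \( (\sigma,\phi) \) and counts preimages under the same map, finding exactly one by faithfulness of \( \rho \) and bijectivity of \( \tau_\eta^{(1)} \). Where you genuinely diverge is part \ref{item: gauge relationships ii}: the paper proves it by a second, independent counting computation parallel to \ref{item: gauge relationships iii} (fixing \( \sigma' \), summing \( e^{-S_{\beta,\kappa,\infty}(\sigma,\phi)} \) over the event \( \{ \tau_{\eta_\phi}(\sigma',1) = (\sigma,\phi) \} \), and finding \( |\Phi_{V_N}| \) equal terms), whereas you deduce \ref{item: gauge relationships ii} from \ref{item: gauge relationships iii} by a pathwise representation argument: realize \( (\sigma,\phi) \) in distribution as \( \tau_\eta(\sigma_0,1) \) with \( \sigma_0 \sim \mu_{\beta,\kappa} \) and \( \eta \) uniform, note \( \eta_\phi = \eta \) by faithfulness, and invert using the group law \( \tau_{-\eta}^{(1)} \circ \tau_\eta^{(1)} = \mathrm{id} \) (valid since \( G \) is abelian). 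This reordering --- proving \ref{item: gauge relationships iii} first --- buys you one fewer sum manipulation and makes transparent that \ref{item: gauge relationships ii} and \ref{item: gauge relationships iii} are mutually inverse statements; the paper's version keeps the three parts logically independent at the cost of repeating the counting. Your obligations are exactly the ones you flagged (the group-action identity for \( \tau^{(1)} \), the identity \( \tau_\eta^{(2)}(1) = \rho(\eta) \), uniqueness of \( \eta_\phi \), and \( S_{\beta,\kappa}(\sigma) = S_{\beta,\kappa,\infty}(\sigma,1) \)), and all of them check out.
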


\begin{proof}
    Fix some \( \phi \in \Phi_{V_N} \). By Lemma~\ref{lemma: gauge}, applied with \( f \equiv 1 \)  (which is clearly gauge invariant), we have  
    \begin{equation*}
        \sum_{\sigma \in \Sigma_{E_N}}  e^{-S_{\beta,\kappa, \infty}(\sigma, \phi)} 
        =
        \sum_{\sigma \in \Sigma_{E_N}}  e^{-S_{\beta,\kappa}(\sigma)}.
    \end{equation*}
    Since the right-hand side of the previous equation is independent of the choice of \( \phi \), we obtain~\ref{item: gauge relationships i}.

    To see that~\ref{item: gauge relationships ii} holds, fix some \(  \sigma' \in \Sigma_{E_N} \). For each \( \phi \in \Phi_{V_N} \) and \( \sigma' \in \Sigma_{E_N} \), by Lemma~\ref{lemma: gauge transform},  we have 
    \begin{equation*} 
        S_{\beta,\kappa}\bigl(\sigma' \bigr)
        = 
        S_{\beta,\kappa,\infty}\bigl(\sigma',1 \bigr)
        = 
        S_{\beta,\kappa,\infty}\bigl(\tau_{\eta_\phi}(\sigma',1) \bigr).
    \end{equation*} 
    Consequently, 
    \begin{align*}
        &\sum_{\substack{\sigma \in \Sigma_{E_N},\phi \in \Phi_{V_N} \mathrlap{\colon}\\ \tau_{\eta_\phi}(\sigma',1) = (\sigma,\phi)}} e^{-S_{\beta,\kappa,\infty}( \sigma,\phi)}
        =
        \sum_{\substack{\sigma \in \Sigma_{E_N},\phi \in \Phi_{V_N} \mathrlap{\colon}\\ \tau_{\eta_\phi}(\sigma',1) = (\sigma,\phi)}} e^{-S_{\beta,\kappa,\infty}( \tau_{\eta_\phi}(\sigma',1))}
        =
        e^{-S_{\beta,\kappa}( \sigma')}\sum_{\substack{\sigma \in \Sigma_{E_N},\phi \in \Phi_{V_N} \mathrlap{\colon}\\ \tau_{\eta_\phi}(\sigma',1) = (\sigma,\phi)}} 1.
    \end{align*}
    Since for any \( \phi \in \Phi_{V_N} \), the function \( \tau_{\eta_\phi}^{(1)} \colon \Sigma_{E_N} \to \Sigma_{E_N} \) is a bijection, and \( \tau_{\eta_\phi}^{(2)}(1) = \phi \), the last sum in the previous equation is equal to \( |\Phi_{V_N}| \). Noting that \( \tau_{\eta_\phi}^{-1} = \tau_{-\eta_\phi} \), it follows that~\ref{item: gauge relationships ii} holds.
    
    Finally, to see that~\ref{item: gauge relationships iii} holds, fix some \( \sigma \in \Sigma_{E_N} \) and \( \phi \in \Phi_{V_N} \).
    Then, for any \( \eta \in G^{V_N} \) and \( \sigma' \in \Sigma_{E_N} \), by Lemma~\ref{lemma: gauge transform}, we have
    \begin{equation*}
        S_{\beta,\kappa}(\sigma') = S_{\beta,\kappa,\infty} (\sigma',1) = 
        S_{\beta,\kappa,\infty} (\tau_\eta(\sigma',1))
    \end{equation*} 
    It follows that 
    \begin{equation*}
        \sum_{\substack{\sigma' \in \Sigma_{E_N}, \eta \in G^{V_N} \mathrlap{\colon} \\ \tau_\eta(\sigma',1) = (\sigma,\phi)}}e^{-S_{\beta,\kappa}(\sigma')}
        =
        \sum_{\substack{\sigma' \in \Sigma_{E_N}, \eta \in G^{V_N} \mathrlap{\colon} \\ \tau_\eta(\sigma',1) = (\sigma,\phi)}}e^{-S_{\beta,\kappa,\infty}(\tau_\eta(\sigma',1))}
        =
        e^{-S_{\beta,\kappa,\infty}(\sigma,\phi)} \sum_{\substack{\sigma' \in \Sigma_{E_N}, \eta \in G^{V_N} \mathrlap{\colon} \\ \tau_\eta(\sigma',1) = (\sigma,\phi)}} 1.
    \end{equation*}
    Since for any \( \eta \in G^{V_N} \), \( \tau_{\eta}^{(1)} \colon \Sigma_{E_N} \to \Sigma_{E_N} \) is a bijection,  \( \tau_{\eta}^{(2)}(1) = \rho(\eta) \), and \( \rho \) is faithful, the last sum in the previous equation is equal to \( 1 \). This shows that~\ref{item: gauge relationships iii} holds. 
\end{proof}

\begin{remark} 
    Using Proposition~\ref{proposition: gauge relationships i}, it follows that the simulations in Figure~\ref{subfig: config4}, Figure~\ref{subfig: config5}, and~Figure~\ref{subfig: config6} can be thought of as simulations of \( \sigma \sim \mu_{\beta,\kappa} \), with purple edges corresponding to edges \( e \in E_N \) with \( \sigma_e \neq 0 \).
\end{remark}

The main reason to work in unitary gauge is that it ensures, with high probability, that \( \sigma_e = 0 \) for sufficiently many edges \( e \) for the gauge field configuration \( \sigma \) to split into relatively small irreducible components (especially when \( \kappa \) is large and \( \sigma\sim \mu_{N,\beta,\kappa} \)). This observation is used on several occasions in the rest of the paper. We illustrate this idea in Figure~\ref{fig: simulation color}.
\begin{figure}[ht]
    \centering
    \begin{subfigure}[t]{0.48\textwidth}\centering
        \includegraphics[width=\textwidth]{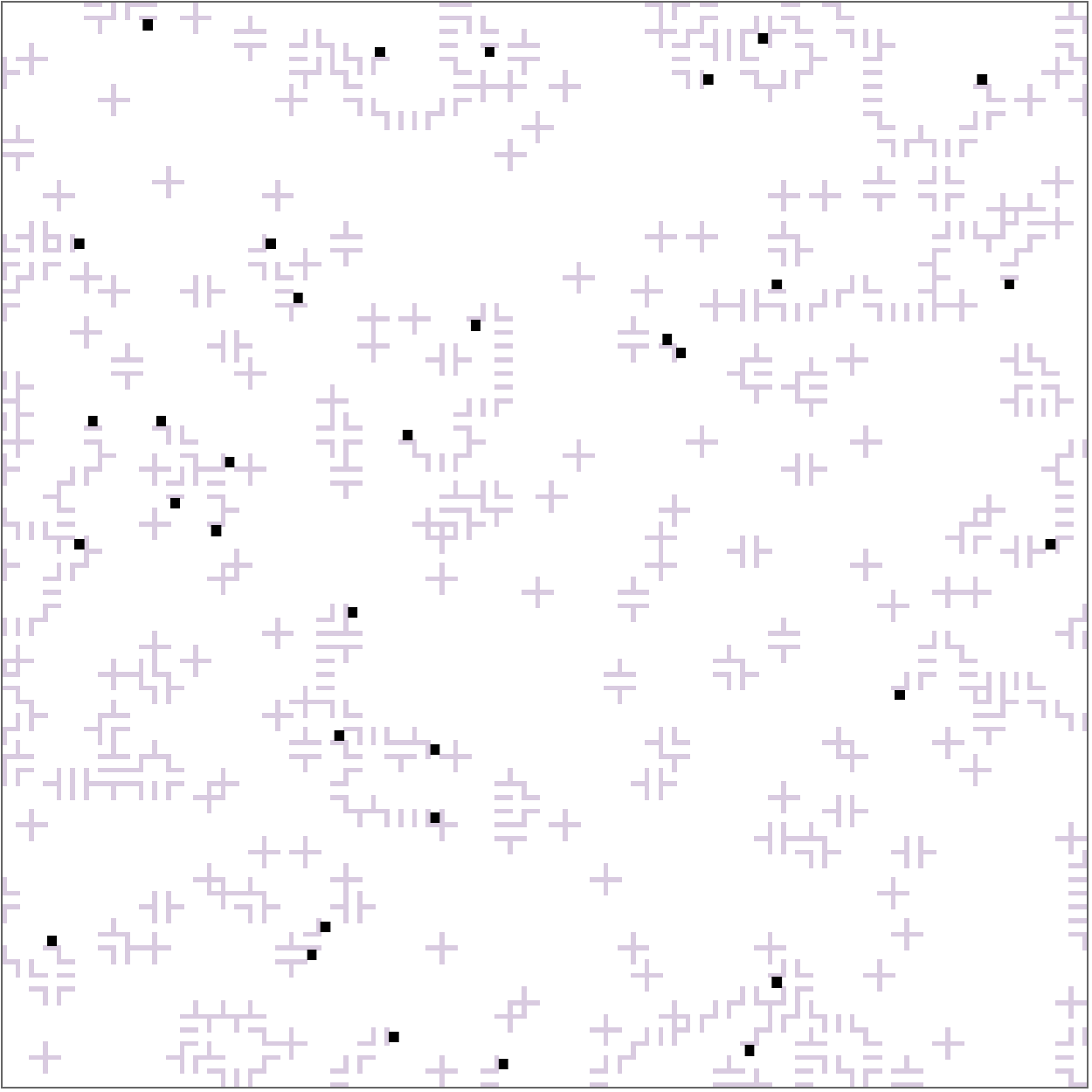}
        \caption{}\label{subfig: uncolored spins}
    \end{subfigure}
    \hfil
    \begin{subfigure}[t]{0.48\textwidth}\centering
        \includegraphics[width=\textwidth]{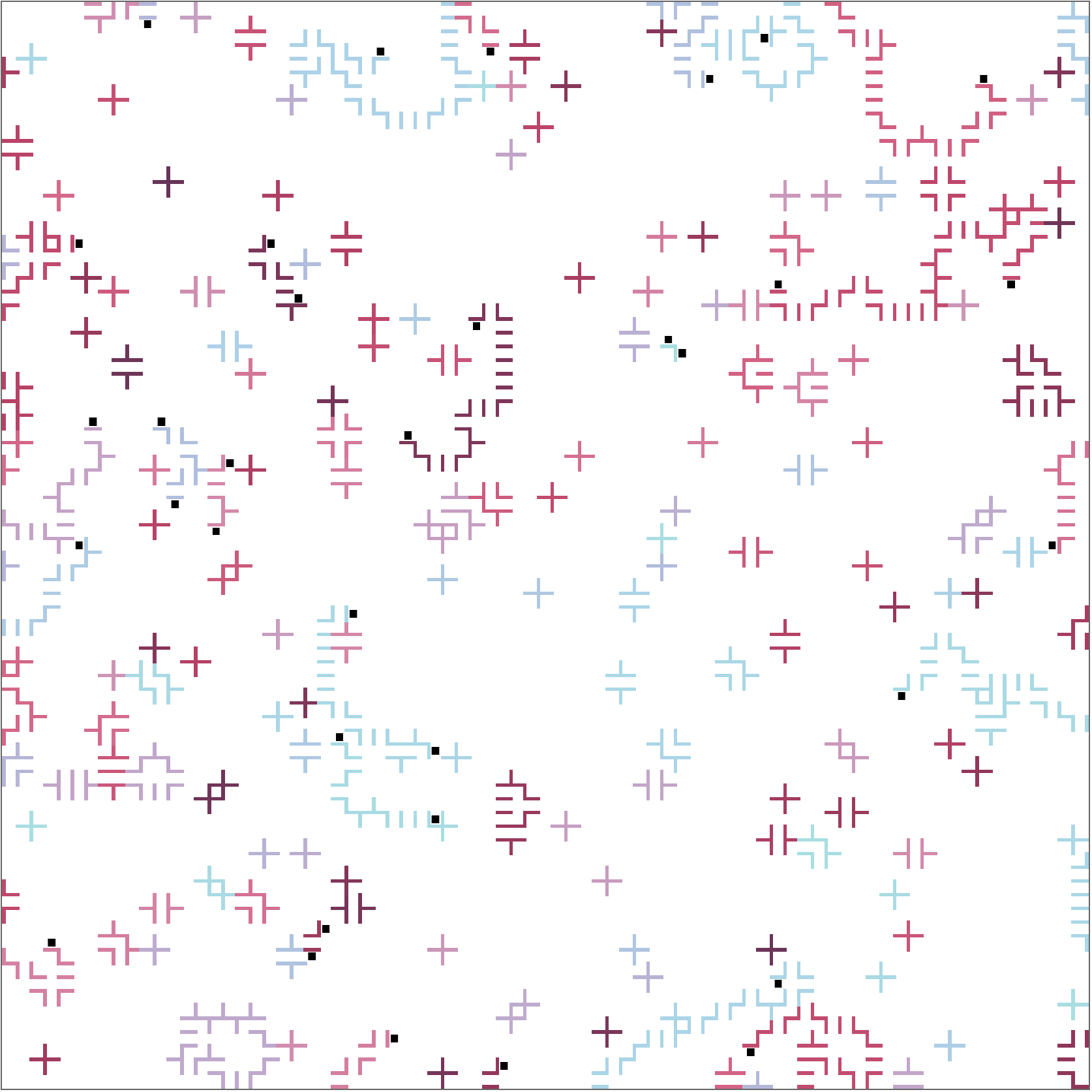}
        \caption{}\label{subfig: colored spins}
    \end{subfigure}
    \caption{This figure shows a gauge field configuration \( \sigma \) obtained by simulation of a fixed length lattice Higgs model on a subset of \( \mathbb{Z}^2\), using \( G = \mathbb{Z}_2 \), free boundary conditions, \( \beta=2.2 \) and \( \kappa = 0.48 \). In the figures, the colored edges correspond to edges \( e \) with  \( \sigma_e = 1 \) and the black squares to plaquettes \(p\) with \((d\sigma)_p = 1.\)
    In~\subref{subfig: colored spins}, we have colored the supports of distinct irreducible gauge field configurations \(  \sigma' \leq \sigma\) using different colors.} \label{fig: simulation color}
\end{figure}

\subsection{Existence of the infinite volume limit}\label{sect:ginibre}

In this section, we recall a result which shows existence and translation invariance of the infinite volume limit \( \langle W_\gamma \rangle_{\beta,\kappa} \) defined in \eqref{infinitevolumelimits} of the expectation of Wilson loop observables. This result is well-known, and is often mentioned in the literature as a direct consequence of the Ginibre inequalities~\cite{g1980}. A full proof of this result in the special case \( \kappa = 0 \) was included in~\cite{flv2020}, and the general case can be proven completely analogously, hence we omit the proof here. 

\begin{proposition}\label{proposition: limit exists}
    Let \( G = \mathbb{Z}_n \), \( M \geq 1 \), and let \( f \colon \Sigma_{E_M} \to \mathbb{R}\).
    For \( M' \geq M \), we abuse notation and let \( f \) denote the natural extension of \( f \) to \( E_{M'} \) (i.e., the unique function such that \( f(\sigma) = f(\sigma|_{E_M}) \) for all \( \sigma \in \Sigma_{E_{M'}} \)).
    Further, let \( \beta \in [0, \infty] \) and \( \kappa \geq 0 \). Then the following hold.
    \begin{enumerate}[label=\textnormal{(\roman*)}]
        \item The limit \( \lim_{N \to \infty} \mathbb{E}_{N,\beta,\kappa} \bigl[ f(\sigma) \bigr] \) exists.
        \item For any translation \( \tau \) of \( \mathbb{Z}^m \), we have \( \lim_{N \to \infty} \mathbb{E}_{N,\beta,\kappa}  \bigl[f \circ \tau(\sigma)\bigr] =  \lim_{N \to \infty} \mathbb{E}_{N,\beta,\kappa}\bigl[ f(\sigma) \bigr] \).
    \end{enumerate} 
\end{proposition}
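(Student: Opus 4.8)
The plan is to deduce both statements from the Ginibre correlation inequalities, following the argument given for the special case $\kappa = 0$ in~\cite{flv2020}. First I would observe that, since $S_{\beta,\kappa}$ is real, the measure $\mu_{N,\beta,\kappa}$ can be written with Boltzmann weight
\[
\exp\Bigl(\beta \sum_{p\in P_N}\Re\rho((d\sigma)_p) + \kappa\sum_{e\in E_N}\Re\rho(\sigma_e)\Bigr),
\]
so that it is a ferromagnetic system of $\mathbb{Z}_n$-valued spins (discrete rotators) with nonnegative plaquette couplings $\beta$ and nonnegative edge couplings $\kappa$. Each $\Re\rho((d\sigma)_p)$ is the real part of a monomial $\prod_{e\in\partial p}\rho(\sigma_e)^{\pm 1}$ in the character variables $\rho(\sigma_e)$, and the Wilson loop $W_\gamma$ together with---after Fourier decomposition on $\mathbb{Z}_n^{E_M}$---any real-valued $f$ is a real-linear combination of such monomial observables. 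The structural point to verify is that the a priori single-edge measure together with these interactions satisfies the hypotheses of Ginibre's general formulation of Griffiths' inequalities; this is precisely what was checked for the plaquette interaction in the $\kappa=0$ analysis, and the extra edge term with coupling $\kappa \geq 0$ is a further nonnegative coupling of the same monomial type, so it fits the same framework.

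Granting this, I would prove part (i) as follows. By the second Ginibre inequality, for every (real or imaginary part of a) monomial observable $\chi$ the expectation $\mathbb{E}_{N,\beta,\kappa}[\chi]$ is nondecreasing in each coupling constant. Passing from $B_N$ to $B_{N+1}$ amounts to switching on the additional plaquette and edge couplings supported in $B_{N+1}\smallsetminus B_N$ from $0$ to their nonnegative values, so $N \mapsto \mathbb{E}_{N,\beta,\kappa}[\chi]$ is monotone; since it is also bounded (the characters have modulus $1$), it converges. Decomposing $f$ into characters and using linearity then yields convergence of $\mathbb{E}_{N,\beta,\kappa}[f(\sigma)]$, which is (i). The endpoint $\beta = \infty$ is treated identically, using that $\mu_{N,\infty,\kappa}$ is the corresponding measure on closed forms and that the Ginibre monotonicity persists in that limit.

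For part (ii), the crucial observation is that the limit in (i) is independent of the exhausting sequence of boxes used to define it: given any two increasing sequences of boxes exhausting $\mathbb{Z}^m$, the monotonicity from the second Ginibre inequality forces the two limits of $\mathbb{E}_{N,\beta,\kappa}[\chi]$ to coincide, since each sequence can be interleaved with the other. Since the interaction is translation invariant on $\mathbb{Z}^m$, a change of variables shows that $\mathbb{E}_{N,\beta,\kappa}[f\circ\tau]$ equals the expectation of $f$ computed over the translated box $\tau(B_N)$, and $\{\tau(B_N)\}_N$ is again an exhausting sequence of boxes; the box-independence of the limit then gives $\lim_N \mathbb{E}_{N,\beta,\kappa}[f\circ\tau] = \lim_N \mathbb{E}_{N,\beta,\kappa}[f]$, which is (ii).

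The main obstacle lies entirely in the first paragraph: casting the model in unitary gauge into a form to which Ginibre's inequalities genuinely apply for all $n \geq 2$, that is, identifying the correct ferromagnetic monomial structure and verifying the positivity hypothesis on the a priori measure. Once that is in place, the monotone-convergence argument for (i) and the box-independence argument for (ii) are routine and identical to the $\kappa = 0$ case treated in~\cite{flv2020}. Accordingly, as the authors do, I would carry out the verification exactly as there and merely indicate the trivial modification needed to accommodate the extra edge coupling $\kappa$.
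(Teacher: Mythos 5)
Your proposal is correct and takes essentially the same approach as the paper: the paper omits the proof entirely, remarking that the result is a well-known consequence of the Ginibre inequalities and that the general case follows "completely analogously" to the full proof given for $\kappa = 0$ in~\cite{flv2020}, which is precisely the reduction you carry out (the extra edge term is just one more nonnegative ferromagnetic coupling of the same character type). Your sketch of the two standard steps---monotonicity in couplings giving convergence for part (i), and box-independence of the limit plus translation invariance giving part (ii)---matches that argument.
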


\subsection{Notation and standing assumptions}\label{sec:standing assumptions}

In the remainder of the paper, we will assume that \( {G = \mathbb{Z}_n} \) for some integer \( n \geq 2 \). We will also assume that \( \rho\colon G \to \mathbb{C} \) is a unitary, faithful, and one-dimensional representation of \( G \). Any such representation has the form \( j \mapsto e^{i \cdot j \cdot p \cdot 2 \pi /n} \) for some integer \( p \in \{ 0,1, \ldots, n-1 \} \) which is relatively prime to \( n \).
Moreover, we will assume that the Higgs field has fixed length, i.e., we will work with the measure as defined in~\eqref{eq: general fixed length measure}.
Except in Section \ref{sec: main result}, we will assume that some integer \( N \geq 1 \) has been fixed.

\section{Vortices}\label{sec: vortices}

In this section, we use the notion of irreducibility introduced in Section~\ref{sec: irreducibility} to define what we refer to as vortices.  Vortices will play a central role in this paper. We mention that the definition of a vortex given in Definition~\ref{def: vortex} below is identical to the definition used in~\cite{flv2020}, but is different from the corresponding definitions in~\cite{sc2019}~and~\cite{c2019}.
    
\begin{definition}[Vortex]\label{def: vortex}
    Let \( \sigma \in \Sigma_{E_N} \). A non-trivial and irreducible plaquette configuration \( \nu \in \Sigma_{P_N} \) is said to be a \emph{vortex} in \( \sigma \) if \( \nu \leq d\sigma \), i.e., if \( (d\sigma)_p = \nu_p\) for all \( p \in \support \nu \). 
\end{definition}

We say that \( \sigma \in \Sigma_{E_N} \) has a vortex at \( V \subseteq P_N \) if \( (d\sigma)|_V \) is a vortex in \( \sigma \).  
With Lemma~\ref{lemma: minimal vortex I} in mind, we say that a vortex \( \nu \) is a \emph{minimal vortex} if there is \( p \in \support \nu \) with \( \hat\partial  \partial \nu \subseteq P_N, \) and  \( |\support \nu| = 12\). 

\begin{lemma}\label{lemma: minimal vortex II}
    Let \( \sigma \in \Sigma_{E_N} \), and let \( \nu \in \Sigma_{P_N}\) be a minimal vortex in \( \sigma \) such that \( \hat \partial \partial \support \nu \subseteq P_N \). Then there is an edge \( e_0 \in E_N \) and a \( g \in G\smallsetminus \{ 0 \} \) such that 
    \begin{equation}\label{eq: minimal vortex as df}
        \nu = d\bigl(g \mathbb{1}_{e = e_0} - g \mathbb{1}_{e = -e_0}\bigr).
    \end{equation}
    In particular, \( (d\sigma)_p =  \nu_p = g \) for all \( p \in \hat \partial e_0 \).
\end{lemma}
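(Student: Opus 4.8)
The plan is to first pin down the support of \( \nu \) using the structure theorem for minimal configurations already recorded in the excerpt, and then to upgrade this to the exact identity~\eqref{eq: minimal vortex as df} by exploiting that \( \nu \) is closed. Since \( \nu \) is a minimal vortex we have \( |\support \nu| = 12 \), and because \( \nu \) is a form its support is symmetric, so \( |(\support \nu)^+| = 6 \). The hypothesis \( \hat\partial\partial\support\nu \subseteq P_N \) guarantees that some (indeed every) plaquette \( p \in \support\nu \) satisfies \( \hat\partial\partial p \subseteq P_N \), so Lemma~\ref{lemma: minimal vortex I}\ref{item: minimal vortex} applies and produces an edge \( e_0 \in E_N \) with \( \support\nu = \hat\partial e_0 \cup \hat\partial(-e_0) \). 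Replacing \( e_0 \) by \( -e_0 \) if needed, I may assume \( e_0 = d\hat x_j \) is positively oriented. I would also note that \( p \in \hat\partial(-e_0) \) iff \( -p \in \hat\partial e_0 \), so \( \hat\partial(-e_0) = -\hat\partial e_0 \) and \( \nu \) is determined by its six values on \( \hat\partial e_0 \).

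Next I would introduce the candidate. Fix \( p_0 \in \hat\partial e_0 \), set \( g \coloneqq \nu_{p_0} \), and let \( \tau \coloneqq g\mathbb{1}_{e=e_0} - g\mathbb{1}_{e=-e_0} \), which is the \( 1 \)-form \( g\,(\mathbb{1}_{x=\hat x}\,dx_j) \). By~\eqref{d1dxjp} we have \( (d\tau)_p = g \) for \( p \in \hat\partial e_0 \), \( (d\tau)_p = -g \) for \( p \in \hat\partial(-e_0) \), and \( (d\tau)_p = 0 \) otherwise; in particular \( d\tau \) and \( \nu \) have the same support. It then suffices to show that \( \omega \coloneqq \nu - d\tau \) vanishes identically. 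This \( \omega \) is closed (a difference of closed forms), is supported in \( \hat\partial e_0 \cup \hat\partial(-e_0) \), and satisfies \( \omega_{p_0} = 0 \).

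To see that \( \omega \equiv 0 \) I would apply the discrete Bianchi identity (Lemma~\ref{lemma: Bianchi}), that is, \( \sum_{p \in \partial c}\omega_p = 0 \) for every \( 3 \)-cell \( c \). The geometric input is that for each of the twelve \( 3 \)-cells \( c \) containing \( e_0 \), exactly two faces of \( c \) contain the undirected edge underlying \( e_0 \) and hence lie in \( \support\nu \), while the remaining four faces lie outside \( \support\nu \); moreover, as \( c \) ranges over these cells (grouped by the three direction pairs \( (1,2,3) \), \( (1,2,4) \), \( (1,3,4) \)), the resulting pairs link all six plaquettes of \( \hat\partial e_0 \) into a single connected structure. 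For such a \( c \), Bianchi reads \( \omega_q + \omega_{q'} = 0 \) for its two relevant oriented faces \( q, q' \). To fix the orientation signs without computing boundary orientations by hand, I would apply the same identity to \( d\tau \), which is closed and equal to \( g \) on all of \( \hat\partial e_0 \): the relation \( (d\tau)_q + (d\tau)_{q'} = 0 \) forces exactly one of \( q, q' \) to lie in \( \hat\partial e_0 \) and the other in \( \hat\partial(-e_0) \), so that the corresponding relation for \( \omega \) reduces to \( \omega_p = \omega_{p'} \) for the associated \( p, p' \in \hat\partial e_0 \). Propagating from \( p_0 \) along these links gives \( \omega_p = \omega_{p_0} = 0 \) for all \( p \in \hat\partial e_0 \), hence \( \omega \equiv 0 \) by symmetry. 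Thus \( \nu = d\tau \); since \( \nu \) is non-trivial we must have \( g \neq 0 \), which gives~\eqref{eq: minimal vortex as df}. Finally, because \( \nu \) is a vortex in \( \sigma \) we have \( \nu \leq d\sigma \) by Definition~\ref{def: vortex}, so \( (d\sigma)_p = \nu_p = g \) for every \( p \in \support\nu \), and in particular for every \( p \in \hat\partial e_0 \).

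The main obstacle is this closedness/connectivity step: verifying that the twelve \( 3 \)-cells through \( e_0 \) genuinely tie all six plaquettes of \( \hat\partial e_0 \) together, and correctly handling the orientation signs in each Bianchi relation. The device of reading off those signs from the explicitly known closed form \( d\tau \), rather than computing the induced orientations on each face of \( \partial c \) directly, is what keeps this step short and avoids a lengthy orientation bookkeeping.
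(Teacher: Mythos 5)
Your argument follows essentially the same route as the paper's proof --- Lemma~\ref{lemma: minimal vortex I}\ref{item: minimal vortex} to produce \( e_0 \), then the Bianchi identity (Lemma~\ref{lemma: Bianchi}) on the \( 3 \)-cells containing \( e_0 \), together with the octahedral connectivity of the six plaquettes of \( \hat \partial e_0 \) --- except that you run it as a direct propagation from \( p_0 \) while the paper runs it as a contradiction on a single offending \( 3 \)-cell. The one genuine problem is the device you use to fix the orientation signs. You claim that the identity \( (d\tau)_q + (d\tau)_{q'} = 0 \) forces one of \( q, q' \) to lie in \( \hat\partial e_0 \) and the other in \( \hat\partial(-e_0) \). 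That deduction is valid only when \( 2g \neq 0 \) in \( G \): if both faces lay in \( \hat\partial e_0 \), the sum would be \( 2g \), and you need \( 2g \neq 0 \) to exclude this. But \( g = \nu_{p_0} \) is an arbitrary nonzero element of \( \mathbb{Z}_n \), and \( 2g = 0 \) occurs whenever \( n \) is even and \( g = n/2 \) --- in particular it \emph{always} occurs for \( G = \mathbb{Z}_2 \), the case of Theorem~\ref{theorem: main result Z2}. In that case \( (d\tau)_q + (d\tau)_{q'} = 0 \) holds for every possible orientation of \( q \) and \( q' \), so the identity carries no information about orientations, and your reduction of the Bianchi relation for \( \omega \) to \( \omega_p = \omega_{p'} \) is unjustified as written.

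Fortunately the gap can be closed in either of two ways. First, your propagation does not actually need the signs: whatever the orientations, the Bianchi relation reads \( \omega_q = -\omega_{q'} \) with \( \omega_q = \pm\omega_p \) and \( \omega_{q'} = \pm\omega_{p'} \), so \( \omega_p = 0 \) if and only if \( \omega_{p'} = 0 \); since the value being propagated from \( p_0 \) is \( 0 \), which is fixed by negation, connectivity alone already yields \( \omega \equiv 0 \). Second, if you want the orientation fact itself (which is what the paper implicitly invokes when it asserts the existence of \( p_1 \in \hat\partial e_0 \) and \( p_2 \in \hat\partial(-e_0) \) inside a common \( \partial c \)), derive it over \( G = \mathbb{Z} \) rather than over \( \mathbb{Z}_n \): the paper's discrete exterior calculus is stated for arbitrary abelian \( G \) including \( \mathbb{Z} \), and over \( \mathbb{Z} \) one has \( 1 + 1 \neq 0 \), so the vanishing of \( \sum_{p \in \partial c} \bigl( d(\mathbb{1}_{x = \hat x}\, dx_j) \bigr)_p \) does force the two faces containing the underlying undirected edge to induce opposite orientations on it. You should also record, in order to apply Lemma~\ref{lemma: Bianchi} as stated, that the closed form \( \omega \) is exact by the Poincar\'e lemma (Lemma~\ref{lemma: poincare}), just as the paper does for \( \nu \).
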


\begin{proof}
    Let \( e_0 \in E_N \) be such that  \( \support \nu = \hat \partial e_0 \cup \hat \partial (-e_0) \) (such an \( e_0 \) is guaranteed to exist by~Lemma~\ref{lemma: minimal vortex I}\ref{item: minimal vortex}).  
    Assume for contradiction that there is no \( g \in G\smallsetminus \{ 0 \} \) such that~\eqref{eq: minimal vortex as df} holds.
    Then, since \( {\support \nu = \hat \partial e_0 \cup \hat \partial (-e_0)} \), there exists a \( 3\)-cell~\(c\) and two plaquettes \( p_1,p_2 \in \support \nu  \cap \partial c \) with \( e_0 \in p_1 \) and \( -e_0 \in p_2 \) such that \( \nu_{p_1} \neq -\nu_{p_2}. \) Since any \( 3 \)-cell contains at most two plaquettes in \( \hat \partial e_0 \cup \hat \partial (-e_0) \), it follows that
    \begin{equation*}
        \sum_{p \in \partial c} \nu_p = \nu_{p_1} + \nu_{p_2} \neq 0.
    \end{equation*}
    Using that \(\nu = d\sigma'\) for some \(\sigma' \in \Sigma_{E_N}\) by Lemma~\ref{lemma: poincare}, this contradicts Lemma~\ref{lemma: Bianchi}, and hence~\eqref{eq: minimal vortex as df} must hold. As a direct consequence, it follows that \( (d\sigma)_p =  \nu_p = g \) for \( p \in \hat \partial e_0 \).
\end{proof}

If \( \sigma \in \Sigma_{E_N} \) and \( \nu \in \Sigma_{P_N} \) is a minimal vortex in \( \sigma \) which can be written as in~\eqref{eq: minimal vortex as df} for some \( e_0 \in E_N \) and \( g \in G \backslash \{ 0 \} \), then we say that \( \nu \) is a \emph{minimal vortex centered at \( e_0 \).}

\begin{lemma}\label{lemma: removing internal min vort}
    Let \( q \) be an oriented surface with boundary \( B_q \), and let \( e_0 \) be an internal edge of \( q \). Further, let \( \sigma \in \Sigma_{E_N} \), and let \( \nu \in \Sigma_{P_N}\) be a minimal vortex in \( \sigma \),  centered at \( e_0 \). Then
    \begin{equation*} 
        \sum_{p\in \support \nu } q_p^+  \nu_p   = 0.
    \end{equation*}
\end{lemma}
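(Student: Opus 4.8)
The plan is to reduce the sum over $\support\nu$ to the single lattice quantity $\sum_{p \in \hat\partial e_0} q_p$ and then show that this vanishes from the very definition of an \emph{internal edge}. The whole argument is a local cancellation and involves no analytic estimates; the only thing to watch is the orientation bookkeeping.

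First I would use Lemma~\ref{lemma: minimal vortex II}: since $\nu$ is a minimal vortex centered at $e_0$, there is some $g \in G \smallsetminus \{0\}$ with $\nu = d(g\mathbb{1}_{e = e_0} - g\mathbb{1}_{e=-e_0})$ and $\support \nu = \hat\partial e_0 \cup \hat\partial(-e_0)$. Reading off the values from \eqref{d1dxjp} (equivalently, from the relation $(d\sigma')_p = \sum_{e\in\partial p}\sigma'_e$ applied to $\sigma' = g\mathbb{1}_{e=e_0} - g\mathbb{1}_{e=-e_0}$), one finds $\nu_p = g$ for every $p \in \hat\partial e_0$ and $\nu_p = -g$ for every $p\in\hat\partial(-e_0)$. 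These two sets are disjoint, since the oriented boundary of a single plaquette never contains both $e_0$ and $-e_0$.

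Next I would exploit the relation $\hat\partial(-e_0) = -\hat\partial e_0$, which holds because $-e_0 \in \partial p \iff e_0 \in \partial(-p)$. Since $q$ is a $\mathbb{Z}$-valued $2$-form we have $q_{-p} = -q_p$, and the elementary identity $\max(x,0) - \max(-x,0) = x$ gives $q_p^+ - q_{-p}^+ = q_p$. Pairing each plaquette of $\hat\partial(-e_0)$ with its reverse in $\hat\partial e_0$ then yields
\begin{equation*}
    \sum_{p\in\support\nu} q_p^+\,\nu_p = g\sum_{p\in\hat\partial e_0}\bigl(q_p^+ - q_{-p}^+\bigr) = g\sum_{p\in\hat\partial e_0} q_p.
\end{equation*}

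Finally, I would invoke the hypothesis that $e_0$ is internal: then neither $e_0$ nor $-e_0$ belongs to $B_q$, so by the definition of $B_q$ we have $\sum_{p\in\hat\partial e_0} q_p \neq 1$, while $\sum_{p\in\hat\partial(-e_0)} q_p = -\sum_{p\in\hat\partial e_0} q_p \neq 1$ forces $\sum_{p\in\hat\partial e_0} q_p \neq -1$. Because $q$ is an oriented surface, $\sum_{p\in\hat\partial e_0} q_p \in \{-1,0,1\}$, so these two constraints pin it to $0$, and the whole sum vanishes. The only real obstacle is ensuring the signs on the two halves of $\support\nu$ and the identity $\hat\partial(-e_0) = -\hat\partial e_0$ are handled correctly, since the entire result is driven by this cancellation. (Alternatively, one could apply the discrete Stokes' theorem of Lemma~\ref{lemma: stokes} together with the internality of $e_0$, but the direct computation above seems cleanest.)
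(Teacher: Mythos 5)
Your proof is correct, and it takes a genuinely different route from the paper's. The paper's proof is a two-line application of the discrete Stokes' theorem (Lemma~\ref{lemma: stokes}): writing $\nu = d\sigma'$ with $\sigma' = g\mathbb{1}_{e=e_0} - g\mathbb{1}_{e=-e_0}$ (exactly as you do), it converts $\sum_{p} q_p^+ \nu_p$ into $\sum_{e \in B_q} \sigma'_e$, which vanishes because internality gives $\{e_0,-e_0\} \cap B_q = \emptyset$. You instead carry out the cancellation by hand: using $\hat\partial(-e_0) = -\hat\partial e_0$, the antisymmetry $q_{-p} = -q_p$, and the identity $q_p^+ - q_{-p}^+ = q_p$, you reduce the sum to $\bigl(\sum_{p\in\hat\partial e_0} q_p\bigr)g$, and then pin this integer to $0$ by combining the oriented-surface constraint $\sum_{p\in\hat\partial e_0} q_p \in \{-1,0,1\}$ with the two exclusions $e_0 \notin B_q$ and $-e_0 \notin B_q$. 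Both arguments use the same structural input (the explicit form of a minimal vortex centered at $e_0$, and internality only through $\pm e_0 \notin B_q$); what your version buys is self-containedness — it never invokes the imported Stokes lemma from~\cite{flv2020} and isolates the precise local fact $\sum_{p\in\hat\partial e_0} q_p = 0$ that drives the cancellation — while the paper's version is shorter because it reuses machinery already needed elsewhere (e.g., in the proof of Proposition~\ref{prop: first part of proposition proof}). Your orientation bookkeeping (disjointness of $\hat\partial e_0$ and $\hat\partial(-e_0)$, the sign flip $\nu_p = -g$ on $\hat\partial(-e_0)$) is all correct.
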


\begin{proof}
    Since \( \nu \) is a vortex centered at \( e_0 \),  there is a \( g \in G\smallsetminus \{ 0 \} \) such that if we for \( e \in E_N \) define  \( \sigma_e \coloneqq g \mathbb{1}_{e = e_0} - g \mathbb{1}_{e = -e_0}, \) then \( \nu = d\sigma \). By Lemma~\ref{lemma: stokes}, it follows that
    \begin{equation}\label{eq: rem int min vort 1}
        \sum_{p\in \support \nu } q_p^+  \nu_p = \sum_{p\in P_N } q_p^+  \nu_p = \sum_ {p\in P_N } q_p^+  (d\sigma)_p = \sum_{e \in B_q } \sigma_e .
    \end{equation}
    Since \( e_0 \) is an internal edge of \( q \), we have \( \{ e_0,-e_0 \} \cap  B_q = \emptyset\). Since \( \support \sigma = \{ e_0,-e_0 \} \), it follows that the right-hand side of~\eqref{eq: rem int min vort 1} vanishes.
\end{proof}

The next lemma is a version of Lemma~\ref{lemma: removing internal min vort}, valid for general vortices.
\begin{lemma}[Lemma~5.4 in~\cite{flv2020}]\label{lemma: 3.2}
    Let \( \sigma \in \Sigma_{E_N} \), and let \( \nu \in \Sigma_{P_N} \) be a vortex in \( \sigma \).
    Let \( q \) be an oriented surface. 
    If there is a box \( B \) containing \( \support \nu \) such that \( (*\! * \! B) \cap \support q \) consists of only internal plaquettes of \( q \), then
	\begin{equation}\label{eq: vortex surface equation}
	    \sum_{p \in P_N}  q^+_p \nu_p = 0.
    \end{equation} 
\end{lemma}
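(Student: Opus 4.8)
The plan is to reduce the statement to the discrete Stokes' theorem of Lemma~\ref{lemma: stokes}. Since $\nu$ is a vortex it lies in $\Sigma_{P_N}$ and is therefore a \emph{closed} $2$-form with $\support \nu \subseteq B$. By the surjectivity part of the Poincar\'e lemma (Lemma~\ref{lemma: poincare}), applied with the box $B$, there exists a $1$-form $\sigma' \in \Sigma_{E_N}$ with $\support \sigma' \subseteq B$ such that $d\sigma' = \nu$. Applying Lemma~\ref{lemma: stokes} to $\sigma'$ then gives
\begin{equation*}
    \sum_{p \in P_N} q_p^+ \nu_p = \sum_{p \in P_N} q_p^+ (d\sigma')_p = \sum_{e \in B_q} \sigma'_e,
\end{equation*}
so it remains to prove that $\sigma'_e = 0$ for every $e \in B_q$.

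To this end I would argue by contradiction: suppose $e \in B_q$ with $\sigma'_e \neq 0$, so that $e \in \support \sigma' \subseteq B$, i.e.\ both endpoints of $e$ lie in $B$. Since $e \in B_q$ we have $\sum_{p \in \hat\partial e} q_p = 1$, so there is a plaquette $p \in \hat\partial e$ with $q_p \neq 0$, i.e.\ $p \in \support q$. The key geometric point is that every plaquette $p \in \hat\partial e$ has its four corners within one lattice step of the endpoints of $e$ in each coordinate direction; since $e \in B$ and $*\!*\!B$ is exactly $B$ enlarged by one layer in every coordinate direction (this is the content of $B \subsetneq *\!*\!B$, cf.\ Lemma~\ref{lemma: lemma 2.4}), it follows that $p \in *\!*\!B$. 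Hence $p \in (*\!*\!B) \cap \support q$.

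By hypothesis $p$ is then an internal plaquette of $q$, which by definition means $\sum_{p' \in \hat\partial e'} q_{p'} = 0$ for every $e' \in \partial p$. Taking $e' = e$ (legitimate since $p \in \hat\partial e$ gives $e \in \partial p$) yields $\sum_{p' \in \hat\partial e} q_{p'} = 0$, contradicting $e \in B_q$. Therefore $\sigma'_e = 0$ for all $e \in B_q$ and the sum above vanishes, proving~\eqref{eq: vortex surface equation}. The only real obstacle is the bookkeeping in the geometric claim that $p \in \hat\partial e$ and $e \in B$ force $p \in *\!*\!B$; once this containment is pinned down via the explicit description of $*\!*\!B$ as the one-layer enlargement of $B$, the internal-plaquette hypothesis closes the argument immediately. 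I expect this to be the natural generalization of the minimal-vortex computation in Lemma~\ref{lemma: removing internal min vort}, where there $\sigma'$ was the explicit $g\mathbb{1}_{e=e_0}-g\mathbb{1}_{e=-e_0}$ and here it is produced abstractly by the Poincar\'e lemma.
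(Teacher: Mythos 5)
Your proof is correct, and it is essentially the argument this paper relies on: the lemma is not proved here but imported as Lemma~5.4 of~\cite{flv2020}, whose proof runs exactly along your lines — use the Poincar\'e lemma (Lemma~\ref{lemma: poincare}) to write $\nu = d\sigma'$ with $\support \sigma' \subseteq B$, apply the discrete Stokes' theorem (Lemma~\ref{lemma: stokes}) to get $\sum_{p \in P_N} q_p^+ \nu_p = \sum_{e \in B_q} \sigma'_e$, and then rule out $B_q \cap \support \sigma' \neq \emptyset$ because an edge $e$ in $B$ lying in $B_q$ would force some $p \in \hat\partial e \cap \support q$ to lie in $*\!*\!B$ and hence be internal, contradicting $\sum_{p' \in \hat\partial e} q_{p'} = 1$. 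Your description of $*\!*\!B$ as the one-layer $\ell^\infty$-enlargement of $B$, used to justify $\hat\partial e \subseteq *\!*\!B$ for edges $e$ in $B$, is the correct reading of the duality definitions, so there is no gap.
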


\begin{lemma}\label{lemma: vortex flip}
Let \( \sigma \in \Sigma_{E_N}\), and let \( \nu \) be a vortex in \( \sigma \). Then \( d\sigma - \nu  \in \Sigma_{P_N} \).
\end{lemma}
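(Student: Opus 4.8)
The plan is to verify directly that $d\sigma - \nu$ is a closed $G$-valued $2$-form with support in $B_N$, since by definition (cf.\ the Poincar\'e lemma, Lemma~\ref{lemma: poincare}) the set $\Sigma_{P_N}$ is precisely the collection of closed $G$-valued $2$-forms in $\Sigma_2$. Because $\nu \in \Sigma_{P_N} \subseteq \Sigma_2$ and $d\sigma \in \Sigma_2$, and $\Sigma_2$ is closed under subtraction (here we use that $G$ is abelian), the difference $d\sigma - \nu$ already lies in $\Sigma_2$. Hence the only point that genuinely requires argument is that $d\sigma - \nu$ is \emph{closed}, i.e.\ that $d(d\sigma - \nu) = 0$.

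By linearity of the exterior derivative, $d(d\sigma - \nu) = d(d\sigma) - d\nu$. The second term vanishes because $\nu$ is by definition a plaquette configuration, i.e.\ $\nu \in \Sigma_{P_N}$, which means exactly that $d\nu = 0$. For the first term I would invoke the discrete Bianchi identity: applying the identity $(df)_{c_0} = \sum_{c \in \partial c_0} f_c$ from~\eqref{dfc0sumdfc} with $f = d\sigma$ and $c_0$ an arbitrary oriented $3$-cell in $B_N$ gives $(d(d\sigma))_{c_0} = \sum_{p \in \partial c_0}(d\sigma)_p$, which is precisely the sum appearing in Lemma~\ref{lemma: Bianchi} and therefore equals $0$. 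Since $c_0$ was an arbitrary $3$-cell, this yields $d(d\sigma) = 0$. Combining the two computations gives $d(d\sigma - \nu) = 0$, so $d\sigma - \nu$ is a closed $2$-form supported in $B_N$ and hence belongs to $\Sigma_{P_N}$, as claimed.

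I do not anticipate any real obstacle here: the statement reduces to the elementary fact that closedness is preserved under differences, together with the relation $d \circ d = 0$, which in the present discrete setting is exactly the content of the Bianchi identity already recorded in Lemma~\ref{lemma: Bianchi}. The only point deserving a word of care is the conceptual one of recognizing that the defining property of $\Sigma_{P_N}$ is closedness, so that checking $d(d\sigma-\nu)=0$ really does suffice; it is also worth noting that the full vortex hypothesis $\nu \leq d\sigma$ is not needed for this particular conclusion, as only the membership $\nu \in \Sigma_{P_N}$ (equivalently, $d\nu = 0$) enters the argument.
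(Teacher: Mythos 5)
Your proof is correct and follows essentially the same route as the paper's: both reduce the claim to $d(d\sigma-\nu) = dd\sigma - d\nu = 0 - 0 = 0$, using that $\nu \in \Sigma_{P_N}$ forces $d\nu = 0$. The only differences are cosmetic: you justify $dd\sigma = 0$ explicitly via the discrete Bianchi identity (Lemma~\ref{lemma: Bianchi}), which the paper leaves implicit, and you correctly observe that the vortex hypothesis $\nu \leq d\sigma$ is never used beyond the membership $\nu \in \Sigma_{P_N}$.
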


\begin{proof}
    Since \( \nu \) is a vortex in \( \sigma \), we have \( \nu \in \Sigma_{P_N} \), and hence \( d\nu = 0 \). Consequently, we have \( d(d\sigma-\nu) = dd\sigma - d\nu = 0-0 = 0 \), and hence \( d\sigma - \nu  \in \Sigma_{P_N} \).
\end{proof}

Before ending this section, we state and prove two lemmas which give connections between vortices in a given gauge field configuration \( \sigma   \) and in a gauge field configuration \( \sigma' \leq \sigma \).

\begin{lemma}\label{lemma: vortex transfer}
    Let \( \sigma' ,\sigma \in \Sigma_{E_N} \) be such that \( \sigma' \leq \sigma \), and let \( \nu \in \Sigma_{P_N}\) be a vortex in \( \sigma' \). Then  \( \nu \) is a vortex in \( \sigma \).
\end{lemma}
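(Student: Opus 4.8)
The plan is to observe first that non-triviality and irreducibility of $\nu$ are intrinsic properties of the configuration $\nu$ itself, making no reference to the ambient gauge field. Hence these two requirements in the definition of a vortex transfer verbatim from the hypothesis ``$\nu$ is a vortex in $\sigma'$'' to the desired conclusion ``$\nu$ is a vortex in $\sigma$.'' Consequently the entire content of the lemma reduces to verifying the single relation $\nu \leq d\sigma$, knowing that $\nu \leq d\sigma'$ (which is what it means for $\nu$ to be a vortex in $\sigma'$) and that $\sigma' \leq \sigma$.

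My strategy is to route this through the transitivity of $\leq$ established in Lemma~\ref{lemma: the blue lemma}\ref{property 3}, applied at the level of $2$-forms. Since $\nu, d\sigma', d\sigma \in \Sigma_2$, it suffices to prove the intermediate claim $d\sigma' \leq d\sigma$; chaining $\nu \leq d\sigma'$ with $d\sigma' \leq d\sigma$ then yields $\nu \leq d\sigma$, which is exactly the statement that $\nu$ is a vortex in $\sigma$.

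To establish $d\sigma' \leq d\sigma$ I would simply unwind Definition~\ref{def: partial order} for the two $2$-forms $d\sigma'$ and $d\sigma$. Condition (i), namely $d\sigma' = (d\sigma)|_{\support d\sigma'}$, is literally condition (ii) in the hypothesis $\sigma' \leq \sigma$, so no work is required there. Condition (ii), namely $d(d\sigma') = (d(d\sigma))|_{\support d(d\sigma')}$, is vacuous: because the image of $d$ is always closed (that is, $d \circ d = 0$), both $d(d\sigma')$ and $d(d\sigma)$ vanish, so $\support d(d\sigma') = \emptyset$ and the identity reads $0 = 0$.

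The only point that requires any care is precisely this collapse of condition (ii) of the partial order at the level of $2$-forms, where one must invoke $d \circ d = 0$ to see that the second defining condition of $\leq$ carries no information. Beyond that there is no genuine obstacle; the lemma is a direct bookkeeping consequence of the two defining conditions of $\leq$, the closedness of exact forms, and transitivity.
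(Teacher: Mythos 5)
Your proof is correct, and it reaches the conclusion by a slightly different route than the paper. The paper's proof is a direct pointwise check: for each \( p \in \support \nu \), the hypothesis that \( \nu \) is a vortex in \( \sigma' \) gives \( (d\sigma')_p = \nu_p \neq 0 \), hence \( p \in \support d\sigma' \), and then condition (ii) of \( \sigma' \leq \sigma \) yields \( (d\sigma)_p = \bigl((d\sigma)|_{\support d\sigma'}\bigr)_p = (d\sigma')_p = \nu_p \); no other lemma is invoked. You instead isolate the structural claim \( d\sigma' \leq d\sigma \) as an inequality of \( 2 \)-forms — whose first defining condition is literally condition (ii) of the hypothesis and whose second collapses because \( d \circ d = 0 \) — and then conclude by transitivity, Lemma~\ref{lemma: the blue lemma}\ref{property 3}. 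Both arguments rest on exactly the same identity, \( d\sigma' = (d\sigma)|_{\support d\sigma'} \), so the difference is one of packaging: your version buys the reusable observation that the exterior derivative is monotone with respect to \( \leq \) (a fact not stated in the paper but implicit in its setup), at the cost of invoking \( dd = 0 \) and the transitivity lemma, which the paper's three-line verification avoids. You also correctly note that non-triviality and irreducibility are intrinsic to \( \nu \) and so transfer automatically, a point the paper leaves tacit.
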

 
\begin{proof}
    Assume that \( p \in \support \nu \). Since \( \nu \) is a vortex in \( \sigma' \), we have \( (d\sigma')_p = \nu_p \neq 0 \), and hence \( p \in \support d\sigma' \).
    Since \( \sigma' \leq \sigma \) and \( p \in \support d\sigma', \) it follows from the definition of \( \leq \) that 
    \begin{equation*}
        (d\sigma)_p = ((d\sigma)|_{\support d\sigma'})_p = (d\sigma')_p = \nu_p
    \end{equation*}
    Since this holds for all \( p \in \support \nu \), \( \nu \) is a vortex is \( \sigma \).
\end{proof}

\begin{lemma}\label{lemma: reduction IV}
    Let \(  \sigma \in \Sigma_{E_N} \) and let 
    \( \nu \in \Sigma_{P_N} \) be a vortex in \( \sigma \). 
    Then there is \(  \sigma' \leq \sigma \) such that
    \begin{enumerate}[label=\textnormal{(\roman*)}]
        \item \( \nu \leq d \sigma' \), and 
        \item there is no \( \sigma''  <  \sigma' \) such that \( \nu \leq d \sigma'' \).
    \end{enumerate}
\end{lemma}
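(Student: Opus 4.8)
The plan is to deduce this lemma as a direct application of the general existence-of-minimal-elements result, Lemma~\ref{lemma: reduction V}, to a suitably chosen subset of \( \Sigma_{E_N} = \Sigma_1 \). First I would introduce the collection of all gauge field configurations below \( \sigma \) that still ``carry'' the vortex \( \nu \), namely
\[
    \Omega \coloneqq \bigl\{ \tilde\sigma \in \Sigma_{E_N} \colon \tilde\sigma \leq \sigma \ \text{and} \ \nu \leq d\tilde\sigma \bigr\}.
\]
Here the relation \( \nu \leq d\tilde\sigma \) is the partial order on \( \Sigma_2 \) from Definition~\ref{def: partial order}, which makes sense because \( \nu \in \Sigma_{P_N} \subseteq \Sigma_2 \) and \( d\tilde\sigma \in \Sigma_2 \). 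Since \( \nu \) is by hypothesis a vortex in \( \sigma \), we have \( \nu \leq d\sigma \), and reflexivity (Lemma~\ref{lemma: the blue lemma}\ref{property 1}) gives \( \sigma \leq \sigma \); hence \( \sigma \in \Omega \), so \( \Omega \) is non-empty.

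Next I would apply Lemma~\ref{lemma: reduction V} with \( k = 1 \), the set \( \Omega \) above, and \( \omega = \sigma \). This produces a configuration \( \sigma' \leq \sigma \) with \( \sigma' \in \Omega \) and with the property that no \( \sigma'' < \sigma' \) lies in \( \Omega \). The membership \( \sigma' \in \Omega \) immediately yields conclusion (i), that \( \nu \leq d\sigma' \) (and it also records \( \sigma' \leq \sigma \), as required in the statement).

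The one point that needs care --- and the only genuine content beyond invoking Lemma~\ref{lemma: reduction V} --- is conclusion (ii). What Lemma~\ref{lemma: reduction V} rules out is any \( \sigma'' < \sigma' \) that \emph{additionally} satisfies \( \sigma'' \leq \sigma \), whereas (ii) asks to exclude every \( \sigma'' < \sigma' \) with \( \nu \leq d\sigma'' \), with no a priori constraint relating \( \sigma'' \) to \( \sigma \). To bridge this gap, I would suppose that \( \sigma'' < \sigma' \) and \( \nu \leq d\sigma'' \). Since \( \sigma'' \leq \sigma' \) and \( \sigma' \leq \sigma \), transitivity (Lemma~\ref{lemma: the blue lemma}\ref{property 3}) gives \( \sigma'' \leq \sigma \); combined with \( \nu \leq d\sigma'' \) this shows \( \sigma'' \in \Omega \), and since \( \sigma'' < \sigma' \) this contradicts the minimality of \( \sigma' \) furnished by Lemma~\ref{lemma: reduction V}. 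Hence no such \( \sigma'' \) exists, which establishes (ii). I do not anticipate a real obstacle here: the argument is purely order-theoretic, and the transitivity step is precisely what makes the constraint ``\( \leq \sigma \)'' built into \( \Omega \) harmless when upgrading to minimality in the unrestricted sense.
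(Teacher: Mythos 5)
Your proof is correct and takes essentially the same approach as the paper: both reduce the lemma to an application of Lemma~\ref{lemma: reduction V} with \( k = 1 \). The only difference is that the paper takes \( \Omega = \{ \omega \in \Sigma_{E_N} \colon \nu \leq d\omega \} \) without the extra constraint \( \omega \leq \sigma \), so that conclusion (ii) follows immediately from Lemma~\ref{lemma: reduction V} and your (correct) transitivity bridging step becomes unnecessary.
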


\begin{proof}
    Since \( \nu \) is a vortex in \( \sigma \), we have \( \nu \leq d\sigma \). Let \( \Omega \coloneqq \{ \omega \in \Sigma_{E_N} \colon \nu \leq d\omega \} \). Since \( \Sigma_{E_N} \) is a finite set and \( \Omega \subseteq \Sigma_{E_N}  \), \( \bigl| \Omega \bigr| \) is finite. The desired conclusion thus follows immediately from applying Lemma~\ref{lemma: reduction V} with \( k = 1 \).
\end{proof}

\section{Activity of gauge field configurations}\label{sec: activity}

In this section, we use the function \( \varphi_r \), introduced in~\eqref{eq: varphi}, to define the so-called \emph{activity} of a spin configuration. To this end, recall that for \( r \in [0,\infty]\) and \( g \in G \), we have defined
\begin{equation*}
    \varphi_r(g) = 
    \begin{cases} 
        e^{r \Re (\rho(g)-\rho(0))} &\text{if } r<\infty, \\ \mathbb{1}_{g=0} &\text{if } r=\infty.
    \end{cases}
\end{equation*} 
Since \( \rho \) is a unitary representation of \( G \), for any \( g \in G \) we have \( \rho(g) = \overline{\rho(-g)} \), and hence \( \Re \rho(g) = \Re \rho(-g) \). In particular, this implies that for any \( g \in G \) and any \( r \geq 0\), we have
\begin{equation} \label{eq: phi is symmetric}
    \varphi_r(g)
    =
    e^{ r (\Re \rho(g)-\rho(0))  }
    =
    e^{r \beta (\Re \rho(-g)-\rho(0)) }
    =
    \varphi_r(-g).
\end{equation}
Clearly, we also have \( \varphi_\infty(g) = \varphi_\infty(-g) \) for all \( g \in G \).
Moreover, if \( a \geq 0 \) and \( r \geq 0 \), then
\begin{equation*}
    \varphi_r(g)^a = \varphi_{ar}(g).
\end{equation*}
Abusing notation, for \( \sigma \in \Sigma_{E_N} \) and \( r \in [0,\infty] \), we let
\begin{equation*}
    \varphi_r(\sigma) \coloneqq \prod_{e \in E_N} \varphi_r(\sigma_e),
\end{equation*}
and for \( \omega \in \Sigma_{P_N} \) we let
\begin{equation*}
    \varphi_r(\omega) \coloneqq \prod_{p \in P_N}\varphi_r(\omega_p).
\end{equation*}
In analogy with~\cite{b1984}, for \( \beta \in [0,\infty ] \) and \( \kappa \geq 0 \), we define the \emph{activity} of \( \sigma \in \Sigma_{E_N} \) by
\begin{equation*}
    \varphi_{\beta,\kappa}(\sigma) \coloneqq 
    \varphi_\kappa(\sigma) \varphi_\beta(d\sigma)
    =
    \prod_{e \in E_N} \varphi_\kappa(\sigma_e) \prod_{p \in P_N}\varphi_\beta \bigl((d\sigma)_p \bigr).
\end{equation*}
Note that with this notation, for \( \sigma \in \Sigma_{E_N} \), \( \beta \in [0,\infty] \), and \( \kappa \geq 0 \), we have
\begin{equation}\label{eq: mubetakappaphi}
    \mu_{N,\beta,\kappa}(\sigma) = \frac{\varphi_{\beta,\kappa} (\sigma)}{\sum_{\sigma' \in \Sigma_{E_N}} \varphi_{\beta,\kappa}(\sigma')}.
\end{equation}
When \( \beta=\infty, \) we have \( \varphi_{\infty,\kappa}(\sigma) = \varphi_\kappa(\sigma) \) if \( \sigma \in \Sigma_{E_N}^0 \) and \( \varphi_{\infty,\kappa}(\sigma)  = 0 \) if \( \sigma \in \Sigma_{E_N}\smallsetminus \Sigma_{E_N}^0. \) Hence, if \( \sigma \in \Sigma_{E_N} ,\) then
\begin{equation}\label{eq: mubetakappaphiinfty}
    \mu_{N,\infty,\kappa}(\sigma) = \frac{\varphi_{\kappa} (\sigma) \cdot \mathbf{1}_{\sigma \in \Sigma_{E_N}^0}}{\sum_{\sigma' \in \Sigma_{E_N}^0} \varphi_{\kappa}(\sigma')}.
\end{equation}

The next lemma, which extends Lemma~2.6 in~\cite{f2021} from the case \( \kappa = 0 \) (see also Lemma~3.2.3 in~\cite{sc2019}), gives a useful factorization property of \( \varphi_{\beta,\kappa} \). 

\begin{lemma}\label{lemma: factorization property}
Let \( \sigma,\sigma' \in \Sigma_{E_N} \) be such that \( \sigma' \leq \sigma \), let \( \beta \in [0,\infty] \), and let \( \kappa \geq 0 \). Then 
\begin{equation}\label{eq: factorization property}
    \varphi_{\beta,\kappa}(\sigma) = \varphi_{\beta,\kappa}(\sigma')\varphi_{\beta,\kappa}(\sigma-\sigma').
\end{equation}
\end{lemma}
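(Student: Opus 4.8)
The plan is to reduce the claimed identity to a \emph{pointwise} (per-cell) factorization, exploiting that the two products defining $\varphi_{\beta,\kappa}$ range over edges and plaquettes separately. First I would record the elementary observation that $\varphi_r(0) = e^{r\Re(\rho(0)-\rho(0))} = 1$ for every finite $r$, and likewise $\varphi_\infty(0) = \mathbb{1}_{0=0} = 1$; hence $\varphi_r(0) = 1$ for all $r \in [0,\infty]$. From this it follows that whenever $g = g_1 + g_2$ with $g_1 = 0$ or $g_2 = 0$, one has $\varphi_r(g) = \varphi_r(g_1)\varphi_r(g_2)$, since the vanishing argument contributes a factor $1$. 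This trivial multiplicativity ``across a zero'' is the only algebraic input needed.

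Next I would treat the edge contribution. Since $\sigma' \leq \sigma$, Lemma~\ref{lemma: the blue lemma}\ref{property 4} gives $\sigma - \sigma' = \sigma|_{E_N \smallsetminus \support \sigma'}$, so $\support \sigma'$ and $\support(\sigma - \sigma')$ are disjoint; thus on every oriented edge $e$ exactly one of $\sigma'_e$ and $(\sigma-\sigma')_e$ vanishes, while their sum equals $\sigma_e$. Applying the pointwise factorization at $r = \kappa$ for each $e$ and taking the product over $E_N$ yields $\varphi_\kappa(\sigma) = \varphi_\kappa(\sigma')\,\varphi_\kappa(\sigma - \sigma')$.

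For the plaquette contribution I would argue analogously, now using Lemma~\ref{lemma: the blue lemma}\ref{property 5}: the supports of $d\sigma'$ and $d(\sigma-\sigma')$ are disjoint. Together with the linearity $d\sigma = d\sigma' + d(\sigma - \sigma')$, this shows that for each plaquette $p$ one of $(d\sigma')_p$, $(d(\sigma-\sigma'))_p$ is zero and their sum is $(d\sigma)_p$; the pointwise factorization at $r = \beta$ then gives $\varphi_\beta(d\sigma) = \varphi_\beta(d\sigma')\,\varphi_\beta(d(\sigma-\sigma'))$ after taking the product over $P_N$. I would note explicitly that this covers the degenerate case $\beta = \infty$ without modification: disjointness of supports forces both $(d\sigma')_p$ and $(d(\sigma-\sigma'))_p$ to vanish whenever $(d\sigma)_p = 0$, so the indicator identity $\mathbb{1}_{(d\sigma)_p=0} = \mathbb{1}_{(d\sigma')_p=0}\,\mathbb{1}_{(d(\sigma-\sigma'))_p=0}$ holds on a case distinction.

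Finally I would multiply the two displays and regroup the four factors as $[\varphi_\kappa(\sigma')\varphi_\beta(d\sigma')]\,[\varphi_\kappa(\sigma-\sigma')\varphi_\beta(d(\sigma-\sigma'))]$, which by the definition of the activity is exactly $\varphi_{\beta,\kappa}(\sigma')\,\varphi_{\beta,\kappa}(\sigma-\sigma')$, giving~\eqref{eq: factorization property}. The whole argument is essentially bookkeeping, and the one point carrying genuine content is the disjointness of $\support d\sigma'$ and $\support d(\sigma-\sigma')$, supplied by property~\ref{property 5} of Lemma~\ref{lemma: the blue lemma}; this is precisely where the definition of the partial order $\leq$ (rather than mere support disjointness of $\sigma'$ and $\sigma-\sigma'$) is indispensable, since the edge supports being disjoint alone would \emph{not} guarantee disjoint differentials.
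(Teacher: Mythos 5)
Your proof is correct and takes essentially the same route as the paper's: both arguments rest on the disjointness of $\support \sigma'$ and $\support(\sigma-\sigma')$, the disjointness of $\support d\sigma'$ and $\support d(\sigma-\sigma')$, and the fact that $\varphi_r(0)=1$ — the paper regroups the edge and plaquette products over these supports and their complements, while you factor cell by cell, which is the same computation organized differently. One cosmetic slip: ``exactly one of $\sigma'_e$ and $(\sigma-\sigma')_e$ vanishes'' should read ``at least one'' (both vanish on edges outside $\support \sigma$), but since the pointwise identity only needs one argument to be zero, this does not affect the argument.
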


\begin{proof}
    To simplify notation, let \( E \coloneqq  \support \sigma' \) and \( P \coloneqq \support d\sigma' \). Note that 
    \begin{equation}\label{eq: eq with terms}
        \begin{split}
        & \varphi_{\beta,\kappa}(\sigma) = \prod_{p \in P_N} \varphi_{\beta}  \bigl( (d\sigma)_p\bigr) \prod_{e \in E_N}\varphi_\kappa(\sigma_e)
        \\&\qquad=
        \biggl[ \, \prod_{p \in P} \varphi_{\beta}  \bigl( (d\sigma)_p\bigr) \prod_{e \in E}\varphi_\kappa(\sigma_e) \biggr] \cdot \biggl[ \,  \prod_{p \in P_N\smallsetminus P} \varphi_{\beta}  \bigl( (d\sigma)_p\bigr) \prod_{e \in E_N\smallsetminus E}\varphi_\kappa(\sigma_e) \biggr].
        \end{split}
    \end{equation} 
    
    We now make the following observations.
    \begin{enumerate}[label=(\arabic*)]
        \item Since \( \sigma' \leq \sigma \), for \( e \in E \)  we have \( \sigma_e = \sigma'_e \). On the other hand, since \( E = \support \sigma' \), if \( e \in E_N \smallsetminus E \), then \( \sigma'_e = 0 \). Since \( \varphi_\kappa (0) = 1 \), it follows that
        \begin{equation}\label{eq: term 1}
            \prod_{e \in E} \varphi_\kappa(\sigma_e)    
            =
            \prod_{e \in E} \varphi_\kappa(\sigma'_e) 
            = 
            \prod_{e \in E_N} \varphi_\kappa(\sigma'_e).
        \end{equation}  
        \label{item: i}

        \item Since \( \sigma' \leq \sigma \), for \( e \in E \)  we have \( \sigma_e = \sigma'_e \), and hence
        \begin{equation*}
            (\sigma-\sigma')_e = \sigma_e - \sigma'_e = \sigma'_e - \sigma'_e = 0.
        \end{equation*}
        On the other hand, if \( e \in E_N \smallsetminus E \), then \( \sigma_e' = 0 \), and hence
        \begin{equation*}
            \sigma_e = \sigma_e - 0 = \sigma_e - \sigma'_e = (\sigma- \sigma')_e.
        \end{equation*}
        Since \( \varphi_\kappa(0)=1 \), it follows that
        \begin{equation}\label{eq: term 2}
            \prod_{e \in E_N\smallsetminus E}  \varphi_\kappa ( \sigma_e )
            =
            \prod_{e \in E_N\smallsetminus E}  \varphi_\kappa \bigl( (\sigma-\sigma')_e \bigr)
            = 
            \prod_{e \in E_N}  \varphi_\kappa \bigl( (\sigma-\sigma')_e \bigr).
        \end{equation}

        \item Since \( \sigma' \leq \sigma \) and \( P = \support d\sigma' \), for \( p \in P \) we have \( (d\sigma)_p = (d\sigma')_p \), and for \( p \in P_N \smallsetminus P \) we have \( (d\sigma')_p = 0 \). Consequently, since \( \varphi_\beta(0)=1 \), we have
        \begin{equation}\label{eq: term 3}
            \prod_{p \in P}  \varphi_\beta \bigl( (d\sigma)_p \bigr)  
            =
            \prod_{p \in P}  \varphi_\beta \bigl( (d\sigma')_p \bigr)  
            =
            \prod_{p \in P_N}  \varphi_\beta \bigl( (d\sigma')_p \bigr)  .
        \end{equation}
        \label{item: iii}

        \item Since \( P = \support d\sigma \), if \( p \in P_N \smallsetminus P \), then \( (d\sigma')_p=0 \), and hence
        \begin{equation*}
            (d\sigma)_p = (d\sigma)_p - 0 = (d\sigma)_p-(d\sigma')_p =  (d\sigma-d\sigma')_p
             =  \bigl(d(\sigma-\sigma')\bigr)_p.
        \end{equation*}
        On the other hand, if \( p \in P \), then \( (d\sigma')_p = (d\sigma)_p \), and hence
        \begin{equation*}
            (d(\sigma-\sigma'))_p = (d\sigma-d\sigma')_p = (d\sigma)_p-(d\sigma')_p =   (d\sigma')_p -(d\sigma')_p =0.
        \end{equation*}
        Combining these observations and using that \( \varphi_\beta(0)=1 \), we obtain
        \begin{equation}\label{eq: term 4}
            \prod_{p \in P_N\smallsetminus P}  \varphi \bigl( (d\sigma)_p  \bigr)
            =
            \prod_{p \in P_N\smallsetminus P}  \varphi \bigl( (d(\sigma-\sigma'))_p  \bigr)
            =
            \prod_{p \in P_N}  \varphi \bigl( (d(\sigma-\sigma'))_p  \bigr).
        \end{equation}
    \end{enumerate}
    Using~\eqref{eq: term 1},~\eqref{eq: term 2},~\eqref{eq: term 3}, and~\eqref{eq: term 4} to substitute the corresponding factors in~\eqref{eq: eq with terms}, we obtain
    \begin{equation*} 
        \begin{split}
        & \varphi_{\beta,\kappa}(\sigma) 
        =
        \biggl[ \, \prod_{p \in P_N} \varphi_{\beta}  \bigl( (d\sigma')_p\bigr) \prod_{e \in E_N}\varphi_\kappa(\sigma'_e) \biggr] \cdot \biggl[ \,  \prod_{p \in P_N } \varphi_{\beta}  \bigl( (d(\sigma-\sigma'))_p\bigr) \prod_{e \in E_N }\varphi_\kappa((\sigma-\sigma')_e) \biggr]
        \\&\qquad=
        \varphi_{\beta,\kappa}(\sigma') \varphi_{\beta,\kappa}(\sigma-\sigma') 
        \end{split}
    \end{equation*}
    as desired.  
\end{proof}

\section{Distribution of vortices and gauge field configurations}\label{sec: distribution of configurations}

In this section, we state and prove two propositions. The first proposition estimates the probability that a gauge field configuration \(\sigma\) locally agrees with another given gauge field configuration \(\sigma'\). The second proposition estimates the probability that \(d\sigma\) locally agrees with a given irreducible plaquette configuration \(\nu\).

\begin{proposition}\label{proposition: edgecluster flipping ii}
    Let  \( \sigma' \in \Sigma_{E_N} \), let \( \beta \in [0,\infty] \), and let \( \kappa \geq 0 \). Then
    \begin{equation*}
        \mu_{N,\beta,\kappa}\bigl(\{ \sigma \in \Sigma_{E_N} \colon
        \sigma' \leq \sigma \} \bigr) 
        \leq 
        \varphi_{\beta,\kappa}(\sigma').
    \end{equation*}
\end{proposition}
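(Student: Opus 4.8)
The plan is to expand the probability in terms of the explicit activity weights recorded in~\eqref{eq: mubetakappaphi}, factor out $\varphi_{\beta,\kappa}(\sigma')$ using the factorization in Lemma~\ref{lemma: factorization property}, and then bound the resulting restricted sum by the full normalizing sum using nonnegativity of the activity. First I would write
\[
    \mu_{N,\beta,\kappa}\bigl(\{ \sigma \in \Sigma_{E_N} \colon \sigma' \leq \sigma \}\bigr)
    =
    \frac{\sum_{\sigma \in \Sigma_{E_N} \colon \sigma' \leq \sigma} \varphi_{\beta,\kappa}(\sigma)}{\sum_{\tau \in \Sigma_{E_N}} \varphi_{\beta,\kappa}(\tau)},
\]
noting that the denominator is strictly positive (it contains the term $\varphi_{\beta,\kappa}(0) = 1$), so the ratio is well defined in both the finite and the $\beta = \infty$ case.

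Next, for every $\sigma$ appearing in the numerator we have $\sigma' \leq \sigma$, so Lemma~\ref{lemma: factorization property} applies and gives $\varphi_{\beta,\kappa}(\sigma) = \varphi_{\beta,\kappa}(\sigma')\,\varphi_{\beta,\kappa}(\sigma - \sigma')$. Pulling the common factor $\varphi_{\beta,\kappa}(\sigma')$ out of the sum, the numerator becomes
\[
    \varphi_{\beta,\kappa}(\sigma') \sum_{\sigma \in \Sigma_{E_N} \colon \sigma' \leq \sigma} \varphi_{\beta,\kappa}(\sigma - \sigma').
\]
The key remaining step is to control the last sum. The map $\sigma \mapsto \sigma - \sigma'$ is a bijection of $\Sigma_{E_N}$ onto itself, hence in particular injective on $\{\sigma \in \Sigma_{E_N} \colon \sigma' \leq \sigma\}$, and it sends this set into $\Sigma_{E_N}$. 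Since every factor $\varphi_r(\cdot)$ is nonnegative (it is a positive exponential when $r < \infty$ and the indicator $\mathbb{1}_{\,\cdot\, = 0}$ when $r = \infty$), the activity $\varphi_{\beta,\kappa}$ is a product of nonnegative terms and is therefore nonnegative; consequently restricting a sum of such terms to a subset can only decrease it. This yields
\[
    \sum_{\sigma \in \Sigma_{E_N} \colon \sigma' \leq \sigma} \varphi_{\beta,\kappa}(\sigma - \sigma')
    \leq
    \sum_{\tau \in \Sigma_{E_N}} \varphi_{\beta,\kappa}(\tau).
\]

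Substituting this back into the expression for the numerator and cancelling the common factor $\sum_{\tau \in \Sigma_{E_N}} \varphi_{\beta,\kappa}(\tau)$ between numerator and denominator leaves exactly $\varphi_{\beta,\kappa}(\sigma')$, which is the claimed bound. I do not expect any serious obstacle here: the entire argument is a one-line factorization followed by enlarging the summation domain, and the only points needing a moment's care are verifying that $\sigma \mapsto \sigma - \sigma'$ is injective (immediate, since $\Sigma_{E_N}$ is closed under subtraction) and that the activity is genuinely nonnegative in the degenerate $\beta = \infty$ regime, where $\varphi_\infty$ is an indicator rather than an exponential.
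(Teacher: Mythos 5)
Your proof is correct and is essentially the paper's own argument: both rest on the activity representation~\eqref{eq: mubetakappaphi}, the factorization of Lemma~\ref{lemma: factorization property}, and the injectivity of $\sigma \mapsto \sigma - \sigma'$ together with nonnegativity of $\varphi_{\beta,\kappa}$. The only (cosmetic) difference is that you enlarge the summation domain of the shifted sum in the numerator, whereas the paper equivalently shrinks the denominator to the image set $\bar\Sigma^{\sigma'} = \{\sigma \colon \sigma + \sigma' \in \Sigma^{\sigma'}\}$ and then cancels exactly.
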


\begin{proof}
    Define 
    \begin{equation*}
        \Sigma^{\sigma'} \coloneqq \{ \sigma \in \Sigma_{E_N} \colon \sigma' \leq \sigma \} \quad \text{and} \quad
        \bar \Sigma^{\sigma'} \coloneqq \{ \sigma \in \Sigma_{E_N} \colon \sigma + \sigma' \in \Sigma^{\sigma'} \}.
    \end{equation*} 
    Using~\eqref{eq: mubetakappaphi}, and that \( \bar \Sigma^{\sigma'} \subseteq \Sigma_{E_N} \), we obtain
    \begin{equation}\label{eq: first ineq eq}
        \mu_{N,\beta,\kappa}\bigl(\{ \sigma \in \Sigma_{E_N} \colon \sigma' \leq \sigma \} \bigr) 
        = 
        \mu_{N,\beta,\kappa}(\Sigma^{\sigma'}) 
        =\frac{\sum_{\sigma \in \Sigma^{\sigma'} } \varphi_{\beta,\kappa}(\sigma)}{\sum_{\sigma \in \Sigma_{E_N}} \varphi_{\beta,\kappa}(\sigma)} 
        \leq\frac{\sum_{\sigma \in \Sigma^{\sigma'} } \varphi_{\beta,\kappa}(\sigma)}{\sum_{\sigma \in \bar \Sigma^{\sigma'}} \varphi_{\beta,\kappa}(\sigma)} .
    \end{equation}
    If \( \sigma \in \Sigma^{\sigma'} \), then \( \sigma' \leq \sigma \), and hence, by Lemma~\ref{lemma: factorization property}, \( \varphi_{\beta,\kappa}(\sigma) = \varphi_{\beta,\kappa}(\sigma')\varphi_{\beta,\kappa}(\sigma-\sigma') \). On the other hand, the mapping \( \sigma \mapsto \sigma - \sigma' \) is a bijection from \( \Sigma^{\sigma'} \) to \( \bar \Sigma^{\sigma'} \). Combining these observations, we obtain
    \begin{equation*}
        \sum_{\sigma \in \Sigma^{\sigma'} } \varphi_{\beta,\kappa}(\sigma) = 
        \sum_{\sigma \in \Sigma^{\sigma'} } \varphi_{\beta,\kappa}(\sigma')\varphi_{\beta,\kappa}(\sigma-\sigma') = 
        \varphi_{\beta,\kappa}(\sigma')
        \sum_{\sigma \in \Sigma^{\sigma'} } \varphi_{\beta,\kappa}(\sigma-\sigma')= 
        \varphi_{\beta,\kappa}(\sigma')
        \sum_{\sigma \in \bar \Sigma^{\sigma'} } \varphi_{\beta,\kappa}(\sigma).
    \end{equation*}
    This shows that the right-hand side of~\eqref{eq: first ineq eq} is equal to \(\varphi_{\beta,\kappa}(\sigma') \), and hence the desired conclusion follows.
\end{proof}

The next proposition gives a version of Proposition~\ref{proposition: edgecluster flipping ii} for plaquette configurations.
\begin{proposition}\label{proposition: first step}
    Let \( \nu \in \Sigma_{P_N} \)  be irreducible and non-trivial, and assume that \( \beta \geq 0 \) and  \( \kappa \geq 0 \) are such that~\ref{assumption: 1} holds. Then
    \begin{equation*} 
        \mu_{N,\beta,\kappa}\bigl(\{ \sigma \in \Sigma_{E_N} \colon \nu \leq d\sigma \} \bigr) 
        \leq    
        \frac{ \varphi_\beta(\nu) \, \bigl( 4 \alpha_0(\kappa)\bigr)^{m_0}  }{1-
        \alpha_1(\beta) -  4 \alpha_0(\kappa) }, 
\end{equation*}
    where 
    \begin{equation}\label{eq: m0def}
        m_0 \coloneqq \frac{1}{2} \min \bigl\{ |\support \sigma| \colon \sigma \in \Sigma_{E_N} \text{ and } \nu \leq d\sigma \bigr\}.
    \end{equation}
\end{proposition}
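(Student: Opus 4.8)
The plan is to reduce the event $\{\nu\le d\sigma\}$ to a union over \emph{minimal} configurations carrying $\nu$, and then to estimate the resulting weighted sum by a Peierls-type exploration. For the reduction, set $A\coloneqq\{\sigma\in\Sigma_{E_N}:\nu\le d\sigma\}$ and let
\begin{equation*}
    M\coloneqq\{\sigma'\in\Sigma_{E_N}:\nu\le d\sigma'\text{ and there is no }\sigma''<\sigma'\text{ with }\nu\le d\sigma''\}.
\end{equation*}
By Lemma~\ref{lemma: reduction IV}, every $\sigma\in A$ satisfies $\sigma'\le\sigma$ for some $\sigma'\in M$, so $A\subseteq\bigcup_{\sigma'\in M}\{\sigma:\sigma'\le\sigma\}$. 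A union bound together with Proposition~\ref{proposition: edgecluster flipping ii} then gives
\begin{equation*}
    \mu_{N,\beta,\kappa}(A)\le\sum_{\sigma'\in M}\varphi_{\beta,\kappa}(\sigma').
\end{equation*}

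Next I would extract the factor $\varphi_\beta(\nu)$. Using $\varphi_r(g)=\varphi_r(-g)$ one may write $\varphi_{\beta,\kappa}(\sigma')=\prod_{e\in(\support\sigma')^+}\varphi_\kappa(\sigma'_e)^2\prod_{p\in(\support d\sigma')^+}\varphi_\beta((d\sigma')_p)^2$. For $\sigma'\in M$ one has $(d\sigma')_p=\nu_p$ for all $p\in\support\nu$, and since $\Re\rho(g)\le\Re\rho(0)$ forces $\varphi_\beta\le 1$, each plaquette in $(\support d\sigma')^+\setminus(\support\nu)^+$ contributes a factor at most $\alpha_1(\beta)$. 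Hence, writing $q(\sigma')\coloneqq|(\support d\sigma')^+\setminus(\support\nu)^+|$,
\begin{equation*}
    \varphi_{\beta,\kappa}(\sigma')\le\varphi_\beta(\nu)\Big(\prod_{e\in(\support\sigma')^+}\varphi_\kappa(\sigma'_e)^2\Big)\alpha_1(\beta)^{q(\sigma')}.
\end{equation*}

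It then remains to show that the sum over $M$ of the right-hand side is at most $\varphi_\beta(\nu)(4\alpha_0(\kappa))^{m_0}/(1-\alpha_1(\beta)-4\alpha_0(\kappa))$, which I would prove by growing each $\sigma'\in M$ outward from $\nu$ along the edge--plaquette incidence graph. Because $\nu$ is irreducible its support is connected in this graph, and minimality forces every irreducible component of $\sigma'$ to have $d$-support meeting $\support\nu$; thus $\support\sigma'$ together with $\support d\sigma'$ is connected and attached to $\nu$, which justifies the exploration. Summing a single positive support edge over its admissible nonzero values in $G$ produces the factor $\sum_{g\ne 0}\varphi_\kappa(g)^2=\alpha_0(\kappa)$; the bounded local geometry of $\mathbb{Z}^4$ (four edges in each plaquette boundary, at most six plaquettes through each edge) bounds by $4$ the combinatorial choices made each time the exploration crosses a plaquette; and each extra nonzero plaquette contributes a factor at most $\alpha_1(\beta)$ by the previous step. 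Since the definition of $m_0$ guarantees $|(\support\sigma')^+|\ge m_0$ for every $\sigma'\in M$, the first $m_0$ edges are unavoidable and give the base factor $(4\alpha_0(\kappa))^{m_0}$, while every further edge or extra plaquette adjoined contributes one more term of a geometric series with ratio $4\alpha_0(\kappa)+\alpha_1(\beta)$. This ratio is strictly below $1$ by Assumption~\ref{assumption: 1}, so summing the series produces the factor $1/(1-\alpha_1(\beta)-4\alpha_0(\kappa))$ and yields the claim.

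The hard part is this last step: one must arrange the exploration so that every element of $M$ is generated, and weighted at least as large as its true weight, while the branching at each step is genuinely bounded by the stated factors. The delicate points are the connectivity/minimality analysis that licenses exploring from $\nu$, the careful bookkeeping of edge orientations and $G$-values, and the way the constraints $(d\sigma')_p=\nu_p$ force additional support edges to appear — this is precisely where the constants $4$, $\alpha_0(\kappa)$, and $\alpha_1(\beta)$ have to be matched to the geometry of the lattice and to the definition of $m_0$.
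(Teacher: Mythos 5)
Your reduction step is correct and is exactly the paper's own first half (Lemma~\ref{lemma: part 1 of upper bound}): Lemma~\ref{lemma: reduction IV}, a union bound over the set $M$ of minimal configurations, and Proposition~\ref{proposition: edgecluster flipping ii} do give $\mu_{N,\beta,\kappa}(A)\le\sum_{\sigma'\in M}\varphi_{\beta,\kappa}(\sigma')$, and your extraction of $\varphi_\beta(\nu)$ with a factor $\alpha_1(\beta)$ per plaquette of $(\support d\sigma')^+\smallsetminus(\support\nu)^+$ is also fine. The gap is in the counting step, which you yourself flag as ``the hard part'': the exploration is never constructed, and the one justification you do offer for the crucial base factor $(4\alpha_0(\kappa))^{m_0}$ --- that every $\sigma'\in M$ satisfies $|(\support\sigma')^+|\ge m_0$ --- is not sufficient. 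If all you know is that at least $m_0$ of the steps of an exploration of length $m$ are edge steps, then summing the per-step bounds over all admissible sequences produces an interleaving factor:
\begin{equation*}
    \sum_{E\ge m_0}\binom{m}{E}\bigl(4\alpha_0(\kappa)\bigr)^E\alpha_1(\beta)^{m-E},
\end{equation*}
which already for $m=2$, $m_0=1$ strictly exceeds $\bigl(4\alpha_0(\kappa)\bigr)\bigl(4\alpha_0(\kappa)+\alpha_1(\beta)\bigr)$ (by the cross term $4\alpha_0(\kappa)\alpha_1(\beta)$); controlling the binomials via $\binom{m}{E}\le\binom{m}{m_0}\binom{m-m_0}{E-m_0}$ and summing over $m$ yields at best $\varphi_\beta(\nu)\bigl(4\alpha_0(\kappa)\bigr)^{m_0}\bigl(1-\alpha_1(\beta)-4\alpha_0(\kappa)\bigr)^{-(m_0+1)}$, which is weaker than the claimed bound and degrades as $m_0$ grows.

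What is actually needed --- and what the paper's Lemma~\ref{lemma: a constructive version of algorithm B} provides --- is an exploration whose \emph{first $m_0$ steps are forced to be edge steps}, so that the sum factorizes as $(4\alpha_0(\kappa))^{m_0}(\alpha_1(\beta)+4\alpha_0(\kappa))^{m-m_0}$ with no binomial factor (this is Lemma~\ref{lemma: part 2 of upper bound}). The mechanism is: fix a total order on plaquettes in which those of $\support\nu$ come first; at each step target the \emph{first} plaquette at which the current partial configuration disagrees with $\nu$ (this determinism is also what makes the encoding injective, so every element of $M$ is generated and weighted correctly); and observe that a plaquette $p\in\support\nu$ can never be ``locked,'' since if all edges of $\partial p$ already carried their final values while $p$ were still defective one would have $(d\sigma')_p\neq\nu_p$, contradicting $\nu\le d\sigma'$. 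Consequently, as long as fewer than $m_0$ edges have been revealed, the definition of $m_0$ guarantees some plaquette of $\support\nu$ is still defective, and the step it forces must reveal a new edge. Your connectivity remarks about irreducibility of $\nu$ are not the operative mechanism and would not by themselves deliver this ordering property. So your strategy is the right one --- it is the paper's --- but the central combinatorial construction, where essentially all of the work lies, is missing, and the shortcut you propose in its place does not give the stated constant.
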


Before giving a proof of Proposition~\ref{proposition: first step}, we state and prove three lemmas which we will need.
The first of these lemmas will be used to enumerate all gauge field configurations \( \sigma \in \Sigma_{E_N}\) with \( \nu \leq d\sigma\) that are minimal in the sense of Lemma~\ref{lemma: reduction IV}. In this lemma, we associate two different sequences to a vortex \( \nu \) in an irreducible edge configuration \( \hat{\sigma} \): the first sequence is a sequence of edge configurations \( \sigma^{(0)}, \sigma^{(1)}, \ldots, \sigma^{(m)} \in \Sigma_{E_N} \) with increasing supports such that $\sigma^{(0)} = 0$ and $\sigma^{(m)} = \hat{\sigma}$, and the second sequence is an increasing sequence of subsets $L_0 \subseteq L_1 \subseteq  \dots \subseteq  L_m$ of $P_N$ such that $L_0 = \emptyset$ and $L_m = \support d\hat \sigma \smallsetminus \support \nu$. Together these two sequences can be thought of as a step-wise joint exploration process of \( \hat \sigma \) and \( \support \hat \sigma \smallsetminus \support \nu \).

\begin{lemma}\label{lemma: a constructive version of algorithm B}
    Let \( \nu \in \Sigma_{P_N}\), and let \( \hat \sigma \in \Sigma_{E_N} \) be such that
    \begin{enumerate}[label=\textnormal{(\alph*)}]
        \item \( \nu\) is a vortex in \( \hat  \sigma \), and
        \item there is no  \( \sigma' \in \Sigma_{E_N} \) with \( \sigma' < \hat  \sigma \) such that \( \nu \) is a vortex in \( \sigma' \). \label{item: constructive *}
    \end{enumerate}
    Assume that an arbitrary total ordering of the plaquettes in  \( P_N \) is given, and define 
    \begin{equation*}
        m \coloneqq m(\hat \sigma,\nu) \coloneqq \bigl(|\support d\hat \sigma \smallsetminus \support \nu| + |\support \hat \sigma| \bigr)/2 .
    \end{equation*} 
    Then there are sequences \( \sigma^{(0)},\sigma^{(1)}, \ldots, \sigma^{(m)} \in \Sigma_{E_N} \) and \( L_0 , L_1 , L_2 , \ldots,  L_m \subseteq P_N \), such that the following properties hold.
    \begin{enumerate}[label=\textnormal{(\roman*)}]
        \item \( \sigma^{(0)} \equiv 0 \) and \( \sigma^{(m)} = \hat \sigma \).\label{property va 1}
        \item \( L_0 = \emptyset \) and \( L_m = \support d\hat \sigma \smallsetminus \support \nu \).\label{property va 2}
    \end{enumerate}
    Moreover, for all \( j \in \{ 1,2, \ldots, m \} \), 
    \begin{enumerate}[resume,label=\textnormal{(\roman*)}] 
        \item  if  \( p \in L_{j}\), then \( \sigma^{(j)}_e = \hat \sigma_e \) for all \( e \in \partial p \),  \label{property va 3}
        \item   \(\hat \sigma|_{\support \sigma^{(j)}}  =  \sigma^{(j)} \), and \label{property va 4}
        \item   \( L_j \subseteq \support d\hat \sigma \smallsetminus \support \nu \).\label{property va 5}
    \end{enumerate}
    Further, for \( j = 1,2, \ldots, m \), let  
    \begin{equation}\label{Pjdef}
        P^j \coloneqq \bigl\{ p \in P_N \smallsetminus L_{j-1} \colon (d\sigma^{(j-1)})_p \neq \nu_p \bigr\} ,
    \end{equation} 
    and let \( p_j  \) be the plaquette in \( P^j \) which appears first in the given  ordering of the plaquettes. Then \( p_j \) is well defined, and we can choose the sequences \( \sigma^{(0)},\sigma^{(1)},\ldots, \sigma^{(m)} \) and \( L_0,L_1,\ldots, L_m \) such that either
    \begin{enumerate}[label=\textnormal{(vi')}]
        \item \( \support \sigma^{(j)} = \support \sigma^{(j-1)} \cup \{e_j,-e_j \} \) for some \( e_j \in \partial p_j \), and \( L_{j} =  L_{j-1}    \), or\label{property va 6'} 
    \end{enumerate}
    \begin{enumerate}[label=\textnormal{(vi'')}] 
        \item \( L_{j} =  L_{j-1} \cup \{ p_j,-p_j \} \) and \(  \support \sigma^{(j)}  = \support \sigma^{(j-1)}, \)\label{property va 6''}
    \end{enumerate}
    for \(j = 1,2, \dots, m\).
    Moreover, if the plaquettes in \( \support \nu \) occur first in the ordering of the plaquettes, then
    \begin{enumerate}[label=\textnormal{(vii)}]
        \item \( L_0 = L_1 = \ldots = L_{m_0} = \emptyset, \)\label{property va 7}
    \end{enumerate}
    where \( m_0 \) is given by~\eqref{eq: m0def}.
\end{lemma}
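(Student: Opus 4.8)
The plan is to construct the two sequences by a greedy, step-wise exploration of $\hat\sigma$ guided by the fixed ordering of $P_N$, and to verify the asserted properties by induction on $j$. I would set $\sigma^{(0)} \equiv 0$ and $L_0 = \emptyset$, so that~\ref{property va 1} and~\ref{property va 2} hold at $j=0$ and~\ref{property va 3}--\ref{property va 5} hold vacuously. Given $\sigma^{(j-1)}$ and $L_{j-1}$, I would form $P^j$ as in~\eqref{Pjdef} and let $p_j$ be its first element in the ordering (assuming for the moment $P^j\neq\emptyset$; this is justified below). Then I split into two cases: if there is an edge $e\in\partial p_j$ with $\sigma^{(j-1)}_e\neq\hat\sigma_e$, I perform move~\ref{property va 6'} by setting $e_j \coloneqq e$, $\sigma^{(j)}\coloneqq\hat\sigma|_{\support\sigma^{(j-1)}\cup\{e_j,-e_j\}}$, and $L_j\coloneqq L_{j-1}$; otherwise I perform~\ref{property va 6''} by setting $\sigma^{(j)}\coloneqq\sigma^{(j-1)}$ and $L_j\coloneqq L_{j-1}\cup\{p_j,-p_j\}$.

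Next I would check that this preserves~\ref{property va 3}--\ref{property va 5}. Property~\ref{property va 4} is immediate since $\sigma^{(j)}$ is always a restriction of $\hat\sigma$, and~\ref{property va 3} is preserved because we only ever assign values that agree with $\hat\sigma$, so once the edges of a plaquette in $L$ agree with $\hat\sigma$ they continue to. The only substantive point is~\ref{property va 5} in the~\ref{property va 6''} case: there every $e\in\partial p_j$ satisfies $\sigma^{(j-1)}_e=\hat\sigma_e$, so $(d\sigma^{(j-1)})_{p_j}=(d\hat\sigma)_{p_j}$, while $p_j\in P^j$ gives $(d\sigma^{(j-1)})_{p_j}\neq\nu_{p_j}$, whence $(d\hat\sigma)_{p_j}\neq\nu_{p_j}$. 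Since $\nu$ is a vortex in $\hat\sigma$, this forces $p_j\notin\support\nu$ and $(d\hat\sigma)_{p_j}\neq0$, so $p_j\in\support d\hat\sigma\smallsetminus\support\nu$; by symmetry $-p_j$ lies there too, giving~\ref{property va 5}.

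The crux is to show $p_j$ is well defined for $j=1,\dots,m$ and that the process terminates at step $m$. Suppose $P^j=\emptyset$, i.e.\ $(d\sigma^{(j-1)})_p=\nu_p$ for every $p\notin L_{j-1}$. Since $\support\nu\cap L_{j-1}=\emptyset$ by~\ref{property va 5}, this already shows $\nu$ is a vortex in $\sigma^{(j-1)}$. Combining~\ref{property va 3} (for $p\in L_{j-1}$) with the identity $(d\sigma^{(j-1)})_p=\nu_p=(d\hat\sigma)_p$ for $p\in\support d\sigma^{(j-1)}\smallsetminus L_{j-1}$ (such a $p$ lies in $\support\nu$, on which $\nu$ agrees with $d\hat\sigma$), one gets $d\sigma^{(j-1)}=(d\hat\sigma)|_{\support d\sigma^{(j-1)}}$, which with~\ref{property va 4} yields $\sigma^{(j-1)}\leq\hat\sigma$ in the sense of Definition~\ref{def: partial order}. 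The minimality hypothesis~\ref{item: constructive *} then forces $\sigma^{(j-1)}=\hat\sigma$, and a direct computation gives $P^j=(\support d\hat\sigma\smallsetminus\support\nu)\smallsetminus L_{j-1}$ in that case, so $P^j=\emptyset$ forces $L_{j-1}=\support d\hat\sigma\smallsetminus\support\nu$. Thus $P^j=\emptyset$ occurs only at the terminal configuration. Since each step enlarges $\support\sigma^{(j)}$ by one positively oriented edge of $\support\hat\sigma$ (case~\ref{property va 6'}) or $L_j$ by one positively oriented plaquette of $\support d\hat\sigma\smallsetminus\support\nu$ (case~\ref{property va 6''}), the quantity $(|\support\sigma^{(j)}|+|L_j|)/2$ rises by exactly $1$ each step, so the terminal state is reached after precisely $m$ steps, establishing~\ref{property va 1} and~\ref{property va 2}. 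This well-definedness step is the main obstacle, since it requires extracting the partial-order relation $\sigma^{(j-1)}\leq\hat\sigma$ (in particular its differential condition) before the minimality hypothesis can be invoked.

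Finally, for~\ref{property va 7} I would assume the plaquettes of $\support\nu$ occur first in the ordering. As long as $\nu$ is not yet a vortex in $\sigma^{(j-1)}$, there is some $p\in\support\nu$ with $(d\sigma^{(j-1)})_p\neq\nu_p$; by~\ref{property va 5} this $p$ lies in $P^j$, so $p_j\in\support\nu$. For such $p_j$ the~\ref{property va 6''} alternative is impossible, as all edges of $p_j$ agreeing with $\hat\sigma$ would give $(d\sigma^{(j-1)})_{p_j}=(d\hat\sigma)_{p_j}=\nu_{p_j}$, contradicting $p_j\in P^j$; hence the step is of type~\ref{property va 6'} and $L_j=L_{j-1}$. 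Thus $L$ stays empty until the first index $j^\ast$ with $\nu\leq d\sigma^{(j^\ast)}$. By the definition of $m_0$ in~\eqref{eq: m0def}, any $\sigma$ with $\nu\leq d\sigma$ has $|\support\sigma|\geq 2m_0$, so $|\support\sigma^{(j^\ast)}|\geq 2m_0$; since the first $j^\ast$ steps are all of type~\ref{property va 6'}, $j^\ast=|\support\sigma^{(j^\ast)}|/2\geq m_0$, and therefore $L_0=\dots=L_{m_0}=\emptyset$, as claimed.
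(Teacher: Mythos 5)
Your proof is correct and follows essentially the same route as the paper's: the same greedy exploration driven by the plaquette ordering with the same two-case update, the same use of properties (iii)--(v) plus the minimality hypothesis to show that $P^j=\emptyset$ forces $\sigma^{(j-1)}=\hat\sigma$ and $L_{j-1}=\support d\hat\sigma\smallsetminus\support\nu$ (hence $p_j$ is well defined for $j\leq m$ by the counting argument), and the same termination and (vii) arguments. The only cosmetic differences (case-splitting over all of $\partial p_j$ rather than $\partial p_j\smallsetminus\partial L_{j-1}$, which is equivalent by (iii), and defining $j^\ast$ as the first index with $\nu\leq d\sigma^{(j^\ast)}$ instead of the last index with $p_j\in\support\nu$) do not change the substance.
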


For \( j = 1,2, \ldots, m \), we think of the plaquettes in the set \( L_{j-1} \) as being ``locked'' or ``frozen'' at step \( j \).

In Figure~\ref{fig: first example} and Figure~\ref{fig: second example}, we give examples of sequences \( \sigma^{(0)},\sigma^{(1)}, \ldots \) and \( L_0,L_1,\ldots \) which correspond to some given  \( \hat  \sigma \in \Sigma_{E_N}\) and  \( \nu \in \Sigma_{P_N}\) as in Lemma~\ref{lemma: a constructive version of algorithm B}. 
\begin{proof}[Proof of Lemma~\ref{lemma: a constructive version of algorithm B}]
    We will show that the conclusion of Lemma~\ref{lemma: a constructive version of algorithm B} holds by constructing two sequences \( \sigma^{(0)}, \sigma^{(1)}, \ldots, \sigma^{(m)} \in \Sigma_{E_N}\) and \( L_0 ,L_1,\ldots, L_m \subseteq P_N\) with the desired properties. Define \( \sigma^{(0)} \equiv 0 \) and \( L_0 \coloneqq \emptyset \).  
    Fix \( k \in \{ 1,2, \ldots, m\} \), and assume that \( \sigma^{(1)}, \ldots, \sigma^{(k-1)} \in \Sigma_{E_N}\) and \(  L_1, \ldots, L_{k-1} \subseteq P_N\) are such that for each \( j \in \{ 0,1,2, \ldots, k-1 \} \),~\ref{property va 3},~\ref{property va 4},~\ref{property va 5} and either~\ref{property va 6'} or~\ref{property va 6''} holds.
    Before we construct \( \sigma^{(k)} \) and \( L_k \), we show that these assumptions together imply that the following  claim holds.
 
    \begin{sublemma}
        \( p_k \) is well defined.
    \end{sublemma}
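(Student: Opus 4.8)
The plan is to argue by contradiction: I would assume $P^k = \emptyset$ and show that this forces $k > m$, contradicting the standing assumption $k \leq m$. Since $p_k$ is defined as the first element of $P^k$ in the fixed ordering, it is well defined precisely when $P^k \neq \emptyset$, so ruling out $P^k = \emptyset$ is exactly what is needed. The first step is to extract the consequence of $P^k = \emptyset$ from its definition \eqref{Pjdef}, namely that $(d\sigma^{(k-1)})_p = \nu_p$ for every $p \in P_N \smallsetminus L_{k-1}$. Because property~\ref{property va 5} gives $L_{k-1} \subseteq \support d\hat\sigma \smallsetminus \support\nu$, and in particular $L_{k-1} \cap \support\nu = \emptyset$, every plaquette of $\support\nu$ lies outside $L_{k-1}$; hence $(d\sigma^{(k-1)})_p = \nu_p$ holds on all of $\support\nu$, which is exactly the statement that $\nu$ is a vortex in $\sigma^{(k-1)}$.

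Next I would upgrade property~\ref{property va 4} to the full partial-order relation $\sigma^{(k-1)} \leq \hat\sigma$. The first (support) condition of Definition~\ref{def: partial order} is property~\ref{property va 4} itself. For the second (derivative) condition I would verify $(d\sigma^{(k-1)})_p = (d\hat\sigma)_p$ for each $p \in \support d\sigma^{(k-1)}$ by splitting on membership in $L_{k-1}$: if $p \in L_{k-1}$ then property~\ref{property va 3} makes $\sigma^{(k-1)}$ and $\hat\sigma$ agree on all of $\partial p$, forcing equality of the curvatures; if $p \notin L_{k-1}$ then the previous paragraph gives $(d\sigma^{(k-1)})_p = \nu_p$, which is nonzero since $p \in \support d\sigma^{(k-1)}$, so $p \in \support\nu$, and the vortex hypothesis~(a) yields $(d\hat\sigma)_p = \nu_p = (d\sigma^{(k-1)})_p$.

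With $\nu$ a vortex in $\sigma^{(k-1)}$ and $\sigma^{(k-1)} \leq \hat\sigma$ in hand, assumption~\ref{item: constructive *} forces $\sigma^{(k-1)} = \hat\sigma$, since otherwise $\sigma^{(k-1)} < \hat\sigma$ would be a strictly smaller configuration carrying the vortex $\nu$. Feeding $\sigma^{(k-1)} = \hat\sigma$ back into $P^k = \emptyset$ shows that any $p \in \support d\hat\sigma \smallsetminus \support\nu$ lying outside $L_{k-1}$ would satisfy $(d\hat\sigma)_p = \nu_p = 0$, contradicting $p \in \support d\hat\sigma$; hence $\support d\hat\sigma \smallsetminus \support\nu \subseteq L_{k-1}$, and together with property~\ref{property va 5} this gives $L_{k-1} = \support d\hat\sigma \smallsetminus \support\nu$. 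A counting step then closes the argument: each of the $k-1$ update alternatives~\ref{property va 6'} and~\ref{property va 6''} increases $|\support\sigma^{(j)}| + |L_j|$ by exactly two, so $|\support\sigma^{(k-1)}| + |L_{k-1}| = 2(k-1)$, whereas $\sigma^{(k-1)} = \hat\sigma$ and $L_{k-1} = \support d\hat\sigma \smallsetminus \support\nu$ give $|\support\hat\sigma| + |\support d\hat\sigma \smallsetminus \support\nu| = 2m$; thus $k-1 = m$, i.e.\ $k = m+1 > m$, the desired contradiction.

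The main obstacle, and the step I would treat most carefully, is the upgrade from property~\ref{property va 4} to the full relation $\sigma^{(k-1)} \leq \hat\sigma$: only the support condition is recorded among the inductive hypotheses, and verifying the derivative condition genuinely requires combining property~\ref{property va 3}, the vortex hypothesis~(a), and the contradiction assumption $P^k = \emptyset$. It is exactly this interplay that makes assumption~\ref{item: constructive *} applicable, and hence all other parts of the argument are routine once this relation is established.
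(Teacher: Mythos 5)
Your proof is correct and follows essentially the same route as the paper: assume $P^k = \emptyset$, deduce that $\nu$ is a vortex in $\sigma^{(k-1)}$ and that $\sigma^{(k-1)} \leq \hat\sigma$ (splitting on membership in $L_{k-1}$, using~\ref{property va 3} and the vortex hypothesis), invoke~\ref{item: constructive *} to get $\sigma^{(k-1)} = \hat\sigma$, identify $L_{k-1} = \support d\hat\sigma \smallsetminus \support \nu$, and count to reach $k-1 = m$, contradicting $k \leq m$. Your treatment is in fact slightly more explicit than the paper's at the step where $p \in \support d\sigma^{(k-1)} \smallsetminus L_{k-1}$ forces $p \in \support \nu$ before the vortex property of $\hat\sigma$ can be applied.
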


    \begin{proof}[Proof of claim]\renewcommand\qedsymbol{\Large$\square$}
        Assume for contradiction that \( P^k = \emptyset \). By the definition of \( P^k \) (see~\eqref{Pjdef}), it follows that  \( (d\sigma^{(k-1)})_p = \nu_p \) for all \( p \in P_N \smallsetminus L_{k-1} \). Since \( L_{k-1} \cap \support \nu = \emptyset \) by~\ref{property va 5}, we have \( (d\sigma^{(k-1)})_p = \nu_p \) for all \( p \in \support \nu \). Hence \( \nu \) is a vortex in \( \sigma^{(k-1)} \). 
        
        At the same time, if \( p \in \support d\sigma^{(k-1)}, \) then either \( p\in P_N \smallsetminus L_{k-1} \) and \( (d\sigma^{(k-1)})_p = \nu_p = (d\hat \sigma)_p \) (since \( \nu \) is a vortex in \( \hat \sigma \)), or \( p \in L_{k-1} \), in which case, by (iii), we have \( \sigma_e^{(k-1)} =  \hat \sigma_e \) for all \( e \in \partial p \), and hence \( (d\sigma^{(k-1)})_p =  (d\hat \sigma)_p. \) Consequently, for all \( p \in \support d\sigma^{(k-1)} \) we have \( (d\sigma^{(k-1)})_p = (d\hat \sigma)_p. \) Since \( \sigma^{(k-1)} = \hat \sigma|_{\support \sigma^{(k-1)}} \) by construction, it follows that \( \sigma^{(k-1)} \leq \hat \sigma. \)
        
        Since \( \sigma^{(k-1)} \leq \hat \sigma \) and \( \nu \) is a vortex in \(\sigma^{(k-1)}, \) it now follows from~\ref{item: constructive *} that \( \sigma^{(k-1)} = \hat  \sigma \).
        Since \( P^k \) is empty by assumption, it follows from the definition of \( P^k \) that \( (d\hat \sigma)_p = \nu_p \) for each \( p \in P_N \smallsetminus L_{k-1} \), and consequently, we have \( L_{k-1} = \support d\hat \sigma \smallsetminus \support \nu \). 
        Since either~\ref{property va 6'} or~\ref{property va 6''} holds for each \( j \in \{ 1,2, \ldots, k-1 \} \), we infer that
        \begin{equation}
            k-1 = \bigl(|L_{k-1}| + |\support \sigma^{(k-1)}|\bigr)/2
            = \bigl(|\support d \hat \sigma \smallsetminus \support \nu| + |\support \hat  \sigma |\bigr)/2,
        \end{equation}
        and hence by the definition of \( m \), we have \( k-1 = m \). Since this contradicts that \( k \leq m \), the set \( P^k \) must be nonempty, implying in particular that \( p_k \) is well defined.
    \end{proof}

    We now define \( \sigma^{(k)} \) and \( L_k \).

    If \( \sigma^{(k-1)}_e = \hat \sigma_e \) for all edges \( e \in \partial p_k \smallsetminus \partial L_{k-1} \) where $\partial L_{k-1} = \cup_{p \in L_{k-1}} \partial p$,  we let \( \sigma^{(k)} \coloneqq \sigma^{(k-1)} \) and \( L_k \coloneqq L_{k-1} \cup \{ p_k,-p_k \} \). Then~\ref{property va 3} holds for \( j = k \)  by assumption, and~\ref{property va 6''} holds for \( j = k \) by construction. Since \( \sigma^{(k)} = \sigma^{(k-1)} \) and~\ref{property va 4} holds for \( j = k-1 \) by assumption,~\ref{property va 4} holds with \( j = k \). 
    Finally, we show that~\ref{property va 5} holds. By~\ref{property va 3} and by the definition of $p_k$, $(d\hat{\sigma})_{p_k} = (d\sigma^{(k-1)})_{p_k} \neq \nu_{p_k}$. Thus, since \( \nu \) is a vortex in \( \hat{\sigma} \), we have \( \nu_{p_k} = 0 \), and hence \( p_k \in \support d\hat \sigma \smallsetminus \support \nu \). Since \( L_k = L_{k-1} \cup \{ p_k,-p_k \} \), and since~\ref{property va 5} holds for \( j = k-1 \) by assumption, it follows that \( L_k \subseteq \support d\hat \sigma \smallsetminus \support \nu \), and thus~\ref{property va 5} holds for \( j = k \).

    On the other hand, if there is an edge \( e_k \in \partial p_k\smallsetminus \partial L_{k-1} \) such that \( \sigma^{(k-1)}_{e_k} \neq \hat \sigma_{e_k} \), then let \( L_k \coloneqq L_{k-1} \) and, for \( e \in E_N \), define
    \begin{equation*}
        \sigma^{(k)}(e) \coloneqq \begin{cases} \hat \sigma_e            &\text{if } e = \pm e_k, \cr        \sigma^{(k-1)}_e &\text{else.} \end{cases}
    \end{equation*}
    Since \( L_{k} = L_{k-1} \), and, since $\sigma^{(k)}$ and \( \sigma^{(k-1)} \) differ only at \( \pm e_k \notin \partial L_{k-1} \),~\ref{property va 3} holds for \( j = k \) (since it holds for \( j = k-1 \) by assumption).
    Since~\ref{property va 4} holds for \( j = k-1 \) by assumption,~\ref{property va 4} holds also when \( j = k \). By construction,~\ref{property va 6'} holds. Finally, since~\ref{property va 5} holds for \( j = k-1 \) by assumption, we have \( L_k = L_{k-1} \subseteq \support d\hat \sigma \smallsetminus \support \nu \), and hence~\ref{property va 5} holds also for \( j = k \).
    
    To sum up, we have constructed sequences \( \sigma^{(0)}, \ldots, \sigma^{(k)} \in \Sigma_{E_N}\) and \(  L_0, \ldots, L_{k} \subseteq P_N\) such that, for each \( j \in \{ 1,2, \ldots, k \} \),~\ref{property va 3},~\ref{property va 4},~\ref{property va 5},  and either~\ref{property va 6'} or~\ref{property va 6''} holds. This completes the inductive step and shows that sequences \( \sigma^{(0)},\sigma^{(1)} ,\ldots, \sigma^{(m)} \) and \( L_0,L_1,\ldots, L_m \) can be defined by repeating the above steps recursively.
    
    We now prove that if \( \sigma^{(0)},\sigma^{(1)} ,\ldots, \sigma^{(m)} \) and \( L_0,L_1,\ldots, L_m \) are defined as above, then \( {\sigma^{(m)} = \hat \sigma }\).
    
    \begin{sublemma}
        \( \sigma^{(m)} = \hat  \sigma \) and  \( L_m = \support d\hat \sigma \smallsetminus \support \nu. \)
    \end{sublemma}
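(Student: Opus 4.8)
The plan is to prove the two equalities by a counting argument that combines the step-by-step monotonicity of the construction with the two containments already established in the inductive step, namely properties~\ref{property va 4} and~\ref{property va 5}.

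First I would record the key invariant: for each \( j \in \{1, \ldots, m\} \) exactly one of \ref{property va 6'}, \ref{property va 6''} holds, and in either case the quantity \( |\support \sigma^{(j)}| + |L_j| \) increases by exactly \( 2 \). In case~\ref{property va 6'} the set \( L_j = L_{j-1} \) is unchanged while \( \support \sigma^{(j)} = \support \sigma^{(j-1)} \cup \{e_j, -e_j\} \); here one must check that \( \pm e_j \notin \support \sigma^{(j-1)} \), which follows because property~\ref{property va 4} forces \( \sigma^{(j-1)} = \hat\sigma|_{\support \sigma^{(j-1)}} \), so \( e_j \in \support \sigma^{(j-1)} \) would give \( \sigma^{(j-1)}_{e_j} = \hat\sigma_{e_j} \), contradicting the defining condition \( \sigma^{(j-1)}_{e_j} \neq \hat\sigma_{e_j} \) of this case (and symmetry of supports then also rules out \( -e_j \)). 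Dually, in case~\ref{property va 6''} the support is unchanged and \( L_j = L_{j-1} \cup \{p_j, -p_j\} \) with \( \pm p_j \notin L_{j-1} \), using \( p_j \in P^j \subseteq P_N \smallsetminus L_{j-1} \) together with the fact that \( L_{j-1} \) is symmetric by construction. Since \( |\support \sigma^{(0)}| + |L_0| = 0 \), iterating \( m \) times gives
\[
    |\support \sigma^{(m)}| + |L_m| = 2m = |\support d\hat\sigma \smallsetminus \support \nu| + |\support \hat\sigma|,
\]
where the last equality is the definition of \( m \).

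Next I would invoke the two containments for \( j = m \). Property~\ref{property va 4} gives \( \sigma^{(m)} = \hat\sigma|_{\support \sigma^{(m)}} \), hence \( \support \sigma^{(m)} \subseteq \support \hat\sigma \) and \( |\support \sigma^{(m)}| \leq |\support \hat\sigma| \); property~\ref{property va 5} gives \( L_m \subseteq \support d\hat\sigma \smallsetminus \support \nu \) and \( |L_m| \leq |\support d\hat\sigma \smallsetminus \support \nu| \). Adding these two inequalities reproduces exactly the right-hand side of the displayed identity, so both must in fact be equalities. Because \( \support \sigma^{(m)} \subseteq \support \hat\sigma \) are finite sets of equal cardinality, they coincide, and then \( \sigma^{(m)} = \hat\sigma|_{\support \sigma^{(m)}} = \hat\sigma \). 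Likewise \( L_m \subseteq \support d\hat\sigma \smallsetminus \support \nu \) with equal cardinalities forces \( L_m = \support d\hat\sigma \smallsetminus \support \nu \), which completes the proof.

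I do not expect a genuine obstacle here; the only points requiring care are the two disjointness checks that guarantee each step raises the total count by precisely \( 2 \) rather than by an uncontrolled amount, and these are exactly where property~\ref{property va 4} and the symmetry of \( L_{j-1} \) enter. Everything else is bookkeeping built into the recursive construction. Conceptually this is the ``saturation'' counterpart of the well-definedness claim proved just above: there, emptiness of \( P^k \) was shown to force termination at step \( m \), whereas here the exhaustion of the budget \( 2m \) forces \( \sigma^{(m)} \) and \( L_m \) to have already reached \( \hat\sigma \) and \( \support d\hat\sigma \smallsetminus \support \nu \), respectively.
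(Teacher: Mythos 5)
Your proof is correct and follows essentially the same route as the paper: both use that each step increases \( |\support \sigma^{(j)}| + |L_j| \) by exactly \( 2 \), so that \( |\support \sigma^{(m)}| + |L_m| = 2m = |\support d\hat \sigma \smallsetminus \support \nu| + |\support \hat \sigma| \), and then combine this with the containments from properties (iv) and (v) to force both inclusions to be equalities. Your explicit verification of the two disjointness checks (via property (iv) and the symmetry of \( L_{j-1} \)) is a detail the paper leaves implicit in the statements of (vi') and (vi''), but the argument is the same.
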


    \begin{proof}[Proof of claim]\renewcommand\qedsymbol{\Large$\square$}
        Since either~\ref{property va 6'} or~\ref{property va 6''} holds for each \( j \in \{ 1,2, \ldots, m \} \), we have 
        \begin{equation*}
            m = \bigl(|L_m| + |\support \sigma^{(m)}|\bigr)/2.
        \end{equation*}
        By definition, we also have
        \begin{equation*}
            m = \bigl(|\support d\hat \sigma \smallsetminus \support \nu| + |\support \hat \sigma|\bigr)/2.
        \end{equation*}
        Since \( L_m \subseteq \support d\hat \sigma \smallsetminus \support \nu \) by~\ref{property va 5}, and \( \support \sigma^{(m)} \subseteq \support \hat \sigma \) by~\ref{property va 4}, it follows that \( L_m = \support d\hat \sigma \smallsetminus \support \nu \) and  \( \support \sigma^{(m)} = \support \hat \sigma \). Again using~\ref{property va 4}, it follows that \( \sigma^{(m)} = \hat \sigma \).
    \end{proof}

    It only remains to prove that~\ref{property va 7} holds.

    \begin{sublemma}
        If the plaquettes in \( \support \nu \) appear first in the ordering of the plaquettes, then \( L_0 = L_1 = \cdots = L_{m_0} = \emptyset \).
    \end{sublemma}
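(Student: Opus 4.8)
The plan is to prove the claim by induction on $j \in \{0, 1, \ldots, m_0\}$, establishing the stronger statement that for each such $j$ one has both $L_j = \emptyset$ and $|\support \sigma^{(j)}| = 2j$; equivalently, that each of the first $m_0$ steps of the construction is of type~\ref{property va 6'} (adjoining a pair of edges) rather than type~\ref{property va 6''} (freezing a pair of plaquettes). The base case $j = 0$ is immediate, since $\sigma^{(0)} \equiv 0$ and $L_0 = \emptyset$ by~\ref{property va 1} and~\ref{property va 2}.

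For the inductive step, fix $j \in \{1, \ldots, m_0\}$ and assume $L_{j-1} = \emptyset$ and $|\support \sigma^{(j-1)}| = 2(j-1)$. The first thing I would argue is that the set $P^j$ defined in~\eqref{Pjdef} must contain a plaquette lying in $\support \nu$. Indeed, since $L_{j-1} = \emptyset$ we have $P^j = \{ p \in P_N \colon (d\sigma^{(j-1)})_p \neq \nu_p\}$, so if no plaquette of $\support \nu$ belonged to $P^j$, then $(d\sigma^{(j-1)})_p = \nu_p$ would hold for every $p \in \support \nu$, giving $\nu \leq d\sigma^{(j-1)}$, i.e.\ $\nu$ would be a vortex in $\sigma^{(j-1)}$. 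But $|\support \sigma^{(j-1)}| = 2(j-1) < 2m_0$, contradicting the definition of $m_0$ in~\eqref{eq: m0def} as half the minimal support size of any gauge field configuration carrying $\nu$ as a vortex. Because the plaquettes of $\support \nu$ are assumed to occur first in the fixed ordering, the first element $p_j$ of $P^j$ therefore satisfies $p_j \in \support \nu$.

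Next I would rule out the freezing alternative~\ref{property va 6''} at this step. If step $j$ were of type~\ref{property va 6''}, then $\sigma^{(j-1)}_e = \hat\sigma_e$ for every $e \in \partial p_j \smallsetminus \partial L_{j-1}$; since $L_{j-1} = \emptyset$ this forces $\sigma^{(j-1)}$ and $\hat\sigma$ to agree on all of $\partial p_j$, whence $(d\sigma^{(j-1)})_{p_j} = (d\hat\sigma)_{p_j}$. As $p_j \in \support \nu$ and $\nu$ is a vortex in $\hat\sigma$, the right-hand side equals $\nu_{p_j}$, so $(d\sigma^{(j-1)})_{p_j} = \nu_{p_j}$, contradicting $p_j \in P^j$. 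Hence step $j$ is necessarily of type~\ref{property va 6'}, which yields $L_j = L_{j-1} = \emptyset$ and, using~\ref{property va 6'} together with~\ref{property va 4} (which guarantees that the adjoined edge pair is genuinely new to the support), $|\support \sigma^{(j)}| = |\support \sigma^{(j-1)}| + 2 = 2j$. This closes the induction and shows $L_0 = L_1 = \cdots = L_{m_0} = \emptyset$.

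The argument is essentially forced once the two observations above are in place, so I do not anticipate a serious obstacle. The one point requiring care is to carry the induction hypothesis on $|\support \sigma^{(j-1)}|$ and not merely on $L_{j-1}$, since it is precisely the edge count $2(j-1) < 2m_0$ that triggers the contradiction with the minimality defining $m_0$. One should also note that the freezing condition is stated relative to $\partial p_j \smallsetminus \partial L_{j-1}$, so that the vanishing of $L_{j-1}$ is genuinely what reduces it to agreement on all of $\partial p_j$.
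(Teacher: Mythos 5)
Your proof is correct. It rests on the same two ingredients as the paper's own argument — the minimality defining \( m_0 \) in \eqref{eq: m0def} and the assumption that the plaquettes of \( \support \nu \) come first in the ordering — but it runs them in the opposite direction. The paper takes \( k \) to be the largest index with \( p_1, \ldots, p_k \in \support \nu \), gets \( L_k = \emptyset \) from property (v) (a frozen \( p_i \in \support \nu \) would violate \( L_i \subseteq \support d\hat \sigma \smallsetminus \support \nu \)), then shows \( \nu \leq d\sigma^{(k)} \) (by the choice of \( p_{k+1} \) if \( k < m \), or because \( \sigma^{(m)} = \hat \sigma \) if \( k = m \)) and deduces \( k \geq m_0 \) from \( |\support \sigma^{(k)}| \leq 2k \). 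You instead induct forward: for \( j \leq m_0 \) the support bound \( |\support \sigma^{(j-1)}| < 2m_0 \) forces \( \nu \nleq d\sigma^{(j-1)} \), hence \( p_j \in \support \nu \), and you exclude a freezing step at \( j \) directly from the construction's freezing criterion (agreement of \( \sigma^{(j-1)} \) and \( \hat \sigma \) on all of \( \partial p_j \) would give \( (d\sigma^{(j-1)})_{p_j} = (d\hat\sigma)_{p_j} = \nu_{p_j} \), putting \( p_j \) outside \( P^j \)) rather than from property (v); both exclusion mechanisms are valid, the paper's being marginally shorter since (v) is already on record. Two small remarks. First, your stronger invariant \( |\support \sigma^{(j)}| = 2j \) is fine, and your appeal to property (iv) to see that the adjoined pair is genuinely new is the right justification, but the trivial bound \( |\support \sigma^{(j)}| \leq 2j \) (each step adds at most one pair) already suffices, which is what the paper uses. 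Second, strictly speaking your induction presupposes that step \( j \) exists, i.e.\ that \( m \geq m_0 \); this follows from the very same minimality argument (if \( j - 1 = m \) then \( \sigma^{(j-1)} = \hat\sigma \) carries \( \nu \) as a vortex, contradicting \( |\support \sigma^{(j-1)}| < 2m_0 \)), so it is a one-line addition rather than a gap.
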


    \begin{proof}[Proof of claim]\renewcommand\qedsymbol{\Large$\square$}
        Assume that the plaquettes in \( \support \nu \) appear first in the ordering of the plaquettes. 
        Furthermore, assume that \( k \leq m \) is the largest positive integer such that \( p_1,p_2,\ldots, p_k \in \support \nu \).
        Since \( L_k \cap \support \nu = \emptyset \) by~\ref{property va 5}, it follows that \( L_0 = L_1 = \dots = L_k = \emptyset \). Consequently, it suffices to show that \( k \geq m_0. \)

        If \( k<m \), then \( p_{k+1} \notin \support \nu \). Since the plaquettes in \( \support \nu \) appear first in the ordering of the plaquettes and \(P^{k+1} = \{p \in P_N \colon (d\sigma^{(k)})_p \neq \nu_p\}\), we must have \( (d\sigma^{(k)})_p = \nu_p \) for all \( p \in \support \nu \) by the choice of \( p_{k+1}\). On the other hand, if \( k = m \), then by~\ref{property va 1}, we have \( \sigma^{(k)} = \sigma^{(m)} = \hat \sigma \), and hence \( (d\sigma^{(k)})_p = (d\hat \sigma)_p = \nu_p \) for all \( p \in \support \nu \) also in this case.
        Since \( d\nu = 0 = d(d\sigma^{(k)})  \), if follows that \( \nu \leq d\sigma^{(k)} \). Consequently, by definition (see~\eqref{eq: m0def}), we have \( |\support \sigma^{(k)}| \geq 2m_0 \). 
        Since, by construction, \( |\support \sigma^{(k)}|\leq |\support \sigma^{(k-1)}| + 2 \) and \( |\support \sigma^{(0)}| = 0\), it follows that \( k \geq m_0 \). This concludes the proof of the claim.  
    \end{proof}
\end{proof}

\begin{figure}[htp]
    \centering 
    \pgfmathsetmacro{\bd}{0.2}
     
     \begin{subfigure}[t]{0.22\textwidth}
     \centering\pgfmathsetmacro{\bd}{0.2}

    \begin{tikzpicture}[scale=0.6] 
    
        \draw[help lines] (3,2) grid (8,6);
        
        \draw[very thick, black] (4,4) -- (5,4);
        \draw[very thick, black] (6,3) -- (6,4) -- (7,4); 
        \draw[very thick, black] (6,4) -- (6,5);      
        \draw[very thick, black] (5,4) -- (5,5);      
          
         \fill[fill=black!60!white] (4+\bd,3+\bd) -- (4+\bd,4-\bd) -- (5-\bd,4-\bd) -- (5-\bd,3+\bd) -- (4+\bd,3+\bd);
         \fill[fill=black!60!white] (5+\bd,3+\bd) -- (5+\bd,4-\bd) -- (6-\bd,4-\bd) -- (6-\bd,3+\bd) -- (5+\bd,3+\bd); 
         
         \draw[white] (4.5,3.5) node {\footnotesize \( 1 \)};
         \draw[white] (5.5,3.5) node {\footnotesize \( 2 \)};
         \draw[black!40] (6.5,3.5) node {\footnotesize \( 3 \)};
         \draw[black!40] (6.5,4.5) node {\footnotesize \( 4 \)};
         \draw[black!40] (5.5,4.5) node {\footnotesize \( 5 \)};
         \draw[black!40] (4.5,4.5) node {\footnotesize \( 6 \)};
         \draw[black!40] (3.5,4.5) node {\footnotesize \( 7 \)};
         \draw[black!40] (3.5,3.5) node {\footnotesize \( 8 \)};
         \draw[black!40] (3.5,2.5) node {\footnotesize \( 9 \)};
         \draw[black!40] (4.5,2.5) node {\footnotesize \( 10 \)};
         \draw[black!40] (5.5,2.5) node {\footnotesize \( 11 \)};
         \draw[black!40] (6.5,2.5) node {\footnotesize \( 12 \)};
         \draw[black!40] (7.5,2.5) node {\footnotesize \( 13 \)};
         \draw[black!40] (7.5,3.5) node {\footnotesize \( 14 \)};
         \draw[black!40] (7.5,4.5) node {\footnotesize \( 15 \)};
         \draw[black!40] (7.5,5.5) node {\footnotesize \( 16 \)};
         \draw[black!40] (6.5,5.5) node {\footnotesize \( 17 \)};
         \draw[black!40] (5.5,5.5) node {\footnotesize \( 18 \)};
         \draw[black!40] (4.5,5.5) node {\footnotesize \( 19 \)};
         \draw[black!40] (3.5,5.5) node {\footnotesize \( 20 \)}; 
    \end{tikzpicture}
    \caption{The gauge field configuration \( \hat{\sigma} \).}
    \label{fig: first example a}
    \end{subfigure}
    \vspace{1ex} 
    \begin{subfigure}[t]{0.22\textwidth}
        \centering
        \begin{tikzpicture}[scale=0.6] 
            \draw[help lines] (3,2) grid (8,6);
 
            \fill[opacity = 0.3, detailcolor08] (4,3) -- (4,4) -- (6,4) -- (6,3) -- (4,3);
            \fill[fill=black!60!white] (4+\bd,3+\bd) -- (4+\bd,4-\bd) -- (5-\bd,4-\bd) -- (5-\bd,3+\bd) -- (4+\bd,3+\bd);
            \fill[fill=black!60!white] (5+\bd,3+\bd) -- (5+\bd,4-\bd) -- (6-\bd,4-\bd) -- (6-\bd,3+\bd) -- (5+\bd,3+\bd);   
        \end{tikzpicture}
    \caption{\( j = 0 \) }
    \label{fig: first example b}
    \end{subfigure}
     \begin{subfigure}[t]{0.22\textwidth}
        \centering
        \begin{tikzpicture}[scale=0.6] 
            \draw[help lines] (3,2) grid (8,6);
            
            \fill[opacity = 0.3, detailcolor08] (4,4) -- (4,5) -- (5,5) -- (5,4) -- (4,4);
            \fill[opacity = 0.3, detailcolor08] (5,3) -- (5,4) -- (6,4) -- (6,3) -- (5,3);
            \draw[very thick, detailcolor03] (4,4) -- (5,4); 
            
            \fill[fill=black!60!white] (4+\bd,3+\bd) -- (4+\bd,4-\bd) -- (5-\bd,4-\bd) -- (5-\bd,3+\bd) -- (4+\bd,3+\bd);
            \fill[fill=black!60!white] (5+\bd,3+\bd) -- (5+\bd,4-\bd) -- (6-\bd,4-\bd) -- (6-\bd,3+\bd) -- (5+\bd,3+\bd);
         
            \draw[white] (4.5,3.5) node {\small \( * \)}; 
        \end{tikzpicture}
    \caption{\( j = 1 \)}
    \label{fig: first example c}
    \end{subfigure}
    \begin{subfigure}[t]{0.22\textwidth}
    \centering
    \begin{tikzpicture}[scale=0.6] 
            \draw[help lines] (3,2) grid (8,6);
        
            \fill[opacity = 0.3, detailcolor08] (4,4) -- (4,5) -- (5,5) -- (5,4) -- (4,4);
            \fill[opacity = 0.3, detailcolor08] (6,3) -- (6,4) -- (7,4) -- (7,3) -- (6,3);

            \draw[very thick, black] (4,4) -- (5,4);
            \draw[very thick, detailcolor03] (6,3) -- (6,4);      
          
            \fill[fill=black!60!white] (4+\bd,3+\bd) -- (4+\bd,4-\bd) -- (5-\bd,4-\bd) -- (5-\bd,3+\bd) -- (4+\bd,3+\bd);
            \fill[fill=black!60!white] (5+\bd,3+\bd) -- (5+\bd,4-\bd) -- (6-\bd,4-\bd) -- (6-\bd,3+\bd) -- (5+\bd,3+\bd);
          
            \draw[white] (5.5,3.5) node {\small \( * \)};
        \end{tikzpicture}
    \caption{\( j = 2 \)}
    \label{fig: first example d}
    \end{subfigure}
      
    \begin{subfigure}[t]{0.22\textwidth}
    \centering
    \begin{tikzpicture}[scale=0.6] 
    
        \draw[help lines] (3,2) grid (8,6); 
        
        \fill[opacity = 0.3, detailcolor08] (4,4) -- (4,5) -- (5,5) -- (5,4) -- (4,4);
         \fill[opacity = 0.3, detailcolor08] (6,4) -- (6,5) -- (7,5) -- (7,4) -- (6,4);

        \draw[very thick, black] (4,4) -- (5,4);
        \draw[very thick, black] (6,3) -- (6,4) ;   
        \draw[very thick, detailcolor03] (6,4) -- (7,4);      
          
         \fill[fill=black!60!white] (4+\bd,3+\bd) -- (4+\bd,4-\bd) -- (5-\bd,4-\bd) -- (5-\bd,3+\bd) -- (4+\bd,3+\bd);
         \fill[fill=black!60!white] (5+\bd,3+\bd) -- (5+\bd,4-\bd) -- (6-\bd,4-\bd) -- (6-\bd,3+\bd) -- (5+\bd,3+\bd);
          
         \draw[black] (6.5,3.5) node {\small \( * \)};

    \end{tikzpicture}
    \caption{\( j = 3 \)}
    \label{fig: first example e}
    \end{subfigure}
    \begin{subfigure}[t]{0.22\textwidth}
    \centering 
        \begin{tikzpicture}[scale=0.6] 
    
        \draw[help lines] (3,2) grid (8,6);
        
        \fill[opacity = 0.3, detailcolor08] (4,4) -- (4,5) -- (5,5) -- (5,4) -- (4,4);
         \fill[opacity = 0.3, detailcolor08] (5,4) -- (5,5) -- (6,5) -- (6,4) -- (5,4);

        \draw[very thick, black] (4,4) -- (5,4);
        \draw[very thick, black] (6,3) -- (6,4) -- (7,4); 
        \draw[very thick, detailcolor03] (6,4) -- (6,5);      
          
         \fill[fill=black!60!white] (4+\bd,3+\bd) -- (4+\bd,4-\bd) -- (5-\bd,4-\bd) -- (5-\bd,3+\bd) -- (4+\bd,3+\bd);
         \fill[fill=black!60!white] (5+\bd,3+\bd) -- (5+\bd,4-\bd) -- (6-\bd,4-\bd) -- (6-\bd,3+\bd) -- (5+\bd,3+\bd);
         
         \draw[black] (6.5,4.5) node {\small \( * \)};

    \end{tikzpicture}
    \caption{\( j = 4 \)}
    \label{fig: first example f}
    \end{subfigure}
    \begin{subfigure}[t]{0.22\textwidth}
        \centering
        \begin{tikzpicture}[scale=0.6] 
    
        \draw[help lines] (3,2) grid (8,6);
        
        \draw[very thick, black] (4,4) -- (5,4);
        \draw[very thick, black] (6,3) -- (6,4) -- (7,4); 
        \draw[very thick, black] (6,4) -- (6,5);      
        \draw[very thick, detailcolor03] (5,4) -- (5,5);      
          
         \fill[fill=black!60!white] (4+\bd,3+\bd) -- (4+\bd,4-\bd) -- (5-\bd,4-\bd) -- (5-\bd,3+\bd) -- (4+\bd,3+\bd);
         \fill[fill=black!60!white] (5+\bd,3+\bd) -- (5+\bd,4-\bd) -- (6-\bd,4-\bd) -- (6-\bd,3+\bd) -- (5+\bd,3+\bd);
         
         \draw[black] (5.5,4.5) node {\small \( * \)};

        \end{tikzpicture}
        \caption{\( j = 5 \)}
        \label{fig: first example g}
    \end{subfigure}

    \caption{Assuming that \( G = \mathbb{Z}_2 \), consider the gauge field configuration \( \hat \sigma \) and plaquette ordering given in Figure~\subref{fig: first example a} above. Note that it contains a (minimal) vortex \( \nu \) with support on two plaquettes (dark gray). Figures~\subref{fig: first example b}, \subref{fig: first example c}, \subref{fig: first example d}, \subref{fig: first example e}, \subref{fig: first example f}, and \subref{fig: first example g}  above correspond to  \( j = 0,1,2,3,4,5 \) respectively, and in each picture we draw the vortex \( \nu \) (dark gray), the set \( P^{j+1} \) (blue), the plaquette \( p_j \) (\( * \)), the edge \( e_j \) (red), and the gauge field configuration \( \sigma^{(j)} \) (union of black and red edges) given in the proof of Lemma~\ref{lemma: a constructive version of algorithm B}, given the ordering of the plaquettes shown in~\subref{fig: first example a}.  Note that in this example, we have \( L_j = \emptyset \) for all \( j \). Note also that although in this case, there are no distinct \( i \) and \( j \) such that \( p_i = p_j \), this is in general not the case when \( G \neq \mathbb{Z}_2 \). Moreover, in  general, \( e_j \) is not always uniquely determined by the previous steps.}
    \label{fig: first example}
\end{figure}

\begin{figure}[htp]
    \centering 
    \pgfmathsetmacro{\bd}{0.2}
     
     \begin{subfigure}[t]{0.22\textwidth}
     \centering
     \begin{tikzpicture}[scale=0.6] 
    
        \draw[help lines] (3,2) grid (8,6);
        
        \draw[very thick, black] (5,3) -- (5,5);  
        \draw[very thick, black] (6,3) -- (6,5); 
          
         \fill[fill=black!60!white] (4+\bd,3+\bd) -- (4+\bd,4-\bd) -- (5-\bd,4-\bd) -- (5-\bd,3+\bd) -- (4+\bd,3+\bd);
         \fill[fill=black!60!white] (4+\bd,4+\bd) -- (4+\bd,5-\bd) -- (5-\bd,5-\bd) -- (5-\bd,4+\bd) -- (4+\bd,4+\bd);
          
         \fill[fill=black!10!white] (6+\bd,3+\bd) -- (6+\bd,4-\bd) -- (7-\bd,4-\bd) -- (7-\bd,3+\bd) -- (6+\bd,3+\bd);
         \fill[fill=black!10!white] (6+\bd,4+\bd) -- (6+\bd,5-\bd) -- (7-\bd,5-\bd) -- (7-\bd,4+\bd) -- (6+\bd,4+\bd);
         
         \draw[white] (4.5,3.5) node {\footnotesize \( 1 \)}; 
         \draw[white] (4.5,4.5) node {\footnotesize \( 2 \)};
         \draw[black!40] (5.5,4.5) node {\footnotesize \( 3 \)};
         \draw[black!95] (6.5,4.5) node {\footnotesize \( 4 \)};
         \draw[black!95] (6.5,3.5) node {\footnotesize \( 5 \)}; 
         \draw[black!40] (5.5,3.5) node {\footnotesize \( 6 \)};
          \draw[black!40] (5.5,2.5) node {\footnotesize \( 7 \)};
         \draw[black!40] (6.5,2.5) node {\footnotesize \( 8 \)};
         \draw[black!40] (7.5,2.5) node {\footnotesize \( 9 \)};
         \draw[black!40] (7.5,3.5) node {\footnotesize \( 10 \)};
         \draw[black!40] (7.5,4.5) node {\footnotesize \( 11 \)};
         \draw[black!40] (7.5,5.5) node {\footnotesize \( 12 \)};
         \draw[black!40] (6.5,5.5) node {\footnotesize \( 13 \)};
         \draw[black!40] (5.5,5.5) node {\footnotesize \( 14 \)};
         \draw[black!40] (4.5,5.5) node {\footnotesize \( 15 \)};
         \draw[black!40] (3.5,5.5) node {\footnotesize \( 16 \)};
         \draw[black!40] (3.5,4.5) node {\footnotesize \( 17 \)};
         \draw[black!40] (3.5,3.5) node {\footnotesize \( 18 \)};
         \draw[black!40] (3.5,2.5) node {\footnotesize \( 19 \)};
         \draw[black!40] (4.5,2.5) node {\footnotesize \( 20 \)};
         
    \end{tikzpicture}
    \caption{The gauge field configuration \( \hat{\sigma} \).}
    \label{fig: second example a}
    \end{subfigure}
     \begin{subfigure}[t]{0.22\textwidth}
     \centering
     \begin{tikzpicture}[scale=0.6] 
    
        \draw[help lines] (3,2) grid (8,6);
 
        \fill[opacity = 0.3, detailcolor08] (4,3) -- (4,5) -- (5,5) -- (5,3) -- (4,3);  
        
             \fill[fill=black!60!white] (4+\bd,3+\bd) -- (4+\bd,4-\bd) -- (5-\bd,4-\bd) -- (5-\bd,3+\bd) -- (4+\bd,3+\bd);
         \fill[fill=black!60!white] (4+\bd,4+\bd) -- (4+\bd,5-\bd) -- (5-\bd,5-\bd) -- (5-\bd,4+\bd) -- (4+\bd,4+\bd);

    \end{tikzpicture}
    \caption{\( j = 0 \) }
    \label{fig: second example b}
    \end{subfigure}
     \begin{subfigure}[t]{0.22\textwidth}
     \centering
     \begin{tikzpicture}[scale=0.6] 
    
        \draw[help lines] (3,2) grid (8,6);
        
        \fill[opacity = 0.3, detailcolor08] (4,4) -- (4,5) -- (5,5) -- (5,4) -- (4,4); 
        \fill[opacity = 0.3, detailcolor08] (5,3) -- (5,4) -- (6,4) -- (6,3) -- (5,3);

        \draw[very thick, detailcolor03] (5,3) -- (5,4);   
        
        \fill[fill=black!60!white] (4+\bd,3+\bd) -- (4+\bd,4-\bd) -- (5-\bd,4-\bd) -- (5-\bd,3+\bd) -- (4+\bd,3+\bd);
         \fill[fill=black!60!white] (4+\bd,4+\bd) -- (4+\bd,5-\bd) -- (5-\bd,5-\bd) -- (5-\bd,4+\bd) -- (4+\bd,4+\bd);
         
         \draw[white] (4.5,3.5) node {\small \( * \)};

    \end{tikzpicture}
    \caption{\( j = 1 \)}
    \label{fig: second example c}
    \end{subfigure}
    \begin{subfigure}[t]{0.22\textwidth}
    \centering
    \begin{tikzpicture}[scale=0.6] 
    
        \draw[help lines] (3,2) grid (8,6);
        
        \fill[opacity = 0.3, detailcolor08] (5,4) -- (5,5) -- (6,5) -- (6,4) -- (5,4); 
        \fill[opacity = 0.3, detailcolor08] (5,3) -- (5,4) -- (6,4) -- (6,3) -- (5,3);  
        
        \draw[very thick, black] (5,3) -- (5,4); 
        \draw[very thick, detailcolor03] (5,4) -- (5,5);   
        
        \fill[fill=black!60!white] (4+\bd,3+\bd) -- (4+\bd,4-\bd) -- (5-\bd,4-\bd) -- (5-\bd,3+\bd) -- (4+\bd,3+\bd);
         \fill[fill=black!60!white] (4+\bd,4+\bd) -- (4+\bd,5-\bd) -- (5-\bd,5-\bd) -- (5-\bd,4+\bd) -- (4+\bd,4+\bd); 
         
         \draw[white] (4.5,4.5) node {\small \( * \)};

    \end{tikzpicture}
    \caption{\( j = 2 \)}
    \label{fig: second example d}
    \end{subfigure}
     
     \vspace{1ex} 
    \begin{subfigure}[t]{0.22\textwidth}
        \centering
        \begin{tikzpicture}[scale=0.6] 
    
            \draw[help lines] (3,2) grid (8,6); 
        
            \fill[opacity = 0.3, detailcolor08] (6,4) -- (6,5) -- (7,5) -- (7,4) -- (6,4); 
            \fill[opacity = 0.3, detailcolor08] (5,3) -- (5,4) -- (6,4) -- (6,3) -- (5,3);  
        
            \draw[very thick, black] (5,3) -- (5,4); 
            \draw[very thick, black] (5,4) -- (5,5);   
            \draw[very thick, detailcolor03] (6,4) -- (6,5);   
        
            \fill[fill=black!60!white] (4+\bd,3+\bd) -- (4+\bd,4-\bd) -- (5-\bd,4-\bd) -- (5-\bd,3+\bd) -- (4+\bd,3+\bd);
            \fill[fill=black!60!white] (4+\bd,4+\bd) -- (4+\bd,5-\bd) -- (5-\bd,5-\bd) -- (5-\bd,4+\bd) -- (4+\bd,4+\bd);
         
            \draw[black] (5.5,4.5) node {\small \( * \)};
        \end{tikzpicture}
        \caption{\( j = 3 \)}
        \label{fig: second example e}
    \end{subfigure}
    \begin{subfigure}[t]{0.22\textwidth}
    \centering 
    \begin{tikzpicture}[scale=0.6] 
    
        \draw[help lines] (3,2) grid (8,6); 
        
        \fill[opacity = 0.3, detailcolor08] (5,3) -- (5,4) -- (6,4) -- (6,3) -- (5,3);  
        
        \draw[very thick, black] (5,3) -- (5,4); 
        \draw[very thick, black] (5,4) -- (5,5);   
        \draw[very thick, black] (6,4) -- (6,5);   
        
        \fill[fill=black!60!white] (4+\bd,3+\bd) -- (4+\bd,4-\bd) -- (5-\bd,4-\bd) -- (5-\bd,3+\bd) -- (4+\bd,3+\bd);
         \fill[fill=black!60!white] (4+\bd,4+\bd) -- (4+\bd,5-\bd) -- (5-\bd,5-\bd) -- (5-\bd,4+\bd) -- (4+\bd,4+\bd);
           
         \fill[fill=detailcolor07,opacity=0.7] (6+\bd,4+\bd) -- (6+\bd,5-\bd) -- (7-\bd,5-\bd) -- (7-\bd,4+\bd) -- (6+\bd,4+\bd);
         
         \draw[black] (6.5,4.5) node {\small \( * \)};

    \end{tikzpicture}
    \caption{\( j = 4 \)}
    \label{fig: second example f}
    \end{subfigure}
    \begin{subfigure}[t]{0.22\textwidth}
    \centering
    \begin{tikzpicture}[scale=0.6] 
    
        \draw[help lines] (3,2) grid (8,6); 
        
        \fill[opacity = 0.3, detailcolor08] (6,3) -- (6,4) -- (7,4) -- (7,3) -- (6,3);  
        
        \draw[very thick, black] (5,3) -- (5,4); 
        \draw[very thick, black] (5,4) -- (5,5);   
        \draw[very thick, black] (6,4) -- (6,5);   
        \draw[very thick, detailcolor03] (6,3) -- (6,4);   
        
        \fill[fill=black!60!white] (4+\bd,3+\bd) -- (4+\bd,4-\bd) -- (5-\bd,4-\bd) -- (5-\bd,3+\bd) -- (4+\bd,3+\bd);
         \fill[fill=black!60!white] (4+\bd,4+\bd) -- (4+\bd,5-\bd) -- (5-\bd,5-\bd) -- (5-\bd,4+\bd) -- (4+\bd,4+\bd);
           
         \fill[fill=detailcolor07,opacity=0.7] (6+\bd,4+\bd) -- (6+\bd,5-\bd) -- (7-\bd,5-\bd) -- (7-\bd,4+\bd) -- (6+\bd,4+\bd);
         
         \draw[black] (5.5,3.5) node {\small \( * \)};

        \end{tikzpicture}
        \caption{\( j = 5 \)}
        \label{fig: second example g}
    \end{subfigure}
    \begin{subfigure}[t]{0.22\textwidth}
    \centering
    \begin{tikzpicture}[scale=0.6] 
    
        \draw[help lines] (3,2) grid (8,6);   
        
        \draw[very thick, black] (5,3) -- (5,4); 
        \draw[very thick, black] (5,4) -- (5,5);   
        \draw[very thick, black] (6,4) -- (6,5);   
        \draw[very thick, black] (6,3) -- (6,4);   
        
        \fill[fill=black!60!white] (4+\bd,3+\bd) -- (4+\bd,4-\bd) -- (5-\bd,4-\bd) -- (5-\bd,3+\bd) -- (4+\bd,3+\bd);
         \fill[fill=black!60!white] (4+\bd,4+\bd) -- (4+\bd,5-\bd) -- (5-\bd,5-\bd) -- (5-\bd,4+\bd) -- (4+\bd,4+\bd);
          
         \fill[fill=detailcolor07,opacity=0.7] (6+\bd,3+\bd) -- (6+\bd,4-\bd) -- (7-\bd,4-\bd) -- (7-\bd,3+\bd) -- (6+\bd,3+\bd);
         \fill[fill=detailcolor07,opacity=0.7] (6+\bd,4+\bd) -- (6+\bd,5-\bd) -- (7-\bd,5-\bd) -- (7-\bd,4+\bd) -- (6+\bd,4+\bd);
         
         \draw[black] (6.5,3.5) node {\small \( * \)};

    \end{tikzpicture}
    \caption{\( j = 6 \)}
    \label{fig: second example h}
    \end{subfigure}

    \caption{Assuming that \( G = \mathbb{Z}_2 \), consider the gauge field configuration \( \hat \sigma \) and plaquette ordering given by Figure~\subref{fig: second example a} above. Note that it contains two (minimal) vortices \( \nu_1 \) (dark gray) and \( \nu_2 \) (light gray), each with support on exactly  two plaquettes. Figures~\subref{fig: second example b}, \subref{fig: second example c}, \subref{fig: second example d}, \subref{fig: second example e}, \subref{fig: second example f}, \subref{fig: second example g}, and \subref{fig: second example h} correspond to  \( j =0,1,2,3,4,5,6 \) respectively. In each picture we draw the vortices \( \nu = \nu_1\) (dark gray), \( \nu_2 \) (light gray), the set \( P^{j+1} \) (blue), the plaquette \( p_j \) (\( * \)), the edge \( e_j \) (red), the gauge field configuration \( \sigma^{(j)} \) and the set \( L_j \) (purple) given in the proof of Lemma~\ref{lemma: a constructive version of algorithm B}, given the ordering of the plaquettes shown in Figure~\subref{fig: second example a}.}
    \label{fig: second example}
\end{figure}

\begin{lemma}\label{lemma: part 1 of upper bound}
    Let \( \beta \geq 0 \) and \( \kappa \geq 0 \). Further, let \( \nu \in \Sigma_{P_N} \)  be irreducible and non-trivial, and define \(m_0\) by~\eqref{eq: m0def}.
    For each \( m \geq 0 \), let   \( \Sigma_{E_N, \nu, m}  \) be the set of all \(  \sigma' \in \Sigma_{E_N} \) such that
    \begin{enumerate}[label=\textnormal{(\roman*)}]
        \item \( \nu \leq d \sigma' \),
        \item \( \nu \nleq d\sigma'' \) for any \( \sigma'' <  \sigma' \), and
        \item \( |\support d\sigma' \smallsetminus \support \nu| + |\support \sigma'| = 2m \).
    \end{enumerate} 
    Then
    \begin{equation}\label{eq: zeroeth}
        \begin{split}
            &\mu_{N,\beta,\kappa}\bigl(\{ \sigma \in \Sigma_{E_N} \colon \nu \leq d\sigma \} \bigr) 
            \leq    
            \sum_{m = m_0}^\infty
            \sum_{ \sigma' \in \Sigma_{E_N, \nu,m}}
            \varphi_{\beta,\kappa}(\sigma').
        \end{split}
    \end{equation}
\end{lemma}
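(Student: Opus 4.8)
The plan is to cover the event $\{\sigma \in \Sigma_{E_N} \colon \nu \leq d\sigma\}$ by a family of upward-closed events of the form $\{\sigma \colon \sigma' \leq \sigma\}$ indexed by the ``minimal'' configurations $\sigma'$ carrying $\nu$, and then combine a union bound with Proposition~\ref{proposition: edgecluster flipping ii}. First I would note that if $\nu \leq d\sigma$, then $\nu$ is a vortex in $\sigma$ in the sense of Definition~\ref{def: vortex}, so Lemma~\ref{lemma: reduction IV} applies and produces some $\sigma' \leq \sigma$ with $\nu \leq d\sigma'$ and with the property that no $\sigma'' < \sigma'$ satisfies $\nu \leq d\sigma''$. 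Choosing $m$ so that $2m = |\support d\sigma' \smallsetminus \support \nu| + |\support \sigma'|$, this $\sigma'$ meets all three defining conditions of $\Sigma_{E_N,\nu,m}$. Since $\sigma' \leq \sigma$ gives $\sigma \in \{\tilde\sigma \colon \sigma' \leq \tilde\sigma\}$, we obtain the inclusion
\begin{equation*}
    \{\sigma \in \Sigma_{E_N} \colon \nu \leq d\sigma\}
    \subseteq
    \bigcup_{m \geq 0} \bigcup_{\sigma' \in \Sigma_{E_N,\nu,m}} \{\sigma \in \Sigma_{E_N} \colon \sigma' \leq \sigma\}.
\end{equation*}

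Next I would check that the index $m$ effectively starts at $m_0$. Any $\sigma'$ satisfying condition (i) in the definition of $\Sigma_{E_N,\nu,m}$ has $\nu \leq d\sigma'$, so by the definition~\eqref{eq: m0def} of $m_0$ we have $|\support \sigma'| \geq 2m_0$. Combining this with $|\support d\sigma' \smallsetminus \support \nu| \geq 0$ and condition (iii) forces $2m \geq 2m_0$, i.e.\ $m \geq m_0$. Hence $\Sigma_{E_N,\nu,m} = \emptyset$ for $m < m_0$, and the outer union above may be restricted to $m \geq m_0$ without changing anything.

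Finally, applying subadditivity of the measure $\mu_{N,\beta,\kappa}$ to this covering, and then bounding each term by $\mu_{N,\beta,\kappa}(\{\sigma \colon \sigma' \leq \sigma\}) \leq \varphi_{\beta,\kappa}(\sigma')$ via Proposition~\ref{proposition: edgecluster flipping ii}, yields
\begin{equation*}
    \mu_{N,\beta,\kappa}\bigl(\{\sigma \in \Sigma_{E_N} \colon \nu \leq d\sigma\}\bigr)
    \leq
    \sum_{m = m_0}^\infty \sum_{\sigma' \in \Sigma_{E_N,\nu,m}} \varphi_{\beta,\kappa}(\sigma'),
\end{equation*}
which is exactly~\eqref{eq: zeroeth}. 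The proof is short given the earlier results, and there is no serious obstacle; the only point requiring care is verifying that the reduction supplied by Lemma~\ref{lemma: reduction IV} lands in exactly one set $\Sigma_{E_N,\nu,m}$ and that these sets are empty below $m_0$, so that the union bound reproduces precisely the stated double sum rather than an over- or under-count.
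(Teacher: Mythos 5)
Your proposal is correct and follows essentially the same route as the paper: reduce each $\sigma$ with $\nu \leq d\sigma$ to a minimal $\sigma' \leq \sigma$ via Lemma~\ref{lemma: reduction IV}, observe that $\Sigma_{E_N,\nu,m} = \emptyset$ for $m < m_0$, and conclude by a union bound together with Proposition~\ref{proposition: edgecluster flipping ii}. The only cosmetic difference is that the paper also invokes Lemma~\ref{lemma: vortex transfer} to state the covering as an equality of events, whereas you use only the inclusion, which is all the upper bound requires.
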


\begin{proof} 
    Consider the event \( \{ \sigma \in \Sigma_{E_N} \colon \exists \sigma' \in \Sigma_{E_N,\nu,m}  \text{ such that } \sigma' \leq \sigma \} \).
    The set \( \Sigma_{E_N,\nu,m} \) is empty if \( m < m_0\). 
    By Lemma~\ref{lemma: vortex transfer} and Lemma~\ref{lemma: reduction IV}, we thus have
    \begin{equation*}
        \begin{split}
            &\mu_{N,\beta,\kappa}\bigl(\{ \sigma \in \Sigma_{E_N} \colon \nu \text{ is a vortex in } \sigma \} \bigr)  
            \\&\qquad =
            \mu_{N,\beta,\kappa}\biggl(\{ \sigma \in \Sigma_{E_N} \colon \exists \sigma' \in \bigcup_{m = m_0}^\infty \Sigma_{E_N,\nu,m} \text{ such that } \sigma' \leq \sigma \} \biggr) 
        \end{split}
     \end{equation*}
     and hence, by a union bound, 
     \begin{equation*}
         \begin{split}
             &\mu_{N,\beta,\kappa}\bigl(\{ \sigma \in \Sigma_{E_N} \colon \nu \text{ is a vortex in } \sigma \} \bigr)  
            \leq
            \sum_{m = m_0}^\infty \mu_{N,\beta,\kappa}\bigl(\{ \sigma \in \Sigma_{E_N} \colon \exists \sigma' \in  \Sigma_{E_N,\nu,m} \text{ such that }  \sigma' \leq \sigma \} \bigr).
        \end{split}
    \end{equation*}
    By a union bound, for any \( m \geq m_0 \), we have 
    \[
        \mu_{N,\beta,\kappa}\bigl(\{ \sigma \in \Sigma_{E_N} \colon \exists  \sigma' \in \Sigma_{E_N,\nu,m}  \text{ such that } \sigma' \leq \sigma \} \bigr) 
        \leq \sum_{\sigma' \in \Sigma_{E_N, \nu,m}} \mu_{N,\beta,\kappa} \bigl(\{ \sigma \in \Sigma_{E_N} \colon \sigma' \leq \sigma \} \bigr) .
    \] 
    Since, by Proposition~\ref{proposition: edgecluster flipping ii}, for any \(  \sigma' \in \Sigma_{E_N} \), we have
    \begin{equation*}
        \mu_{N,\beta,\kappa} \bigl(\{ \sigma \in \Sigma_{E_N} \colon \sigma' \leq \sigma \} \bigr) \leq \varphi_{\beta,\kappa}(\sigma')
    \end{equation*}
    we obtain~\eqref{eq: zeroeth} as desired. 
\end{proof}

\begin{lemma}\label{lemma: part 2 of upper bound}
   Let \( \beta \geq 0 \), and let \( \kappa \geq 0 \). Let \( \nu \in \Sigma_{P_N} \) be irreducible and non-trivial, and define \(m_0\) by~\eqref{eq: m0def}. Let \( \Sigma_{E_N, \nu, m}  \) be the sets defined in Lemma~\ref{lemma: part 1 of upper bound}.
    Then, for any \( m \geq m_0 \),
    \begin{equation*} 
        \begin{split}
            &\sum_{\hat \sigma \in \Sigma_{E_N, \nu,m}} \varphi_{\beta,\kappa} (\hat \sigma)
            \leq 
            \varphi_{\beta} (\nu) \, \bigl( 4\alpha_0(\kappa) \bigr)^{m_0}   
            \Bigl(  \alpha_1(\beta)+ 4\alpha_0(\kappa)\Bigr)^{m-m_0},
        \end{split}
    \end{equation*}
    where \(\alpha_0\) and \(\alpha_1\) are defined in~\eqref{eq: alpha01def}.
\end{lemma}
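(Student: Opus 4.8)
The plan is to feed each $\hat\sigma\in\Sigma_{E_N,\nu,m}$ through the exploration process of Lemma~\ref{lemma: a constructive version of algorithm B} and bound the sum by a weighted count over the resulting exploration tree. First I would fix, once and for all, a total ordering of $P_N$ in which the plaquettes of $\support\nu$ come first, so that property~\ref{property va 7} is available. Every $\hat\sigma\in\Sigma_{E_N,\nu,m}$ satisfies hypotheses (a) and (b) of Lemma~\ref{lemma: a constructive version of algorithm B} and has $m(\hat\sigma,\nu)=m$, hence produces sequences $\sigma^{(0)},\dots,\sigma^{(m)}$ and $L_0,\dots,L_m$. After fixing a deterministic rule for choosing the edge $e_j$ at a type-\ref{property va 6'} step, the assignment $\hat\sigma\mapsto(c_1,\dots,c_m)$ of the sequence of choices is injective, since $\hat\sigma=\sigma^{(m)}$ can be reconstructed from the edges and values introduced along the way.

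The second step is to factor the activity along this exploration. Let $A'$ (resp.\ $A''$) denote the set of indices $j$ at which the step is of type~\ref{property va 6'} (resp.\ type~\ref{property va 6''}). From property~\ref{property va 4}, property~\ref{property va 5}, and the terminal identities $\sigma^{(m)}=\hat\sigma$ and $L_m=\support d\hat\sigma\smallsetminus\support\nu$, one reads off that $\support\hat\sigma=\{\pm e_j:j\in A'\}$ and $\support d\hat\sigma\smallsetminus\support\nu=\{\pm p_j:j\in A''\}$, giving disjoint decompositions of the relevant supports. Using $\varphi_\kappa(-g)=\varphi_\kappa(g)$ and $\varphi_\beta(-g)=\varphi_\beta(g)$ from~\eqref{eq: phi is symmetric}, together with $(d\hat\sigma)_p=\nu_p$ for $p\in\support\nu$ (as $\nu\leq d\hat\sigma$), this yields
\[
    \varphi_{\beta,\kappa}(\hat\sigma)=\varphi_\beta(\nu)\prod_{j\in A'}\varphi_\kappa(\hat\sigma_{e_j})^2\prod_{j\in A''}\varphi_\beta\bigl((d\hat\sigma)_{p_j}\bigr)^2.
\]
For $j\in A''$, the defining condition of the step together with property~\ref{property va 3} shows that all four edges of $p_j$ are already explored at stage $j-1$, so $(d\hat\sigma)_{p_j}=(d\sigma^{(j-1)})_{p_j}$ is a function of the history alone, and it is nonzero because $(d\sigma^{(j-1)})_{p_j}\neq\nu_{p_j}=0$.

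Third, I would run a telescoping bound over the tree. Since $p_j$ is determined by $(c_1,\dots,c_{j-1})$, the available choices at step $j$ are: select $e_j$ among the at most four edges of $\partial p_j\smallsetminus\partial L_{j-1}$ and a value $g_j\in G\smallsetminus\{0\}$, with weight $\varphi_\kappa(g_j)^2$; or lock $p_j$, with the history-determined weight $\varphi_\beta((d\sigma^{(j-1)})_{p_j})^2\leq\alpha_1(\beta)$. Summing the first option over $g_j$ and over the edge gives at most $4\sum_{g\neq0}\varphi_\kappa(g)^2=4\alpha_0(\kappa)$, so the total branch weight at a generic step is at most $\alpha_1(\beta)+4\alpha_0(\kappa)$; for $j\leq m_0$, property~\ref{property va 7} gives $L_{j-1}=\emptyset$, so no locking can have occurred and the step must be of type~\ref{property va 6'}, bounding its branch weight by $4\alpha_0(\kappa)$. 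Using injectivity to replace $\sum_{\hat\sigma}$ by a sum over admissible choice sequences and peeling off one step at a time by downward induction on the step index, the factorization yields
\[
    \sum_{\hat\sigma\in\Sigma_{E_N,\nu,m}}\varphi_{\beta,\kappa}(\hat\sigma)\leq\varphi_\beta(\nu)\,(4\alpha_0(\kappa))^{m_0}\,(\alpha_1(\beta)+4\alpha_0(\kappa))^{m-m_0},
\]
as claimed.

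The analytic input here is minimal; the crux is the bookkeeping inherited from Lemma~\ref{lemma: a constructive version of algorithm B}. The points to verify carefully are that $p_j$ and the step weights depend only on previously made choices (so the step-by-step sum factorizes), that each edge- and plaquette-factor of $\varphi_{\beta,\kappa}(\hat\sigma)$ is attributed to exactly one step, and---most delicately---that property~\ref{property va 7} genuinely excludes type-\ref{property va 6''} steps among the first $m_0$ steps, which is what removes the $\alpha_1(\beta)$ contribution there and produces the sharp split $m_0$ versus $m-m_0$ in the exponents.
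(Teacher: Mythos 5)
Your proposal is correct and follows essentially the same route as the paper's proof: both feed $\hat\sigma$ through the exploration of Lemma~\ref{lemma: a constructive version of algorithm B}, factor $\varphi_{\beta,\kappa}(\hat\sigma)$ into $\varphi_\beta(\nu)$ times one factor per step (edge steps weighted by $\varphi_\kappa(\hat\sigma_{e_j})^2$, locking steps by $\varphi_\beta((d\hat\sigma)_{p_j})^2\leq\alpha_1(\beta)$), use injectivity of the choice-sequence encoding to dominate the sum by a product of per-step branch weights $4\alpha_0(\kappa)$ resp.\ $\alpha_1(\beta)+4\alpha_0(\kappa)$, and invoke property~\ref{property va 7} to force the first $m_0$ steps to be edge steps. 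Your explicit fixing of a deterministic tie-breaking rule for $e_j$ and the step-peeling induction are just a slightly more formal rendering of what the paper does through its indicator decomposition, not a different argument.
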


\begin{proof}
    Let \( V \coloneqq \support \nu \) and assume that \( \hat \sigma \in \Sigma_{E_N, \nu,m} \).
    By definition, 
    \begin{align*}
        &
        \varphi_{\beta,\kappa}(\hat \sigma)
        =
        \biggl[ \, \prod_{p' \in \support d\hat \sigma } \varphi_\beta\bigl( (d\hat \sigma)_p \bigr)  \biggr] \cdot \biggl[ \,  \prod_{e \in \support \hat \sigma} \varphi_\kappa (\hat \sigma_e)  \biggr].
    \end{align*}
    Using that  \( (d \hat \sigma) |_V = \nu\) and hence \( V \subseteq \support d \hat \sigma  \), we obtain
    \begin{equation}\label{phibetakappasigmahat}
        \varphi_{\beta,\kappa}(\hat \sigma)
        =
        \varphi_\beta(\nu) \biggl[ \, \prod_{p' \in (\support d\hat \sigma)\smallsetminus V } \varphi_\beta\bigl( (d\hat \sigma)_p \bigr)  \biggr] \cdot \biggl[ \,  \prod_{e \in \support \hat \sigma} \varphi_\kappa (\hat \sigma_e)  \biggr].
    \end{equation}
    Next note that the assumptions of Lemma~\ref{lemma: a constructive version of algorithm B} are satisfied.  
    Let the integer \( m \), the sets \( L_0,L_1, \ldots, L_{m} \) and the 1-forms \( \sigma^{(0)},\sigma^{(1)},\ldots, \sigma^{(m)} \) be as in Lemma~\ref{lemma: a constructive version of algorithm B} when applied with \( \hat \sigma \) and \( \nu \). Then the following hold for each \( j \in \{ 1,2, \ldots, m\} \).
    \begin{enumerate}
        \item By~\ref{property va 1} of Lemma~\ref{lemma: a constructive version of algorithm B}, we have \( \sigma^{(0)} = 0 \) and \( \sigma^{(m)} = \hat \sigma \), and by~\ref{property va 2} of Lemma~\ref{lemma: a constructive version of algorithm B}, we have \( L_0 = \emptyset \) and \( L_m = \support d\hat \sigma \smallsetminus \support \nu \).
        \item By~\ref{property va 6'} and~\ref{property va 6''} of Lemma~\ref{lemma: a constructive version of algorithm B},  either  \( L_j = L_{j-1} \) or \( \sigma^{(j)} = \sigma^{(j-1)} \) (but not both).
        \item By~\ref{property va 6'} and~\ref{property va 6''} of Lemma~\ref{lemma: a constructive version of algorithm B}, if \( L_j \neq L_{j-1} \), then there is a plaquette \( p_j \in \support d\hat \sigma \smallsetminus V \) such that \( L_j = L_{j-1} \sqcup \{ p_j,-p_j \} \).
        \item By~\ref{property va 6'} and~\ref{property va 6''} of Lemma~\ref{lemma: a constructive version of algorithm B}, if \( \sigma^{(j)} \neq \sigma^{(j-1)} \), then there is an edge \( e_j \in \support \hat \sigma \smallsetminus \support \sigma^{(j-1)} \) such that \( \sigma^{(j)}_{e'} = \sigma^{(j-1)}_{e'} + \hat \sigma_{e'} \mathbb{1}_{e' = \pm e_j}  \) for all \( e' \in E_N \).
    \end{enumerate} 
    Together, these observations imply that 
    \begin{equation}\label{phibetaphikappa}
        \begin{split} 
            &\biggl[ \, \prod_{p' \in (\support d\hat \sigma)\smallsetminus V } \varphi_\beta\bigl( (d\hat \sigma)_p \bigr)  \biggr] \cdot \biggl[ \,  \prod_{e \in \support \hat \sigma} \varphi_\kappa (\hat \sigma_e)  \biggr]
            \\ 
            &\qquad =    \prod_{j = 1}^m \Bigl[ \mathbb{1}\bigl(L_j \neq L_{j-1}\bigr) \varphi_\beta\bigl( (d\hat{\sigma})_{p_j}\bigr)^2+ \mathbb{1}\bigl({\sigma^{(j)} \neq \sigma^{(j-1)}}\bigr) \varphi_\kappa (\hat \sigma_{e_j})^2 \Bigr],
        \end{split}
    \end{equation}
    where \( \mathbb{1}(L_j \neq L_{j-1}) \) is equal to one if \( L_j \neq L_{j-1} \) and zero otherwise; analogously,  \( \mathbb{1}(\sigma^{(j)} \neq \sigma^{(j-1)})\) equals one if \( \sigma^{(j)}\neq \sigma^{(j-1)} \) and zero otherwise.
    If $L_j \neq L_{j-1}$, then \( p_j \in \support d\hat \sigma \), and thus \( \varphi_\beta\bigl( (d\hat{\sigma})_{p_j} \bigr)^2 \leq  \alpha_1(\beta) \) by the definition~\eqref{eq: alpha01def} of $\alpha_1$. 
    Consequently,~\eqref{phibetakappasigmahat} and~\eqref{phibetaphikappa} yield
    \begin{equation}\label{eq: the last product equation before the observations} 
        \varphi_{\beta,\kappa}( \hat \sigma ) 
        \leq 
        \varphi_\beta(\nu)  \prod_{j = 1}^m \Bigl[ \mathbb{1}\bigl(L_j \neq L_{j-1}\bigr)  \alpha_1(\beta) + \mathbb{1}\bigl({\sigma^{(j)} \neq \sigma^{(j-1)}}\bigr) \varphi_\kappa (\hat \sigma_{e_j})^2\Bigr].
    \end{equation}

    Given some \( j \in \{ 1,2, \ldots, m \} \), we now make a few additional observations regarding the above indicator functions.
    \begin{enumerate}
        \item If for each plaquette \( p \in P_N \), we let \( \partial p[1] \), \( \partial p[2] \), \( \partial p[3] \), and \( \partial p[4] \) denote the positively oriented edges in \( \partial p \), then by~\ref{property va 6'} of Lemma~\ref{lemma: a constructive version of algorithm B} we have
        \begin{equation} \label{eq: split indicators}
            \begin{split}
                &\mathbb{1}(\sigma^{(j)} \neq \sigma^{(j-1)})  
                \\&\qquad= \sum_{k \in \{ 1,2,3,4 \}}\sum_{g \in G\smallsetminus \{ 0 \}} \mathbb{1} \bigl( \support \sigma^{(j)} \smallsetminus \support \sigma^{(j-1)} = \{ \partial p_j[k], -\partial p_j[k] \} \text{ and }  \sigma^{(j)}_{\partial p_j[k]} = g  \bigr).
            \end{split}
        \end{equation} 
        \item If we for each \( j \in \{ 1,2, \ldots, m \} \) know which indicator (if any) in the right-hand side of~\eqref{eq: split indicators} is non-zero, then we know \( \sigma^{(m)} \). Since \( \hat \sigma = \sigma^{(m)} \), this fully determines \( \hat \sigma \).
        \item By~\ref{property va 7} of Lemma~\ref{lemma: a constructive version of algorithm B}, we have \(  L_0 = L_1 = \dots = L_{m_0} = \emptyset\).  
    \end{enumerate}
    By combining~\eqref{eq: the last product equation before the observations} with the above observations, we obtain
    \begin{equation*} 
        \begin{split}
            &\sum_{\hat \sigma \in \Sigma_{E_N, \nu,m}} \varphi_{\beta,\kappa} (\hat \sigma) \leq \varphi_{\beta} (\nu)  \Bigl[ 4\sum_{g \in G \smallsetminus \{ 0 \}} \varphi_\kappa(g)^2  \Bigr]^{m_0} 
            \prod_{j = m_0+1}^{m} \biggl[  \alpha_1(\beta)+ 4\sum_{g \in G \smallsetminus \{ 0 \}} \varphi_\kappa(g)^2\biggr]
            \\&\qquad = \varphi_{\beta} (\nu) \bigl( 4 \alpha_0(\kappa) \bigr)^{m_0}   
            \Bigl(  \alpha_1(\beta)+ 4\alpha_0(\kappa) \Bigr)^{m-m_0}
        \end{split}
    \end{equation*}
    as desired.
\end{proof}

\begin{proof}[Proof of Proposition~\ref{proposition: first step}]
    Let \(m_0\) be given by~\eqref{eq: m0def} and let   \( \Sigma_{E_N, \nu, m}  \) be the sets defined in Lemma~\ref{lemma: part 1 of upper bound}. 
    By Lemma~\ref{lemma: part 1 of upper bound}, we have
    \begin{equation*}
        \begin{split}
            &\mu_{N,\beta,\kappa}\bigl(\{ \sigma \in \Sigma_{E_N} \colon \nu \text{ is a vortex in } \sigma \} \bigr) 
            \leq    
            \sum_{m = m_0}^\infty
            \sum_{\hat \sigma \in \Sigma_{E_N, \nu,m}}
            \varphi_{\beta,\kappa}(\hat \sigma).
        \end{split}
    \end{equation*}
    On the other hand, by Lemma~\ref{lemma: part 2 of upper bound}, for any \( m \geq m_0 \), we have
    \begin{equation*} 
        \begin{split}
            &\sum_{\hat \sigma \in \Sigma_{E_N, \nu,m}} \varphi_{\beta,\kappa} (\hat \sigma)
            \leq 
            \varphi_{\beta} (\nu) \bigl( 4\alpha_0(\kappa) \bigr)^{m_0}
            \Bigl(  \alpha_1(\beta)+ 4\alpha_0(\kappa)\Bigr)^{m-m_0}.
        \end{split}
    \end{equation*}
    Combining these two inequalities, we get
    \begin{equation*}
        \begin{split}
            &\mu_{N,\beta,\kappa}\bigl(\{ \sigma \in \Sigma_{E_N} \colon \nu \text{ is a vortex in } \sigma \} \bigr) 
            \leq    
            \varphi_{\beta} (\nu) \bigl( 4\alpha_0(\kappa) \bigr)^{m_0} \sum_{m = m_0}^\infty
            \bigl(  \alpha_1(\beta)+ 4\alpha_0(\kappa)\bigr)^{m-m_0}.
        \end{split}
    \end{equation*}
    Since \(  \alpha_1(\beta)+ 4\alpha_0(\kappa)< 1 \) by assumption~\ref{assumption: 1}, we have
    \begin{equation*}
        \begin{split}
            &  \sum_{j = 0}^\infty 
            \bigl( \alpha_1(\beta)+ 4\alpha_0(\kappa)\bigr)^{j}
            =    \frac{1 }{1- \alpha_1(\beta)- 4\alpha_0(\kappa) },
        \end{split}
    \end{equation*}
    we hence obtain
    \begin{equation*}
        \begin{split}
            &\mu_{N,\beta,\kappa}\bigl(\{ \sigma \in \Sigma_{E_N} \colon \nu \text{ is a vortex in } \sigma \} \Bigr)  
            \leq 
            \frac{ \varphi_\beta(\nu)   \bigl( 4\alpha_0(\kappa) \bigr)^{m_0}  }{1-  \alpha_1(\beta) - 4 \alpha_0(\kappa) } ,
        \end{split}
    \end{equation*}
    which is the desired conclusion.
\end{proof}

\section{Distribution of frustrated plaquettes}
\label{section: distributions of frustrated plaquettes} 

In this section we establish an upper bound for the probability that a gauge field configuration contains a large vortex with support containing a given plaquette.

In order to simplify notation, given \( \sigma \in \Sigma_{E_N} \) and \( p \in P_N \), we let \( (d\sigma)^p  \) be the set of  vortices in \( \sigma \) with support containing \( p \).  

\begin{proposition}[Compare with Proposition~5.9 in~\cite{flv2020}]\label{proposition: the vortex flipping lemma}
Let \( \beta \geq0 \) and \( \kappa \geq 0 \)  be such that~\ref{assumption: 1}  and~\ref{assumption: 2} hold. Further, let \( p_0 \in P_N \) and \( M \geq 1 \). Then
    \begin{equation}\label{eq: the vortex flipping lemma}
        \begin{split}
            &\mu_{N,\beta,\kappa}\bigl(\{ \sigma \in \Sigma_{E_N} \colon \exists \nu \in (d\sigma)^{p_0} \text{ with } |\support \nu| \geq 2M \} \bigr)
            \leq 
            C_0^{(M)} \alpha_2(\beta,\kappa)^{\mathrlap{M}} ,
        \end{split}
    \end{equation} 
    where
    \begin{equation}\label{eq: C0M}
        C_0^{(M)} \coloneqq \frac{   5^M  4^{ M/6} }{\bigl(1-
            \alpha_1(\beta)- 4\alpha_0(\kappa)\bigr) \bigl(1-2^{1/3} 5 \alpha_2 (\beta,\kappa) \bigr) }.
    \end{equation}
\end{proposition}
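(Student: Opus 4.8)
The plan is to reduce to the single-vortex estimate already available in Proposition~\ref{proposition: first step} via a union bound, and then to control the resulting sum over vortices by separating the choice of support from the choice of values. First I would observe that, by definition of $(d\sigma)^{p_0}$, the event in~\eqref{eq: the vortex flipping lemma} is the union, over all irreducible non-trivial $\nu\in\Sigma_{P_N}$ with $p_0\in\support\nu$ and $|\support\nu|\geq 2M$, of the events $\{\sigma\in\Sigma_{E_N}\colon \nu\leq d\sigma\}$. A union bound together with Proposition~\ref{proposition: first step} (applicable since~\ref{assumption: 1} holds) gives
\[
\mu_{N,\beta,\kappa}\bigl(\{\sigma\colon\exists\nu\in(d\sigma)^{p_0},\ |\support\nu|\geq 2M\}\bigr)
\leq
\frac{1}{1-\alpha_1(\beta)-4\alpha_0(\kappa)}\sum_{\nu}\varphi_\beta(\nu)\,\bigl(4\alpha_0(\kappa)\bigr)^{m_0(\nu)},
\]
where the sum ranges over the vortices described above and $m_0(\nu)$ is given by~\eqref{eq: m0def}; this first factor is the source of the denominator $1-\alpha_1(\beta)-4\alpha_0(\kappa)$ of $C_0^{(M)}$. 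To make the exponent explicit I would invoke Lemma~\ref{lemma: 6 plaquettes per edge}: any $\sigma$ with $\nu\leq d\sigma$ satisfies $\support\nu\subseteq\support d\sigma$, hence $|\support\sigma|\geq|\support\nu|/6$, so writing $k\coloneqq|(\support\nu)^+|$ we obtain $m_0(\nu)\geq k/6$. Since $4\alpha_0(\kappa)<1$ by~\ref{assumption: 1}, this yields $\bigl(4\alpha_0(\kappa)\bigr)^{m_0(\nu)}\leq 4^{k/6}\alpha_0(\kappa)^{k/6}$.

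Next I would group the sum according to $k\geq M$ and factor each term into a sum over the support shape and a sum over the values. Because $\varphi_\beta$ is symmetric (see~\eqref{eq: phi is symmetric}), one has $\varphi_\beta(\nu)=\prod_{p\in(\support\nu)^+}\varphi_\beta(\nu_p)^2$, so for a fixed shape $S=(\support\nu)^+$ of size $k$, summing freely over the values $(\nu_p)_{p\in S}\in(G\smallsetminus\{0\})^S$ (dropping the closedness constraints, which only strengthens the bound) gives exactly $\alpha_0(\beta)^{k}$ by the definition~\eqref{eq: alpha01def} of $\alpha_0$. The remaining combinatorial input is that the number of admissible shapes of size $k$ containing $p_0$ is at most $5^{k}$; this is where the connectivity of irreducible supports enters (an irreducible $\nu$ cannot be split into pieces with disjoint $d$-supports, which forces $\support\nu$ to be connected through shared edges), the constant $5$ reflecting that each edge lies in at most $6=|\hat\partial e|$ plaquettes. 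Combining these bounds,
\[
\sum_{\substack{\nu:\,p_0\in\support\nu\\ |(\support\nu)^+|=k}}\varphi_\beta(\nu)\,\bigl(4\alpha_0(\kappa)\bigr)^{m_0(\nu)}
\leq
5^{k}\,\alpha_0(\beta)^{k}\,4^{k/6}\alpha_0(\kappa)^{k/6}
=
\bigl(2^{1/3}5\,\alpha_2(\beta,\kappa)\bigr)^{k},
\]
where I used $4^{1/6}=2^{1/3}$ and $\alpha_2(\beta,\kappa)=\alpha_0(\beta)\alpha_0(\kappa)^{1/6}$.

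Finally I would sum the geometric series over $k\geq M$. Assumption~\ref{assumption: 2} guarantees $2^{1/3}5\,\alpha_2(\beta,\kappa)<1$, so
\[
\sum_{k\geq M}\bigl(2^{1/3}5\,\alpha_2(\beta,\kappa)\bigr)^{k}
=
\frac{\bigl(2^{1/3}5\,\alpha_2(\beta,\kappa)\bigr)^{M}}{1-2^{1/3}5\,\alpha_2(\beta,\kappa)}.
\]
Since $\bigl(2^{1/3}5\bigr)^{M}=5^{M}4^{M/6}$, multiplying by the prefactor $\bigl(1-\alpha_1(\beta)-4\alpha_0(\kappa)\bigr)^{-1}$ reproduces exactly $C_0^{(M)}\alpha_2(\beta,\kappa)^{M}$, as claimed.

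I expect the main obstacle to be the shape-counting bound $5^{k}$: one must first establish that the support of an irreducible closed $2$-form is connected in the appropriate adjacency, and then argue that the number of such connected supports of size $k$ through a fixed plaquette grows at most like $5^{k}$, rather than at the much larger naive lattice-animal rate one would get by ignoring the closedness and irreducibility structure. Everything else is a union bound, the already-established single-vortex estimate of Proposition~\ref{proposition: first step}, the crude exponent bound $m_0(\nu)\geq k/6$, and a geometric summation controlled by~\ref{assumption: 1} and~\ref{assumption: 2}.
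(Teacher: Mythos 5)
Your outer structure coincides exactly with the paper's proof: the event is written as a union over irreducible non-trivial \( \nu \in \Sigma_{P_N} \) with \( p_0 \in \support \nu \) and \( |\support \nu| \geq 2M \), a union bound is combined with Proposition~\ref{proposition: first step} (valid under~\ref{assumption: 1}), the exponent \( m_0(\nu) \geq |(\support\nu)^+|/6 \) is extracted from Lemma~\ref{lemma: 6 plaquettes per edge}, and the geometric series is summed using~\ref{assumption: 2}. All of these steps are correct. However, there is a genuine gap at what you yourself identify as the main obstacle: the combinatorial bound \( \sum_{\nu} \varphi_\beta(\nu) \leq 5^{k-1}\alpha_0(\beta)^{k} \) over irreducible closed \(2\)-forms through \(p_0\), which is precisely the content of the paper's Lemma~\ref{lemma: counting vortex configurations iii}. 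You do not prove it, and your sketch of where the constant \(5\) comes from is off target. It is not edge-connectivity of \( \support\nu \) (connectivity alone yields only a lattice-animal bound whose growth constant is far larger than \(5\), since each plaquette in \( \mathbb{Z}^4 \) shares an edge with \(20\) others). The correct mechanism is the Bianchi identity (Lemma~\ref{lemma: Bianchi}): in the paper's exploration process, whenever the partially revealed form \( \nu^k \) has \( d\nu^k \neq 0 \), the \emph{first} such \(3\)-cell \( c_{k+1} \) is uniquely determined, and closedness of \( \nu \) forces another support plaquette of \( \nu \) to lie among the at most \( 6-1 = 5 \) unexplored faces of \( \partial c_{k+1} \); this is what caps the branching at \(5\).

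Moreover, your proposed factorization into (number of support shapes \( \leq 5^k \)) times (sum over values \( = \alpha_0(\beta)^k \) per shape) does not recover the constant \(5\) for general \( G = \mathbb{Z}_n \). The exploration that produces the \(5\)-fold branching is adaptive in the \emph{values} of \( \nu \): whether a given \(3\)-cell is "frustrated" at step \(k\) (i.e., whether \( (d\nu^k)_c \neq 0 \)) depends on the group elements carried by the already revealed plaquettes, not just on their positions. Consequently, a shape-only enumeration cannot identify the next \(3\)-cell from the plaquette sequence alone, and bounding the number of shapes this way forces you to track values as well, reintroducing a factor of order \( (n-1)^k \) that destroys the bound. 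The paper avoids this by enumerating plaquette--value pairs \emph{jointly}, weighting each value \(g\) by \( \varphi_\beta(g)^2 \) so that the per-step cost is \( 5\,\alpha_0(\beta) \) rather than \( 5(n-1) \) choices. (Your separation does happen to work when \( n = 2 \), since then the nonzero value is forced, but the proposition is stated for all \( \mathbb{Z}_n \).) To close the gap you should prove the joint enumeration lemma as in the paper, rather than a shape-counting statement.
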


Before we give a proof of Proposition~\ref{proposition: the vortex flipping lemma}, we state and prove the following lemma, which essentially corresponds to the first part of the proof of Proposition~2.11 in~\cite{flv2020}.

\begin{lemma}\label{lemma: counting vortex configurations iii}
    Let \(  p_0 \in P_N \),  \( m \geq 1 \), and \( \beta \geq0 \). Let \( \Lambda_m \) be the set of all irreducible plaquette configurations \( \nu \in \Sigma_{P_N}  \) with \( p_0 \in \support \nu \) and \( |\support \nu| = 2m \). Then 
    \begin{equation}\label{eq: counting vortex configurations ii}
        \sum_{\nu \in \Lambda_m} \varphi_\beta(\nu) \leq  5^{m-1} \alpha_0(\beta)^m .
    \end{equation}
\end{lemma}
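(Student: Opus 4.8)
The plan is to bound the weighted sum by separating a purely combinatorial count of the admissible supports from a sum over the group values, and to control the combinatorics using irreducibility via the Bianchi identity.

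First I would rewrite the weight. Since $\nu_{-p}=-\nu_p$, the symmetry \eqref{eq: phi is symmetric} and $\varphi_\beta(0)=1$ give $\varphi_\beta(\nu)=\prod_{p\in(\support\nu)^+}\varphi_\beta(\nu_p)^2$, and $|(\support\nu)^+|=m$ for $\nu\in\Lambda_m$. Grouping the sum over $\nu\in\Lambda_m$ according to the positively oriented support $P^+:=(\support\nu)^+$, and bounding from above by dropping the closedness and irreducibility constraints and letting each $\nu_p$ range freely over $G\smallsetminus\{0\}$, I obtain
\[
\sum_{\nu\in\Lambda_m}\varphi_\beta(\nu)\ \le\ \sum_{P^+}\ \prod_{p\in P^+}\sum_{g\in G\smallsetminus\{0\}}\varphi_\beta(g)^2\ =\ \#\{P^+\}\cdot\alpha_0(\beta)^m,
\]
with $\alpha_0$ as in \eqref{eq: alpha01def}. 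Thus it suffices to show that the number of admissible supports $P^+$ (sets of $m$ positively oriented plaquettes containing the positive representative of $p_0$) is at most $5^{m-1}$.

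Next I would extract connectivity from irreducibility. Since $\nu\in\Sigma_{P_N}$ satisfies $d\nu=0$, the characterization of irreducibility for closed forms (the remark following Definition~\ref{def: irreducible kform}) says there is no nonempty symmetric $S\subsetneq\support\nu$ with $d(\nu|_S)=d(\nu|_{S^c})=0$. I would combine this with the discrete Bianchi identity (Lemma~\ref{lemma: Bianchi}), $\sum_{p\in\partial c}\nu_p=0$ for every $3$-cell $c$, to prove that $P^+$ is connected in the graph $H$ whose vertices are plaquettes and whose edges join two plaquettes lying (up to sign) in the boundary of a common oriented $3$-cell. Indeed, any splitting of $P^+$ into two $H$-separated parts $A,B$ would force every $3$-cell to meet the support on only one of the two sides, whence $d(\nu|_{\pm A})=d(\nu|_{\pm B})=0$, contradicting irreducibility.

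The main step, and the principal obstacle, is the combinatorial estimate $\#\{P^+\}\le 5^{m-1}$. Here I would fix total orders on the $3$-cells and on the plaquettes and run a deterministic exploration of $P^+$ starting from the positive representative of $p_0$: at each growth step one passes from the already-revealed cluster through a uniquely determined active $3$-cell and adjoins the next support plaquette among the faces of that $3$-cell. Since a $3$-cell has six faces and at least one is already revealed, there are at most $5$ candidates, and by the $H$-connectivity established above the exploration reaches all $m$ plaquettes in exactly $m-1$ growth steps. This produces an injection of the admissible supports into $\{1,\dots,5\}^{m-1}$, which yields the bound. The delicate point—carried out exactly as in the first part of the proof of Proposition~2.11 in~\cite{flv2020}—is to organize the exploration so that the active $3$-cell at each step is determined by the data revealed so far, making the encoding genuinely injective rather than requiring knowledge of the yet-unrevealed support. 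Combining the three ingredients gives $\sum_{\nu\in\Lambda_m}\varphi_\beta(\nu)\le 5^{m-1}\alpha_0(\beta)^m$, as desired.
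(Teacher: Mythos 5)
There is a genuine gap, and it sits exactly where you flagged it. Your first step (bounding the sum over values by $\alpha_0(\beta)^m$ for each fixed support, so that it remains to count admissible supports) is a valid inequality, and your connectivity argument via the Bianchi identity is correct. But the reduction forces you to prove the purely combinatorial claim that the number of supports of irreducible closed configurations of size $2m$ containing $p_0$ is at most $5^{m-1}$, and your exploration does not establish this. The difficulty is not organizational: an ``active $3$-cell'' that is guaranteed to contain a yet-unrevealed support plaquette cannot in general be identified from the revealed support alone. Knowing only which plaquettes carry nonzero values tells you nothing about \emph{where} the partial configuration fails to be closed; a rule that picks a $3$-cell adjacent to the revealed cluster may land on a cell through which a given admissible support does not continue, so the exploration stalls (or, if you let it peek at the unrevealed support to avoid stalling, the encoding into $\{1,\dots,5\}^{m-1}$ loses its well-defined decoder and injectivity fails). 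The citation of Proposition~2.11 of~\cite{flv2020} does not repair this, because that argument — like the proof in the present paper — runs the exploration on the $G$-\emph{valued} partial configuration $\nu^k$, not on its support: the active $3$-cell is the first cell $c$ with $(d\nu^k)_c \neq 0$, which exists while $k<m$ by irreducibility and must contain a new support plaquette because $(d\nu)_c = 0$. Both existence and the ``at most $5$ new candidates'' bound are consequences of keeping the values.

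This is precisely why the paper does not separate supports from values. It encodes each $\nu \in \Lambda_m$ by the interleaved sequence $(p_1,g_1),\dots,(p_m,g_m)$, observes that for each \emph{fixed} value sequence $(g_1,\dots,g_m)$ the number of realizable plaquette sequences is at most $5^{m-1}$ (uniformly in the values), and only then sums the weights $\prod_j \varphi_\beta(g_j)^2$ over the value sequences to produce $\alpha_0(\beta)^m$. Your decomposition discards the values before the combinatorics, and the resulting support-counting claim does not follow from any argument in the paper or in~\cite{flv2020}; it is not even clear that it is true for $n \geq 3$. (For $G=\mathbb{Z}_2$ the support determines the configuration, so your approach and the paper's coincide there — but the lemma is stated for all $\mathbb{Z}_n$.) To fix the proof, keep the group values in the exploration as the paper does, or find a genuinely new argument for the combinatorial bound.
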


\begin{proof}
    We will prove the lemma by giving a mapping from \( \Lambda_m \) to a set of  sequences \( \nu^1 ,\nu^2, \ldots, \nu^m \) of \( G \)-valued 2-forms on \( P_N \).
    To this end, assume that total orderings of the plaquettes and 3-cells in \( B_N \) are given.

    Fix some \( \nu \in \Lambda_m\).
    Let \( p_1 = p_1(\nu) \coloneqq p_0 \), and define 
    \begin{equation*}
        \nu^1(p) \coloneqq 
        \begin{cases}
            \nu_p &\text{if } p = p_1, \cr 
            -\nu_p &\text{if } p = -p_1, \cr 
            0 &\text{otherwise.}
        \end{cases}
    \end{equation*}
    Next,  assume that for some \( k\in \{ 1,2, \ldots, m\} \), we are given 2-forms \( \nu^1, \nu^2 , \ldots, \nu^k \)  such that for each \( j \in \{ 1,2, \ldots, k \} \) we have
    \begin{enumerate}[label=(\alph*)]
        \item \( \support \nu^j \smallsetminus \support \nu^{j-1} = \{ p_j,-p_j \} \) for some \( p_j \in P_N \), and\label{item: old i}
        \item \( \nu|_{\support \nu^j} = \nu^j \).\label{item: old ii}
    \end{enumerate}
    If \( d\nu^k = 0 \), then \( \nu^k \in \Sigma_{P_N} \) and, by~\ref{item: old ii}, we have \( \nu|_{\support \nu^k} = \nu^k \). Since \( \nu \) is irreducible by assumption, and \( \support \nu^k \neq \emptyset \) by~\ref{item: old i}, it follows that \( \nu^k = \nu \). Since \( |\support \nu^k| = 2k \) by~\ref{item: old i}, we conclude that  \( k = m \).
    Consequently, if \( k <m \), then \( d\nu^k \not \equiv 0 \), so there is at least one oriented 3-cell \( c \in B_N\) for which \( (d \nu^k)_c \neq 0 \). Let \( c_{k+1} \) be the first oriented 3-cell (with respect to the ordering of the 3-cells) for which \( (d\nu^k)_{c_{k+1}} \neq 0\). Since \( \nu \in \Sigma_{P_N} \), we have \( (d\nu)_{c_{k+1}} = 0 \), and consequently there exists at least one plaquette \( p\in  \support \nu \cap \partial c_{k+1} \smallsetminus  \support \nu^k. \)  Let \( p_{k+1} \coloneqq p_{k+1}(\nu) \) be the first such plaquette (with respect to the ordering of the plaquettes) and define
    \begin{equation*}
        \nu^{k+1}(p) \coloneqq 
        \begin{cases}
            \nu_p &\text{if } p = p_{k+1}, \cr 
            -\nu_p &\text{if } p = -p_{k+1}, \cr 
            \nu^k(p) &\text{otherwise.}
        \end{cases}
    \end{equation*}
    Note that if \( \nu^1,\nu^2, \ldots, \nu^k \)   satisfies~\ref{item: old i} and~\ref{item: old ii}, then so does \( \nu^{k+1} \).

    We now show that \( \nu^m = \nu \). By~\ref{item: old i}, \( |\support \nu^m | = 2m \) and by~\ref{item: old ii}, \( \nu|_{\support \nu^m} = \nu^m \). Since \( |\support \nu| = 2m \), it follows that \( \nu^m = \nu \).

    In order to simplify notation, let \( [m] \coloneqq \{ 1,2,\ldots, m\}\). To obtain~\eqref{eq: counting vortex configurations ii}, note first that the above mapping allows us to write
    \begin{align*}
        &\sum_{\nu \in \Lambda_m} \varphi_\beta(\nu) 
        =
        \sum_{\nu \in \Lambda_m}  
        \prod_{j=1}^m  \varphi_\beta\bigl(\nu_{p_j(\nu)}\bigr)^2 
        =
        \sum_{g_1,g_2,\ldots, g_m \in G\smallsetminus \{ 0 \}} \sum_{\substack{\nu \in \Lambda_m\colon \\ \nu_{p_j(\nu)} = g_j  \,\forall j\in [m]}}
        \, \prod_{j=1}^m  \varphi_\beta\bigl(g_j\bigr)^2 . 
    \end{align*}
    Given a sequence \( p_1,p_2,\ldots, p_m \in P_N \) and a sequence \( g_1,\dots, g_m \in G\smallsetminus \{ 0 \} \), there is at most one  \( \nu \in \Lambda_m \) with \( p_j = p_j(\nu) \) and \( \nu_{p_j} = g_j \) for all \( j \in [m] \).
    This implies in particular that the right-hand side of the previous equation is equal to
    \begin{align*}
        &
        \sum_{g_1,g_2,\ldots, g_m \in G\smallsetminus \{ 0 \} } \sum_{\substack{p_1,p_2,\ldots,p_m \in P_N \colon\\ \substack{\exists \nu \in \Lambda_m \colon \forall j \in [m] \colon\\ p_j = p_j(\nu) \text{ and } \nu_{p_j}=g_j }}}
        \, \prod_{j=1}^m  \varphi_\beta\bigl(g_j\bigr)^2 . 
    \end{align*}
    Given \( \nu^k \), the 3-cell \( c_{k+1} \) is uniquely defined and satisfies \( |\partial c_{k+1} \cap \support \nu^k| \geq1. \) Since we always have \( p_{k+1} \in \partial c_{k+1} \smallsetminus \support \nu^k, \) given \( \nu^k, \) there can be at most five possible choices for \( p_{k+1} \). Since \( p_1 = p_0 \) by construction, this implies in particular that
    \begin{align*}
        &
        \sum_{g_1,g_2,\ldots, g_m \in G\smallsetminus \{ 0 \} } \sum_{\substack{p_1,p_2,\ldots,p_m \in P_N \colon\\ \substack{\exists \nu \in \Lambda_m \colon \forall j \in [m] \colon\\ p_j = p_j(\nu) \text{ and } \nu_{p_j}=g_j }}}
        \, \prod_{j=1}^m  \varphi_\beta\bigl(g_j\bigr)^2
        \leq 
        \sum_{g_1,g_2,\ldots, g_m \in G\smallsetminus \{ 0 \} } 5^{m-1}
        \, \prod_{j=1}^m  \varphi_\beta\bigl(g_j\bigr)^2.
    \end{align*}
    Finally, note that
    \begin{align*}
        &
        \sum_{g_1,g_2,\ldots, g_m \in G\smallsetminus \{ 0 \} } 5^{m-1}
        \, \prod_{j=1}^m  \varphi_\beta\bigl(g_j\bigr)^2
        =
         5^{m-1}
        \, \prod_{j=1}^m  \Bigl( \sum_{{g_j} \in G\smallsetminus \{ 0 \} } \varphi_\beta\bigl(g_j\bigr)^2 \Bigr) 
        =
         5^{m-1} \alpha_0(\beta)^m.
    \end{align*} 
    Combining the above equations, we obtain~\eqref{eq: counting vortex configurations ii} as desired.%
\end{proof}

\begin{remark}
    The statements of Lemma~\ref{lemma: a constructive version of algorithm B} and Lemma~\ref{lemma: counting vortex configurations iii} hold for lattices \( \mathbb{Z}^d \) whenever \( d \geq 3 \).
\end{remark}

We are now ready to give a proof of Proposition~\ref{proposition: the vortex flipping lemma}.

\begin{proof}[Proof of Proposition~\ref{proposition: the vortex flipping lemma}] 
    For each \( m \geq 1 \), let \( \Lambda_m \) be the set of all irreducible plaquette configurations \( \nu \in \Sigma_{P_N}  \) such that \( p_0 \in \support \nu \) and \( |\support \nu| = 2m \). By Lemma~\ref{lemma: counting vortex configurations iii}, we have 
\begin{equation} \label{eq: bound 1}
    \sum_{\nu \in \Lambda_m} \varphi_\beta(\nu) \leq  5^{m-1} \alpha_0(\beta)^m .
\end{equation}
Next, by Lemma~\ref{lemma: 6 plaquettes per edge}, if \( \nu \in \Lambda_m \) and  \( \sigma \in \Sigma_{E_N} \) is such that \( \nu \leq d\sigma   \), then \( |\support \sigma| \geq 2 m/6  \). 
In particular, this shows that the constant $m_0$, defined in~\eqref{eq: m0def}, satisfies \(m_0 \geq  m/6 \). Consequently, since \( \alpha_1(\beta) + 4\alpha_0(\beta) < 1 \), we can apply Proposition~\ref{proposition: first step}  to obtain 
    \begin{equation}\label{eq: bound 2}
        \begin{split}
            &\mu_{N,\beta,\kappa}\bigl(\{ \sigma \in \Sigma_{E_N} \colon \nu \leq d\sigma \} \Bigr)  
            \leq     
            \frac{ \varphi_\beta(\nu) \bigl( 4 \alpha_0(\kappa) \bigr)^{ m/6} }{1-
            \alpha_1(\beta)- 4\alpha_0(\kappa) }
        \end{split}
    \end{equation}
for each  \( \nu \in \Lambda_m\). Combining these observations, we obtain
    \begin{align*}
        &\mu_{N,\beta,\kappa}\bigl(\{ \sigma \in \Sigma_{E_N} \colon \exists \nu \in (d\sigma)^{p_0} \text{ with }  |\support \nu| \geq 2M \} \bigr) 
        \leq 
        \sum_{m=M}^\infty \sum_{\nu \in \Lambda_m} \mu_{N,\beta,\kappa}\bigl(\{ \sigma \in \Sigma_{E_N} \colon  \nu \leq d\sigma\bigr) 
        \\&\qquad\overset{\eqref{eq: bound 2}}{\leq}
        \sum_{m=M}^\infty \sum_{\nu \in \Lambda_m} \frac{ \varphi_\beta(\nu) \bigl( 4 \alpha_0(\kappa) \bigr)^{ m/6 } }{1-
            \alpha_1(\beta)- 4\alpha_0(\kappa) }
            \overset{\eqref{eq: bound 1}}{\leq}
        \sum_{m=M}^\infty \frac{ 5^m  \alpha_0(\beta)^m \bigl( 4 \alpha_0(\kappa) \bigr)^{ m/6 } }{1-
            \alpha_1(\beta)- 4\alpha_0(\kappa) }.
     \end{align*}
    Computing the sum of the geometric series (which is finite by the assumptions on \( \beta \) and \( \kappa \)), we obtain
    \begin{equation*}
        \begin{split}
            &\mu_{N,\beta,\kappa}\bigl(\{ \sigma \in \Sigma_{E_N} \colon \exists \nu \in (d\sigma)^{p_0} \text{ with }  |\support \nu| \geq 2M \} \bigr) 
            \\&\qquad\leq  
            \frac{ 5^M  \alpha_0(\beta)^M \bigl( 4 \alpha_0(\kappa) \bigr)^{ M/6} }{\bigl(1-
            \alpha_1(\beta)- 4\alpha_0(\kappa)\bigr) \bigl(1-5  \alpha_0(\beta) ( 4 \alpha_0(\kappa) )^{ 1/6} \bigr) }.
        \end{split}
    \end{equation*}
    Recalling the definition~\eqref{eq: alpha234def} of \( \alpha_2 (\beta,\kappa) \), the desired conclusion follows.
\end{proof}

\section{Coupling the lattice Higgs model and the \texorpdfstring{\( \mathbb{Z}_n \)}{Zn} model}\label{sec: coupling}

Heuristically, when \( \beta \) is very large and \( \sigma \sim \mu_{\beta,\kappa} \), we expect very few of the plaquettes in \( P_N \) to be frustrated in \( d\sigma \), and so, far away from these plaquettes, we expect the distribution of \( \sigma \)  to behave similarly to the gradient field of the \( \mathbb{Z}_n \) model with parameter \( \kappa \), defined in~\eqref{eq: Zn model def}. 
In particular, if \( \sigma \sim \mu_{N,\beta,\kappa} \) and we condition on the spins \( \sigma_e\) of edges \( e \) in a connected component \( E \) of edges with non-zero spin which supports a vortex, then the distribution of \( \sigma \) outside \( E\) has the same distribution as \( \sigma' \sim \mu_{N,\infty,\kappa} \) conditioned to be equal to zero on \( E. \)  
In this section, we make this precise by constructing a coupling between \( \mu_{\beta,\kappa} \) and \( \mu_{\infty,\kappa} \).
To this end, we first define a graph which will be used in the definition of the coupling. 
The main purpose of this graph is to be used as a tool to define a set of edges where two configurations will not be coupled.
\begin{definition}
    Given \( \sigma,\sigma' \in \Sigma_{E_N}  \), let  \( \mathcal{G}(\sigma, \sigma') \) be the graph with vertex set \( E_N \), and with an edge between two distinct vertices  \( e,e' \in E_N  \) if either 
    \begin{enumerate}[label=\textnormal{(\roman*)}]
        \item \( e' = -e \), or 
        \item \( e,e' \in \support \sigma \cup   \support \sigma'  \), and either \( \hat \partial e \cap \hat \partial e' \neq \emptyset \) or \( \hat \partial e \cap \hat \partial (-e') \neq \emptyset \).
    \end{enumerate} 
    Given \( \sigma,\sigma' \in \Sigma_{E_N}  \), \( \mathcal{G} \coloneqq  \mathcal{G}(\sigma, \sigma') \), and \( e \in E_N \), we let \( \mathcal{C}_{\mathcal{G}}(e) \) be set of all edges \( e'\in E_N \) which belong to the same connected component as \( e \) in \( \mathcal{G} \). For \( E \subseteq E_N \), we let \( \mathcal{C}_{\mathcal{G}}(E) \coloneqq \cup_{e \in E} \mathcal{C}_{\mathcal{G}}(e).  \) 
\end{definition}

We next establish a few useful properties of the above definitions.

\begin{lemma}\label{lemma: cluster is subconfig}
    Let \( \sigma ,\sigma' \in \Sigma_{E_N}  \), \( \mathcal{G} \coloneqq  \mathcal{G}(\sigma, \sigma') \), \( E \subseteq E_N \), and \( E' \coloneqq \mathcal{C}_{\mathcal{G}}(E) \). Then 
    \begin{enumerate}[label=\textnormal{(\roman*)}]
        \item \( \sigma |_{E'} \leq \sigma \), \label{item: subconfig i}
        \item \( \sigma |_{E_N\smallsetminus E'} \leq \sigma \),\label{item: subconfig ii}
        \item \( \sigma' |_{E'} \leq \sigma' \), and\label{item: subconfig iii}
        \item \( \sigma' |_{E_N \smallsetminus E'} \leq \sigma' \).\label{item: subconfig iv}
    \end{enumerate} 
\end{lemma}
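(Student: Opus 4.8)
The plan is to verify the two defining conditions of the partial order $\leq$ (Definition~\ref{def: partial order}) for each of the four restrictions, and to reduce everything to a single graph-theoretic observation. First I would record that, since $\mathcal{G}(\sigma,\sigma')$ always contains an edge between $e$ and $-e$ (clause (i) of its definition), every connected component of $\mathcal{G}$ is symmetric; hence $E' = \mathcal{C}_{\mathcal{G}}(E)$, being a union of components, is symmetric, and so is $E_N \smallsetminus E'$. Thus all four restrictions are well defined. Moreover, once~\ref{item: subconfig i} is established,~\ref{item: subconfig ii} is immediate: we have $\sigma|_{E_N \smallsetminus E'} = \sigma - \sigma|_{E'}$, which is $\leq \sigma$ by Lemma~\ref{lemma: the blue lemma}\ref{property 4}. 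The identical implication derives~\ref{item: subconfig iv} from~\ref{item: subconfig iii}. So it suffices to prove~\ref{item: subconfig i} and~\ref{item: subconfig iii}.

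Condition (i) of Definition~\ref{def: partial order} is automatic for any restriction, since $\omega|_E = \omega|_{\support(\omega|_E)}$ by inspection. The content is condition (ii), namely that $(d(\sigma|_{E'}))_p = (d\sigma)_p$ for every $p \in \support d(\sigma|_{E'})$. Given such a $p$, the inequality $(d(\sigma|_{E'}))_p = \sum_{e \in \partial p \cap E'} \sigma_e \neq 0$ supplies an edge $e_0 \in \partial p \cap E'$ with $\sigma_{e_0} \neq 0$. The key step is to show that \emph{every} $e \in \partial p$ with $\sigma_e \neq 0$ lies in $E'$. Indeed, any such $e$ and $e_0$ both lie in $\support \sigma \subseteq \support\sigma \cup \support\sigma'$ and satisfy $p \in \hat\partial e \cap \hat\partial e_0$, so when $e \neq e_0$ they are adjacent in $\mathcal{G}(\sigma,\sigma')$ by clause (ii) of its definition; here one uses that no two edges of $\partial p$ are negatives of one another, so the possibility $e = -e_0$ never occurs. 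Since $E'$ is a union of connected components and $e_0 \in E'$, this forces $e \in E'$. Consequently every edge of $\partial p$ lying outside $E'$ carries vanishing $\sigma$-spin, so $\sum_{e \in \partial p \smallsetminus E'} \sigma_e = 0$ and $(d(\sigma|_{E'}))_p = \sum_{e \in \partial p} \sigma_e = (d\sigma)_p$. This proves~\ref{item: subconfig i}.

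Finally,~\ref{item: subconfig iii} follows by the verbatim argument with $\sigma'$ in place of $\sigma$: clause (ii) of $\mathcal{G}(\sigma,\sigma')$ is symmetric in $\sigma$ and $\sigma'$, so two edges of $\partial p$ both carrying nonzero $\sigma'$-spin are again adjacent via the shared plaquette $p$, and the same confinement argument applies. The only genuine obstacle is this confinement step---extracting from the definition of $\mathcal{G}$ that an edge of $\partial p$ with nonzero spin sharing the plaquette $p$ with an $E'$-edge cannot escape the component $E'$. Everything else (symmetry of $E'$, the automatic verification of condition (i), and the passage to complements through Lemma~\ref{lemma: the blue lemma}\ref{property 4}) is routine bookkeeping.
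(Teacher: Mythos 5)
Your proof is correct and follows essentially the same route as the paper's: the key step in both is that any edge of $\partial p$ carrying nonzero spin is adjacent in $\mathcal{G}(\sigma,\sigma')$ to an edge of $\partial p \cap E'$, hence trapped in $E'$, giving $(d(\sigma|_{E'}))_p = (d\sigma)_p$, after which~\ref{item: subconfig ii} and~\ref{item: subconfig iv} follow from Lemma~\ref{lemma: the blue lemma}\ref{property 4} and~\ref{item: subconfig iii} by symmetry. You merely spell out details the paper leaves implicit (symmetry of $E'$, and that distinct edges of $\partial p$ are never negatives of one another).
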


\begin{proof}
    Assume  that \( p \in \support d (\sigma |_{E'} ) \). Then the set \( \partial p \cap E' \) is non-empty, and hence, by the definition of \( E' \), we have $    \partial p \cap \support \sigma  \subseteq E'$. Consequently, $(d\sigma)_p = \bigl(d(\sigma|_{E'})\bigr)_p$. Since $\sigma|_{\support (\sigma|_{E'})} = \sigma|_{E'}$, this shows that~\ref{item: subconfig i} holds. Using Lemma~\ref{lemma: the blue lemma}\ref{property 4}, we obtain~\ref{item: subconfig ii}. Finally, by symmetry, we obtain~\ref{item: subconfig iii} and~\ref{item: subconfig iv}. This concludes the proof. 
\end{proof}

Given $\sigma ,\sigma' \in \Sigma_{E_N}$, we define the set of edges $E_{\sigma, \sigma'} \subseteq E_N$ by
\begin{equation}\label{eq: Esigmasigmadef}
  E_{\sigma, \sigma'} \coloneqq \mathcal{C}_{\mathcal{G}(\sigma, \sigma')}(\support  \sigma \cap \partial \support d \sigma).
\end{equation}
The set \( E_{\sigma, \sigma'}\) will turn out to always contain the set where the two coupled configurations differ. With this definition at hand, we now take the first step towards the construction of the coupling, by making the following observation.

\begin{lemma}\label{lemma: closed is closed}
    Let \( \hat \sigma \in \Sigma_{E_N} \) and \( \hat \sigma' \in \Sigma_{E_N}^0 \), and define \(\sigma' \coloneqq \hat \sigma'|_{E_{\hat \sigma,\hat \sigma'}} + \hat \sigma|_{E_N\smallsetminus E_{\hat \sigma,\hat \sigma'}} \). Then \( \sigma' \in \Sigma_{E_N}^0 \).
\end{lemma}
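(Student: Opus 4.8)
The plan is to set $E \coloneqq E_{\hat\sigma,\hat\sigma'}$ and observe that $\sigma' = \hat\sigma'|_E + \hat\sigma|_{E_N\smallsetminus E}$ is a sum of two restrictions; since $d$ is linear, it suffices to show that each summand is closed, i.e.\ that $d(\hat\sigma'|_E) = 0$ and $d(\hat\sigma|_{E_N\smallsetminus E}) = 0$. Both summands are sub-configurations for which the partial order $\leq$ is available, and I would exploit the defining property of $\leq$ that $d(\omega|_{\cdots})$ agrees with the corresponding restriction of $d\omega$.

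For the first summand, I would invoke Lemma~\ref{lemma: cluster is subconfig}\ref{item: subconfig iii}, which gives $\hat\sigma'|_E \leq \hat\sigma'$. By the definition of the partial order this yields $d(\hat\sigma'|_E) = (d\hat\sigma')|_{\support d(\hat\sigma'|_E)}$; since $\hat\sigma' \in \Sigma_{E_N}^0$ means $d\hat\sigma' = 0$, the right-hand side vanishes and hence $d(\hat\sigma'|_E) = 0$. This step is immediate once the relevant part of Lemma~\ref{lemma: cluster is subconfig} is in hand.

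The second summand is the crux, and I expect it to be the main obstacle. I would use Lemma~\ref{lemma: cluster is subconfig}\ref{item: subconfig ii} to get $\hat\sigma|_{E_N\smallsetminus E} \leq \hat\sigma$, so that $d(\hat\sigma|_{E_N\smallsetminus E}) = (d\hat\sigma)|_{\support d(\hat\sigma|_{E_N\smallsetminus E})}$; in particular, any plaquette $p$ with $(d(\hat\sigma|_{E_N\smallsetminus E}))_p \neq 0$ must satisfy $p \in \support d\hat\sigma$. I would then argue by contradiction: such a $p$ would force the existence of an edge $e \in \partial p$ with $\hat\sigma_e \neq 0$ and $e \notin E$. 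But $\hat\sigma_e \neq 0$ means $e \in \support \hat\sigma$, and $e \in \partial p$ with $p \in \support d\hat\sigma$ means $e \in \partial\support d\hat\sigma$; thus $e \in \support\hat\sigma \cap \partial\support d\hat\sigma$, which is contained in $E$ simply because $E = \mathcal{C}_{\mathcal{G}(\hat\sigma,\hat\sigma')}(\support\hat\sigma \cap \partial\support d\hat\sigma)$ and a cluster always contains its generating set. This contradicts $e \notin E$, so $d(\hat\sigma|_{E_N\smallsetminus E}) = 0$. Combining the two vanishing statements gives $d\sigma' = 0$, i.e.\ $\sigma' \in \Sigma_{E_N}^0$. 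The only bookkeeping to watch is the orientation/symmetry of $\support d\hat\sigma$ when unwinding $\partial\support d\hat\sigma$, but this is routine. (Lemma~\ref{lemma: the blue lemma}\ref{property 5} offers an alternative handle on the second summand via disjointness of $\support d(\hat\sigma|_E)$ and $\support d(\hat\sigma|_{E_N\smallsetminus E})$, but I do not expect to need it.)
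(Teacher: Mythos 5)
Your proposal is correct and follows essentially the same route as the paper's own proof: both reduce to showing $d(\hat\sigma'|_{E_{\hat\sigma,\hat\sigma'}}) = 0$ via Lemma~\ref{lemma: cluster is subconfig} and closedness of $\hat\sigma'$, and $d(\hat\sigma|_{E_N\smallsetminus E_{\hat\sigma,\hat\sigma'}}) = 0$ by the same contradiction argument (a nonzero plaquette would produce an edge in $\support\hat\sigma \cap \partial\support d\hat\sigma \subseteq E_{\hat\sigma,\hat\sigma'}$ lying outside $E_{\hat\sigma,\hat\sigma'}$). The only cosmetic difference is that the paper handles the $\hat\sigma$-summand first, but the logical content is identical.
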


\begin{proof}
    Since \( \hat \sigma \in \Sigma_{E_N} \) and \( \hat \sigma' \in \Sigma_{E_N}^0 \), we clearly have  \( \sigma' \in \Sigma_{E_N} \). The desired conclusion will thus follow if we can show that \( d\sigma' = 0 \). 
    To see this, note first that by Lemma~\ref{lemma: cluster is subconfig}, applied with \( \hat \sigma \), \( \hat \sigma' \), and  \( E=\support \hat \sigma \cap \partial \support d\hat \sigma \), we have \( \hat \sigma|_{E_{\hat \sigma,\hat \sigma'}} \leq \hat \sigma \) and \( \hat \sigma|_{E_N\smallsetminus E_{\hat \sigma,\hat \sigma'}} \leq \hat \sigma \).
    
    Let us now show that \( d(\hat \sigma |_{E_N\smallsetminus E_{\hat \sigma,\hat \sigma'}}) = 0 . \)
    If $(d( \hat \sigma |_{E_N\smallsetminus E_{ \hat \sigma, \hat \sigma'}}))_p \neq 0$ for some plaquette \( p \in P_N\), then there exists an $e \in \partial p$ such that $( \hat \sigma |_{E_N\smallsetminus E_{ \hat \sigma, \hat \sigma'}})_e \neq 0$; in particular $e \in (E_N\smallsetminus E_{ \hat \sigma, \hat \sigma'}) \cap \support {\hat \sigma}$.
    Also, the relation $\hat \sigma|_{E_N\smallsetminus E_{\hat \sigma,\hat \sigma'}} \leq \hat \sigma$ implies $\support d(\hat \sigma |_{E_N\smallsetminus E_{\hat \sigma,\hat \sigma'}}) \subseteq \support d\hat \sigma$.
    This means that \( p \in \support d\hat \sigma\), and hence that \( e \in \support  \hat{\sigma} \cap \partial \support d \hat{\sigma}. \) 
	 Since $\support  \hat{\sigma} \cap \partial \support d \hat{\sigma} \subseteq E_{\hat{\sigma}, \hat{\sigma}'}$ by the definition~\eqref{eq: Esigmasigmadef} of $E_{\hat{\sigma}, \hat{\sigma}'}$, we deduce that $e \in E_{\hat{\sigma}, \hat{\sigma}'}$, which contradicts the fact that $e \in E_N\smallsetminus E_{\hat \sigma,\hat \sigma'}$. This proves that \(d(\hat \sigma |_{E_N\smallsetminus E_{\hat \sigma,\hat \sigma'}}) = 0. \)

    Next, since \( \hat \sigma' \in \Sigma_{E_N}^0 \), we have \( d\hat \sigma' = 0 \). Since, by Lemma~\ref{lemma: cluster is subconfig}, we have \( \hat \sigma'|_{E_{\hat \sigma, \hat \sigma'}} \leq \hat \sigma' \), it follows that \( d(\hat \sigma'|_{E_{\hat \sigma, \hat \sigma'}})=0 \).
    Finally, since  \( d( \hat \sigma|_{E_N\smallsetminus E_{\hat \sigma,\hat \sigma'}}) = 0\) and \( d(\hat \sigma'|_{E_{\hat \sigma, \hat \sigma'}})=0 \), we obtain
    \begin{equation*}
        d\sigma' = d(\hat \sigma|_{E_N\smallsetminus E_{\hat \sigma,\hat \sigma'}} + \hat \sigma'|_{E_{\hat \sigma, \hat \sigma'}})
        = d(\hat \sigma|_{E_N\smallsetminus E_{\hat \sigma,\hat \sigma'}}) + d(\hat \sigma'|_{E_{\hat \sigma, \hat \sigma'}}) = 0+0 = 0
    \end{equation*}
    as desired. 
\end{proof}

\begin{lemma}\label{lemma: E in coupling}
    Let \( \hat \sigma \in \Sigma_{E_N} \), \( \hat \sigma' \in \Sigma_{E_N}^0 \), and define 
    \begin{equation*}
        \begin{cases}
            \sigma  \coloneqq \hat \sigma|_{E_{\hat \sigma,\hat \sigma'}}
            +
            \hat \sigma'|_{E_N \smallsetminus E_{\hat \sigma,\hat \sigma'}},
            \cr 
            \sigma' \coloneqq  \hat \sigma'|_{E_{\hat \sigma,\hat \sigma'}}
            +
            \hat \sigma|_{E_N \smallsetminus E_{\hat \sigma,\hat \sigma'}}.
        \end{cases}
    \end{equation*}
    Then \(   E_{\hat \sigma, \hat \sigma'} = E_{\hat \sigma,\sigma'} = E_{\sigma,\hat \sigma'}  = E_{\sigma, \sigma'} \). In particular, \( \hat \sigma_e = \sigma'_e \) and \(  \sigma_e = \hat \sigma'_e \) for all \( e \in E_N \smallsetminus E_{\sigma,\sigma'} \).
\end{lemma}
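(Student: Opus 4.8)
The plan is to fix $E \coloneqq E_{\hat\sigma,\hat\sigma'}$ and $S \coloneqq \support\hat\sigma \cap \partial\support d\hat\sigma$, so that $E = \mathcal C_{\mathcal G(\hat\sigma,\hat\sigma')}(S)$ by the definition~\eqref{eq: Esigmasigmadef}, and to show that each of the three remaining sets is the $S$-cluster of an appropriate graph and equals $E$. The final ``in particular'' claim is then immediate, since by construction $\sigma'_e = \hat\sigma_e$ and $\sigma_e = \hat\sigma'_e$ for every $e \in E_N \smallsetminus E$.

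First I would record two structural facts. Since $\sigma$ and $\sigma'$ arise from $\hat\sigma,\hat\sigma'$ by exchanging their restrictions to $E$ and to $E_N\smallsetminus E$, the union of supports is unchanged: $\support\sigma\cup\support\sigma' = \support\hat\sigma\cup\support\hat\sigma'$. As the type-(ii) adjacencies depend only on this union while the adjacencies $e'=-e$ are always present, this yields $\mathcal G(\sigma,\sigma') = \mathcal G(\hat\sigma,\hat\sigma')$. Second, I would compute $d\sigma$. By Lemma~\ref{lemma: cluster is subconfig} we have $\hat\sigma'|_{E_N\smallsetminus E}\le\hat\sigma'$ and $\hat\sigma|_{E_N\smallsetminus E}\le\hat\sigma$; closedness of $\hat\sigma'$ forces $d(\hat\sigma'|_{E_N\smallsetminus E})=0$, while the proof of Lemma~\ref{lemma: closed is closed} shows $d(\hat\sigma|_{E_N\smallsetminus E})=0$. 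Since $\hat\sigma|_E\le\hat\sigma$, it follows that $d\sigma = d(\hat\sigma|_E) = (d\hat\sigma)|_{\support d(\hat\sigma|_E)}$, and as $\support d(\hat\sigma|_E) = \support d\hat\sigma$ (combine $d(\hat\sigma|_{E_N\smallsetminus E})=0$ with Lemma~\ref{lemma: the blue lemma}) we obtain $d\sigma = d\hat\sigma$, hence $\partial\support d\sigma = \partial\support d\hat\sigma$.

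Next I would pin down the seed of the three remaining clusters, whose first argument is either $\sigma$ (seed $\support\sigma\cap\partial\support d\sigma$) or $\hat\sigma$ (seed $S$). Using $d\sigma=d\hat\sigma$ and that $\sigma$ agrees with $\hat\sigma$ on $E$, one checks $(\support\sigma\cap\partial\support d\hat\sigma)\cap E = S$. The only danger is an edge $e\in\support\hat\sigma'\cap(E_N\smallsetminus E)$ lying on $\partial\support d\hat\sigma$, which would enlarge the seed; this is the crux. I would rule it out as follows: such an $e$ lies in $\partial p$ for some $p\in\support d\hat\sigma$, and any such $p$ also contains an edge $e_0\in\support\hat\sigma\cap\partial p\subseteq S\subseteq E$; then $p\in\hat\partial e\cap\hat\partial e_0$ exhibits a type-(ii) adjacency between $e$ and $e_0$ in $\mathcal G(\hat\sigma,\hat\sigma')$, so $e$ lies in the same component as $e_0$, i.e. $e\in E$, contradicting $e\in E_N\smallsetminus E$. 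Hence the seed of every cluster with first argument $\sigma$ also equals $S$.

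Finally I would compare the mixed graphs $\mathcal G(\hat\sigma,\sigma')$ and $\mathcal G(\sigma,\hat\sigma')$ to $\mathcal G(\hat\sigma,\hat\sigma')$. Each has an active support-union contained in $A\coloneqq\support\hat\sigma\cup\support\hat\sigma'$ but coinciding with $A$ on $E$, because for $e\in E$ the clause $e\in E$ absorbs the restriction-to-$E$ that distinguishes the two unions. A monotonicity argument then finishes: since these graphs have at most the edges of $\mathcal G(\hat\sigma,\hat\sigma')$, each $S$-cluster is contained in $E$; conversely any $\mathcal G(\hat\sigma,\hat\sigma')$-path from $S$ stays inside the component $E$, so every type-(ii) edge along it has both endpoints in $E$ and therefore survives in the smaller graph, whence the $S$-cluster also contains $E$. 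Together with $\mathcal G(\sigma,\sigma')=\mathcal G(\hat\sigma,\hat\sigma')$ this gives $E_{\sigma,\sigma'} = E_{\hat\sigma,\sigma'} = E_{\sigma,\hat\sigma'} = E = E_{\hat\sigma,\hat\sigma'}$. I expect the seed-control step to be the main obstacle, as it is precisely where the definition of $E_{\hat\sigma,\hat\sigma'}$ as a cluster of boundary edges is indispensable.
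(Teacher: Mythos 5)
Your proof is correct, and it follows the same overall architecture as the paper's own proof: the graph identity $\mathcal{G}(\sigma,\sigma')=\mathcal{G}(\hat\sigma,\hat\sigma')$ together with the fact that the two mixed graphs are subgraphs of it, the identity $d\sigma=d\hat\sigma$, and the observation that adjacencies between edges of $E\coloneqq E_{\hat\sigma,\hat\sigma'}$ survive in the mixed graphs (the paper phrases this as the graphs being identical when restricted to $E$). The genuine difference lies in how the seeds are controlled, and your version is a real streamlining. The paper never identifies the seed $\support\sigma\cap\partial\support d\sigma$ exactly: its intermediate claims only sandwich it, $\support\hat\sigma\cap\partial\support d\hat\sigma\subseteq\support\sigma\cap\partial\support d\sigma\subseteq\mathcal{C}_{\mathcal{G}(\sigma,0)}(\support\hat\sigma\cap\partial\support d\hat\sigma)$, and the proof must then chase a chain of cluster inclusions in a particular order (establishing $E_{\sigma,\sigma'}=E_{\hat\sigma,\hat\sigma'}$ first, and only then the mixed cases), using that passing through the auxiliary subgraph $\mathcal{G}(\sigma,0)$ does not change the final cluster. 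Your crux argument — an edge of $\support\hat\sigma'\cap(E_N\smallsetminus E)$ lying on $\partial\support d\hat\sigma$ would share a frustrated plaquette with an edge of $\support\hat\sigma\cap\partial\support d\hat\sigma$, hence be adjacent to it in $\mathcal{G}(\hat\sigma,\hat\sigma')$ and so lie in $E$, a contradiction — pins the seed down to be exactly $\support\hat\sigma\cap\partial\support d\hat\sigma$, so all four sets become clusters of the same seed and the conclusion reduces to a clean two-sided comparison of the four graphs; note that the underlying combinatorial step (a frustrated plaquette through $e$ contains an edge of the seed) is exactly what the paper's Claim 2 establishes, used there for inclusion rather than exclusion. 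One small point of rigor: in the crux you should dispose of the degenerate cases $e=e_0$ (then $e$ is itself in the seed, hence in $E$) and $e=-e_0$ (then $e$ is type-(i) adjacent to $e_0$), since type-(ii) adjacencies are only defined between distinct vertices; both cases are immediate and do not affect the argument.
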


\begin{proof}
    Before we prove the statement of the lemma, we state and prove a few claims which will simplify the rest of the proof.

    \begin{sublemma}\label{sublemma: 1}
        We have \( d\sigma = d\hat \sigma. \)
    \end{sublemma}

    \begin{subproof}
        By Lemma~\ref{lemma: cluster is subconfig}\ref{item: subconfig ii}, we have \( \hat \sigma'|_{E_N\smallsetminus E_{\hat \sigma, \hat \sigma'}} \leq \hat \sigma'. \) Since \( \hat \sigma' \in \Sigma_{E_N}^0, \) it follows from the definition of \( \leq \) that \( d(\hat \sigma'|_{E_N\smallsetminus E_{\hat \sigma, \hat \sigma'}}) = 0 . \) 
        Consequently, 
        \begin{equation*}
            d\sigma 
            = 
            d(\hat \sigma|_{E_{\hat \sigma,\hat \sigma'}} + \hat \sigma'|_{E_N\smallsetminus E_{\hat \sigma, \hat \sigma'}}) 
            = 
            d(\hat \sigma|_{E_{\hat \sigma,\hat \sigma'}}) + d(\hat \sigma'|_{E_N\smallsetminus E_{\hat \sigma, \hat \sigma'}}) 
            = d(\hat \sigma|_{E_{\hat \sigma,\hat \sigma'}}) + 0 = d(\hat \sigma|_{E_{\hat \sigma,\hat \sigma'}}).
        \end{equation*}  
        By Lemma~\ref{lemma: closed is closed}, we have \( \sigma' = \hat \sigma'|_{E_{\hat \sigma,\hat \sigma'}} + \hat \sigma|_{E_N\smallsetminus E_{\hat \sigma,\hat \sigma'}} \in \Sigma_{E_N}^0, \) and thus \( d\sigma' = 0. \) Since, by Lemma~\ref{lemma: cluster is subconfig}\ref{item: subconfig ii}, we have \( \hat \sigma'|_{E_{\hat \sigma,\hat \sigma'}} \leq \hat \sigma' \) and \( \hat \sigma' \in \Sigma_{E_N}^0, \) it follows from the definition of \( \leq \) that \( d(\hat \sigma'|_{E_{\hat \sigma,\hat \sigma'}}) = 0. \) Hence 
         \begin{equation*}
            \begin{split}
            &d\hat \sigma
            = 
            d(\hat \sigma|_{ E_{\hat \sigma,\hat \sigma'}}+\hat \sigma|_{E_N\smallsetminus E_{\hat \sigma,\hat \sigma'}})
            =  
            d(\hat \sigma|_{ E_{\hat \sigma,\hat \sigma'}}+\hat \sigma'|_{E_{\hat \sigma,\hat \sigma'}} + \hat \sigma|_{E_N\smallsetminus E_{\hat \sigma,\hat \sigma'}} - \hat \sigma'|_{E_{\hat \sigma,\hat \sigma'}})
            \\&\qquad
            =  
            d(\hat \sigma|_{ E_{\hat \sigma,\hat \sigma'}}+\sigma' - \hat \sigma'|_{E_{\hat \sigma,\hat \sigma'}}) 
            =
            d(\hat \sigma|_{ E_{\hat \sigma,\hat \sigma'}})+d\sigma' - d(\hat \sigma'|_{E_{\hat \sigma,\hat \sigma'}})
            =
            d(\hat \sigma|_{ E_{\hat \sigma,\hat \sigma'}})+0-0
            \\&\qquad 
            =
            d(\hat \sigma|_{ E_{\hat \sigma,\hat \sigma'}}).
            \end{split}
        \end{equation*}
        Combining the two previous equations, we obtain \( d\sigma = d\hat \sigma \) as desired.
    \end{subproof}

    \begin{sublemma}\label{sublemma: 2}
        If \( e \in \support \sigma \cap \partial \support d\sigma, \) then there is a plaquette \( p \in \hat \partial e \cap \support d\sigma \) and an edge \( e' \in \partial p \) such that \( \sigma_{e'} = \hat \sigma_{e'} \neq 0 . \)
    \end{sublemma}

    \begin{subproof}
        Since \( e \in \support \sigma \cap \partial \support d\sigma, \) there exists \( p \in \hat \partial e \) with \( (d\sigma)_p  \neq 0. \) Since, by Claim~\ref{sublemma: 1}, we have \( d\sigma = d\hat \sigma \), we must have \( (d\hat \sigma)_p = (d\sigma)_p \neq 0. \) Since \( (d\hat \sigma)_p \neq 0, \)  there exists an edge \( e' \in \partial p \) with \( \hat \sigma_{e'} \neq 0. \) Since \( \hat \sigma_{e'} \neq 0, \) \( e' \in \partial p, \) and \( (d\hat \sigma)_p \neq 0, \) we have \( e' \in E_{\hat \sigma,\hat \sigma'}, \) and hence, by the definition of \( \sigma, \) it follows that \( \sigma_{e'} = \hat \sigma_{e'} \neq 0 . \) This concludes the proof.
    \end{subproof}

    \begin{sublemma}\label{sublemma: 3}
        We have \( \support \sigma \cap \partial \support d\sigma \subseteq  \mathcal{C}_{\mathcal{G}( \sigma ,  0)}(\support \hat \sigma \cap \partial \support d\hat \sigma).\) 
    \end{sublemma}

    \begin{subproof}
        Let \( e \in \support \sigma \cap \partial \support d\sigma. \) Then, by Claim~\ref{sublemma: 2},  there is \( p \in \hat \partial e \cap \support d \sigma   \) and \( e' \in \partial p \) such that \( \sigma_{e'} = \hat \sigma_{e'} \neq 0 . \) Using the definition of \( \mathcal{C}_{\mathcal{G}( \sigma ,  0)} ,\)  it follows that \( e \in \mathcal{C}_{\mathcal{G}( \sigma ,  0)}(e').\)
        Since \( p \in \support d\sigma \) and \( d\sigma = d\hat \sigma \) (by Claim~\ref{sublemma: 1}), we have \( p \in \support d\hat \sigma. \) Since \( e'\in \partial p \) and \( \hat \sigma_{e'} \neq 0, \) it follows that  \( e' \in \support \hat \sigma \cap \partial \support  d\hat \sigma, \) and hence \( \mathcal{C}_{\mathcal{G}( \sigma ,  0)}(e') \subseteq  \mathcal{C}_{\mathcal{G}( \sigma ,  0)}(\support \hat \sigma \cap \partial \support d\hat \sigma).\) 
        Combining the above observations, it follows that \( e \in \mathcal{C}_{\mathcal{G}( \sigma ,  0)}(\support \hat \sigma \cap \partial \support d\hat \sigma).\) 
        Since this holds for any \( e \in \support \sigma \cap \partial \support d\sigma, \) the desired conclusion follows.
    \end{subproof}

    \begin{sublemma}\label{sublemma: 5}
        We have \( \support \hat \sigma \cap \partial \support d\hat \sigma
            \subseteq \support  \sigma \cap \partial \support d \sigma. \)
    \end{sublemma}

    \begin{proof}
        By definition, we have  \(  \sigma|_{E_{\hat \sigma, \hat \sigma'}} = \hat \sigma|_{E_{\hat \sigma, \hat \sigma'}}. \) Since \( \support \hat \sigma \cap \partial \support d\hat \sigma \subseteq E_{\hat \sigma,\hat \sigma'} ,\) and, by Claim~\ref{sublemma: 1}, we have \( d\hat \sigma = d\sigma, \) it follows that
        \begin{equation*}
            \support \hat \sigma \cap \partial \support d\hat \sigma
            =
            \support \hat \sigma|_{E_{\hat \sigma,\hat \sigma'}} \cap \partial \support d\hat \sigma
            =
            \support  \sigma|_{E_{\hat \sigma,\hat \sigma'}} \cap \partial \support d \sigma
            \subseteq \support  \sigma \cap \partial \support d \sigma
        \end{equation*}
        as desired.
    \end{proof}

    \begin{sublemma}\label{sublemma: 4}
        \( \mathcal{G}(\hat \sigma,\sigma') \) and \( \mathcal{G}( \sigma,\hat\sigma') \) are both subgraphs of \( \mathcal{G}(\hat \sigma,\hat \sigma') = \mathcal{G}( \sigma, \sigma'). \)
    \end{sublemma}
    
    \begin{subproof}
        By definition, we have \( \support \hat \sigma \cup \support \sigma'   \subseteq \support \hat \sigma \cup \support \hat \sigma', \) \(  \support  \sigma \cup \support \hat \sigma' \subseteq \support \hat \sigma \cup \support \hat \sigma'  , \)
        and 
        \(  \support \hat \sigma \cup \support \hat \sigma' = \support \sigma \cup \support \sigma'.\) From this the desired conclusion immediately follows.
    \end{subproof}
    
    We now use the above claims to prove that \( E_{\sigma,\sigma'} = E_{\sigma,\hat \sigma'} = E_{\hat \sigma, \sigma'} = E_{\hat \sigma,\hat \sigma}. \)
    
    By combining Claim~\ref{sublemma: 3} and Claim~\ref{sublemma: 4}, we see that 
    \begin{equation}\label{eq: new proof eq 6}
        \begin{split}
            &E_{\sigma,  \sigma'}  = \mathcal{C}_{ {\mathcal{G}( \sigma,   \sigma')}}(\support   \sigma \cap  \partial \support d \sigma)
            \subseteq 
            \mathcal{C}_{ {\mathcal{G}( \sigma,   \sigma')}}\bigl(\mathcal{C}_{\mathcal{G}( \sigma ,  0)}(\support \hat \sigma \cap \partial \support d\hat \sigma)\bigr)
            \\&\qquad = 
            \mathcal{C}_{ {\mathcal{G}( \sigma,   \sigma')}}(\support \hat \sigma \cap \partial \support d\hat \sigma) 
            =
            \mathcal{C}_{ {\mathcal{G}( \hat \sigma,   \hat \sigma')}}(\support \hat \sigma \cap \partial \support d\hat \sigma) = E_{\hat \sigma,\hat \sigma'}
        \end{split}
    \end{equation}
    At the same time, by combining Claim~\ref{sublemma: 5} and Claim~\ref{sublemma: 4}, we obtain
          \begin{equation}\label{eq: new proof eq 3}
          \begin{split}
              &E_{\hat\sigma,\hat\sigma'} = \mathcal{C}_{\mathcal{G}(\hat\sigma,\hat\sigma')}(\support \hat \sigma \cap \partial \support d\hat \sigma) 
              \subseteq 
              \mathcal{C}_{\mathcal{G}(\hat\sigma,\hat\sigma')}(\support  \sigma \cap \partial \support d \sigma)
              \\&\qquad=
              \mathcal{C}_{\mathcal{G}(\sigma,\sigma')}(\support  \sigma \cap \partial \support d \sigma) = E_{ \sigma, \sigma'}.
          \end{split}
    \end{equation}
    Combining~\eqref{eq: new proof eq 6}~and~\eqref{eq: new proof eq 3}, we obtain \( E_{\sigma,\sigma'} \subseteq E_{\hat \sigma,\hat \sigma'} \subseteq E_{\sigma,\sigma'}, \) and hence \( E_{\sigma,\sigma'} = E_{\hat \sigma,\hat \sigma'}. \) 
    
    Next, using Claim~\ref{sublemma: 4}, we immediately see that
    \begin{equation}\label{eq: new proof eq 1}
        E_{\hat \sigma, \sigma'} 
        =  
        \mathcal{C}_{ {\mathcal{G}( \hat \sigma,  \sigma')}}(\support  \hat \sigma \cap  \partial \support d \hat \sigma)
        \subseteq 
        \mathcal{C}_{ {\mathcal{G}( \hat \sigma,  \hat \sigma')}}(\support  \hat \sigma \cap  \partial \support d \hat \sigma) = E_{\hat \sigma,\hat \sigma'}
    \end{equation} 
    and
    \begin{equation}\label{eq: new proof eq 5}
        \begin{split}
            E_{\sigma,\hat \sigma'}
            = \mathcal{C}_{\mathcal{G}(\sigma,\hat \sigma')}(\support \sigma \cap \partial \support d\sigma)
            \subseteq \mathcal{C}_{\mathcal{G}(\sigma, \sigma')}(\support \sigma \cap \partial \support d\sigma) = E_{\sigma,\sigma'}.
        \end{split}
    \end{equation} 
    
    Since 
    \( \sigma'|_{E_{\hat \sigma,\hat \sigma'}} = \hat \sigma'|_{E_{\hat \sigma,\hat \sigma'}}, \) the graphs \( \mathcal{G}( \sigma, \sigma') \) and \( \mathcal{G}(\sigma,\hat \sigma') \)  are identical when restricted to the set \( E_{\hat \sigma,\hat \sigma'}.\) Since \( E_{\hat \sigma,\hat \sigma'} = E_{\sigma,\sigma'} \) and the set \( E_{\sigma,\sigma'} \) contains the set \( \support \sigma \cap \partial \support d\sigma,\) this implies in particular that
    \begin{equation}\label{eq: new proof eq 4}
        \begin{split}
            E_{\sigma,\sigma'} = \mathcal{C}_{\mathcal{G}(\sigma,\sigma')}(\support \sigma \cap \partial \support d\sigma) \subseteq \mathcal{C}_{\mathcal{G}(\sigma,\hat \sigma')}(\support \sigma \cap \partial \support d\sigma) = E_{\sigma,\hat \sigma'}.
        \end{split}
    \end{equation}  
    Combining~\eqref{eq: new proof eq 5}~and~\eqref{eq: new proof eq 4}, we obtain \( E_{\sigma,\sigma'} \subseteq E_{\sigma,\hat \sigma'} \subseteq E_{\sigma,\sigma'}\), and hence \( E_{\sigma,\hat \sigma'} = E_{\sigma,\sigma'}. \)
    Similarly, since \( \sigma|_{E_{\hat \sigma,\hat \sigma'}} = \hat \sigma|_{E_{\hat \sigma,\hat \sigma'}}, \) 
    the graphs \( \mathcal{G}( \sigma, \sigma') \) and  \( \mathcal{G}(\hat  \sigma,\sigma') \)  are identical when restricted to the set \( E_{\hat \sigma,\hat \sigma'},\) which contains the set \( \support \hat \sigma \cap \partial \support d\hat \sigma.\) Using also Claim~\ref{sublemma: 3}, we obtain
    \begin{equation}\label{eq: new proof eq 7}
        \begin{split}
            &E_{\sigma,\sigma'} 
            = 
            \mathcal{C}_{\mathcal{G}(\sigma,\sigma')}(\support \sigma \cap \partial \support d\sigma) 
            \subseteq 
            \mathcal{C}_{\mathcal{G}(\sigma,\sigma')}\bigl(\mathcal{C}_{\mathcal{G}(\sigma,0)}(\support \hat \sigma \cap \partial \support d\hat \sigma) \bigr)
            \\&\qquad =
            \mathcal{C}_{\mathcal{G}(\sigma,\sigma')}(\support \hat \sigma \cap \partial \support d\hat \sigma) 
            \subseteq
            \mathcal{C}_{\mathcal{G}(\hat \sigma,\sigma')}(\support \hat{\sigma} \cap \partial \support d\hat{\sigma}) 
            = 
            E_{\hat \sigma,\sigma'}.
        \end{split}
    \end{equation} 
    Combining~\eqref{eq: new proof eq 3},~\eqref{eq: new proof eq 1}~and~\eqref{eq: new proof eq 7}, we obtain  \( E_{\sigma,\sigma} \subseteq E_{\hat \sigma, \sigma'} \subseteq E_{\hat \sigma,\hat \sigma'} \subseteq E_{\sigma,\sigma'}, \) and hence \( E_{\hat \sigma, \sigma'} = E_{\sigma,\sigma'}. \)
\end{proof}

We are now ready to describe the coupling between \( \mu_{N,\beta,\kappa} \) and \( \mu_{N,\infty,\kappa} \).

\begin{definition}[The coupling to the \( \mathbb{Z}_n \) model]\label{def: the coupling}
    Let \( \beta,\kappa \geq 0 \). 
    For \( \sigma \in \Sigma_{E_N} \) and \( \sigma' \in \Sigma_{E_N}^0 \), we define
    \begin{align*}
        &\mu_{N, (\beta,\kappa),(\infty,\kappa)}(\sigma, \sigma') 
        \\&\qquad\coloneqq \mu_{N, \beta,\kappa} \times \mu_{N,\infty, \kappa}\bigl(\big\{(\hat{\sigma}, \hat{\sigma}')\in \Sigma_{E_N} \times \Sigma_{E_N}^0 \colon \sigma=\hat{\sigma} \text{ and } \sigma' = \hat \sigma'|_{E_{\hat \sigma,\hat \sigma'}} + \hat \sigma|_{E_N \smallsetminus E_{\hat \sigma,\hat \sigma'}}\bigr\} \bigr).
    \end{align*}
    We let \( \mathbb{E}_{N,(\beta,\kappa),(\infty,\kappa)} \) denote the corresponding expectation. 
\end{definition}

\begin{remark}
    We mention that, by definition, if \( \hat \sigma \sim \mu_{N,\beta,\kappa} \) and \( \hat \sigma' \sim \mu_{N,\infty,\kappa} \) are independent, and we let 
    \begin{equation}\label{couplingdef}
        \begin{cases}
            \sigma  \coloneqq \hat \sigma,  \cr 
            \sigma' \coloneqq  \hat \sigma'|_{E_{\hat \sigma,\hat \sigma'}}
            +
            \hat \sigma|_{E_N \smallsetminus E_{\hat \sigma,\hat \sigma'}},
        \end{cases}
    \end{equation}
    then \( (\sigma,\sigma') \sim \mu_{N,(\beta,\kappa),(\infty,\kappa)} \). In Figure~\ref{fig: the coupling}, we illustrate the construction of the pair \( (\sigma,\sigma') \) as described above.
\end{remark}

\begin{figure}[htp]
    \centering
    \begin{subfigure}[t]{0.32\textwidth}\centering
        \includegraphics[width=\textwidth]{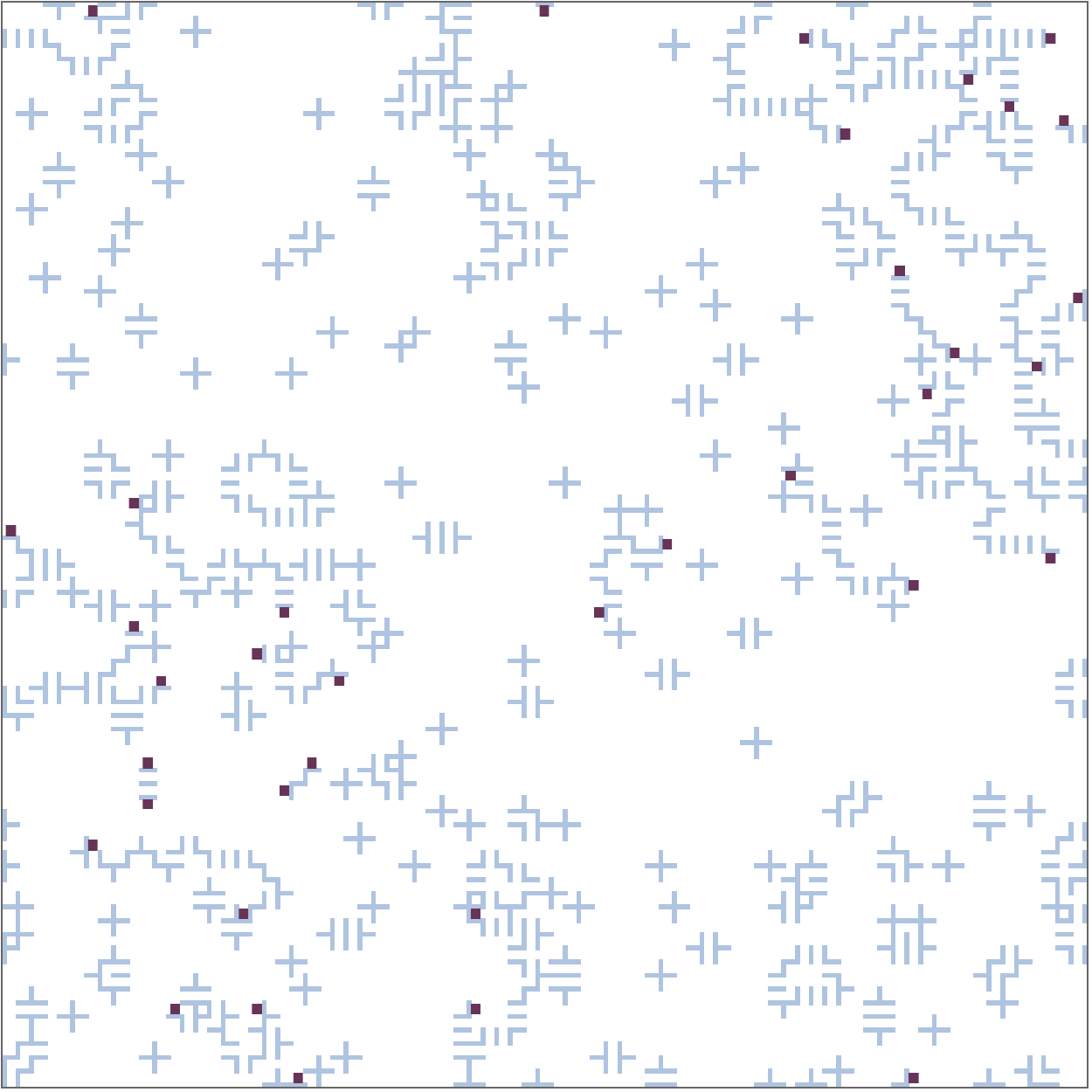}
        \caption{Blue edges correspond to the support of \( \hat \sigma \sim \mu_{N,\beta,\kappa}\), and black squares to the support of \( d\hat \sigma \).}
    \end{subfigure}
    \hfil
    \begin{subfigure}[t]{0.32\textwidth}\centering
        \includegraphics[width=\textwidth]{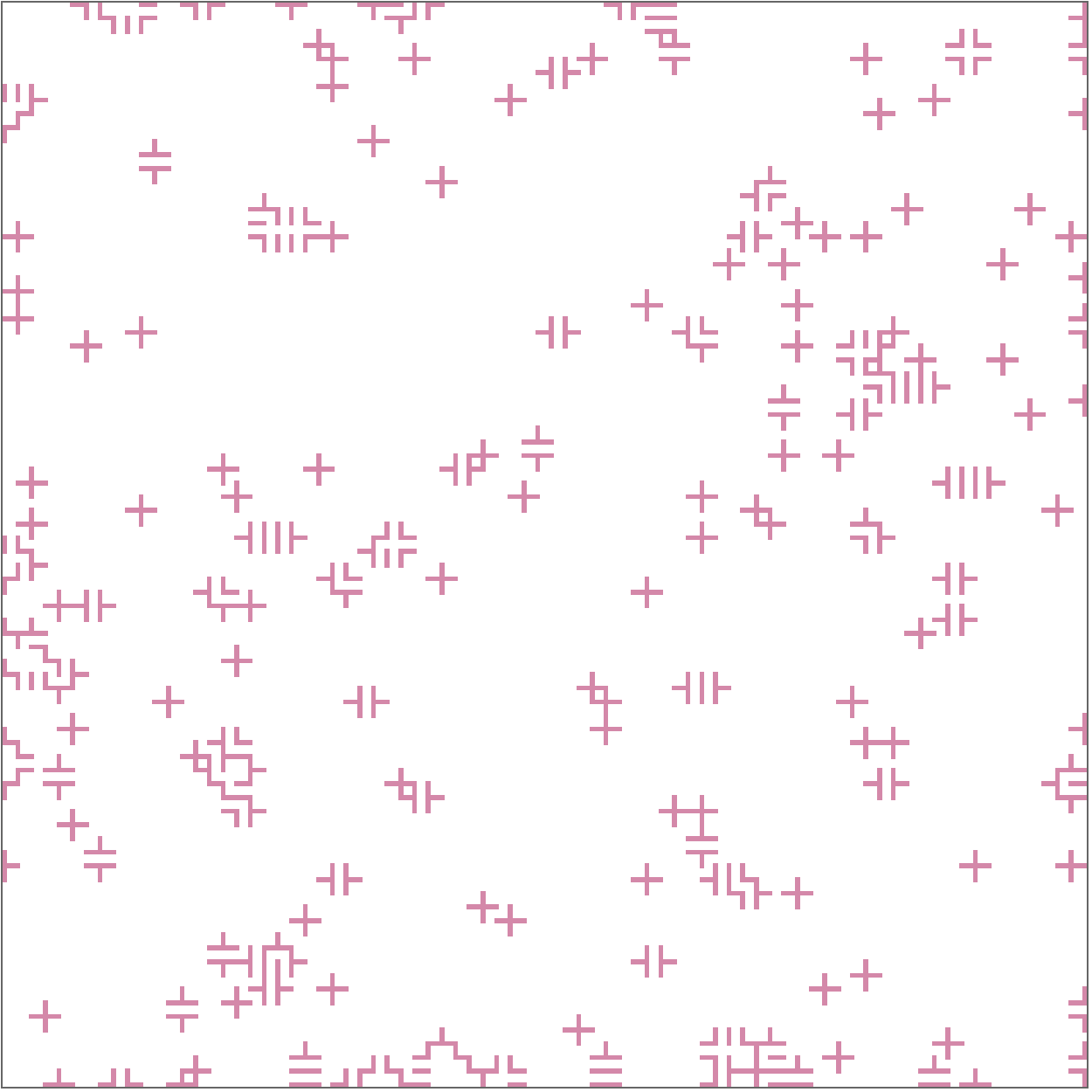}
        \caption{Red edges correspond to the support of \( \hat \sigma' \sim \mu_{N,\infty,\kappa}\). In this case we automatically have \( d\hat \sigma' = 0\).}
    \end{subfigure}
    \hfil
    \begin{subfigure}[t]{0.32\textwidth}\centering
        \includegraphics[width=\textwidth]{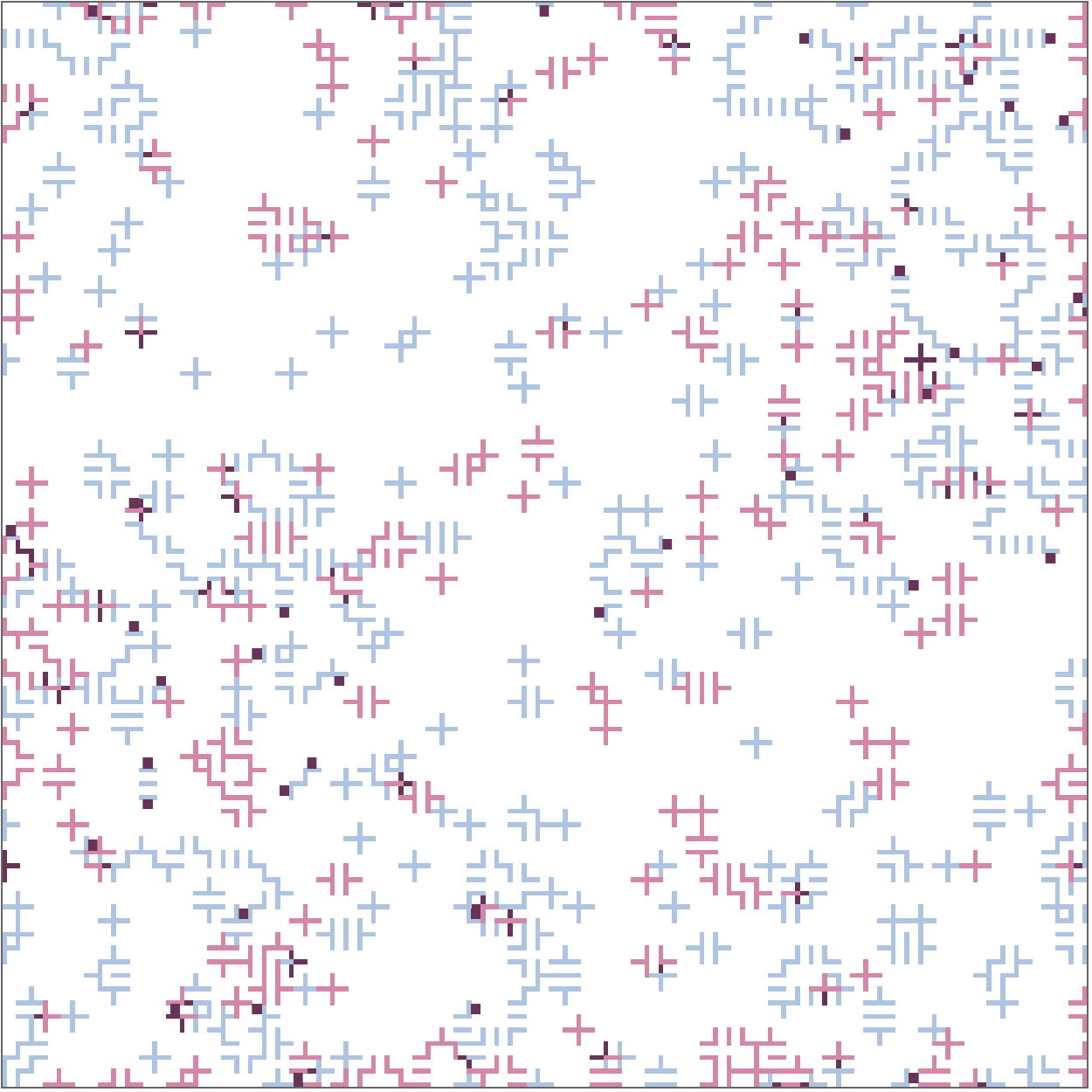}
        \caption{Blue edges correspond to the support of \( \hat \sigma \), red edges correspond to the support of \( \hat \sigma'\), and black squares to the support of \( d\hat \sigma \).}
    \end{subfigure}

    \vspace{1ex}
    \begin{subfigure}[t]{0.32\textwidth}\centering
        \includegraphics[width=\textwidth]{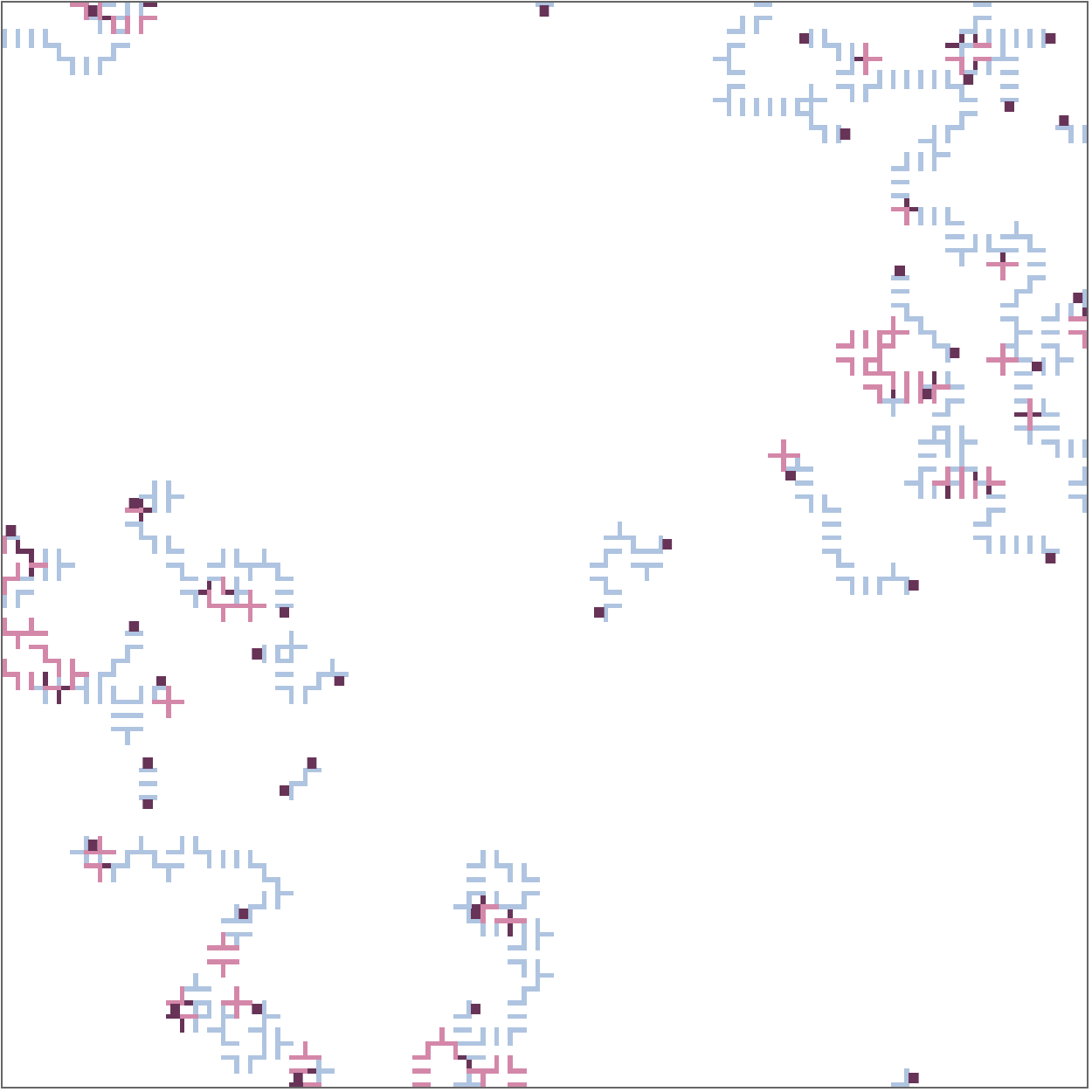}
        \caption{Blue edges correspond to the support of  \(  \hat \sigma|_{E_{\hat \sigma,\hat \sigma'}}  \),  red edges correspond to the support of \(  \hat \sigma'|_{E_{\hat \sigma,\hat \sigma'}} \), and  black squares correspond to the support of \( d\hat \sigma  \).}\label{subfig: the coupling d}
    \end{subfigure}
    \hfil
    \begin{subfigure}[t]{0.32\textwidth}\centering
        \includegraphics[width=\textwidth]{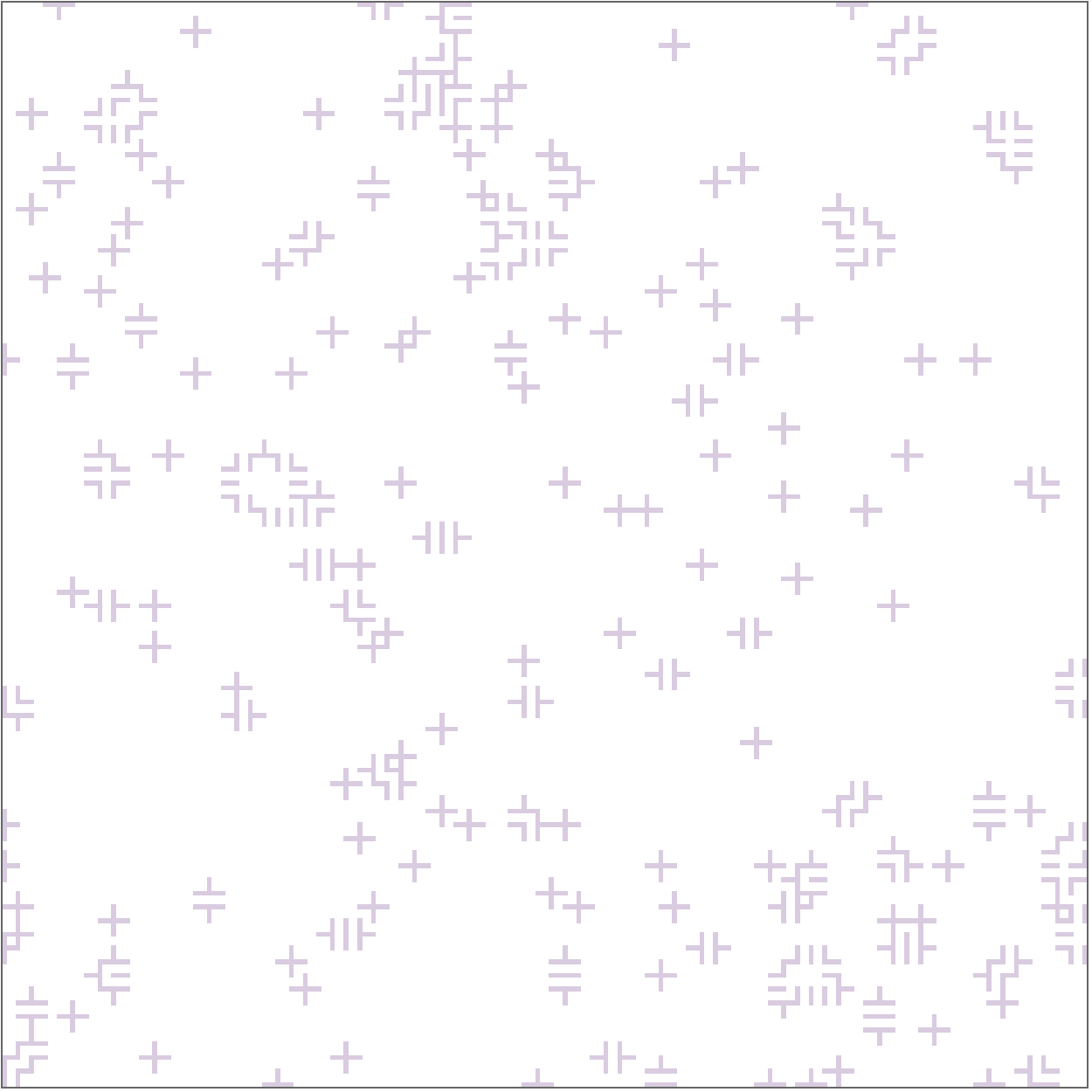}
        \caption{Purple edges correspond to the support of  \(  \hat \sigma|_{E_N\smallsetminus E_{\hat \sigma,\hat \sigma'}}  \).}\label{subfig: the coupling e}
    \end{subfigure}
    \hfil
    \begin{subfigure}[t]{0.32\textwidth}\centering
        \includegraphics[width=\textwidth]{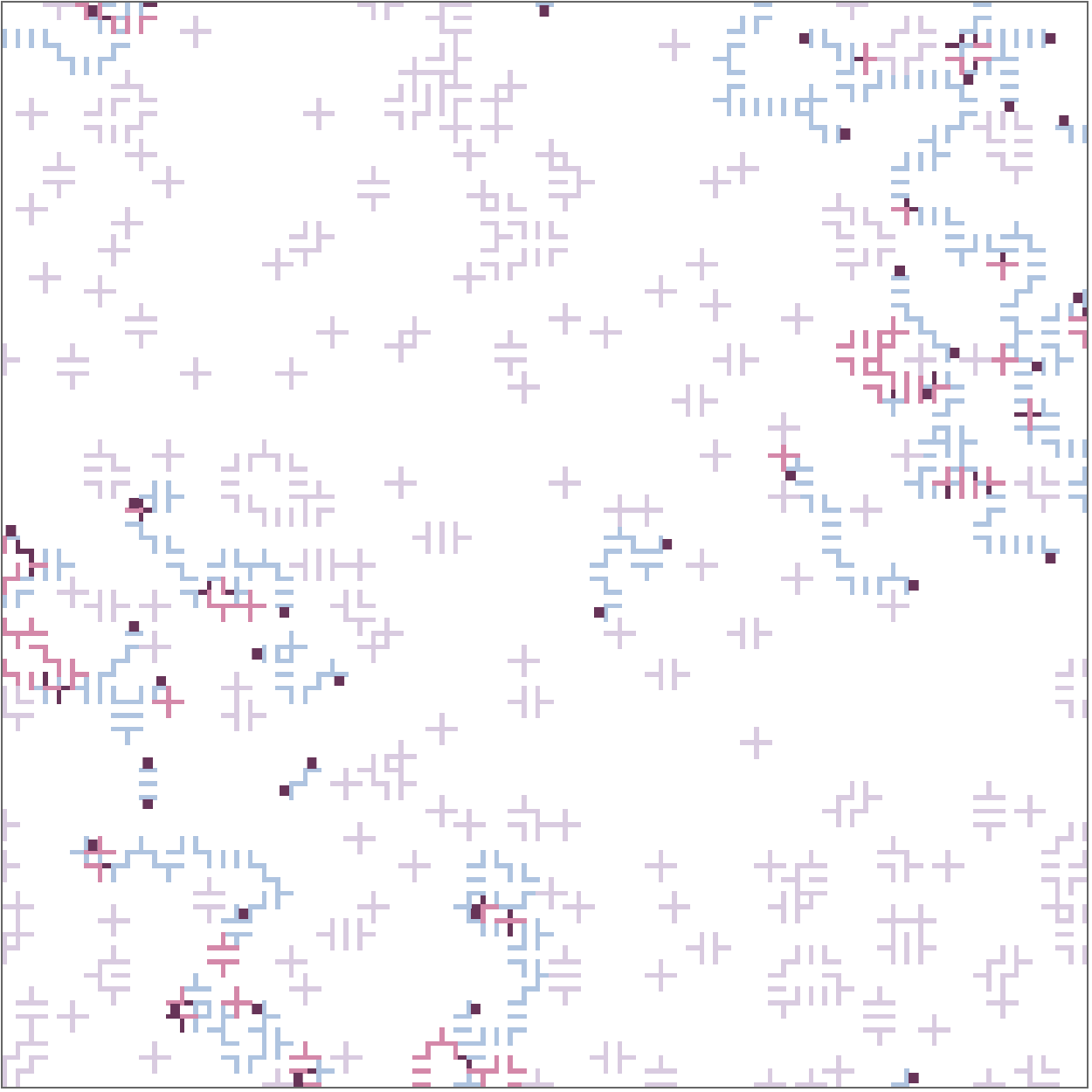}
        \caption{Blue edges correspond to the support of \( \hat \sigma|_{E_{\hat \sigma, \hat \sigma'}}\),  red edges correspond to the support of \( \hat \sigma'|_{E_{\hat \sigma, \hat \sigma'}}\),  purple edges correspond to the support of \( \hat \sigma|_{E_N \smallsetminus E_{\hat \sigma, \hat \sigma'}}\), and  black squares correspond to the support of \( d\hat \sigma \).}
    \end{subfigure}
    \caption{Illustration of the coupling \( (\sigma, \sigma') = (\hat \sigma, \hat \sigma'|_{E_{\hat \sigma, \hat \sigma'}} + \hat \sigma|_{E_N \smallsetminus E_{\hat \sigma, \hat \sigma'}}) \) defined in Definition~\ref{def: the coupling} (simulated on a 2-dimensional lattice, with \( G = \mathbb{Z}_2 \)). }
    \label{fig: the coupling}
\end{figure}

\begin{proposition}\label{proposition: coupling is a coupling}
    Let \( \beta,\kappa \geq 0 \). Then \( \mu_{N,(\beta,\kappa),{(\infty,\kappa})} \) is a coupling of \( \mu_{N,\beta,\kappa} \) and \( \mu_{N,\infty,\kappa} \). 
\end{proposition}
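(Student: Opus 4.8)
The plan is to realize the coupling as the push-forward of the independent product measure $\mu_{N,\beta,\kappa}\times\mu_{N,\infty,\kappa}$ under the map $\Psi(\hat\sigma,\hat\sigma') := \bigl(\hat\sigma,\ \hat\sigma'|_{E_{\hat\sigma,\hat\sigma'}}+\hat\sigma|_{E_N\smallsetminus E_{\hat\sigma,\hat\sigma'}}\bigr)$, and to verify the two marginals separately. The first marginal is immediate: the first coordinate of $\Psi$ is simply $\hat\sigma$, hence distributed according to $\mu_{N,\beta,\kappa}$. The entire difficulty lies in showing that the second coordinate $\sigma':=\hat\sigma'|_{E_{\hat\sigma,\hat\sigma'}}+\hat\sigma|_{E_N\smallsetminus E_{\hat\sigma,\hat\sigma'}}$ is distributed according to $\mu_{N,\infty,\kappa}$. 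Note that Lemma~\ref{lemma: closed is closed} already guarantees $\sigma'\in\Sigma_{E_N}^0$, so it at least lives on the correct space.

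The key device I would introduce is the fully symmetric \emph{swap} map
\[
T(\hat\sigma,\hat\sigma') := \bigl(\hat\sigma|_{E_{\hat\sigma,\hat\sigma'}}+\hat\sigma'|_{E_N\smallsetminus E_{\hat\sigma,\hat\sigma'}},\ \hat\sigma'|_{E_{\hat\sigma,\hat\sigma'}}+\hat\sigma|_{E_N\smallsetminus E_{\hat\sigma,\hat\sigma'}}\bigr),
\]
which exchanges the two configurations off $E_{\hat\sigma,\hat\sigma'}$ and leaves them untouched on $E_{\hat\sigma,\hat\sigma'}$. First I would record that $T$ maps $\Sigma_{E_N}\times\Sigma_{E_N}^0$ into itself: the second coordinate is closed by Lemma~\ref{lemma: closed is closed}, and the first coordinate is automatically in $\Sigma_{E_N}$. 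Next, Lemma~\ref{lemma: E in coupling}, applied to the pair $(\hat\sigma,\hat\sigma')$, yields $E_{\sigma,\sigma'}=E_{\hat\sigma,\hat\sigma'}$ for the image $(\sigma,\sigma')=T(\hat\sigma,\hat\sigma')$; from this a direct computation shows that a second application of $T$ swaps back and recovers $(\hat\sigma,\hat\sigma')$, so $T$ is an involution, in particular a bijection. Finally I would verify that $T$ preserves the product weight. Writing the (unnormalized) weight as $w(\hat\sigma,\hat\sigma')=\varphi_{\beta,\kappa}(\hat\sigma)\,\varphi_\kappa(\hat\sigma')=\varphi_\beta(d\hat\sigma)\prod_{e\in E_N}\varphi_\kappa(\hat\sigma_e)\,\varphi_\kappa(\hat\sigma'_e)$, two observations make it invariant under $T$: since $\varphi_\kappa$ factorizes over edges and for each edge $e$ the unordered pair $\{\sigma_e,\sigma'_e\}$ equals $\{\hat\sigma_e,\hat\sigma'_e\}$, the double product over edges is unchanged; and $d\sigma=d\hat\sigma$ (the identity established inside the proof of Lemma~\ref{lemma: E in coupling}, using that $\hat\sigma'|_{E_N\smallsetminus E_{\hat\sigma,\hat\sigma'}}\leq\hat\sigma'$ is closed), so the factor $\varphi_\beta(d\,\cdot)$ is unchanged; as both $\hat\sigma'$ and $\sigma'$ are closed, their $\beta$-weights are trivially equal. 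Hence $w\circ T=w$, and being a weight-preserving bijection of $\Sigma_{E_N}\times\Sigma_{E_N}^0$, the map $T$ leaves the normalized product measure $\mu_{N,\beta,\kappa}\times\mu_{N,\infty,\kappa}$ invariant.

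To conclude, I would observe that the second coordinate of $T$ coincides, as a function, with the second coordinate of $\Psi$, namely $\hat\sigma'|_{E_{\hat\sigma,\hat\sigma'}}+\hat\sigma|_{E_N\smallsetminus E_{\hat\sigma,\hat\sigma'}}$. Therefore the law of $\sigma'$ under the coupling equals the law of the second coordinate of $T(\hat\sigma,\hat\sigma')$; since $T$ preserves the product measure, this law is the second marginal of $\mu_{N,\beta,\kappa}\times\mu_{N,\infty,\kappa}$, which is exactly $\mu_{N,\infty,\kappa}$. Combined with the trivial first-marginal computation, this shows that $\mu_{N,(\beta,\kappa),(\infty,\kappa)}$ is a coupling of $\mu_{N,\beta,\kappa}$ and $\mu_{N,\infty,\kappa}$. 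The main obstacle is conceptual rather than computational: the coupling map $\Psi$ is itself asymmetric and not a bijection, so one must replace it by the symmetric involution $T$ that shares the same second coordinate. The involution property of $T$ is precisely what Lemma~\ref{lemma: E in coupling} was designed to supply, while the weight invariance reduces to the edgewise factorization of $\varphi_\kappa$ together with the identity $d\sigma=d\hat\sigma$.
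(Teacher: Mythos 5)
Your proof is correct and takes essentially the same route as the paper: the paper's argument is precisely the verification that your swap map $T$ is a weight-preserving bijection of $\Sigma_{E_N}\times\Sigma_{E_N}^0$, written out as an explicit sum identity, with Lemma~\ref{lemma: closed is closed} and Lemma~\ref{lemma: E in coupling} playing exactly the roles you assign them (the paper checks weight preservation via three applications of Lemma~\ref{lemma: factorization property}, whereas you use the edgewise pairing of the $\varphi_\kappa$-factors together with $d\sigma=d\hat\sigma$, but the content is the same). The one stylistic caveat is that $d\sigma=d\hat\sigma$ is an internal claim inside the proof of Lemma~\ref{lemma: E in coupling} rather than a stated result, so you should either cite it as such or note, as you partially do, that it follows in two lines from Lemma~\ref{lemma: cluster is subconfig} and Lemma~\ref{lemma: closed is closed}.
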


\begin{proof} 
    It is immediate from the definition that if \( (\sigma,\sigma') \sim \mu_{N,(\beta,\kappa),(\infty,\kappa)}\), then \( \sigma \sim\mu_{N,\beta,\kappa} \), and it is hence sufficient to show that \( \sigma' \sim \mu_{N,\infty,\kappa} \).
	To this end, fix some \(  \sigma' \in \Sigma_{E_N}^0 \). We need to show that
	\begin{equation*}	\mu_{N,\beta,\kappa}\times \mu_{N,\infty,\kappa} \bigl( \bigl\{ (\hat \sigma,\hat \sigma') \in \Sigma_{E_N} \times \Sigma_{E_N^0} \colon \hat \sigma|_{E_N\smallsetminus E_{\hat \sigma,\hat \sigma'}}  + \hat \sigma'|_{E_{\hat \sigma,\hat \sigma'}} =  \sigma' \bigr\} \bigr)
		=
		\mu_{N,\infty,\kappa}( \sigma'),
	\end{equation*}
	or, equivalently, using~\eqref{eq: mubetakappaphi} (see also~\eqref{eq: mubetakappaphiinfty}), that
	\begin{equation}\label{eq: coupling goal eq}
		\sum_{\hat \sigma \in \Sigma_{E_N}} \sum_{\hat \sigma' \in \Sigma_{E_N}^0}
		\varphi_{\beta,\kappa}(\hat \sigma) \varphi_{\beta,\kappa}(\hat \sigma') \cdot \mathbb{1} \bigl(  \hat \sigma|_{E_N\smallsetminus E_{\hat \sigma,\hat \sigma'}} + \hat \sigma'|_{E_{\hat \sigma,\hat \sigma'}} =  \sigma' \bigr)
		=
		\varphi_\kappa( \sigma') \sum_{ \sigma \in \Sigma_{E_N}} \varphi_{\beta,\kappa} ( \sigma),
	\end{equation}
	where we have used that \(\varphi_\kappa(\hat{\sigma}') = \varphi_{\beta, \kappa}(\hat{\sigma}')\) because \(d\hat{\sigma}' = 0\).
	We now rewrite the left-hand side of~\eqref{eq: coupling goal eq} in order to see that this equality indeed holds.
	
	Let \( \hat \sigma \in \Sigma_{E_N} \) and \( \hat \sigma' \in \Sigma_{E_N}^0 \).
	By Lemma~\ref{lemma: cluster is subconfig}, we have \( \hat \sigma |_{E_{\hat \sigma,\hat \sigma'}} \leq \hat \sigma \) and \( \hat \sigma' |_{E_{\hat \sigma,\hat \sigma'}} \leq \hat \sigma'\) 
	and hence, by Lemma~\ref{lemma: factorization property},
    \begin{equation}\label{eq: useful eq 1a}
     \varphi_{\beta,\kappa}(\hat \sigma) = \varphi_{\beta,\kappa}\bigl( \hat \sigma |_{E_{\hat \sigma,\hat \sigma'}} \bigr)\varphi_{\beta,\kappa}\bigl( \hat \sigma |_{E_N\smallsetminus E_{\hat \sigma,\hat \sigma'}} \bigr)
        \;\; \text{and} \;\;
        \varphi_{\beta,\kappa}(\hat \sigma') = \varphi_{\beta,\kappa}\bigl( \hat \sigma' |_{E_{\hat \sigma,\hat \sigma'}} \bigr)\varphi_{\beta,\kappa}\bigl( \hat \sigma' |_{E_N\smallsetminus E_{\hat \sigma,\hat \sigma'}} \bigr).
    \end{equation} 
	Since \( \hat{\sigma}' \in \Sigma_{E_N}^0 \), we have \( d \hat{\sigma}' = 0 \).  Since \( \hat{\sigma}'|_{E_{\hat{\sigma},\hat{\sigma}'}} \leq \hat{\sigma}' \), and hence \( d(\hat{\sigma}'|_{E_{\hat{\sigma},\hat{\sigma}'}} )= 0 \). As \( \hat \sigma'|_{E_{\hat{\sigma},\hat{\sigma}'}} \) and  \( \hat \sigma|_{E_N\smallsetminus E_{\hat{\sigma},\hat{\sigma}'}}\)  have disjoint supports, it follows that \( \hat \sigma'|_{E_{\hat \sigma,\hat \sigma'}} \leq \hat \sigma|_{E_N\smallsetminus E_{\hat \sigma,\hat \sigma'}} + \hat \sigma'|_{E_{\hat \sigma,\hat \sigma'}} \). Thus, by Lemma~\ref{lemma: factorization property},
    \begin{equation}\label{eq: useful eq 1b}
        \varphi_{\beta,\kappa}\bigl( \hat \sigma |_{E_N\smallsetminus E_{\hat \sigma,\hat \sigma'}} \bigr)
         \varphi_{\beta,\kappa}\bigl( \hat \sigma' |_{E_{\hat \sigma,\hat \sigma'}} \bigr)
         	=
         	\varphi_{\beta,\kappa}\bigl( \hat \sigma |_{E_N\smallsetminus E_{\hat \sigma,\hat \sigma'}} +\hat \sigma' |_{E_{\hat \sigma,\hat \sigma'}} \bigr).
    \end{equation} %
	Similarly, by Lemma~\ref{lemma: cluster is subconfig}, we have \( \hat{\sigma}'|_{E_N\smallsetminus E_{\hat{\sigma},\hat{\sigma}'}} \leq \hat{\sigma}' \), and hence \( d(\hat{\sigma}'|_{E_N\smallsetminus E_{\hat{\sigma},\hat{\sigma}'}})=0 \). Since \( \hat \sigma'|_{E_N\smallsetminus E_{\hat{\sigma},\hat{\sigma}'}} \) and \( \hat \sigma|_{ E_{\hat{\sigma},\hat{\sigma}'}}\) have disjoint supports, it follows that \( \hat \sigma'|_{E_N\smallsetminus E_{\hat \sigma,\hat \sigma'}} \leq \hat \sigma|_{ E_{\hat \sigma,\hat \sigma'}}  + \hat \sigma'|_{E_N\smallsetminus E_{\hat \sigma,\hat \sigma'}} \). Consequently, by Lemma~\ref{lemma: factorization property}, we have
    \begin{equation}\label{eq: useful eq 1c}
        \varphi_{\beta,\kappa}\bigl( \hat \sigma |_{E_{\hat \sigma,\hat \sigma'}} \bigr)
         \varphi_{\beta,\kappa}\bigl( \hat \sigma' |_{E_N\smallsetminus E_{\hat \sigma,\hat \sigma'}} \bigr)
         	=
         	\varphi_{\beta,\kappa}\bigl( \hat \sigma |_{E_{\hat \sigma,\hat \sigma'}} +\hat \sigma' |_{E_N\smallsetminus E_{\hat \sigma,\hat \sigma'}} \bigr).
    \end{equation} %
    Using~\eqref{eq: useful eq 1a},~\eqref{eq: useful eq 1b}, and~\eqref{eq: useful eq 1c}, we can rewrite the left-hand side of~\eqref{eq: coupling goal eq} as follows:
	\begin{equation}\label{eq: coupling goal eq ii} 
		\begin{split}
			&\sum_{\hat \sigma \in \Sigma_{E_N}} \sum_{\hat \sigma' \in \Sigma_{E_N}^0}
			\varphi_{\beta,\kappa}(\hat \sigma) \varphi_{\beta,\kappa}(\hat \sigma') \cdot \mathbb{1} \bigl(  \hat \sigma|_{E_N\smallsetminus E_{\hat \sigma,\hat \sigma'}} + \hat \sigma'|_{E_{\hat \sigma,\hat \sigma'}} =  \sigma' \bigr) 
			\\&\qquad =
			\sum_{\hat \sigma \in \Sigma_{E_N}} \sum_{\hat \sigma' \in \Sigma_{E_N}^0}
			\varphi_{\beta,\kappa}\bigl( \hat \sigma |_{E_N\smallsetminus E_{\hat \sigma,\hat \sigma'}} +\hat \sigma' |_{E_{\hat \sigma,\hat \sigma'}} \bigr)
			\varphi_{\beta,\kappa}\bigl( \hat \sigma |_{E_{\hat \sigma,\hat \sigma'}} +\hat \sigma' |_{E_N\smallsetminus E_{\hat \sigma,\hat \sigma'}} \bigr)
			\\&\qquad\qquad \cdot 
			\mathbb{1} \bigl(  \hat \sigma|_{E_N\smallsetminus E_{\hat \sigma,\hat \sigma'}} + \hat \sigma'|_{E_{\hat \sigma,\hat \sigma'}} =  \sigma' \bigr) 
			\\&\qquad = 
			\varphi_{\beta,\kappa}\bigl(  \sigma' \bigr)
			\sum_{\hat \sigma \in \Sigma_{E_N}} \sum_{\hat \sigma' \in \Sigma_{E_N}^0}
			\varphi_{\beta,\kappa}\bigl( \hat \sigma |_{E_{\hat \sigma,\hat \sigma'}} +\hat \sigma' |_{E_N\smallsetminus E_{\hat \sigma,\hat \sigma'}} \bigr)
			\cdot 
			\mathbb{1} \bigl(  \hat \sigma|_{E_N\smallsetminus E_{\hat \sigma,\hat \sigma'}} + \hat \sigma'|_{E_{\hat \sigma,\hat \sigma'}} =  \sigma' \bigr) 
			\\&\qquad =
  			\varphi_{\beta,\kappa}\bigl(  \sigma' \bigr)
			\sum_{ \sigma \in \Sigma_{E_N}} \varphi_{\beta,\kappa}( \sigma) \sum_{\hat \sigma \in \Sigma_{E_N}} \sum_{\hat \sigma' \in \Sigma_{E_N}^0}
			\mathbb{1}\bigl( \hat \sigma |_{E_{\hat \sigma,\hat \sigma'}} +\hat \sigma' |_{E_N\smallsetminus E_{\hat \sigma,\hat \sigma'}} =  \sigma \bigr)
			\\&\qquad\qquad \cdot 
			\mathbb{1} \bigl(  \hat \sigma|_{E_N\smallsetminus E_{\hat \sigma,\hat \sigma'}} + \hat \sigma'|_{E_{\hat \sigma,\hat \sigma'}} =  \sigma' \bigr).  
			\end{split}	
	\end{equation}  
	Now note that, by Lemma~\ref{lemma: E in coupling}, we have \( E_{\hat \sigma,\hat \sigma'} = E_{\sigma,\sigma'} \).
	Consequently, for any \(  \sigma \in \Sigma_{E_N} \), we have
	\begin{equation}\label{eq: doublesum11}
		\begin{split}
			&\sum_{\hat \sigma \in \Sigma_{E_N}} \sum_{\hat \sigma' \in \Sigma_{E_N}^0}
			\mathbb{1}\bigl( \hat \sigma |_{E_{\hat \sigma,\hat \sigma'}} +\hat \sigma' |_{E_N\smallsetminus E_{\hat \sigma,\hat \sigma'}} =  \sigma \bigr)
			\cdot 
			\mathbb{1} \bigl(  \hat \sigma|_{E_N\smallsetminus E_{\hat \sigma,\hat \sigma'}} + \hat \sigma'|_{E_{\hat \sigma,\hat \sigma'}} = \sigma' \bigr)
			\\&\qquad=
			\sum_{\hat \sigma \in \Sigma_{E_N}} \sum_{\hat \sigma' \in \Sigma_{E_N}^0}
			\mathbb{1}\bigl( \hat \sigma |_{E_{ \sigma, \sigma'}} +\hat \sigma' |_{E_N\smallsetminus E_{ \sigma, \sigma'}} =  \sigma \bigr)
			\cdot 
			\mathbb{1} \bigl(  \hat \sigma|_{E_N\smallsetminus E_{ \sigma, \sigma'}} + \hat \sigma'|_{E_{ \sigma, \sigma'}} =  \sigma' \bigr)
			\\&\qquad=
			\sum_{\hat \sigma \in \Sigma_{E_N}} \sum_{\hat \sigma' \in \Sigma_{E_N}^0}
			\mathbb{1}\bigl( \hat \sigma    =  \sigma|_{E_{ \sigma, \sigma'}}+ \sigma'|_{E_N\smallsetminus E_{ \sigma, \sigma'}} \bigr)
			\cdot 
			\mathbb{1}\bigl(  \hat \sigma' =  \sigma |_{E_N\smallsetminus E_{ \sigma, \sigma'}} +   \sigma' |_{E_{ \sigma, \sigma'}} \bigr).
		\end{split}
	\end{equation}
	Since \( \sigma \in \Sigma_{E_N} \) and \( \sigma' \in \Sigma_{E_N}^0 \), we can apply Lemma~\ref{lemma: closed is closed} to obtain \( \sigma'|_{E_{\sigma,\sigma'}} + \sigma_{E_N\smallsetminus E_{\sigma,\sigma'}} \in \Sigma_{E_N}^0 \).
	From this it follows that the double sum on the right-hand side of~\eqref{eq: doublesum11} is equal to $1$. Inserting this into the last line of~\eqref{eq: coupling goal eq ii} and using that $\varphi_{\beta,\kappa}(  \sigma') = \varphi_{\kappa}(  \sigma')$, we obtain~\eqref{eq: coupling goal eq} as desired. This concludes the proof.
\end{proof}

In what follows, we collect a few useful properties of the coupling introduced in Definition~\ref{def: the coupling} for easy reference in later sections.
 
\begin{lemma}\label{lemma: properties of coupling measure} 
    Let \( \beta,\kappa \geq 0 \) and assume that \( \mu_{N,(\beta,\kappa),(\infty,\kappa)}(\sigma , \sigma') \neq 0\). Then \( \sigma_e = \sigma'_e \) for all \( e \in E_N \smallsetminus E_{\sigma,\sigma'} \).  
\end{lemma}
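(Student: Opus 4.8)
The plan is to unwind Definition~\ref{def: the coupling} and then reduce the claim to Lemma~\ref{lemma: E in coupling}. Since $\mu_{N,(\beta,\kappa),(\infty,\kappa)}(\sigma, \sigma') \neq 0$, the definition of the coupling guarantees the existence of $\hat\sigma \in \Sigma_{E_N}$ and $\hat\sigma' \in \Sigma_{E_N}^0$ with $\sigma = \hat\sigma$ and $\sigma' = \hat\sigma'|_{E_{\hat\sigma,\hat\sigma'}} + \hat\sigma|_{E_N \smallsetminus E_{\hat\sigma,\hat\sigma'}}$. First I would record these two identities, since everything else is a direct computation from them.

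The one genuinely delicate point is a notational clash: in Lemma~\ref{lemma: E in coupling} the symbol $\sigma$ denotes the mixed configuration $\hat\sigma|_{E_{\hat\sigma,\hat\sigma'}} + \hat\sigma'|_{E_N \smallsetminus E_{\hat\sigma,\hat\sigma'}}$, whereas in the present statement $\sigma$ equals $\hat\sigma$. However, the configuration called $\sigma'$ in Lemma~\ref{lemma: E in coupling} is exactly our $\sigma'$. So I would apply Lemma~\ref{lemma: E in coupling} to $\hat\sigma, \hat\sigma'$ and extract only the set-equality $E_{\hat\sigma,\hat\sigma'} = E_{\hat\sigma,\sigma'}$; combined with $\sigma = \hat\sigma$, this gives $E_{\sigma,\sigma'} = E_{\hat\sigma,\sigma'} = E_{\hat\sigma,\hat\sigma'}$, so the edge set appearing in the claim is precisely the cluster $E_{\hat\sigma,\hat\sigma'}$ on which the coupling mixes.

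With that identification, the conclusion is a one-line evaluation of restrictions: for any $e \in E_N \smallsetminus E_{\sigma,\sigma'} = E_N \smallsetminus E_{\hat\sigma,\hat\sigma'}$ we have $\sigma_e = \hat\sigma_e$, and since $\hat\sigma'|_{E_{\hat\sigma,\hat\sigma'}}$ vanishes off $E_{\hat\sigma,\hat\sigma'}$ while $\hat\sigma|_{E_N \smallsetminus E_{\hat\sigma,\hat\sigma'}}$ agrees with $\hat\sigma$ there, we get $\sigma'_e = 0 + \hat\sigma_e = \hat\sigma_e = \sigma_e$. (Equivalently, one may invoke the ``in particular'' clause $\hat\sigma_e = \sigma'_e$ of Lemma~\ref{lemma: E in coupling} directly, reading off $\sigma = \hat\sigma$.)

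The main, and essentially only, obstacle is keeping the two overlapping uses of the letters $\sigma, \sigma'$ straight, so I would be explicit that I am using the cluster $E_{\hat\sigma,\hat\sigma'}$ and its invariance under replacing $\hat\sigma'$ by $\sigma'$. No estimates, combinatorics, or probabilistic input are needed beyond this: the entire content is bookkeeping of restrictions of forms on top of the already-established equality of the four edge sets in Lemma~\ref{lemma: E in coupling}.
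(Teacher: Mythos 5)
Your proposal is correct and follows the same route as the paper: unwind Definition~\ref{def: the coupling} to get $\sigma = \hat\sigma$ and $\sigma' = \hat\sigma'|_{E_{\hat\sigma,\hat\sigma'}} + \hat\sigma|_{E_N \smallsetminus E_{\hat\sigma,\hat\sigma'}}$, then invoke Lemma~\ref{lemma: E in coupling}. The paper's proof is just these two steps stated tersely; your write-up merely makes explicit the identification $E_{\sigma,\sigma'} = E_{\hat\sigma,\hat\sigma'}$ and the resulting evaluation of restrictions, which is exactly what "immediately obtain" hides.
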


\begin{proof}
    Since \(\mu_{N,(\beta,\kappa),(\infty,\kappa)}(\sigma , \sigma') \neq 0\), by definition, there is \( (\hat \sigma, \hat \sigma') \in E_N \times E_N^0 \) such that \( \sigma = \hat \sigma \) and \( \sigma' = \hat \sigma'|_{E_{\hat \sigma,\hat \sigma'}} + \hat{\sigma}|_{E_N \smallsetminus E_{\hat{\sigma}, \hat{\sigma}'}}\). Using Lemma~\ref{lemma: E in coupling}, we immediately obtain the desired conclusion.
\end{proof}

\begin{lemma}\label{lemma: Edsigmalemma}
    Let \((\sigma, \sigma') \in \Sigma_{E_N} \times \Sigma_{E_N}^0\). Then \( e\in E_{\sigma, \sigma'}\) if and only if  \(d(\sigma |_{\mathcal{C}_{\mathcal{G}(\sigma, \sigma')}(e)}) \neq 0\).
\end{lemma}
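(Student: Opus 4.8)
The plan is to prove both implications directly, with the entire argument resting on a single observation about the graph $\mathcal{G} \coloneqq \mathcal{G}(\sigma,\sigma')$: \emph{whenever a plaquette $p$ has two boundary edges $e_1,e_2 \in \partial p$ with $\sigma_{e_1}\neq 0 \neq \sigma_{e_2}$, then $e_1$ and $e_2$ lie in the same connected component of $\mathcal{G}$.} Indeed, $e_1,e_2 \in \support\sigma \subseteq \support\sigma\cup\support\sigma'$ and $p \in \hat\partial e_1 \cap \hat\partial e_2$, so clause (ii) in the definition of $\mathcal{G}$ makes them adjacent (the cases $e_1=e_2$ and $e_1=-e_2$ being trivial, the latter via clause (i)). Consequently all edges in $\partial p \cap \support\sigma$ lie in one component. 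First I would record the corollary that, since each component of $\mathcal{G}$ is symmetric (opposite edges are always adjacent by (i)), the restriction $\sigma|_{\mathcal{C}_{\mathcal{G}}(e)}$ is well-defined, and that for every plaquette $p$ one has $\bigl(d(\sigma|_{\mathcal{C}_{\mathcal{G}}(e)})\bigr)_p = (d\sigma)_p$ whenever $\mathcal{C}_{\mathcal{G}}(e)$ contains some edge of $\partial p \cap \support\sigma$, and $\bigl(d(\sigma|_{\mathcal{C}_{\mathcal{G}}(e)})\bigr)_p = 0$ otherwise. This is because $\mathcal{C}_{\mathcal{G}}(e)$, being a full component, then contains \emph{all} of $\partial p \cap \support\sigma$, so the restricted sum over $\partial p$ equals the full sum $(d\sigma)_p$.

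For the forward direction, assume $e \in E_{\sigma,\sigma'} = \mathcal{C}_{\mathcal{G}}(\support\sigma \cap \partial\support d\sigma)$. Then $\mathcal{C}_{\mathcal{G}}(e) = \mathcal{C}_{\mathcal{G}}(f)$ for some $f \in \support\sigma \cap \partial\support d\sigma$. By definition of $\partial\support d\sigma$ there is $p \in \hat\partial f$ with $(d\sigma)_p \neq 0$, and $f \in \partial p \cap \support\sigma \cap \mathcal{C}_{\mathcal{G}}(f)$; the corollary then gives $\bigl(d(\sigma|_{\mathcal{C}_{\mathcal{G}}(e)})\bigr)_p = (d\sigma)_p \neq 0$, so $d(\sigma|_{\mathcal{C}_{\mathcal{G}}(e)}) \neq 0$. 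For the reverse direction, suppose $d(\sigma|_{\mathcal{C}_{\mathcal{G}}(e)}) \neq 0$ and pick $p$ with $\bigl(d(\sigma|_{\mathcal{C}_{\mathcal{G}}(e)})\bigr)_p \neq 0$. This forces some $e'' \in \partial p \cap \mathcal{C}_{\mathcal{G}}(e)$ with $\sigma_{e''} \neq 0$, so $\mathcal{C}_{\mathcal{G}}(e)$ meets $\partial p \cap \support\sigma$ and the corollary yields $(d\sigma)_p = \bigl(d(\sigma|_{\mathcal{C}_{\mathcal{G}}(e)})\bigr)_p \neq 0$. Hence $p \in \support d\sigma$ and $e'' \in \partial p$, giving $e'' \in \support\sigma \cap \partial\support d\sigma$. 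Since $e'' \in \mathcal{C}_{\mathcal{G}}(e)$ we have $\mathcal{C}_{\mathcal{G}}(e'') = \mathcal{C}_{\mathcal{G}}(e) \ni e$, and $\mathcal{C}_{\mathcal{G}}(e'') \subseteq E_{\sigma,\sigma'}$ by the definition of $E_{\sigma,\sigma'}$; therefore $e \in E_{\sigma,\sigma'}$.

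The only real content, and hence the step I expect to be the crux, is the component-sharing observation together with its corollary equating $\bigl(d(\sigma|_{\mathcal{C}_{\mathcal{G}}(e)})\bigr)_p$ with $(d\sigma)_p$ on plaquettes met by $\mathcal{C}_{\mathcal{G}}(e)$; once it is in place, both implications are a few lines. The remaining care points are routine but worth stating explicitly: that $\mathcal{C}_{\mathcal{G}}(e)$ is symmetric so $\sigma|_{\mathcal{C}_{\mathcal{G}}(e)}$ makes sense, and that $E_{\sigma,\sigma'}$ is precisely the union of $\mathcal{G}$-components of the seed set $\support\sigma \cap \partial\support d\sigma$, so that membership of $e''$ in that seed set propagates to its whole component.
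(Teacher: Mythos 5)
Your proof is correct and follows essentially the same route as the paper's own argument: both hinge on the observation that all edges of $\partial p \cap \support\sigma$ lie in a single $\mathcal{G}$-component (being pairwise adjacent via clause (ii) of the definition of $\mathcal{G}$), so that $\bigl(d(\sigma|_{\mathcal{C}_{\mathcal{G}}(e)})\bigr)_p = (d\sigma)_p$ on any plaquette whose support edges meet $\mathcal{C}_{\mathcal{G}}(e)$, and then run the two implications exactly as the paper does. Your version merely isolates this observation and its corollary as an explicit preliminary step, which is a presentational difference, not a mathematical one.
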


\begin{proof}
    Let  \( \mathcal{G} \coloneqq \mathcal{G}(\sigma, \sigma')\).
    Suppose first that \( e\in E_{\sigma, \sigma'}\). By the definition of \(\mathcal{C}_{\mathcal{G}}(e)\), there exists an edge \( e_1 \in \mathcal{C}_{\mathcal{G}}(e)\) such that \(e_1 \in \support \sigma \cap \partial \support d\sigma\). We infer that there exists a plaquette \(p_1 \in \hat{\partial} e_1\) such that \((d\sigma)_{p_1} \neq 0\). Since \( e_1\in \mathcal{C}_{\mathcal{G}}(e)\), all edges in \(\support \sigma \cap \partial p_1\) are contained in \(\mathcal{C}_{\mathcal{G}}(e)\), and so \((d(\sigma |_{\mathcal{C}_{\mathcal{G}}(e)}))_{p_1} = (d\sigma)_{p_1} \neq 0\).

    Conversely, suppose that there is a plaquette \(p_1\in P_N \) such that \((d(\sigma |_{\mathcal{C}_{\mathcal{G}}(e)}))_{p_1} \neq 0\). Then there exists an edge \(e_1 \in \partial p_1\) with $\sigma_{e_1} \neq 0$ and $e_1 \in \mathcal{C}_{\mathcal{G}}(e)$. It follows that $\support \sigma \cap \partial p_1 \subseteq \mathcal{C}_{\mathcal{G}}(e)$, and so $(d\sigma)_{p_1} = (d(\sigma |_{\mathcal{C}_{\mathcal{G}}(e)}))_{p_1} \neq 0$; in particular $e_1 \in \partial p_1 \subseteq \partial \support d\sigma$.
    Consequently,  \(e_1\) must belong to the set \(\support \sigma \cap \partial \support d\sigma\). In particular, \(e_1 \in E_{\sigma, \sigma'}\) and thus \(e \in \mathcal{C}_{\mathcal{G}}(e_1) \subseteq E_{\sigma, \sigma'}\).
\end{proof}

For \( e \in E_N \), we let \( \dist(e,B_N^c) \) denote the smallest Euclidean distance between the midpoint of \( e \) and the midpoints of the edges on the boundary of \( B_N \). For \( E \subseteq E_N \), we let \( \dist(E,B_N^c ) \coloneqq \min_{e \in E} \dist(e, B_N^c)\).

In the remainder of this section, we provide proofs of the following three propositions, which will all be used later in the proofs of our main results.
Heuristically, these propositions all give upper bounds for local events related to the coupling. For all of these local events, the main proof idea is to combine Proposition~\ref{proposition: edgecluster flipping ii} with an enumeration of all spin configurations which belong to the given local event.

\begin{proposition}\label{proposition: coupling and conditions 2}
    Let \( \beta,\kappa \geq 0 \) be such that~\ref{assumption: 3} holds, and let \( e \in E_N \) be such that \( \partial \hat \partial \partial \hat \partial e \subseteq E_N \). Then
    \begin{equation}\label{eq: coupling and conditions 2}
        \begin{split}  &\mu_{N,(\beta,\kappa),(\infty,\kappa)}\bigl( \bigl\{ (\sigma,\sigma') \in \Sigma_{E_N} \times \Sigma_{E_N}^0 \colon e \in E_{\sigma,\sigma'}  \text{ and } \sigma_e' \neq 0 \bigr\}\bigr) 
            \\&\qquad \leq
            C_{c,1}   \alpha_0(\kappa)^9\alpha_1(\beta)^6
            +
           C_{c,1}'  \Bigl( 18^{2} \bigl(2+ \alpha_0(\kappa)  \bigr) \alpha_0(\kappa)  \Bigr)^{\dist(e,B_N^c)} , 
        \end{split}
    \end{equation} 
    where
    \begin{equation}\label{eq: Cc1}
        C_{c,1} \coloneqq 18^{13} \cdot \frac{ \sum_{j=1}^8 \binom{8}{j} 2^{8-j}\alpha_0(\kappa)^{j-1}  + 2^8
        18^{2} \bigl(2  + \alpha_0(\kappa) \bigr) }{1-
        18^{2} \bigl(2\alpha_0(\kappa) + \alpha_0(\kappa)^2 \bigr) }
    \end{equation}
    and
    \begin{equation}\label{eq: Cc1'}
        C_{c,1}' \coloneqq \frac{1}{18^3\big(1-18^{2} (2 + \alpha_0(\kappa) ) \alpha_0(\kappa)\big)}.
    \end{equation}
\end{proposition}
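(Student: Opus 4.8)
The plan is to unfold the coupling back to the independent product measure and then control the resulting connectivity event by an exploration (polymer-type) argument, combining Proposition~\ref{proposition: edgecluster flipping ii} with the minimal-configuration lemmas of Section~\ref{sec: minimal configurations}. First I would use the definition of $\mu_{N,(\beta,\kappa),(\infty,\kappa)}$ together with Lemma~\ref{lemma: E in coupling} (which gives $E_{\hat\sigma,\hat\sigma'} = E_{\sigma,\sigma'}$, so that $\sigma'_e = \hat\sigma'_e$ whenever $e \in E_{\hat\sigma,\hat\sigma'}$) to rewrite the left-hand side of~\eqref{eq: coupling and conditions 2} as
\[
\mu_{N,\beta,\kappa}\times\mu_{N,\infty,\kappa}\bigl(\bigl\{(\hat\sigma,\hat\sigma') \colon e\in E_{\hat\sigma,\hat\sigma'}\text{ and }\hat\sigma'_e\neq 0\bigr\}\bigr).
\]
By Lemma~\ref{lemma: Edsigmalemma}, the condition $e\in E_{\hat\sigma,\hat\sigma'}$ is equivalent to $d(\hat\sigma|_{\mathcal{C}_{\mathcal{G}(\hat\sigma,\hat\sigma')}(e)})\neq 0$; that is, the $\mathcal{G}(\hat\sigma,\hat\sigma')$-component of $e$ must carry a frustrated plaquette of $\hat\sigma$. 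Thus the event asserts the existence of a connected chain of edges (in the plaquette-adjacency defining $\mathcal{G}$), all lying in $\support\hat\sigma\cup\support\hat\sigma'$, that starts at $e$ (with $\hat\sigma'_e\neq0$) and reaches some $e_1\in\support\hat\sigma\cap\partial\support d\hat\sigma$.

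Next I would enumerate such witnessing chains by a step-by-step exploration and bound each contribution as a product of an $\hat\sigma'$-part and an $\hat\sigma$-part, using the independence under the product measure. For the $\hat\sigma'$-part I use that $\hat\sigma'\sim\mu_{N,\infty,\kappa}$ is closed, so $\support\hat\sigma'$ decomposes into irreducible closed loops; by Proposition~\ref{proposition: edgecluster flipping ii} each edge forced into $\support\hat\sigma'$ costs a factor $\alpha_0(\kappa)$, and by Lemma~\ref{lemma: small 1forms} a bulk loop through $e$ contains at least $8$ positive edges. For the $\hat\sigma$-part I again invoke Proposition~\ref{proposition: edgecluster flipping ii}: each edge in $\support\hat\sigma$ costs $\alpha_0(\kappa)$ (from the $\varphi_\kappa$ factor) while each positively oriented frustrated plaquette costs $\alpha_1(\beta)$ (from $\varphi_\beta$). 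The branching of the exploration is controlled by the constant $18 = 6\cdot 3$ (at most $6$ plaquettes per edge, each contributing $3$ further edges), which produces the powers of $18$ appearing in $C_{c,1}$ and $C_{c,1}'$.

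I would then split into two cases. If the witnessing vortex of $\hat\sigma$ lies in the interior, so that the hypothesis $\hat\partial\partial\support\nu\subseteq P_N$ holds, then Lemma~\ref{lemma: minimal vortex II} together with Lemma~\ref{lemma: other minimal configuration} and Lemma~\ref{lemma: 6 plaquettes per edge} forces at least $6$ positive frustrated plaquettes; combining these with the $8$-edge loop of $\hat\sigma'$ through $e$ and the generating edge of the minimal vortex yields the leading factor $\alpha_0(\kappa)^9\alpha_1(\beta)^6$, and summing the geometric series over the cluster size produces the first term with constant $C_{c,1}$. If instead the interior hypothesis fails, the minimal-vortex lower bounds are unavailable, but in that regime the exploration must trace a chain of edges of $\support\hat\sigma\cup\support\hat\sigma'$ from $e$ all the way to the boundary of $B_N$; such a chain has length at least $\dist(e,B_N^c)$, and charging each step the cost $18^2(2+\alpha_0(\kappa))\alpha_0(\kappa)$ gives the geometric-decay second term with constant $C_{c,1}'$.

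The main obstacle will be organizing the exploration so that the product measure factorizes cleanly into the independent $\hat\sigma$- and $\hat\sigma'$-contributions while every configuration in the event is charged to exactly one witness without double counting, and in particular correctly separating the interior case (where the sharp minimal counts of $9$ edges and $6$ plaquettes must be extracted) from the boundary case (where one trades the vortex count for a path-to-boundary estimate). Checking that these two regimes genuinely exhaust the event, and that the combinatorial branching is dominated by the stated powers of $18$ so that the geometric series converge under Assumption~\ref{assumption: 3}, is where the bulk of the careful bookkeeping lies.
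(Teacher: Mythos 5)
Your proposal follows essentially the same route as the paper's proof: unfolding the coupling via Lemma~\ref{lemma: E in coupling} and Lemma~\ref{lemma: Edsigmalemma}, enumerating the cluster of $e$ by walks with branching constant $18$ (Lemmas~\ref{lemma: spanning paths} and~\ref{lemma: number of paths}), factorizing under the product measure and applying Proposition~\ref{proposition: edgecluster flipping ii} twice, extracting the $8+1$ edge factors (Lemma~\ref{lemma: small 1forms} plus $\sigma\neq 0$) and the six frustrated-plaquette factors, and splitting into the interior case versus the path-to-boundary case, exactly as the paper does. The only minor slip is the citation for the six-plaquette lower bound, which in the paper is Lemma~\ref{lemma: minimal vortex I}\ref{item: minimal vortex I i} rather than Lemmas~\ref{lemma: minimal vortex II} and~\ref{lemma: other minimal configuration}, but this does not change the argument.
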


\begin{remark}
    With very small modifications to the proof of Proposition~\ref{proposition: coupling and conditions 2}, one can prove a result which considers the same event as in~\eqref{eq: coupling and conditions 2} but without the condition \( \sigma_e' \neq 0. \) This shows that far from the boundary of \( B_N \), the probability that the coupled configurations \( \sigma \) and \( \sigma' \) disagree on a given edge \( e \in E_N \) is of order \( \alpha_2(\beta,\kappa)^6. \)
\end{remark}

\begin{proposition}\label{proposition: coupling and conditions 1}
    Let \( \beta,\kappa \geq 0 \) be such that~\ref{assumption: 3} holds. If \( e \in E_N \) satisfies \( \partial \hat \partial \partial \hat \partial e \subseteq E_N \), then
    \begin{equation}\label{eq: coupling and conditions 1}
        \begin{split}          &\mu_{N,(\beta,\kappa),(\infty,\kappa)}\bigl( \bigl\{ (\sigma,\sigma') \in \Sigma_{E_N} \times \Sigma_{E_N}^0 \colon 
            e \in \partial \hat \partial E_{\sigma,\sigma'} \text{ and } \sigma \in \mathcal{E}\}\bigr) 
            \\&\qquad \leq
            C_{c,2}  \alpha_1(\beta)^6 \alpha_0(\kappa)^6 \max\bigl(   \alpha_0(\kappa),  \alpha_1(\beta)^6\bigr)
            +
           C_{c,2}'  \Bigl( 18^{2} \bigl(2+ \alpha_0(\kappa)  \bigr) \alpha_0(\kappa)  \Bigr)^{\dist(e,B_N^c)} ,  
        \end{split}
    \end{equation}
    where the event $\mathcal{E}$ is defined by
    \begin{equation*}
        \mathcal{E} \coloneqq \bigl\{ \sigma \in \Sigma_{E_N} \colon \exists g \in G\smallsetminus \{ 0 \} \text{ such that }\sigma_e - (d\sigma)_p = g \text{ for all } p \in \hat \partial e \bigr\},
    \end{equation*}  
    \begin{equation}\label{eq: Cc2}
        C_{c,2} \coloneqq   
        \frac{2\cdot 18^{11} (18^2+1) \bigl( 1 + 2\alpha_0(\kappa) + \alpha_0(\kappa)^2 \bigr) \bigl(2 + \alpha_0(\kappa) \bigr)^{7}}{1-18^2(2\alpha_0(\kappa) + \alpha_0(\kappa)^2)},
    \end{equation}
    and
    \begin{equation}\label{eq: Cc2'}
        C_{c,2}' \coloneqq \frac{ 1 + 2\alpha_0(\kappa) + \alpha_0(\kappa)^2 }{18^3(1-18^{2} \bigl(2 + \alpha_0(\kappa) \bigr) \alpha_0(\kappa))} \cdot \biggl( 1 + \frac{1}{ \bigl(2 + \alpha_0(\kappa) \bigr) \alpha_0(\kappa)}\biggr).
    \end{equation}
\end{proposition}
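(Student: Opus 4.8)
The plan is to transfer the event to the independent pair from which the coupling is built and then run a cluster enumeration controlled by Proposition~\ref{proposition: edgecluster flipping ii}. By the remark following Definition~\ref{def: the coupling}, if $\hat\sigma\sim\mu_{N,\beta,\kappa}$ and $\hat\sigma'\sim\mu_{N,\infty,\kappa}$ are independent and $(\sigma,\sigma')$ is defined by~\eqref{couplingdef}, then $(\sigma,\sigma')\sim\mu_{N,(\beta,\kappa),(\infty,\kappa)}$; moreover $\sigma=\hat\sigma$ and, by Lemma~\ref{lemma: E in coupling}, $E_{\sigma,\sigma'}=E_{\hat\sigma,\hat\sigma'}$. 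Hence the probability to be bounded equals $\mu_{N,\beta,\kappa}\times\mu_{N,\infty,\kappa}(\hat\sigma\in\mathcal E\text{ and }e\in\partial\hat\partial E_{\hat\sigma,\hat\sigma'})$. First I would unwind the geometric condition: $e\in\partial\hat\partial E_{\hat\sigma,\hat\sigma'}$ means there is an edge $e_1$ sharing a plaquette with $e$ and lying in $E_{\hat\sigma,\hat\sigma'}$, and by Lemma~\ref{lemma: Edsigmalemma} together with the definition~\eqref{eq: Esigmasigmadef}, the cluster $\mathcal C_{\mathcal G(\hat\sigma,\hat\sigma')}(e_1)$ must reach an edge of $\support\hat\sigma\cap\partial\support d\hat\sigma$, i.e.\ a nonzero edge of $\hat\sigma$ bordering a frustrated plaquette of $\hat\sigma$. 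Thus the event forces a connected chain of edges in $\support\hat\sigma\cup\support\hat\sigma'$ running from a neighborhood of $e$ to such a frustrated plaquette.

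Next I would exploit the local rigidity coming from $\mathcal E$. Since $\hat\sigma\in\mathcal E$ and $\partial\hat\partial\partial\hat\partial e\subseteq E_N$, Lemma~\ref{lemma: other minimal configuration} applies and gives that either $|(\support\hat\sigma)^+|\geq 7$, or $|(\support\hat\sigma)^+|=6$ and $|(\support d\hat\sigma)^+|\geq 12$; in particular $\hat\sigma$ has at least six edges near $e$ (one in $\partial p\smallsetminus\{e\}$ for each $p\in\hat\partial e$) plus either a seventh edge or six further positively oriented frustrated plaquettes. The enumeration then chooses this forced local core of $\hat\sigma$ together with the connecting chain, edge by edge; each successive chain edge is reached from the current cluster through a shared plaquette, which gives a branching factor of at most $18^2$ per step (an edge lies in at most six plaquettes, each carrying three further edges), while each new nonzero edge may belong to $\hat\sigma$, to $\hat\sigma'$, or to both, contributing a weight $2\alpha_0(\kappa)+\alpha_0(\kappa)^2=\alpha_0(\kappa)(2+\alpha_0(\kappa))$ after summing over its nonzero group value. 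Having fixed an explanatory sub-configuration $\tau\leq\hat\sigma$ and its $\hat\sigma'$-counterpart $\tau'\leq\hat\sigma'$, I would bound the two marginal probabilities by $\varphi_{\beta,\kappa}(\tau)$ and $\varphi_{\kappa}(\tau')$ via Proposition~\ref{proposition: edgecluster flipping ii}, multiply them using independence, and sum over all choices.

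The two terms arise from whether the frustrated plaquette $p_0$ terminating the chain is internal or lies near the boundary of $B_N$. If $p_0$ is internal, so that Lemma~\ref{lemma: minimal vortex I} applies, its vortex contains at least six positively oriented plaquettes, contributing $\alpha_1(\beta)^6$; combined with the six core edges ($\alpha_0(\kappa)^6$), the dichotomy of Lemma~\ref{lemma: other minimal configuration} ($\max(\alpha_0(\kappa),\alpha_1(\beta)^6)$), and the convergent geometric sum over the chain length (a constant factor), this yields the first term with constant $C_{c,2}$. If instead the only reachable frustration is a boundary vortex, which as noted in the remark after Lemma~\ref{lemma: minimal vortex I} may be cheaper and need not contain six positive plaquettes, then since $e$ sits at distance $\dist(e,B_N^c)$ from the boundary the chain is forced to have length at least $\dist(e,B_N^c)$, so its total weight is at most $\bigl(18^2(2+\alpha_0(\kappa))\alpha_0(\kappa)\bigr)^{\dist(e,B_N^c)}$, giving the boundary term with constant $C_{c,2}'$. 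In both cases the geometric series converges precisely because $18^2\alpha_0(\kappa)(2+\alpha_0(\kappa))<1$ by Assumption~\ref{assumption: 3}, and evaluating it while collecting the fixed combinatorial prefactors reproduces $C_{c,2}$ and $C_{c,2}'$.

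The main obstacle is the bookkeeping of the joint enumeration over $\hat\sigma$ and $\hat\sigma'$: the connecting chain lives in the combined support $\support\hat\sigma\cup\support\hat\sigma'$, so one must apportion each chain edge to one or both configurations, apply Proposition~\ref{proposition: edgecluster flipping ii} to the respective marginals without double counting, and keep the branching and weight factors aligned so that the per-step ratio is exactly $18^2\alpha_0(\kappa)(2+\alpha_0(\kappa))$. Equally delicate is cleanly separating the internal contribution, where Lemma~\ref{lemma: minimal vortex I} and Lemma~\ref{lemma: other minimal configuration} must reproduce $\alpha_1(\beta)^6\alpha_0(\kappa)^6\max(\alpha_0(\kappa),\alpha_1(\beta)^6)$, from the long-range boundary contribution, and verifying that whenever the terminating frustration is not internal the chain is genuinely forced to traverse distance at least $\dist(e,B_N^c)$, so that no configuration is omitted and the decay in $\dist(e,B_N^c)$ is legitimate.
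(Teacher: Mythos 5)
Your proposal is correct and follows essentially the same route as the paper's proof: the paper likewise transfers the event to the independent pair, reuses the spanning-walk enumeration of Proposition~\ref{proposition: coupling and conditions 2} (branching $18^{2}$ per cluster edge, per-edge weight $2\alpha_0(\kappa)+\alpha_0(\kappa)^2$, marginals bounded via Proposition~\ref{proposition: edgecluster flipping ii}), invokes Lemma~\ref{lemma: other minimal configuration} for the $\mathcal{E}$-dichotomy yielding $\max(\alpha_0(\kappa),\alpha_1(\beta)^6)$, and uses Lemma~\ref{lemma: minimal vortex I} to split the internal contribution from the boundary term of length at least $\dist(e,B_N^c)$. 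The only caveat, which you flag yourself as bookkeeping, is that the rigidity must be applied to the restricted configuration $\hat\sigma|_{\tilde{\mathcal{C}}}$ rather than to $\hat\sigma$ itself, and the factor $18^2$ per cluster edge comes from the spanning walk having length $2m-3$ (Lemma~\ref{lemma: spanning paths}), not from a single exploration step.
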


\begin{remark}
    We note that if \( N \to \infty \) while \( e \), \( \beta \), and \( \kappa \) remain fixed, then the terms involving $\dist(e,B_N^c)$ on the right-hand sides of~\eqref{eq: coupling and conditions 2} and~\eqref{eq: coupling and conditions 1} tend to zero.
\end{remark}

\begin{proposition} \label{proposition: minimal Ising}
    Let \( \beta,\kappa \geq 0 \) be such that~\ref{assumption: 3} holds, and let \( e \in E_N^+ \) be such that \( \partial \hat \partial \partial \hat \partial e \in E_N \). Then
    \begin{equation}\label{eq: minimal Ising}
        \mu_{N,\infty,\kappa}   \bigl( \{ \sigma' \in  \Sigma_{E_N}^0 \colon  \sigma_e'\neq0 \}\bigr) 
        \leq 
        C_{I} \alpha_0(\kappa)^8,
    \end{equation}
    where
    \begin{equation}\label{eq: Ising constant}
        C_{I} \coloneqq \frac{18^{13}  }{1-18^{2}  \alpha_0(\kappa)},
    \end{equation}
\end{proposition}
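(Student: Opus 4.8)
The plan is to bound the probability that a closed 1-form $\sigma'\sim\mu_{N,\infty,\kappa}$ is nonzero on a fixed edge $e$ by decomposing according to the irreducible component of $\sigma'$ that contains $e$. Since $\sigma'\in\Sigma_{E_N}^0$ is closed, if $\sigma_e'\neq 0$ then $e$ lies in the support of some nontrivial irreducible closed 1-form $\sigma''\leq \sigma'$ (using Lemma~\ref{lemma: lemma sum of irreducible configurations}, with each summand closed since $d\sigma'=0$). Because $\partial\hat\partial\partial\hat\partial e\subseteq E_N$, Lemma~\ref{lemma: small 1forms} applies and forces $|(\support\sigma'')^+|\geq 8$. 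So the plan is to union-bound over all such minimal irreducible closed $\sigma''$ with $e\in\support\sigma''$, estimate the probability of each via Proposition~\ref{proposition: edgecluster flipping ii} (noting $\mu_{N,\infty,\kappa}=\mu_{N,\beta,\kappa}$ at $\beta=\infty$, for which $\varphi_{\infty,\kappa}(\sigma'')=\varphi_\kappa(\sigma'')=\prod_{e'}\varphi_\kappa(\sigma''_{e'})$), and then control the combinatorial count of configurations.

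First I would set up the union bound: write
\begin{equation*}
    \mu_{N,\infty,\kappa}(\{\sigma':\sigma_e'\neq 0\})
    \leq
    \sum_{\sigma''} \mu_{N,\infty,\kappa}(\{\sigma'\in\Sigma_{E_N}^0:\sigma''\leq \sigma'\})
    \leq
    \sum_{\sigma''} \varphi_\kappa(\sigma''),
\end{equation*}
where the sum ranges over nontrivial irreducible $\sigma''\in\Sigma_{E_N}^0$ with $e\in\support\sigma''$, minimal in the sense that no strictly smaller closed configuration contains $e$ with $\sigma''\leq\sigma'$; by Lemma~\ref{lemma: small 1forms} each such $\sigma''$ has $|(\support\sigma'')^+|\geq 8$. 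Then I would enumerate these $\sigma''$ by an exploration process analogous to Lemma~\ref{lemma: counting vortex configurations iii}: starting from $e$, the closedness constraint $(d\sigma'')_p=0$ on each plaquette $p\in\hat\partial e$ forces a new nonzero edge in $\partial p\setminus\{e\}$, and one grows the support step by step. Each new edge is chosen among the boundary edges of plaquettes adjacent to the current support, giving at most a bounded branching factor (controlled by the $18$ appearing in~\eqref{eq: Ising constant}, reflecting that each edge meets $6$ plaquettes each with $3$ other positive edges), and each new edge contributes a factor $\sum_{g\in G\setminus\{0\}}\varphi_\kappa(g)^2=\alpha_0(\kappa)$.

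The resulting estimate would take the form
\begin{equation*}
    \sum_{\sigma''}\varphi_\kappa(\sigma'')
    \leq
    \sum_{k\geq 8} (\text{number of supports of size }2k)\cdot \alpha_0(\kappa)^{k},
\end{equation*}
where the count grows at most like $18^{k}$ (one factor $18$ per edge added beyond the first), and the leading term at $k=8$ produces $\alpha_0(\kappa)^8$. Summing the geometric tail, which converges precisely under Assumption~\ref{assumption: 3} (guaranteeing $18^2\alpha_0(\kappa)<1$, hence $18\,\alpha_0(\kappa)<1$), yields a bound of the shape $\frac{18^{13}}{1-18^2\alpha_0(\kappa)}\,\alpha_0(\kappa)^8 = C_I\,\alpha_0(\kappa)^8$. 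I would need to track the exact power $18^{13}$ carefully: it comes from the branching bookkeeping at the first eight forced edges together with the prefactor absorbed into the geometric series.

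The main obstacle I anticipate is the combinatorial enumeration: making the counting argument rigorous requires an exploration procedure (as in the proof of Lemma~\ref{lemma: counting vortex configurations iii}) that assigns each minimal irreducible closed $\sigma''$ a canonical growth sequence, so that the number of supports of each size is genuinely bounded by the claimed branching factor and no configuration is overcounted. The subtle points are (i) ensuring that closedness at each explored plaquette really does force at least one new edge in a controlled local neighborhood (which is exactly what the hypothesis $\partial\hat\partial\partial\hat\partial e\subseteq E_N$ secures, so the local geometry is that of the bulk), and (ii) pinning down the precise constant $18^{13}$ rather than merely its order of magnitude, which is a matter of bookkeeping the first few exploration steps separately from the geometric tail. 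Everything else—the reduction to irreducible components and the per-configuration bound $\varphi_\kappa(\sigma'')$—follows directly from the already-established Proposition~\ref{proposition: edgecluster flipping ii} and Lemma~\ref{lemma: small 1forms}.
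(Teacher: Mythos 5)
Your proposal is correct, and it rests on the same two pillars as the paper's proof---Lemma~\ref{lemma: small 1forms} to force \( |(\support \sigma'')^+|\geq 8 \), and Proposition~\ref{proposition: edgecluster flipping ii} at \( \beta=\infty \) for the per-configuration bound \( \varphi_\kappa(\sigma'') \)---but your enumeration runs along a genuinely different track. The paper never decomposes \( \sigma' \) into irreducible components: it passes to the graph component \( \hat\sigma'|_{\mathcal{C}_{\mathcal{G}(0,\hat\sigma')}(e)} \), which is closed and nonzero at \( e \) by Lemma~\ref{lemma: cluster is subconfig}, and then enumerates the possible supports of that component by spanning walks: Lemma~\ref{lemma: spanning paths} supplies a spanning walk of length \( 2m-3 \) started at \( e \), Lemma~\ref{lemma: number of paths} bounds the number of such walks by \( 18^{2m-3} \), and summing the weights of all configurations carried by a fixed support of size \( m \) gives \( \alpha_0(\kappa)^m \); the geometric series, convergent under~\ref{assumption: 3}, yields exactly \( C_I\alpha_0(\kappa)^8 \). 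Your route---extract the irreducible closed component through \( e \) via Lemma~\ref{lemma: lemma sum of irreducible configurations}, then run a closedness-driven exploration in the spirit of Lemma~\ref{lemma: counting vortex configurations iii}---also works: if the partial configuration is not yet closed, closedness of \( \sigma'' \) forces a new edge on the offending plaquette, and if it is closed, irreducibility forces it to equal \( \sigma'' \). Moreover, if you make the exploration canonical (always treat the first plaquette, in a fixed ordering, at which the partial configuration fails to be closed), the per-step branching is in fact \( 3 \), not \( 18 \), since the chosen plaquette has four boundary edges of which at least one already lies in the support; this gives \( \sum_{k\geq 8}3^{k-1}\alpha_0(\kappa)^k \), a strictly smaller constant than \( C_I \), which still implies~\eqref{eq: minimal Ising}. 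Two caveats: as literally written, your rule that each new edge is ``chosen among the boundary edges of plaquettes adjacent to the current support'' allows a candidate set that grows with the support, so the branching is not constant per step---the canonical growth sequence you yourself flag as the main point to make rigorous is exactly what removes this---and there is no need to reproduce the constant \( 18^{13} \), since any enumeration with exponential base below \( 18^2 \) and leading power \( \alpha_0(\kappa)^8 \) suffices for the stated upper bound. What the paper's walk enumeration buys is that it counts supports rather than configurations, is indifferent to irreducibility, and reuses Lemmas~\ref{lemma: spanning paths} and~\ref{lemma: number of paths}, which are shared with the proofs of Propositions~\ref{proposition: coupling and conditions 2} and~\ref{proposition: coupling and conditions 1}; what yours buys is a sharper constant and a treatment exactly parallel to the vortex counting of Lemma~\ref{lemma: counting vortex configurations iii}.
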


Before we provide proofs of the three propositions above, we introduce some additional notation and prove two useful lemmas.

If \( \mathcal{G} \) is a finite graph, we say that a walk \( W = (v_1,\ldots, v_m) \) in \( \mathcal{G} \) is a \emph{spanning walk} of \( \mathcal{G} \) if it passes through each vertex of \( G \) at least once. We say that a walk \( W = (v_1,\ldots, v_m) \) has \emph{length} $m-1$.

\begin{lemma}\label{lemma: spanning paths}
    Let \( \mathcal{G} \) be a connected graph on \( m \geq 2 \) vertices. Then there is a walk \( W \) in \( \mathcal{G} \) of length \( 2m-3 \) which passes through each vertex at least once.
\end{lemma}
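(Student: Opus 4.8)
Every connected graph $\mathcal{G}$ on $m \geq 2$ vertices has a walk of length $2m-3$ passing through each vertex at least once.

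The plan is to use a spanning tree of $\mathcal{G}$ and a depth-first traversal argument. First I would fix any spanning tree $T$ of $\mathcal{G}$, which exists since $\mathcal{G}$ is connected; $T$ has exactly $m-1$ edges. The classical fact is that a depth-first (Euler-tour-style) traversal of a tree traverses each edge exactly twice—once descending, once ascending—producing a closed walk that visits every vertex and has length $2(m-1) = 2m-2$. The point is that I can save one step by not returning to the root: if I stop the traversal the last time I reach a leaf (rather than walking back up to the starting vertex), I shorten the walk by the length of the final return path. The cleanest way to guarantee a saving of exactly one is to argue by induction, which avoids having to quantify the length of that final return segment.

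So I would prove the statement by induction on $m$. For the base case $m=2$, the graph contains an edge $\{v_1,v_2\}$, and the walk $(v_1,v_2)$ has length $1 = 2\cdot 2 - 3$ and passes through both vertices. For the inductive step, suppose the claim holds for all connected graphs on fewer than $m$ vertices, and let $\mathcal{G}$ have $m \geq 3$ vertices. Fix a spanning tree $T$ of $\mathcal{G}$ and choose a leaf $u$ of $T$ (a tree on $\geq 2$ vertices always has a leaf), with unique tree-neighbor $w$. The graph $T - u$ is a tree, hence connected, on $m-1$ vertices, so by the inductive hypothesis there is a walk $W'$ in $T-u$ of length $2(m-1)-3 = 2m-5$ passing through every vertex of $T-u$; in particular $W'$ passes through $w$. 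I would then splice in a detour to $u$ at an occurrence of $w$ in $W'$: replacing one visit to $w$ by the sub-walk $(\dots, w, u, w, \dots)$ adds the two edges $\{w,u\}$ and $\{u,w\}$, increasing the length by $2$ and incorporating $u$. The resulting walk $W$ has length $(2m-5) + 2 = 2m-3$ and passes through every vertex of $\mathcal{G}$, completing the induction.

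I do not expect a genuine obstacle here; the main point requiring a little care is the splicing step—making explicit that since $W' = (v_1,\dots,v_k)$ visits $w$, say $v_i = w$, the walk $(v_1,\dots,v_i,u,v_i,\dots,v_k)$ is a valid walk in $\mathcal{G}$ (both inserted steps use the edge $\{w,u\}\in T \subseteq \mathcal{G}$) with exactly two more edges. One should also note that the walk is constructed inside the spanning tree $T$, so every edge used genuinely belongs to $\mathcal{G}$, and that the leaf $u$ indeed has a neighbor $w$ present in $T-u$ since $m-1 \geq 1$. These are all routine verifications.
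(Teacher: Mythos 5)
Your proof is correct and rests on the same idea as the paper's: fix a spanning tree, account two traversals per tree edge, and save one edge, giving length $2(m-1)-1 = 2m-3$. The paper does this directly — it takes the closed double-traversal (depth-first) tour of the tree, of length $2m-2$, and deletes its last edge, the deleted endpoint being the root which is already visited at the start — whereas you package the same saving as an induction on $m$ with a leaf-splicing step, which just makes explicit the bookkeeping the paper leaves implicit.
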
 

\begin{proof} 
    Since \( \mathcal{G} \) is connected, \( \mathcal{G} \) has a spanning tree. Any such spanning tree must contain exactly \( m-1 \) edges. Using this tree, one can find a spanning walk which uses each edge in the tree exactly twice. This walk must have length \( 2(m-1) = 2m-2 \). Removing the last edge in this walk, we obtain a walk with the desired properties. This concludes the proof.
\end{proof}

Fix some \( g \in G\smallsetminus \{ 0 \} \) and define \( \sigma \in \Sigma_{E_N} \) by letting \( \sigma_e = g \) for all \( e \in E_N^+ \). Let \( \bar {\mathcal{G}} \coloneqq \mathcal{G}(\sigma,0) \) and note that \( \bar {\mathcal{G}} \) does not depend on the choice of \( g \). 
Note also that if \( \sigma^{(0)},\sigma^{(1)} \in \Sigma_{E_N} \), then \( \mathcal{G}(\sigma^{(0)}, \sigma^{(1)}) \) is a subgraph of \( \bar {\mathcal{G}} \).

\begin{lemma}\label{lemma: number of paths}
    Let \( e \in E_N \) and let \( m \geq 1 \). Then there are at most \( 18^{m}\) walks of length \( m\) in \( \bar {\mathcal{G}} \) containing only edges in \( E_N^+ \), and starting at \( e \). 
\end{lemma}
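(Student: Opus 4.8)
The plan is to reduce the statement to a single degree bound: in $\bar{\mathcal{G}}$, every positively oriented vertex has at most $18$ neighbours that are themselves positively oriented. Once this is established, the conclusion follows by a greedy counting argument. Indeed, a walk of length $m$ starting at $e$ is determined by its $m$ successive steps, and by hypothesis every vertex it visits lies in $E_N^+$; hence at each of the $m$ steps, starting from a positively oriented vertex, there are at most $18$ admissible (positively oriented) choices for the next vertex, giving at most $18^m$ such walks in total.

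To set up the degree bound I would first record the structure of $\bar{\mathcal{G}}$. For the particular $\sigma$ used in its definition we have $\sigma_e = g \neq 0$ for every $e \in E_N^+$, and since $\sigma$ is a $1$-form, $\sigma_{-e} = -g \neq 0$; therefore $\support \sigma = E_N$. Consequently the support condition in clause (ii) of the definition of $\mathcal{G}(\sigma,0)$ holds for every pair of edges, and two distinct vertices $e,e'$ are adjacent in $\bar{\mathcal{G}}$ precisely when $e' = -e$, or when there is a plaquette $p$ with $e \in \partial p$ and either $e' \in \partial p$ or $-e' \in \partial p$.

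Now fix a positively oriented $e$ and count its positively oriented neighbours. The neighbour coming from clause (i), namely $-e$, is negatively oriented and so is excluded. Any neighbour $e'$ coming from clause (ii) shares a plaquette with $e$ up to orientation, i.e.\ the underlying geometric edge of $e'$ lies in $\partial p$ for some $p \in \hat\partial e$. For each such plaquette $p$, the boundary $\partial p$ consists of exactly four geometric edges, one of which is that of $e$; the remaining three each have a unique positive orientation, contributing at most three positively oriented candidates. Since $|\hat\partial e \cap P_N| \le 6$ on $\mathbb{Z}^4$ (as used in the proof of Lemma~\ref{lemma: 6 plaquettes per edge}), summing over $p \in \hat\partial e$ gives at most $6 \cdot 3 = 18$ positively oriented neighbours, with any repetitions across plaquettes only lowering the count. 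Combined with the counting argument above, this yields the lemma.

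The only genuine obstacle is the orientation bookkeeping in the degree count: one must check that each geometric edge adjacent to $e$ through a common plaquette is counted via its single positive representative, and that the clause-(i) neighbour $-e$ is indeed negatively oriented and hence does not contribute. This is routine, so I expect no essential difficulty.
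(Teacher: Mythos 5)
Your proposal is correct and takes essentially the same approach as the paper: the paper's proof also reduces to the degree bound, noting that \( |(\partial \hat \partial e)\smallsetminus \{ e \}| = 6\cdot 3 = 18 \) and that this set is disjoint from its negation, so that each vertex has at most \(18\) positively oriented neighbours in \( \bar{\mathcal{G}} \), whence at most \( 18^m \) walks. Your per-plaquette bookkeeping (three geometric edges per plaquette, each with a unique positive representative, over at most six plaquettes) and your explicit exclusion of the clause-(i) neighbour \(-e\) are just a more detailed rendering of the same count.
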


\begin{proof}
    For each \( e \in E_N \), we have \( |(\partial \hat \partial e)\smallsetminus \{ e \}|= 6 \cdot 3 = 18\). Moreover, 
    \begin{equation*}
        \bigl((\partial \hat \partial e)\smallsetminus \{ e \} \bigr)
        \cap \Bigl( -\bigl((\partial \hat \partial e)\smallsetminus \{ e \} \bigr) \Bigr) = \emptyset,
    \end{equation*}
   and hence
    \begin{equation*}
        \Bigl| \bigl((\partial \hat \partial e)\smallsetminus \{ e \} \bigr)^+
        \cup \Bigl( -\bigl((\partial \hat \partial e)\smallsetminus \{ e \} \bigr) \Bigr)^+ \Bigr| = |(\partial \hat \partial e)\smallsetminus \{ e \}|= 18,
    \end{equation*}
    from which the desired conclusion follows.
\end{proof}

\begin{proof}[Proof of Proposition~\ref{proposition: coupling and conditions 2}]
    By using first the definition of \( \mu_{N,(\beta,\kappa),(\infty,\kappa)} \) and then Lemma~\ref{lemma: E in coupling}, we see that
    \begin{equation*}  
        \begin{split}
            &\mu_{N,(\beta,\kappa),(\infty,\kappa)}   \bigl( \big\{ ( \sigma, \sigma') \in \Sigma_{E_N} \times \Sigma_{E_N}^0 \colon  e \in E_{ \sigma,  \sigma'} \text{ and } \sigma_e' \neq 0\big\}\bigr)
            \\&\qquad= 
            \mu_{N,\beta,\kappa} \times \mu_{N,\infty,\kappa}   \bigl( \big\{ (\hat \sigma,\hat \sigma') \in \Sigma_{E_N} \times \Sigma_{E_N}^0 \colon  e \in E_{\hat \sigma, \hat \sigma'|_{E_{\hat \sigma,\hat \sigma'}}+ \hat \sigma|_{E_N\smallsetminus E_{\hat \sigma,\hat \sigma'}}}  \text{ and } \hat \sigma_e' \neq 0\big\}\bigr)\nonumber
            \\&\qquad= 
            \mu_{N,\beta,\kappa} \times \mu_{N,\infty,\kappa}   \bigl( \big\{ (\hat \sigma,\hat \sigma') \in \Sigma_{E_N} \times \Sigma_{E_N}^0 \colon  e \in E_{\hat \sigma, \hat \sigma'} 
             \text{ and } \hat \sigma_e' \neq 0\big\}\bigr). 
        \end{split}
    \end{equation*}
    By Lemma~\ref{lemma: Edsigmalemma}, for any \( (\hat \sigma,\hat \sigma') \in \Sigma_{E_N} \times \Sigma_{E_N}^0 \), we have
    \begin{equation*}
        e\in E_{\hat \sigma,\hat \sigma'}
        \Leftrightarrow 
        d(\hat{\sigma} |_{\mathcal{C}_{\mathcal{G}(\hat \sigma , \hat \sigma')}(e)}) \neq 0.
    \end{equation*}
    Consequently, the desired conclusion will follow if we can show that
    \begin{equation}\label{eq: hat goal 2}
        \begin{split}
            &\mu_{N,\beta,\kappa} \times \mu_{N,\infty,\kappa} \bigl( \big\{ (\hat \sigma,\hat \sigma') \in \Sigma_{E_N} \times \Sigma_{E_N}^0 \colon d(\hat{\sigma} |_{\mathcal{C}_{\mathcal{G}(\hat \sigma , \hat \sigma')}(e)}) \neq 0 \text{ and } \hat \sigma_e' \neq 0\big\}\bigr) 
            \\&\qquad \leq 
            C_{c,1} \alpha_0(\kappa)^9\alpha_1(\beta)^6+
            C_{c,1}' 
            \Bigl( 18^{2} \bigl(2+ \alpha_0(\kappa)  \bigr) \alpha_0(\kappa)  \Bigr)^{\dist(e,B_N^c)}.
        \end{split}
    \end{equation}
    
    If \( (\hat \sigma,\hat \sigma') \in \Sigma_{E_N} \times \Sigma_{E_N}^0 \) and \( d(\hat \sigma|_{\mathcal{C}_{\mathcal{G}(\hat \sigma, \hat \sigma')}(e)}) \neq 0 \), the set \(   \mathcal{C}_{\mathcal{G}(\hat \sigma,\hat \sigma')}(e) \) must be non-empty. Since \( \mathcal{C}_{\mathcal{G}(\hat \sigma,\hat \sigma')}(e) \) is symmetric, the cardinality \( |\mathcal{C}_{\mathcal{G}(\hat \sigma,\hat \sigma')}(e)| \geq 1 \) must be even. From these observations, we obtain
    \begin{align*}
        \begin{split}
            &\mu_{N,\beta,\kappa} \times \mu_{N,\infty,\kappa} \bigl( \big\{ (\hat \sigma,\hat \sigma') \in \Sigma_{E_N} \times \Sigma_{E_N}^0 \colon  d(\hat \sigma|_{\mathcal{C}_{\mathcal{G}(\hat \sigma, \hat \sigma')}(e)}) \neq 0 \text{ and } \hat \sigma_e' \neq 0\big\} \bigr) 
            \\& = 
            \sum_{m=1}^\infty  
            \mu_{N,\beta,\kappa} \times \mu_{N,\infty,\kappa} \bigl( \big\{ (\hat \sigma,\hat \sigma') \in \Sigma_{E_N} \times \Sigma_{E_N}^0 \colon d(\hat \sigma|_{\mathcal{C}_{\mathcal{G}(\hat \sigma, \hat \sigma')}(e)})\neq 0,\, \hat \sigma_e' \neq 0 ,\,  |\mathcal{C}_{\mathcal{G}(\hat \sigma, \hat \sigma')}(e)| = 2m \big\} \bigr).  
        \end{split}
    \end{align*}

    Next, for \( k \geq 1 \), let \( T_{e,k} \) be the set of all walks in \( \bar {\mathcal{G}}  \) which start at \( e \), have length \( 2k-3 \), contain only edges in \( E_N^+ \), and visit exactly \( k \) vertices in \( \bar {\mathcal{G}}  \).
    Given \( (\hat \sigma,\hat \sigma')   \in \Sigma_{E_N} \times \Sigma_{E_N}^0  \), and \( \mathcal{G} = \mathcal{G}(\hat \sigma, \hat \sigma') \), 
    let \( \mathcal{G}|_{(\mathcal{C}_{\mathcal{G}}(e))^+ } \) denote the graph \( \mathcal{G} \) restricted to the set \( (\mathcal{C}_{\mathcal{G}}(e))^+ \), and let \( \mathcal{W}_{\hat \sigma,\hat \sigma',e} \) be the set of all spanning walks of \( \mathcal{G}|_{( \mathcal{C}_{\mathcal{G}}(e))^+ } \) which have length \( 2|(\mathcal{C}_{\mathcal{G}}(e))^+|-3 \) and start at \( e \).
    By Lemma~\ref{lemma: spanning paths}, the set \( \mathcal{W}_{\hat \sigma,\hat \sigma',e} \) is non-empty. Moreover, since any walk on \( \mathcal{G}\) is a walk on \( \bar {\mathcal{G}}  \), we have \( \mathcal{W}_{\hat \sigma,\hat \sigma',e} \subseteq T_{e,|(\mathcal{C}_{\mathcal{G}}(e))^+|} \).
    Hence
    \begin{align}
        &\sum_{m=1}^\infty  
        \mu_{N,\beta,\kappa} \times \mu_{N,\infty,\kappa} \bigl( \big\{ (\hat \sigma,\hat \sigma') \in \Sigma_{E_N} \times \Sigma_{E_N}^0 \colon d(\hat \sigma|_{\mathcal{C}_{\mathcal{G}(\hat \sigma, \hat \sigma')}(e)})\neq 0,\, \hat \sigma_e' \neq 0,\,  |\mathcal{C}_{\mathcal{G}(\hat \sigma, \hat \sigma')}(e)| = 2m \big\} \bigr) \nonumber
        \\&\qquad \leq 
        \sum_{m=1}^\infty \sum_{W \in T_{e,m}} 
        \mu_{N,\beta,\kappa} \times \mu_{N,\infty,\kappa} \bigl( \big\{ (\hat \sigma,\hat \sigma') \in \Sigma_{E_N} \times \Sigma_{E_N}^0 \colon d(\hat \sigma|_{\mathcal{C}_{\mathcal{G}(\hat \sigma, \hat \sigma')}(e)})\neq 0,\, \hat \sigma_e' \neq 0, \label{eq: last line 2p} \,\\[-1ex]&\hspace{19em} |\mathcal{C}_{\mathcal{G}(\hat \sigma, \hat \sigma')}(e)| = 2m \text{ and }  W \in \mathcal{W}_{\hat \sigma,\hat \sigma',e} \big\} \bigr). \nonumber
    \end{align}
    Now assume that \( W \in T_{e,m} \) for some \( m \geq 1 \). Let \( \support W \) be the set of edges in \( E_N^+ \) traversed by \( W \). Then \( \bigl( \mathcal{C}_{\mathcal{G}(\hat \sigma,\hat \sigma')}(e)\bigr)^+ = \support W\) whenever $|\mathcal{C}_{\mathcal{G}(\hat \sigma, \hat \sigma')}(e)| = 2m$ and \( W \in \mathcal{W}_{\hat \sigma,\hat\sigma',e} \). Consequently, the expression on the right-hand side of~\eqref{eq: last line 2p} is bounded from above by
    \begin{equation}\label{eq: muNbetainftysumsum} 
        \begin{split}
            &\sum_{m=1}^\infty \sum_{W \in T_{e,m}} 
            \mu_{N,\beta,\kappa} \times \mu_{N,\infty,\kappa} \bigl( \{ (\hat \sigma,\hat \sigma') \in \Sigma_{E_N} \times \Sigma_{E_N}^0 \colon d(\hat \sigma|_{\mathcal{C}_{\mathcal{G}(\hat \sigma, \hat \sigma')}(e)})\neq 0, \, \hat\sigma_e' \neq 0, \\[-1ex]&\hspace{24em}  \bigl( \mathcal{C}_{\mathcal{G}(\hat \sigma, \hat \sigma')}(e)\bigr)^+ = \support W \}\bigr).
        \end{split}
    \end{equation}
    Given \( (\hat \sigma,\hat \sigma') \in \Sigma_{E_N} \times \Sigma_{E_N}^0 \), if we let \( \sigma \coloneqq  \hat \sigma|_{\mathcal{C}_{\mathcal{G}(\hat \sigma,\hat \sigma')}(e)} \) and \( \sigma' \coloneqq  \hat \sigma'|_{\mathcal{C}_{\mathcal{G}(\hat\sigma,\hat \sigma')}(e)} \), then the following statements hold.
    \begin{enumerate}
        \item By Lemma~\ref{lemma: cluster is subconfig}, \( \sigma' \leq \hat \sigma' \). Since \( \hat \sigma' \in \Sigma_{E_N}^0 \), it follows that \( \sigma' \in \Sigma_{E_N}^0 \). 
        \item If \( d(\hat \sigma|_{\mathcal{C}_{\mathcal{G}(\hat \sigma, \hat \sigma')}(e)})\neq 0 \), then \( d\sigma \neq 0. \)
        \item If \( d(\hat \sigma|_{\mathcal{C}_{\mathcal{G}(\hat \sigma, \hat \sigma')}(e)})\neq 0 \), then \( e \in \support \hat \sigma \cup \support \hat \sigma'\) and thus
        \( (\mathcal{C}_{\mathcal{G}(\hat \sigma,\hat \sigma')}(e))^+ = (\support \sigma \cup \support \sigma')^+\). Hence, if $d(\hat \sigma|_{\mathcal{C}_{\mathcal{G}(\hat \sigma, \hat \sigma')}(e)})\neq 0$, then \( (\mathcal{C}_{\mathcal{G}(\hat \sigma,\hat \sigma')}(e))^+ = \support W \) if and only if \(   (\support \sigma \cup \support \sigma')^+ = \support W. \)
        \item 
        \( \hat \sigma_e' = \sigma_e'. \)
    \end{enumerate}
    As a consequence, the expression in~\eqref{eq: muNbetainftysumsum} is bounded from above by
    \begin{equation*} 
        \begin{split}
            &\sum_{m=1}^\infty \sum_{W \in T_{e,m}} 
            \sum_{\substack{\sigma \in \Sigma_{E_N},\,  \sigma' \in \Sigma_{E_N}^0 \colon \\
            \substack{(\support  \sigma \cup \support  \sigma')^+ = \support W,\\ d \sigma \neq 0,\, \sigma'_e \neq 0} }} 
            \mu_{N,\beta,\kappa} \times \mu_{N,\infty,\kappa} \bigl( \{ (\hat \sigma,\hat \sigma') \in \Sigma_{E_N} \times \Sigma_{E_N}^0 \colon
            \\[-7ex]&\hspace{22em}
            \hat \sigma|_{\mathcal{C}_{\mathcal{G}(\hat \sigma, \hat \sigma')}(e)} =  \sigma
            , \text{ and }
            \hat \sigma'|_{\mathcal{C}_{\mathcal{G}(\hat \sigma, \hat \sigma'}(e)} =  \sigma'\} \bigr) \\ &\mbox{}
        \end{split}
    \end{equation*}
    For any \( m\), \( W \), \(  \sigma \), and \(  \sigma' \) as in the sum above, by Lemma~\ref{lemma: cluster is subconfig}, we have 
    \begin{equation*}
        \begin{split}
            &\mu_{N,\beta,\kappa} \times \mu_{N,\infty,\kappa} \bigl( \{ (\hat \sigma,\hat \sigma') \in \Sigma_{E_N} \times \Sigma_{E_N}^0 \colon
            \hat \sigma|_{\mathcal{C}_{\mathcal{G}(\hat \sigma, \hat \sigma')}(e)} =  \sigma \text{ and } 
            \hat \sigma'|_{\mathcal{C}_{\mathcal{G}(\hat \sigma, \hat \sigma'}(e)} =  \sigma'\} \bigr) 
            \\&\qquad\leq \mu_{N,\beta,\kappa} \times \mu_{N,\infty,\kappa} \bigl( \{ (\hat \sigma,\hat \sigma') \in \Sigma_{E_N} \times \Sigma_{E_N}^0 \colon
            \sigma \leq \hat \sigma  \text{ and } 
            \sigma' \leq \hat \sigma' \} \bigr) 
            \\&\qquad= \mu_{N,\beta,\kappa} \bigl( \{ \hat \sigma  \in \Sigma_{E_N}  \colon
            \sigma \leq \hat \sigma   \} \bigr) 
            \mu_{N,\infty,\kappa} \bigl( \{ \hat \sigma' \in  \Sigma_{E_N}^0 \colon
            \sigma' \leq \hat \sigma' \} \bigr) \leq \varphi_{\beta,\kappa}( \sigma) \varphi_{\infty,\kappa} (\sigma'),
        \end{split}
    \end{equation*}
    where the last inequality follows by applying Proposition~\ref{proposition: edgecluster flipping ii} twice.
    Taken together, the above equations show that
    \begin{equation}\label{eq: first summary B}  
        \begin{split}
        &\mu_{N,\beta,\kappa} \times \mu_{N,\infty,\kappa} \bigl( \{ (\hat \sigma,\hat \sigma') \in \Sigma_{E_N} \times \Sigma_{E_N}^0 \colon  e \in E_{\hat \sigma, \hat \sigma'}  ,\, \hat{\sigma}'_e \neq 0 \} \bigr) \leq
        \sum_{m=1}^\infty \sum_{W \in T_{e,m}} 
          J_{\beta, \kappa}(W),
        \end{split}
    \end{equation}
    where
    \begin{equation*}
        J_{\beta, \kappa}(W) \coloneqq 
        \sum_{\substack{ \sigma \in \Sigma_{E_N},\,  \sigma' \in \Sigma_{E_N}^0 \colon \\
        \substack{(\support  \sigma \cup \support  \sigma')^+ = \support W,\\ d \sigma \neq 0,\, 
        \sigma'_e \neq 0 
        }}} \varphi_{\beta,\kappa}  ( \sigma )\,  \varphi_{\infty,\kappa} (  \sigma'  ).
    \end{equation*} 
    
    Fix some \( m \geq 1 \) and \( W \in T_{e,m} \). Then 
    \begin{equation*}
        \begin{split}
            &J_{\beta, \kappa}(W) = \sum_{\substack{ \sigma \in \Sigma_{E_N},\,  \sigma' \in \Sigma_{E_N}^0 \colon \\
            \substack{(\support  \sigma \cup \support  \sigma')^+ = \support W,\\ d \sigma \neq 0,\, 
            \sigma'_e \neq 0 
             }}}
            \prod_{e' \in E_N} \varphi_\kappa(\sigma_{e'}) \prod_{p \in P_N}\varphi_\beta \bigl((d\sigma)_p \bigr) \prod_{e'' \in E_N} \varphi_\kappa(\sigma_{e''}')
            \\&\qquad =
            \sum_{\substack{ \sigma \in \Sigma_{E_N},\,  \sigma' \in \Sigma_{E_N}^0 \colon \\
            \substack{(\support  \sigma \cup \support  \sigma')^+ = \support W,\\ d \sigma \neq 0,\, 
            \sigma'_e \neq 0 
            }}}
            \prod_{e' \in E_N^+} (\varphi_\kappa(\sigma_{e'})^2 \varphi_\kappa(\sigma_{e'}')^2) \prod_{p \in P_N^+}\varphi_\beta \bigl((d\sigma)_p \bigr)^2. 
        \end{split}
    \end{equation*} 
    If \( \sigma' \in \Sigma_{E_N}^0 \) satisfies \( \sigma'_e \neq 0 \), then, since \( \partial \hat \partial \partial \hat \partial e \subseteq E_N \) by assumption, we have \( |(\support \sigma')^+| \geq 8 \) by Lemma~\ref{lemma: small 1forms}. Hence \( J_{\beta,\kappa}(W) =0\) if \( m \leq 7 \) and \( W \in T_{e,m} \).
    
    Now recall that for any \( r \geq 0 \) and \( g \in G \), we have  \( \varphi_r(0) = 1\) and \( \varphi_r(g) = e^{r \Re(\rho(g) - 1)} \in (0,1]\). If \( g \neq 0\), then \( \varphi_r(g) < 1\) and hence \(\varphi_\beta(g)^2 \leq \alpha_1(\beta) < 1\).
    If \( \sigma \) is as above, then \( d\sigma \neq 0 \), and we must be in one of the following two cases.
    \begin{enumerate}
        \item If \( |(\support d\sigma)^+| \geq 6\), then 
        \begin{equation*}
            \prod_{p \in P_N^+}\varphi_\beta \bigl((d\sigma)_p \bigr)^2 \leq \alpha_1(\beta)^6.
        \end{equation*} 
        \item If \( |(\support d\sigma)^+| < 6\), then, by Lemma~\ref{lemma: minimal vortex I}, \( \sigma \) must support a vortex with support at the boundary of \( B_N \), and hence we must have \( |\support W| \geq \dist(e,B_N^c).\) At the same time, we trivially also have \( \prod_{p \in P_N^+}\varphi_\beta \bigl((d\sigma)_p \bigr)^2 \leq 1. \)
    \end{enumerate}
    Consequently, if \( m \geq 8 \) and \( W \in T_{e,m} \), then
    \begin{equation*}
        J_{\beta, \kappa}(W)  \leq \bigl(\alpha_1(\beta)^6+\mathbb{1}_{|\support W| \geq \dist(e,B_N^c)}\bigr) \sum_{\substack{ \sigma \in \Sigma_{E_N},\,  \sigma' \in \Sigma_{E_N}^0 \colon \\
        \substack{(\support  \sigma \cup \support  \sigma')^+ = \support W, \\ d\sigma \neq 0 ,\, \sigma_e' \neq 0 }}}
        \prod_{e' \in E_N^+} \bigl(\varphi_\kappa(\sigma_{e'})^2 \varphi_\kappa(\sigma_{e'}')^2 \bigr).
    \end{equation*}
    By replacing the condition \( d\sigma \neq 0\)  with the condition \( \sigma \neq 0 \)
    we make the sum larger. Hence
    \begin{align*}
        J_{\beta, \kappa}(W) \leq \bigl(\alpha_1(\beta)^6+\mathbb{1}_{|\support W| \geq \dist(e,B_N^c)}\bigr) \sum_{\substack{ \sigma \in \Sigma_{E_N},\,  \sigma' \in \Sigma_{E_N}^0 \colon \\
        \substack{(\support  \sigma \cup \support  \sigma')^+ = \support W, \\  \sigma,\sigma_e' \neq 0 }}}
        \prod_{e' \in E_N^+} \bigl(\varphi_\kappa(\sigma_{e'})^2 \varphi_\kappa(\sigma_{e'}')^2 \bigr).
    \end{align*}
    If \( \sigma \in \Sigma_{E_N} \) and \( e' \notin \support \sigma \), then \( \varphi_\kappa(\sigma_{e'}) = \varphi_\kappa(0)=1 \). Also, if \( \sigma, \sigma' \in \Sigma_{E_N} \) and \( e' \in (\support \sigma \cup \support \sigma')^+ = \support W \), then either \( \sigma_{e'} \neq 0 \) and \( \sigma'_{e'} = 0 \), \( \sigma_{e'}=0 \) and \(\sigma_{e'}' \neq 0 \), or \( \sigma_{e'},\sigma_{e'}' \neq 0 \). At the same time, if \( \sigma \in \Sigma_{E_N} \) is such that \( \sigma \neq 0 \), then \( |(\support \sigma)^+| \geq 1 \), and, by Lemma~\ref{lemma: small 1forms}, if \( \sigma' \in \Sigma_{E_N}^0 \) is such that \( \sigma_e' \neq 0 \), then \( |(\support \sigma')^+| \geq 8 \).   %
    Combining these observations, we obtain
    \begin{align}\nonumber
        &\sum_{\substack{ \sigma \in \Sigma_{E_N},\,  \sigma' \in \Sigma_{E_N}^0 \colon \\
        \substack{(\support  \sigma \cup \support  \sigma')^+ = \support W, \\  \sigma,\sigma_e' \neq 0 }}}
        \prod_{e' \in E_N^+} \bigl(\varphi_\kappa(\sigma_{e'})^2 \varphi_\kappa(\sigma_{e'}')^2 \bigr) 
        \\&\qquad\leq  
        \prod_{e' \in \support W}
        \biggl\{ 
        \varphi_\kappa(0)^2
        \bigg(\sum_{\sigma_{e'}' \in G \smallsetminus \{0\}} \varphi_\kappa(\sigma_{e'}')^2\bigg)
        + 
        \biggl(\sum_{\sigma_{e'} \in G \smallsetminus \{0\}}
        \varphi_\kappa(\sigma_{e'})^2 \biggr)
        \varphi_\kappa(0)^2
        \\\nonumber
        &\qquad\qquad\qquad +\bigg(\sum_{\sigma_{e'} \in G \smallsetminus \{0\}} 
        \varphi_\kappa(\sigma_{e'})^2 \bigg)
        \bigg(\sum_{\sigma_{e'}' \in G \smallsetminus \{0\}} \varphi_\kappa(\sigma_{e'}')^2\bigg)
        \biggr\}\bigg|_{\varphi_\kappa(\cdot)^{2p}, p \geq 9},
            \\ \nonumber
        & \qquad = \prod_{e' \in \support W}
        \big(\alpha_0(\kappa) + \alpha_0(\kappa)
        + \alpha_0(\kappa)^2\big)\big|_{\alpha_0(\kappa)^{p}, p \geq 9}
            \\ \nonumber
        & \qquad =   \bigl(2 \alpha_0(\kappa)
        + \alpha_0(\kappa)^2\bigr)^{|\support W|}\big|_{\alpha_0(\kappa)^{p}, p \geq 9},
    \end{align} 
    where the notation $|_{\alpha_0(\kappa)^{p}, p \geq 9}$ indicates that only terms involving  $\alpha_0(\kappa)$ raised to at least the $9$th power should be included, and $|_{\varphi_\kappa(\cdot)^{2p}, p \geq 9}$ indicates that only terms containing at least 9 factors of the type \( \varphi_\kappa(g)^2 \) for \( g \neq 0 \) should be included. 
    We thus have
    \begin{align}\label{eq: innermost sum B}
        &J_{\beta, \kappa}(W)
        \leq \bigl(\alpha_1(\beta)^6+\mathbb{1}_{|\support W| \geq \dist(e,B_N^c)}\bigr) \Bigl( \bigl(2 \alpha_0(\kappa)
        + \alpha_0(\kappa)^2\bigr)^{|\support W|} \Bigr)\big|_{\alpha_0(\kappa)^{p}, p \geq 9}.
    \end{align}
    Since \( W \in T_{e,m} \), we have $|\support W| =m$. Also, by Lemma~\ref{lemma: number of paths}, \( |T_{e,m}| \leq 18^{2m-3} \) for any \( m \geq 1 \). 
    Combining~\eqref{eq: first summary B} and~\eqref{eq: innermost sum B}, we thus find that  
    \begin{align*}
        &\mu_{N,(\beta,\kappa),(\infty,\kappa)}\bigl( \bigl\{ (\sigma,\sigma') \in \Sigma_{E_N} \times \Sigma_{E_N}^0 \colon e \in E_{\sigma,\sigma'}  \text{ and } \sigma_e' \neq 0 \bigr\}\bigr)   
        \\&\qquad\leq  
     \sum_{m=8}^\infty  
        18^{2m-3} \bigl(2\alpha_0(\kappa) + \alpha_0(\kappa)^2 \bigr)^m  \alpha_1(\beta)^6 |_{\alpha_0(\kappa)^{p}, p \geq 9}
        \\&\qquad\qquad +
        \sum_{m=\dist(e,B_N^c)}^\infty  
        18^{2m-3} \bigl(2\alpha_0(\kappa) + \alpha_0(\kappa)^2 \bigr)^m 
        \\&\qquad\leq  
        \sum_{m=8}^\infty  
        18^{2m-3} \bigl(2\alpha_0(\kappa) + \alpha_0(\kappa)^2 \bigr)^m  \alpha_1(\beta)^6
        -
        18^{13}  2^8\alpha_0(\kappa)^8   \alpha_1(\beta)^6
        \\&\qquad\qquad +
        \sum_{m=\dist(e,B_N^c)}^\infty  
        18^{2m-3} \bigl(2\alpha_0(\kappa) + \alpha_0(\kappa)^2 \bigr)^m .
    \end{align*}
     Computing the above geometric sums, we obtain~\eqref{eq: coupling and conditions 2}.
\end{proof}

\begin{proof}[Proof of Proposition~\ref{proposition: coupling and conditions 1}]
    Without loss of generality, we can assume that \( e \in E_N^+ \).
    Define
    \begin{equation*}
        {\tilde { \mathcal{C}}}_{\mathcal{G}(\sigma,\sigma')}( e) \coloneqq \bigcup_{e' \in \partial \hat \partial e \cap (\support \sigma \cup \support \sigma')} \mathcal{C}_{\mathcal{G}(\sigma,\sigma')}( e').
    \end{equation*}
    Using the same notation as in the proof of Proposition~\ref{proposition: coupling and conditions 2}, we find the following analog of \eqref{eq: last line 2p}--\eqref{eq: muNbetainftysumsum}:   
   \begin{equation}\label{eq: muNbetainftysumsum2} 
        \begin{split}
            &   \mu_{N,(\beta,\kappa),(\infty,\kappa)}\bigl( \bigl\{ (\sigma,\sigma') \in \Sigma_{E_N} \times \Sigma_{E_N}^0 \colon e \in \partial \hat \partial E_{\sigma,\sigma'}  \text{ and } \sigma \in \mathcal{E} \bigr\}\bigr) 
            \\&\qquad \leq
            \sum_{m=1}^\infty \sum_{W \in T_{e,m}} 
            \mu_{N,\beta,\kappa} \times \mu_{N,\infty,\kappa} \Bigl( \bigl\{ (\hat \sigma,\hat \sigma') \in \Sigma_{E_N} \times \Sigma_{E_N}^0 \colon d(\hat \sigma|_{{\tilde { \mathcal{C}}}_{\mathcal{G}(\hat \sigma, \hat \sigma')}( e)})\neq 0,  \, \\[-1ex]&\hspace{16em}  \bigl( {\tilde { \mathcal{C}}}_{\mathcal{G}(\hat \sigma, \hat \sigma')}(e)\bigr)^+ \cup \{ e \} = \support W,  \text{ and } \hat \sigma \in \mathcal{E}\bigr\} \Bigr).
        \end{split}
    \end{equation} 
    Given \( (\hat \sigma,\hat \sigma') \in \Sigma_{E_N} \times \Sigma_{E_N}^0 \), let \( \sigma \coloneqq  \hat \sigma|_{{\tilde { \mathcal{C}}}_{\mathcal{G}(\hat \sigma,\hat \sigma')}(e)} \) and \( \sigma' \coloneqq  \hat \sigma'|_{{\tilde { \mathcal{C}}}_{\mathcal{G}(\hat\sigma,\hat \sigma')}(e)} \). If \( d(\hat \sigma|_{{\tilde { \mathcal{C}}}_{\mathcal{G}(\hat \sigma, \hat \sigma')}( e)})\neq 0 \) and \( \hat \sigma \in \mathcal{E} \), then  \(  \sigma \in \mathcal{E} \).  This leads to the following analog of~\eqref{eq: first summary B}:
    \begin{equation*} 
        \begin{split}
        &\mu_{N,\beta,\kappa} \times \mu_{N,\infty,\kappa} \bigl( \{ (\hat \sigma,\hat \sigma') \in \Sigma_{E_N} \times \Sigma_{E_N}^0 \colon  e \in \partial \hat \partial E_{\hat \sigma, \hat \sigma'},\,  \sigma \in \mathcal{E} \} \bigr) \leq
        \sum_{m=1}^\infty \sum_{W \in T_{e,m}} 
          J_{\beta, \kappa}(W),
        \end{split}
    \end{equation*}
    where
    \begin{equation*}
        J_{\beta, \kappa}(W) \coloneqq 
        \sum_{\substack{ \sigma \in \Sigma_{E_N},\,  \sigma' \in \Sigma_{E_N} \colon \\
        \substack{(\support  \sigma \cup \support  \sigma')^+ \cup \{ e \} = \support W,\\ d \sigma \neq 0,\, \sigma \in \mathcal{E}}}} \varphi_{\beta,\kappa}  ( \sigma )\,  \varphi_{\infty,\kappa} (  \sigma'  ).
    \end{equation*} 
    
    If \( \sigma \in \mathcal{E} \), then \( |(\support \sigma)^+\smallsetminus \{ e \}| \geq 6 \), and hence \( J_{\beta,\kappa}(W) =0\) whenever \( m <7 \) and \( W \in T_{e,m} \). Furthermore, if \( m \geq 7 \) and \( W \in T_{e,m} \), then 
    \begin{align*}
            &J_{\beta, \kappa}(W) =
            \sum_{\substack{ \sigma \in \Sigma_{E_N},\,  \sigma' \in \Sigma_{E_N} \colon \\
            \substack{(\support  \sigma \cup \support  \sigma')^+ \cup \{ e \} = \support W, \\ d \sigma \neq 0,\, \sigma \in \mathcal{E}}}}
            \prod_{e' \in E_N^+} (\varphi_\kappa(\sigma_{e'})^2 \varphi_\kappa(\sigma_{e'}')^2) \prod_{p \in P_N^+}\varphi_\beta \bigl((d\sigma)_p \bigr)^2.
    \end{align*}
    
    If \( \sigma \in \mathcal{E} \) and \( |(\support \sigma)^+| = 6 \), then \( e \notin \support \sigma \), and, since \( \partial \hat \partial \partial \hat \partial e \subseteq E_N \), we have $|(\support d\sigma)^+| \geq 12$ by Lemma~\ref{lemma: other minimal configuration}. Consequently, in this case, we have
    \begin{equation*}
        \prod_{p \in P_N^+}\varphi_\beta \bigl((d\sigma)_p \bigr)^2 \leq \alpha_1(\beta)^{12},
    \end{equation*}
    On the other hand, if \( |(\support \sigma)^+| \geq 7\), then since \( d\sigma \neq 0 \), it follows from~Lemma~\ref{lemma: minimal vortex I} that we either have \( |(\support d\sigma)^+| \geq 6 \), or \( |\support W| \geq \dist(e,B_N^c).\)
    Hence
     \begin{equation*}
        \prod_{p \in P_N^+}\varphi_\beta \bigl((d\sigma)_p \bigr)^2 \leq \alpha_1(\beta)^6 + \mathbb{1}_{|\support W| \geq \dist(e,B_N^c)}.
    \end{equation*}
    Consequently,
    \begin{equation*}
        \begin{split}
            J_{\beta, \kappa}(W)  
            &\leq \alpha_1(\beta)^{12} \sum_{\substack{ \sigma \in \Sigma_{E_N},\,  \sigma' \in \Sigma_{E_N} \colon \\
            \substack{(\support  \sigma \cup \support  \sigma')^+ \cup \{ e \} = \support W, \\    |(\support \sigma)^+|=6   }}}
            \prod_{e' \in E_N^+} \bigl(\varphi_\kappa(\sigma_{e'})^2 \varphi_\kappa(\sigma_{e'}')^2 \bigr)
            \\&\qquad+
            \bigl( \alpha_1(\beta)^6 + \mathbb{1}_{|\support W| \geq \dist(e,B_N^c)}\bigr) \sum_{\substack{ \sigma \in \Sigma_{E_N},\,  \sigma' \in \Sigma_{E_N} \colon \\ \substack{(\support  \sigma \cup \support  \sigma')^+ \cup \{ e \}= \support W, \\    |(\support \sigma)^+| \geq 7  }}}
            \prod_{e' \in E_N^+} \bigl(\varphi_\kappa(\sigma_{e'})^2 \varphi_\kappa(\sigma_{e'}')^2 \bigr)
        \end{split}
    \end{equation*}
    Now note that we can have either \( e \in \support \sigma \cup \support \sigma' \) or \( e \notin \support \sigma \cup \support \sigma'. \) Using this observation,
    we obtain the following analog of~\eqref{eq: innermost sum B}:
    \begin{align*}
        &J_{\beta, \kappa}(W)
        \\&\qquad\leq \alpha_1(\beta)^{12} \bigl(1+2 \alpha_0(\kappa)
        + \alpha_0(\kappa)^2\bigr)\bigl(2 \alpha_0(\kappa)
        + \alpha_0(\kappa)^2\bigr)^{|\support W|-1} \cdot \mathbb{1}(|\support W| \geq 7)
        \\&\qquad\quad +
        \bigl( \alpha_1(\beta)^6 + \mathbb{1}_{|\support W| \geq \dist(e,B_N^c)}\bigr) \bigl(2 \alpha_0(\kappa)
        + \alpha_0(\kappa)^2\bigr)^{|\support W|}  \cdot \mathbb{1}(|\support W| \geq 7)
        \\&\qquad\quad +
        \bigl( \alpha_1(\beta)^6 + \mathbb{1}_{|\support W| \geq \dist(e,B_N^c)}\bigr) \bigl(1+2 \alpha_0(\kappa)
        + \alpha_0(\kappa)^2\bigr) \bigl(2 \alpha_0(\kappa)
        + \alpha_0(\kappa)^2\bigr)^{|\support W|-1} \cdot \mathbb{1}(|\support W| \geq 8).
    \end{align*}
    Since \( W \in T_{e,m} \), we have $|\support W| =m$. Also, by Lemma~\ref{lemma: number of paths}, $|T_{e,m}| \leq 18^{2m-3}$ for any $m \geq 1$.
    We conclude that  
    \begin{align*}
        &\mu_{N,\beta,\kappa} \times \mu_{N,\infty,\kappa} \bigl( \{ (\hat \sigma,\hat \sigma') \in \Sigma_{E_N} \times \Sigma_{E_N}^0 \colon  e \in \partial \hat \partial E_{\hat \sigma, \hat \sigma'},\,  \sigma \in \mathcal{E} \} \bigr)  
        \\&\qquad\leq  
     \sum_{m=7}^\infty  
        18^{2m-3} \bigl( 1 + 2\alpha_0(\kappa) + \alpha_0(\kappa)^2 \bigr)\bigl(2\alpha_0(\kappa) + \alpha_0(\kappa)^2 \bigr)^{m-1}  \alpha_1(\beta)^{12}
        \\&\qquad\qquad +
     \sum_{m=7}^\infty  
        18^{2m-3} \bigl(2\alpha_0(\kappa) + \alpha_0(\kappa)^2 \bigr)^m  \alpha_1(\beta)^{6}
        \\&\qquad\qquad +
     \sum_{m=8}^\infty  
        18^{2m-3} \bigl( 1 + 2\alpha_0(\kappa) + \alpha_0(\kappa)^2 \bigr) \bigl(2\alpha_0(\kappa) + \alpha_0(\kappa)^2 \bigr)^{m-1}  \alpha_1(\beta)^{6}
        \\&\qquad\qquad+
     \bigl( 1 + 2\alpha_0(\kappa) + \alpha_0(\kappa)^2 \bigr) \sum_{m=\dist(e,B_N^c)}^\infty  
        18^{2m-3} \bigl(2\alpha_0(\kappa) + \alpha_0(\kappa)^2 \bigr)^{m-1} 
    \end{align*}
    and thus~\eqref{eq: coupling and conditions 1} follows by evaluating the above geometric sums. 
\end{proof}

\begin{proof}[Proof of Proposition~\ref{proposition: minimal Ising}]
    Note first that if \( \hat \sigma' \in \Sigma_{E_N}^0, \) then \( d\hat \sigma' = 0 , \) and hence, using Lemma~\ref{lemma: cluster is subconfig}\ref{item: subconfig i} and the definition of \( \leq \), it follows that \( d\hat \sigma'|_{\mathcal{C}_{\mathcal{G}(0,\hat \sigma')}(e)} = 0 ,\) and hence \( \hat \sigma'|_{\mathcal{C}_{\mathcal{G}(0,\hat \sigma')}(e)} \in \Sigma_{E_N}^0.\)
    If, in addition, \( \hat \sigma'_e \neq 0,  \) then, by definition, we have \( (\hat \sigma'|_{\mathcal{C}_{\mathcal{G}(0,\hat \sigma')}(e)})_e \neq 0. \) Applying Lemma~\ref{lemma: small 1forms}, it thus follows that \( |(\support \hat \sigma'|_{\mathcal{C}_{\mathcal{G}(0,\hat \sigma')}(e)})^+| \geq 2 \cdot 8, \) and hence \( |\mathcal{C}_{\mathcal{G}(0 , \hat \sigma')}(e)| = |(\support \hat \sigma'|_{\mathcal{C}_{\mathcal{G}(0,\hat \sigma')}(e)})^+| \geq 2 \cdot 8. \)
    Consequently, the desired conclusion will follow if we can show that
    \begin{equation}\label{eq: hat goal}
        \mu_{N,\infty,\kappa} \bigl( \hat \sigma' \in  \Sigma_{E_N}^0 \colon  | \mathcal{C}_{\mathcal{G}(0 , \hat \sigma')}(e) | \geq 2 \cdot 8 \big\}\bigr) \leq C_I \alpha_0(\kappa)^8.
    \end{equation}
    
    For \( k \geq 1 \), let \( T_{e,k} \) be the set of all walks in \( \bar {\mathcal{G}}  \) which start at \( e \), have length \( 2k-3 \), contain only edges in \( E_N^+ \), and visit exactly \( k \) vertices in \( \bar {\mathcal{G}}  \).
    Given \( \hat \sigma'   \in\Sigma_{E_N}^0  \), and \( \mathcal{G} = \mathcal{G}(0, \hat \sigma') \), 
    let \( \mathcal{G}|_{(\mathcal{C}_{\mathcal{G}}(e))^+ } \) denote the graph \( \mathcal{G} \) restricted to the set \( (\mathcal{C}_{\mathcal{G}}(e))^+ \), and let \( \mathcal{W}_{0,\hat \sigma',e} \) be the set of all spanning walks of \( \mathcal{G}|_{( \mathcal{C}_{\mathcal{G}}(e))^+ } \) which have length \( 2|(\mathcal{C}_{\mathcal{G}}(e))^+|-3 \) and start at \( e \).
    By Lemma~\ref{lemma: spanning paths}, the set \( \mathcal{W}_{0,\hat \sigma',e} \) is non-empty. Moreover, since any walk on \( \mathcal{G}\) is a walk on \( \bar {\mathcal{G}}  \), we have \( \mathcal{W}_{0,\hat \sigma',e} \subseteq T_{e,|(\mathcal{C}_{\mathcal{G}}(e))^+|} \). 
    Hence
    \begin{align}
        & \mu_{N,\infty,\kappa} \bigl( \big\{ \hat \sigma' \in  \Sigma_{E_N}^0 \colon  | \mathcal{C}_{\mathcal{G}(0 , \hat \sigma')}(e) | \geq 2 \cdot 8 \big\}\bigr)
        =
        \sum_{m=8}^\infty  
        \mu_{N,\infty,\kappa} \bigl( \big\{ \hat \sigma' \in \Sigma_{E_N}^0 \colon  |\mathcal{C}_{\mathcal{G}(0, \hat \sigma')}(e)| = 2m \big\} \bigr) \nonumber
        \\&\qquad \leq 
        \sum_{m=8}^\infty \sum_{W \in T_{e,m}} 
        \mu_{N,\infty,\kappa} \bigl( \big\{ \hat \sigma' \in \Sigma_{E_N}^0 \colon  |\mathcal{C}_{\mathcal{G}(0, \hat \sigma')}(e)| = 2m
        \text{ and } W \in \mathcal{W}_{0,\hat \sigma',e} \big\} \bigr). \label{eq: last line}
    \end{align}

    Now assume that \( W \in T_{e,m} \) for some \( m \geq 1 \). Let \( \support W \) be the set of edges in \( E_N^+ \) traversed by \( W \). Then \( \bigl( \mathcal{C}_{\mathcal{G}(0,\hat \sigma')}(e)\bigr)^+ = \support W\) whenever \(|\mathcal{C}_{\mathcal{G}(0, \hat \sigma')}(e)| = 2m\) and \( W \in \mathcal{W}_{0,\hat\sigma',e} \). Consequently, the expression on the right-hand side of~\eqref{eq: last line} is bounded from above by
    \begin{equation*}
        \begin{split}
            &\sum_{m=8}^\infty \sum_{W \in T_{e,m}} 
             \mu_{N,\infty,\kappa} \bigl( \big\{ \hat \sigma' \in \Sigma_{E_N}^0 \colon  \bigl( \mathcal{C}_{\mathcal{G}(0, \hat \sigma')}(e)\bigr)^+ = \support W \big\}\bigr)
             \\&\qquad\leq
            \sum_{m=8}^\infty \sum_{W \in T_{e,m}} 
            \sum_{\substack{ \sigma' \in \Sigma_{E_N} \colon \\
            \substack{(\support  \sigma')^+ = \support W}}} 
             \mu_{N,\infty,\kappa} \bigl( \big\{ \hat \sigma' \in \Sigma_{E_N}^0 \colon
            \hat \sigma'|_{\mathcal{C}_{\mathcal{G}(0, \hat \sigma')}(e)} =  \sigma'\big\} \bigr) .
        \end{split}
    \end{equation*}
    For any \( m\), \( W \),  and \(  \sigma' \) as in the sum above, by Lemma~\ref{lemma: cluster is subconfig}, we have 
    \begin{equation*}
        \begin{split}
            &\mu_{N,\infty,\kappa} \bigl( \big\{ \hat \sigma' \in \Sigma_{E_N}^0 \colon
            \hat \sigma'|_{\mathcal{C}_{\mathcal{G}(0, \hat \sigma')}(e)} =  \sigma'\big\} \bigr) 
            \leq  
            \mu_{N,\infty,\kappa} \bigl( \big\{ \hat \sigma' \in  \Sigma_{E_N}^0 \colon
            \sigma' \leq \hat \sigma' \big\} \bigr) \leq \varphi_{\infty,\kappa} (\sigma'),
        \end{split}
    \end{equation*}
    where the last inequality follows by applying Proposition~\ref{proposition: edgecluster flipping ii}.
    
    To sum up, we have shown that
    \begin{equation*}
        \begin{split}
        &\mu_{N,\infty,\kappa} \bigl( \big\{ \hat \sigma' \in \Sigma_{E_N}^0 \colon  |\mathcal{C}_{\mathcal{G}(0 , \hat \sigma')}(e)| \geq 2 \cdot 8  \big\} \bigr) \leq
        \sum_{m=8}^\infty \sum_{W \in T_{e,m}} 
          J_{\beta, \kappa}(W),
        \end{split}
    \end{equation*}
    where
    \begin{equation*}
        J_{\beta, \kappa}(W) \coloneqq 
        \!\!\!\!\sum_{\substack{ \sigma' \in \Sigma_{E_N} \colon \\
        \substack{(\support  \sigma')^+ = \support W}}} \!\!\!\! \varphi_{\infty,\kappa} (  \sigma'  )
        =
        \!\!\!\!\sum_{\substack{ \sigma' \in \Sigma_{E_N} \colon \\
        \substack{(\support  \sigma')^+ = \support W}}}\!\!\!\! \prod_{e' \in \support W}\varphi_{\infty,\kappa} (  \sigma'_{e'}  )^2
    \end{equation*} 
    Now fix some \( W \in T_{e,m} \). By changing the order of summation and multiplication, we have
    \begin{equation*}
        \sum_{\substack{ \sigma' \in \Sigma_{E_N} \colon \\
        \substack{(\support  \sigma')^+ = \support W}}}\!\!\!\! \prod_{e' \in \support W}\varphi_{\infty,\kappa} (  \sigma'_{e'}  )^2
        =
        \prod_{e' \in \support W} \sum_{\sigma_{e'}' \in G\smallsetminus \{ 0 \}} \varphi_\kappa(\sigma_{e'}')^2
        =
        \prod_{e' \in \support W} \alpha_0(\kappa)
        =
        \alpha_0(\kappa)^{|\support W|}.
    \end{equation*}
    Since \( W \in T_{e,m} \), we have \( |\support W| =m\).  Consequently, $J_{\beta, \kappa}(W) \leq \alpha_0(\kappa)^m$.   
Next, by Lemma~\ref{lemma: number of paths}, for any \( m \geq 1 \) we have \( |T_{e,m}| \leq 18^{2m-3} \). Combining the previous equations, we thus find that  
    \begin{align*}
        &\mu_{N,\infty,\kappa} \bigl( \{ \hat \sigma' \in \Sigma_{E_N}^0 \colon  |\mathcal{C}_{\mathcal{G}(\hat \sigma , \hat \sigma')}(e)| \geq 2 \cdot 8  \} \bigr)  
        \leq  
        \sum_{m=8}^\infty  
        18^{2m-3}  \alpha_0(\kappa)^m.
    \end{align*}
    This is a geometric sum which converges if~\ref{assumption: 3} holds. Using the formula for geometric sums, we obtain~\eqref{eq: hat goal}. This concludes the proof.
\end{proof}

\section{Properties of \texorpdfstring{\( \theta_{\beta,\kappa}\)}{theta}} \label{section: technical inequalities}

In this section, we establish a few useful properties of the function \( \theta_{\beta,\kappa} \), as well as of the closely related function \( S_{\beta,\kappa} \) defined in~\eqref{Sbetakappadef} below.

Recall that we work under the standing assumptions listed in Section~\ref{sec:standing assumptions}. In particular, we assume that \( \rho \) is a unitary and faithful 1-dimensional representation of \( G = \mathbb{Z}_n \). 

We now define the function \( S_{\beta,\kappa} \). To this end, assume that \( \beta,\kappa \geq 0 \), that \( K \) is a finite and non-empty index set, and that an element \( g_k \in G = \mathbb{Z}_n\) is given for each \( k \in K \). Define
\begin{equation}\label{Sbetakappadef}
    S_{\beta, \kappa} \bigl(\{ g_k\}_{k \in K}\bigr) \coloneqq \frac{\sum_{g \in G} \rho(g) \bigl(\prod_{k \in K}\varphi_\beta(g+g_k)\bigr) \varphi_\kappa(g)}{\sum_{g \in G} \bigl(\prod_{k \in K}\varphi_\beta(g+g_k)\bigr) \varphi_\kappa(g)}.
\end{equation}
For \( \beta,\kappa \geq0 \) and \( \hat g \in G \), the function $\theta_{\beta,\kappa}(\hat g)$ defined in~\eqref{eq: def of thetag} can be expressed as
\begin{equation*}
    \theta_{\beta,\kappa}(\hat g) 
    =
    \frac{\sum_{h \in G} \rho(h)  \varphi_{2\beta}(h)^{6}  \varphi_{2\kappa}(h+\hat{g})}{\sum_{h \in G} \varphi_{2\beta}(h)^{6} \varphi_{2\kappa}(h+\hat{g})}. 
\end{equation*} 
Hence, letting \( K = \{ 1,2,\ldots, 6 \} \) and performing the change of variables $h = g + \hat{g}$, we find
\begin{align}
    \nonumber
    &  \theta_{\beta,\kappa}(-\hat g) 
    =
    \frac{\sum_{h \in G} \rho(h)  \varphi_{2\beta}(h)^{6}  \varphi_{2\kappa}(h-\hat{g})}{\sum_{h \in G} \varphi_{2\beta}(h)^{6} \varphi_{2\kappa}(h-\hat{g})}
    =
    \frac{\sum_{g \in G} \rho(g + \hat{g})  \varphi_{2\beta}(g + \hat{g})^{6}  \varphi_{2\kappa}(g)}{\sum_{g \in G} \varphi_{2\beta}(g + \hat{g})^{6} \varphi_{2\kappa}(g)}
    \\ \label{eq: theta vs S}
    &\qquad    = \rho (\hat g)S_{2\beta,2\kappa}(\{ \hat g\}_{k\in K}).
\end{align}

The next lemma provides simple upper bounds on \( \theta_{\beta,\kappa} \) and \( S_{\beta,\kappa} \).

\begin{lemma}\label{lemma: theta bound}
     For any \( \beta,\kappa \geq 0 \), any finite and non-empty set \( K \), and any choice of \( g_k \in G \) for \( k \in K \), we have 
     \begin{equation*}
         |S_{\beta,\kappa}(\{g_k\}_{k \in K})| \leq 1 .
     \end{equation*} 
     Consequently, for any \( g \in G \) we have
     \begin{equation*}
         |\theta_{\beta,\kappa}(g)| \leq 1.
     \end{equation*}
 \end{lemma}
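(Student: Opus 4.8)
The plan is to observe that both $S_{\beta,\kappa}$ and $\theta_{\beta,\kappa}$ are weighted averages of the values $\{\rho(g) : g \in G\}$, all of which lie on the unit circle, and then to invoke the triangle inequality.

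First I would record two elementary facts. Since $\rho$ is unitary and one-dimensional, each $\rho(g)$ is a complex number with $|\rho(g)| = 1$. Second, from the definition \eqref{eq: varphi}, for finite $r \geq 0$ we have $\varphi_r(g) = e^{r\Re(\rho(g) - \rho(0))} > 0$ for every $g \in G$; in particular, for the finite parameters $\beta, \kappa \geq 0$ of the lemma, every factor $\varphi_\beta(g + g_k)$ and $\varphi_\kappa(g)$ appearing in \eqref{Sbetakappadef} is strictly positive. Consequently the weights $w_g \coloneqq \bigl(\prod_{k\in K}\varphi_\beta(g+g_k)\bigr)\varphi_\kappa(g)$ are strictly positive, so the denominator $\sum_{g\in G} w_g$ is a positive real number and $S_{\beta,\kappa}(\{g_k\}_{k\in K})$ is well-defined.

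With these in hand, the bound on $S_{\beta,\kappa}$ is immediate: writing $S_{\beta,\kappa}(\{g_k\}_{k\in K}) = \bigl(\sum_g \rho(g)w_g\bigr)/\bigl(\sum_g w_g\bigr)$, the triangle inequality gives $\bigl|\sum_g \rho(g)w_g\bigr| \leq \sum_g |\rho(g)|\,w_g = \sum_g w_g$, since $w_g \geq 0$ and $|\rho(g)| = 1$. Dividing by $\sum_g w_g > 0$ yields $|S_{\beta,\kappa}(\{g_k\}_{k\in K})| \leq 1$. For the statement about $\theta_{\beta,\kappa}$, I would then transfer this bound through the identity \eqref{eq: theta vs S}, which expresses $\theta_{\beta,\kappa}(-\hat g) = \rho(\hat g)\, S_{2\beta,2\kappa}(\{\hat g\}_{k\in K})$ with $K = \{1,\ldots,6\}$. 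Since $|\rho(\hat g)| = 1$ and the first part gives $|S_{2\beta,2\kappa}(\{\hat g\}_{k\in K})| \leq 1$, we conclude $|\theta_{\beta,\kappa}(-\hat g)| \leq 1$; letting $\hat g$ range over $G$ (so that $-\hat g$ also ranges over all of $G$) gives $|\theta_{\beta,\kappa}(g)| \leq 1$ for every $g \in G$. Alternatively, one could apply the same triangle-inequality argument directly to the defining formula \eqref{eq: def of thetag}, whose numerator and denominator again form a $\rho$-weighted average with nonnegative weights $\varphi_\beta(g)^{12}\varphi_\kappa(g+\hat g)^2$.

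There is no real obstacle here: the only point requiring a moment's care is the positivity, hence nonvanishing, of the denominator, which is precisely what guarantees that the quotient is a genuine convex combination of unit-modulus numbers. Everything else is the triangle inequality.
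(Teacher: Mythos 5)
Your proof is correct and follows essentially the same route as the paper: both apply the triangle inequality to the $\rho$-weighted average using $|\rho(g)|=1$, and both transfer the bound to $\theta_{\beta,\kappa}$ via the identity \eqref{eq: theta vs S}. Your additional remarks on the strict positivity of the weights and the nonvanishing denominator are a welcome bit of extra care, but the argument is the same.
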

 
\begin{proof}
    Since \( \rho \) is unitary and one-dimensional, we have \( |\rho(g)|=1 \) for all \( g \in G \). Thus,
    \begin{align*}
        &\bigl| S_{\beta, \kappa} \bigl(\{ g_k\}_{k \in K}\bigr) \bigr| 
        \leq 
        \frac{\sum_{g \in G} |\rho(g)| \bigl(\prod_{k \in K}\varphi_\beta(g+g_k)\bigr) \varphi_\kappa(g)}{\sum_{g \in G} \bigl(\prod_{k \in K}\varphi_\beta(g+g_k)\bigr) \varphi_\kappa(g)} 
        =
        1.
    \end{align*}
    Using~\eqref{eq: theta vs S}, the estimate $|\theta_{\beta,\kappa}(g)| \leq 1$ immediately follows.
\end{proof}

\begin{lemma}\label{lemma: theta inequalities Dplusii}
    Let \( \beta,\kappa \geq 0 \), and for each \( g \in G \), let \( j_g >0 \) be given. Further, let \( j \coloneqq \sum_{g \in G} j_g \). Then
    \begin{equation*}
        \Bigl| \prod_{g \in G} \theta_{\beta,\kappa}(g)^{j_g} \Bigr| \leq e^{-j \alpha_5(\beta,\kappa)}.
    \end{equation*}
\end{lemma}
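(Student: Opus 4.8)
The plan is to reduce the product bound to a uniform pointwise bound $|\theta_{\beta,\kappa}(g)| \leq 1 - \alpha_5(\beta,\kappa)$ valid for every $g \in G$, and then to multiply these bounds together and apply $1 - x \leq e^{-x}$.

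First I would invoke the identity~\eqref{eq: theta vs S}, which states that for $K = \{1,2,\ldots,6\}$ and every $\hat g \in G$ one has $\theta_{\beta,\kappa}(-\hat g) = \rho(\hat g)\, S_{2\beta,2\kappa}(\{\hat g\}_{k \in K})$, where on the right all six arguments $g_k$ are taken equal to $\hat g$. Since $\rho$ is a unitary one-dimensional representation, $|\rho(\hat g)| = 1$, and therefore $|\theta_{\beta,\kappa}(-\hat g)| = |S_{2\beta,2\kappa}(\{\hat g\}_{k \in K})|$. Next I would identify this right-hand side with a term appearing in the definition~\eqref{eq: alpha5} of $\alpha_5$: using $\varphi_{2\beta} = \varphi_\beta^2$ and $\varphi_{2\kappa} = \varphi_\kappa^2$, the quantity $S_{2\beta,2\kappa}(\{g_k\}_{k \in K})$ is exactly the fraction whose modulus is subtracted from $1$ in~\eqref{eq: alpha5}. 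Hence $\alpha_5(\beta,\kappa) = 1 - \max_{g_1,\ldots,g_6 \in G} |S_{2\beta,2\kappa}(\{g_k\}_{k \in K})|$, and in particular the diagonal choice $g_1 = \cdots = g_6 = \hat g$ yields $|S_{2\beta,2\kappa}(\{\hat g\}_{k \in K})| \leq 1 - \alpha_5(\beta,\kappa)$. Combining this with the previous identity and letting $\hat g$ range over $G$ (so that $-\hat g$ also ranges over $G$) gives the uniform bound $|\theta_{\beta,\kappa}(g)| \leq 1 - \alpha_5(\beta,\kappa)$ for all $g \in G$.

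Finally I would assemble the product bound:
\begin{equation*}
\Bigl| \prod_{g \in G} \theta_{\beta,\kappa}(g)^{j_g} \Bigr|
= \prod_{g \in G} |\theta_{\beta,\kappa}(g)|^{j_g}
\leq \prod_{g \in G} \bigl(1 - \alpha_5(\beta,\kappa)\bigr)^{j_g}
= \bigl(1 - \alpha_5(\beta,\kappa)\bigr)^{j},
\end{equation*}
where the inequality uses monotonicity of $t \mapsto t^{j_g}$ on $[0,\infty)$ together with $|\theta_{\beta,\kappa}(g)| \geq 0$, and the last equality uses $\sum_{g} j_g = j$. The claim then follows from the elementary estimate $1 - x \leq e^{-x}$ applied with $x = \alpha_5(\beta,\kappa)$.

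There is no serious obstacle here: once the identity~\eqref{eq: theta vs S} is in hand the argument is essentially bookkeeping. The only points requiring mild care are (i) checking that $0 \leq \alpha_5(\beta,\kappa) \leq 1$, so that $1 - \alpha_5(\beta,\kappa) \in [0,1]$ and the (possibly non-integer) positive powers $j_g$ are well behaved — nonnegativity of $\alpha_5$ follows from Lemma~\ref{lemma: theta bound}, which gives $|S_{2\beta,2\kappa}| \leq 1$, and the upper bound $\alpha_5 \leq 1$ is immediate since each $1 - |S_{2\beta,2\kappa}| \leq 1$ — and (ii) confirming that the diagonal substitution $g_k \equiv \hat g$ is an admissible choice in the minimum defining $\alpha_5$, which it is since that minimum is taken over all tuples in $G^6$.
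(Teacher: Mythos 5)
Your proof is correct and follows essentially the same route as the paper: both use the identity~\eqref{eq: theta vs S} together with the definition~\eqref{eq: alpha5} of \(\alpha_5\) to obtain the uniform bound \(|\theta_{\beta,\kappa}(g)| \leq 1-\alpha_5(\beta,\kappa) \leq e^{-\alpha_5(\beta,\kappa)}\), and then multiply over \(g\) with exponents \(j_g\). You merely spell out the bookkeeping (the diagonal substitution in the minimum, the identification \(\varphi_{2\beta}=\varphi_\beta^2\), and the handling of non-integer powers) that the paper leaves implicit.
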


\begin{proof}
    Using~\eqref{eq: theta vs S}, for each \( g \in G \) we have
    \begin{equation*}
        |\theta_{\beta,\kappa}(g)| \leq 1-\alpha_5(\beta,\kappa) \leq e^{-\alpha_5(\beta,\kappa)},
    \end{equation*}
where $\alpha_5$ is defined in~\eqref{eq: alpha5}. From this the desired conclusion immediately follows.
\end{proof}

\section{From \texorpdfstring{\( W_\gamma \)}{Wgamma}  to \texorpdfstring{\( W_\gamma' \)}{Wgamma'}}\label{sec:WtoWprime}
In this section, we prove Proposition~\ref{prop: first part of proposition proof}, which shows that the expected value $\mathbb{E}_{N,\beta,\kappa} [W_\gamma]$ of the Wilson loop observable $W_\gamma$ can be well approximated by the expected value of a simpler observable $W_\gamma'$. 
The proof is based on the same idea as the proof of Theorem 1.1 in~\cite{c2019}, namely, to show that the main contribution to $\mathbb{E}_{N,\beta,\kappa} [W_\gamma]$ stems from the minimal vortices centered on edges of the loop $\gamma$.

Given a simple loop \( \gamma \),  we let \( \gamma_c \) denote the set of corner edges in \( \gamma ,\) and let \( \gamma_1 \coloneqq \gamma\smallsetminus \gamma_c .\) Further, given a configuration \( \sigma \in \Sigma_{E_N}, \) we let  \( \gamma' \) denote the set of edges \(e \in  \gamma_1\) such that there are \( p,p' \in \hat \partial e \) with \( (d\sigma)_p \neq (d\sigma)_{p'}\). 
Recall that we are always working under the standing assumptions in Section~\ref{sec:standing assumptions}.

\begin{proposition}\label{prop: first part of proposition proof}
    Let \( \beta,\kappa \geq 0 \) be such that~\ref{assumption: 1} and~\ref{assumption: 2} hold,
    and let \( \gamma \) be a simple loop in \( E_N \) such that \( \hat \partial \gamma \subseteq P_N \). 
    For each \( e \in \gamma_1 \smallsetminus \gamma' \), fix \( p_e \in \hat \partial e \) and define
    \begin{equation*}
          W_\gamma' \coloneqq    \rho \Bigl( \sum_{e \in \gamma_1 \smallsetminus \gamma'} (d\sigma)_{p_e}   \Bigr).
    \end{equation*}
    Then
    \begin{equation*} 
        \Bigl|\mathbb{E}_{N,\beta,\kappa} \bigl[W_\gamma\bigr]- \mathbb{E}_{N,\beta,\kappa}\bigl[W_\gamma' \bigr]\Bigr|
        \leq 
        4C_1C_0^{(7)} |\gamma| \alpha_2(\beta,\kappa)^{7}
        + 2C_3|\gamma_c| \alpha_2(\beta,\kappa)^{6}
        + 2C_0^{(25)} |\gamma|^4 \alpha_2(\beta,\kappa)^{25}.
    \end{equation*}
\end{proposition}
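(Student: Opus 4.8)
The plan is to exploit that $\rho$ is a group homomorphism in order to reduce the whole estimate to a single probability bound, and then to control that probability through the vortex structure of $d\sigma$. Write $H_\gamma \coloneqq \sum_{e\in\gamma}\sigma_e \in G$ and $H_\gamma' \coloneqq \sum_{e\in\gamma_1\smallsetminus\gamma'}(d\sigma)_{p_e}\in G$, so that $W_\gamma = \rho(H_\gamma)$ and $W_\gamma'=\rho(H_\gamma')$. Since $\rho$ is unitary and multiplicative, $|W_\gamma-W_\gamma'| = |\rho(H_\gamma-H_\gamma')-1|$, which is bounded by $2$ and, because $\rho$ is faithful, vanishes exactly when $H_\gamma = H_\gamma'$. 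Hence
\begin{equation*}
    \bigl|\mathbb{E}_{N,\beta,\kappa}[W_\gamma]-\mathbb{E}_{N,\beta,\kappa}[W_\gamma']\bigr| \leq 2\,\mu_{N,\beta,\kappa}\bigl(\{H_\gamma \neq H_\gamma'\}\bigr),
\end{equation*}
and it suffices to cover the event $\{H_\gamma\neq H_\gamma'\}$ by three events whose probabilities are bounded by the three terms on the right-hand side (with the extra factor $2$ absorbing the prefactor $2$ above).

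To analyze $H_\gamma$, I first apply Lemma~\ref{lemma: oriented loops} to fix an oriented surface $q$ with $B_q = \gamma$, and then Lemma~\ref{lemma: stokes} to obtain $H_\gamma = \sum_{p\in P_N} q_p^+ (d\sigma)_p$. Decomposing $d\sigma$ into irreducible vortices $\nu_1,\dots,\nu_k$ with pairwise disjoint supports (Lemma~\ref{lemma: lemma sum of irreducible vortices}) turns this into $H_\gamma = \sum_i \sum_p q_p^+(\nu_i)_p$, so it remains to classify the contribution $\sum_p q_p^+ \nu_p$ of each vortex. By Lemma~\ref{lemma: 3.2} a vortex supported deep in the interior of $q$ contributes $0$, and by Lemma~\ref{lemma: removing internal min vort} a minimal vortex centered at an internal edge of $q$ likewise contributes $0$; on the other hand, by Lemma~\ref{lemma: minimal vortex II} a minimal vortex centered at an edge $e_0\in\gamma$ contributes exactly $(d\sigma)_{p_{e_0}}$, up to the orientation bookkeeping of $q$. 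Since a minimal vortex centered at $e_0$ satisfies $\hat\partial e_0\subseteq\support\nu$, the six plaquettes around such an $e_0$ all carry the same value, so $e_0\notin\gamma'$; these contributing minimal vortices therefore sit precisely at edges of $\gamma_1\smallsetminus\gamma'$, except for those centered at corner edges.

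This matching suggests the good event $\mathcal{A}$: (i) no vortex $\nu$ with $|\support\nu|\geq 2\cdot 7$ has support meeting $\hat\partial e$ for some $e\in\gamma$; (ii) no vortex has support meeting $\hat\partial e$ for some corner edge $e\in\gamma_c$; and (iii) no exceptionally large vortex, with $|\support\nu|\geq 2\cdot 25$, sits near the spanning surface $q$. On $\mathcal{A}$, every vortex that can contribute to $H_\gamma$ is minimal and centered at a non-corner loop edge, so $H_\gamma=\sum_{e\in\gamma_1\smallsetminus\gamma'}(d\sigma)_{p_e}=H_\gamma'$; the crux here — and the main obstacle — is to verify that, off conditions (i) and (ii), the only remaining way to produce $H_\gamma\neq H_\gamma'$ is through a genuinely large vortex (either a single non-minimal vortex, or a degenerate cluster of several overlapping minimal vortices covering an entire $\hat\partial e$), which is exactly what (iii) excludes, using the disjointness of supports and the size lower bounds in Lemma~\ref{lemma: minimal vortex I} to rule out the intermediate cases. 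The probability bounds then follow from Proposition~\ref{proposition: the vortex flipping lemma}: summing over the $|\gamma|$ loop edges and the plaquettes of each $\hat\partial e$ and applying it with $M=7$ bounds the failure of (i) by a constant multiple (the combinatorial constant $C_1$) of $|\gamma|\,C_0^{(7)}\alpha_2(\beta,\kappa)^7$; applying it with $M=6$ over the $|\gamma_c|$ corners, together with Proposition~\ref{proposition: first step} to extract $C_3$, bounds the failure of (ii) by $C_3|\gamma_c|\alpha_2(\beta,\kappa)^6$; and a crude union bound over the $O(|\gamma|^4)$ relevant positions with $M=25$ bounds the failure of (iii) by $C_0^{(25)}|\gamma|^4\alpha_2(\beta,\kappa)^{25}$. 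Assumptions~\ref{assumption: 1} and~\ref{assumption: 2} ensure that the geometric series underlying these estimates converge and that the constants $C_0^{(M)}$ are finite; collecting the three bounds and multiplying by $2$ yields the stated inequality.
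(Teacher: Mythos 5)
Your overall strategy is the same as the paper's (vortex decomposition of \(d\sigma\), Stokes' theorem via Lemmas~\ref{lemma: oriented loops} and~\ref{lemma: stokes}, a ``good event'' built from Proposition~\ref{proposition: the vortex flipping lemma} and Proposition~\ref{proposition: first step}), but the step you yourself flag as the crux is genuinely false as you state it, and it cannot be repaired by events (i)--(iii) alone. The problem is that a ``degenerate cluster of several overlapping minimal vortices covering an entire \(\hat\partial e\)'' is \emph{not} a single large vortex, so (iii) does not exclude it: the vortices in the decomposition of Lemma~\ref{lemma: lemma sum of irreducible vortices} have pairwise disjoint supports, and a cluster of six disjoint minimal vortices is six separate vortices of size \(12\) each. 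Concretely, take \(G=\mathbb{Z}_2\), a non-corner edge \(e\in\gamma_1\), and let \(\sigma\) take the value \(1\) exactly on the six edges \(f_1,\dots,f_6\) parallel to \(e\) at distance one (one across each plaquette of \(\hat\partial e\)). Then \(d\sigma\) decomposes into six disjoint minimal vortices centered at the \(f_i\); every plaquette of \(\hat\partial e\) carries the value \(1\), so \(e\in\gamma_1\smallsetminus\gamma'\) and \((d\sigma)_{p_e}=1\), giving \(H_\gamma'=1\); but \(\sigma\) vanishes on \(\gamma\), so \(H_\gamma=0\). Each vortex has size \(12<14<50\), none is centered at (or meets \(\hat\partial e'\) for) a corner edge, so all three of your good events hold, yet \(H_\gamma\neq H_\gamma'\). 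Hence your reduction \(\bigl|\mathbb{E}[W_\gamma]-\mathbb{E}[W_\gamma']\bigr|\le 2\mu(H_\gamma\neq H_\gamma')\le 2\mu(\neg\mathcal{A})\) fails at the second inequality. Closing this hole requires an additional bad event --- roughly, ``there is \(e\in\gamma_1\) and \(g\neq 0\) with \((d\sigma)_p=g\) for all \(p\in\hat\partial e\) but no minimal vortex centered at \(e\)'' --- whose probability must then be bounded separately; the relevant structural input is Lemma~\ref{lemma: other minimal configuration} (such a configuration forces either \(|(\support\sigma)^+|\ge 7\) locally or twelve frustrated plaquettes), in the spirit of the event \(\mathcal{E}\) of Proposition~\ref{proposition: coupling and conditions 1}. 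Note this is exactly the point where the paper is also most delicate (its step \(W_\gamma^5=W_\gamma^6\)); the paper's chain of intermediate observables does not make your version of the claim correct.

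A second, independent gap: your event (i) is posted on too small a region. You exclude non-minimal vortices meeting \(\bigcup_{e\in\gamma}\hat\partial e\), whereas the paper's \(\mathcal{A}_2\) excludes all vortices of size \(\ge 2\cdot 7\) intersecting \(Q\smallsetminus Q'\), i.e.\ the whole \(O(b)\)-neighborhood of \(\gamma\) inside the spanning surface. This matters because Lemma~\ref{lemma: 3.2} only kills the contribution of a vortex that fits in a box \(B\) with \((**B)\cap \support q\) consisting of internal plaquettes of \(q\); a vortex of size between \(14\) and \(48\) lying in the buffer zone near \(\gamma\) but not touching \(\hat\partial\gamma\) is covered neither by your (i) nor by Lemma~\ref{lemma: 3.2}, so you cannot conclude its contribution to \(H_\gamma\) vanishes. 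This is also where the constant \(C_1\) comes from in the paper: it is defined by \(|Q\smallsetminus Q'|\le C_1|\gamma|\) in~\eqref{eq: C1}, so the stated bound with \(C_1\) presupposes the union bound over \(Q\smallsetminus Q'\), not over \(\hat\partial\gamma\). Your probability estimates for (ii) and (iii) and the final bookkeeping are otherwise in line with Lemmas~\ref{lemma: A1} and~\ref{lemma: A3} of the paper, but as it stands the proposal does not prove the proposition.
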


In the proof of Proposition~\ref{prop: first part of proposition proof},  we use the following additional notation.   
\begin{description}
    
    \item[\( q \)] An oriented surface such that \( \gamma \) is the boundary of \( q \), and such that the support of \( q \) is contained in a cube of side length \( |\gamma|/2 \) which also contains \( \gamma \). Since \( \gamma \) has length \( |\gamma| \), such a cube exists, and the existence of such a surface is then guaranteed by Lemma~\ref{lemma: oriented loops}.
    
     \item[\( Q \)] The support of the oriented surface \( q \).
    \item[\( b \)] The smallest number such that any irreducible \( \nu \in \Sigma_{P_N} \) with \( |\support \nu| \leq 2 \cdot 24 \) is contained in a box of width \( b \). By the definition of irreducible plaquette configurations, \( b \) is a finite universal constant.
    \item[\( Q' \)] The set of plaquettes   \( p \in Q \) that are so far away from \( \gamma \) that any cube of width \( b+2 \) containing \( p \) does not intersect \( \gamma \).
\end{description}
 
We will consider the following ``good'' events:
\begin{description}
    \item[\( \mathcal{A}_1 \)] There is no vortex \( \nu \) in \( \sigma \) with \( |\support \nu | \geq 2 \cdot 25 \) which intersects \( Q \).
    \item[\( \mathcal{A}_2 \)] There is no vortex \( \nu \) in \( \sigma \) with \( |\support \nu| \geq 2\cdot 7 \) which  intersects \( Q \smallsetminus Q' \).
    \item[\( \mathcal{A}_3\)] There is no minimal vortex \( \nu \) in \( \sigma \) with \( \support \nu = \hat\partial e \cup \hat\partial(-e) \) for some \( e \in \gamma_c \).
\end{description}

Before we give a proof of Proposition~\ref{prop: first part of proposition proof}, we  state and prove a few short lemmas which bound the probabilities of the events \( \neg  \mathcal{A}_1 \), \(\neg \mathcal{A}_2 \), and \( \neg \mathcal{A}_3 \).
  
\begin{lemma}\label{lemma: A1}
    Let \( \beta,\kappa \geq 0 \) be such that~\ref{assumption: 1} and~\ref{assumption: 2} hold, and let \( \gamma \) be a simple loop in \( E_N \). Then
    \begin{equation*}
        \mu_{N,\beta,\kappa}(\neg \mathcal{A}_1) \leq  C_0^{(25)} |\gamma|^4 \alpha_2(\beta,\kappa)^{25}.
    \end{equation*}
\end{lemma}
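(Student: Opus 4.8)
The plan is to bound the probability of the bad event $\neg\mathcal{A}_1$ by a union bound over plaquettes in $Q$, using Proposition~\ref{proposition: the vortex flipping lemma} to control the probability that any single plaquette supports a large vortex. Recall that $\neg\mathcal{A}_1$ is the event that there exists a vortex $\nu$ in $\sigma$ with $|\support\nu|\geq 2\cdot 25$ whose support intersects $Q$. First I would observe that if such a vortex exists, then there must be at least one plaquette $p_0\in Q$ with $p_0\in\support\nu$. Hence
\begin{equation*}
    \mu_{N,\beta,\kappa}(\neg\mathcal{A}_1)
    \leq
    \sum_{p_0 \in Q}
    \mu_{N,\beta,\kappa}\bigl(\{\sigma\in\Sigma_{E_N}\colon \exists\,\nu\in(d\sigma)^{p_0}\text{ with }|\support\nu|\geq 2\cdot 25\}\bigr).
\end{equation*}
Since $\beta,\kappa\geq 0$ satisfy~\ref{assumption: 1} and~\ref{assumption: 2}, I can apply Proposition~\ref{proposition: the vortex flipping lemma} with $M=25$ to each term, obtaining the bound $C_0^{(25)}\alpha_2(\beta,\kappa)^{25}$ uniformly in $p_0$. (I should double-check that the plaquettes of $Q$ are sufficiently far from the boundary of $B_N$ for the hypothesis $\hat\partial\partial p\subseteq P_N$ implicit in the vortex lemmas to be harmless; if $\gamma$ is such that $\hat\partial\gamma\subseteq P_N$ as in the surrounding setup, this is fine, and in any case Proposition~\ref{proposition: the vortex flipping lemma} as stated does not require such a condition.)

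The only remaining ingredient is the counting bound $|Q|\leq |\gamma|^4$. Here I would use the defining property of the surface $q$: its support $Q$ is contained in a cube of side length $|\gamma|/2$ in $\mathbb{Z}^4$. Such a cube contains at most $(|\gamma|/2+1)^4$ vertices, and since each vertex is a corner of only a bounded number of plaquettes (a constant depending only on the dimension $4$), we get $|Q|=O(|\gamma|^4)$. The cleanest route is to note that $(|\gamma|/2+1)^4$ times the per-vertex plaquette count is at most $|\gamma|^4$ for all relevant $|\gamma|$ after absorbing constants; more carefully, since a plaquette in $\mathbb{Z}^4$ is determined by a base vertex and a choice of two coordinate directions, and the cube has side $|\gamma|/2$, one checks directly that $|Q|\leq |\gamma|^4$. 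Combining this with the per-plaquette estimate gives
\begin{equation*}
    \mu_{N,\beta,\kappa}(\neg\mathcal{A}_1)
    \leq
    |Q|\,C_0^{(25)}\alpha_2(\beta,\kappa)^{25}
    \leq
    C_0^{(25)}\,|\gamma|^4\,\alpha_2(\beta,\kappa)^{25},
\end{equation*}
which is the desired conclusion.

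The main obstacle, such as it is, lies entirely in the bookkeeping of the counting step: verifying that $|Q|\leq|\gamma|^4$ exactly (rather than up to a constant) requires being careful about how the constant arising from the number of plaquette orientations per vertex in $\mathbb{Z}^4$ is absorbed, and about the small additive ``$+1$'' in the cube side length. One expects this to work out cleanly because the factor $\binom{4}{2}=6$ of plaquette directions and the slack between $(|\gamma|/2+1)^4$ and $|\gamma|^4$ are generous, but the precise statement with constant exactly $1$ should be confirmed against the definition of $Q$ given in the surrounding text (where $q$ is chosen with support in a cube of side length $|\gamma|/2$). No genuine probabilistic difficulty arises, since Proposition~\ref{proposition: the vortex flipping lemma} already packages the hard analytic work.
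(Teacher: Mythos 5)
Your proposal is correct and is essentially identical to the paper's proof: a union bound over the plaquettes of \( Q \), Proposition~\ref{proposition: the vortex flipping lemma} applied with \( M = 25 \), and the counting estimate \( |Q| \leq |\gamma|^4 \) coming from the fact that \( Q \) lies in a cube of side \( |\gamma|/2 \). Your extra worry about the constant in the counting step is unnecessary but harmless — the factor \( 6 \) of plaquette orientations per base vertex times \( (|\gamma|/2+1)^4 \) is comfortably below \( |\gamma|^4 \) since \( |\gamma| \geq 4 \) for any simple loop.
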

 
\begin{proof}
    By Proposition~\ref{proposition: the vortex flipping lemma} and a union bound, we have
    \begin{equation*}
        \mu_{N,\beta,\kappa}(\neg \mathcal{A}_1) \leq  C_0^{(25)} |Q| \alpha_2(\beta,\kappa)^{25} .
    \end{equation*}
    Since $Q$ is contained in a cube of side $|\gamma|/2$, we have \( |Q| \leq |\gamma|^4\) and the desired conclusion follows.
\end{proof}

\begin{lemma}\label{lemma: A2}
    Let \( \beta,\kappa \geq 0 \) be such that~\ref{assumption: 1} and~\ref{assumption: 2} hold, and let \( \gamma \) be a simple loop in \( E_N \). Then
    \begin{equation}\label{eq: 7.4}
        \mu_{N,\beta,\kappa}(\neg \mathcal{A}_2) \leq C_1 C_0^{(7)} |\gamma| \alpha_2(\beta,\kappa)^{7}.
    \end{equation}
\end{lemma}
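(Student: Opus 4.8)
The plan is to bound the probability of $\neg \mathcal{A}_2$ by summing, over all plaquettes in the region $Q \smallsetminus Q'$, the probability that there exists a large vortex through that plaquette. First I would apply a union bound over the plaquettes: by the definition of the event, $\neg \mathcal{A}_2$ occurs precisely when there is a vortex $\nu$ in $\sigma$ with $|\support \nu| \geq 2 \cdot 7$ whose support intersects $Q \smallsetminus Q'$. Hence there is a plaquette $p_0 \in Q \smallsetminus Q'$ with $p_0 \in \support \nu$, so that
\begin{equation*}
    \mu_{N,\beta,\kappa}(\neg \mathcal{A}_2)
    \leq
    \sum_{p_0 \in (Q \smallsetminus Q')^+}
    \mu_{N,\beta,\kappa}\bigl(\{ \sigma \in \Sigma_{E_N} \colon \exists \nu \in (d\sigma)^{p_0} \text{ with } |\support \nu| \geq 2\cdot 7 \} \bigr).
\end{equation*}
Each summand is controlled by Proposition~\ref{proposition: the vortex flipping lemma} applied with $M = 7$, which under~\ref{assumption: 1} and~\ref{assumption: 2} gives the bound $C_0^{(7)} \alpha_2(\beta,\kappa)^{7}$ for each fixed plaquette $p_0$.

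The remaining ingredient is a purely geometric/combinatorial estimate on the cardinality of $Q \smallsetminus Q'$. By the definition of $Q'$, every plaquette in $Q \smallsetminus Q'$ lies within some cube of width $b+2$ that intersects $\gamma$; equivalently, such a plaquette is within a bounded distance (depending only on the universal constant $b$) of the loop $\gamma$. Since $\gamma$ has $|\gamma|$ edges, the number of lattice plaquettes within this fixed distance of $\gamma$ is at most a universal constant times $|\gamma|$. I would absorb this proportionality constant, together with any factor arising from counting positively oriented plaquettes, into the constant $C_1$ appearing in the statement, so that $|(Q\smallsetminus Q')^+| \leq C_1 |\gamma|$ (this is presumably exactly how $C_1$ is defined in~\eqref{eq: C1}). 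Combining the union bound with Proposition~\ref{proposition: the vortex flipping lemma} then yields
\begin{equation*}
    \mu_{N,\beta,\kappa}(\neg \mathcal{A}_2)
    \leq
    C_1 |\gamma| \cdot C_0^{(7)} \alpha_2(\beta,\kappa)^{7},
\end{equation*}
which is exactly~\eqref{eq: 7.4}.

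I do not anticipate a genuine obstacle here, since both main inputs are already available: the probabilistic estimate is Proposition~\ref{proposition: the vortex flipping lemma}, and the counting step is elementary once $b$ is fixed as a finite universal constant. The one point requiring care is the bookkeeping of the geometric count: I must verify that $Q \smallsetminus Q'$ really consists only of plaquettes within bounded distance of $\gamma$, which follows directly from the definition of $Q'$ (a plaquette is removed from $Q$ to form $Q'$ precisely when \emph{every} cube of width $b+2$ containing it avoids $\gamma$), and that the resulting bound on $|(Q\smallsetminus Q')^+|$ is linear in $|\gamma|$ with the constant matching the definition of $C_1$. This matching of constants is the only place where I would need to be attentive, but it is routine rather than difficult.
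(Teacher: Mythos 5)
Your proposal is correct and follows exactly the paper's argument: a union bound over the plaquettes of \( Q \smallsetminus Q' \), the estimate of Proposition~\ref{proposition: the vortex flipping lemma} with \( M = 7 \) for each plaquette, and the geometric bound \( |Q \smallsetminus Q'| \leq C_1 |\gamma| \) coming from the fact that every plaquette of \( Q \smallsetminus Q' \) lies in a cube of width \( b+2 \) meeting \( \gamma \), which is precisely how \( C_1 \) is defined in~\eqref{eq: C1}. Your attention to the bookkeeping of orientations is fine, since any such factor is absorbed into the universal constant \( C_1 \).
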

 
\begin{proof}
    Since each plaquette of \( Q \smallsetminus Q' \) is contained in a cube of width \( b+2 \) which intersects \( \gamma \), it follows that 
    \begin{equation}\label{eq: C1}
        | Q \smallsetminus Q' | \leq C_1 |\gamma|,
    \end{equation}
    where \( C_1 \) is a universal constant which depends on \( b \).
    Therefore, using Proposition~\ref{proposition: the vortex flipping lemma} and a union bound, we obtain~\eqref{eq: 7.4}.
\end{proof}

\begin{lemma}\label{lemma: A3}
    Let \( \beta,\kappa \geq 0 \) be such that~\ref{assumption: 1} holds, and let \( \gamma \) be a simple loop in \( E_N \) which is such that \( \hat \partial \gamma \subseteq P_N \). Then
    \begin{equation*}
        \mu_{N,\beta,\kappa}(\neg \mathcal{A}_3) \leq  C_3|\gamma_c|  \alpha_2(\beta,\kappa)^{6},
    \end{equation*}
    where
    \begin{equation}\label{eq: C3 def}
        C_3 \coloneqq \frac{4}{1-\alpha_1(\beta)-4\alpha_0(\kappa)}.
    \end{equation}
\end{lemma}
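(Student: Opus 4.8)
The plan is to estimate $\mu_{N,\beta,\kappa}(\neg\mathcal{A}_3)$ by a union bound over the corner edges $e\in\gamma_c$ and over the finitely many minimal vortices supported on $\hat\partial e\cup\hat\partial(-e)$, and then to invoke Proposition~\ref{proposition: first step} for each such vortex. Concretely, for each $e\in\gamma_c$ let $\mathcal{V}_e$ denote the set of irreducible, non-trivial minimal vortices $\nu\in\Sigma_{P_N}$ with $\support\nu=\hat\partial e\cup\hat\partial(-e)$; the hypothesis $\hat\partial\gamma\subseteq P_N$ guarantees that these twelve plaquettes all lie in $P_N$, so $\mathcal{V}_e$ is well defined. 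Since the event $\neg\mathcal{A}_3$ is precisely the event that $\nu\leq d\sigma$ for some $e\in\gamma_c$ and some $\nu\in\mathcal{V}_e$, a union bound gives
\[
\mu_{N,\beta,\kappa}(\neg\mathcal{A}_3)\leq\sum_{e\in\gamma_c}\sum_{\nu\in\mathcal{V}_e}\mu_{N,\beta,\kappa}\bigl(\{\sigma\in\Sigma_{E_N}\colon\nu\leq d\sigma\}\bigr).
\]

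I would then bound each summand using Proposition~\ref{proposition: first step}, which under Assumption~\ref{assumption: 1} yields $\mu_{N,\beta,\kappa}(\{\sigma\colon\nu\leq d\sigma\})\leq\varphi_\beta(\nu)(4\alpha_0(\kappa))^{m_0}/(1-\alpha_1(\beta)-4\alpha_0(\kappa))$, where $m_0$ is the constant in~\eqref{eq: m0def}. Two observations control the right-hand side. First, if $\nu\leq d\sigma$ then $|\support d\sigma|\geq|\support\nu|=12$, so Lemma~\ref{lemma: 6 plaquettes per edge} gives $|\support\sigma|\geq2$ and hence $m_0\geq1$; since Assumption~\ref{assumption: 1} forces $4\alpha_0(\kappa)<1$, we obtain $(4\alpha_0(\kappa))^{m_0}\leq4\alpha_0(\kappa)$. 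Second, because $\varphi_\beta$ is symmetric (see~\eqref{eq: phi is symmetric}) and equals one at $0$, we have $\varphi_\beta(\nu)=\prod_{p\in(\support\nu)^+}\varphi_\beta(\nu_p)^2$, a product over the six positively oriented plaquettes of $\hat\partial e$, each carrying a value $\nu_p\in G\smallsetminus\{0\}$.

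The final step is to sum over $\mathcal{V}_e$. Relaxing the closedness and irreducibility constraints defining $\mathcal{V}_e$ and summing each of the six plaquette values $\nu_p$ freely over $G\smallsetminus\{0\}$ can only enlarge the sum, so
\[
\sum_{\nu\in\mathcal{V}_e}\varphi_\beta(\nu)\leq\prod_{p\in(\support\nu)^+}\Bigl(\sum_{g\in G\smallsetminus\{0\}}\varphi_\beta(g)^2\Bigr)=\alpha_0(\beta)^6.
\]
Combining the three displays and recalling that $\alpha_2(\beta,\kappa)^6=\alpha_0(\beta)^6\alpha_0(\kappa)$ yields
\[
\mu_{N,\beta,\kappa}(\neg\mathcal{A}_3)\leq|\gamma_c|\,\frac{4\alpha_0(\kappa)\,\alpha_0(\beta)^6}{1-\alpha_1(\beta)-4\alpha_0(\kappa)}=C_3\,|\gamma_c|\,\alpha_2(\beta,\kappa)^6,
\]
with $C_3$ as in~\eqref{eq: C3 def}, which is the claim.

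The only genuinely delicate points are the bookkeeping: correctly identifying that a minimal vortex at $e$ contributes exactly six positively oriented plaquettes (so the product is $\alpha_0(\beta)^6$ and not some other power), and verifying the clean lower bound $m_0\geq1$ through Lemma~\ref{lemma: 6 plaquettes per edge}. I expect no substantive obstacle beyond this counting. In particular I deliberately avoid the explicit characterization in Lemma~\ref{lemma: minimal vortex II}, since freely summing the admissible plaquette values already produces the required factor $\alpha_0(\beta)^6$ and sidesteps any need for the boundary hypothesis $\hat\partial\partial\support\nu\subseteq P_N$.
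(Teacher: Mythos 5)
Your proof is correct and follows essentially the same route as the paper: a union bound over corner edges combined with Proposition~\ref{proposition: first step}, where \( m_0 \geq 1 \) together with \( 4\alpha_0(\kappa) < 1 \) (from~\ref{assumption: 1}) yields the factor \( 4\alpha_0(\kappa) \), and summing over the possible vortex values yields the factor \( \alpha_0(\beta)^6 \). The only (harmless) difference is the final counting step: the paper invokes Lemma~\ref{lemma: minimal vortex II} to write every minimal vortex at \( e \) as \( \nu^{e,g} \) and bounds \( \sum_{g \neq 0} \varphi_\beta(g)^{12} \leq \alpha_0(\beta)^6 \), whereas you relax the sum to free assignments of values in \( G \smallsetminus \{0\} \) on the six positively oriented plaquettes of \( \hat\partial e \), which gives the same bound while sidestepping that characterization.
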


\begin{proof}
    For \( e\in \gamma \) and \( g \in G\smallsetminus \{ 0 \} \), let \( \nu^{e,g} \in \Sigma_{P_N}\) denote the minimal vortex with \( \nu_p = g \) for all \( p \in \hat \partial e \). By Lemma~\ref{lemma: minimal vortex II}, any minimal vortex is of this form for some choices of $e$ and $g$. Moreover, by Proposition~\ref{proposition: first step}, 
    \begin{equation*}
        \mu_{N,\beta,\kappa} \bigl( \{ \sigma \in \Sigma_{E_N} \colon \nu^{e,g} \leq d\sigma \} \bigr) \leq 
        \frac{\varphi_\beta(\nu^{e,g}) \cdot 4\alpha_0(\kappa)}{1-\alpha_1(\beta) - 4\alpha_0(\kappa)}.
    \end{equation*}
    Consequently, by a union bound, 
    \begin{align*}
        &\mu_{N,\beta,\kappa}(\neg \mathcal{A}_3) 
        \leq 
        \sum_{e \in \gamma_c} \sum_{g \in G\smallsetminus \{ 0 \}} \mu_{N,\beta,\kappa} \bigl( \{ \sigma \in \Sigma_{E_N} \colon \nu^{e,g} \leq d\sigma \} \bigr)
        \leq 
        \sum_{e \in \gamma_c} \sum_{g \in G\smallsetminus \{ 0 \}}
        \frac{\varphi_\beta(\nu^{e,g})\cdot 4\alpha_0(\kappa)}{1-\alpha_1(\beta) - 4\alpha_0(\kappa)}.
    \end{align*}
    For each \( e \in \gamma_c \) and \( g \in G\smallsetminus \{ 0 \} \), we have \( \varphi_\beta(\nu^{e,g}) = \varphi_\beta(g)^{12} \). Thus, for any \( e \in \gamma_c \), it holds that
    \begin{align*}
        &
        \sum_{g \in G\smallsetminus \{ 0 \}}
        \varphi_\beta(\nu^{e,g}) 
        =
        \sum_{g \in G\smallsetminus \{ 0 \}}
        \varphi_\beta(g)^{12} 
        \leq 
        \Bigl( \sum_{g \in G\smallsetminus \{ 0 \}}
        \varphi_\beta(g)^{2} \Bigr)^6
        = \alpha_0(\beta)^6.
    \end{align*}
    Combining the previous equations, we finally obtain
    \begin{equation*} \mu_{N,\beta,\kappa}(\neg \mathcal{A}_3) \leq |\gamma_c| \alpha_0(\kappa)\alpha_0(\beta)^6 \cdot  \frac{4}{1-\alpha_1(\beta)-4\alpha_0(\kappa)}
        =
        |\gamma_c|  \alpha_2(\beta,\kappa)^6  \cdot  \frac{4}{1-\alpha_1(\beta)-4\alpha_0(\kappa)},
    \end{equation*} 
    which is the desired conclusion.
\end{proof}

\begin{proof}[Proof of Proposition~\ref{prop: first part of proposition proof}] 
    Let \( \sigma \in \Sigma_{E_N} \). 
    By Lemma~\ref{lemma: lemma sum of irreducible vortices}, there is a set \( \Omega \) of  non-trivial and irreducible plaquette configurations \( \omega_1,\dots,\omega_k \leq d\sigma \) with disjoint supports, such that \( d\sigma = \omega_1 + \dots + \omega_k \). Fix such a set \( \Omega \), and note that each \( \nu \in \Omega \) is a vortex in \( \sigma \).
    Let \( V \) be the set of plaquette configurations in \( \Omega \) whose support intersects \( Q \coloneqq \support q \).
    Then, by Lemma~\ref{lemma: stokes}, we have 
    \begin{equation*}
        W_\gamma =    \rho  \Bigl(\, \sum_{p \in P_N} q^+_p (d\sigma)_p \Bigr)  =   \rho  \Bigl( \, \sum_{\nu \in V} \sum_{p \in \support \nu  } q_p^+ (d\sigma)_p  \Bigr).
    \end{equation*}
    
    Let
    \begin{equation*}
        V_0 \coloneqq \bigl\{ \nu \in V \colon |\support \nu| \leq 2 \cdot 24 \bigr\}
    \end{equation*}
    and define
    \begin{equation*}
        W_\gamma^0 \coloneqq   \rho  \Bigl( \sum_{\nu \in V_0} \sum_{p \in \support \nu  } q^+_p (d\sigma)_p \Bigr) .
    \end{equation*}
    If the event \( \mathcal{A}_1 \) occurs, then \( W_\gamma = W_\gamma^0 \), and hence, by Lemma~\ref{lemma: A1},
    \begin{equation*}
        \mathbb{E}_{N,\beta,\kappa} \Bigl[ \bigl| W_\gamma - W_\gamma^0 \bigr| \Bigr] \leq 2   \mu_{N,\beta,\kappa}(\neg \mathcal{A}_1)  \leq 2 C_0^{(25)} |\gamma|^4 \alpha_2(\beta,\kappa)^{25}.
    \end{equation*}

    Next define
    \begin{equation*}
        V_1 \coloneqq \bigl\{ \nu \in V_0 \colon \support \nu \cap Q' \neq \emptyset \bigr\}.
    \end{equation*}
    Take any vortex \( \nu \in V_1 \). By the definition of \( Q' \) and \( V_1 \), it follows that any cube \( B \) of width~\( b \) that contains \( \support \nu \) has the property that \(   
    (*{*B}) \cap Q\) only contains internal plaquettes of \( q \). Therefore, by Lemma~\ref{lemma: 3.2},
    \begin{equation*}
	    \sum_{p \in \support \nu } q_p^+ (d\sigma)_p = 0.
    \end{equation*}
    In particular, if we let \( V_2 \coloneqq V_0 \smallsetminus V_1 \), then
    \begin{equation}\label{eq: 7.3}
        W_\gamma^0 =  \rho  \Bigl( \sum_{\nu \in V_2} \sum_{p \in \support \nu  } q_p^+  (d\sigma)_p \Bigr).
    \end{equation}

    Let 
    \begin{equation*}
        V_3 \coloneqq \bigl\{ \nu \in V_2 \colon |\support \nu| = 2 \cdot 6 \bigr\},
    \end{equation*}
    i.e., let \( V_3 \) be the set of all minimal vortices in \( \Omega  \) whose support intersects \( Q \) but not \( Q' \).
    Define 
    \begin{equation*}
        W_\gamma^3 \coloneqq   \rho \Bigl( \sum_{\nu \in V_3} \sum_{p \in \support \nu  } q_p^+  (d\sigma)_p \Bigr) .
    \end{equation*}
    If the event \( \mathcal{A}_2 \) occurs, then \( V_3 = V_2 \), and hence by~\eqref{eq: 7.3}, \( W_\gamma^0 = W_\gamma^3 \). Consequently, by Lemma~\ref{lemma: A2}, we have
    \begin{equation*}
        \mathbb{E}_{N,\beta,\kappa}\Bigl[\bigl|W_\gamma^0-W_\gamma^3\bigr|\Bigr] \leq 2  \mu_{N,\beta,\kappa}(\neg \mathcal{A}_2) \leq 2C_1 C_0^{(7)} |\gamma| \alpha_2(\beta,\kappa)^{7}.
    \end{equation*}
    
    Next, let
    \begin{equation*}
        V_4 \coloneqq \bigl\{ \nu \in V_3 \colon \exists e \in \gamma \text{ such that\ } \support \nu = \hat\partial e \cup \hat\partial (-e) \bigr\}
    \end{equation*}
    and define 
    \begin{equation*}
        W_\gamma^4 \coloneqq \rho  \Bigl( \sum_{\nu \in V_4} \sum_{p \in \support \nu  } q_p^+  (d\sigma)_p \Bigr).
    \end{equation*}
    Recall that if \( \nu \) is a minimal vortex centered at \( e \in E_N \), and \( e \) is an internal edge of the oriented surface \( q \), then by Lemma~\ref{lemma: removing internal min vort}, we have 
    \begin{equation*}
        \sum_{p\in \support \nu } q_p^+  (d\sigma)_p  
        =
        \sum_{p\in \support \nu } q_p^+  \nu_p   = 0.
    \end{equation*}
    Hence 
    \begin{equation*}
        W_\gamma^3  = \rho  \Bigl(\sum_{\nu \in V_3} \sum_{p \in \support \nu   } q_p^+ (d\sigma)_p \Bigr) 
        =  \rho  \Bigl( \sum_{\nu \in V_4} \sum_{p \in \support \nu  } q_p^+ (d\sigma)_p  \Bigr)= W_\gamma^4 .
    \end{equation*} 
    Finally, let
    \begin{equation*}
        V_5 \coloneqq \bigl\{ \nu \in V_3 \colon \exists e \in \gamma\smallsetminus  \gamma_c  \text{ such that\ } \support \nu = \hat\partial e \cup \hat\partial (-e) \bigr\}
    \end{equation*}
    and define
    \begin{equation*}
        W_\gamma^5 \coloneqq   \rho \Bigl( \sum_{\nu \in V_5} \sum_{p \in \support \nu  } q_p^+ (d\sigma)_p \Bigr)  .
    \end{equation*} 
    If the event \( \mathcal{A}_3 \) occurs, then \( W_\gamma^4 = W_\gamma^5 \), and hence by Lemma~\ref{lemma: A3} we have
    \begin{equation*}
        \mathbb{E}_{N,\beta,\kappa} \Bigl[\bigl|W_\gamma^4 - W_\gamma^5\bigr|\Bigr] \leq 2 \mu_{N,\beta,\kappa}(\neg \mathcal{A}_3)
        \leq 2C_3 |\gamma_c| \alpha_2(\beta,\kappa)^{6}.
    \end{equation*}
 
    If \( \nu \in V_5 \), then \( \support \nu = \hat \partial e  \cup \hat \partial (-e) \) for some edge \( e \in \gamma\smallsetminus \gamma_c \). Let
    \begin{equation*}
        E_5 \coloneqq \bigl\{ e \in \gamma\smallsetminus \gamma_c \colon \exists \nu \in V_5 \text{ such that\ } \support \nu = \hat \partial e \cup \hat \partial (-e)\bigr\}.
    \end{equation*}
     Since \( q \) is an oriented surface, by definition, we have 
    \begin{equation}\label{eq: 7.8}
         W_\gamma^5 \coloneqq  \rho \Bigl(  \sum_{\nu \in V_5} \sum_{p \in \support \nu} q_p^+ (d\sigma)_p  \Bigr)  
         =  \rho \Bigl(  \sum_{e \in E_5} \sum_{p \in \pm \hat \partial e } q_p^+ (d\sigma)_p   \Bigr)
         =  \rho \Bigl(  \sum_{e \in E_5} \sum_{p \in  \hat \partial e } q_p (d\sigma)_p   \Bigr).  
    \end{equation} 

    For any two distinct edges \( e,e' \in \gamma\smallsetminus \gamma_c \), the sets \( \hat \partial e \cup \hat \partial (-e) \) and \( \hat \partial e' \cup \hat \partial (-e') \) are disjoint. 
    Moreover, if \( \nu \) is a minimal vortex in \( \sigma \) centered at \( e \in \gamma \), then \( (d\sigma)_p=(d\sigma)_{p'} \) for all \( p,p' \in \hat \partial e \) by Lemma~\ref{lemma: minimal vortex II}. 
    With this in mind, define
    \begin{equation*}
        E_6  \coloneqq \bigl\{ e \in \gamma\smallsetminus \gamma_c \colon (d\sigma)_p = (d\sigma)_{p'} \text{ for all } p,p' \in \hat \partial e   \bigr\}.
    \end{equation*}
    Since \( q \) is an oriented surface, for any choice of a plaquette \( p_e \in \hat \partial e  \) for each \( e \in E_6 \), we have
    \begin{equation*}
        \sum_{p \in \hat \partial e } q_p (d\sigma)_p = (d\sigma)_{p_e} 
    \end{equation*}
  for all \( e \in E_6 \).  Define 
    \begin{equation*}
        W_\gamma^6 \coloneqq \rho \Bigl( \sum_{e \in E_6} (d\sigma)_{p_e}   \Bigr) 
        =\rho \Bigl( \sum_{e \in E_6} \sum_{p \in \partial e } q_p (d\sigma)_p   \Bigr) .
    \end{equation*} 
    We clearly have \( E_5 \subseteq E_6 \). Moreover, on the event \( \mathcal{A}_2 \), \( (d\sigma)_p = 0\) whenever \( p \in \hat{\partial} e \) for some \( e \in E_6 \smallsetminus E_5 \). Hence, in this case,  
    \begin{equation*}
         W_\gamma^5   
         =  \rho \Bigl(  \sum_{e \in E_5} \sum_{p \in \partial e } q_p (d\sigma)_p   \Bigr) 
         = W_\gamma^6.
    \end{equation*} 
    Again using Lemma~\ref{lemma: A2}, it follows that
    \begin{equation*}
        \mathbb{E}_{N,\beta,\kappa} \Bigl[ \bigl|W_\gamma^5-W_\gamma^6\bigr|\Bigr] \leq 2\mu_{N,\beta,\kappa}(\neg \mathcal{A}_2) \leq 2C_1C_0^{(7)} |\gamma| \alpha_2(\beta,\kappa)^{7}.
    \end{equation*}
    Combining the equations above, we arrive at
    \begin{equation*} 
        \begin{split}
          &\bigl|\mathbb{E}_{N,\beta,\kappa} [W_\gamma]- \mathbb{E}_{N,\beta,\kappa}[W_\gamma^6]\bigr| \leq   
          \mathbb{E}_{N,\beta,\kappa}\Bigl[\bigl|W_\gamma^6-W_\gamma^5\bigr|\Bigr]+ \mathbb{E}_{N,\beta,\kappa}\Bigl[\bigl|W_\gamma^5-W_\gamma^4\bigr|\Bigr] 
            + \mathbb{E}_{N,\beta,\kappa}\Bigl[\bigl|W_\gamma^4-W_\gamma^3\bigr|\Bigr]
            \\&\hspace{4.2cm}
            + \mathbb{E}_{N,\beta,\kappa}\Bigl[\bigl|W_\gamma^3-W_\gamma^0\bigr|\Bigr]
            + \mathbb{E}_{N,\beta,\kappa}\Bigl[\bigl|W_\gamma^0-W_\gamma\bigr|\Bigr]
            \\& \quad \leq 
             2C_1C_0^{(7)} |\gamma| \alpha_2(\beta,\kappa)^{7} 
            +  2C_3|\gamma_c| \alpha_2(\beta,\kappa)^{6} 
            +  0 
            +  2C_1 C_0^{(7)} |\gamma| \alpha_2(\beta,\kappa)^{7} 
            + 2C_0^{(25)} |\gamma|^4 \alpha_2(\beta,\kappa)^{25}  .
        \end{split}
    \end{equation*}
    Noting that \( E_6 = \gamma_1 \smallsetminus \gamma' \), the desired conclusion follows.
\end{proof}

\section{A resampling trick}\label{sec: resampling}
Recall from Section~\ref{sec:WtoWprime}, that given a simple loop \( \gamma \), we let \( \gamma_c \) denote the set of corner edges of \( \gamma \), we let \( \gamma_1 = \gamma\smallsetminus \gamma_c \), and we let \( \gamma' \) denote the set of edges \(e \in  \gamma_1\) such that there are \( p,p' \in \hat \partial e \) with \( (d\sigma)_p \neq (d\sigma)_{p'}\). Furthermore, for each \( e \in \gamma_1 \smallsetminus \gamma' \) we have fixed some (arbitrary) \( p_e \in \hat \partial e \), and defined
\begin{equation*}
    W_\gamma' =    \rho \Bigl( \sum_{e \in \gamma_1 \smallsetminus \gamma'} (d\sigma)_{p_e}   \Bigr). 
\end{equation*} 
In this section, we use a resampling trick, first introduced (in a different setting) in~\cite{c2019}, to rewrite \( \mathbb{E}_{N,\beta,\kappa}[W_\gamma'] \).

\begin{proposition}\label{prop: resampling in main proof}
    Let \( \beta,\kappa \geq 0 \), and let \( \gamma \) be a simple loop in \( E_N \) such that \( \partial \hat \partial \gamma \subseteq E_N\). For each \( e \in \gamma_1 \smallsetminus \gamma'  \), fix one plaquette \( p_e \in \hat \partial e \). Then 
    \begin{equation*}
        \mathbb{E}_{N,\beta, \kappa}\bigl[W_{\gamma  }'  \bigr] 
        =
        \mathbb{E}_{N,\beta, \kappa} \Bigl[ \, \prod_{ e \in \gamma_1 \smallsetminus \gamma'}  \theta_{\beta,\kappa} \bigl( \sigma_e - (d\sigma)_{p_e} \bigr) \Bigr]. 
    \end{equation*}   
\end{proposition}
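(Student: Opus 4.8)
The plan is to prove the identity by conditioning on the gauge field configuration restricted to the complement of the edges $\gamma_1 \smallsetminus \gamma'$ and then resampling the spins on these edges. First I would observe that for each $e \in \gamma_1 \smallsetminus \gamma'$, the quantity $(d\sigma)_{p_e}$ in the definition of $W_\gamma'$ depends on $\sigma_e$; the key structural fact, coming from the definition of $\gamma'$, is that on the complement $\gamma_1 \smallsetminus \gamma'$ all six plaquettes in $\hat\partial e$ carry the same value of $d\sigma$, so that $(d\sigma)_{p_e} = (d\sigma)_p$ for every $p \in \hat\partial e$. I would write $(d\sigma)_{p_e} = \sigma_e + ((d\sigma)_{p_e} - \sigma_e)$, where the second term $\hat g_e \coloneqq (d\sigma)_{p_e} - \sigma_e$ is a sum of the spins on the other three edges of $p_e$ and hence does \emph{not} depend on $\sigma_e$.

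The central computation is a change of variables / resampling step. Fixing all spins except those on $\gamma_1 \smallsetminus \gamma'$, I would rewrite the expectation $\mathbb{E}_{N,\beta,\kappa}[W_\gamma']$ using the measure $\mu_{N,\beta,\kappa} \propto \varphi_{\beta,\kappa}$ from~\eqref{eq: mubetakappaphi}. The factor $\rho((d\sigma)_{p_e}) = \rho(\sigma_e)\rho(\hat g_e)$ multiplies the local Boltzmann weight. For each such edge $e$, summing over the value $\sigma_e \in G$ while holding the neighboring spins fixed produces, after dividing by the corresponding normalization, exactly a ratio of the form appearing in $\theta_{\beta,\kappa}$. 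Concretely, the sum over $\sigma_e = g$ weights each term by $\varphi_\kappa(g)$ (the edge factor) and by $\prod_{p \in \hat\partial e}\varphi_\beta((d\sigma)_p)$; since the six plaquettes in $\hat\partial e$ each receive a contribution $g + g_k$ for appropriate neighbor-data $g_k$, and the relevant curvature value reduces to $\hat g_e$, the resulting ratio matches~\eqref{eq: def of thetag} once one identifies $\theta_{\beta,\kappa}(\sigma_e - (d\sigma)_{p_e})$ with $\theta_{\beta,\kappa}(-\hat g_e)$, using the relation~\eqref{eq: theta vs S} between $\theta_{\beta,\kappa}$ and $S_{2\beta,2\kappa}$. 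The prefactor $\rho(\hat g_e)$ combines with $S_{2\beta,2\kappa}$ precisely as in~\eqref{eq: theta vs S} to yield $\theta_{\beta,\kappa}(\sigma_e - (d\sigma)_{p_e})$.

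To execute this cleanly I would iterate the single-edge resampling over all $e \in \gamma_1 \smallsetminus \gamma'$, using that the sets $\hat\partial e$ for distinct $e \in \gamma_1$ interact only through shared plaquettes; the definition of $\gamma'$ and the corner removal ensure that the edges of $\gamma_1 \smallsetminus \gamma'$ can be handled so that the local weights factorize appropriately, or else I would phrase the whole argument as a single joint change of variables on all these spins simultaneously, which avoids commutation issues. The assumption $\partial\hat\partial\gamma \subseteq E_N$ guarantees that all plaquettes in $\hat\partial e$ and their boundary edges lie in $E_N$, so no boundary corrections arise and the local sums genuinely have the form~\eqref{eq: def of thetag}.

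\textbf{Main obstacle.} The delicate point is bookkeeping the interaction between neighboring edges of $\gamma_1 \smallsetminus \gamma'$: when two such edges share a common plaquette, resampling $\sigma_e$ alters curvature values entering the weight for a neighboring edge $e'$, so the naive per-edge factorization could fail. I expect the resolution is that on $\gamma_1 \smallsetminus \gamma'$ the defining constraint forces the shared curvature to be constant around each edge, decoupling the relevant $\theta_{\beta,\kappa}$ factors; making this decoupling rigorous—identifying the correct conditional measure and verifying that the six-plaquette product in~\eqref{eq: def of thetag} emerges with the right exponent and the right curvature argument—is the technical heart of the proof and where I would spend the most care.
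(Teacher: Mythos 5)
Your overall route --- condition on the spins off the loop, compute the single-site conditional expectation of \( \rho((d\sigma)_{p_e}) \) as a ratio of weighted sums, and recognize that ratio as \( \theta_{\beta,\kappa}(\sigma_e - (d\sigma)_{p_e}) \) --- is the same as the paper's, and your per-edge computation (writing \( (d\sigma)_{p_e} = \sigma_e + \hat g_e \) with \( \hat g_e \) not depending on \( \sigma_e \), pulling out \( \rho(\hat g_e) \), and using the change of variables behind \eqref{eq: theta vs S}) is correct.

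The gap is in what you call the technical heart. First, the obstacle you flag --- two edges of \( \gamma_1 \smallsetminus \gamma' \) sharing a plaquette --- cannot occur, and your conjectured resolution is not the right mechanism. By definition, \( \gamma_c \) consists precisely of the edges of \( \gamma \) that share a plaquette with at least one other edge of \( \pm\gamma \), and \( \gamma_1 = \gamma \smallsetminus \gamma_c \); hence for distinct \( e, e' \in \gamma_1 \) the sets \( \hat\partial e \cup \hat\partial(-e) \) and \( \hat\partial e' \cup \hat\partial(-e') \) are disjoint. This purely geometric consequence of removing the corner edges --- not the defining constraint of \( \gamma' \), which plays no role in the decoupling --- is what makes the spins \( (\sigma_e)_{e\in\gamma_1} \) exactly independent under the conditioning; this is Lemma~\ref{lemma: independent spins} applied with \( E = \gamma_1 \). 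The condition \( e \notin \gamma' \) enters elsewhere: it guarantees that all six plaquettes of \( \hat\partial e \) carry the same curvature, so that the single-site ratio is exactly the six-fold product appearing in \eqref{eq: def of thetag} with the single argument determined by \( \hat g_e \). Second, you condition on the spins outside \( \gamma_1 \smallsetminus \gamma' \), which is a random set, so your conditioning is circular as stated. The clean formulation (the paper's) is to condition on the spins outside the deterministic set \( \pm\gamma_1 \): since for \( p, p' \in \hat\partial e \) the difference \( (d\sigma)_p - (d\sigma)_{p'} \) does not involve \( \sigma_e \), the set \( \gamma' \) is a function of the conditioning data alone, so both \( \gamma' \) and the product over \( \gamma_1\smallsetminus\gamma' \) are measurable with respect to this conditioning; the factorization from conditional independence and the tower property then finish the proof, with no joint change of variables needed.
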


The proof of Proposition~\ref{prop: resampling in main proof} is based on the following lemma.

\begin{lemma}\label{lemma: independent spins}
    Let \( \beta,\kappa \geq 0 \), and let \( E \subseteq E_N \) be such that 
    \begin{enumerate}[label=\textnormal{(\roman*)}]
        \item \( E \cap (-E)= \emptyset \),
        \item \( \partial \hat \partial E \subseteq E_N \), and
        \item for any distinct edges  \( e,e' \in E \)   we have \( \bigl( \hat \partial e \cup \hat \partial (-e) \bigr)
        \cap \bigl( \hat \partial e' \cup \hat \partial (-e') \bigr) = \emptyset. \)
    \end{enumerate} 
    Further,  let \( \sigma \in \Sigma_{E_N} \), and let \( \mu_{N,\beta,\kappa}' \) denote conditional probability of \( \mu_{N,\beta,\kappa} \) given \( \sigma|_{(E \cup (-E))^c}\). Then the spins \( ( \sigma_e)_{e \in E} \) are independent under \( \mu_{N,\beta,\kappa}' \). Equivalently, for any \( \sigma',\sigma'' \in \Sigma_{E_N} \), if \( \sigma \sim \mu_{N,\beta,\kappa} \) then
     \begin{equation}\label{eq: muprodmu}
     \begin{split}
 &\mu_{N,\beta,\kappa} \Bigl(   \sigma|_{E} = \sigma'|_{E} \; \bigm| \; \sigma|_{(E \cup (-E))^c} = \sigma''|_{(E \cup (-E))^c} \Bigr)
 \\&\qquad = \prod_{e \in E} \mu_{N,\beta,\kappa} \Bigl(   \sigma_e = \sigma'_e \; \bigm| \; \sigma|_{(E \cup (-E))^c} = \sigma''|_{(E \cup (-E))^c} \Bigr). 
     \end{split}
 \end{equation}
\end{lemma}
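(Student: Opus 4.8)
The plan is to exploit the product structure of the activity $\varphi_{\beta,\kappa}$ introduced in Section~\ref{sec: activity}. Fix a reference configuration $\sigma'' \in \Sigma_{E_N}$ and write $\phi \coloneqq \sigma''|_{(E \cup (-E))^c}$ for the conditioning data. By~\eqref{eq: mubetakappaphi}, the conditional law of $\sigma$ given $\sigma|_{(E\cup(-E))^c} = \phi$ assigns to each admissible $\sigma$ (one agreeing with $\phi$ off $E \cup (-E)$) a weight proportional to
\[
\varphi_{\beta,\kappa}(\sigma) = \prod_{e' \in E_N} \varphi_\kappa(\sigma_{e'}) \prod_{p \in P_N} \varphi_\beta\bigl((d\sigma)_p\bigr).
\]
By condition~(i) the free coordinates are exactly the spins $(\sigma_e)_{e \in E}$, with $\sigma_{-e} = -\sigma_e$ determined by antisymmetry. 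First I would establish that this weight factorizes as
\[
\varphi_{\beta,\kappa}(\sigma) = C(\phi) \prod_{e \in E} f_e(\sigma_e),
\]
where $C(\phi)$ depends only on $\phi$ and each $f_e$ is a function of the single variable $\sigma_e$.

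To verify the factorization, I would treat the two products separately. The $\kappa$-product splits immediately: since $E_N$ is symmetric and $\varphi_\kappa(\sigma_{-e}) = \varphi_\kappa(\sigma_e)$ by~\eqref{eq: phi is symmetric}, the factors attached to edges in $E \cup (-E)$ are $\prod_{e \in E} \varphi_\kappa(\sigma_e)^2$, each depending on a single $\sigma_e$, while the factors attached to the remaining edges are fixed by $\phi$ and absorbed into $C(\phi)$. For the $\beta$-product, the point is that a plaquette weight $\varphi_\beta((d\sigma)_p)$ varies with the free coordinates only through those $\sigma_e$, $e \in E$, for which $p \in \hat\partial e \cup \hat\partial(-e)$. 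By condition~(iii) the sets $\hat\partial e \cup \hat\partial(-e)$, $e \in E$, are pairwise disjoint, so each $p \in P_N$ lies in at most one of them; hence, given $\phi$, the weight $\varphi_\beta((d\sigma)_p)$ depends on at most one free spin. Condition~(ii) guarantees $\hat\partial E \subseteq P_N$ (and, by symmetry of $P_N$, also $\hat\partial(-e) = -\hat\partial e \subseteq P_N$), so every plaquette carrying a genuine dependence on some $\sigma_e$ actually appears as a factor of $\varphi_{\beta,\kappa}$. Grouping the plaquette factors accordingly — those in $\hat\partial e \cup \hat\partial(-e)$ into $f_e(\sigma_e)$ and the rest into $C(\phi)$ — yields the claimed factorization.

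With the factorization in hand, the conclusion is routine. Summing $\varphi_{\beta,\kappa}(\sigma)$ over all admissible $\sigma$, equivalently over all $(\sigma_e)_{e\in E} \in G^E$, the conditional normalizing constant factors as $C(\phi) \prod_{e\in E}\bigl(\sum_{g\in G} f_e(g)\bigr)$. Therefore the conditional probability of $\{\sigma|_E = \sigma'|_E\}$ equals $\prod_{e\in E} f_e(\sigma'_e) / \sum_{g\in G} f_e(g)$, which is precisely the product over $e \in E$ of the one-dimensional marginals $\mu_{N,\beta,\kappa}(\sigma_e = \sigma'_e \mid \sigma|_{(E\cup(-E))^c} = \phi)$. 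This establishes both the independence of $(\sigma_e)_{e\in E}$ under $\mu_{N,\beta,\kappa}'$ and the explicit identity~\eqref{eq: muprodmu}.

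The main obstacle is bookkeeping rather than conceptual: one must check precisely that, after conditioning on $\phi$, no plaquette weight couples two distinct spins $\sigma_e,\sigma_{e'}$ with $e,e' \in E$. This is exactly where hypothesis~(iii) is indispensable; hypothesis~(ii) is needed only to ensure that all the relevant plaquettes lie in $P_N$, so that no dependence leaks into boundary terms absent from the action, and hypothesis~(i) only to make $(\sigma_e)_{e\in E}$ a legitimate set of free coordinates. Some care is also needed to track the antisymmetry $\sigma_{-e} = -\sigma_e$ consistently when splitting the edge product, but this is handled cleanly by the symmetry relation~\eqref{eq: phi is symmetric}.
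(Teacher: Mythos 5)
Your proof is correct and follows essentially the same route as the paper's: both use hypothesis (iii) to factorize the weight \( \varphi_{\beta,\kappa}(\sigma) \) into single-spin factors (with the plaquette factors \( \varphi_\beta((d\sigma)_p)^2 \), \( p \in \hat\partial e \), attached to the unique free spin \( \sigma_e \) they involve) times a constant depending only on the conditioning data, and then conclude that the conditional law is a product measure. The only cosmetic difference is that the paper writes the computation as an explicit ratio and separately verifies that each factor equals the single-edge conditional probability by noting it depends on \( \sigma'' \) only through \( \sigma''|_{\partial\hat\partial e \smallsetminus \{e\}} \subseteq (E\cup(-E))^c \), whereas you obtain the same identification by observing that the marginals of a product measure are its factors.
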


\begin{proof}
    To simplify notation, define \( \bar E \coloneqq E \cup \{ e \in E_N \colon -e \in E \} \) and \( \hat \partial \bar E \coloneqq \bigcup_{e \in \bar E} \hat \partial e \). 
If \( \sigma \in \Sigma_{E_N} \), then
    \begin{align*}
        &\varphi_{\beta,\kappa}(\sigma) 
        =
        \prod_{e \in E_N} \varphi_\kappa(\sigma_e)  \prod_{p \in P_N} \varphi_\beta \bigl( (d\sigma)_p \bigr)
        \\&\qquad = 
        \prod_{e \in E} \varphi_\kappa(\sigma_e)^2  
        \prod_{p \in \hat \partial E} \varphi_\beta \bigl( (d\sigma)_p \bigr)^2
        \cdot 
        \prod_{e \in E_N \smallsetminus \bar E} \varphi_\kappa(\sigma_e)
        \prod_{p \in P_N \smallsetminus \hat \partial \bar E} \varphi_\beta \bigl( (d\sigma)_p \bigr)
        \\&\qquad = 
        \prod_{e \in E} \varphi_\kappa(\sigma_e)^2  
        \prod_{p \in \hat \partial E} \varphi_\beta \biggl( \sigma_e + \sum_{e' \in \partial p \smallsetminus \{ e \}} \sigma_{e'} \biggr)^2
        \cdot 
        \prod_{e \in E_N \smallsetminus \bar E} \varphi_\kappa(\sigma_e)
        \prod_{p \in P_N \smallsetminus \hat \partial \bar E} \varphi_\beta \bigl( (d\sigma)_p \bigr).
    \end{align*}
   Now note that the term
    \begin{equation*}
        \prod_{e \in E_N \smallsetminus \bar E} \varphi_\kappa(\sigma_e)
        \prod_{p \in P_N \smallsetminus \hat \partial \bar E} \varphi_\beta \bigl( (d\sigma)_p \bigr)
    \end{equation*}
    does not depend on \( \sigma|_{\bar E} \).
    Consequently, if \( \sigma',\sigma'' \in \Sigma_{E_N} \), then, using the expression~\eqref{eq: mubetakappaphi} for $\mu_{N,\beta, \kappa}$,
    \begin{equation} \label{eq: first part of mup}
        \begin{split}
            &
            \mu_{N,\beta,\kappa} \Bigl(   \sigma|_{\bar E} = \sigma'|_{\bar E} \; \bigm| \; \sigma|_{\bar E^c} = \sigma''|_{\bar E^c} \Bigr)
            =
            \frac{\mu_{N,\beta,\kappa} \bigl( \{ \sigma \in \Sigma_{E_N} \colon \sigma|_{\bar E} = \sigma'|_{\bar E},\, \sigma|_{\bar E^c} = \sigma''|_{\bar E^c} \} \bigr)}{\mu_{N,\beta,\kappa} \bigl( \{ \sigma \in \Sigma_{E_N} \colon \sigma|_{\bar E^c} = \sigma''|_{\bar E^c} \} \bigr)}
            \\&\qquad =
            \frac{
            \prod_{e \in   E} \varphi_\kappa(\sigma_e')^2 \prod_{p \in \hat \partial e } \varphi_\beta \bigl( \sigma'_e + \sum_{e' \in \partial p \backslash \{ e \}} \sigma''_{e'} \bigr)^2  
        }{
        \sum_{\sigma  \in \Sigma_{E}} 
        \prod_{e \in E} \varphi_\kappa(\sigma_e)^2 \prod_{p \in \hat \partial e} \varphi_\beta \bigl( \sigma_e + \sum_{e' \in \partial p \backslash \{ e \}} \sigma''_{e'} \bigr)^2  
        }
        \\&\qquad =\prod_{e \in E}  \frac{ \varphi_\kappa(\sigma'_e)^2 \prod_{p \in \hat \partial e } \varphi_\beta  \bigl( \sigma'_e + \sum_{e' \in \partial p \backslash \{ e \}} \sigma''_{e'} \bigr)^2
        }{
        \sum_{\sigma_e  \in \Sigma_{\{ e,-e \}}} \varphi_\kappa(\sigma_e)^2
        \prod_{p \in \hat \partial e } \varphi_\beta \bigl( \sigma_e + \sum_{e' \in \partial p \backslash \{ e \}} \sigma''_{e'} \bigr)^2
        }. 
        \end{split}
    \end{equation}
    A similar argument shows that, for a fixed edge \( e \in E \),
    \begin{align}
        &\mu_{N,\beta,\kappa} \Bigl( \sigma_e = \sigma_e' \; \bigm| \; \sigma|_{E_N \backslash \{e,-e \}} = \sigma''|_{E_N \backslash \{e,-e \}} \Bigr) \nonumber
        \\&\qquad=
        \frac{
             \varphi_\kappa(\sigma'_e)^2 \prod_{p \in \hat \partial e } \varphi_\beta\bigl( \sigma'_e + \sum_{e' \in \partial p \backslash \{ e \}} \sigma''_{e'} \bigr)^2
        }{
        \sum_{\sigma_e  \in \Sigma_{\{ e,-e \}}} \varphi_\kappa(\sigma_e)^2
        \prod_{p \in \hat \partial e } \varphi_\beta \bigl( \sigma_e + \sum_{e' \in \partial p \backslash \{ e \}} \sigma''_{e'} \bigr)^2
        }.\label{eq: previous eq}
    \end{align}
    Since~\eqref{eq: previous eq} only depends on \( \sigma'' \) through \( \sigma'' |_{{\partial\hat \partial e} \backslash \{ e \}} \), and, by assumption, we have \( \support \sigma'' |_{{\partial\hat \partial e \backslash \{ e \}}} \subseteq \bar E^c \subseteq E_N\backslash \{ e,-e \}\), it follows that 
    \begin{align*}
        &\mu_{N,\beta,\kappa} \Bigl( \sigma_e = \sigma_e' \; \bigm| \; \sigma|_{E_N \backslash \{e,-e \}} = \sigma''|_{E_N \backslash \{e,-e \}} \Bigr) 
        =
        \mu_{N,\beta,\kappa} \Bigl( \sigma_e = \sigma_e' \; \bigm| \; \sigma|_{\bar E^c} = \sigma''|_{\bar E^c} \Bigr) .
    \end{align*}
    Combining this with~\eqref{eq: first part of mup}, equation~\eqref{eq: muprodmu} follows.
\end{proof}

\begin{proof}[Proof of Proposition~\ref{prop: resampling in main proof}]
    By the definition of  \( \gamma' \), if we condition on the spins of all edges which are not in \(  \pm \gamma_1 \) (the non-corner edges of \( \gamma \)), then we can recognize the set \( \gamma' \).
    Since no two non-corner edges belong to the same plaquette, from Lemma~\ref{lemma: independent spins} applied with \( E= \gamma_1 \) it follows that under this conditioning,  \( ( \sigma_e)_{e \in \gamma_1} \) are independent spins. 
    Letting \( \mu_{N,\beta,\kappa}' \) and \( \mathbb{E}_{N,\beta,\kappa}' \) denote conditional probability and conditional expectation given \( ( \sigma_e )_{e \notin \pm \gamma_1} \), we find
    \begin{equation*} 
            \mathbb{E}_{N,\beta, \kappa}'\bigl[W_{\gamma  }'  \bigr]
            =\mathbb{E}_{N,\beta, \kappa}' \biggl[    \rho  \Bigl( \, \sum_{e \in \gamma_1 \smallsetminus \gamma'  } (d\sigma)_{p_e}   \Bigr)\biggr]
            =      \mathbb{E}_{N,\beta, \kappa}' \biggl[\,  \prod_{e \in \gamma_1 \smallsetminus \gamma'  } \rho  (  (d\sigma)_{p_e}  ) \biggr]
            =       \prod_{e \in \gamma_1 \smallsetminus \gamma'  } \mathbb{E}_{N,\beta, \kappa}' \bigl[ \rho ( (d\sigma)_{p_e}  ) \bigr].
   \end{equation*} 
    For each edge \(  e \in \gamma_1 \smallsetminus \gamma' \), let \( \sigma^{ e} \coloneqq \sum_{\hat e \in \partial p_{ e} \smallsetminus \{  e \} } \sigma_{\hat e} \). Then  $(d\sigma)_p = \sigma_e + \sigma^e$ for each $p \in \hat{\partial} e$, and we obtain
   \begin{equation*} 
        \begin{split} 
            & \mathbb{E}_{N,\beta, \kappa}'\bigl[W_{\gamma  }'  \bigr] =   \prod_{e \in \gamma_1 \smallsetminus \gamma'  } \Bigl[ \,   \frac{ \sum_{g \in  G} \rho(g + \sigma^e)\, \varphi_\beta(g + \sigma^e)^{12} \varphi_\kappa(g)^2}{\sum_{g\in  G} \varphi_\beta(g + \sigma^e)^{12} \varphi_\kappa(g)^2} \, \Bigr]
            \\&\qquad
            =   \prod_{e \in \gamma_1 \smallsetminus \gamma'  } \Bigl[ \,   \frac{ \sum_{g \in  G} \rho(g) \varphi_\beta(g)^{12} \varphi_\kappa (g-\sigma^e)^{2}}{\sum_{g\in  G} \varphi_\beta(g)^{12} \varphi_\kappa (g-\sigma^e)^2} \, \Bigr]
            = 
            \prod_{ e \in \gamma_1 \smallsetminus \gamma'} \theta_{\beta,\kappa}(-\sigma^e)
            = 
            \prod_{ e \in \gamma_1 \smallsetminus \gamma'} \theta_{\beta,\kappa}(\sigma_e-(d\sigma)_{p_e}),
        \end{split}
    \end{equation*}   
    and hence
    \begin{equation*} 
        \begin{split} 
            &\mathbb{E}_{N,\beta, \kappa}\bigl[W_{\gamma  }'  \bigr] 
            =\mathbb{E}_{N,\beta, \kappa} \Bigl[ \mathbb{E}_{N,\beta,\kappa}'\bigl[W_{\gamma  }'  \bigr] \Bigr]
            = 
            \mathbb{E}_{N,\beta, \kappa} \Bigl[ \, \prod_{ e \in \gamma_1 \smallsetminus \gamma'}  \theta_{\beta,\kappa} \bigl( \sigma_e - (d\sigma)_{p_e} \bigr) \Bigr]
        \end{split}
    \end{equation*}   
    as desired.
\end{proof}

\section{Applying the coupling}\label{sec:applyingcoupling}
Recall from Section~\ref{sec:WtoWprime} that, given a loop \( \gamma \), we let \( \gamma_c \) denote the set of corner edges of \( \gamma \),  \( \gamma_1 = \gamma\smallsetminus \gamma_c \), and let \( \gamma' \) denotes the set of edges \(e \in  \gamma_1\) such that there are \( p,p' \in \hat \partial e \) with \( (d\sigma)_p \neq (d\sigma)_{p'}\). Further, for each \( e \in \gamma_1 \smallsetminus \gamma' \) we fix some (arbitrary) \( p_e \in \hat \partial e \).
In this section, we use the coupling introduced in Section~\ref{sec: coupling} to prove the following proposition.
    
\begin{proposition}\label{proposition: E}
    Let \( \beta,\kappa \geq 0 \) be such that~\ref{assumption: 3} holds, and let \( \gamma \) be a simple loop in \( E_N \) such that \( \partial \hat \partial \partial \hat \partial \gamma \subseteq E_N \).
    Then
    \begin{equation} \label{eq: E}
        \begin{split}
            &\biggl| 
            \mathbb{E}_{N,\beta, \kappa} \Bigl[ \, \prod_{ e \in \gamma_1 \smallsetminus \gamma'}  \theta_{\beta,\kappa} \bigl( \sigma_e - (d\sigma)_{p_e} \bigr) \Bigr]
            - \mathbb{E}_{N,\infty,\kappa} \Bigl[ \, \prod_{e \in \gamma } \theta_{\beta,\kappa}(\sigma_e) \Bigr] \biggr| 
             \\&\qquad \leq 
            2\sqrt{2 C_{c,2}  |\gamma| \alpha_4(\beta,\kappa) \alpha_2(\beta,\kappa)^6 \alpha_0(\kappa)^5 \max(\alpha_0(\kappa),\alpha_0(\beta)^6)  } 
            \\&\qquad\qquad + 
            4\sqrt{2 C_{c,1} |\gamma| \alpha_4(\beta,\kappa)     \alpha_2(\beta,\kappa)^6     \alpha_0(\kappa)^8  } 
            + 2 \sqrt{2 C_I |\gamma_c| \alpha_0(\kappa)^8\alpha_4(\beta,\kappa)}
            + 2 \sqrt{2 |\gamma_c| \alpha_3(\beta,\kappa)}
            \\&\qquad\qquad 
            + 2\sqrt{12C_0^{(6)}|\gamma|\alpha_2(\beta,\kappa)^6 \alpha_3(\beta,\kappa)}
            +  (2C_{c,1}'+C_{c,2}') |\gamma| \Bigl( 18^{2} \bigl(2+ \alpha_0(\kappa)  \bigr) \alpha_0(\kappa)  \Bigr)^{\dist(\gamma,B_N^c)}.
        \end{split}
    \end{equation}
\end{proposition}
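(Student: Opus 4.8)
The plan is to run everything through the coupling \( \mu_{N,(\beta,\kappa),(\infty,\kappa)} \) of Definition~\ref{def: the coupling}. Write \( \mathbb{E} \) for \( \mathbb{E}_{N,(\beta,\kappa),(\infty,\kappa)} \), and set \( A \coloneqq \prod_{e\in\gamma_1\smallsetminus\gamma'}\theta_{\beta,\kappa}(\sigma_e-(d\sigma)_{p_e}) \) and \( B \coloneqq \prod_{e\in\gamma}\theta_{\beta,\kappa}(\sigma'_e) \). By Proposition~\ref{proposition: coupling is a coupling} the two expectations in~\eqref{eq: E} are the \(\mu_{N,\beta,\kappa}\)- and \(\mu_{N,\infty,\kappa}\)-expectations of \(A\) and \(B\), so the quantity to estimate is at most \( \mathbb{E}[|A-B|] \). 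Every factor has modulus at most \(1\) by Lemma~\ref{lemma: theta bound}, so padding \(A\) with trivial factors on the edges of \( \gamma_c\cup\gamma' \) (define \( \tilde a_e \) to be the corresponding \(A\)-factor for \( e\in\gamma_1\smallsetminus\gamma' \) and \( \tilde a_e=1 \) otherwise, and \( b_e=\theta_{\beta,\kappa}(\sigma'_e) \)) gives, by the standard telescoping estimate for products of contractions, \( |A-B|\le\sum_{e\in\gamma}|\tilde a_e-b_e| \).

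The square roots in~\eqref{eq: E} arise from a second-moment reduction. Since \( |A-B|\le 2 \), I would use
\[
\mathbb{E}[|A-B|]\le\bigl(\mathbb{E}[|A-B|^2]\bigr)^{1/2},\qquad |A-B|^2\le 2|A-B|\le 2\sum_{e\in\gamma}|\tilde a_e-b_e|,
\]
and therefore
\[
\mathbb{E}[|A-B|]\le\Bigl(2\sum_{e\in\gamma}\mathbb{E}\bigl[|\tilde a_e-b_e|\bigr]\Bigr)^{1/2}.
\]
Thus a \emph{linear} bound on each \( \mathbb{E}[|\tilde a_e-b_e|] \) produces, after the outer square root and the subadditivity \( \sqrt{\sum_i x_i}\le\sum_i\sqrt{x_i} \), a sum of terms of the shape \( \sqrt{|\gamma|\cdot(\text{deviation})\cdot(\text{probability})} \), matching the right-hand side of~\eqref{eq: E}.

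It remains to bound each \( \mathbb{E}[|\tilde a_e-b_e|] \) by identifying the event on which \( \tilde a_e\ne b_e \) and the deviation there. For a corner edge \( e\in\gamma_c \) (where \( \tilde a_e=1 \)) I would split \( 1-\theta_{\beta,\kappa}(\sigma'_e)=(1-\theta_{\beta,\kappa}(0))+(\theta_{\beta,\kappa}(0)-\theta_{\beta,\kappa}(\sigma'_e)) \): the first summand is deterministically at most \( \alpha_3(\beta,\kappa) \) (yielding the \( \sqrt{2|\gamma_c|\alpha_3} \) term), and the second is at most \( \alpha_4(\beta,\kappa) \) and nonzero only on \( \{\sigma'_e\ne0\} \), whose probability is controlled by Proposition~\ref{proposition: minimal Ising} (the \( C_I \) term). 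For a frustrated non-corner edge \( e\in\gamma' \) the same split applies, but now the \( \alpha_3 \)-part is nonzero only on the frustration event \( e\in\gamma' \); I would bound its probability by summing Proposition~\ref{proposition: the vortex flipping lemma} with \( M=6 \) over the at most six plaquettes of \( \hat\partial e \) (the \( C_0^{(6)} \) term, with the factor \(12=2\cdot 6\)). For \( e\in\gamma_1\smallsetminus\gamma' \) both factors are present, \( |\tilde a_e-b_e|\le 2\alpha_4 \) on \( \{\tilde a_e\ne b_e\} \), and this event forces either a local vortex at \(e\), i.e. \( \sigma\in\mathcal{E} \) with \( e\in\partial\hat\partial E_{\sigma,\sigma'} \) (controlled by Proposition~\ref{proposition: coupling and conditions 1}, the \( C_{c,2} \) term), or a coupling failure \( e\in E_{\sigma,\sigma'} \) with \( \sigma'_e\ne0 \) (controlled by Proposition~\ref{proposition: coupling and conditions 2}, the \( C_{c,1} \) term), where Lemma~\ref{lemma: properties of coupling measure} is used to pass from \( \sigma_e\ne\sigma'_e \) to \( e\in E_{\sigma,\sigma'} \). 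The additive boundary remainders of Propositions~\ref{proposition: coupling and conditions 2} and~\ref{proposition: coupling and conditions 1} I would not pass through the second-moment step but instead bound directly by a first-moment union bound over the \( \le|\gamma| \) edges, using \( \dist(e,B_N^c)\ge\dist(\gamma,B_N^c) \), which gives the final linear term with constant \( 2C_{c,1}'+C_{c,2}' \). Simplifying the per-edge probabilities with \( \alpha_1(\beta)\le\alpha_0(\beta) \) and \( \alpha_2(\beta,\kappa)^6=\alpha_0(\beta)^6\alpha_0(\kappa) \) then reproduces the six terms of~\eqref{eq: E}.

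The main obstacle is the combinatorial bookkeeping of the last paragraph: reconciling the mismatched index sets \( \gamma_1\smallsetminus\gamma' \) and \( \gamma \), and attributing each edge's discrepancy to a single geometric event so that the correct deviation factor (\( \alpha_3 \) versus \( \alpha_4 \)) is paired with the correct probability estimate. In particular, converting the statement ``the coupled configurations produce different \( \theta_{\beta,\kappa} \)-values at \(e\)'' into the precise events \( \{e\in E_{\sigma,\sigma'}\} \) and \( \{e\in\partial\hat\partial E_{\sigma,\sigma'},\ \sigma\in\mathcal{E}\} \), and checking that a local minimal vortex at \(e\) genuinely places \(e\) in \( \partial\hat\partial E_{\sigma,\sigma'} \), is where the geometric structure from Section~\ref{sec: coupling} must be invoked carefully.
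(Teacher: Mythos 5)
Your overall strategy is viable and genuinely different from the paper's. The paper first rewrites the difference of the two products algebraically (its equation~\eqref{eq: eq before claims ii}) as five comparisons of the form ``product of \(\theta\)-values over a \emph{random} set of edges versus the corresponding power of \(\theta_{\beta,\kappa}(0)\)'', and then applies the truncation trick of Lemma~\ref{lemma: Chatterjees trick abs} to each comparison separately; the square roots come from optimizing the truncation level inside that lemma. You instead pad the first product to the full index set \(\gamma\), telescope edge by edge via Lemma~\ref{lemma: Chatterjees inequality ii}, and make a single global second-moment reduction \(\mathbb{E}|A-B|\le(\mathbb{E}[|A-B|^2])^{1/2}\le(2\sum_{e\in\gamma}\mathbb{E}|\tilde a_e-b_e|)^{1/2}\), distributing the square root over the individual terms by subadditivity afterwards. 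Both routes rest on the same inputs (Propositions~\ref{proposition: coupling and conditions 1},~\ref{proposition: coupling and conditions 2},~\ref{proposition: minimal Ising},~\ref{proposition: the vortex flipping lemma}, Lemma~\ref{lemma: properties of coupling measure}, and the deviations \(\alpha_3,\alpha_4\)), and your per-edge event identifications for \(\gamma_c\) and for \(\gamma_1\smallsetminus\gamma'\) match the paper's term-by-term accounting with slightly better constants, so that part of your argument would indeed imply~\eqref{eq: E}. The global Cauchy--Schwarz step is an attractive, shorter substitute for the truncation trick.

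There are, however, two places where the proposal as written does not close. First, your case analysis omits the \(\alpha_4\)-contribution of frustrated edges \(e\in\gamma'\) on which \(\sigma'_e\neq0\): the split \(|1-\theta_{\beta,\kappa}(\sigma'_e)|\le\alpha_3+\alpha_4\mathbb{1}_{\sigma'_e\neq0}\) leaves this term unattributed. It cannot be absorbed by Proposition~\ref{proposition: minimal Ising} (that would produce a term of order \(\sqrt{C_I|\gamma|\alpha_0(\kappa)^8\alpha_4}\), with \(|\gamma|\) in place of \(|\gamma_c|\) and no compensating factor \(\alpha_2(\beta,\kappa)^6\), which the right-hand side of~\eqref{eq: E} does not dominate for large \(\beta\)), nor by the \(C_0^{(6)}\)-estimate with deviation \(\alpha_4\) (since \(\alpha_4\) can vastly exceed \(\alpha_3\); for \(G=\mathbb{Z}_2\) their ratio is of order \(e^{8\kappa}\)). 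The correct treatment is the paper's: one shows \(\gamma'\subseteq\gamma''\coloneqq\gamma_1\cap\partial\hat\partial E_{\sigma,\sigma'}\), and then \(\sigma'_e\neq0\) promotes \(e\) into \(E_{\sigma,\sigma'}\), so Proposition~\ref{proposition: coupling and conditions 2} applies a second time; this second application is precisely why the \(C_{c,1}\)-term in~\eqref{eq: E} carries the coefficient \(4\).

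Second, the boundary remainders. Since they enter through the very same per-edge probabilities as the bulk terms, you cannot simply decline to ``pass them through the second-moment step'': a direct application of your Cauchy--Schwarz bound yields terms of order \(\sqrt{|\gamma|\,q^{\dist(\gamma,B_N^c)}}\), where \(q=18^2(2+\alpha_0(\kappa))\alpha_0(\kappa)\), and for large \(N\) such a square root exceeds the linear boundary term claimed in~\eqref{eq: E}. The repair is to retain, alongside the square-root bound, the linear telescoping bound \(\mathbb{E}|A-B|\le\sum_{e\in\gamma}\mathbb{E}|\tilde a_e-b_e|\) and the trivial bound \(2\), and to combine them through an elementary case analysis such as \(\min\bigl(\sqrt{x+y},\,x+y\bigr)\le\sqrt{x}+y\) for \(x,y\ge0\), which places the bulk contributions under the square root and the boundary contributions linearly. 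In fairness, the paper's own proof elides exactly the same point when it applies Lemma~\ref{lemma: Chatterjees trick abs} with expectations that already contain the boundary remainders and asserts that~\eqref{eq: E} ``now follows'', so this is a shared, fixable gloss rather than a defect of your approach alone.
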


For the proof of Proposition~\ref{proposition: E} we need two lemmas. 

\begin{lemma}\label{lemma: Chatterjees inequality ii}
    Assume that \( z_1,z_2,z_1',z_2' \in \mathbb{C} \) are such that \( |z_1|,|z_2|,|z_1'|,|z_2'| \leq 1 \). Then 
    \begin{equation*}
        |z_1z_2-z_1'z_2'| \leq |z_1-z_1'| + |z_2-z_2'|.
    \end{equation*} 
\end{lemma}

\begin{proof}
    Note first that
    \begin{equation*} 
        z_1z_2-z_1'z_2' = z_1(z_2-z_2') + z_2'(z_1-z_1').
    \end{equation*}
    Applying the triangle inequality, we thus obtain
    \begin{equation*} 
        \bigl|z_1z_2-z_1'z_2'\bigr| \leq \bigl|z_1(z_2-z_2') \bigr| + \bigl| z_2'(z_1-z_1')\bigr| =|z_1|\bigl|z_2-z_2' \bigr|+ |z_2'| \bigl| z_1-z_1'\bigr| .
    \end{equation*}
    Since \( |z_1| \leq 1 \) and \( |z_2'| \leq 1 \) by assumption, the desired conclusion follows.
\end{proof}

\begin{lemma}  \label{lemma: Chatterjees trick abs} 
    Let $a,b>0$.  Assume that \( A \subseteq E_N \) is a random set with \( \mathbb{E}[|A|] \leq a \),  and that
    \begin{enumerate}[label=\textnormal{(\roman*)}]
        \item \( X_e \in \mathbb{C} \) and \( |X_e| \leq 1 \) for all \( e \in E_N \), and \label{item: Chatterjees trick abs assump 1}
        \item there exists a \( c \in [-1,1] \) such that \( |X_e-c| \leq b \) for all \( e \in E_N \).\label{item: Chatterjees trick abs assump 2}
    \end{enumerate} 
    Then
    \begin{equation*} 
        \mathbb{E}\biggl[ \Bigl| \prod_{e \in A} c -  \prod_{e \in A} X_e \Bigr|  \biggr] 
        \leq
        2\sqrt{2ab} .
    \end{equation*}
\end{lemma}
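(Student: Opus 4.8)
The plan is to combine two complementary pointwise bounds on the random quantity $D \coloneqq \bigl|\prod_{e\in A} c - \prod_{e \in A} X_e\bigr|$ — a crude bound of size $2$ and a telescoping bound of size $|A|\,b$ — and then to interpolate between them by truncating at a threshold that is subsequently optimized. This is the ``$\sqrt{\cdot}$-producing'' mechanism behind the statement.

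First I would establish the telescoping estimate: for any fixed realization of $A = \{e_1,\dots,e_k\}$ and of the $X_e$,
\[
    D \le \sum_{e \in A} |c - X_e| \le |A|\, b.
\]
This follows by iterating Lemma~\ref{lemma: Chatterjees inequality ii}. Writing the two products as $z_1 z_2$ and $z_1' z_2'$ with $z_1 = c$, $z_1' = X_{e_1}$, and $z_2 = \prod_{j\ge 2} c$, $z_2' = \prod_{j\ge 2} X_{e_j}$ the products over the remaining edges — all of absolute value at most $1$ by assumption~\ref{item: Chatterjees trick abs assump 1} — one peels off one factor at a time, picking up $|c - X_{e_1}| \le b$ from assumption~\ref{item: Chatterjees trick abs assump 2} at each step and proceeding by induction on $k$. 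Simultaneously, since $|c| \le 1$ and $|X_e| \le 1$, both products have absolute value at most $1$, so the triangle inequality yields the trivial bound $D \le 2$.

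Next I would fix a threshold $t > 0$ and split the expectation according to whether $|A| \le t$ or $|A| > t$. On the first event I use $D \le |A|\, b \le t b$, and on the second event I use $D \le 2$, obtaining
\[
    \mathbb{E}[D] \le t b + 2\,\mathbb{P}(|A| > t).
\]
By Markov's inequality together with the hypothesis $\mathbb{E}[|A|] \le a$, we have $\mathbb{P}(|A| > t) \le a/t$, and hence $\mathbb{E}[D] \le t b + 2a/t$ for every $t>0$.

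Finally I would optimize over $t$. The function $t \mapsto t b + 2a/t$ is minimized at $t = \sqrt{2a/b}$, where it equals $\sqrt{2ab} + \sqrt{2ab} = 2\sqrt{2ab}$, which is exactly the claimed bound. There is no genuine obstacle here: the only mild point to note is that $|A|$ is integer-valued while the optimizing $t$ is real, but this causes no difficulty since both the Markov estimate and the truncation bound $|A|\,b\,\mathbb{1}_{|A|\le t} \le tb$ are valid for arbitrary real $t>0$. The entire content of the lemma is this interpolation between the linear bound $|A|\,b$ and the constant bound $2$, so I expect the write-up to be short once the telescoping inequality is recorded.
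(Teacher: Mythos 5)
Your proof is correct and follows essentially the same route as the paper's: both arguments combine the telescoping bound $\bigl|\prod_{e\in A}c-\prod_{e\in A}X_e\bigr|\le |A|\,b$ (obtained by iterating Lemma~\ref{lemma: Chatterjees inequality ii}) with a truncation at a threshold, bound the large-$|A|$ contribution by Markov's inequality, and optimize the threshold at $\sqrt{2a/b}$. The paper phrases the truncation as a three-term triangle inequality through $\mathbb{1}_{\{|A|\le j\}}\prod X_e$ and $\mathbb{1}_{\{|A|\le j\}}\prod c$, which yields exactly your bound $bj+2a/j$, so the two write-ups differ only cosmetically.
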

     
\begin{proof} 
    For any \( j > 0 \),  we have
    \begin{equation*}
        \begin{split}
            &  \mathbb{E}  \biggl[\Bigl| \prod_{e \in A} X_e  
            -
            \mathbb{1}_{\{|A |\leq j\}}  \prod_{e \in A} X_e  \Bigr| \biggr] 
            = 
            \mathbb{E} \biggl[  \mathbb{1}_{\{|A | > j\}} \Bigl| \prod_{e \in A} X_e  \Bigr| \biggr] 
            \overset{\ref{item: Chatterjees trick abs assump 1}}{\leq} 
            1 \cdot  \mathbb{E} \bigl[ \mathbb{1}_{\{|A | > j\}}  \bigr]
            \leq 
            \frac{ \mathbb{E}  [|A |   ]}{j}
            \leq 
            \frac{ a}{j}.
        \end{split}
    \end{equation*} 
    Next, applying Lemma~\ref{lemma: Chatterjees inequality ii} several times, we deduce that
    \begin{equation*} 
        \mathbb{E} \biggl[ \Bigl|  \mathbb{1}_{\{|A |\leq j\}} \prod_{e \in A} X_e   -\mathbb{1}_{\{|A |\leq j\}} \prod_{e \in A} c \, \Bigr| \biggr]
        = 
        \mathbb{E}\biggl[  \mathbb{1}_{\{| A |\leq j\}} \sum_{e \in A} \Bigl|  X_e -c   \Bigr| \biggr]
        \overset{\ref{item: Chatterjees trick abs assump 2}}{\leq}
        b\cdot \mathbb{E}\big[\mathbb{1}_{|A| \leq j} |A|\big] 
        \leq b j . 
    \end{equation*} 
    Finally, since $|c|\leq 1$,
    \begin{equation*}
        \mathbb{E} \biggl[ \Bigl|   \mathbb{1}_{\{|A |\leq j\}} \prod_{e \in A} c - \prod_{e \in A} c \, \Bigr| \biggr]
        =
        \mathbb{E} \biggl[ \Bigl|   \mathbb{1}_{\{|A | > j\}} \prod_{e \in A} c   \, \Bigr| \biggr]
        \leq 
        1 \cdot  \mathbb{E} \bigl[ \mathbb{1}_{\{|A | > j\}}  \bigr]
        \leq 
        \frac{ \mathbb{E}  [|A |   ]}{j}
        \leq 
        \frac{ a}{j}.
    \end{equation*}
    Combining the previous estimates, using the triangle inequality, and choosing 
    \begin{equation*}
        j = \sqrt{2a/b},
    \end{equation*}
    we obtain the desired conclusion.
\end{proof}

\begin{proof}[Proof of Proposition~\ref{proposition: E}]
    Recall the coupling \( (\sigma,\sigma') \sim \mu_{N,(\beta,\kappa),(\infty,\kappa)} \) between \( \sigma \sim \mu_{N,\beta,\kappa} \) and \( \sigma' \sim \mu_{N,\infty,\kappa} \) described in Section~\ref{sec: coupling}, and the set \( E_{\sigma,\sigma'} \)  defined in~\eqref{eq: Esigmasigmadef}. Since \( \mu_{N,(\beta,\kappa),(\infty,\kappa)} \) is a coupling of \( \mu_{N,\beta,\kappa} \) and \( \mu_{N,\infty,\kappa} \), we have
    \begin{equation*}
        \begin{split}
        &\mathbb{E}_{N,\beta, \kappa} \Bigl[\, \prod_{ e \in \gamma_1 \smallsetminus \gamma'}  \theta_{\beta,\kappa} \bigl(  \sigma_e - (d\sigma)_{p_e} \bigr) \bigr]
        -
        \mathbb{E}_{N,\infty,\kappa} \Bigl[ \, \prod_{e \in \gamma } \theta_{\beta,\kappa}(\sigma_e) \Bigr]
        \\&\qquad =  
        \mathbb{E}_{N,(\beta,\kappa),(\infty,\kappa)} \Bigl[\, \prod_{ e \in \gamma_1 \smallsetminus \gamma'}  \theta_{\beta,\kappa}\bigl( \sigma_e - (d\sigma)_{p_e} \bigr)
        -
        \prod_{e \in \gamma } \theta_{\beta,\kappa}(\sigma'_e) \Bigr].
        \end{split}
    \end{equation*}
    
    Given \( (\sigma,\sigma') \in E_N \times E_N^0 \), define 
    \begin{equation*}
        \gamma'' \coloneqq \gamma_1 \cap \partial \hat \partial E_{\sigma,\sigma'}.
    \end{equation*}
    By Lemma~\ref{lemma: properties of coupling measure}, if \( e \in \gamma_1 \smallsetminus \gamma'' \), then \( \sigma_{e'} = \sigma_{e'}' \) for all \( e' \in \partial \hat \partial e \), and  hence  \( (d\sigma)_{p_e} = (d\sigma')_{p_e} = 0 \). In particular, this implies that if \( e \in \gamma_1 \smallsetminus \gamma'' \), then 
    \begin{equation}\label{eq: uncoupled edges eq}
        \sigma_e- (d\sigma)_{p_e}  = \sigma'_e- (d\sigma')_{p_e}   = \sigma'_e- 0 =  \sigma'_e.
    \end{equation}
    By the definition of \(\gamma'\), if \(e' \in \gamma'\) then there exists an edge \(e'' \in \partial \hat{\partial} e'\) such that \(e'' \in \support{\sigma} \cap \partial \support{d\sigma} \subseteq E_{\sigma, \sigma'}\), and thus \(e' \in \partial \hat{\partial}E_{\sigma, \sigma'}\).
    Hence \( \gamma' \subseteq \gamma'' \) and it follows that
    \begin{equation*}
        \begin{split}
            &\prod_{ e \in \gamma_1 \smallsetminus \gamma'}  \theta_{\beta,\kappa} \bigl( \sigma_e- (d\sigma)_{p_e} \bigr)
            =
            \prod_{ e \in \gamma_1 \smallsetminus \gamma''}  \theta_{\beta,\kappa} \bigl(\sigma_e- (d\sigma)_{p_e} \bigr)
            \prod_{ e \in \gamma'' \smallsetminus \gamma'}  \theta_{\beta,\kappa} \bigl( \sigma_e- (d\sigma)_{p_e} \bigr)
            \\&\qquad \overset{\eqref{eq: uncoupled edges eq}}{=}
            \prod_{ e \in \gamma_1 \smallsetminus \gamma''}  \theta_{\beta,\kappa}(\sigma_e')
            \prod_{ e \in \gamma'' \smallsetminus \gamma'}  \theta_{\beta,\kappa} \bigl(\sigma_e- (d\sigma)_{p_e} \bigr).
        \end{split}
    \end{equation*} 
    Consequently, using Lemma~\ref{lemma: properties of coupling measure}, we have
    \begin{equation*}
        \begin{split}
            &
            \prod_{ e \in \gamma_1 \smallsetminus \gamma'}  \theta_{\beta,\kappa} \bigl( \sigma_e- (d\sigma)_{p_e} \bigr)
            -  
            \prod_{ e \in \gamma  }  \theta_{\beta,\kappa} ( \sigma_e' )
            \\&\qquad=
            \prod_{ e \in \gamma_1 \smallsetminus \gamma''}  \theta_{\beta,\kappa} (\sigma_e)
            \prod_{ e \in \gamma'' \smallsetminus \gamma'}  \theta_{\beta,\kappa} \bigl( \sigma_e- (d\sigma)_{p_e} \bigr)  
            -
            \prod_{ e \in \gamma_1 \smallsetminus \gamma''  }  \theta_{\beta,\kappa} (\sigma'_e)
            \prod_{ e \in \gamma'' \smallsetminus \gamma'  }  \theta_{\beta,\kappa} (\sigma_e')
            \prod_{ e \in \gamma_c \sqcup \gamma' }  \theta_{\beta,\kappa} (\sigma_e') 
            \\&\qquad=
            \prod_{ e \in \gamma_1 \smallsetminus \gamma''}  \theta_{\beta,\kappa} (\sigma_e')
            \biggl( \, \prod_{ e \in \gamma'' \smallsetminus \gamma'}  \theta_{\beta,\kappa} \bigl( \sigma_e- (d\sigma)_{p_e} \bigr)  
            -
            \prod_{ e \in \gamma'' \smallsetminus \gamma'  }  \theta_{\beta,\kappa} (\sigma_e') \biggr)
            \\&\qquad\qquad
            +
            \prod_{ e \in \gamma_1 \smallsetminus \gamma'  }  \theta_{\beta,\kappa} (\sigma'_e) 
            \Bigl( 1-\prod_{ e \in \gamma_c  }  \theta_{\beta,\kappa}(\sigma_e')\Bigr)
            +
            \prod_{ e \in \gamma_1 \smallsetminus \gamma'  }  \theta_{\beta,\kappa} (\sigma'_e) \prod_{ e \in \gamma_c  }  \theta_{\beta,\kappa}(\sigma_e')
            \Bigl( 1-\prod_{ e \in  \gamma' }  \theta_{\beta,\kappa}(\sigma_e')\Bigr)
            .
        \end{split}
    \end{equation*}
    Combining the above equations, we find that
    \begin{equation*}
        \begin{split}
            &
            \mathbb{E}_{N,\beta, \kappa} \Bigl[\, \prod_{ e \in \gamma_1 \smallsetminus \gamma'}  \theta_{\beta,\kappa} \bigl(  \sigma_e - (d\sigma)_{p_e} \bigr) \bigr]   - \mathbb{E}_{N,\infty,\kappa} \Bigl[ \,  \prod_{ e \in \gamma  }  \theta_{\beta,\kappa} (\sigma_e) \Bigr] 
            \\&\qquad=  \mathbb{E}_{N,(\beta,\kappa),(\infty,\kappa)}  \biggl[\, \prod_{ e \in \gamma_1 \smallsetminus \gamma''}  \theta_{\beta,\kappa} (\sigma_e')
            \biggl( \, \prod_{ e \in \gamma'' \smallsetminus \gamma'}  \theta_{\beta,\kappa} \bigl( \sigma_e- (d\sigma)_{p_e} \bigr)  
            -
            \prod_{ e \in \gamma'' \smallsetminus \gamma'  }  \theta_{\beta,\kappa} (\sigma_e') \biggr)\biggr]
            \\&\qquad\qquad+
            \mathbb{E}_{N,(\beta,\kappa),(\infty,\kappa)}  \biggl[\, \prod_{ e \in \gamma_1 \smallsetminus \gamma'  }  \theta_{\beta,\kappa} (\sigma_e') 
            \Bigl( 1-\prod_{ e \in \gamma_c  }  \theta_{\beta,\kappa}(\sigma_e')\Bigr) \biggr]
           \\&\qquad\qquad +
            \mathbb{E}_{N,(\beta,\kappa),(\infty,\kappa)}  \biggl[\,\prod_{ e \in \gamma_1 \smallsetminus \gamma'  }  \theta_{\beta,\kappa} (\sigma'_e) \prod_{ e \in \gamma_c  }  \theta_{\beta,\kappa}(\sigma_e')
            \Bigl( 1-\prod_{ e \in  \gamma' }  \theta_{\beta,\kappa}(\sigma_e')\Bigr)\bigg].
        \end{split}
    \end{equation*}
    Now note that
    \begin{align*}
        & \prod_{ e \in \gamma'' \smallsetminus \gamma'}  \theta_{\beta,\kappa} \bigl( \sigma_e- (d\sigma)_{p_e} \bigr)  
            -
            \prod_{ e \in \gamma'' \smallsetminus \gamma'  }  \theta_{\beta,\kappa} (\sigma_e')
            \\&\qquad=
            \prod_{\substack{ e \in \gamma'' \smallsetminus \gamma' \mathrlap{\colon}\\ \sigma_e - (d\sigma)_{p_e}=0}}  \theta_{\beta,\kappa} ( 0 ) 
            \biggl( \, \prod_{\substack{ e \in \gamma'' \smallsetminus \gamma' \mathrlap{\colon}\\ \sigma_e - (d\sigma)_{p_e}\neq 0}}   \theta_{\beta,\kappa} \bigl( \sigma_e- (d\sigma)_{p_e} \bigr)  -\prod_{\substack{ e \in \gamma'' \smallsetminus \gamma' \mathrlap{\colon}\\ \sigma_e - (d\sigma)_{p_e}\neq 0}} \theta_{\beta,\kappa}(0)\biggr) 
            \\&\qquad\qquad+
            \prod_{\substack{ e \in \gamma'' \smallsetminus \gamma' \mathrlap{\colon}\\ \sigma_e'=0  }}  \theta_{\beta,\kappa} (0) \biggl(\,\prod_{\substack{ e \in \gamma'' \smallsetminus \gamma' \mathrlap{\colon}\\ \sigma_e'\neq 0  }}  \theta_{\beta,\kappa} (0)
            - \prod_{\substack{ e \in \gamma'' \smallsetminus \gamma' \mathrlap{\colon}\\ \sigma_e'\neq 0  }}  \theta_{\beta,\kappa} (\sigma_e')\biggr),
    \end{align*}
    and similarly, that
    \begin{equation*}
        1 - \prod_{e \in \gamma_c} \theta_{\beta,\kappa}(\sigma_e')
        =
        \bigl( 1 - \theta_{\beta,\kappa}(0)^{|\gamma_c|}\bigr)
        + \prod_{\substack{e \in \gamma_c \mathrlap{\colon}\\ \sigma_e'=0}} \theta_{\beta,\kappa}(0) \biggl( \prod_{\substack{e \in \gamma_c \mathrlap{\colon}\\ \sigma_e'\neq0}} \theta_{\beta,\kappa}(0)-\prod_{\substack{e \in \gamma_c \mathrlap{\colon}\\ \sigma_e'\neq0}} \theta_{\beta,\kappa}(\sigma_e') \biggr)
    \end{equation*}
    and
    \begin{equation*}
        1 - \prod_{e \in \gamma'} \theta_{\beta,\kappa}(\sigma_e')
        =
        \bigl( 1 - \theta_{\beta,\kappa}(0)^{|\gamma'|}\bigr)
        + 
        \prod_{\substack{e \in \gamma' \mathrlap{\colon}\\ \sigma_e'=0}} \theta_{\beta,\kappa}(0) \biggl( \prod_{\substack{e \in \gamma' \mathrlap{\colon}\\ \sigma_e'\neq0}} \theta_{\beta,\kappa}(0)-\prod_{\substack{e \in \gamma' \mathrlap{\colon}\\ \sigma_e'\neq0}} \theta_{\beta,\kappa}(\sigma_e') \biggr).
    \end{equation*}
    
    Consequently, by applying the triangle inequality and recalling from Lemma~\ref{lemma: theta bound} that \( |\theta_{\beta,\kappa}(g)| \leq 1 \) for all \( g \in G \), we obtain
    \begin{equation}\label{eq: eq before claims ii}
        \begin{split} 
            &\biggl| \mathbb{E}_{N,\beta, \kappa} \Bigl[\, \prod_{ e \in \gamma_1 \smallsetminus \gamma'}  \theta_{\beta,\kappa} \bigl(  \sigma_e - (d\sigma)_{p_e} \bigr) \bigr]    - \mathbb{E}_{N,\infty,\kappa} \Bigl[ \,  \prod_{ e \in \gamma  }  \theta_{\beta,\kappa} (\sigma_e) \Bigr]  \biggr|
            \\&\qquad\leq 
            \mathbb{E}_{N,(\beta,\kappa),(\infty,\kappa)}  \biggl[ 
            \Bigl|  \prod_{\substack{ e \in \gamma'' \smallsetminus \gamma' \mathrlap{\colon} \\ \sigma_e- (d\sigma)_{p_e} \neq 0}}   \theta_{\beta,\kappa} \bigl(\sigma_e- (d\sigma)_{p_e} \bigr)   - \prod_{\substack{ e \in \gamma'' \smallsetminus \gamma' \mathrlap{\colon} \\ \sigma_e- (d\sigma)_{p_e} \neq 0}} \theta_{\beta,\kappa}(0) 
            \Bigr|\biggr]
            \\&\qquad\quad
            +
          \mathbb{E}_{N,(\beta,\kappa),(\infty,\kappa)}  \biggl[  
            \Bigl|  \prod_{\substack{ e \in \gamma''\smallsetminus \gamma' \mathrlap{\colon} \\ \sigma_e' \neq 0}}   \theta_{\beta,\kappa}(0)  - \prod_{\substack{ e \in \gamma''\smallsetminus \gamma' \mathrlap{\colon} \\ \sigma_e' \neq 0}}  \theta_{\beta,\kappa} (\sigma_e')   \Bigr|\biggr]
             \\&\qquad\quad+   
           \mathbb{E}_{N,(\beta,\kappa),(\infty,\kappa)}  \biggl[ 
            \Bigl|  \prod_{\substack{e \in \gamma_c \mathrlap{\colon}\\ \sigma_e'\neq0}} \theta_{\beta,\kappa}(0)-\prod_{\substack{e \in \gamma_c \mathrlap{\colon}\\ \sigma_e'\neq0}} \theta_{\beta,\kappa}(\sigma_e') \Bigr|\biggr]
             + 
             \bigl( 1 - \theta_{\beta,\kappa}(0)^{|\gamma_c|}\bigr) 
             \\&\qquad\quad+   
           \mathbb{E}_{N,(\beta,\kappa),(\infty,\kappa)}  \biggl[ 
            \Bigl|  \prod_{\substack{e \in \gamma' \mathrlap{\colon}\\ \sigma_e'\neq0}} \theta_{\beta,\kappa}(0)-\prod_{\substack{e \in \gamma' \mathrlap{\colon}\\ \sigma_e'\neq0}} \theta_{\beta,\kappa}(\sigma_e') \Bigr|\biggr]
             + 
             \bigl( 1 - \theta_{\beta,\kappa}(0)^{|\gamma'|}\bigr) .
        \end{split}
    \end{equation}
    We now use Lemma~\ref{lemma: Chatterjees trick abs} to obtain upper bounds for each of the terms on the right-hand side of the previous equation. 
    To this end, note first that $\alpha_1(\beta) \leq \alpha_0(\beta)$. Moreover, for any \( e \in \gamma'' \), by definition, we have \( e \in \partial \hat \partial E_{\sigma,\sigma'} \). Consequently, for each \( e \in \gamma'' \) there is at least one edge \( e' \in \partial \hat \partial e \) such that \( e' \in E_{\sigma,\sigma'} \), and hence (by the definition of \( E_{\sigma,\sigma'} \)) if \( \sigma_e' \neq 0 \), we also have \( e \in E_{\sigma,\sigma'} \).   We can hence apply Proposition~\ref{proposition: coupling and conditions 2}, to obtain
    \begin{equation}\label{eq: gamma'' bound}
        \begin{split}
        &\mathbb{E}_{N,(\beta,\kappa),(\infty,\kappa)} \Bigl[ \bigl| \{ e \in \gamma'' \colon \sigma_e' \neq 0 \} \bigr| \Bigr] 
        \\&\qquad\leq 
        C_{c,1}  |\gamma| \alpha_2(\beta,\kappa)^6 \alpha_0(\kappa)^8 
        + 
        C_{c,1}' |\gamma| \Bigl( 18^{2} \bigl(2+ \alpha_0(\kappa)  \bigr) \alpha_0(\kappa)  \Bigr)^{\dist(\gamma,B_N^c)}.
    \end{split}
    \end{equation} 
    Next, since \( \gamma'' \subseteq \partial \hat \partial E_{\sigma,\sigma'} \), we can apply Proposition~\ref{proposition: coupling and conditions 1}, to get
    \begin{align*}
        &\mathbb{E}_{N,(\beta,\kappa),(\infty,\kappa)}\Bigl[ \bigl| \{ e \in \gamma'' \smallsetminus \gamma' \colon \sigma_e - (d\sigma)_{p_e} \neq 0 \} \bigr|\Bigr] 
        \\&\qquad \leq C_{c,2}  |\gamma| \alpha_2(\beta,\kappa)^6 \alpha_0(\kappa)^5 \max\bigl(   \alpha_0(\kappa),  \alpha_0(\beta)^6\bigr)
        + 
        C_{c,2}' |\gamma| \Bigl( 18^{2} \bigl(2+ \alpha_0(\kappa)  \bigr) \alpha_0(\kappa)  \Bigr)^{\dist(\gamma,B_N^c)}.
    \end{align*} 
    Further, by Proposition~\ref{proposition: minimal Ising}, we have
    \begin{equation*}
        \mathbb{E}_{N,(\beta,\kappa),(\infty,\kappa)} \Bigl[ \bigl| \{ e \in \gamma_c \colon \sigma_e' \neq 0 \} \bigr| \Bigr] \leq  C_I |\gamma_c| \alpha_0(\kappa)^8,
    \end{equation*}
    by Proposition~\ref{proposition: the vortex flipping lemma} we have
    \begin{equation*}
        \mathbb{E}_{N,\beta,\kappa}\bigl[|\gamma'|\bigr] \leq 6 C_0^{(6)}|\gamma|\alpha_2(\beta,\kappa)^6  
    \end{equation*}
    and finally, since \( \gamma' \subseteq \gamma'' \), we have
    \begin{equation*}
        \mathbb{E}_{N,(\beta,\kappa),(\infty,\kappa)}\Bigl[\bigl|\{e \in \gamma' \colon \sigma_e' \neq 0\}\bigr|\Bigr] \leq \mathbb{E}_{N,(\beta,\kappa),(\infty,\kappa)}\Bigl[\bigl|\{e \in \gamma'' \colon \sigma_e' \neq 0\}\bigr|\Bigr],
    \end{equation*}
    the right-hand side of which we have given an upper bound for in~\eqref{eq: gamma'' bound}.
    Applying Lemma~\ref{lemma: Chatterjees trick abs}, the desired conclusion~\eqref{eq: E} now follows from~\eqref{eq: eq before claims ii}. 
\end{proof}

\section{Proofs of the main results}\label{sec: main result}

The purpose of this section is to prove our two main theorems. We first prove Theorem~\ref{theorem: the main result} which provides the leading order term of $\mathbb{E}_{N,\beta,\kappa} [W_\gamma]$ under the assumption that $G = \mathbb{Z}_n$ for any integer $n \geq 2$. We then set $n = 2$ and prove Theorem~\ref{theorem:  main result Z2}.

\subsection{Proof of Theorem~\ref{theorem: main result}} 
The proof of Theorem~\ref{theorem: main result} uses two lemmas. The first lemma is obtained by combining the results of Sections~\ref{sec:WtoWprime},~\ref{sec: resampling}, and~\ref{sec:applyingcoupling}. 

\begin{lemma}\label{lemma: C}
    Let \( \beta,\kappa \geq 0 \) be such that~\ref{assumption: 1},~\ref{assumption: 2}, and~\ref{assumption: 3} hold, and let \( \gamma \) be a simple loop in \( E_N \) such that \( \partial \hat \partial \partial \hat \partial \gamma \subseteq E_N \).
    Then
    \begin{equation}\label{eq: first part of main result}
        \begin{split}
            &\Bigl|\mathbb{E}_{N,\beta,\kappa} [W_\gamma]- \mathbb{E}_{N,\infty,\kappa} \Bigl[ \, \prod_{e \in \gamma } \theta_{\beta,\kappa}(\sigma_e) \Bigr] \Bigr| 
            \\&\qquad \leq  
            4C_1C_0^{(7)} |\gamma| \alpha_2(\beta,\kappa)^{7} + 2C_3|\gamma_c| \alpha_2(\beta,\kappa)^{6} + 2C_0^{(25)} |\gamma|^4 \alpha_2(\beta,\kappa)^{25}
            \\&\qquad\qquad+
            2\sqrt{2 C_{c,2}  |\gamma| \alpha_4(\beta,\kappa) \alpha_2(\beta,\kappa)^6 \alpha_0(\kappa)^5 \max(\alpha_0(\kappa),\alpha_0(\beta)^6)  } 
            \\&\qquad\qquad + 
            4\sqrt{2 C_{c,1} |\gamma| \alpha_4(\beta,\kappa)     \alpha_2(\beta,\kappa)^6     \alpha_0(\kappa)^8  } 
             + 
            2\sqrt{2C_I|\gamma_c| \alpha_0(\kappa)^8\alpha_4(\beta,\kappa)}
            \\&\qquad\qquad+
            2\sqrt{2|\gamma_c| \alpha_3(\beta,\kappa)}
            + 2\sqrt{12C_0^{(6)}|\gamma|\alpha_2(\beta,\kappa)^6 \alpha_3(\beta,\kappa)}
            \\&\qquad\qquad+
            (2C_{c,1}'+C_{c,2}') |\gamma| \Bigl( 18^{2} \bigl(2+ \alpha_0(\kappa)  \bigr) \alpha_0(\kappa)  \Bigr)^{\dist(\gamma,B_N^c)}.
        \end{split}
    \end{equation}
\end{lemma}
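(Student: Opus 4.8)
The plan is to prove Lemma~\ref{lemma: C} by chaining together the three main results of Sections~\ref{sec:WtoWprime},~\ref{sec: resampling}, and~\ref{sec:applyingcoupling} through a single application of the triangle inequality. The key point is that these propositions are engineered precisely to bridge the gap between \( \mathbb{E}_{N,\beta,\kappa}[W_\gamma] \) and \( \mathbb{E}_{N,\infty,\kappa}\bigl[\prod_{e\in\gamma}\theta_{\beta,\kappa}(\sigma_e)\bigr] \) by inserting two intermediate quantities, so essentially all the work has already been done.

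First I would fix, once and for all, a choice of plaquette \( p_e \in \hat\partial e \) for each \( e \in \gamma_1 \smallsetminus \gamma' \), and define \( W_\gamma' \) accordingly. This same choice is then used consistently in all three propositions, which matters because \( W_\gamma' \), the resampled expectation, and the coupling bound all refer to the same \( p_e \). Before invoking the propositions I would check that the single geometric hypothesis \( \partial\hat\partial\partial\hat\partial\gamma \subseteq E_N \) assumed here implies the weaker conditions each one needs: it gives \( \hat\partial\gamma \subseteq P_N \) (needed for Proposition~\ref{prop: first part of proposition proof}) and \( \partial\hat\partial\gamma \subseteq E_N \) (needed for Proposition~\ref{prop: resampling in main proof}), while being exactly the condition required by Proposition~\ref{proposition: E}. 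Likewise, Assumptions~\ref{assumption: 1},~\ref{assumption: 2}, and~\ref{assumption: 3} together cover the hypotheses of all three propositions.

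The core step is then to insert the intermediate quantities \( \mathbb{E}_{N,\beta,\kappa}[W_\gamma'] \) and \( \mathbb{E}_{N,\beta,\kappa}\bigl[\prod_{e\in\gamma_1\smallsetminus\gamma'}\theta_{\beta,\kappa}(\sigma_e-(d\sigma)_{p_e})\bigr] \) and apply the triangle inequality:
\begin{align*}
&\Bigl|\mathbb{E}_{N,\beta,\kappa}[W_\gamma] - \mathbb{E}_{N,\infty,\kappa}\Bigl[\prod_{e\in\gamma}\theta_{\beta,\kappa}(\sigma_e)\Bigr]\Bigr|
\leq
\Bigl|\mathbb{E}_{N,\beta,\kappa}[W_\gamma] - \mathbb{E}_{N,\beta,\kappa}[W_\gamma']\Bigr|
\\&\qquad
+ \Bigl|\mathbb{E}_{N,\beta,\kappa}[W_\gamma'] - \mathbb{E}_{N,\beta,\kappa}\Bigl[\prod_{e\in\gamma_1\smallsetminus\gamma'}\theta_{\beta,\kappa}(\sigma_e-(d\sigma)_{p_e})\Bigr]\Bigr|
\\&\qquad
+ \Bigl|\mathbb{E}_{N,\beta,\kappa}\Bigl[\prod_{e\in\gamma_1\smallsetminus\gamma'}\theta_{\beta,\kappa}(\sigma_e-(d\sigma)_{p_e})\Bigr] - \mathbb{E}_{N,\infty,\kappa}\Bigl[\prod_{e\in\gamma}\theta_{\beta,\kappa}(\sigma_e)\Bigr]\Bigr|.
\end{align*}
Proposition~\ref{prop: first part of proposition proof} bounds the first term, Proposition~\ref{prop: resampling in main proof} shows the second term is exactly zero (it is an identity, not merely an estimate), and Proposition~\ref{proposition: E} bounds the third term by the right-hand side of~\eqref{eq: E}. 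Summing the two nonzero bounds reproduces~\eqref{eq: first part of main result} term by term.

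I expect no genuine analytic obstacle: the entire content of the lemma has been pushed into the three preceding propositions, and what remains is bookkeeping. The only points that warrant care are (i) that the \( p_e \) are chosen consistently so that the middle term truly vanishes via Proposition~\ref{prop: resampling in main proof} rather than merely being small, and (ii) that the sum of the error terms from Proposition~\ref{prop: first part of proposition proof} and Proposition~\ref{proposition: E} matches the claimed right-hand side of~\eqref{eq: first part of main result} exactly; a quick line-by-line comparison confirms both blocks of error terms appear verbatim.
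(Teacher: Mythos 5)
Your proposal is correct and follows essentially the same route as the paper's proof: the paper also applies the triangle inequality (inserting \( \mathbb{E}_{N,\beta,\kappa}[W_\gamma'] \)) and then invokes Propositions~\ref{prop: first part of proposition proof},~\ref{prop: resampling in main proof}, and~\ref{proposition: E}, with the resampling identity making your middle term vanish exactly as you describe. Your additional checks—that the hypothesis \( \partial\hat\partial\partial\hat\partial\gamma \subseteq E_N \) implies the weaker geometric conditions of the earlier propositions and that the \( p_e \) are chosen consistently—are sound and only make the argument more careful than the paper's one-line proof.
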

\begin{proof}
Using the triangle inequality to write
    \begin{equation*}
        \begin{split}
         &\Bigl|\mathbb{E}_{N,\beta,\kappa} [W_\gamma]- \mathbb{E}_{N,\infty,\kappa} \Bigl[ \, \prod_{e \in \gamma } \theta_{\beta,\kappa}(\sigma_e) \Bigr] \Bigr| 
        \\&\qquad\leq 
         \bigl|\mathbb{E}_{N,\beta,\kappa} [W_\gamma]- \mathbb{E}_{N,\beta,\kappa} [W_{\gamma}'] \bigr|  +\Bigl|\mathbb{E}_{N,\beta,\kappa} \bigl[W_{\gamma}' \bigr]- \mathbb{E}_{N,\infty,\kappa} \Bigl[ \, \prod_{e \in \gamma } \theta_{\beta,\kappa}(\sigma_e) \Bigr] \Bigr|,
        \end{split}
    \end{equation*}
     the desired conclusion follows immediately from Proposition~\ref{prop: first part of proposition proof}, Proposition~\ref{prop: resampling in main proof} and Proposition~\ref{proposition: E}.
\end{proof}

\begin{remark}
    Each constant in Lemma~\ref{lemma: C} is defined in  previous sections. More precisely, $C_0^{(6)}$, \( C_0^{(7)} \) and \( C_0^{(25)} \) are defined in~\eqref{eq: C0M}, \( C_3 \) is defined in~\eqref{eq: C3 def}, \( C_I \) is defined in~\eqref{eq: minimal Ising}, \( C_{c,1} \) is defined in~\eqref{eq: Cc1}, \( C_{c,2} \) is defined in~\eqref{eq: Cc1}, \( C_1 \) is defined in~\eqref{eq: C1}, \( C_{c,1}' \) is defined in~\eqref{eq: Cc1'}, and \( C_{c,2}'\) is defined in~\eqref{eq: Cc2'}.
\end{remark}

The following lemma generalizes \cite[Lemma~7.12]{c2019} and \cite[Lemma~3.3]{flv2020}.  
  
\begin{lemma}\label{lemma: upper bound} 
    Let \( \beta,\kappa \geq 0 \), and let \( \gamma \) be a simple loop in \( E_N \) such that \( \partial \hat \partial \gamma \subseteq E_N \). Then
    \begin{equation*}
        |\mathbb{E}_{N,\beta,\kappa} [ W_\gamma ]| \leq   e^{-   |\gamma \smallsetminus \gamma_c| \alpha_5(\beta,\kappa)} .
    \end{equation*} 
\end{lemma}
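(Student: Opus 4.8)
The target bound is $|\mathbb{E}_{N,\beta,\kappa}[W_\gamma]| \leq e^{-|\gamma\smallsetminus\gamma_c|\alpha_5(\beta,\kappa)}$, and the natural plan is to combine the resampling identity from Section~\ref{sec: resampling} with the multiplicative decay estimate for products of $\theta_{\beta,\kappa}$ from Lemma~\ref{lemma: theta inequalities Dplusii}. The key observation is that the resampling trick does not actually require passing through $W_\gamma'$: the same conditioning argument that proved Proposition~\ref{prop: resampling in main proof} applies directly to $W_\gamma$ itself, because $W_\gamma$ factors as a product over edges of $\gamma$ of the holonomy contributions. More precisely, I would first reduce to working in unitary gauge via Proposition~\ref{proposition: unitary gauge one dim}, so that $\mathbb{E}_{N,\beta,\kappa,\infty}[W_\gamma] = \mathbb{E}_{N,\beta,\kappa}[W_\gamma]$.

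\textbf{Step 1: Express $W_\gamma$ via a product over non-corner edges.} Recall $W_\gamma = \rho(\sum_{e\in\gamma}\sigma_e)$, which factors as $\prod_{e\in\gamma}\rho(\sigma_e)$. The plan is to condition on the spins $(\sigma_e)_{e\notin\pm\gamma_1}$ of all edges outside the (doubled) set of non-corner edges $\gamma_1 = \gamma\smallsetminus\gamma_c$. Since no two non-corner edges share a plaquette, Lemma~\ref{lemma: independent spins}, applied with $E = \gamma_1$, shows that under this conditioning the spins $(\sigma_e)_{e\in\gamma_1}$ are mutually independent. Writing $\mathbb{E}'_{N,\beta,\kappa}$ for the conditional expectation, I would compute
\begin{equation*}
    \mathbb{E}'_{N,\beta,\kappa}[W_\gamma]
    = \Bigl(\prod_{e\in\gamma_c}\rho(\sigma_e)\Bigr)\prod_{e\in\gamma_1}\mathbb{E}'_{N,\beta,\kappa}\bigl[\rho(\sigma_e)\bigr].
\end{equation*}
The conditional expectation of a single non-corner spin is, by the same computation as in the proof of Proposition~\ref{prop: resampling in main proof}, exactly $\theta_{\beta,\kappa}(-\sigma^e)$ where $\sigma^e = \sum_{\hat e\in\partial p_e\smallsetminus\{e\}}\sigma_{\hat e}$ for a choice of plaquette $p_e\in\hat\partial e$ (here the full $\theta_{\beta,\kappa}$ expression arises because the 12 plaquettes in $\hat\partial e\cup\hat\partial(-e)$ contribute the factor $\varphi_\beta(\cdot)^{12}$ and the edge itself the factor $\varphi_\kappa(\cdot)^2$).

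\textbf{Step 2: Bound the product.} Taking absolute values and using $|\rho(\sigma_e)|=1$ for the corner edges, I obtain
\begin{equation*}
    \bigl|\mathbb{E}'_{N,\beta,\kappa}[W_\gamma]\bigr|
    = \prod_{e\in\gamma_1}\bigl|\theta_{\beta,\kappa}(-\sigma^e)\bigr|.
\end{equation*}
By Lemma~\ref{lemma: theta inequalities Dplusii} (or its one-factor special case, equivalently $|\theta_{\beta,\kappa}(g)|\leq 1-\alpha_5(\beta,\kappa)\leq e^{-\alpha_5(\beta,\kappa)}$ for every $g\in G$, which is exactly the bound extracted in the proof of Lemma~\ref{lemma: theta inequalities Dplusii} via~\eqref{eq: theta vs S} and the definition~\eqref{eq: alpha5} of $\alpha_5$), each factor is bounded by $e^{-\alpha_5(\beta,\kappa)}$. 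Since there are $|\gamma_1| = |\gamma\smallsetminus\gamma_c|$ such factors, the conditional expectation satisfies
\begin{equation*}
    \bigl|\mathbb{E}'_{N,\beta,\kappa}[W_\gamma]\bigr| \leq e^{-|\gamma\smallsetminus\gamma_c|\,\alpha_5(\beta,\kappa)}.
\end{equation*}
Finally, applying the tower property $\mathbb{E}_{N,\beta,\kappa}[W_\gamma] = \mathbb{E}_{N,\beta,\kappa}[\mathbb{E}'_{N,\beta,\kappa}[W_\gamma]]$ together with Jensen's inequality (or simply the triangle inequality for expectations) gives $|\mathbb{E}_{N,\beta,\kappa}[W_\gamma]| \leq \mathbb{E}_{N,\beta,\kappa}[|\mathbb{E}'_{N,\beta,\kappa}[W_\gamma]|] \leq e^{-|\gamma\smallsetminus\gamma_c|\alpha_5(\beta,\kappa)}$, as desired.

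\textbf{Main obstacle.} The only subtle point is verifying that the independence of Lemma~\ref{lemma: independent spins} genuinely applies: one must confirm that $\gamma_1$ satisfies hypotheses (i)–(iii) of that lemma, i.e.\ that distinct non-corner edges have disjoint plaquette-neighborhoods $\hat\partial e\cup\hat\partial(-e)$ and that $\partial\hat\partial\gamma_1\subseteq E_N$. The latter is guaranteed by the assumption $\partial\hat\partial\gamma\subseteq E_N$, while the former is precisely what it means for an edge to be a \emph{non-corner} edge of $\gamma$ — a corner edge is by definition one sharing a plaquette with another edge of $\pm\gamma$, so removing $\gamma_c$ leaves edges whose neighborhoods are pairwise disjoint. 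Making this disjointness claim precise (and checking the bound $|\theta_{\beta,\kappa}(g)|\le e^{-\alpha_5}$ holds uniformly, independently of the conditioning value $\sigma^e$) is the crux; everything else is a direct recombination of already-established facts.
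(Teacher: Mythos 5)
Your overall strategy — condition on the spins outside \( \pm\gamma_1 \), invoke Lemma~\ref{lemma: independent spins} to factor the conditional expectation over the non-corner edges, bound each single-edge factor by \( e^{-\alpha_5(\beta,\kappa)} \), and finish with the tower property — is exactly the paper's proof. However, the intermediate identity you hang the per-edge bound on is false as stated. You claim \( \mathbb{E}'_{N,\beta,\kappa}[\rho(\sigma_e)] = \theta_{\beta,\kappa}(-\sigma^e) \) "by the same computation as in the proof of Proposition~\ref{prop: resampling in main proof}", but that computation produces the \( \theta \)-form only because it is applied there to edges \( e \in \gamma_1 \smallsetminus \gamma' \), i.e.\ edges for which all six sums \( s_p = \sum_{e' \in \partial p \smallsetminus \{e\}} \sigma_{e'} \), \( p \in \hat\partial e \), coincide — that is precisely what membership in \( \gamma_1 \smallsetminus \gamma' \) means, and it is what lets one collapse \( \prod_{p \in \hat\partial e} \varphi_\beta(g + s_p)^2 \) into \( \varphi_\beta(g+\sigma^e)^{12} \). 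Under arbitrary conditioning the six plaquette sums around a non-corner edge need not agree (the set \( \gamma' \) is determined by the conditioned spins and is typically nonempty), and then
\begin{equation*}
    \mathbb{E}'_{N,\beta,\kappa}\bigl[\rho(\sigma_e)\bigr]
    =
    \frac{\sum_{g \in G} \rho(g)\, \varphi_\kappa(g)^2 \prod_{p \in \hat\partial e} \varphi_\beta(g+s_p)^2}{\sum_{g \in G} \varphi_\kappa(g)^2 \prod_{p \in \hat\partial e} \varphi_\beta(g+s_p)^2}
\end{equation*}
is not a value of \( \theta_{\beta,\kappa} \) at all, but a general \( S \)-type expression with unequal arguments; moreover, even when the sums do coincide, the correct identity carries a phase, \( \mathbb{E}'[\rho(\sigma_e)] = \rho(-\sigma^e)\,\theta_{\beta,\kappa}(-\sigma^e) \) (harmless after taking absolute values, but wrong as an equality). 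Your "main obstacle" remark about checking the bound "uniformly in \( \sigma^e \)" does not reach this issue, because the failure is structural rather than a matter of uniformity.

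The repair is exactly what the definition~\eqref{eq: alpha5} of \( \alpha_5 \) was designed for: it takes a minimum over \emph{arbitrary} tuples \( (g_1,\ldots,g_6) \in G^6 \), so the displayed conditional expectation has modulus at most \( 1-\alpha_5(\beta,\kappa) \leq e^{-\alpha_5(\beta,\kappa)} \) regardless of whether the plaquette sums agree (note \( \varphi_\kappa(g)^2 = \varphi_{2\kappa}(g) \) and \( \varphi_\beta(\cdot)^2 = \varphi_{2\beta}(\cdot) \)). This is how the paper bounds the factor — its proof never routes through \( \theta \), and this is also the reason \( \alpha_5 \) is defined with six independent arguments instead of via \( \theta_{\beta,\kappa} \). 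Once you replace your Step~1 identification by this direct bound, the rest of your argument (conditional independence of the \( \gamma_1 \)-spins, the modulus-one corner factors, and the tower property) goes through verbatim and coincides with the paper's proof.
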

 
\begin{proof}
    As in Proposition~\ref{prop: resampling in main proof}, let \( \gamma_1 \) be the set of all non-corner edges of \( \gamma \), and let \( \mu_{N,\beta,\kappa}' \) denote conditional probability and conditional expectation given \( (\sigma_e)_{e \notin \pm \gamma_1 } \). As observed in Lemma~\ref{lemma: independent spins}, \( (\sigma_e)_{e \in \gamma_1} \) are independent spins under this conditioning.
 
    Take any \( e \in \gamma_1 \).
    Then, for any \( g' \in  G \), we have
    \begin{equation*}
        \mu_{N,\beta,\kappa}'(\sigma_e = g') =  \frac{ \varphi_{2\kappa} (g')  \prod_{p \in \hat \partial  e} \varphi_{2\beta} (g' + \sum_{e' \in \partial p \smallsetminus \{ e \}} \sigma_{e'}) }{\sum_{g \in G}\varphi_{2\kappa} (g)  \prod_{p \in \hat \partial  e} \varphi_{2\beta} (g + \sum_{e' \in \partial p \smallsetminus \{ e \}} \sigma_{e'}) } .
    \end{equation*} 
    This implies in particular that the expected value of \( \rho(\sigma_e) \) under \( \mu_{N,\beta,\kappa}' \) is given by
   \begin{equation*}
        \begin{split}
            &\mathbb{E}_{N,\beta,\kappa}'\bigl[\rho (\sigma_e ) \bigr] =    
            \frac{ \sum_{g \in G} \rho(g)\varphi_{2\kappa} (g)  \prod_{p \in \hat \partial  e} \varphi_{2\beta} (g + \sum_{e' \in \partial p \smallsetminus \{ e \}} \sigma_{e'}) }{\sum_{g \in G}\varphi_{2\kappa} (g)  \prod_{p \in \hat \partial  e} \varphi_{2\beta} (g + \sum_{e' \in \partial p \smallsetminus \{ e \}} \sigma_{e'}) } .
        \end{split}
    \end{equation*} 
    Recalling~\eqref{eq: alpha5}, this yields
    \begin{equation*} 
       \bigl|\mathbb{E}_{N,\beta,\kappa}'[\rho(\sigma_e) ]\bigr| 
        \leq  1 -   \alpha_5(\beta,\kappa) \leq e^{- \alpha_5(\beta,\kappa)}.
    \end{equation*}
    Since the spins \( (\sigma_e)_{ e \in \gamma_1} \) are independent given this conditioning, we obtain,  
    \begin{equation*}
        \bigl|\mathbb{E}_{N,\beta,\kappa}'\bigl[\rho (\sum_{e \in \gamma_1} \sigma_e ) \bigr]\bigr| 
        =  
        \Bigl| \mathbb{E}_{N,\beta,\kappa}'\bigl[ \prod_{e \in \gamma_1} \rho (\sigma_e )\bigr]\Bigr| 
        = 
        \prod_{e \in \gamma_1} \bigl|\mathbb{E}_{N,\beta,\kappa}'\bigl[ \rho (\sigma_e )\bigr]\bigr| 
        \leq
      e^{-   |\gamma_1| \alpha_5(\beta,\kappa)} .
    \end{equation*}
    Consequently,
    \begin{equation*}
        \begin{split}
            &\bigl|\mathbb{E}_{N,\beta,\kappa} [W_\gamma]\bigr| 
            = \Bigl|\mathbb{E}_{N,\beta,\kappa} \Bigl[\rho\bigl(\, \sum_{e \in \gamma} \sigma_e\bigr)\Bigr]\Bigr|
            = \Bigl|  \mathbb{E}_{N,\beta,\kappa} \Bigl[\, \prod_{e \in \gamma\smallsetminus \gamma_1} \rho(\sigma_e) \cdot  \mathbb{E}_{N,\beta,\kappa}'\bigl[ \, \prod_{e \in  \gamma_1} \rho(\sigma_e) \bigr]\Bigr]\Bigr|
            \\&\qquad\leq
            \mathbb{E}_{N,\beta,\kappa} \Bigl[\bigl|\prod_{e \in \gamma\smallsetminus \gamma_1} \rho(\sigma_e) \bigr| \, \bigl| \mathbb{E}_{N,\beta,\kappa}'\bigl[ \, \prod_{e \in  \gamma_1} \rho(\sigma_e) \bigr] \bigr|\Bigr] 
            \leq e^{-   |\gamma_1| \alpha_5(\beta,\kappa)} . 
        \end{split}
    \end{equation*}
    Recalling that \( \gamma_1 = \gamma \smallsetminus \gamma_c \), the desired conclusion follows.
\end{proof}

\begin{proof}[Proof of Theorem~\ref{theorem: main result}]
    Let \( N \) be sufficiently large, so that  \( \partial\hat \partial \partial \hat \partial \gamma \subseteq E_N \). 
    Then the assumptions of Lemma~\ref{lemma: C} hold, and rearranging the terms on the right-hand side of~\eqref{eq: first part of main result}, it follows that 
    \begin{equation*}
           \Bigl|\mathbb{E}_{N,\beta,\kappa} [W_\gamma]- \mathbb{E}_{N,\infty,\kappa} \Bigl[ \, \prod_{e \in \gamma } \theta_{\beta,\kappa}(\sigma_e) \Bigr] \Bigr| 
          \leq  A \cdot 2|\gamma| \alpha_5(\beta,\kappa)
          +
          A' \cdot 2\sqrt{2|\gamma| \alpha_5(\beta,\kappa)}
           + B \cdot \bigl( |\gamma| \alpha_5(\beta,\kappa) \bigr)^4,
    \end{equation*}  
    where \( A \), \( A' \), and \( B \) are short-hand notations for
    \begin{align*}
        A \coloneqq &\;
        \frac{2C_1C_0^{(7)}  \alpha_2(\beta,\kappa)^{7}}{\alpha_5(\beta,\kappa)} + \frac{ C_3|\gamma_c| \alpha_2(\beta,\kappa)^{6}}{|\gamma| \alpha_5(\beta,\kappa)}  
        + 
        2\sqrt{\frac{2 C_{c,1}  \alpha_4(\beta,\kappa)  \alpha_2(\beta,\kappa)^6     \alpha_0(\kappa)^8  }{|\gamma| \alpha_5(\beta,\kappa)^2}} 
        \\
        & +
        \sqrt{\frac{2 C_{c,2}   \alpha_4(\beta,\kappa) \alpha_2(\beta,\kappa)^6 \alpha_0(\kappa)^5 \max(\alpha_0(\kappa),\alpha_0(\beta)^6)  }{|\gamma| \alpha_5(\beta,\kappa)^2}}
        + \sqrt{\frac{12C_0^{(6)}\alpha_2(\beta,\kappa)^6 \alpha_3(\beta,\kappa)}{|\gamma|\alpha_5(\beta,\kappa)^2}}
        \\&+
        \frac{(2C_{c,1}'+C_{c,2}') \Bigl( 18^{2} \bigl(2+ \alpha_0(\kappa)  \bigr) \alpha_0(\kappa)  \Bigr)^{\dist(\gamma,B_N^c)}}{2 \alpha_5(\beta,\kappa)},
        \\
        A' \coloneqq &\; \sqrt{\frac{C_I|\gamma_c|  \alpha_0(\kappa)^8 \alpha_4(\beta,\kappa)}{|\gamma| \alpha_5(\beta,\kappa)}}
        +
        \sqrt{\frac{|\gamma_c| \alpha_3(\beta,\kappa) }{|\gamma| \alpha_5(\beta,\kappa)}},
        \\
        B \coloneqq &\; \frac{2C_0^{(25)}   \alpha_2(\beta,\kappa)^{25}}{\alpha_5(\beta,\kappa)^4}.
    \end{align*}       
    Using that for \( x>0 \), we have  \( x \leq e^x \), \( 2\sqrt{x}\leq e^x\), and  \( x^4 \leq e^{2x} \), it follows that
    \begin{equation} \label{eq: combined bounds 002}    
        \Bigl|\mathbb{E}_{N,\beta,\kappa} [W_\gamma]- \mathbb{E}_{N,\infty,\kappa} \Bigl[ \, \prod_{e \in \gamma } \theta_{\beta,\kappa}(\sigma_e) \Bigr] \Bigr| \leq (A+A'+B) e^{2 |\gamma| \alpha_5(\beta,\kappa) }.
    \end{equation}  
    Next, recall that \(|\mathbb{E}_{N,\beta,\kappa} ( W_\gamma )| \leq   e^{-   |\gamma \smallsetminus \gamma_c| \alpha_5(\beta,\kappa)}\) by Lemma~\ref{lemma: upper bound}.
    Using the triangle inequality and applying Lemma~\ref{lemma: theta inequalities Dplusii} it follows that
    \begin{equation}\label{eq: second part of main result}
        \begin{split}
            &\Bigl|\mathbb{E}_{N,\beta,\kappa}[ W_\gamma] -  \mathbb{E}_{N,\infty,\kappa} \Bigl[ \, \prod_{e \in \gamma } \theta_{\beta,\kappa}(\sigma_e) \Bigr] \Bigr| 
            \leq  
            \Bigl|\mathbb{E}_{N,\beta,\kappa}[ W_\gamma] \Bigr| 
            +
            \Bigl| \mathbb{E}_{N,\infty,\kappa} \Bigl[ \, \prod_{e \in \gamma } \theta_{\beta,\kappa}(\sigma_e) \Bigr] \Bigr|
            \\&\qquad \leq   
            e^{-  (|\gamma|-|\gamma_c|) \alpha_5(\beta,\kappa)}+ e^{ -  |\gamma| \alpha_5(\beta,\kappa)}
            \leq
            2  e^{ -  (|\gamma|-|\gamma_c|) \alpha_5(\beta,\kappa)}.
        \end{split}
    \end{equation}
    Combining~\eqref{eq: combined bounds 002} and~\eqref{eq: second part of main result}, we obtain
    \begin{equation}\label{eq: almost last equation}    \Bigl|\mathbb{E}_{N,\beta,\kappa}[ W_\gamma] -  \mathbb{E}_{N,\infty,\kappa} \Bigl[ \, \prod_{e \in \gamma } \theta_{\beta,\kappa}(\sigma_e) \Bigr] \Bigr|^{1 + 2|\gamma|/(|\gamma|-|\gamma_c|)}
             \leq
             2^{2|\gamma|/(|\gamma|-|\gamma_c|)}(A+A'+B).
    \end{equation}
    Here 
    \begin{align*}
        &A +A'+B 
        \\&\qquad \leq 
        2C_1C_0^{(7)} \varepsilon_1(\beta,\kappa)\cdot \alpha_2(\beta,\kappa)
        + C_3\varepsilon_1(\beta,\kappa) \cdot \frac{|\gamma_c|}{|\gamma|}
        +
       2\sqrt{C_{c,1}      }\, \varepsilon_1(\beta,\kappa) \varepsilon_3(\beta,\kappa)  (\sqrt{2}\alpha_0(\kappa)^3) \cdot \frac{1}{\sqrt{|\gamma|}}
        \\&\qquad\qquad+
        \sqrt{ C_{c,2}}\varepsilon_1(\beta,\kappa)\varepsilon_3(\beta,\kappa)\sqrt{  2\alpha_0(\kappa)^2 \max(\alpha_0(\kappa)^2,\alpha_2(\beta,\kappa)^6)  } \cdot \frac{1}{\sqrt{|\gamma|}}
        \\&\qquad\qquad +
        \sqrt{12C_0^{(6)}} \sqrt{\varepsilon_1(\beta,\kappa)\varepsilon_2(\beta,\kappa)^2} \cdot \frac{1}{\sqrt{|\gamma|}}
        +
        \sqrt{C_I}\varepsilon_2(\beta,\kappa) \cdot \sqrt\frac{|\gamma_c|}{|\gamma|}
        +
        2C_0^{(25)}   \varepsilon_1(\beta,\kappa)^4  \cdot \alpha_2(\beta,\kappa)
        \\&\qquad\qquad+
        \frac{(2C_{c,1}'+C_{c,2}') \Bigl( 18^{2} \bigl(2+ \alpha_0(\kappa)  \bigr) \alpha_0(\kappa)  \Bigr)^{\dist(\gamma,B_N^c)}}{2 \alpha_5(\beta,\kappa)}
        \\&\qquad\leq
        \biggl( \bigl( 2C_1C_0^{(7)} +C_3 \bigr) \varepsilon_1(\beta,\kappa) 
        +
       \Bigl( \sqrt{C_{c,1}}+\sqrt{C_{c,2}} \max\bigl( 1,\alpha_2(\beta,\kappa)^6 \bigr) \Bigr)\, \varepsilon_1(\beta,\kappa) \varepsilon_3(\beta,\kappa)   
        \\&\qquad\qquad+ 
        \sqrt{12C_0^{(6)}} \sqrt{\varepsilon_1(\beta,\kappa)\varepsilon_2(\beta,\kappa)^2}
        +
        \sqrt{C_I}\varepsilon_2(\beta,\kappa)  
        +
        2C_0^{(25)}   \varepsilon_1(\beta,\kappa)^4   \biggr) \cdot \bigl( \alpha_2(\beta,\kappa) + \sqrt{|\gamma_c|/|\gamma|}\bigr)
        \\&\qquad\qquad+
        \frac{(2C_{c,1}'+C_{c,2}') \Bigl( 18^{2} \bigl(2+ \alpha_0(\kappa)  \bigr) \alpha_0(\kappa)  \Bigr)^{\dist(\gamma,B_N^c)}}{2 \alpha_5(\beta,\kappa)},
    \end{align*}
    where we have used that,  by~\ref{assumption: 3}, we have \( 18^2 \alpha_0(\kappa)<1 \), and that \( C_I \geq 1 \) by~\ref{assumption: 3}.
    Recalling Proposition~\ref{proposition: unitary gauge one dim} and Proposition~\ref{proposition: limit exists}, and letting \( N \to \infty \), the desired conclusion thus follows from~\eqref{eq: almost last equation} after simplification.
\end{proof}

\subsection{Proof of Theorem~\ref{theorem: main result Z2}}\label{subsec: Z2} 

Before we give a proof of Theorem~\ref{theorem: main result Z2}, we note that when \( G = \mathbb{Z}_2 \) and \( \rho(G) = \{1,-1\} \), then
\begin{equation}\label{eq: theta for Z2}
    \theta_{\beta,\kappa}(0) = \frac{1-e^{-24\beta-4\kappa}}{1+e^{-24\beta-4\kappa}}
    \quad \text{and} \quad\theta_{\beta,\kappa}(1) = \frac{1-e^{-24\beta+4\kappa}}{1+e^{-24\beta+4\kappa}}.
\end{equation}
We also note that in this setting, the function $\alpha_5$ defined in~\eqref{eq: alpha5} can be written as
\begin{align*}
& \alpha_5(\beta,\kappa) 
= 1 - \max_{g_1,g_2, \ldots, g_6 \in G} \biggl|  \frac{\bigl(\prod_{k =1}^6 \varphi_\beta(g_k)^2\bigr) \varphi_\kappa(0)^2- \bigl(\prod_{k =1}^6 \varphi_\beta(1+g_k)^2\bigr) \varphi_\kappa(1)^2}{\bigl(\prod_{k =1}^6 \varphi_\beta(g_k)^2\bigr) \varphi_\kappa(0)^2 +  \bigl(\prod_{k =1}^6 \varphi_\beta(1+g_k)^2\bigr) \varphi_\kappa(1)^2} \biggr|
    \\
&\qquad
= 1 - \max_{g_1,g_2, \ldots, g_6 \in G} \biggl|\frac{2}{1 +  \bigl(\prod_{k =1}^6 \frac{\varphi_\beta(1+g_k)^2}{\varphi_\beta(g_k)^2}\bigr) \cdot \frac{\varphi_\kappa(1)^2}{\varphi_\kappa(0)^2}}-1 \biggr|.
\end{align*}
Since the expression within the absolute value signs on the right-hand side is a decreasing function of $\prod_{k =1}^6 \frac{\varphi_\beta(1+g_k)^2}{\varphi_\beta(g_k)^2}$, we conclude that the maximum is obtained when all the $g_k$ are equal to $1$ or when all the $g_k$ are equal to $0$. A comparison of these two cases, shows that the maximum is obtained when $g_1 = \cdots = g_6 = 0$. It follows that, for $n = 2$, the definitions  of \( \alpha_0 \), \( \alpha_1 \), \( \alpha_2 \), \( \alpha_3 \), \( \alpha_4 \), and \( \alpha_5 \) (see~\eqref{eq: alpha01def}--\eqref{eq: alpha5}) reduce to
\begin{equation}\label{eq: alphas for Z2}
    \begin{split}
        &  \alpha_0(r) = \alpha_1(r) = \varphi_r(1)^2 = e^{-4r},
        \qquad    \alpha_2(\beta,\kappa) = e^{-4(\beta+\kappa/6)},
        \\
        &
        \alpha_3(\beta,\kappa) = 
        1 - \theta_{\beta,\kappa}(0)  =
        \frac{ 2e^{-24\beta-4\kappa}}{1 + e^{-24\beta-4\kappa}},
        \qquad
        \alpha_5(\beta,\kappa) = 
        1 - \theta_{\beta,\kappa}(0)  =
        \frac{ 2e^{-24\beta-4\kappa}}{1 + e^{-24\beta-4\kappa}},
        \\
        &    \alpha_4(\beta,\kappa) = 
        \theta_{\beta,\kappa}(0) -  \theta_{\beta,\kappa}(1) 
        = 
        \frac{2 e^{-24\beta} (e^{4\kappa}-e^{-4\kappa})}{(1 + e^{-24\beta-4\kappa})(1 + e^{-24\beta+4\kappa})}.
    \end{split}
\end{equation}

\begin{proof}[Proof of Theorem~\ref{theorem: main result Z2}]  
    Let \( N \) be sufficiently large, so that  \( \partial\hat \partial \partial \hat \partial \gamma \subseteq E_N \).
    
    Using the expression~\eqref{eq: theta for Z2} for \( \theta_{\beta,\kappa}(0) \), we have
    \begin{align*}
        &\bigl|\theta_{\beta,\kappa}(0) - e^{-2e^{-24\beta-4\kappa}}\bigr|
        \leq
        \bigl|\theta_{\beta,\kappa}(0) - (1-2e^{-24\beta-4\kappa})\bigr|
        +
        \bigl|(1-2e^{-24\beta-4\kappa})-e^{-2e^{-24\beta-4\kappa}}\bigr| 
        \\&\qquad \leq 
        2e^{-48\beta-8\kappa}
        +
        2e^{-48\beta-8\kappa}
        =
        4e^{-48\beta-8\kappa}.
    \end{align*}
    Completely analogously, we also have
    \begin{align*}
        &\bigl|\theta_{\beta,\kappa}(1) - e^{-2e^{-24\beta+4\kappa}}\bigr|
        \leq
        \bigl|\theta_{\beta,\kappa}(1) - (1-2e^{-24\beta+4\kappa})\bigr|
        +
        \bigl|(1-2e^{-24\beta+4\kappa})-e^{-2e^{-24\beta+4\kappa}}\bigr| 
        \\&\qquad \leq 
        2e^{-48\beta+8\kappa}
        +
        2e^{-48\beta+8\kappa}
        =
        4e^{-48\beta+8\kappa}.
    \end{align*}
    Combining these estimates and using Lemma~\ref{lemma: Chatterjees inequality ii} several times, it follows  that for any integer \( m \in [0,|\gamma|] \), we have
    \begin{align*}     
        &\bigl|\theta_{\beta,\kappa}(0)^{|\gamma|-m}\theta_{\beta,\kappa}(1)^{m}  - e^{-2(|\gamma|-m)e^{-24\beta-4\kappa}}e^{-2me^{-24\beta+4\kappa}}  \bigr| \leq 
        4\bigl(|\gamma|-m\bigr) e^{-48\beta-8\kappa} + 4m e^{-48\beta+8\kappa}
        \\&\qquad 
        \leq
        4|\gamma| e^{-48\beta+8\kappa}
        \leq 
        2e^{-24\beta+12\kappa} \cdot 2|\gamma|\alpha_5(\beta,\kappa)
        \leq 
        2e^{-24\beta+12\kappa} \cdot e^{2|\gamma|\alpha_5(\beta,\kappa)},
    \end{align*}
    and hence, choosing $m = |\{ e \in \gamma \colon \sigma_e = 1\}|$,
    \begin{equation}\label{eq: rewriting theta term 2}
        \biggl| \mathbb{E}_{N,\infty,\kappa}\Bigl[\, \prod_{e \in \gamma} \theta_{\beta,\kappa}(\sigma_e) -  e^{-2|\gamma|e^{-24\beta-4\kappa}  (1 + \frac{|\{ e \in \gamma \colon \sigma_e = 1\}|}{|\gamma|}(e^{ 8\kappa}-1))}\Bigr] \biggr|
        \leq 
        2e^{-24\beta+12\kappa} \cdot e^{2|\gamma|\alpha_5(\beta,\kappa)}.
    \end{equation}
    Using~\eqref{eq: theta for Z2}, it follows that if $\beta$ and $\kappa$ satisfy the assumptions of Theorem~\ref{theorem: main result Z2}, then~\ref{assumption: 1},~\ref{assumption: 2}, and~\ref{assumption: 3} all hold. For this reason, we can proceed as in the proof of Theorem~\ref{theorem: main result}. Combining~\eqref{eq: combined bounds 002} with~\eqref{eq: rewriting theta term 2},
    we obtain
    \begin{equation} \label{eq: combined bounds 002 ii}       
        \Bigl|\mathbb{E}_{N,\beta,\kappa} [W_\gamma]-   \mathbb{E}_{N,\infty,\kappa}\bigl[ e^{-2|\gamma|e^{-24\beta-4\kappa}  (1 + \frac{|\{ e \in \gamma \colon \sigma_e = 1\}|}{|\gamma|}(e^{ 8\kappa}-1))}\bigr]   \Bigr| \leq (A+A'+B + 2e^{-24\beta+12\kappa}  ) e^{2 |\gamma| \alpha_5(\beta,\kappa) }.
    \end{equation}

    Next, recall that $|\mathbb{E}_{N,\beta,\kappa} [ W_\gamma ]| \leq   e^{-   |\gamma \smallsetminus \gamma_c| \alpha_5(\beta,\kappa)}$ by Lemma~\ref{lemma: upper bound}. At the same time, using~\eqref{eq: alphas for Z2}, we have
    \begin{equation*}
        \mathbb{E}_{N,\infty,\kappa}\bigl[ e^{-2|\gamma|e^{-24\beta-4\kappa}  (1 + \frac{|\{ e \in \gamma \colon \sigma_e = 1\}|}{|\gamma|}(e^{ 8\kappa}-1))}\bigr] 
        \leq 
        e^{-2|\gamma|e^{-24\beta-4\kappa}} 
        = 
        e^{-|\gamma|\alpha_5(\beta,\kappa)(1+e^{-24\beta-4\kappa})}  
        \leq
        e^{-|\gamma|\alpha_5(\beta,\kappa)}.
    \end{equation*}
    Using the triangle inequality, it follows that
    \begin{equation}\label{eq: second part of main result ii}
        \begin{split}
            &\Bigl|\mathbb{E}_{N,\beta,\kappa} [W_\gamma]-   \mathbb{E}_{N,\infty,\kappa}\bigl[ e^{-2|\gamma|e^{-24\beta-4\kappa}  (1 + \frac{|\{ e \in \gamma \colon \sigma_e = 1\}|}{|\gamma|}(e^{ 8\kappa}-1))}\bigr]   \Bigr|
            \\&\qquad\leq 
            \Bigl|\mathbb{E}_{N,\beta,\kappa} [W_\gamma] \Bigr|
            +
            \Bigl| \mathbb{E}_{N,\infty,\kappa}\bigl[ e^{-2|\gamma|e^{-24\beta-4\kappa}  (1 + \frac{|\{ e \in \gamma \colon \sigma_e = 1\}|}{|\gamma|}(e^{ 8\kappa}-1))}\bigr]   \Bigr|
            \\&\qquad\leq e^{-(|\gamma|- |\gamma_c|) \alpha_5(\beta,\kappa)} + e^{-|\gamma|\alpha_5(\beta,\kappa)} \leq 2e^{-(|\gamma|- |\gamma_c|) \alpha_5(\beta,\kappa)}.
        \end{split}
    \end{equation}
    Combining~\eqref{eq: combined bounds 002 ii} and~\eqref{eq: second part of main result ii}, we obtain
    \begin{equation*}    
    \begin{split}
        &\Bigl|\mathbb{E}_{N,\beta,\kappa}[ W_\gamma] - \mathbb{E}_{N,\infty,\kappa}\bigl[ e^{-2|\gamma|e^{-24\beta-4\kappa}  (1 + \frac{|\{ e \in \gamma \colon \sigma_e = 1\}|}{|\gamma|}(e^{ 8\kappa}-1))}\bigr] \Bigr|^{1 + 2|\gamma|/(|\gamma|-|\gamma_c|)}
        \\&\qquad\leq
        2^{2|\gamma|/(|\gamma|-|\gamma_c|)}\bigl(A+A'+B+2e^{-24\beta+12\kappa} \bigr). 
    \end{split}
    \end{equation*}
    Now note that, by~\eqref{eq: alphas for Z2}, we have
    \begin{equation*}
        \varepsilon_1(\beta,\kappa) = \frac{\alpha_2(\beta,\kappa)^6}{\alpha_5(\beta,\kappa)} = \frac{{1 + e^{-24\beta-4\kappa}}}{2} \leq 1,
    \qquad   
        \varepsilon_3(\beta,\kappa) = \frac{  \alpha_4(\beta,\kappa) \alpha_0(\kappa)^2}{\alpha_5(\beta,\kappa)} =   
        \frac{1-e^{-8\kappa}}{1 + e^{-24\beta+4\kappa}} \leq 1,
    \end{equation*}
    and
    \begin{equation*} 
     \varepsilon_2(\beta,\kappa) =  \sqrt{\frac{\alpha_0(\kappa)^8 \alpha_4(\beta,\kappa)}{ \alpha_5(\beta,\kappa)}}
         +
         \sqrt{\frac{\alpha_3(\beta,\kappa) }{\alpha_5(\beta,\kappa)}}  = \sqrt{ \frac{  e^{-24\kappa}(1-e^{-8\kappa})}{1 + e^{-24\beta+4\kappa}}} + 1 \leq 2.
\end{equation*}

    Consequently, we have
    \begin{equation*}
        \begin{split}
            &A +A'+B 
            \leq
        2C_1C_0^{(7)} \cdot \alpha_2(\beta,\kappa)
        + C_3\cdot \frac{|\gamma_c|}{|\gamma|}
        +
       2\sqrt{C_{c,1} }\,  (\sqrt{2}\alpha_0(\kappa)^3) \cdot \frac{1}{\sqrt{|\gamma|}}
        \\&\qquad\qquad+
        \sqrt{ C_{c,2}} \sqrt{  2\alpha_0(\kappa)^2 \max(\alpha_0(\kappa)^2,\alpha_2(\beta,\kappa)^6)  } \cdot \frac{1}{\sqrt{|\gamma|}}
        +
        2\sqrt{12C_0^{(6)}} \cdot \frac{1}{\sqrt{|\gamma|}}
      + 2\sqrt{C_I} \cdot \sqrt\frac{|\gamma_c|}{|\gamma|}
        \\&\qquad\qquad+
        2C_0^{(25)} \cdot \alpha_2(\beta,\kappa)
        +
        \frac{(2C_{c,1}'+C_{c,2}') \Bigl( 18^{2} \bigl(2+ \alpha_0(\kappa)  \bigr) \alpha_0(\kappa)  \Bigr)^{\dist(\gamma,B_N^c)}}{2 \alpha_2(\beta,\kappa)}.
        \end{split}
    \end{equation*}
    Note now that by definition, \( |\gamma_c|\geq 4. \) Also, note that by assumption, we have \( 2\kappa \leq 3\beta  \), and hence \( e^{-24\beta+12\kappa} \leq e^{-4(\beta+\kappa/6)} = \alpha_2(\beta,\kappa)\leq 1\). Finally, note that by~\ref{assumption: 3}, we have \( 18^2\alpha_0(\kappa) \leq 1 \). 
    Recalling Proposition~\ref{proposition: unitary gauge one dim} and Proposition~\ref{proposition: limit exists}, letting \( N \to \infty \), and simplifying, we thus obtain
    \begin{equation*}
        \begin{split}
            &\bigl| \langle W_\gamma\rangle_{\beta,\kappa} - \Upsilon_{\gamma,\beta,\kappa}  \bigr|^{1 + 2|\gamma|/(|\gamma|-|\gamma_c|)}
            \\&\qquad \leq
            2^{2|\gamma|/(|\gamma|-|\gamma_c|)}
            \cdot 
            \bigl( 2C_1C_0^{(7)} + C_3  + \sqrt{ C_{c,1} }  + \sqrt{ C_{c,2}  } +
            2\sqrt{12C_0^{(6)}}
            + 2\sqrt{C_I}+2C_0^{(25)}+2
            \bigr)
            \\&\qquad \qquad
             \cdot \Bigl( e^{-4(\beta+\kappa/6)} + \sqrt{|\gamma_c|/|\gamma|}\Bigr).
        \end{split}
    \end{equation*} 
    
    If \( |\gamma_c|/|\gamma| \leq a \in (0,1) \), then 
    \begin{equation*}
        \frac{1-a}{3-a} \leq \frac{1}{1+2|\gamma|/(|\gamma|-|\gamma_c|)} \leq \frac{1}{3}.
    \end{equation*} 
    Consequently, in this case, if \( e^{-4(\beta+\kappa/6)} + \sqrt{|\gamma_c|/|\gamma|}\leq 1 \), then it follows that
    \begin{align}\nonumber
            &\bigl| \langle W_\gamma\rangle_{\beta,\kappa} - \Upsilon_{\gamma,\beta,\kappa}  \bigr|
            \\\nonumber 
            &\qquad \leq
            2^{1-\frac{1-a}{3-a}}
            \cdot 
            \bigl\{ 2C_1C_0^{(7)} + C_3  + \sqrt{ C_{c,1} }  + \sqrt{ C_{c,2}  } +2\sqrt{12C_0^{(6)}}+2\sqrt{C_I}+2C_0^{(25)}+2
            \bigr\}^{1/3} 
            \\\label{eq: Z2 main result before simplification}
            & \qquad\qquad
            \cdot \Bigl( e^{-4(\beta+\kappa/6)} + \sqrt{|\gamma_c| /|\gamma|}\Bigr)^{\frac{1-a}{3-a}}.
    \end{align}  
   Since \( |\rho(g)| = 1 \) for all $g \in G$, we always have
    \begin{equation*}
        \begin{split}
            &\bigl| \langle W_\gamma\rangle_{\beta,\kappa} - \Upsilon_{\gamma,\beta,\kappa}  \bigr| \leq 2.
        \end{split}
    \end{equation*}  
    On the other hand, \( C_0^{(25)} \geq 5^{24}\) by~\eqref{eq: C0M}, and hence \( (C_0^{(25)})^{1/3} \geq 5^8 \). This implies in particular that if \( e^{-4(\beta+\kappa/6)} + \sqrt{|\gamma_c|/|\gamma|}\geq 1 \), then~\eqref{eq: Z2 main result before simplification} automatically holds. Also it follows that if \( |\gamma_c|/|\gamma| > a \), then~\eqref{eq: Z2 main result before simplification} automatically holds if \( a \) is not too small. One verifies that it is sufficient to choose \( a \geq 5^{-42} \). Choosing \( a = 1/5 \) and noting that the assumption that \( 3\beta \geq 2\kappa \) implies that \( e^{-24\beta+12\kappa} \leq 1 \), we obtain
    the desired estimate~\eqref{eq: main result Z2}. This completes the proof of Theorem~\ref{theorem: main result Z2}.
\end{proof}

\noindent
{\bf Acknowledgements} {\it MPF and JL acknowledge support from the Ruth and Nils-Erik Stenb\"ack Foundation, the Swedish Research Council, Grant No. 2015-05430, and the European Research Council, Grant Agreement No. 682537.
FV acknowledges support from the Ruth and Nils-Erik Stenb\"ack Foundation, the Swedish Research Council, and the Knut and Alice Wallenberg Foundation.}

\end{document}